\tikzset{>=stealth',
  head/.style = {fill = white, text=black}, 
  pil/.style={->,thick},
  und/.style={thick},
  junct/.style = {draw,circle,inner sep=0.5pt,outer sep=0pt, fill=black}
  }
\newtheorem{theorem}{Theorem}[section]
\newtheorem{proposition}[theorem]{Proposition}
\newtheorem{lemma}[theorem]{Lemma}
\newtheorem{claim}[theorem]{Claim}
\newtheorem*{claim*}{Claim}
\newtheorem{corollary}[theorem]{Corollary}
\newtheorem{definition-lemma}[theorem]{Definition-Lemma}
\theoremstyle{remark}
\newtheorem{example}[theorem]{Example}
\numberwithin{equation}{section}
\newcommand\complexes{{\mathbb C}}
\DeclareMathOperator{\GL}{\sf GL}
\newcommand{\blank}{\phantom{2}}
\newcommand{\DD}{\mathcal{D}}
\newcommand{\EE}{\mathcal{E}}
\newcommand{\FF}{\mathcal{F}}
\newcommand{\GG}{\mathcal{G}}
\newcommand{\HH}{\mathcal{H}}
\newcommand{\QQ}{\mathcal{Q}}
\newcommand{\w}{{\sf w}}
\newcommand\x{{\sf x}}
\newcommand\y{{\sf y}}
\newcommand\z{{\sf z}}
\newcommand\uuu{{\sf u}}
\newcommand\aaa{{\sf a}}
\newcommand\bbb{{\sf b}}
\newcommand{\m}{{\sf mswap}}
\newcommand{\revm}{{\sf revmswap}}
\DeclareMathOperator{\wt}{{\tt wt}}
\DeclareMathOperator{\lab}{{\tt label}}
\DeclareMathOperator{\family}{{\tt family}}
\DeclareMathOperator{\swap}{{\tt swap}}
\DeclareMathOperator{\swapset}{{\tt swapset}}
\DeclareMathOperator{\revswap}{{\tt revswap}}
\DeclareMathOperator{\head}{{\tt head}}
\DeclareMathOperator{\body}{{\tt body}}
\DeclareMathOperator{\tail}{{\tt tail}}
\DeclareMathOperator{\im}{{\rm im}}
\newcommand{\gap}{\hspace{1in} \\ \vspace{-.2in}}
\newcommand{\dnu}{\overline{\nu}}
\newcommand{\dlambda}{\overline{\lambda}}
\newcommand{\dalpha}{\overline{\alpha}}
\newcommand{\drho}{\overline{\rho}}
\newcommand{\upper}[1]{#1_\star}
\newcommand*{\Scale}[2][4]{\scalebox{#1}{$#2$}}%
\newcommand*\circled[1]{\tikz[baseline=(char.base)]{\node[shape=circle,draw,inner sep=1pt] (char) {$#1$};}}
\begin{document}
\pagestyle{plain}

\mbox{}
\title{Equivariant $K$-theory of Grassmannians}
\author{Oliver Pechenik}
\author{Alexander Yong}
\address{Dept.~of Mathematics, U.~Illinois at
Urbana-Champaign, Urbana, IL 61801, USA}
\email{pecheni2@illinois.edu, ayong@uiuc.edu}

\date{June 5, 2015}

\begin{abstract}
We address a unification of the Schubert calculus 
problems solved by [A.~Buch '02] and [A.~Knutson-T.~Tao '03]. That is, we prove a combinatorial
rule for the structure coefficients in the torus-equivariant $K$-theory of Grassmannians with respect to the basis of Schubert structure sheaves.
We thereby deduce the conjectural rule of [H.~Thomas-A.~Yong '13] for the same coefficients. Both rules
are positive in the sense of [D.~Anderson-S.~Griffeth-E.~Miller '11] (and moreover in a stronger form).
Our work is based on the combinatorics of \emph{genomic tableaux} and a
generalization of [M.-P.~Sch\"{u}tzenberger '77]'s \emph{jeu de taquin}. 
\end{abstract}

\maketitle

\vspace{-.1in}
\tableofcontents

\vspace{-.4in}

\section{Introduction}\label{sec:introduction}
\subsection{Overview}
Let $X={\rm Gr}_k(\complexes^n)$ denote the Grassmannian of $k$-dimensional subspaces of $\complexes^n$. The action of ${\sf GL}_n(\complexes)$ on $X$ restricts to an action
of the Borel subgroup ${\sf B}$ of invertible upper triangular matrices and its subgroup ${\sf T}$ of invertible diagonal matrices.
The ${\sf T}$-fixed points $e_\lambda\in X$ are naturally indexed by Young diagrams
$\lambda$ contained in the rectangle $k\times (n-k)$. The {\bf Schubert varieties} are the
orbit closures $X_\lambda =\overline{{\sf B}_-e_\lambda}$. The Poincar\'e duals $[X_\lambda]$ of their classes form a ${\mathbb Z}$-basis of the cohomology ring $H^\star(X,{\mathbb Z})$.

The (classical) {\bf Schubert structure constants} $c_{\lambda,\mu}^\nu$ are defined by
$[X_\lambda]\cdot [X_\mu]=\sum_\nu c_{\lambda,\mu}^\nu [X_\nu]$.
In Schubert calculus, one interprets
$c_{\lambda,\mu}^\nu\in {\mathbb Z}_{\geq 0}$ as the number of points (when finite) in a generic triple intersection of Schubert varieties. Combinatorially, $c_{\lambda,\mu}^\nu$ is computed, in a manifestly nonnegative manner, by 
Littlewood-Richardson rules. The first such rule was stated by D.~E.~Littlewood-A.~R.~Richardson in the 1930s \cite{LRrule} in their study of the representation theory of the symmetric group.  The first rigorous proof of a rule was given by M.-P.~Sch\"{u}tzenberger \cite{Schutzenberger} in the 1970s. These rules describe $c_{\lambda,\mu}^\nu$ as a count of certain Young tableaux.

In the modern Schubert calculus, there is significant attention on the problem of generalizing the above work to richer
cohomology theories. Early last decade, two problems of this type were solved. A.~Buch \cite{Buch} found the first rule for the multiplication of the Schubert structure sheaves in $K$-theory. His rule
is positive after accounting for a predictable alternation of sign. Separately, A.~Knutson-T.~Tao \cite{Knutson.Tao} introduced \emph{puzzles} to give the first rule for equivariant Schubert calculus that is positive in the sense of \cite{Graham}.

We turn to a unification of these problems. Let $K_{\sf T}(X)$ denote the Grothendieck ring of ${\sf T}$-equivariant vector bundles over $X$. This ring has a natural $K_{\sf T}(\rm{pt})$-module structure and an additive basis given by the classes of Schubert structure sheaves; 
for background, we refer the reader to, e.g., \cite{Kostant.Kumar, Anderson.Griffeth.Miller} and the references therein.
The analogues of Littlewood-Richardson coefficients are the Laurent polynomials
$K_{\lambda,\mu}^{\nu}\in {\mathbb Z}[t_1^{\pm 1},\ldots, t_n^{\pm 1}] \cong K_{\sf T}(\rm{pt})$ defined by \[[{\mathcal O}_{X_\lambda}]\cdot [{\mathcal O}_{X_\mu}]=\sum_{\nu\subseteq k\times (n-k)}K_{\lambda,\mu}^\nu[{\mathcal O}_{X_{\nu}}],\] where
$[{\mathcal O}_{X_\lambda}]$ is the class of the structure sheaf of $X_\lambda$. 
These coefficients may be algebraically computed using 
\emph{double Grothendieck polynomials}; see \cite{lascoux.schuetzenberger, fulton.lascoux}.
The problem addressed by this paper is to prove a combinatorial rule for $K_{\lambda,\mu}^{\nu}$.

We give a  summary of
earlier contributions to the problem: A.~Knutson-R.~Vakil
conjectured a (still-open) formula for $K_{\lambda,\mu}^{\nu}$ in terms of puzzles 
(reported in \cite[$\mathsection 5$]{Coskun.Vakil}). V.~Kreiman \cite{Kreiman} proved a rule for the case
$\lambda=\nu$, corresponding to a certain localization (cf.~Section~\ref{sec:basecase}). C.~Lenart-A.~Postnikov \cite{Lenart.Postnikov}
determined a rule for the case $\lambda=(1)$ (in a broader context applicable
to any generalized flag variety); we use this result. Later, W.~Graham-S.~Kumar \cite{Graham.Kumar} 
determined the coefficients in the case 
$X={\mathbb P}^{n-1}$. ``Positivity'' of $K_{\lambda,\mu}^\nu$ (in a more general context) was geometrically
established by D.~Anderson-S.~Griffeth-E.~Miller \cite{Anderson.Griffeth.Miller}. More recently, A.~Knutson \cite{Knutson:positroid} obtained a puzzle rule in $K_{\sf T}(X)$ for the different problem of multiplying the class of a Schubert structure sheaf by that of an \emph{opposite} Schubert structure sheaf. Finally, H.~Thomas and the second author conjectured the first Young tableau rule for $K_{\lambda,\mu}^\nu$
\cite[Conjecture~4.7]{Thomas.Yong:H_T}; they showed their conjectural rule is positive in the
sense of \cite{Anderson.Griffeth.Miller};
see \cite[$\mathsection 4.1$]{Thomas.Yong:H_T}.

This paper introduces and proves an \cite{Anderson.Griffeth.Miller}-positive 
rule for $K_{\lambda,\mu}^{\nu}$ (Theorem~\ref{thm:main}); in fact our rule exhibits a further property of the coefficients which seems at present not to have a geometric explanation. Our rule
allows us to deduce the aforementioned conjecture of \cite{Thomas.Yong:H_T}. Indeed, our approach completes the strategy set out in \emph{loc.~cit.}\ and our Theorem~\ref{thm:main} is a generalization of the rule of \cite{Thomas.Yong:H_T} for {\sf T}-equivariant cohomology. The general approach of our proof is to relate our combinatorial rule to a $K$-theoretic generalization of a recurrence proven by A.~Molev-B.~Sagan \cite{Molev.Sagan} and A.~Knutson-T.~Tao \cite{Knutson.Tao} (who also credit A.~Okounkov). A similar approach was employed by A.~Buch \cite{Buch:quantum} who gave a rule for the equivariant quantum cohomology of Grassmannians, cf.~\cite{Buch.Mihalcea}. (The case of non-equivariant quantum cohomology had been previously handled geometrically by \cite{Coskun} and combinatorially by \cite{Buch.Kresch.Purbhoo.Tamvakis}, cf.~\cite{Buch.Kresch.Tamvakis}.)

We introduce \emph{genomic tableaux} and a generalization of M.-P.~Sch\"utzenberger's \emph{jeu de taquin} \cite{Schutzenberger}. C.~Monical has reported applications of genomic tableaux in the study of \emph{Lascoux polynomials} (see, e.g., \cite{ross} and references therein) and $K$-theoretic analogues of \emph{Demazure atoms}, extending results of \cite{haglund.etal}.  
These tableaux also give a new
rule for (non-equivariant) $K$-theory of Grassmannians;
the announcement \cite{Pechenik.Yong:announce} outlines applications to analogous problems when $X$ is replaced by Lagrangian or maximal orthogonal Grassmannians. 
In addition, one hopes to use our results to shed light on the A.~Knutson-R.~Vakil's puzzle conjecture for $K_{\lambda,\mu}^{\nu}$.
Note the puzzle conjecture does not directly recover an earlier ordinary $K$-theory puzzle rule; this is a qualitative difference with the rules of this text. Moreover, closely related to the equivariant Schubert calculus of $X$, the combinatorial rule of A.~Molev-B.~Sagan \cite{Molev.Sagan} solves a \emph{triple Schubert calculus} problem in $H^\star(\GL_{n}({\mathbb C})/{\sf B}  \times X \times \GL_{n}({\mathbb C})/{\sf B})$ (see \cite[$\mathsection$6]{Knutson.Tao}). Our methods should extend to give a $K$-theoretic analogue, cf.~\cite[$\mathsection$6.2]{Knutson.Tao}. 

\subsection{Genomic tableaux} A
{\bf genomic} tableau is a Young diagram filled with (subscripted) labels
$i_j$ where $i\in {\mathbb Z}_{>0}$ and the $j$'s that appear for each $i$ form
an initial segment of ${\mathbb Z}_{>0}$. It is {\bf edge-labeled}
of shape $\nu / \lambda$ if each
horizontal edge of a box weakly below the southern border of
$\lambda$ (viewed as a lattice path from $(0,0)$ to $(k,n-k)$) is filled with a subset of $\{i_j\}$.

Let $\x^\rightarrow$ be the box immediately east of
$\x$, $\x^\uparrow$ the box immediately north of
$\x$, etc. For a box $\x$, let $\overline{\x}$ denote the upper horizontal edge of $\x$ and
$\underline{\x}$ denote the lower horizontal edge. We write $\family(i_j)=i$. 
We distinguish two orders on subscripted labels. 
Say $i_j < k_\ell$ if $i < k$. Write $i_j \prec k_\ell$ if $i < k$ or 
$i=k$ with $j < \ell$. Note that $\prec$ is a total order, while $<$ is not.

A genomic tableau $T$ is {\bf semistandard} if the following
four conditions hold: 
\begin{itemize}
\item[(S.1)] ${\tt label}({\sf x}) \prec {\tt label}({\sf x}^{\rightarrow})$;
\item[(S.2)] every label is $<$-strictly smaller than any label South\footnote{Throughout, we write ``West'', ``west'' and ``NorthWest'' to mean ``strictly west'', ``weakly west'' and ``strictly north and strictly west'' respectively, etc.} in its column;
\item[(S.3)] if $i_j, k_\ell$ appear on the same edge then $i\neq k$;
\item[(S.4)] if $i_j$ is West of $i_k$, then $j \leq k$.
\end{itemize}
Refer to the multiset $\{i_j\}$ (for fixed $i$ and $j$) 
collectively as a {\bf gene}.
The {\bf content} of $T$ is $(c_1,c_2,c_3,\ldots)$ where
$c_i$ is the number of genes of family $i$. Suppose $\x$ is in row $r$. A label $i_j$ is {\bf too high} if  $i\geq r$ and $i_j\in \overline\x$, or alternatively if $i>r$ and $i_j\in \x$ or $i_j\in \underline\x$. 

\begin{example}
\label{exa:1.1}
For $\lambda=(4,2,2,1)$ and $\nu=(6,5,4,3,2)$ consider the genomic tableau $T$:
\[\begin{picture}(240,113)
\ytableausetup{boxsize=2.0em}
\put(-60,83){$\ytableaushort{ {*(lightgray)\blank} {*(lightgray)\blank} {*(lightgray)\blank} {*(lightgray)\blank} {1_2} {1_3}, {*(lightgray)\blank} {*(lightgray)\blank} {1_2} {2_1} {2_2},
{*(lightgray)\blank} {*(lightgray)\blank} {2_1} {3_2},
{*(lightgray)\blank} {1_1} {3_2},
{2_1} {3_2}}$}
\put(-34,6){$2_1 \ 3_2$}
\put(-4,6){$4_2$}
\put(-53,-20){$3_1$}
\put(-28,-20){$4_1$}

\put(90,60){The content of $T$ is $(3,2,2,2)$. The tableau $T$ is not}
\put(90,47){semistandard, since the second column from the}
\put(90, 34){left fails (S.2). If we deleted the $3_2$ from the edge,}
\put(90, 21){the result would be semistandard. No label is too}
\put(90, 8){high.}
\end{picture}
\]
\qed
\end{example}

\subsection{The ballot property}\label{sec:ballot_property}
A {\bf genotype} $G$ of $T$ is a choice of one label from each gene of $T$.
Let ${\tt word}(G)$ be obtained by reading $G$ down columns from right to left. (If there are multiple labels on an edge, read them from smallest to largest in $\prec$-order.)
Then $G$ is {\bf ballot} if in every initial segment of ${\tt word}(G)$, there are at least as many labels of family $i$ as of family $i+1$, for each $i\geq 1$. We say $T$ is {\bf ballot} if all of its genotypes are ballot. Let ${\tt BallotGen}(\nu/\lambda)$ be the set of ballot, semistandard, edge-labeled genomic 
tableaux of shape $\nu/\lambda$ where no label is too high. 

\begin{example}
Let
$\ytableausetup{boxsize=1.2em}
T=\Scale[0.8]{\ytableaushort{ {*(lightgray)\blank} {1_2}, {1_1} {2_1}}} \mbox{ \  and \  }
U=\Scale[0.8]{\ytableaushort{ {*(lightgray)\blank} {1_1}, {1_1} {2_1}}}.
$
Then $T$ is ballot: the one genotype (itself) has reading word is $1_2 2_1 1_1$, which is a
ballot sequence. $U$ is not ballot: it has two genotypes
$\Scale[0.8]{\ytableausetup{boxsize=1.2em}
\ytableaushort{ {*(lightgray)\blank} \ , {1_1} {2_1}}}$ and $\Scale[0.8]{\ytableaushort{ {*(lightgray)\blank} {1_1}, \ {2_1}}}$
and the word for the former is $2_1 1_1$, which is not ballot.\qed
\end{example}

\subsection{Tableau weights and the main theorem}\label{sec:tableau_weights}
Let $T \in {\tt BallotGen}(\nu/\lambda)$.
For a box $\x$, ${\sf Man}(\x)$ is
the ``Manhattan distance'' from the southwest corner (point) of $k\times (n-k)$
to the northwest corner (point)
of $\x$ (the length of any north-east lattice path between the
corners).

For a gene $\GG$, let $N_\GG$ be the number of genes $\GG'$ with $\family(\GG')=\family(\GG)$ and $\GG' \succ \GG$.
For instance, in Example~\ref{exa:1.1}, $N_{1_1}=2$ since the genes $1_2$ and $1_3$ are of the same family
as $1_1$ (namely family $1$) but $1_1\prec 1_2,1_3$.

If $\ell = i_j \in \underline{\x}$ and $\x$ is in row $r$, then
\begin{equation}
\label{eqn:edgefactordef}
\mathtt{edgefactor}(\ell) := \mathtt{edgefactor}_{\underline{\x}}(i_j) := 1 - \frac{t_{\sf Man}(\x)}{t_{r - i + N_{i_j} + 1 + {\sf Man}(\x)}}.
\end{equation}
The {\bf edge weight} $\mathtt{edgewt}(T)$ is
$\prod_\ell \mathtt{edgefactor}(\ell)$; the product is over edge labels of $T$.

A nonempty box $\x$ in row $r$ is {\bf productive} if
$\lab(\x) <
\lab(\x^\rightarrow)$.
If $i_j\in \x$, set
\begin{equation}
\label{eqn:boxfactordef}
\mathtt{boxfactor}(\x):=\frac{t_{{\sf Man}(\x)+1}}{t_{r - i + N_{i_j} + 1 + {\sf Man}(\x)}}.
\end{equation}
The {\bf box weight} of a tableau $T$ is
$\mathtt{boxwt}(T):=\prod_\x \mathtt{boxfactor}(\x)$, where the product is over all productive boxes of $T$.
The {\bf weight} of $T$ is
$ \wt T := (-1)^{d(T)} \times {\tt boxwt}(T)\times {\tt edgewt}(T)$.
Here $d(T)=\sum_{\GG} (|\GG|-1)$, 
where the sum is over all genes $\GG$
and $|\GG|$ is the (multiset) cardinality of $\GG$. Set
\[L_{\lambda,\mu}^\nu:=\sum_T \wt T,
\]
where the sum is over all $T\in {\tt BallotGen}(\nu/\lambda)$ that have content $\mu$.

\begin{theorem}[Main Theorem]
\label{thm:main}
$K_{\lambda,\mu}^\nu=L_{\lambda,\mu}^\nu$.
\end{theorem}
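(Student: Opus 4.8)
The plan is to establish $K_{\lambda,\mu}^\nu = L_{\lambda,\mu}^\nu$ by showing that the combinatorially-defined quantities $L_{\lambda,\mu}^\nu$ satisfy the same characterizing recurrence that pins down the equivariant $K$-theoretic Littlewood-Richardson coefficients. Following the strategy indicated in the introduction, I would first assemble a $K$-theoretic version of the Molev--Sagan/Knutson--Tao recurrence: a system of linear relations, together with boundary/base conditions, that has a \emph{unique} solution among suitable families of Laurent polynomials indexed by triples $(\lambda,\mu,\nu)$ inside $k\times(n-k)$. The equivariant-cohomology version of this recurrence expresses $c_{\lambda,\mu}^\nu$ in terms of coefficients with $\mu$ replaced by a strictly smaller partition (plus an ``equivariant'' diagonal term $(t_{\text{something}}-t_{\text{something}})\,c_{\lambda,\mu}^\nu$ isolating the leading behavior); the $K$-theoretic analogue will have the same shape but with extra lower-order correction terms reflecting the failure of $K$-theory classes to be homogeneous. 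I expect the precise statement of this recurrence, and especially its uniqueness, to be the technical backbone: one wants to induct on $|\mu|$ with base case $\mu = \varnothing$ or $\mu=(1)$, the latter handled by the Lenart--Postnikov/Pieri-type result cited in the excerpt, and the localization case $\lambda=\nu$ handled by Kreiman's result (Section~\ref{sec:basecase}).

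Second, and this is where the genomic-tableau machinery does its work, I would prove that $L_{\lambda,\mu}^\nu$ satisfies exactly this recurrence. Concretely, one needs a weight-preserving (up to the appropriate monomial factors) bijection or sign-reversing involution on $\mathtt{BallotGen}(\nu/\lambda)$ that peels off the combinatorial analogue of ``removing a box from $\mu$''. This is where the generalization of jeu de taquin enters: the operators $\swap$, $\swapsnake$, $\revswap$ and the associated notions of $\head$, $\body$, $\tail$, $\family$ of a gene should be used to build a \emph{genomic jeu de taquin} that rectifies (or partially rectifies) tableaux while tracking how $\mathtt{boxwt}$, $\mathtt{edgewt}$, and the sign $(-1)^{d(T)}$ transform. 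The ballot condition must be shown to be preserved (or to transform predictably) under these moves, and the ``too high'' condition together with (S.1)--(S.4) must be checked to be compatible with the moves; the edge labels, and the bookkeeping of $N_\GG$ and $\mathtt{Man}(\x)$ in the denominators $t_{r-i+N_{i_j}+1+\mathtt{Man}(\x)}$, are precisely engineered so that the monomial prefactors match the prefactors appearing in the recurrence. I would organize this as: (i) a ``single-box'' Pieri-type computation verifying the $\mu=(1)$ case directly from the tableau definition (matching Lenart--Postnikov); (ii) a structural lemma showing $L_{\lambda,\lambda}^\nu$ computes the correct localization (matching Kreiman); (iii) the inductive step via the genomic jeu de taquin bijection.

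Third, once both sides are known to satisfy the same recurrence with the same base cases, uniqueness of the solution forces $K_{\lambda,\mu}^\nu = L_{\lambda,\mu}^\nu$ for all $(\lambda,\mu,\nu)$, proving Theorem~\ref{thm:main}; the deduction of the Thomas--Yong conjecture \cite[Conjecture 4.7]{Thomas.Yong:H_T} then follows by specializing or by a separate (easier) weight-comparison between genomic tableaux and the combinatorial objects of \emph{loc.~cit.}, and the claimed \cite{Anderson.Griffeth.Miller}-positivity follows from the explicit form of $\wt T$ after the substitution $t_i \mapsto 1 - e^{-\alpha_i}$ (or whatever normalization puts the edge factors $1 - t_{\mathtt{Man}}/t_{(\cdots)}$ into manifestly positive form).

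The main obstacle, I expect, is step two's inductive bijection: genomic jeu de taquin must be \emph{confluent} (rectification independent of the order of slides) for the tableau count to be well-defined and to match the recurrence, and proving confluence in the presence of edge labels, multiple labels per edge, and the subtle interplay of the two orders $<$ and $\prec$ is delicate — this is the $K$-theoretic/edge-labeled analogue of the classical fact that jeu de taquin rectification is well-defined, but here the ``rectification target'' and the weights both move, so one really needs a \emph{weight-tracking} confluence statement. A secondary difficulty is pinning down the exact form of the $K$-theoretic Molev--Sagan recurrence (the lower-order terms) and proving its uniqueness, since unlike the cohomological case the degrees no longer strictly control the induction and one must instead filter by $|\mu|$ together with a secondary statistic (e.g.\ $d(T)$, or the number of boxes of $\nu/\lambda$ not accounted for by $\mu$).
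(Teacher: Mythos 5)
Your overall skeleton — derive a Molev--Sagan/Knutson--Tao-style recurrence in $K_{\sf T}$, check a base case, and show $L_{\lambda,\mu}^\nu$ satisfies the same recurrence via a genomic jeu de taquin — is the skeleton of the actual proof. But two of your concrete choices would send you in the wrong direction. First, the recurrence you describe (one that replaces $\mu$ by a strictly smaller partition, with induction on $|\mu|$ and a worry about uniqueness requiring a secondary statistic) is not the recurrence that is available or used. The correct statement (Proposition~\ref{prop:recurrence}) comes from multiplying $[\mathcal{O}_{X_\lambda}][\mathcal{O}_{X_\mu}]$ by the divisor class $[\mathcal{O}_{X_\square}]$ and using the Lenart--Postnikov Chevalley formula plus associativity: it keeps $\mu$ \emph{fixed} and relates $K_{\lambda,\mu}^\nu$ to $K_{\rho,\mu}^\nu$ for $\rho\in\lambda^+$ and $K_{\lambda,\mu}^\delta$ for $\delta\in\nu^-$, with the diagonal coefficient $1-\wt\,\nu/\lambda$. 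Uniqueness is then immediate — no filtration by $d(T)$ is needed — because $1-\wt\,\nu/\lambda\neq 0$ whenever $\lambda\subsetneq\nu$, so one inducts on $|\nu/\lambda|$ with base case $\lambda=\nu$ (Proposition~\ref{prop:basecase}, proved by a localization/Grothendieck-polynomial specialization and a direct bijection with set-valued tableaux; invoking Kreiman's rule would itself require a nontrivial comparison with the genomic rule, so it is not a shortcut).

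Second, and more seriously, the inductive step is mischaracterized, and this is where essentially all of the paper's work lives. No confluence theorem for genomic rectification is needed or proved: the slide ${\tt slide}_{\rho/\lambda}$ is a ${\mathbb Z}[t_1^{\pm1},\ldots,t_n^{\pm1}]$-linear operator producing a \emph{formal sum} of tableaux, not a rectification map, and there is no single ``weight-preserving bijection or sign-reversing involution'' peeling a box off $\mu$. What must actually be established is: (i) that slides stay inside an explicitly axiomatized class of ``good'' tableaux (Proposition~\ref{prop:goodness_preservation}), so the outputs land in $B_{\lambda,\mu}^\nu\cup\bigcup_\delta B_{\lambda,\mu}^\delta$; (ii) that reverse swaps invert forward swaps and, via a reversal-tree analysis of walkways, that the aggregated slide coefficients reproduce exactly the coefficients $1-\wt\,\nu/\lambda$ and $\wt\,\delta/\lambda$ of the recurrence (Proposition~\ref{prop:slideLambda+equals}); and (iii) weight preservation, which \emph{fails tableau-by-tableau} and only holds after grouping terms by pairing involutions on good tableaux (Proposition~\ref{thm:weight_preservation}). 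A plan that posits a weight-preserving bijection on ${\tt BallotGen}(\nu/\lambda)$ and identifies confluence as the main obstacle would stall at precisely this point; you would need to replace it with the formal-sum slide, the goodness axioms, the reversal tree, and the pairing arguments (or some equivalent machinery) before the inductive step can be carried out.
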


This provides the first proved rule for $K_{\lambda,\mu}^\nu$ that is manifestly \cite{Anderson.Griffeth.Miller}-positive. That is,
let $z_i :=\frac{t_i}{t_{i+1}}-1$. For $j>i$, we have
\begin{equation}
\label{eqn:thetzsubs}
\frac{t_i}{t_j}=\prod_{k=i}^{j-1}(z_k+1) \mbox{\ \ and  \ \ }
1-\frac{t_i}{t_j}=-\left(\prod_{k=i}^{j-1}(z_k+1)-1\right).
\end{equation}
Therefore, 
$(-1)^{\#\text{edge labels}}\times {\tt boxwt}(T)\times {\tt edgewt}(T)$ is $z$-positive. Since
clearly $d(T)=|\nu|-|\lambda|-|\mu|+\#\text{edge labels}$,
we have that $(-1)^{|\nu|-|\lambda|-|\mu|}L_{\lambda,\mu}^\nu
=\sum_T (-1)^{|\nu|-|\lambda|-|\mu|}\wt T$
is $z$-positive. This positivity is the same as that of
\cite[Corollary~5.3]{Anderson.Griffeth.Miller} after the substitution $z_i\mapsto e^{\alpha_i}-1$
where $\alpha_i$ is the $i$-th simple root for the root system $A_{n-1}$. 

\begin{example}
\label{exa:(2),(2,1),(2,2)}
To compute $K_{(2),(2,1)}^{(2,2)}$ for ${\rm Gr}_2({\mathbb C}^4)$, the required
tableaux are
\[\begin{picture}(400,25)
\ytableausetup{boxsize=1.2em}
\put(10,10){$T_1=\ytableaushort{ {*(lightgray)\blank} {*(lightgray)\blank} , {1_1} {1_2}}, \
T_2=\ytableaushort{ {*(lightgray)\blank} {*(lightgray)\blank} , {1_1} {1_2}}, \
T_3=\ytableaushort{ {*(lightgray)\blank}  {*(lightgray)\blank} , {1_1} {1_2}}, \
T_4=\ytableaushort{ {*(lightgray)\blank} {*(lightgray)\blank} , {1_1} {2_1}}, \
T_5=\ytableaushort{ {*(lightgray)\blank}   {*(lightgray)\blank} , {1_1} {2_1}}$}
\put(40,-10){$2_1$}
\put(121,-10){$2_1$}
\put(172,-10){$2_1$}
\put(186,-10){$2_1$}
\put(252,8){$1_2$}
\put(318,8){$1_2$}
\put(304,-10){$2_1$}
\end{picture}
\]
Then
\begin{itemize}
\item ${\tt edgewt}(T_1)=1-\frac{t_1}{t_2}$, ${\tt boxwt}(T_1)=\frac{t_3}{t_4}$ and $d(T_1)=0$;
\item ${\tt edgewt}(T_2)=1-\frac{t_2}{t_3}$, ${\tt boxwt}(T_2)=\frac{t_3}{t_4}$ and $d(T_2)=0$;
\item ${\tt edgewt}(T_3)=(1-\frac{t_1}{t_2})(1-\frac{t_2}{t_3})$, ${\tt boxwt}(T_3)=\frac{t_3}{t_4}$ and $d(T_3)=1$;
\item  ${\tt edgewt}(T_4)=(1-\frac{t_3}{t_4})$, ${\tt boxwt}(T_4)=\frac{t_2}{t_4}$ and $d(T_4)=0$; and
\item  ${\tt edgewt}(T_5)=(1-\frac{t_1}{t_2})(1-\frac{t_3}{t_4})$,
${\tt boxwt}(T_5)=\frac{t_2}{t_4}$ and $d(T_5)=1$.
\end{itemize}
Hence {\small
\[K_{(2),(2,1)}^{(2,2)} \!=\! \left(\!1-\frac{t_1}{t_2}\!\right)\frac{t_3}{t_4} + \left(1-\frac{t_2}{t_3}\right)\frac{t_3}{t_4}
-\left(1-\frac{t_1}{t_2}\right)\left(1-\frac{t_2}{t_3}\right)\frac{t_3}{t_4}
+\left(1-\frac{t_3}{t_4}\right)\frac{t_2}{t_4}
-\left(1-\frac{t_1}{t_2}\right)\left(1-\frac{t_3}{t_4}\right)\frac{t_2}{t_4}.\]}
Observe that, after rewriting using (\ref{eqn:thetzsubs}), each term is $z$-negative, in agreement with the discussion above; that is,
\begin{eqnarray}\nonumber
(-1)^{|(2,2)|-|(2)|-|(2,1)|}K_{(2),(2,1)}^{(2,2)} & = & -(-z_1)(z_3+1)-(-z_2)(z_3+1)+(-z_1)(-z_2)(z_3+1)\\ \nonumber
& & -(-z_3)(z_2+1)(z_3+1)+(-z_1)(-z_3)(z_2+1)(z_3+1)\\ \nonumber
& = & z_1(z_3+1)+z_2(z_3+1)+z_1 z_2 (z_3+1)\\ \nonumber
& & + z_3(z_2+1)(z_3+1)+z_1 z_3(z_2+1)(z_3+1) \nonumber
\end{eqnarray} is $z$-positive (without any cancellation needed).
\qed
\end{example}

There is a stronger positivity property exhibited by the rule
of Theorem~\ref{thm:main}. The work of
\cite{Anderson.Griffeth.Miller} generalizes the positivity of W.~Graham
\cite{Graham}: the equivariant Schubert structure coefficients are polynomials
with nonnegative integer coefficients in the simple roots $\alpha_i$. In 
\cite{Knutson:positroid}, A.~Knutson observes W.~Graham's geometric argument further implies the coefficients can be expressed as polynomials
with nonnegative integer coefficients in the positive roots \emph{such that each
monomial is square-free}. Moreover, A.~Knutson raises the issue of finding a ``proper analogue'' in equivariant $K$-theory for this square-free property. For $X$, we offer:
\begin{corollary}[Strengthened \cite{Anderson.Griffeth.Miller}-positivity]
\label{cor:strengthedAGM}
Let $z_{ij}:=\frac{t_i}{t_j}-1$. Then 
$(-1)^{|\nu|-|\lambda|-|\mu|}K_{\lambda,\mu}^{\nu}$ is expressible
as a polynomial with nonnegative integer coefficients in the $z_{ij}$'s 
such that each monomial is square-free.
\end{corollary}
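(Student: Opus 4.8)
\textbf{Proof proposal for Corollary~\ref{cor:strengthedAGM}.}
The plan is to extract the square-free property directly from the combinatorial formula of Theorem~\ref{thm:main}, using the explicit shapes of $\mathtt{edgefactor}$ and $\mathtt{boxfactor}$. The first step is to rewrite a single tableau weight $\wt T$ entirely in terms of the variables $z_{ij}$. By the discussion following Theorem~\ref{thm:main}, $(-1)^{|\nu|-|\lambda|-|\mu|}\wt T = (-1)^{\#\text{edge labels}}\,\mathtt{boxwt}(T)\,\mathtt{edgewt}(T)$; each $\mathtt{edgefactor}(i_j)$ for $i_j\in\underline{\x}$ equals $-z_{{\sf Man}(\x),\,r-i+N_{i_j}+1+{\sf Man}(\x)}$, and each $\mathtt{boxfactor}(\x)$ for $i_j\in\x$ equals $z_{{\sf Man}(\x)+1,\,r-i+N_{i_j}+1+{\sf Man}(\x)}+1$, both of which are monomials in $\{z_{ij}+1\}$ by (\ref{eqn:thetzsubs}) applied to $t_a/t_b=\prod_{k=a}^{b-1}(z_k+1)$ and then re-expanding via $z_k = z_{k,k+1}$, or — more cleanly — by noting $t_a/t_b$ is itself $z_{ab}+1$ so $\mathtt{boxfactor}(\x)=z_{ab}+1$ is already a single $(z+1)$ factor and $\mathtt{edgefactor}=-z_{ab}$ is already a single $z$ variable. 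Expanding the product over the (at most one) box factor per productive box and the edge factors, $(-1)^{|\nu|-|\lambda|-|\mu|}\wt T$ becomes a sum of monomials in the $z_{ij}$, each with coefficient $+1$: the edge factors contribute the variables themselves (the sign $-z$ is cancelled by $(-1)^{\#\text{edge labels}}$), and each box factor $z_{ab}+1$ contributes either $z_{ab}$ or $1$. So $(-1)^{|\nu|-|\lambda|-|\mu|}L_{\lambda,\mu}^\nu$ is visibly a nonnegative-integer combination of monomials in the $z_{ij}$; the only remaining issue is square-freeness of each such monomial.

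The second step is therefore to verify that within a fixed tableau $T$ no variable $z_{ab}$ is repeated. A repeated variable could arise in three ways: two edge labels producing the same $z_{ab}$; two box factors producing the same $z_{ab}$; or an edge label and a box factor colliding. The key observation is that the lower index of the $z$ attached to a label $\ell=i_j$ sitting in (or on the lower edge of) box $\x$ in row $r$ is $r - i + N_{i_j} + 1 + {\sf Man}(\x)$, while the upper index is ${\sf Man}(\x)$ (for edge labels) or ${\sf Man}(\x)+1$ (for box labels). Since a box and its edges all share essentially the same ${\sf Man}$ value and the box/edge distinction shifts the upper index by exactly one, I expect collisions to be ruled out by a careful bookkeeping argument: two distinct labels of the same family in the same antidiagonal (same ${\sf Man}(\x)$) are forced by semistandardness (S.2), (S.4) and the definition of $N$ to have distinct values of $r-i+N_{i_j}$, hence distinct lower indices; and labels in different antidiagonals have different upper indices. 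The box-versus-edge shift of the upper index handles the mixed case. This is a finite case analysis on the combinatorial conditions (S.1)--(S.4) together with the ``no label too high'' hypothesis, which ensures the lower index $r-i+N_{i_j}+1+{\sf Man}(\x)$ lies in the legal range $\{1,\dots,n\}$ and strictly exceeds the upper index, so that each $z_{ab}$ that appears is genuinely one of the allowed variables.

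I expect the main obstacle to be precisely this collision analysis: ruling out that two labels in the same tableau yield identical pairs $(a,b)$. The subtle point is that $N_{i_j}$ depends on the global $\prec$-ordering of all genes of family $i$, not just on local data near $\x$, so one must track how $N$ changes as one moves between boxes on a common antidiagonal and show the net effect on $r - i + N_{i_j}$ is a strict change. Concretely, if $\GG\prec\GG'$ are two genes of family $i$ with representatives in boxes $\x,\x'$ on the same antidiagonal, then $N_{\GG}>N_{\GG'}$ while (S.2)/(S.4) force $\x'$ to be weakly northeast of $\x$ in a way that decreases $r-i$ by the complementary amount or less; pinning down the exact inequality is where the real work lies. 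Once this is established, square-freeness of every monomial follows, since each label contributes at most one $z$-variable to any monomial and distinct labels contribute distinct variables. Combining with the nonnegativity and integrality already noted completes the proof.
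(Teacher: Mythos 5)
Your first step (rewriting $(-1)^{|\nu|-|\lambda|-|\mu|}\wt T$ as $\prod_{\text{edge labels}} z_{ab}\cdot\prod_{\text{productive boxes}}(z_{ab}+1)$ and reducing square-freeness to the claim that no variable $z_{ab}$ occurs twice within a single tableau) is exactly the paper's setup, and your nonnegativity argument is fine. The gap is in the collision analysis. You only treat collisions between two labels of the \emph{same} family, and you propose to rule them out by ``(S.1)--(S.4) together with the `no label too high' hypothesis.'' But the configuration you describe there (two genes $\GG\prec\GG'$ of one family occupying distinct boxes of a common diagonal) cannot occur at all in a semistandard tableau -- a label NorthWest of another label of the same family already violates (S.1)/(S.2) -- so that case is essentially empty. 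The case that actually carries the difficulty is a collision between labels of \emph{different} families: an $m_n$ in a box $\x$ and a $p_q$ with $p>m$ in a box $\y$ SouthEast of $\x$ on the same diagonal can perfectly well satisfy ${\rm row}(\x)-m+N_{m_n}={\rm row}(\y)-p+N_{p_q}$ as far as semistandardness is concerned; (S.1)/(S.2) only give the inequality $N_{m_n}\le N_{p_q}$. What kills this configuration in the paper is the \emph{ballotness} of $T$ (the hypothesis that every genotype is ballot), which is the one ingredient entirely absent from your outline: from $N_{m_n}\le N_{p_q}$ and ballotness one forces $\x=\y$ and $m=p$, and then (S.2)/(S.3) give that the two labels coincide. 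Without invoking ballotness your plan has no mechanism to exclude the cross-family case, so the ``finite case analysis'' you sketch would not close.

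A secondary inaccuracy: your claim that ``the box-versus-edge shift of the upper index handles the mixed case'' is backwards. An edge label on $\underline{\x}$ has upper index ${\sf Man}(\x)$, which equals ${\sf Man}(\y)+1$ for any box $\y$ on the diagonal of $\x^\downarrow$; so after the natural re-indexing (regarding an edge label as sitting on the upper edge of the box below it, as the paper implicitly does) mixed edge/box collisions occur between labels attached to boxes of the \emph{same} diagonal and must be handled by exactly the same argument as the box/box case -- the shift does not dispose of them. So the overall structure of your reduction is correct and matches the paper, but the heart of the proof -- using ballotness to show a tableau carries at most one label producing a given $z_{ij}$ -- is missing.
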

\begin{proof}
The nonnegativity of the coefficients is immediate from each $z_{ij}$ being positive in the $z_i$'s. It remains to show each monomial in our expression $L_{\lambda,\mu}^\nu$ is square-free.

Consider a $T \in {\tt BallotGen}(\nu / \lambda)$. Every ${\tt edgefactor}(\ell)$ is of the form $-z_{ij}$, while every ${\tt boxfactor}(\x)$ is of the form $z_{ij} + 1$. Define an $(i,j)$-label to be either an edge label with ${\tt edgefactor}(\ell) = -z_{ij}$ or a label $\ell$ in a productive box $\x$ with ${\tt boxfactor}(\x) = z_{ij}+1$. 

Suppose $\ell, \ell'$ are $(i,j)$-labels of $T$. Say $\ell \in \x$ or $\overline{\x}$ and $\ell' \in \y$ or $\overline{\y}$. Since both are $(i, -)$-labels, ${\sf Man}(\x) = {\sf Man}(\y)$. Hence $\x$ and $\y$ are boxes of the same diagonal. We may assume $\x$ northwest of $\y$.
Let $\ell$ be an instance of $m_n$ and $\ell'$ and instance of $p_q$.
Since both are $(-, j)$-labels, ${\rm row}(\x) - m + N_{m_n} = {\rm row}(\y) - p + N_{p_q}$.
By (S.1) and (S.2), $m + r(\y) - r(\x) \leq p$, so $N_{m_n} = r(\y) - r(\x) + m - p + N_{p_q} \leq p - p + N_{p_q} = N_{p_q}$. Hence by ballotness of $T$, $\x = \y$ and moreover $m=p$. Therefore by (S.2) and (S.3), $\ell = \ell'$, and thus $T$ contains at most one $(i,j)$-label and each monomial in our expression is square-free.
\end{proof}

We do not know a geometric explanation for Corollary~\ref{cor:strengthedAGM}.
However, based on this result, one speculates that for any $G/P$,
if for each positive root
$\alpha$ we set $z_{\alpha}:=e^\alpha -1$, then the corresponding Schubert 
structure
coefficients for $K_T(G/P)$ may be expressed in a square-free manner with nonnegative coefficients in the $z_{\alpha}$'s.

\subsection{Organization}
The first key to the proof is to reformulate Theorem~\ref{thm:main} in terms of the more technical \emph{bundled} tableaux that are appropriate for the
inductive argument; this is presented in
Section~\ref{sec:bundled_tableaux}.
In Section~\ref{sec:structure_of_proof}, we outline this inductive argument that the rule of Theorem~\ref{thm:main} satisfies the key recurrence alluded to above. The base case is in Section~\ref{sec:basecase}. Both the plan of induction
and the base case may be considered routine. 

The core of the argument lies
in Sections~\ref{sec:good_tableaux}--\ref{sec:weight_preservation}. There we construct local swapping rules, defining a genomic generalization of M.-P.~Sch\"utzenberger's \emph{jeu de taquin}. This permits us
to establish a combinatorial map of formal sums of tableaux. 
This part of the argument is developed as a sequence of 
four main ideas:
\begin{itemize}
\item[(1)] To show well-definedness of the map, we identify and characterize the class of \emph{good tableaux} that arise via genomic \emph{jeu de taquin} (Sections~\ref{sec:good_tableaux}, \ref{sec:snakes} and~\ref{sec:swaps}).
\item[(2)] To establish surjectivity, we define reverse swaps and slides and prove these operations keep one in the class of good tableaux (Sections~\ref{sec:ladders} and~\ref{sec:reversal}).
\item[(3)] To prove that the map respects the coefficients of the key 
recurrence, we define and prove properties of a \emph{reversal tree} (Sections~\ref{sec:reversal_tree} and~\ref{sec:recurrence_proof}).
\item[(4)] The map is weight-preserving. However,
a significant subtlety is that it is not generally weight-preserving on \emph{individual} tableaux. To establish this property of the map, we need involutions that pair tableaux (Section~\ref{sec:weight_preservation}).
\end{itemize}
In Section~\ref{sec:proof_of_conjecture}, we recall the conjecture of \cite{Thomas.Yong:H_T} and prove it from Theorem~\ref{thm:main}; this argument is essentially independent of the rest of the paper. The three appendices isolate essentially straightforward but long technical proofs of important propositions.

\section{Bundled tableaux and a reformulation of Theorem~\ref{thm:main}}\label{sec:bundled_tableaux}
A tableau $T\in {\tt BallotGen}(\nu/\lambda)$ is
{\bf bundled} if every edge label is the westmost label of its gene. 
For example, 
in Example~\ref{exa:(2),(2,1),(2,2)}, only $T_3$ is \emph{not} bundled (the eastmost $2_1$ is to blame).
We denote the set of bundled tableaux of shape $\nu/\lambda$ by ${\tt Bundled}(\nu/\lambda)$.

Define a surjection
${\tt Bun}:{\tt BallotGen}(\nu/\lambda)\twoheadrightarrow {\tt Bundled}(\nu/\lambda)$.
This sends $T$ to ${\tt Bun}(T)$ by
deleting each edge label of $T$ that is not maximally west in its gene.
If $B\in {\tt Bundled}(\nu/\lambda)$, then any $T\in {\tt Bun}^{-1}(B)$
differs from $B$ by having (possibly $0$) additional edge labels. Let $E_{i_j}$ be the edges where $i_j$ appears in some
$T\in {\tt Bun}^{-1}(B)$ but not in $B$, i.e., the set of edges of $B$ where adding an $i_j$ would yield an element of ${\tt BallotGen}(\nu/\lambda)$.
We say $B$ has a {\bf virtual label} $i_j$ on each edge of $E_{i_j}$. We denote a virtual label $i_j$ by $\circled{i_j}$.

\begin{example}\label{ex:bundled_tableau}
All virtual labels are depicted below:
\[\Scale[0.9]{\begin{picture}(100,110)
\ytableausetup{boxsize=2em}
\put(-40,90){$\ytableaushort{ {*(lightgray)\blank} {*(lightgray)\blank} {*(lightgray)\blank} {*(lightgray)\blank} {*(lightgray)\blank} {1_3}, {*(lightgray)\blank} {*(lightgray)\blank} {*(lightgray)\blank}  {2_1}, {*(lightgray)\blank} {*(lightgray)\blank} {1_2}, {*(lightgray)\blank} {2_1},{3_1}}$}
\put(-32, 11){$1_1$}
\put(-12,11){\circled{3_1}}
\put(-12,38){\circled{1_1}}
\put(13,38){\circled{2_1}}
\put(36,88){\circled{1_2}}
\put(62,88){\circled{1_2}}
\end{picture}}
\begin{picture}(100,120)
\put(10,26){$\in {\tt Bundled}((6,4,3,2,1)/(5,3,2,1))$}
\end{picture}
\]
\qed
\end{example}

For $B\in {\tt Bundled}(\nu/\lambda)$, let
\begin{equation}
\label{eqn:buninveqn}
\wt(B)=\sum_{T\in {\tt Bun}^{-1}(B)} \wt(T).
\end{equation}
Let $B_{\lambda, \mu}^\nu$ denote the set of tableaux in ${\tt Bundled}(\nu/\lambda)$ with content $\mu$.
\begin{proposition}\label{lem:bundled_main_theorem}
$L_{\lambda,\mu}^{\nu} = \sum_{B \in B_{\lambda, \mu}^\nu} \wt(B)$.
\end{proposition}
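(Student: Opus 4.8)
The plan is to prove the identity directly from the definitions by partitioning the sum defining $L_{\lambda,\mu}^\nu$ according to the fibers of the surjection ${\tt Bun}$. First I would observe that, since ${\tt Bun}:{\tt BallotGen}(\nu/\lambda)\twoheadrightarrow{\tt Bundled}(\nu/\lambda)$ is surjective, its fibers ${\tt Bun}^{-1}(B)$ for $B$ ranging over ${\tt Bundled}(\nu/\lambda)$ partition ${\tt BallotGen}(\nu/\lambda)$. Moreover, deleting edge labels that are not maximally west in their gene does not change the family of any gene, nor the number of genes of each family, so ${\tt Bun}$ preserves content: $T$ has content $\mu$ if and only if ${\tt Bun}(T)$ does. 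Hence the fibers of ${\tt Bun}$ restricted to content-$\mu$ tableaux are exactly the sets ${\tt Bun}^{-1}(B)$ for $B\in B_{\lambda,\mu}^\nu$.

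Given this, the computation is a routine reorganization of the sum:
\begin{align*}
L_{\lambda,\mu}^\nu &= \sum_{\substack{T\in{\tt BallotGen}(\nu/\lambda)\\ {\rm content}(T)=\mu}} \wt(T)
= \sum_{B\in B_{\lambda,\mu}^\nu}\ \sum_{T\in{\tt Bun}^{-1}(B)} \wt(T)
= \sum_{B\in B_{\lambda,\mu}^\nu} \wt(B),
\end{align*}
where the last equality is precisely the definition \eqref{eqn:buninveqn} of $\wt(B)$. So the real content of the proposition is verifying the two claims that make the middle step legitimate: that ${\tt Bun}$ is well-defined and surjective with the fiber description given (which is asserted in the text), and that it preserves content.

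The step I expect to require the most care is confirming that ${\tt Bun}$ genuinely lands in ${\tt Bundled}(\nu/\lambda)$ and that every fiber ${\tt Bun}^{-1}(B)$ is correctly described by the sets $E_{i_j}$ of admissible edges — i.e. that deleting the non-westmost edge labels of a ballot semistandard genomic tableau again yields a ballot semistandard genomic tableau with no too-high label, and conversely that adding back any subset of labels from the $E_{i_j}$'s (these being defined exactly as the edges where a single addition stays in ${\tt BallotGen}$) preserves membership in ${\tt BallotGen}(\nu/\lambda)$. These are semistandardness/ballotness bookkeeping checks against (S.1)--(S.4) and the ``too high'' condition; the subtle point is that admissibility of adding several virtual labels simultaneously reduces to admissibility of adding each one individually, since virtual labels of the same gene sit on distinct edges weakly east of the original westmost label and impose no new interaction. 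Everything else — content preservation and the summation rearrangement — is immediate. I would therefore structure the proof as: (i) note ${\tt Bun}$ partitions ${\tt BallotGen}(\nu/\lambda)$ into the fibers ${\tt Bun}^{-1}(B)$; (ii) note content is preserved, so content-$\mu$ tableaux are partitioned by the fibers over $B_{\lambda,\mu}^\nu$; (iii) rearrange the sum and invoke \eqref{eqn:buninveqn}.
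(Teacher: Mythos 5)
Your proposal is correct and is essentially the paper's own argument: the paper proves the proposition in one line as "immediate from (\ref{eqn:buninveqn}) and the definition of $L_{\lambda,\mu}^{\nu}$," i.e.\ exactly your regrouping of the content-$\mu$ sum over ${\tt BallotGen}(\nu/\lambda)$ along the fibers of ${\tt Bun}$, with $\wt(B)$ defined as the fiber sum. The extra checks you flag (that ${\tt Bun}$ preserves content and that the fibers are as described) are harmless but not needed for this statement, since (\ref{eqn:buninveqn}) is a definition rather than something to verify.
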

\begin{proof}
Immediate from (\ref{eqn:buninveqn}) and the definition of $L_{\lambda,\mu}^{\nu}$.
\end{proof}

Compute ${\widetilde\wt}(B)$
as a product: an edge label $\ell$ contributes a factor of ${\tt edgefactor}(\ell)$ 
and each productive box $\x$ contributes a factor of ${\tt boxfactor}(\x)$. Each virtual label $\circled{\ell}\in \underline{\x}$ contributes $1 - {\tt edgefactor}_{\underline{\x}}(\ell)$ (where the latter is calculated as if $\circled{\ell}$ were instead $\ell$). 
Multiply by $(-1)^{d(T)}$ where $d(T)=\sum_{\GG} (|\GG|-1)$ and here $|\GG|$ is interpreted to be
the multiset cardinality of non-virtual $\GG$ in $T$.

\begin{example}
For $B$ from Example~\ref{ex:bundled_tableau},
${\widetilde\wt}(B)=(-1)^1 \cdot \left(1 - \frac{t_2}{t_8}\right) \cdot \frac{t_2}{t_4} \frac{t_4}{t_6} \frac{t_6}{t_9}\frac{t_8}{t_8}\frac{t_{11}}{t_{11}} \cdot \frac{t_3}{t_5} \frac{t_4}{t_9} \frac{t_5}{t_7}  \frac{t_8}{t_{10}} \frac{t_9}{t_{11}}$.\qed
\end{example}

\begin{lemma}\label{lem:wt=tildewt}
$\wt(B)={\widetilde \wt}(B)$.
\end{lemma}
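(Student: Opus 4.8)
The goal is to show $\wt(B)=\widetilde\wt(B)$ for each $B\in{\tt Bundled}(\nu/\lambda)$, i.e.\ that summing $\wt(T)$ over the fiber ${\tt Bun}^{-1}(B)$ reproduces the explicit product formula defining $\widetilde\wt(B)$. I would begin by unpacking what varies across the fiber. An element $T\in{\tt Bun}^{-1}(B)$ is obtained from $B$ by choosing, for each gene $\GG$, a subset of the edges in $E_{i_j}$ (the edges carrying the virtual label $\circled{i_j}$) on which to place an actual copy of $i_j$. Crucially, placing such extra edge labels does \emph{not} change which boxes are productive, does not change ${\sf Man}(\x)$ for any box, and—this is the point that needs checking—does not change the values $N_\GG$ for any gene, hence does not change any ${\tt boxfactor}$ or the ${\tt edgefactor}$ of any pre-existing (non-virtual) edge label. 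So ${\tt boxwt}(T)={\tt boxwt}(B)$ is constant on the fiber, and ${\tt edgewt}(T)$ factors as ${\tt edgewt}(B)$ times the product of ${\tt edgefactor}$'s of the chosen virtual labels. The first real step is therefore to verify carefully that adding a westmost-is-already-present edge label to $B$ keeps us in ${\tt BallotGen}$ exactly when the edge lies in $E_{i_j}$, and that the $N$-statistics are unaffected (this is essentially built into the definition of $E_{i_j}$ and the fact that $\prec$-order among genes of a family is unchanged).

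**The core computation.** With $\wt(T)=(-1)^{d(T)}{\tt boxwt}(B)\,{\tt edgewt}(B)\cdot\prod_{\ell\in S}{\tt edgefactor}(\ell)$ where $S$ ranges over the chosen subset of virtual labels, and $d(T)=d(B)+|S|$ (each extra edge label increases some $|\GG|$ by one), the sum over the fiber becomes
\[
\wt(B)=(-1)^{d(B)}{\tt boxwt}(B)\,{\tt edgewt}(B)\sum_{S\subseteq \mathcal V}(-1)^{|S|}\prod_{\ell\in S}{\tt edgefactor}(\ell),
\]
where $\mathcal V$ is the set of all virtual-label slots of $B$. The sum over subsets factors as a product over individual virtual labels:
\[
\sum_{S\subseteq\mathcal V}(-1)^{|S|}\prod_{\ell\in S}{\tt edgefactor}(\ell)=\prod_{\circled\ell\in\mathcal V}\bigl(1-{\tt edgefactor}(\ell)\bigr).
\]
Comparing with the definition of $\widetilde\wt(B)$—which is exactly ${\tt boxwt}(B)\cdot{\tt edgewt}(B)$ times $\prod_{\circled\ell}(1-{\tt edgefactor}_{\underline\x}(\ell))$ times $(-1)^{d(B)}$—gives the claim. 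So the proof is: factor $\wt(T)$, observe the constancy of the box and (non-virtual) edge contributions and the additivity of $d$, then apply the elementary identity $\sum_{S}(-1)^{|S|}\prod_{S}a_\ell=\prod_\ell(1-a_\ell)$.

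**Main obstacle.** The only genuinely nontrivial point—and the step I would spend the most care on—is the independence claim: that the choice of which virtual labels to realize has no effect on the productivity of boxes, on ${\sf Man}$, and above all on the $N_\GG$ statistics and on whether any label becomes "too high." The first two are clear since these depend only on box positions and on $\lab$ of boxes (not edges), which $B$ already fixes. The $N_\GG$ independence holds because $N_\GG$ counts genes $\GG'$ of the same family with $\GG'\succ\GG$, and the set of genes, together with a choice of representative per gene determining the $\prec$-comparisons between genes, is unchanged—adding edge labels only enlarges genes, never creates or reorders them. One must also confirm that every $T\in{\tt Bun}^{-1}(B)$ is automatically ballot and has no too-high label given that $B$ does and the added labels sit on edges of $E_{i_j}$; but this is precisely how $E_{i_j}$ was defined ("the set of edges of $B$ where adding an $i_j$ would yield an element of ${\tt BallotGen}$"), together with the fact that, since virtual labels are placed independently, realizing any subset still yields a valid tableau—this last independence across different genes and different edges within a gene is the one assertion that deserves an explicit (if short) argument, and I would isolate it as the technical heart of the lemma.
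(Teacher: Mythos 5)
Your proof is correct and is essentially the paper's argument: the paper's entire proof consists of observing that, since $d(T)=d(B)+|S|$ and the box and non-virtual edge contributions are constant on the fiber, the sum defining $\wt(B)$ over ${\tt Bun}^{-1}(B)$ reduces to the inclusion--exclusion identity $\prod_{i\in[m]}a_i=\sum_{S\subseteq[m]}(-1)^{|S|}\prod_{i\in S}(1-a_i)$, which is exactly your core computation. The supporting checks you isolate (invariance of $N_\GG$, ${\sf Man}$ and productivity under adding edge labels, and the identification of ${\tt Bun}^{-1}(B)$ with arbitrary subsets of the virtual-label slots) are precisely what the paper compresses into the phrase ``by the weights' definitions,'' so your version is, if anything, more explicit than the original.
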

\begin{proof}
Let $m$ be the number of virtual labels in $B$ and $a_i$ be the non-virtual weight of the
$i$-th virtual label (listed in some given order).
By the weights' definitions, the lemma follows from 
the ``inclusion-exclusion'' identity $\prod_{i\in[m]}a_i=\sum_{S\subseteq [m]}(-1)^{|S|}\prod_{i\in S}(1-a_i)$.
\end{proof}

\section{Structure of the proof of Theorem~\ref{thm:main}}\label{sec:structure_of_proof}

Let $\lambda^+ := \{ \rho \supsetneq \lambda : \text{$\rho/\lambda$ has no two boxes in the same row or column} \}$ and $\nu^- := \{ \delta \subsetneq \nu : \text{$\nu / \delta$ has no two boxes in the same row or column}  \}$. For a set $D$ of boxes, let $\wt D := \prod_{\x\in D} \frac{t_{{\sf Man}(\x)}}{t_{{\sf Man}(\x)+1}}$.
\begin{proposition}[Key recurrence]\label{prop:recurrence}
\begin{equation}
\label{eqn:therec}
 \sum _{\rho \in \lambda^+} (-1)^{|\rho/\lambda|+1}K^\nu_{\rho,\mu}
= K^\nu_{\lambda,\mu}(1-\wt \nu/\lambda )+ \sum_{\delta \in \nu^-}(-1)^{|\nu/\delta|+1} K^\delta_{\lambda,
\mu} \wt \delta/\lambda.
\end{equation}
\end{proposition}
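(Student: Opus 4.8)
The recurrence \eqref{eqn:therec} is a $K$-theoretic analogue of the Molev--Sagan/Knutson--Tao recurrence, and the natural strategy is to \emph{not} prove it combinatorially from the $L$-side at all, but instead to establish it on the geometric/algebraic side where $K^\nu_{\lambda,\mu}$ are genuine equivariant $K$-theory structure constants; the combinatorial content of the paper is then the identity $K=L$ (Theorem~\ref{thm:main}), which is proved by showing $L$ satisfies the \emph{same} recurrence together with the base case of Section~\ref{sec:basecase}. So here I would derive \eqref{eqn:therec} directly in $K_{\sf T}(X)$. The idea is to multiply the defining relation $[\mathcal{O}_{X_\lambda}]\cdot[\mathcal{O}_{X_\mu}]=\sum_\nu K^\nu_{\lambda,\mu}[\mathcal{O}_{X_\nu}]$ by a suitable ``divided-difference''-type or Chern-class element of $K_{\sf T}(X)$ and re-expand both sides in the Schubert basis, reading off the coefficient of $[\mathcal{O}_{X_\nu}]$.

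\textbf{Key steps.} First I would recall the Lenart--Postnikov/Pieri-type rule for multiplication by the class $[\mathcal{O}_{X_{(1)}}]$ (equivalently by a line bundle class or by the relevant special Schubert class), already cited in the introduction as the ingredient from \cite{Lenart.Postnikov}; in the Grassmannian this multiplication adds a box in all possible ways with the characteristic $K$-theoretic correction terms involving the weights $\wt D$ attached to added/removed boxes along a diagonal. Concretely, one has an identity of the shape
\begin{equation}
\label{eqn:pieriish}
\Bigl(\sum_{\rho\in\lambda^+}(-1)^{|\rho/\lambda|+1}[\mathcal{O}_{X_\rho}]\Bigr)
=\xi\cdot[\mathcal{O}_{X_\lambda}]
\end{equation}
for an explicit $\xi\in K_{\sf T}(X)$ independent of $\lambda$ (a combination of a line-bundle class and the structure sheaf of a special Schubert divisor), valid because the left side is exactly the $K$-theoretic ``raising'' operator dual to the restriction recursion for the $t$-deformed factorial Grothendieck/Schur functions. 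Second, I would multiply \eqref{eqn:pieriish} on the right by $[\mathcal{O}_{X_\mu}]$, use associativity/commutativity of $K_{\sf T}(X)$ to rewrite $\xi\cdot[\mathcal{O}_{X_\lambda}]\cdot[\mathcal{O}_{X_\mu}]=\xi\cdot\sum_\nu K^\nu_{\lambda,\mu}[\mathcal{O}_{X_\nu}]$, and then apply the \emph{same} Pieri identity \eqref{eqn:pieriish} in the variable $\nu$ (now in the \emph{opposite} direction, i.e.\ as a ``lowering'' statement $\xi\cdot[\mathcal{O}_{X_\nu}]=[\mathcal{O}_{X_\nu}]\cdot$(something)$=\sum_{\delta\in\nu^-}(-1)^{|\nu/\delta|+1}[\mathcal{O}_{X_\delta}]\wt(\delta)+[\mathcal{O}_{X_\nu}]\cdot(\text{scalar correction})$, where the scalar correction $1-\wt(\nu)$ comes from the $\delta=\nu$ ``diagonal'' term). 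Third, I would apply \eqref{eqn:pieriish} once more to $\xi\cdot[\mathcal{O}_{X_\lambda}]=\sum_{\rho\in\lambda^+}(-1)^{|\rho/\lambda|+1}[\mathcal{O}_{X_\rho}]$ on the left-hand side, multiply by $[\mathcal{O}_{X_\mu}]$, and finally equate the coefficients of $[\mathcal{O}_{X_\nu}]$ on both sides; after collecting the weight factors $\wt D$ (which by construction are $\prod_{\x\in D}t_{{\sf Man}(\x)}/t_{{\sf Man}(\x)+1}$ and multiply correctly precisely because they only depend on the diagonal of the box), one lands on \eqref{eqn:therec}. The shift from $\wt(\delta/\nu)$-type factors to the stated $\wt(\delta/\lambda)$ in the last sum comes from combining the $t$-weights of the box moves in $\nu^-$ with those already present in $K^\delta_{\lambda,\mu}$'s defining geometry, i.e.\ a diagonal-by-diagonal telescoping.

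\textbf{Main obstacle.} The crux is step two: getting the ``diagonal'' term right, i.e.\ explaining why multiplication by $\xi$ contributes the scalar $(1-\wt(\nu/\lambda))$ to the coefficient of $[\mathcal{O}_{X_\nu}]$ rather than only off-diagonal box-removal terms. Geometrically this is the statement that the special class $\xi$ acts on $[\mathcal{O}_{X_\nu}]$ with a nonzero ``fixed-point'' eigenvalue $\wt(\nu)$ coming from the weight of $T$-fixed point $e_\nu$ under the corresponding line bundle, and one must check this eigenvalue telescopes to $\wt(\nu/\lambda)$ after dividing through. The cleanest way to nail this down is to pass to $T$-fixed-point localization: restrict \eqref{eqn:pieriish} (multiplied by $[\mathcal{O}_{X_\mu}]$) to each fixed point $e_\nu$, where everything becomes an identity of Laurent polynomials in $t_1,\dots,t_n$ governed by the known localization values of $[\mathcal{O}_{X_\lambda}]|_{e_\nu}$ (Kreiman's formula, cited as the $\lambda=\nu$ case), and then use the injectivity of localization $K_{\sf T}(X)\hookrightarrow\bigoplus_\nu K_{\sf T}(e_\nu)$ to conclude. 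I expect the bookkeeping of which $t_i$ indices appear (via the Manhattan-distance indexing ${\sf Man}(\x)$) to be the only genuinely delicate part; the algebraic skeleton above is forced once the Pieri identity \eqref{eqn:pieriish} and the localization values are in hand.
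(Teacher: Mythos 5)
Your overall strategy is in fact the paper's: the recurrence is established geometrically in $K_{\sf T}(X)$ by multiplying by the Chevalley (divisor) class and comparing the coefficient of $[\mathcal{O}_{X_\nu}]$ in two associativity-equivalent expressions, while the combinatorics elsewhere only has to show that $L$ satisfies the same recurrence. However, the identity you hang everything on is false: there is no $\xi\in K_{\sf T}(X)$, independent of $\lambda$, with $\sum_{\rho\in\lambda^+}(-1)^{|\rho/\lambda|+1}[\mathcal{O}_{X_\rho}]=\xi\cdot[\mathcal{O}_{X_\lambda}]$. Indeed, restrict to the fixed point $e_\lambda$: every $\rho\in\lambda^+$ satisfies $\rho\supsetneq\lambda$, so $e_\lambda\notin X_\rho$ and $[\mathcal{O}_{X_\rho}]|_{e_\lambda}=0$, while $[\mathcal{O}_{X_\lambda}]|_{e_\lambda}\neq 0$; hence $\xi|_{e_\lambda}=0$ for every $\lambda$, and injectivity of restriction to the fixed points forces $\xi=0$, contradicting the linear independence of the $[\mathcal{O}_{X_\rho}]$ (take any $\lambda$ with $\lambda^+\neq\emptyset$). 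What your identity suppresses --- and what you then try to reinstate by hand as a ``diagonal correction'' only when sliding from $\lambda$ to $\nu$ --- is unavoidable from the outset: multiplication by the Chevalley class always produces a diagonal term and a $\lambda$-dependent weight. The correct input is \cite[Corollary~8.2]{Lenart.Postnikov}:
\[ [\mathcal{O}_{X_\lambda}]\,[\mathcal{O}_{X_\square}] \;=\; [\mathcal{O}_{X_\lambda}](1-\wt\lambda) \;+\; \sum_{\rho\in\lambda^+}(-1)^{|\rho/\lambda|+1}[\mathcal{O}_{X_\rho}]\,\wt\lambda. \]

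With this in hand the proof is three lines and your ``main obstacle'' disappears. The coefficient of $[\mathcal{O}_{X_\nu}]$ in $\bigl([\mathcal{O}_{X_\lambda}][\mathcal{O}_{X_\square}]\bigr)[\mathcal{O}_{X_\mu}]$ is $K^\nu_{\lambda,\mu}(1-\wt\lambda)+\sum_{\rho\in\lambda^+}(-1)^{|\rho/\lambda|+1}K^\nu_{\rho,\mu}\,\wt\lambda$; the coefficient in $\bigl([\mathcal{O}_{X_\lambda}][\mathcal{O}_{X_\mu}]\bigr)[\mathcal{O}_{X_\square}]$ is $K^\nu_{\lambda,\mu}(1-\wt\nu)+\sum_{\delta\in\nu^-}(-1)^{|\nu/\delta|+1}K^\delta_{\lambda,\mu}\,\wt\delta$; these agree by associativity and commutativity. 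Dividing by the unit $\wt\lambda$ yields \eqref{eqn:therec} immediately, because $\wt$ is multiplicative over boxes, so $\wt\nu/\wt\lambda=\wt(\nu/\lambda)$ and $\wt\delta/\wt\lambda=\wt(\delta/\lambda)$ (terms with $\lambda\not\subseteq\delta$ cause no trouble, since then $K^\delta_{\lambda,\mu}=0$). In particular there is no diagonal-by-diagonal ``telescoping'' to verify, and the localization/Kreiman fallback you propose is unnecessary: the only external input is the Chevalley formula itself.
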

\begin{proof}
The Chevalley formula in equivariant $K$-theory 
\cite[Corollary~8.2]{Lenart.Postnikov} implies:
\[ [\mathcal{O}_{X_\lambda}] [\mathcal{O}_{X_\square}] = [\mathcal{O}_{X_\lambda}] (1-\wt\lambda) + \sum_{\rho
\in \lambda^+} (-1)^{|\rho/\lambda|+1}[\mathcal{O}_{X_\rho}] \wt\lambda.\]

Thus, the coefficient of $[\mathcal{O}_{X_\nu}]$ in $\left([\mathcal{O}_{X_\lambda}][\mathcal{O}_{X_\square}]\right)[\mathcal{O}_{X_\mu}]$
is
\[ K^\nu_{\lambda,\mu} (1-\wt\lambda) + \sum _{\rho \in \lambda^+}
(-1)^{|\rho/\lambda|+1}
K^\nu_{\rho,\mu} \wt\lambda.\]
On the other hand, the coefficient of $[\mathcal{O}_{X_\nu}]$ in $\left([\mathcal{O}_{X_\lambda}][\mathcal{O}_{X_\mu}]\right)[\mathcal{O}_{X_\square}]$ is
\[
K^\nu_{\lambda,\mu}(1-\wt\nu) +
\sum_{\delta \in \nu^-} (-1)^{|\nu/\delta|+1}K^\delta_{\lambda,\mu} \wt\delta.\]
The proposition then follows from associativity and commutativity: \[ \left([\mathcal{O}_{X_\lambda}][\mathcal{O}_{X_\square}]\right)[\mathcal{O}_{X_\mu}] = \left([\mathcal{O}_{X_\lambda}][\mathcal{O}_{X_\mu}]\right)[\mathcal{O}_{X_\square}]. \qedhere\] 
\end{proof}

To prove $K_{\lambda,\mu}^{\nu}=L_{\lambda,\mu}^{\nu}$,
we induct on
$|\nu/\lambda|$. Proposition~\ref{prop:basecase} is the base case: $K_{\lambda,\mu}^{\lambda} = L_{\lambda,\mu}^{\lambda}$; this is proved using the description of
$L_{\lambda,\mu}^{\lambda}$ from Section~\ref{sec:introduction}.

The remaining cases use the description of $L_{\lambda,\mu}^{\nu}$ from Proposition~\ref{lem:bundled_main_theorem}. Assume
$K_{\theta,\mu}^{\tau}=L_{\theta,\mu}^{\tau}$ when $|\tau/\theta|\leq h$.
Suppose we are given $\lambda,\nu$ with $|\nu/\lambda|=h+1$.
We show that $L_{\lambda,\mu}^{\nu}$ satisfies (\ref{eqn:therec}).
Since Proposition~\ref{prop:recurrence} asserts $K_{\lambda,\mu}^\nu$ also satisfies (\ref{eqn:therec}) we will be done by induction.
 
Fix $\lambda$, $\mu$, $\nu$ with $\lambda \subsetneq \nu$. Define the formal sum
\[
\Lambda^+ := \sum_{\rho\in \lambda^+} (-1)^{|\rho/\lambda|+1} \sum_{T \in B_{\rho, \mu}^\nu} T.
\] Similarly define
\[ \Lambda := (1-\wt\nu/\lambda) \sum_{T \in B_{\lambda, \mu}^\nu} T
\mbox{\ \ \  and \ \ \ }
\Lambda^- := \sum_{\delta \in \nu^-} (-1)^{|\nu/\delta|+1} (\wt\delta/\lambda) \sum_{T \in B_{\lambda, \mu}^\delta} T.\]

In Section~\ref{subsection:swapsandslides},
we define an operation ${\tt slide}_{\rho/\lambda}$ that takes as input $T\in \Lambda^+$ and returns a
formal sum of tableaux with coefficients from ${\mathbb Z}[t_1^{\pm 1},\ldots,t_n^{\pm 1}]$.
The construction of ${\tt slide}_{\rho/\lambda}$ and proof of its correctness are found in Sections~\ref{sec:good_tableaux}--\ref{sec:swaps}.
Specifically, Corollary~\ref{cor:codomain} shows the tableaux in the formal sum are from $B_{\lambda, \mu}^\nu \cup \left(\bigcup_{\delta \in \nu^-} B_{\lambda, \mu}^\delta\right)$.

In Section~\ref{sec:recurrence_proof} we prove that
\[{\tt slide}(\Lambda^+):=\sum_{\rho\in \lambda^+} (-1)^{|\rho/\lambda|+1} \sum_{T \in B_{\rho, \mu}^\nu} {\tt slide}_{\rho/\lambda}(T)=\Lambda+\Lambda^{-};
\]
see Proposition~\ref{prop:slideLambda+equals}  for the precise statement. Finally Proposition~\ref{thm:weight_preservation} shows that $\wt \Lambda^+ = \wt {\tt slide}(\Lambda^+)$, so  $\sum _{\rho \in \lambda^+} (-1)^{|\rho/\lambda|+1}L^\nu_{\rho,\mu}
= L^\nu_{\lambda,\mu}(1-\wt\nu/\lambda)+ \sum_{\delta \in \nu^-}(-1)^{|\nu/\delta|+1} L^\delta_{\lambda,\mu} \wt\delta/\lambda$. This completes the proof that the Laurent polynomials $L_{\lambda,\mu}^\nu$ defined by the rule of 
Proposition~\ref{lem:bundled_main_theorem} equal $K_{\lambda,\mu}^\nu$. Hence we have
completed our proof of Theorem~\ref{thm:main}. \qed

\section{The base case of the recurrence}
\label{sec:basecase}

A different rule for the case $K_{\lambda,\mu}^{\lambda}$ was given by V.~Kreiman~\cite{Kreiman}. We give an independent proof of the following:

\begin{proposition}[Base case of the recurrence]
\label{prop:basecase}
$K_{\lambda,\mu}^{\lambda} = L_{\lambda,\mu}^{\lambda}$.
\end{proposition}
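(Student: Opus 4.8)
The plan is to compute $L_{\lambda,\mu}^{\lambda}$ directly from the combinatorial definition and identify it with the known localization-type formula for $K_{\lambda,\mu}^{\lambda}$. The key observation is that the shape $\nu/\lambda = \lambda/\lambda$ is empty, so there are no boxes at all: a tableau $T \in {\tt BallotGen}(\lambda/\lambda)$ consists \emph{only} of edge labels, placed on the horizontal edges of boxes weakly below the southern border of $\lambda$. In particular ${\tt boxwt}(T) = 1$ (there are no productive boxes), every gene is a single edge label (so $d(T) = \#\{\text{edge labels}\}$), and $\wt(T) = (-1)^{\#\text{edge labels}} {\tt edgewt}(T) = \prod_\ell (-{\tt edgefactor}(\ell)) = \prod_\ell \left(\frac{t_{{\sf Man}(\x)}}{t_{r-i+N_{i_j}+1+{\sf Man}(\x)}}-1\right)$ for $\ell = i_j \in \underline{\x}$. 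So $L_{\lambda,\mu}^{\lambda}$ is a sum over edge-labelings of the staircase path of $\lambda$ by labels of content $\mu$, subject to semistandardness, ballotness, and the ``no label too high'' condition.

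The steps I would carry out are as follows. First, unwind exactly which edge-labelings are legal: since all labels live on the $n-k$ horizontal unit edges of the lattice path from $(0,0)$ to $(k,n-k)$ cut out by $\lambda$, conditions (S.1)--(S.4) collapse to statements about how labels of a common family may be distributed along this path, and ``too high'' forbids a label $i_j$ on an edge in row $r$ when $i \geq r$. Second, I would set up the bijection between such labelings and whatever indexing set appears in Kreiman's rule (or, more self-containedly, in the $\lambda=\nu$ specialization discussed in Section~\ref{sec:basecase}'s reference to localization): each legal edge-labeling should correspond to a chain / set of reflections recording where families $1, 2, \ldots$ sit, with the ballot condition matching the chain condition and the ``too high'' condition matching the geometric constraint that forces the relevant $T$-fixed point to lie in $X_\lambda$. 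Third, I would check the weights match: each ${\tt edgefactor}$ term $1 - \frac{t_a}{t_b}$ should be exactly the factor $1 - t_a/t_b$ appearing in the localization $[\mathcal{O}_{X_\mu}]|_{e_\lambda}$ expansion, with the indices ${\sf Man}(\x)$ and $r - i + N_{i_j} + 1 + {\sf Man}(\x)$ translating into the correct pair of indices on the reflection; the sign bookkeeping $(-1)^{d(T)}$ is absorbed since each factor is written as $-(1 - t_a/t_b) = t_a/t_b - 1$.

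I expect the main obstacle to be the third step: verifying that the somewhat baroque index $r - i + N_{i_j} + 1 + {\sf Man}(\x)$ in \eqref{eqn:edgefactordef} matches the index produced by the localization formula under the bijection. This is where the roles of $N_{i_j}$ (counting later genes of the same family) and the row $r$ genuinely interact, and getting the offset right requires carefully tracking how a family-$i$ gene's position among its siblings, together with the row it occupies, determines which diagonal reflection it records; the ballot condition is what guarantees the map is well-defined on this level. Once the index dictionary is pinned down, the identification of $L_{\lambda,\mu}^{\lambda}$ with $K_{\lambda,\mu}^{\lambda}$ should follow termwise, with no cancellation needed. (I would also sanity-check against the description of $L_{\lambda,\mu}^{\lambda}$ promised ``from Section~\ref{sec:introduction}'', i.e.\ that in the empty-shape case the rule reduces to exactly this edge-label count, so that Proposition~\ref{prop:basecase} is really a statement about matching two explicit formulas.)
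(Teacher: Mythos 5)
Your overall strategy is the one the paper uses: since $\lambda/\lambda$ has no boxes, $L_{\lambda,\mu}^{\lambda}$ is a signed sum of products of ${\tt edgefactor}$s over legal edge-labelings, and one identifies it termwise with a formula for the localization of the $\mu$-class at $e_\lambda$. However, as written the proposal has two genuine gaps. First, a concrete error in the setup: in shape $\lambda/\lambda$ it is \emph{not} true that every gene is a single edge label (a gene may recur on several edges in distinct columns — see the example inside the paper's own proof of Lemma~\ref{lem:base_bijection}, where $1_2$ occurs on two edges), and even granting that claim your sign bookkeeping is internally inconsistent: $d(T)=\sum_{\GG}(|\GG|-1)=\#\{\text{labels}\}-|\mu|$, so $\wt(T)=(-1)^{\#\text{labels}-|\mu|}\prod_\ell {\tt edgefactor}(\ell)$, which differs from your $\prod_\ell(-{\tt edgefactor}(\ell))$ by $(-1)^{|\mu|}$. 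This matters because the matching of signs with the localization formula is part of what must be checked.

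Second, and more importantly, the two steps you defer are the entire content of the proof, and your proposed target is not pinned down. The paper does not match against Kreiman's rule or a chain-of-reflections model (it explicitly gives a proof independent of \cite{Kreiman}); instead it first proves Lemma~\ref{lem:specialization}, identifying $K_{\lambda,\mu}^{\lambda}$ with a specialization of a double Grothendieck polynomial — this reduction itself needs an argument (vanishing of localizations via \cite{Willems}/\cite{Woo.Yong:AJM} and the injection of Schubert bases under $\GL_n(\complexes)/{\sf B}\twoheadrightarrow X$), which you treat as known — and then invokes the set-valued tableau formula of \cite{KMY} for the Grassmannian permutation of $\mu'$, constructing an explicit column-recording bijection $\xi$ from edge-labelings of $\lambda/\lambda$ to set-valued semistandard tableaux of shape $\mu'$. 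A subtlety your sketch glosses over: the ``no label too high'' condition does not correspond to a combinatorial condition on the target set, but to non-vanishing of the specialized weight ${\tt SVSSYTeqwt}$ (bad labels produce a factor $1-t_a/t_a=0$), and verifying this, together with the index dictionary ${\sf Man}(\x)$ and $r-i+N_{i_j}+1+{\sf Man}(\x)$ against the substituted variables $t_{w'(j)}$, is exactly the computation you flag as the ``main obstacle'' but do not carry out. Until that formula for the localization and the weight-preserving bijection are supplied, the proposal is a plausible plan rather than a proof.
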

\begin{proof}
We use the original (unbundled) definition of
$L_{\lambda,\mu}^{\lambda}$ from Section~\ref{sec:introduction}.

One says that $\pi\in S_n$ is a {\bf Grassmannian permutation}
if there is at most one $k$ such that $\pi(k)>\pi(k+1)$.
The Grassmannian permutation for $\lambda \subseteq k \times (n-k)$ 
is the (unique) Grassmannian permutation $\pi_\lambda \in S_n$ 
defined by $\pi_\lambda(i)=i+\lambda_{k-i+1}$
for $1\leq i\leq k$ and $\pi(i)<\pi(i+1)$ for $i\neq k$.

Let $w',v'\in S_n$ be the Grassmannian permutations for the conjugate diagrams $\lambda',\mu'\subseteq (n-k)\times k$.
The following
identity relates $K_{\lambda,\mu}^{\lambda}$ to the localization of the class $[\mathcal{O}_{X_\lambda}]$ at the ${\sf T}$-fixed point $e_\mu$,
expressed as a specialization of a double Grothendieck polynomial:
\begin{lemma}
\label{lem:specialization}
$K_{\lambda,\mu}^{\lambda}=\overline{{\mathfrak G}_{v'}(t_{w'(1)},\ldots,t_{w'(n)};t_1,\ldots,t_n)}$, where $\overline{f(t_1,\ldots,t_n)}$ is obtained by applying the substitution $t_j\mapsto t_{n-j+1}$ to $f(t_1,\ldots,t_n)$.
\end{lemma}
\begin{proof}
This lemma is known to experts, but for completeness we give details and references. 
Suppose $X_w$ is a Schubert variety in $\GL_{n}({\mathbb C})/{\sf B}$. 
We have in $K_{\sf T}(\GL_{n}({\mathbb C})/{\sf B})$,
\begin{equation}
\label{eqn:expforloc}
[{\mathcal O}_{X_{v}}][{\mathcal O}_{X_w}]=K_{v,w}^w [{\mathcal O}_{X_w}]+\sum_{\theta\neq w}K_{v,w}^\theta [{\mathcal O}_{X_\theta}].
\end{equation}
It is known that $K_{v,w}^\theta=0$ unless $v\leq \theta$
in Bruhat order; this follows for instance from the equivariant $K$-theory
localization formula of M.~Willems \cite{Willems} together with the \emph{mutatis mutandis} modification of the proof of \cite[Proposition~1]{Knutson.Tao}. 

Now, let $[{\mathcal O}_{X_v}]|_{e_w}$ denote the localization of the class $[{\mathcal O}_{X_v}]$ at the ${\sf T}$-fixed point $e_w := w{\sf B}/{\sf B}$. Localization is a ${\mathbb Z}[t_1^{\pm 1},\ldots, t_n^{\pm 1}]$-module homomorphism from
$K_{\sf T}(\GL_n({\mathbb C})/{\sf B})$ to $K_{\sf T}(e_w)\cong {\mathbb Z}[t_1^{\pm 1},\ldots, t_n^{\pm 1}]$. Applying the localization map to (\ref{eqn:expforloc}) gives
\[[{\mathcal O}_{X_{v}}]|_{e_w}[{\mathcal O}_{X_w}]|_{e_w}=K_{v,w}^w [{\mathcal O}_{X_w}]|_{e_w}.\]
All terms in the summation vanish because $[{\mathcal O}_{X_\pi}]|_{e_\rho}=0$ unless $\rho\leq \pi$ in Bruhat order. This vanishing condition appears
in \cite{Willems} for generalized flag varieties; it also 
follows in the case at hand from, e.g., from the later work \cite[Theorem 4.5]{Woo.Yong:AJM} (see specifically the proof). For similar reasons, $[{\mathcal O}_{X_w}]|_{e_w}\neq 0$. Hence dividing by this shows $K_{v,w}^w=[{\mathcal O}_{X_v}]|_{e_w}$.

Consider the natural projection $\GL_n({\mathbb C})/{\sf B}\twoheadrightarrow 
X$. The pullback of of each Schubert variety in $X$ is a distinct Schubert 
variety in $\GL_n({\mathbb C})/{\sf B}$ (see, e.g., \cite[Example~1.2.3(6)]{Brion}). Thus 
the Schubert basis of $X$ is sent into the Schubert basis of $\GL_n({\mathbb C})/{\sf B}$. Hence we obtain an injection
$K_{\sf T}(X)\hookrightarrow K_{\sf T}(\GL_n({\mathbb C})/{\sf B})$.
 Thus, if $\lambda,\mu\subseteq k\times (n-k)$ and $w,v\in S_n$ are 
respectively their Grassmannian permutations, then 
$K_{\lambda,\mu}^{\lambda}=K_{w,v}^w$.  The lemma now follows
from \cite[Theorem 4.5]{Woo.Yong:AJM} (after chasing conventions).
\end{proof}

Since $v'$ is Grassmannian, by \cite[Theorem~5.8]{KMY}
${\mathfrak G}_{v'}(X;Y)=\sum_{T}{\tt SVSSYTwt}(T)$,
where the sum is over all set-valued semistandard Young tableaux $T$ of shape $\mu'$ with entries bounded above by $n-k$.
Here ${\tt SVSSYTwt}(T)=(-1)^{|L(T)| - |\mu'|} \prod_{\ell \in L(T)}(1 - \frac{x_{\ell}}{y_{\ell+{\rm col}(\x)-{\rm row}(\x)}})$, where $L(T)$ is the set of labels in $T$ and $\x$ is the box containing $\ell$.

Let ${\tt SVSSYTeqwt}(T)$ be the result of the substitution $x_j\mapsto t_{w'(j)}, y_j\mapsto t_j$.
Define $\mathcal{A}$ to be the set of $T\in {\tt BallotGen}(\lambda/\lambda)$
that have content $\mu$. Define $\mathcal{B}$ to be the set of set-valued semistandard tableaux $U$ of shape $\mu'$ where
${\tt SVSSYTeqwt}(U)\neq 0$.

\begin{lemma}
\label{lem:base_bijection}
There is a bijection $\xi:\mathcal{A} \to \mathcal{B}$, with ${\tt wt}(T)=\overline{{\tt SVSSYTeqwt}(\xi(T))}$ for all $T\in \mathcal{A}$.
\end{lemma}
\begin{proof}
  Index columns of $k\times (n-k)$ by $1,2,\dots,n-k$ from \emph{right to left}.
To construct $\xi(T)$, begin with a Young diagram of shape $\mu'$. For each label in $T$, we add a label to $\xi(T)$ as follows: If $i_j$ appears in column $c$ in $T$, place a label $c$ in position $(\mu_i + 1 - j, i)$ in $\xi(T)$.

We have a candidate inverse map $\xi^{-1} : \mathcal{B} \to {\mathcal A}$: For each label $c$ in (matrix) position $(r,i)$ 
in $U \in \mathcal{B}$, we place an $i_{\mu_i + 1 -r}$ at the bottom of column $c$ of $\lambda/\lambda$.

\begin{example}
Let $n=7$, $k=3$, $\lambda=(4,2,1)$ and $\mu=(3,2,0)$. Then $T$, together with the column labels
$1,2, 3, 4$, and $\xi(T)$ are depicted below:
\[\Scale[0.8]{\begin{picture}(240,90)
\ytableausetup{boxsize=2em}
\put(-20,50){$T=$}
\put(20,75){$\ytableaushort{{\none[4]} {\none[3]} {\none[2]} {\none[1]}, {*(lightgray)\blank}{*(lightgray)\blank}{*(lightgray)\blank}{*(lightgray)\blank},{*(lightgray)\blank}{*(lightgray)\blank},{*(lightgray)\blank}}$}
\put(19, -2){$1_1, 2_1$}
\put(44, 23){$1_2, 2_2$}
\put(76, 48){$1_2$}
\put(100, 48){$1_3$}
\put(140,50){$\mapsto$}
\put(160,50){$\xi(T)=$}
\put(200,50){$\ytableaushort{13,{2,3} 4,4}.$}
\end{picture}}
\]

We compute that
$\wt(T)=(-1)^1 \left( 1-\frac{t_1}{t_6}\right) \left(1-\frac{t_3}{t_6}\right) \left(1-\frac{t_5}{t_7}\right) \left(1-\frac{t_6}{t_7}\right) \left(1-\frac{t_1}{t_4}\right) \left(1-\frac{t_3}{t_4}\right)$,
where the first four factors correspond to the labels $1_j$ of $T$ from left to right and
the last two factors correspond to the labels $2_j$ of $T$ from left to right. Now,
\[{\tt SVSSYTwt}(\xi(T))=(-1)^1 \left(1 - \frac{x_4}{y_2}\right) \left(1 - \frac{x_3}{y_2}\right)\left(1 - \frac{x_2}{y_1}\right)\left(1 - \frac{x_1}{y_1}\right)\left(1 - \frac{x_4}{y_4}\right)\left(1 - \frac{x_3}{y_4}\right),\]
where the factors correspond to the entries of $\xi(T)$ as read up columns from left
to right (i.e., consistent with the order of factors of $\wt(T)$ above).

Since $\lambda'=(3,2,1,1)$ we have $w'=2357146$ 
(one-line notation). So substituting, we get
\[{\tt SVSSYTeqwt}(\xi(T))=(-1)^1 \left(1 - \frac{t_7}{t_2}\right) \left(1 - \frac{t_5}{t_2}\right)\left(1 - \frac{t_3}{t_1}\right)\left(1 - \frac{t_2}{t_1}\right)\left(1 - \frac{t_7}{t_4}\right)\left(1 - \frac{t_5}{t_4}\right).\]
The reader can check ${\overline{{\tt SVSSYTeqwt}(\xi(T))}}=\wt(T)$, in agreement
with the lemma.\qed
\end{example}

($\xi^{-1}$ is well-defined and is weight-preserving): Let $U\in \mathcal{B}$.
That $\xi^{-1}(U)$ is an edge-labeled genomic tableau is immediate from
the column strictness of $U$. Ballotness follows from the row increasingness
of $U$.

We now check that no label of $\xi^{-1}(U)$ is too high. Suppose $c$ is a
{\bf bad} label in $U$ in (matrix) position $(r,i)$, i.e., one  
such that the label $i_{\mu_i+1-r}$ placed in column $c$ of $\lambda/\lambda$ is too high. Observe that every label $c'$ North of $c$ and in the same column of $U$ is also bad: this is since $c'$ corresponds to placing another label
of family $i$ in the weakly shorter column $c'$ East of column $c$ (since $c'<c$).  Thus we may assume $c$ is in the northmost row
of $U$, i.e., $r=1$.  Now if $i=1$, then since $c$ is bad, it must be that
$\lambda_{n-k-c+1}'=0$, so $w'(c) = c + 0$. Now $c$ contributes a factor of $1-\frac{x_c}{y_c}$ to 
${\tt SVSSYTwt}(U)$ and hence a factor of $1-\frac{t_{c+0}}{t_{c}}=0$ to 
${\tt SVSSYTeqwt}(U)$. That is, ${\tt SVSSYTeqwt}(U)=0$, so $U\notin {\mathcal B}$, a contradiction. Otherwise, we may also assume $i>1$ is smallest such that a label in $(r=1,i)$ is bad. Since no label $c'$ 
in $(r=1,i-1)$ of $U$ is bad, it must be that $c$ is ``barely'' bad, i.e.,
\begin{equation}
\label{eqn:barelybad}
\lambda_{n-k-c+1}'=i-1
\end{equation} 
(column $c$ is one box too short). However, 
$c$ contributes a factor of $1-\frac{x_c}{y_{c+i-1}}$ to  ${\tt SVSSYTwt}(U)$
and hence a factor of $1-\frac{t_{c+\lambda_{n-k-c+1}'}}{t_{c+i-1}}$ to ${\tt SVSSYTeqwt}(U)$.
This latter factor is $0$ precisely by (\ref{eqn:barelybad}). Hence again
$U\not\in {\mathcal B}$, a contradiction. Thus $U$ has no bad labels and thus
no label of $\xi^{-1}(U)$ is too high, as desired.

The sign appearing in $\wt \xi^{-1}(U)$ records the difference between  $|\mu|$ and the number of labels in $\xi^{-1}(U)$, while the sign in $\overline{{\tt SVSSYTeqwt}(U)}$ records the difference between $|\mu|$ and number of labels in $U$. Since the number of labels in $U$ is clearly the same as the number of labels in $\xi^{-1}(U)$, these signs are equal.

We check that the weight assigned to a label $c$ of $U$ in position $(r,i)$ is the same as the ${\tt edgefactor}$ assigned to the corresponding label $i_{\mu_i + 1 - r}$ at the bottom of column $c$ in $\xi^{-1}(U)$. The label $c$ is assigned the weight
\[
{\tt SSYTeqfactor}_{(r,i)}(c) :=1 - \frac{x_c}{y_{c + i - r}} = 1 - \frac{t_{c + \lambda_{n-k + 1 -c}'}}{t_{c +i- r}}.
\]
Hence we must show the equality of these two quantities:
\begin{align*}
\overline{{\tt SSYTeqfactor}_{(r,i)}(c)} &= 1 - \frac{t_{n + 1 -c - \lambda_{n-k + 1 -c}'}}{t_{n + 1 -c + r - i}} \mbox{ and}\\
{\tt edgefactor}_{\underline{\x}}(i_{\mu_i + 1 - r})&= 1 - \frac{t_{{\tt Man}(\x)}}{t_{\lambda_{n-k+1-c}' - i + r + {\tt Man}(\x)}},
\end{align*}
where $\underline{\x}$ is the southern edge of $\lambda$ in column $c$.

Now, counting the rows and columns separating $\x$ from the
southwest corner of $k\times (n-k)$, we have
\[{\tt Man}(\x)=(n-k-c)+(k-\lambda_{n-k +1 -c}'+1)=n + 1 - c -\lambda_{n-k+1-c}'.\]
Thus, the numerators of the quotients of ${\overline{{\tt SSYTeqfactor}(c)}}$ and ${\tt edgefactor}(c)$
are equal. To see that the denominators are also equal, observe 
\begin{align*}
{\tt Man}(\x) + \lambda_{n-k+1-c}' - i + r &= \left(n + 1 - c -\lambda_{n-k+1-c}'\right) + \lambda_{n-k+1-c}'- i + r \\
&= n + 1 -c-i+r.
\end{align*}

($\xi$ is well-defined and weight-preserving): Let $T\in \mathcal A$.
We must show $\xi(T)$ is strictly increasing along columns. This is clear
since $T$ satisfies (S.3) and (S.4).

Now we show that $\xi(T)$ is weakly increasing along rows. Suppose we have $a$ in position $(r, i)$ and $b$ in position $(r, i+1)$. This $a$ comes from an $i_{\mu_i + 1 - r}$ in column $a$ in $T$, while this $b$ comes from an $(i+1)_{\mu_{i+1} + 1 - r}$ in column $b$. By ballotness of $T$, each $i_{\mu_i + 1 - r}$ must be weakly right of each $(i+1)_{\mu_{i+1} + 1 - r}$. Thus $a \leq b$.

Hence $\xi(T)$ is a set-valued semistandard tableau of shape $\mu'$. The same computations showing $\xi^{-1}$
is weight preserving shows $0\neq {\tt wt}(T)=\overline{{\tt SSYTeqwt}(\xi(T))}$ and so the desired
conclusions hold.
\end{proof}
The proposition now follows immediately from 
Lemmas~\ref{lem:specialization} and~\ref{lem:base_bijection}.
\end{proof}

\section{Good tableaux}\label{sec:good_tableaux}
In this section, we give an intrinsic description of the tableaux that will appear during our generalized {\it jeu de taquin} process (defined in Section~\ref{sec:swaps}).
Since we will use \emph{box} labels $\bullet_\GG$, we distinguish
labels $i_j$ as {\bf genetic labels}. As a visual aid, we
mark genetic labels $\FF$ southeast of a $\bullet_\GG$ with $\FF \prec \GG$ 
as $\FF^{!}$.
For a gene $\GG$, let $\GG^+$ (respectively, $\GG^-$) denote the successor (respectively, predecessor) of $\GG$ in the total order $\prec$ on genes. For example, $1_1^+ = 2_1$ if $\mu_1 = 1$, and $1_1^+ = 1_2$ if $\mu_1 > 1$. Let $\GG_{\rm max}$ be the maximum
gene that can appear, namely $\ell(\mu)_{\mu_{\ell(\mu)}}$ where $\ell(\mu)$ is the number of nonzero rows of $\mu$.
Declare $\GG_{\rm max}^+ := (\ell(\mu)+1)_1$.

A {\bf $\GG$-good tableau} is an edge-labeled filling $T$ of $\nu/\lambda$ by genetic labels $i_j$ (such that $i \in \mathbb{Z}_{>0}$ and the $j$'s that appear for each $i$ form an initial segment of $\mathbb{Z}_{>0}$) and \emph{box} labels $\bullet_\GG$, satisfying the conditions (G.1)--(G.13) below:
\begin{itemize}
\item[(G.1)] no genetic label is too high;
\item[(G.2)] no $\bullet_{\GG}$ is southeast of another (in particular, $\bullet_\GG$'s are in distinct rows and columns);
\item[(G.3)] the labels $\prec$-increase along rows (ignoring any $\bullet_\GG$'s), except for possibly three consecutive labels $\Scale[0.9]{\ytableausetup{boxsize=1.5em}\ytableaushort{\HH  {\bullet_\GG} {\FF^!}}}$ with $\HH > \FF$;
\item[(G.4)] the labels $<$-increase down columns (ignoring any $\bullet_\GG$'s), except that unmarked $\FF$ may 
appear adjacent and above $\FF^!$ when both are box labels; 
\item[(G.5)] if $i_j, k_\ell$ appear on the same edge, then $i\neq k$;
\item[(G.6)] if $i_j$ is West
of $i_k$, then $j\leq k$;
\item[(G.7)] each edge label is maximally west in its gene;
\item[(G.8)] each genotype $G$ obtained by choosing
one label of each gene of $T$ is ballot in the sense defined in Section~\ref{sec:ballot_property}.
\item[(G.9)] if $\FF$ appears northwest of $\bullet_\GG$, then $\FF \prec \GG$;
\item[(G.10)] if $\FF^!\in \x$ or $\FF^!\in\underline{\x}$, then $\bullet_\GG$ appears in $\x$'s row;
\item[(G.11)] $\bullet_{\GG}$ does not appear in a column containing a marked label;
\item[(G.12)] if $\ell$ and $\ell'$ are genetic labels of the same family with $\ell$ NorthWest of $\ell'$, then there are boxes $\x ,\z$ in row $r$ with $\x$ West of $\z$, $\ell \in \x$ or $\overline{\x}$, and $\ell' \in \z$ or $\underline{\z}$; further, $\bullet_{\GG}$ appears in some box $\y$ of $r$ that is East of $\x$ and west of $\z$. Pictorially,
the scenarios are:
\[
\begin{picture}(90,25)
\ytableausetup{boxsize=1.2em}
\put(0,9){$\ytableaushort{{\star} {\cdots} {\bullet} {\cdots} {\star}}$}
\put(6,19){$\ell$}
\put(65,4){$\ell'$}
\put(5,-3){$\x$}
\put(35,-3){$\y$}
\put(65,-3){$\z$}
\end{picture}
\begin{picture}(90,25)
\ytableausetup{boxsize=1.2em}
\put(0,9){$\ytableaushort{{\star} {\cdots} {\cdots} {\cdots} {\bullet}}$}
\put(6,19){$\ell$}
\put(65,4){$\ell'$}
\put(5,-3){$\x$}
\put(56,-3){$\y=\z$}
\end{picture}
\begin{picture}(90,25)
\ytableausetup{boxsize=1.2em}
\put(0,9){$\ytableaushort{{\star} {\cdots} {\bullet} {\cdots} {\ell'}}$}
\put(6,19){$\ell$}
\put(5,-3){$\x$}
\put(35,-3){$\y$}
\put(65,-3){$\z$}
\end{picture}
\]
\[
\begin{picture}(90,25)
\ytableausetup{boxsize=1.2em}
\put(0,9){$\ytableaushort{{\ell} {\cdots} {\bullet} {\cdots} {\star}}$}
\put(65,4){$\ell'$}
\put(5,-3){$\x$}
\put(35,-3){$\y$}
\put(65,-3){$\z$}
\end{picture}
\begin{picture}(90,25)
\ytableausetup{boxsize=1.2em}
\put(0,9){$\ytableaushort{{\ell} {\cdots} {\cdots} {\cdots} {\bullet}}$}
\put(65,4){$\ell'$}
\put(5,-3){$\x$}
\put(56,-3){$\y=\z$}
\end{picture}
\]
Furthermore, if $\y=\z=\x^\rightarrow$ in the last scenario, then
$\y^\rightarrow$ does not contain a marked label nor another instance of the gene of $\ell'$. 
\end{itemize}

We place a {\bf virtual label} $\circled{\HH}$ on each edge $\underline{\x}$ where $\HH \in \underline{\x}$ would
\begin{itemize}
\item[(V.1)] \emph{not} be marked (hence if $\circled{\HH}$ appears southeast of a $\bullet_\GG$, then $\HH \succeq \GG$);
\item[(V.2)] \emph{not} be maximally west in its gene (hence violating condition (G.7)); and
\item[(V.3)] satisfy the conditions (G.1), (G.4), (G.5), (G.6), (G.8), (G.9) and (G.12).
\end{itemize}

\begin{itemize}
\item[(G.13)] If $\EE^! \in \x$ or $\EE^! \in \underline{\x}$, then there is $\FF$ or $\circled{\FF}$ on $\underline{\x}$ with $N_\EE = N_\FF$ and $\family(\FF) = \family(\EE) + 1$.
\end{itemize}

A tableau is {\bf good} if it is $\GG$-good for some $\GG$.

\begin{example}
The tableau
\ytableausetup{boxsize=1.5em}
$\Scale[0.8]{\begin{picture}(66,40)
\put(0,19){$\ytableaushort{{*(lightgray)\blank} {*(lightgray)\blank} {*(lightgray)\blank}, {2_1} {\bullet_{2_2}} {1_2^!}}$}
\put(5,15){$1_1$}
\put(41,-4){$2_2$}
\end{picture}}
$ is $2_2$-good. Although the labels in the second row do not increase left to right, they satisfy (G.3). Furthermore, notice the $1_1$ and $1_2^!$ satisfy (G.12), as do the $2_1$ and $2_2$.

The tableau \ytableausetup{boxsize=1.5em}
$\Scale[0.8]{\begin{picture}(60,37)
\put(0,19){$\ytableaushort{{*(lightgray)\blank} {1_1} {\bullet_{2_1}}, {\bullet_{2_1}} {1_1^!}}$}
\put(23,-4){$2_1$}
\end{picture}}
$ is also good. Although the label $1_1$ appears twice in the same column, the lower instance is marked in accordance with (G.4). \qed
\end{example}

\begin{example}
The following tableaux are \emph{not} good:
\[\begin{picture}(90,40)
\put(0,19){$\ytableaushort{{*(lightgray)\blank} {1_1} {\bullet_{1_2}} {1_2}, {2_1}}$}
\put(23,15){$2_1$}
\end{picture}
\begin{picture}(50,40)
\put(0,19){$\ytableaushort{{\bullet_{2_1}} {1_1^!}, {2_1}}$}
\put(23,15){$3_1$}
\end{picture}
\begin{picture}(70,40)
\put(0,19){$\ytableaushort{{*(lightgray)\blank} {\bullet_{2_1}} {1_2}, {\bullet_{2_1}} {1_1^!}}$}
\put(23,-3){$2_1$}
\end{picture}
\begin{picture}(55,40)
\put(0,19){$\ytableaushort{{*(lightgray)\blank} {*(lightgray)\blank}, {\bullet_{1_2}} {1_2}}$}
\put(4,15){$1_1$}
\end{picture}
\]
The first fails conditions (G.1) and (G.7) because of the edge label $2_1$. The second fails (G.8), as the  unique genotype is not ballot. Although the marked $1_1^!$ in the third tableau has a label of family $2$ on the lower edge of its box, the tableau fails (G.13) as $1 = N_{1_1} \neq N_{2_1} = 0$. It also fails (G.11) by having both a $\bullet_{2_1}$ and a marked label in the second column. The fourth tableau fails (G.12). 
\qed
\end{example}

\begin{lemma}\label{lem:bundled_tableaux_are_good}
If $T\in {\tt Bundled}(\nu/\lambda)$, then $T$ is $\GG$-good for every $\GG$. Moreover the virtual labels of the $\GG$-good tableau $T$ (as defined by {\normalfont (V.1)}--{\normalfont (V.3)}) are the same as the virtual labels of the bundled tableau $T$ (as defined in Section~\ref{sec:bundled_tableaux}). 
\end{lemma}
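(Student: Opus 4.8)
The claim has two parts: first, that a bundled tableau $T \in {\tt Bundled}(\nu/\lambda)$ satisfies all thirteen good-tableau conditions (G.1)--(G.13) for an arbitrary choice of gene $\GG$; second, that the notion of virtual label coming from (V.1)--(V.3) coincides with the notion of virtual label from Section~\ref{sec:bundled_tableaux}. The overall approach is a direct verification: since $T$ is bundled, it has no box labels $\bullet_\GG$ at all, so all conditions mentioning $\bullet_\GG$ (namely (G.2), (G.9), (G.10), (G.11), (G.12), and (G.13)) either become vacuous or collapse to something that follows from semistandardness and ballotness. The remaining conditions are essentially the definition of a semistandard, ballot, edge-labeled genomic tableau with no too-high label, which is exactly what membership in ${\tt Bundled}(\nu/\lambda) \subseteq {\tt BallotGen}(\nu/\lambda)$ guarantees.

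First I would dispense with the conditions that reference box labels. Since $T$ has no $\bullet_\GG$, condition (G.2) is vacuous, and (G.9), (G.10), (G.11) are vacuous. For (G.3) and (G.4), the exceptional clauses involve $\bullet_\GG$ or marked labels, which do not occur, so these reduce to: labels $\prec$-increase along rows and $<$-increase down columns, which are (S.1) and (S.2). Conditions (G.5), (G.6), (G.7) are literally (S.3), (S.4), and the bundled condition respectively. (G.1) is the ``no too-high label'' requirement built into ${\tt BallotGen}$, and (G.8) is exactly ballotness as defined in Section~\ref{sec:ballot_property}. For (G.12), when $\ell$ is NorthWest of $\ell'$ with $\family(\ell) = \family(\ell')$, the existence of the boxes $\x, \z$ in a common row $r$ follows from (S.2)/(S.4) together with ballotness (this is essentially the structural fact that same-family labels in a genomic tableau, read appropriately, behave like a horizontal strip within each gene); since there is no $\bullet_\GG$, the clause ``$\bullet_\GG$ appears in some box $\y$ of $r$'' is required to hold vacuously, i.e., one must check the hypothesis ``$\ell$ NorthWest of $\ell'$'' can never occur — more precisely (G.12) as stated for a $\GG$-good tableau is still satisfiable when no $\bullet_\GG$ exists only if no such configuration arises; so I would argue that semistandardness forces $\x$ and $\z$ to be in adjacent-or-identical situations and check the trailing marked-label clause is vacuous. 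Similarly (G.13) is vacuous since no marked $\EE^!$ occurs.

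For the second part, I would unwind both definitions of virtual label and match them condition by condition. A virtual label in the bundled sense (Section~\ref{sec:bundled_tableaux}) on edge $\underline{\x}$ is an $i_j$ that is not maximally west in its gene but whose addition to $B$ still yields an element of ${\tt BallotGen}(\nu/\lambda)$ — i.e., it still satisfies semistandardness, ballotness, and not-too-high. The (V.1)--(V.3) definition for the $\GG$-good tableau $T$ asks: (V.1) not marked — automatic since there are no box labels, so every edge label of $T$ (hence every candidate virtual label) is unmarked; (V.2) not maximally west in its gene — the same condition as in the bundled definition; (V.3) the relevant semistandard/ballot/not-too-high conditions, which are precisely (G.1), (G.4), (G.5), (G.6), (G.8), (G.9), (G.12), and these unwind (as above, using the absence of box labels) to exactly the ${\tt BallotGen}$ membership conditions for $B$ with the extra edge label added. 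Thus the two notions of virtual label agree.

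\textbf{Main obstacle.} The only part requiring genuine care rather than bookkeeping is (G.12): I expect the subtle point is verifying that when $T$ is bundled the ``NorthWest same-family pair'' configuration either cannot occur in a way that would demand a $\bullet_\GG$, or occurs only in the degenerate adjacent form, and then checking the trailing clause about $\y^\rightarrow$ not containing a marked label or another instance of the gene of $\ell'$ is vacuously fine. This requires carefully invoking (S.2) and (S.4) to pin down the positions of same-family labels relative to each other and to confirm that ballotness plus semistandardness leave no room for the configurations that the good-tableau axioms were designed to police in the presence of box labels. Everything else is a routine matching of definitions.
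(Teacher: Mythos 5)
Your overall plan is the paper's own proof: the conditions mentioning $\bullet_\GG$ or marked labels are vacuous for a bundled tableau, (S.1)--(S.4) give (G.3)--(G.6) respectively, and (G.1), (G.7), (G.8) are built into the definition of ${\tt Bundled}(\nu/\lambda)$; your matching of the two notions of virtual label is likewise the paper's ``clear from the definitions'' step, made explicit. The one substantive point is (G.12), and there your treatment contains a step that would fail. Every branch of the conclusion of (G.12) --- including the degenerate case $\y=\z$ in the pictures --- demands a $\bullet_\GG$ in the row $r$, and a bundled tableau has no $\bullet_\GG$'s at all. So your fallback options (``the boxes $\x,\z$ in a common row exist by (S.2)/(S.4) and ballotness'', or the configuration ``occurs only in the degenerate adjacent form'' with the trailing clause vacuous) cannot rescue (G.12): if the hypothesis of (G.12) occurred in any form, the condition would simply be false for a bundled tableau. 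The only correct route, and the one the paper takes in a single sentence, is to show the hypothesis never occurs: two labels of the same family with one NorthWest of the other (in the sense of the (G.12) pictures, i.e.\ $\ell\in\x$ or $\overline{\x}$ and $\ell'\in\z$ or $\underline{\z}$ with $\x$ West of $\z$, or a genuinely different-row pair) already violate (S.1) or (S.2).

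The missing check is short, and it uses (S.1) and (S.2) --- not (S.4) or ballotness, which play no role here. If $\ell$ has family $i$ and lies in $\x$ or on $\overline{\x}$, then in the edge case (S.2) forces $\family(\lab(\x))>i$, and iterating (S.1) along row $r$ gives $\family(\lab(\z))\geq \family(\lab(\x))$ (and $\geq i$ in the box case); then either $\ell'=\lab(\z)$ has family $>i$ directly, or $\ell'\in\underline{\z}$ and (S.2) down $\z$'s column forces $\family(\ell')>\family(\lab(\z))\geq i$. Either way $\family(\ell')>i$, contradicting $\family(\ell')=\family(\ell)=i$; the case where $\ell'$ sits strictly South of $\ell$'s row is the same computation using the box of $\nu/\lambda$ in $\ell$'s row and $\ell'$'s column. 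With this in place the (G.12) hypothesis is vacuous for bundled tableaux, your verification of the remaining conditions stands, and the virtual-label comparison goes through exactly as you describe (the extra (V.3) conditions (G.9) and (G.12) being vacuous or automatic once (S.1), (S.2) hold).
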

\begin{proof}
Since $T$ is bundled, (S.1), (S.2), (S.3) and (S.4) hold. These conditions respectively imply
(G.3), (G.4), (G.5) and (G.6).
(G.1), (G.7) and (G.8) are part of the definition of a bundled tableau. For (G.12), if $\ell$ is NorthWest of $\ell'$ and both are from the same family, (S.1) or (S.2) is violated. The remaining conditions are vacuous since $T$ has no $\bullet_\GG$'s. Hence $T$ is $\GG$-good.

The claim about virtual labels is then clear from the definitions.
\end{proof}

\begin{lemma}[Strong form of (G.10)]\label{lem:strong_form_of_G10}
Assume $T$ is $\GG$-good. Let $\x$ be a box of $T$ in row $r$.
\begin{itemize}
\item[(I)] If $\FF^! \in\underline{\x}$, then $\lab(\x)$ is marked.
\item[(II)] If $\FF^! \in\x$, then there is a $\y$ West of $\x$ in $r$ such that $\bullet_\GG \in \y$.
Every box label of $r$ between $\x$ and $\y$ is marked.
\end{itemize}
\end{lemma}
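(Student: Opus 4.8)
The plan is to prove the two claims by analyzing what conditions (G.3), (G.4), (G.10), (G.11), (G.12), (G.13), and the definition of marking/virtual labels force on the boxes adjacent to a marked label $\FF^!$. Recall that $\FF^!$ denotes a genetic label $\FF$ that sits (strictly) southeast of some $\bullet_\GG$ with $\FF \prec \GG$; marking is purely a visual indicator of this relationship, so if $\FF^!$ appears, there \emph{is} such a $\bullet_\GG$ northwest of it, and by (G.2) there is a unique such box label.

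\textbf{Part (I).} Suppose $\FF^! \in \underline{\x}$, with $\x$ in row $r$. By (G.10), $\bullet_\GG$ appears in $\x$'s row $r$, say $\bullet_\GG \in \y$. Since $\FF^!$ is marked, $\y$ is (weakly) northwest — in fact strictly west, as it is in the same row — of $\x$, i.e.\ $\y$ is West of $\x$ in $r$. I claim $\lab(\x)$ is a box label. If not, $\lab(\x)$ is a genetic label, and then the three consecutive boxes of $r$ running from $\y$ eastward would have to conform to (G.3): the only way a $\bullet_\GG$ can sit between two genetic labels with the left one larger than the right is the exceptional pattern $\Scale[0.9]{\ytableaushort{\HH {\bullet_\GG} {\FF^!}}}$, forcing $\y = \x^{\leftarrow}$ and $\lab(\x) = \FF^!$. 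But then $\FF$ (a genetic label) sits both in $\x$ and on $\underline{\x}$, contradicting (S.4)/(G.6) applied within the gene of $\FF$, or more directly contradicting the requirement that an edge label be maximally west in its gene together with column-strictness (G.4) — a genetic label cannot simultaneously occupy a box and that box's lower edge, since then the genotype choosing the edge instance would violate (G.4) or the box instance is not maximally west. (Here I would cite (G.4), (G.7) and the virtual-label conditions (V.2).) Alternatively, if $\lab(\x)$ is a genetic label and $\y$ is strictly more than one box to the west, the entries strictly between $\y$ and $\x$ in $r$ are genetic (else two $\bullet$'s in $r$, contradicting (G.2)), and then (G.3) fails at the triple $(\bullet_\GG, \text{next}, \ldots)$ unless we again land in the exceptional pattern, forcing $\y = \x^\leftarrow$. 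So in all cases $\lab(\x)$ is a box label; write $\lab(\x) = \bullet_{\GG'}$. Finally, (G.11) forbids $\bullet_{\GG'}$ from sitting in a column containing a marked label; but $\FF^! \in \underline{\x}$ and $\x$'s column contains $\x$, and I would invoke the convention that an edge label lies in the column of its box, so in particular $\lab(\x)$ being a box label in that same column is consistent only if $\lab(\x)$ is itself the marked label — i.e.\ $\lab(\x)$ is marked. (This last point is the one place where I'd need to be careful about exactly how (G.11) interacts with edge labels; the cleanest route is: since $\FF^! \in \underline{\x}$ with $\FF \prec \GG$ and $\bullet_\GG \in \y$ West of $\x$, the box $\x$ lies strictly southeast of $\bullet_\GG$, so whatever label $\x$ carries is itself southeast of $\bullet_\GG$, hence if it is genetic it is marked — but we showed it's a box label, and a box label southeast of $\bullet_\GG$ is exactly what ``marked box label'' means under the conventions of Section~\ref{sec:good_tableaux}.) Thus $\lab(\x)$ is marked.

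\textbf{Part (II).} Suppose $\FF^! \in \x$, $\x$ in row $r$. By definition of the mark, some $\bullet_\GG$ lies strictly northwest of $\x$ with $\FF \prec \GG$. Apply (G.12) to the pair consisting of $\FF^!$ and — wait, (G.12) is about two genetic labels of the same family; instead the relevant condition is (G.10) together with (G.12). Actually the right tool is: since $\FF^! \in \x$ and $\FF$ is a genetic label, by the reasoning above (G.3) forces the local picture $\Scale[0.9]{\ytableaushort{\HH {\bullet_\GG} {\FF^!}}}$ with $\bullet_\GG = \lab(\x^\leftarrow)$ and $\HH = \lab(\x^{\leftarrow\leftarrow})$, \emph{or} $\x$ begins its row's genetic run immediately east of a box label, in either case giving a $\bullet_\GG \in \y := \x^\leftarrow$ West of $\x$ in $r$. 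More generally — and this is what the lemma wants — I would argue by downward induction along row $r$ starting from $\x$: given any marked genetic or marked box label in $r$, the box immediately to its west is either $\bullet_\GG$ (done) or a marked box label (to which we recurse), using (G.3) to rule out an unmarked genetic label immediately west of a marked label and (G.4)/(G.2)/(G.11) to keep the chain of box labels marked. Since row $r$ is finite and the leftmost boxes of the skew shape are not marked (they lie weakly west of everything), the recursion terminates at some $\y$ with $\bullet_\GG \in \y$, and by construction every box label strictly between $\y$ and $\x$ encountered along the way is marked. This yields exactly statement (II).

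\textbf{Main obstacle.} The delicate point is Part~(I)'s conclusion that $\lab(\x)$ cannot be an unmarked genetic label: one must rule out the exceptional pattern of (G.3) sneaking a genetic $\FF^!$ into $\x$ while $\FF$ also sits on $\underline{\x}$, and this requires carefully combining (G.4) (column strictness, with its stated exception only for \emph{box} labels), (G.6)/(S.4), (G.7) (edge labels maximally west in their gene), and the virtual-label conventions (V.1)--(V.3) to see that a genetic label cannot occupy both a box and that box's southern edge in a good tableau. Getting the interaction of (G.11) with the column-of-an-edge-label convention exactly right is the second subtlety; I would resolve it by phrasing everything in terms of ``southeast of $\bullet_\GG$'' rather than invoking (G.11) directly, since ``$\lab(\x)$ is southeast of $\bullet_\GG$'' is immediate once $\bullet_\GG \in \y$ with $\y$ West of $\x$, and ``marked'' is by definition ``(genetic or box label that is) southeast of some $\bullet$''.
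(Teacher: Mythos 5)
Part (I) of your proposal does not hold together, and the problem is substantive rather than expository. Your central claim there --- that $\lab(\x)$ is a box label, i.e.\ some $\bullet_{\GG'}$ --- is false outright: since $\FF^!\in\underline{\x}$ is a marked label lying in $\x$'s column, (G.11) forbids any $\bullet_\GG$ from appearing in that column, in particular in $\x$ itself. You notice this clash with (G.11) at the end but then conclude that $\lab(\x)$ ``is itself the marked label,'' which does not parse: marking is defined only for genetic labels, and a bullet southeast of another bullet would violate (G.2). The route by which you reach the claim is also invalid: (G.3) does not \emph{force} the exceptional configuration $\HH\,\bullet_\GG\,\FF^!$ whenever a bullet sits between genetic labels (that configuration is merely permitted when row-increase fails), and even where it occurs, the marked label in it is a generic one --- it need not be the same gene $\FF$ as the label on $\underline{\x}$ --- so your ``contradiction'' that $\FF$ occupies both $\x$ and $\underline{\x}$ does not follow. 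The opening step has a further gap: the $\bullet_\GG$ in row $r$ supplied by (G.10) need not be the bullet witnessing the mark of $\FF^!$, so ``since $\FF^!$ is marked, $\y$ is West of $\x$'' requires a (G.2) argument, not the marking alone. The correct argument for (I) needs none of this machinery: by (G.11) the column of $\x$ contains no $\bullet_\GG$, so $\x$ holds a genetic label $\EE$; by (G.4) (whose exception concerns two box labels, not the edge label $\FF^!$), $\EE<\FF\prec\GG$; and since the bullet marking $\FF^!$ lies weakly north and, again by (G.11), in a column strictly west of $\x$, the box $\x$ is southeast of it, so this $\EE$ is marked.

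Part (II) is closer in spirit to what is needed, but the justifications are off at the decisive points. First, (G.3) does not force $\y=\x^\leftarrow$: the bullet may be several boxes west with marked labels in between, which is exactly the situation the lemma describes. Second, your inductive step ``the box immediately west of a marked label is a bullet or marked'' needs (G.11) to place the marking bullet in a column strictly west, so that the western neighbor is also southeast of it and hence marked; you only gesture at this. Third, termination is not because ``leftmost boxes of the skew shape are not marked'' for the reason you give; it follows from (G.10) together with (G.2), and once you invoke those two conditions you may as well argue directly, as the paper does: (G.10) puts a $\bullet_\GG$ in row $r$; (G.2) forces it West of $\x$ (a bullet East of $\x$ would be southeast of the bullet witnessing the mark of $\FF^!$); and then (G.3) gives $\EE\prec\FF\prec\GG$ for every box label $\EE$ strictly between $\y$ and $\x$, so each such $\EE$ is southeast of $\bullet_\GG\in\y$ and marked.
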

\begin{proof}
(I): Since $\FF^!\in \underline{\x}$,
 $\underline{\x}$ (and hence also $\x$) is southeast of a $\bullet_\GG$. 
By (G.11), $\bullet_\GG \notin \x$. Hence some $\EE\in \x$. By (G.4), $\EE < \FF$. Therefore the $\EE \in \x$ is marked.

(II): Since $\FF^! \in \x$, there is a $\bullet_\GG$ northwest of $\x$. By (G.10), there is a $\bullet_\GG$ in $\x$'s row. If this latter $\bullet_\GG$ is East of $\x$, these two $\bullet_\GG$'s are distinct and violate (G.2). Hence the $\bullet_\GG$ in $\x$'s row is in some box $\y$ West of $\x$. If $\EE$ is a box label between $\x$ and $\y$ (and in the same row), it is southeast of the $\lab(\y)=\bullet_\GG$.
By (G.3) $\EE \prec \FF$. Hence this $\EE$ is also marked.
\end{proof}

\begin{lemma}[Strong form of (G.13)]\label{lem:strong_form_of_G13}
Let $T$ be $\GG$-good. 
Suppose $\EE^!\in \x$ or $\EE^!\in \underline{\x}$ with $\family(\GG) - \family(\EE) = k > 0$. For each $0 < h < k$, 
there is $\HH^! \in \underline{\x}$ with $N_\HH = N_\EE$ and $\family(\HH) = \family(\EE) + h$. 
Also, there is a $\GG'$ or $\circled{\GG'} \in \underline{\x}$ with $N_{\GG'} = N_\EE$ and $\family(\GG') = \family(\GG)$.
\end{lemma}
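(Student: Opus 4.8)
The plan is to bootstrap from the basic conditions (G.13), (G.10), and their already-established strong forms (Lemmas~\ref{lem:strong_form_of_G10} and~\ref{lem:strong_form_of_G13} for lower indices) to the full statement by an induction on $h$, carefully tracking the invariants $N_\EE$ and $\family$ as we move through the chain of genetic labels on $\underline{\x}$. First I would use (G.13) itself: since $\EE^! \in \x$ or $\EE^! \in \underline{\x}$ and there is a $\bullet_\GG$ with $\family(\GG) - \family(\EE) = k > 0$, condition (G.13) directly produces an $\FF$ or $\circled{\FF}$ on $\underline{\x}$ with $N_\FF = N_\EE$ and $\family(\FF) = \family(\EE) + 1$. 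The next step is to argue that this $\FF$ is itself marked (so it is really an $\FF^!$) when $h = 1 < k$: here I would invoke (G.10)/(G.9) together with the fact that $\bullet_\GG$ lies strictly to the northwest or in the row, using $\family(\FF) < \family(\GG)$ to conclude $\FF \prec \GG$, hence $\FF$ sits southeast of $\bullet_\GG$ and must be marked by the marking convention (only labels $\FF \prec \GG$ southeast of $\bullet_\GG$ are marked).

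Having obtained $\FF^! = \HH^!$ with $\family(\HH) = \family(\EE)+1$ on $\underline\x$, I would then \emph{re-apply} (G.13) to this new marked label $\HH^! \in \underline{\x}$ (now $\x$ is, in the relevant sense, the box with $\HH^!$ on its lower edge). Since $\family(\GG) - \family(\HH) = k - 1$, if $k - 1 > 0$ we again extract a label of family $\family(\HH)+1 = \family(\EE)+2$ with the same $N$-value (one must check $N_\HH = N_\EE$ is preserved, which follows from the previous step). Iterating, this walks up the families one step at a time, producing at stage $h$ a label of family $\family(\EE)+h$ with $N$-value $N_\EE$, and each intermediate one is forced to be marked by the same (G.9)/(G.10) argument as long as its family is strictly below $\family(\GG)$, i.e.\ as long as $h < k$. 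This yields the claimed chain $\HH^! \in \underline{\x}$ for $0 < h < k$. For the final assertion, run the induction one more step to $h = k$: we get a label, call it $\GG'$ or $\circled{\GG'}$, on $\underline{\x}$ with $\family(\GG') = \family(\EE) + k = \family(\GG)$ and $N_{\GG'} = N_\EE$; this one need not be marked (indeed its family equals $\family(\GG)$, so it may be $\succeq \GG$, consistent with not being southeast-and-smaller), which is exactly why the statement allows the virtual option $\circled{\GG'}$.

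The main obstacle I anticipate is the bookkeeping that $N_\EE$ is genuinely preserved at every stage, not merely $\leq$ or $\geq$: condition (G.13) only asserts equality of the $N$-values at a single application, so one must be sure that each newly produced label genuinely satisfies the hypothesis of (G.13) in the \emph{exact} form ``$N_{\text{new}} = N_{\text{old}}$'' before re-applying it, and that the ``$N$'' being compared is consistently the $N$ of the same box/edge. I would handle this by phrasing the induction hypothesis precisely as: ``for each $0 < h < k$ there is $\HH^! \in \underline{\x}$ with $N_\HH = N_\EE$ and $\family(\HH) = \family(\EE) + h$, and moreover $\HH^!$ is marked,'' so that the marked-ness needed to re-invoke (G.13) and (G.10) is carried along in the hypothesis rather than re-derived. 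A secondary point to verify carefully is that all these labels can coexist on the single edge $\underline{\x}$ without violating (G.5) (no two labels of the same family on one edge) — but since they have strictly increasing families $\family(\EE)+1, \ldots, \family(\EE)+k$, (G.5) is automatically respected, so this is a non-issue once noticed.
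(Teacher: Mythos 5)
Your proposal is correct and is essentially the paper's own argument: the paper proves this lemma simply by repeated application of (G.13), observing that each intermediate label of family strictly below $\family(\GG)$ lies southeast of a $\bullet_\GG$ and is therefore marked, hence (by (V.1)) cannot be virtual, which licenses the next application of (G.13) and carries the equality $N = N_\EE$ along. Your more detailed bookkeeping of the $N$-values and the remark about (G.5) are harmless elaborations of the same inductive walk up the families.
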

\begin{proof} This follows by repeated application of (G.13). Note that none of the $\HH$'s of the statement can be virtual
since they must be marked.
\end{proof}

\begin{lemma}
\label{lem:how_to_check_ballotness}
If $\EE < \FF$ are genes of a good tableau $T$ with $N_\EE = N_\FF$, then no $\FF$ or $\circled{\FF}$ is East of any $\EE$.
\end{lemma}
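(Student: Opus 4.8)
The plan is to argue by contradiction via the ballot property (G.8). Suppose $\EE < \FF$ are genes of the good tableau $T$ with $N_\EE = N_\FF$, and suppose for contradiction that some instance of $\FF$ or $\circled{\FF}$ (call it $\ell'$) lies East of some instance of $\EE$ (call it $\ell$). Write $\family(\EE) = a$ and $\family(\FF) = b$, so $a < b$. The heart of the matter is to produce a genotype $G$ of $T$ whose reading word has an initial segment in which family $b$ is ``ahead of'' family $a$, contradicting (G.8). The reading word reads columns from right to left, and within a column top to bottom; since $\ell'$ is East of $\ell$, the column of $\ell'$ is read strictly before the column of $\ell$ in ${\tt word}(G)$.

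First I would set up the genotype carefully. For each gene, I must choose one instance; the natural choice is to select, for every gene of family $b$, its $\succ$-maximal instance that is available (weakly East of / late enough in the reading order), and for every gene of family $a$, its $\succ$-minimal instance. Here the condition $N_\EE = N_\FF$ is exactly what makes the counting work: $N_\EE$ counts the genes of family $a$ that are $\succ$-larger than $\EE$, i.e., those of family $a$ that come \emph{after} $\EE$ in $\prec$-order, and likewise $N_\FF$ for family $b$. So among family $a$, the gene $\EE$ is the $(c_a - N_\EE)$-th in $\prec$-order, and $\FF$ is the $(c_b - N_\FF)$-th among family $b$; since $N_\EE = N_\FF$, these ordinal positions differ by exactly $c_a - c_b$. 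Using semistandardness (S.1)/(S.2)-type consequences, namely (G.3), (G.4), (G.6) and the placement constraints, the $\prec$-smaller genes of family $a$ (there are $c_a - N_\EE - 1$ of them) must have their chosen instances weakly West of $\ell$, hence read \emph{after} the column of $\ell'$, while at least the $c_b - N_\FF$ genes of family $b$ up to and including $\FF$ have instances weakly West of (or equal to the column of) $\ell'$, hence read no later. Reading up to and including the column of $\ell'$ then yields an initial segment containing at least $c_b - N_\FF$ family-$b$ labels but at most $c_a - N_\EE - 1 = c_b - N_\FF - 1$ family-$a$ labels (using $c_a - c_b = N_\EE - N_\FF = 0$, wait — rather $c_a - N_\EE = c_b - N_\FF$, so the family-$a$ count is at most that minus one). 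This violates ballotness.

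The step I expect to be the main obstacle is controlling \emph{which} instances of the various genes land in which columns — that is, proving that the $\prec$-predecessors of $\EE$ in family $a$ genuinely cannot contribute to the initial segment, and that enough family-$b$ genes do. This requires combining (G.12) (the structural statement about two same-family labels with one NorthWest of the other, forcing them into a single row separated by a $\bullet_\GG$), together with (G.1) (no label too high), (G.3), (G.4), (G.6), and — for the virtual case — (V.1)--(V.3), which guarantee $\circled{\FF}$ satisfies the same ballot-relevant conditions (G.8) as a genuine label. One must also handle edge labels via (G.7): an edge label is maximally west in its gene, which pins down the column of $\ell$ if it is an edge label and is exactly what one needs to locate the other instances. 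I would organize this as: (i) reduce to comparing a single well-chosen genotype against (G.8); (ii) show the chosen family-$a$ instances of genes $\prec \EE$ are all weakly West of $\ell$'s column, using (G.6) and (G.12); (iii) show the chosen family-$b$ instances of genes $\preceq \FF$ are all weakly East-or-equal to $\ell'$'s column is false in the wrong direction — rather, that they all appear no later in the reading word than $\ell'$, again via (G.12) and (G.6); (iv) count, using $N_\EE = N_\FF$, to contradict ballotness. Steps (ii)--(iii) are the genuinely technical ones and will consume most of the argument; steps (i) and (iv) are bookkeeping.
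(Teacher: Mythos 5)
Your overall strategy---derive a contradiction with (G.8) by exhibiting a genotype whose reading word has an initial segment with too many family-$\family(\FF)$ labels relative to family-$\family(\EE)$ labels, with $N_\EE=N_\FF$ driving the count, and handle $\circled{\FF}$ by invoking (V.1)--(V.3) so the virtual label behaves like a genuine $\FF$---is the same skeleton as the paper's proof. But your bookkeeping is inverted, and as written the count does not go through. You count the genes $\preceq\FF$ (there are $c_b-N_\FF$ of them) and claim they ``have instances weakly West of $\ell'$, hence read no later''; weakly West means read \emph{later} in the column reading word (columns are read right to left), so these genes contribute nothing to the initial segment, and the implication is backwards. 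The labels that are forced into the initial segment are the opposite ones: by (G.6), every instance of a gene of family $\family(\FF)$ that is $\succ\FF$ lies weakly East of the chosen $\FF$, so each of those $N_\FF$ genes, plus $\FF$ itself, is read no later than $\FF$; symmetrically, every gene of family $\family(\EE)$ that is $\prec\EE$ lies weakly West of the chosen $\EE$, hence strictly West of $\ell'$, so at most $N_\EE$ family-$\family(\EE)$ labels are read before $\FF$. That gives $\geq N_\FF+1$ versus $\leq N_\EE$ and the ballotness contradiction for some $\family(\EE)\leq i<\family(\FF)$. Your version instead compares $c_b-N_\FF$ with $c_a-N_\EE-1$ and needs the identity $c_a-N_\EE=c_b-N_\FF$, which is \emph{not} a consequence of $N_\EE=N_\FF$ unless $c_a=c_b$ (for $\mu$ a partition one typically has $c_a>c_b$ when $\family(\EE)<\family(\FF)$), so the final inequality fails in general. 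This is a genuine gap, not a cosmetic one, since the whole contradiction rests on it.

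Two further simplifications you are missing, which is why your steps (ii)--(iii) look so technical: no careful choice of genotype is needed (any genotype containing the offending $\EE$ and $\FF$ instances works, because (G.6) constrains \emph{all} instances of the relevant genes, not just cleverly chosen ones), and none of (G.1), (G.3), (G.4), (G.12) is required---the paper's argument uses only (G.6), (G.8), and, for the virtual case, (V.3) applied to the tableau obtained by replacing that $\circled{\FF}$ with $\FF$. Once you flip the count as above, your proof collapses to essentially the paper's four-sentence argument.
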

\begin{proof}
First suppose that some $\FF$ is East of some $\EE$. Let $G$ be a genotype of $T$ with $\FF\in G $ that is East of some $\EE\in G$. 
Then $\FF$ appears before $\EE$ in ${\tt word}(G)$. By (G.6), the initial segment $W$ of ${\tt word}(G)$ ending at $\FF$ 
contains $N_\FF+1$ labels of $\family(\FF)$ and at most $N_\EE$ labels of $\family(\EE)$. Thus $T$'s (G.8) is violated for some
$\family(\EE) \leq i < \family(\FF)$, a contradiction. Finally, 
if some $\circled{\FF}$ is East of some $\EE$, then by (V.3)
the tableau $T'$ obtained by replacing that $\circled{\FF}$ by $\FF$ satisfies (G.6) and (G.8). Now we derive the same contradiction
as before, using $T'$ in place of $T$.
\end{proof}

\begin{lemma}
\label{lemma:markedHiswestmost}
If $\EE^!$ appears in a good tableau $T$, then it is maximally west in its gene.
\end{lemma}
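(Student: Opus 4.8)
The plan is to argue by contradiction. Suppose $\EE^!$ appears in a box $\x$ of a good tableau $T$ but is not maximally west in its gene $\EE$; that is, there is another instance $\ell'$ of $\EE$ strictly west of $\x$ (or, if $\EE^!$ is an edge label, we should really be thinking of the genetic label inside $\x$, but let me treat the case $\EE^! \in \x$ first and indicate the edge case afterward). Since $\EE^!$ is marked, by definition there is a box label $\bullet_\GG$ appearing northwest of $\x$ with $\EE \prec \GG$. Now I would apply (G.12) to the two same-family labels $\ell'$ (NorthWest of $\x$, after locating $\ell'$ suitably) and $\EE^! \in \x$: this forces a box $\bullet_{\GG'}$ to lie strictly between $\ell'$'s box and $\x$ in the same row as $\EE^!$. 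Combined with Lemma~\ref{lem:strong_form_of_G10}(II), which already guarantees a $\bullet_\GG$ strictly west of $\x$ in $\x$'s row with all intervening box labels marked, I want to derive a configuration that violates (G.2) (two box labels, one southeast of the other) or (G.11) (a $\bullet_\GG$ in a column containing a marked label) or the ballotness/ordering conditions.

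More precisely, here is the shape of the argument I would carry out. By Lemma~\ref{lem:strong_form_of_G10}(II) there is $\y$ West of $\x$ in row $r = {\rm row}(\x)$ with $\bullet_\GG \in \y$. If the west instance $\ell'$ of $\EE$ lies weakly west of $\y$, then $\ell'$ is (weakly) NorthWest—in fact west in the same row or strictly northwest—of $\bullet_\GG$; but by (G.9), any label northwest of $\bullet_\GG$ is $\prec \GG$, which is consistent, so I instead use (G.12) directly on $\ell'$ and $\EE^!$. Applying (G.12) to this pair (both family $\family(\EE)$, $\ell'$ NorthWest of $\EE^!$) puts boxes $\x_0 \ni \ell'$ (or $\ell' \in \overline{\x_0}$) and $\z_0$ with $\EE^! \in \z_0$ or $\EE^! \in \underline{\z_0}$ in row $r$, West-to-East, and a $\bullet_{\GG'}$ strictly between them in row $r$. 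But $\EE^! \in \x$ means $\z_0 = \x$, so we get $\bullet_{\GG'}$ strictly west of $\x$ in row $r$; meanwhile (G.11) says $\bullet_{\GG'}$ is not in a column with a marked label. The key observation is that this $\bullet_{\GG'}$, together with the marked label $\EE^!$ in $\x$ and the box label $\bullet_\GG \in \y$, must produce either two box labels with one southeast of the other (contradicting (G.2)) or a marked label sharing a column with a box label (contradicting (G.11)). I would push through the small number of positional cases (whether $\y$ is east, west, or equal to the newly produced $\bullet_{\GG'}$-box) to close each one.

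For the case where $\EE^!$ is an \emph{edge} label, $\EE^! \in \underline{\x}$: Lemma~\ref{lem:strong_form_of_G10}(I) says $\lab(\x)$ is marked, so there is a marked genetic label $\DD^!$ in $\x$ with $\DD < \EE$ (by (G.4) applied with the edge label below, actually $\DD \prec \EE$ since $\DD^!$ sits above $\EE^! \in \underline{\x}$ and both are $\prec$-comparable). Then $\DD^!$ being marked forces a $\bullet_\GG$ northwest, and I can either reduce to the genetic-label case via $\DD^!$ or repeat the (G.12)-plus-Lemma~\ref{lem:strong_form_of_G10} argument using the west instance of $\EE$ directly, again landing on a violation of (G.2) or (G.11). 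The main obstacle I anticipate is the bookkeeping in (G.12): its statement has several pictorial scenarios (with the subtle last-scenario caveat about $\y^\rightarrow$), and I must make sure the west instance $\ell'$ of $\EE$ is correctly placed as ``NorthWest'' in the precise sense the paper uses (strictly north and strictly west) versus merely west in the same row—if $\ell'$ is in row $r$ itself, I should instead invoke (G.3)/(G.6) directly to see that $\EE$ cannot repeat in row $r$ to the west of a marked $\EE^!$ without running afoul of the $\prec$-increasing condition, since the marked triple exception in (G.3) only permits one $\bullet_\GG$ flanked by a larger label on the left, not a repeated gene. Handling that same-row subcase carefully is where the real care is needed; once it is dispatched, the strictly-NorthWest case is exactly the (G.12) + (G.2)/(G.11) clash described above.
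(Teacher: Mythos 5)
There is a genuine gap: the contradiction you are steering toward does not exist. The positional conditions you plan to play off one another --- (G.12), (G.2), (G.11), Lemma~\ref{lem:strong_form_of_G10}, and (G.3)/(G.6) in the same-row subcase --- do not forbid a marked $\EE^!$ from having an unmarked $\EE$ strictly to its West, and your outline never brings in (G.13), which is the missing idea. Note that (G.12) is an \emph{allowance}: a same-family NorthWest pair is legal precisely when both labels hug a common row with a $\bullet_\GG$ in between, and that $\bullet_\GG$ may simply be the same one in $\y$ that Lemma~\ref{lem:strong_form_of_G10}(II) already provides, so no clash with (G.2) or (G.11) is forced. Concretely, take a (sufficiently southern) row containing, west to east in consecutive boxes, $2_1\in\x_0$ with $1_1\in\overline{\x_0}$, then $\bullet_{3_1}\in\y$, then $1_1^!\in\x$: there is a single $\bullet$, no marked label shares its column, (G.3) holds via the exceptional triple $2_1\,\bullet_{3_1}\,1_1^!$, and (G.4)--(G.7), (G.9)--(G.12) all check out, yet the marked $1_1^!$ has an unmarked $1_1$ strictly to its West. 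Your same-row fallback fails for the same reason: (G.3) does not exclude a row of shape $\EE\,\HH\,\bullet_\GG\,\FF^!\,\EE^!$ with $\FF\prec\EE\prec\HH$ and $\HH>\FF$, since the unique $\prec$-decrease sits at the permitted exceptional triple, and (G.6) says nothing about two copies of the same gene. (You also cannot appeal to Lemma~\ref{lemma:Gsoutheast}, whose proof invokes the present lemma.)

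What actually excludes a western $\EE$ is ballotness, channelled through (G.13): since $\EE^!\in\x$ or $\EE^!\in\underline{\x}$ is marked, (G.13) supplies an $\FF$ or $\circled{\FF}\in\underline{\x}$ with $\family(\FF)=\family(\EE)+1$ and $N_\EE=N_\FF$, hence $\EE<\FF$; Lemma~\ref{lem:how_to_check_ballotness} (a consequence of (G.6), (G.8) and (V.3)) then forbids any such $\FF$ or $\circled{\FF}$ from lying East of an $\EE$, so no $\EE$ can occur West of $\x$. That is the paper's entire proof. In the toy configuration above it is exactly the (G.13)-forced family-$2$ label on $\underline{\x}$ that produces a non-ballot genotype; without invoking (G.8) and (G.13) in this way the statement simply does not follow from the remaining goodness conditions, so your argument cannot be completed as written.
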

\begin{proof}
Suppose $\EE^!\in \x$ or $\EE^!\in \underline{\x}$.
By (G.13), there is an $\FF$ or $\circled{\FF}\in\underline{\x}$ with $N_\EE=N_\FF$ and $\EE<\FF$. Thus we are done
by Lemma~\ref{lem:how_to_check_ballotness}.
\end{proof}

\begin{lemma}
\label{lem:draggable_labels_in_complete_sets}
Suppose column $c$ of good tableau $T$ contains labels $\HH$ and ${\mathcal J}$ with $\HH < {\mathcal J}$ and $N_\HH = N_{\mathcal J}$. Then 
for every $i$ such that $\family(\HH)<i<\family({\mathcal J})$,
there is a label ${\mathcal I}$ of family $i$ in column $c$ such that
$N_\HH = N_{\mathcal I}$.
\end{lemma}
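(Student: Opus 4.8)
The plan is to locate the labels $\HH$ and $\mathcal J$ geometrically in column $c$, use column-strictness (G.4) and the hypothesis $N_\HH = N_{\mathcal J}$ to force them into one of a small number of local configurations, and then invoke the strong form of (G.13) (Lemma~\ref{lem:strong_form_of_G13}) together with Lemma~\ref{lem:how_to_check_ballotness} to produce the intermediate-family labels $\mathcal I$. First I would observe that, since $\HH < \mathcal J$ and labels $<$-increase down columns by (G.4), the label $\mathcal J$ lies weakly south of $\HH$ in column $c$; the only way they can fail to be strictly separated by families in between is if $\mathcal J$ lies on the lower edge of the box containing $\HH$, or $\HH$ and $\mathcal J$ lie on the upper and lower edges of a single box, or (via the marked exception in (G.4)) they sit in vertically adjacent boxes as $\FF$ above $\FF^!$ — but the last case forces $\family(\HH)=\family(\mathcal J)$, which contradicts $\family(\HH) < i < \family(\mathcal J)$ having a solution. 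So the operative case is that $\HH$ and $\mathcal J$ are separated by (at most) the two horizontal edges of a single box $\x$, i.e. $\HH \in \overline{\x}$ (or $\HH \in \x$) and $\mathcal J \in \underline{\x}$.

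Next I would argue that $\mathcal J \in \underline{\x}$ must in fact be marked, i.e. $\mathcal J = \mathcal J^!$ for the purposes of applying (G.13). Indeed, if $\mathcal J$ on $\underline{\x}$ were unmarked, then by (V.1) it is not southeast of any $\bullet_\GG$; combined with $\HH$ directly above it and $N_\HH = N_{\mathcal J}$, Lemma~\ref{lem:how_to_check_ballotness} already gives a contradiction unless $\HH = \mathcal J$ — more carefully, an unmarked label of a larger family sitting immediately below $\HH$ with equal $N$-value is precisely the kind of configuration (G.8)/ballotness rules out, as in the proof of Lemma~\ref{lem:how_to_check_ballotness}. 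Having established that $\mathcal J^! \in \underline{\x}$ (the case $\mathcal J^! \in \x$ is handled symmetrically, noting $\HH$ would then be on $\overline{\x}$), apply Lemma~\ref{lem:strong_form_of_G13} with $\EE := \mathcal J$'s genetic label regarded against the enclosing marked structure: for each family value strictly between $\family(\HH)$ and $\family(\mathcal J)$ it yields a marked label $\HH' \in \underline{\x}$ of that family with $N_{\HH'} = N_{\mathcal J} = N_\HH$. Each such $\HH'$ lies on $\underline{\x}$, hence in column $c$, giving the required $\mathcal I$.

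The main obstacle I anticipate is the bookkeeping of the marked/virtual status: I must be careful that the label I call $\mathcal J$ is genuinely in the scope of (G.13) (i.e. is a marked genetic label $\EE^!$, not merely a genetic label below a $\bullet_\GG$ in a way that (G.13) does not see), and that the intermediate labels produced by (G.13) genuinely count toward "in column $c$" — here the point is that (G.13) places them on the \emph{same} edge $\underline{\x}$, which is part of column $c$, so this is fine once the reduction is made. A secondary subtlety is ruling out the degenerate configurations from the (G.4) exception and from $\HH$, $\mathcal J$ being far apart in the column with other labels strictly between them; in that "far apart" case the intermediate families are supplied directly by column-strictness (G.4) rather than by (G.13), so I would split into these two regimes at the outset and dispatch the easy regime in one line. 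I expect the whole argument to be short once the case analysis on the relative position of $\HH$ and $\mathcal J$ in column $c$ is set up cleanly.
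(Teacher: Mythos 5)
Your approach does not go through, and it diverges from the paper's argument in a way that matters. The paper's proof is two lines and global: since the gene $\mathcal{J}$ with $N_{\mathcal J}=N_\HH$ exists, ballotness (G.8) already forces the existence, somewhere in $T$, of a gene $\mathcal{I}$ of each intermediate family $i$ with $N_{\mathcal I}=N_\HH$; then Lemma~\ref{lem:how_to_check_ballotness}, applied to the pair $(\HH,\mathcal{I})$ and to the pair $(\mathcal{I},\mathcal{J})$, shows no instance of $\mathcal{I}$ can be East of column $c$ (it would be East of the $\HH$ there) nor West of column $c$ (the $\mathcal{J}$ there would be East of it), so every $\mathcal{I}$ lies in column $c$. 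Your proposal instead tries to localize $\HH$ and $\mathcal{J}$ to a single box/edge and invoke (G.13), and each step of that reduction has a genuine defect.

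Concretely: (1) your dismissal of the ``far apart'' regime is wrong --- column strictness (G.4) only forces the labels of column $c$ to $<$-increase; it does not force any intermediate family to appear in that column, let alone with the required $N$-value, so that case is exactly the content of the lemma and cannot be handled ``in one line''; (2) your claim that an unmarked $\mathcal{J}\in\underline{\x}$ sitting below $\HH$ with $N_\HH=N_{\mathcal J}$ contradicts ballotness is false --- the reading word goes \emph{down} each column, so a larger-family label below a smaller one in the same column is read later and is perfectly ballot (this configuration occurs throughout the paper, e.g.\ $1_1$ in a box with $2_1$ on its lower edge); and (3) even granting $\mathcal{J}^!\in\underline{\x}$, Lemma~\ref{lem:strong_form_of_G13} produces marked labels of the families strictly between $\family(\mathcal{J})$ and $\family(\GG)$, where $\GG$ is the goodness parameter of $T$ --- not of the families strictly between $\family(\HH)$ and $\family(\mathcal{J})$ that the lemma requires --- so it cannot supply the labels $\mathcal{I}$. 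The repair is to abandon the local reduction entirely and argue globally as the paper does: existence of $\mathcal{I}$ from (G.8), and its location in column $c$ from Lemma~\ref{lem:how_to_check_ballotness}.
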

\begin{proof}
Suppose not. By (G.8), there is some ${\mathcal I}\in T$ of family $i$  
such that
$N_\HH = N_{\mathcal{J}}=N_{\mathcal I}$. If this ${\mathcal I}$ is not in column $c$, we contradict
Lemma~\ref{lem:how_to_check_ballotness}.
\end{proof}

\begin{lemma}
\label{lem:virtualG13label}
Suppose $\EE$ and $\FF$ satisfy $N_\EE=N_\FF$ and $\family(\FF)=\family(\EE)+1$.
Let $T$ be a $\GG$-good tableau with $\circled{\FF}\in {\underline{\x}}$ and 
either $\EE^!\in \x$ or $\EE^!\in \underline{\x}$. Then $\bullet_{\GG}\in \x^\leftarrow$ and $\family(\FF)=\family(\GG)$. 
\end{lemma}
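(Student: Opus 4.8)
The plan is to combine the defining properties (V.1)--(V.3) of the virtual label $\circled{\FF}$ with the strong forms of (G.10) and (G.13) already established. First I would observe that since $\EE^! \in \x$ or $\EE^! \in \underline{\x}$, the box $\x$ is southeast of some $\bullet_{\GG}$, so in particular $\GG$ is a genuine box label of $T$. Applying Lemma~\ref{lem:strong_form_of_G10}: in case $\EE^! \in \underline{\x}$, part (I) tells us $\lab(\x)$ is marked; in case $\EE^! \in \x$, part (II) produces a box $\y$ West of $\x$ in $\x$'s row with $\bullet_{\GG} \in \y$, every intervening box label being marked. In either case we learn that $\bullet_{\GG}$ lies in $\x$'s row and (strictly) West of $\x$ or in $\x$ itself; but (G.11) forbids $\bullet_{\GG} \in \x$ since $\x$ or $\underline\x$ carries the marked label $\EE^!$ (and (G.4) forces $\lab(\x)$, if genetic, to be $< \FF$, hence marked, so $\x$ is in the ``marked column''). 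So $\bullet_{\GG}$ is in a box strictly West of $\x$ in the same row.

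Next I would pin down \emph{which} box West of $\x$ contains $\bullet_{\GG}$, and this is where the hypothesis $\circled{\FF} \in \underline{\x}$ with $\family(\FF) = \family(\EE)+1$ and $N_\EE = N_\FF$ does the work. The idea: by (V.2), the virtual $\circled{\FF}$ fails to be maximally west in its gene, i.e.\ there is another (non-virtual) instance of $\FF$ weakly West of $\x$ in $T$. I want to rule out this other instance of $\FF$ lying far to the west, using Lemma~\ref{lem:how_to_check_ballotness}: since $\EE < \FF$ and $N_\EE = N_\FF$, no $\FF$ can be East of any $\EE$; combined with (G.12) applied to the two same-family labels (the non-virtual $\FF$ and the $\circled\FF$, or rather to whatever forces them into the same family pattern) this confines the westmost instance of $\FF$'s gene and forces $\bullet_\GG$ to sit in exactly $\x^\leftarrow$. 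Concretely, (G.12) applied to the pair consisting of the non-virtual westmost $\FF$ (call it $\ell$) and $\ell' = \circled{\FF} \in \underline\x$ gives boxes $\x_0$ (West, containing $\ell$) and $\z_0 = \x$ with $\bullet_{\family(\FF)'s\ gene\ marker} \dots$ — more precisely it places $\bullet_{\GG}$ in a box East of $\x_0$ and west of $\x$; and the ``barely bad'' configuration together with (G.13)'s requirement that the family-$(\family(\EE)+1)$ witness on $\underline\x$ have $N$-value equal to $N_\EE$ forces this box to be $\x^\leftarrow$ and forces $\family(\FF) = \family(\GG)$.

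The main obstacle will be the second step: correctly identifying that the virtual $\circled{\FF}$ on $\underline{\x}$, rather than a genuine edge label, is exactly the (G.13)-witness for $\EE^!$, and then showing the geometry of (G.12) is tight enough to force $\bullet_{\GG} \in \x^\leftarrow$ (as opposed to some box further west) and $\family(\GG) = \family(\FF)$ (as opposed to $\family(\GG) > \family(\FF)$, which would contradict Lemma~\ref{lem:strong_form_of_G13}'s chain of marked labels terminating too early, or which one rules out because $\circled\FF$ being virtual requires by (V.1) that it not be southeast of $\bullet_\GG$, forcing $\family(\FF) \succeq \GG$, hence with $N_\EE = N_\FF$ and the marked chain, equality of families). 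I would handle the family equality by playing (V.1) — which says a virtual label southeast of $\bullet_\GG$ must have gene $\succeq \GG$ — against Lemma~\ref{lem:strong_form_of_G13}, which gives a $\GG'$ or $\circled{\GG'} \in \underline\x$ with $N_{\GG'} = N_\EE$ and $\family(\GG') = \family(\GG)$: since $\circled\FF$ is on $\underline\x$ with $N_\FF = N_\EE$ and family $\family(\EE)+1$, and $\GG'$ is on $\underline\x$ with the same $N$-value, (G.5) and Lemma~\ref{lem:draggable_labels_in_complete_sets} force $\family(\GG) \le \family(\EE)+1 = \family(\FF)$; combined with $\family(\GG) > \family(\EE)$ this gives $\family(\GG) = \family(\FF)$. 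The column/row placement $\bullet_\GG \in \x^\leftarrow$ then follows because any box strictly between $\x^\leftarrow$ and the putative $\bullet_\GG$ would carry a marked label (by Lemma~\ref{lem:strong_form_of_G10}(II)), and a marked label of family $\family(\GG) - 1 = \family(\EE)$ sitting immediately west of $\x$ together with $\EE^! \in \x$ and $N_\EE = N_\FF$ would violate (G.6) or Lemma~\ref{lem:how_to_check_ballotness}.
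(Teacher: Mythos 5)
There is a genuine gap, and it sits exactly where the lemma's content lies: forcing the $\bullet_\GG$ into the box $\x^\leftarrow$ specifically (rather than merely somewhere West in $\x$'s row, which is all that Lemma~\ref{lem:strong_form_of_G10} gives and which both you and the paper obtain in the same way). Your proposed mechanism is (G.12) applied to the nonvirtual $\FF$ guaranteed by (V.2) together with $\circled{\FF}\in\underline{\x}$. But (G.12) has the hypothesis that the first label is NorthWest of the second, and this is not available: by (V.1) we have $\FF\succeq\GG$, so by (G.9) that nonvirtual $\FF$ cannot lie northwest of the $\bullet_\GG$ in $\x$'s row, and hence in general it sits strictly south of $\x$'s row (or on an edge of that row), where (G.12) says nothing. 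Even in the sub-case where (G.12) does apply, its conclusion only reproduces a $\bullet_\GG$ somewhere strictly between the two labels in $\x$'s row --- information you already had --- and does not locate it at $\x^\leftarrow$. Your fallback argument is also unjustified: if $\bullet_\GG$ were further west, Lemma~\ref{lem:strong_form_of_G10}(II) gives only that $\x^\leftarrow$ contains some marked label $\DD^!$ with $\DD\prec\EE$; you assert it has family $\family(\EE)$, which does not follow, and even granting that, the claimed violation of ``(G.6) or Lemma~\ref{lem:how_to_check_ballotness}'' does not materialize without more work, since neither condition pairs labels of different families or of unequal $N$-values.

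The missing idea is the two-step transfer that constitutes the paper's proof: assuming $\bullet_\GG\notin\x^\leftarrow$, one has $\DD^!\in\x^\leftarrow$ with $\DD\prec\EE\prec\GG$; applying Lemma~\ref{lem:strong_form_of_G13} to this $\DD^!$ (together with (G.6)) manufactures a marked label of family $\family(\EE)$ in $\x^\leftarrow$ or on $\underline{\x^\leftarrow}$ with $N$-value $N_\DD$, and then (G.13) applied to that label produces a label $\widetilde{\FF}$ or $\circled{\widetilde{\FF}}$ on $\underline{\x^\leftarrow}$ of family $\family(\FF)$ with the matching $N$-value; Lemma~\ref{lem:how_to_check_ballotness}, played against the validity conditions (V.1)--(V.3) for $\circled{\FF}\in\underline{\x}$, then yields the contradiction. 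Your sketch never constructs $\widetilde{\FF}$, so the contradiction is not reachable by the tools you cite in the order you cite them. (By contrast, your treatment of the family equality --- terminate the Lemma~\ref{lem:strong_form_of_G13} chain using (G.5)/(V.1) --- is essentially the paper's one-line argument and is fine in spirit, modulo the unexplained strict inequality $\family(\GG)>\family(\EE)$, which the paper also leaves implicit; and the appeal to the ``barely bad'' terminology from the base-case proof is out of place here.)
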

\begin{proof}
If $\bullet_{\GG}\not\in \x^\leftarrow$, then by Lemma~\ref{lem:strong_form_of_G10}, ${\mathcal D}^!\in \x^\leftarrow$. By 
(G.3) and (G.4), ${\mathcal D}\prec \EE$. Also $\EE\prec \GG$ since $\EE^!\in T$. Thus by (G.6) and Lemma~\ref{lem:strong_form_of_G13}, there is a
$\widetilde{\EE}^!\in \underline{\x^\leftarrow}$ or $\widetilde{\EE}^!\in \x^\leftarrow$ with $\family(\EE)=\family(\widetilde{\EE})$ and
$N_{\widetilde{\EE}}=N_{{\mathcal D}}$. By (G.13), there is ${\widetilde \FF}$
or $\circled{\widetilde{\FF}} \in \underline{\x^\leftarrow}$ with 
$\family(\FF)=\family(\widetilde{\FF})$ and
$N_{\widetilde{\EE}}=N_{\widetilde{\FF}}$. Thus,
by Lemma~\ref{lem:how_to_check_ballotness}, $\FF\neq {\widetilde{\FF}}$, contradicting $\circled{\FF}\in \underline{\x}$. Finally, $\family(\FF)=\family(\GG)$ by Lemma~\ref{lem:strong_form_of_G13}.
\end{proof}

\begin{lemma}
\label{lemma:Gsoutheast}
If $T$ is $\GG$-good, then no $\HH$ is southEast of another.
\end{lemma}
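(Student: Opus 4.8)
The statement asserts that in a $\GG$-good tableau, no genetic label $\HH$ of some family $i$ can lie strictly south and strictly east of another label $\HH$ of the same family $i$. The natural strategy is to suppose for contradiction that $\HH \in \x$ or $\overline{\x}$ lies southeast of $\HH \in \y$ or $\overline{\y}$ (same family), and then first invoke (G.12) to each of the two occurrences together with a third occurrence (or the two occurrences themselves) in intermediate rows, extracting structural information about where box labels $\bullet_\GG$ must sit. Since (G.12) says that two same-family genetic labels with one NorthWest of the other force a $\bullet_\GG$ strictly between them \emph{in the row of the lower one}, and since (G.2) forbids any $\bullet_\GG$ from being southeast of another, applying (G.12) to the hypothesized pair and to other same-family pairs in nearby rows should immediately produce two $\bullet_\GG$'s in a southeast configuration, contradicting (G.2).

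\textbf{Key steps.} First I would reduce to the case where the two offending instances of $\HH$ are as close as possible: pick the southeast instance $\HH \in \z$ and, among all same-family instances strictly NorthWest of it, a $\prec$-minimal (or geometrically extreme) one $\HH \in \x$; label their rows $r_{\x} < r_{\z}$ and columns $c_{\x} < c_{\z}$. Second, apply (G.12) to this NorthWest/SouthEast pair: it gives boxes $\x', \z'$ in row $r_{\z}$ with $\x'$ West of $\z'$, the northwest instance "at $\x'$ or $\overline{\x'}$" and the southeast one "at $\z'$ or $\underline{\z'}$", and a $\bullet_\GG$ in some box $\y_1$ of row $r_{\z}$ strictly between them. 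Third — this is the crux — I would use (G.1) (no genetic label too high) and (S.2)-type column-strictness (G.4) to locate \emph{another} same-family label in a row strictly between $r_{\x}$ and $r_{\z}$, or directly in row $r_{\x}$, that is again NorthWest of some instance, forcing via (G.12) a second $\bullet_\GG$ in a row $\le r_{\z}$; since the columns are constrained (the northwest label forces the $\bullet_\GG$ to be east of column $c_{\x}$, the southeast one forces it west of column $c_{\z}$, but the two $\bullet_\GG$'s live in different rows), I would arrange the two $\bullet_\GG$'s to violate (G.2). Alternatively, and perhaps more cleanly, one $\bullet_\GG$ produced by (G.12) from the original pair sits in row $r_{\z}$ east of $c_{\x}$; tracking the northwest instance down its own column using (G.4) and (G.1) produces an instance of $\HH$ in a lower row still west of $c_{\z}$, whose own (G.12)-application yields a $\bullet_\GG$ that is strictly north of the first one and strictly west of it (since its "$\z$" box is further west), contradicting (G.2). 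A small separate case is needed when one of the labels is an edge label rather than a box label, but (G.5) and the "$\x$ or $\overline{\x}$" / "$\z$ or $\underline{\z}$" phrasing of (G.12) already absorb this.

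\textbf{Main obstacle.} The delicate point is the bookkeeping needed to guarantee that the two $\bullet_\GG$'s extracted from two applications of (G.12) are genuinely in a southeast relationship (strictly different rows \emph{and} strictly ordered columns in the wrong direction), rather than possibly coinciding or being incomparable. Ensuring a fresh, appropriately-placed same-family label exists in an intermediate row — so that (G.12) can be applied a second time — is where one must carefully use (G.1) to rule out the label being "too high," (G.4) for column behavior, and the minimality of the chosen pair to prevent an infinite regress. I expect this combinatorial positioning argument, rather than any single cited axiom, to be the heart of the proof, though it should proceed by a short finite case analysis once the right extremal choice of pair is fixed.
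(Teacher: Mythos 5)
There is a genuine gap, and the route itself would not close. Your plan is to apply (G.12) twice and collide two $\bullet_\GG$'s against (G.2). But the second application never gets off the ground: you need a second same-family label in an intermediate row, and your mechanism for producing it --- ``tracking the northwest instance down its own column using (G.4) and (G.1)'' --- is contradicted by (G.4) itself, which forces $<$-strict increase down a column, so a column carries at most one label of a given family (apart from the adjacent marked exception, which is the same gene in the very next box and gives you nothing new). Even granting such a label, you give no argument that the two bullets end up in strictly SouthEast position rather than coinciding or being incomparable; you flag this yourself as the ``delicate point,'' but it is exactly the content that is missing. The paper needs only \emph{one} bullet: the $\bullet_\GG$ supplied by (G.12) is southeast of the western $\HH$, so (G.9) forces $\HH\prec\GG$; it is northwest of the eastern $\HH$, so that eastern instance is \emph{marked}; and a marked label is maximally west in its gene (Lemma~\ref{lemma:markedHiswestmost}), contradicting the existence of the western instance of the same gene. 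Your proposal never uses (G.9), the marking mechanism, or Lemma~\ref{lemma:markedHiswestmost}, which is the actual short path.

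Two further problems of scope. First, ``southEast'' in the paper's convention means \emph{weakly} south and strictly East, so the same-row cases must be treated: two box labels of the gene $\HH$ in one row (here (G.12) does not apply since neither is NorthWest of the other; the paper uses (G.3) to extract the bullet) and two bottom-edge labels of $\HH$ in one row (immediate from (G.7)). Your reading ``strictly south and strictly east'' silently drops these. Second, the lemma is about two instances of the \emph{same gene} $\HH$, not merely the same family; the family-level statement is false in good tableaux (e.g.\ a snake with $2_1$ in a box and $2_2$ on a lower edge farther east), and same-gene-ness is essential to the endgame via Lemma~\ref{lemma:markedHiswestmost}. Your write-up slides between ``same gene'' and ``same family'' (``other same-family pairs in nearby rows''), which both weakens the hypotheses you may use and, if taken literally, makes the claim you are proving untrue.
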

\begin{proof}
If some $\HH$ is SouthEast of another $\HH$,  
by (G.12) there is a $\bullet_{\GG}$ in between the two $\HH$'s. 
If two $\HH$'s are box labels of the same row, then by (G.3) we reach the same conclusion that there is a $\bullet_{\GG}$ in between the two $\HH$'s.
In either case, since this $\bullet_{\GG}$ is southeast
of the western $\HH$ we have $\HH\prec \GG$ by (G.9). Since this $\bullet_{\GG}$ is northwest of the eastern $\HH$, this
eastern $\HH$ is marked. This contradicts Lemma~\ref{lemma:markedHiswestmost}.
Finally, suppose two $\HH$'s are edge labels on the bottom of the same row. This contradicts (G.7). 
\end{proof}

\begin{lemma}
\label{lemma:Gsoutheastofbullet}
Let $T$ be a $\GG$-good tableau. Suppose $\family(\FF)\leq \family(\GG)$, $\bullet_{\GG} \in \y$ and $\FF \in \z$ or $\underline{\z}$. Then $\z$ is not SouthEast of $\y$.
\end{lemma}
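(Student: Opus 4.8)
\textbf{Proof plan for Lemma~\ref{lemma:Gsoutheastofbullet}.}
The plan is to argue by contradiction: suppose $\z$ is SouthEast of $\y$, where $\bullet_\GG\in\y$ and $\FF\in\z$ or $\underline\z$ with $\family(\FF)\le\family(\GG)$. The first observation is that since $\z$ lies southeast of the box $\y$ carrying $\bullet_\GG$, the label $\FF$ (living in $\z$ or on its lower edge) is southeast of $\bullet_\GG$, so it is marked; write it as $\FF^!$. We now want to produce a \emph{second} gene southeast of $\bullet_\GG$ and play it against the structural restrictions on marked labels.

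The key step is to invoke the strong form of (G.13), Lemma~\ref{lem:strong_form_of_G13}, applied to $\FF^!$: since $\family(\GG)-\family(\FF)=k\ge 0$, if $k>0$ we obtain marked labels $\HH^!$ of each intermediate family together with a $\GG'$ or $\circled{\GG'}$ on $\underline\z$ having $\family(\GG')=\family(\GG)$ and $N_{\GG'}=N_\FF$; if $k=0$ we already have such a same-family label, namely $\FF$ itself up to ballotness considerations. In either case there is a genuine or virtual label of family $\family(\GG)$ on $\underline\z$, hence southeast of $\bullet_\GG$, and by (V.1) and (G.9) this forces that label $\GG'$ to satisfy $\GG'\succeq\GG$. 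On the other hand, I would now compare $\GG'$ with the gene $\GG$ itself using Lemma~\ref{lem:how_to_check_ballotness}: the gene $\GG$ appears in the box $\y$ (as the subscript of $\bullet_\GG$, which by the conventions for box labels records an actual occurrence of $\GG$ in the tableau), which is NorthWest of $\z$; so $\GG$ is West of $\GG'$ while $\family(\GG)=\family(\GG')$, and if additionally $N_\GG=N_{\GG'}$ we contradict Lemma~\ref{lem:how_to_check_ballotness}. To close the gap I would use $\GG'\succeq\GG$ together with (G.6)-type west-east monotonicity of subscripts within a family to pin down $N_{\GG'}$ relative to $N_\GG$, much as in the proof of Corollary~\ref{cor:strengthedAGM}.

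I expect the main obstacle to be the bookkeeping in the case $\family(\FF)=\family(\GG)$ (i.e.\ $k=0$), where Lemma~\ref{lem:strong_form_of_G13} gives less directly and one must argue that $\FF$ itself, being southeast of $\bullet_\GG$ and hence satisfying $\FF\succeq\GG$ by (G.9) applied contrapositively, already conflicts with $\family(\FF)\le\family(\GG)$ and (G.2)/(G.9): if $\FF\succ\GG$ but $\family(\FF)=\family(\GG)$ then $\FF$ and the occurrence of $\GG$ in $\y$ are two labels of the same family with the eastern one having strictly larger subscript, and one shows via (G.6) and Lemma~\ref{lem:how_to_check_ballotness} that this is impossible once $\bullet_\GG$ sits northwest of $\FF$ and hence $\FF$ is marked, contradicting Lemma~\ref{lemma:markedHiswestmost}. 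A secondary subtlety is handling the two subcases $\FF\in\z$ versus $\FF\in\underline\z$ uniformly; in the former case one additionally invokes Lemma~\ref{lem:strong_form_of_G10}(II) to locate a $\bullet_\GG$ west of $\z$ in $\z$'s row, and then notes this would be a second $\bullet_\GG$ distinct from the one in $\y$, violating (G.2) unless it coincides with $\y$—which it cannot, since $\y$ is strictly north of $\z$. Assembling these observations yields the contradiction and proves the lemma.
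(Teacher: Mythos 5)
There are genuine gaps here, and the central one is fatal to your main mechanism. You treat $\bullet_\GG\in\y$ as "an actual occurrence of the gene $\GG$ in the tableau" and then play it against the family-$\family(\GG)$ label $\GG'$ on $\underline{\z}$ via Lemma~\ref{lem:how_to_check_ballotness}. But a box label $\bullet_\GG$ is not a genetic label: it contributes nothing to genotypes, reading words, or the west/east comparisons in Lemma~\ref{lem:how_to_check_ballotness}, and that lemma in any case only applies to genes $\EE<\FF$ of \emph{different} families, not to two labels of the same family. Moreover, merely having a (possibly virtual) label of family $\family(\GG)$ on $\underline{\z}$ southeast of $\bullet_\GG$ is not itself contradictory: (V.1) and the marking convention allow unmarked labels $\succeq\GG$ southeast of a $\bullet_\GG$, and (G.9) only restricts labels \emph{northwest} of bullets — your "contrapositive" reading of (G.9), which you use to conclude $\FF\succeq\GG$ from $\FF$ being southeast, is not a consequence of (G.9). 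Compounding this, your opening step "$\FF$ is southeast of $\bullet_\GG$, so it is marked" is only valid when $\FF\prec\GG$; the hypothesis is just $\family(\FF)\le\family(\GG)$, and the case $\family(\FF)=\family(\GG)$ with $\FF\succeq\GG$ (which the paper explicitly flags as not excluded) is exactly where your argument has nothing to stand on. Your subsidiary claim that a $\bullet_\GG$ West of $\z$ in $\z$'s row (from Lemma~\ref{lem:strong_form_of_G10}(II)) must violate (G.2) with the one in $\y$ is also not automatic: that bullet could sit strictly West of $\y$'s column, hence southWest rather than southeast of $\y$; one must then instead note that the box in $\y$'s column between the bullet and $\z$ carries a marked label, contradicting (G.11).

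For comparison, the paper's proof avoids the gene-versus-bullet comparison entirely by working with auxiliary boxes. If $\FF<\GG$, it looks at the box $\aaa$ in $\y$'s column and $\z$'s row: by Lemma~\ref{lem:strong_form_of_G10}, either $\aaa$ holds a marked label (contradicting (G.11), since $\bullet_\GG$ sits in that column) or $\bullet_\GG\in\aaa$ (contradicting (G.2)). If $\family(\FF)=\family(\GG)$, it looks at the box $\bbb$ in $\y$'s row and $\z$'s column: (G.2) forces a genetic label there, (G.4) forces $\lab(\bbb)<\FF$, hence $\lab(\bbb)$ is marked, and then Lemma~\ref{lem:strong_form_of_G13} applied to $\bbb$ (not to $\z$) produces a label of family $\family(\GG)$ on $\underline{\bbb}$ sitting above $\FF$ in the same column, contradicting (G.4). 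That last step — converting the marked label in $\bbb$ into a same-family clash in a single column — is the idea your plan is missing.
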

\begin{proof}
Suppose $\z$ is SouthEast of $\y$.
First assume $\FF < \GG$. Consider the box $\aaa$ that is in $\y$'s column and $\z$'s row. By Lemma~\ref{lem:strong_form_of_G10}, either $\aaa$ contains a marked label (contradicting (G.11)) or $\bullet_\GG\in \aaa$ Southeast of $\y$ (contradicting (G.2)).

Now assume $\family(\FF) = \family(\GG)$. (We do not assume $\FF \preceq \GG$.) Consider the box $\bbb$ of $T$ that is in $\y$'s row and $\z$'s column. By (G.2), $\bbb$ contains a genetic label. By (G.4), $\lab(\bbb) < \FF$. Hence $\lab(\bbb)$ is marked in $T$. By Lemma~\ref{lem:strong_form_of_G13}, $\underline{\bbb}$ then contains a (possibly virtual) label of the same family as $\FF$ and $\GG$. This contradicts (G.4).
\end{proof}

\begin{lemma}
\label{lem:GandbulletG+}
Let $U$ be a $\GG^+$-good tableau. Suppose that $\bullet_{\GG^+} \in \x$ and that either  $\GG \in \y$ or $\GG \in \underline{\y}$. Then $\y$ is not NorthWest of $\x$.
\end{lemma}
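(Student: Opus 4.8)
The plan is to argue by contradiction, mirroring the structure of Lemma~\ref{lemma:Gsoutheastofbullet} but with the roles of ``SouthEast'' and ``NorthWest'' reversed and with $\GG$ playing the role of a label whose family is one less than (or equal to) that of the box label $\bullet_{\GG^+}$. So suppose $\y$ is NorthWest of $\x$, where $\bullet_{\GG^+}\in\x$ and $\GG\in\y$ or $\GG\in\underline{\y}$. First I would record what (V.1) and (G.9) tell us about $\GG$ relative to $\bullet_{\GG^+}$: since $\GG \prec \GG^+$, and any (possibly virtual) $\GG$ strictly northwest of $\bullet_{\GG^+}$ is consistent with (G.9). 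The useful structural input is instead that $\GG^+$ is the \emph{successor} of $\GG$, so $\family(\GG^+) \in \{\family(\GG), \family(\GG)+1\}$; I will need to handle these two cases separately, just as Lemma~\ref{lemma:Gsoutheastofbullet} splits on $\family(\FF)<\family(\GG)$ versus $\family(\FF)=\family(\GG)$.

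The key steps I anticipate are as follows. Consider the box $\aaa$ lying in $\y$'s row and $\x$'s column (or the symmetric choice in $\x$'s row and $\y$'s column — I would pick whichever makes the column/row conditions bite). Because $\y$ is (strictly) NorthWest of $\x$, this box $\aaa$ is genuinely in the diagram and is NorthWest or North-or-West of $\x$. Now I examine $\lab(\aaa)$. In the case $\family(\GG^+) = \family(\GG)+1$: $\bullet_{\GG^+}\in\x$ forces, via Lemma~\ref{lem:strong_form_of_G10}(II), a chain of marked labels and a $\bullet_{\GG^+}$ West of $\x$ in $\x$'s row; I would combine this with (G.11) (no $\bullet$ shares a column with a marked label) and (G.2) (no two $\bullet$'s in the same row or column, more precisely none southeast of another) to derive a contradiction with the presence of $\GG$ at $\y$, exactly as in the first half of Lemma~\ref{lemma:Gsoutheastofbullet}'s proof. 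In the case $\family(\GG^+) = \family(\GG)$: here $\GG$ and $\bullet_{\GG^+}$ have the same family, and I would look at the box $\bbb$ in $\x$'s row and $\y$'s column. By (G.2) $\bbb$ holds a genetic label; by column-increasingness (G.4) applied against $\GG\in\y$ or $\underline\y$ we get $\lab(\bbb)<\GG$, so $\lab(\bbb)$ is marked; then Lemma~\ref{lem:strong_form_of_G13} forces a (possibly virtual) label of family $\family(\GG)=\family(\GG^+)$ on $\underline\bbb$, and since $\bbb$ is North of $\x$ (same column as $\y$, which is West of $\x$ — wait, need $\bbb$ in the same column as $\y$) this collides with (G.4) or with Lemma~\ref{lem:how_to_check_ballotness}, giving the contradiction.

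The main obstacle I expect is getting the geometry of the auxiliary box exactly right: ``NorthWest'' is strict in both coordinates, so I must be careful that the rectangle spanned by $\y$ and $\x$ has both corner boxes actually present in $\nu/\lambda$ and that the inequalities I invoke (G.3), (G.4), Lemma~\ref{lem:strong_form_of_G10}, Lemma~\ref{lem:strong_form_of_G13}) apply with the correct strictness — in particular whether I need $\aaa$ strictly north of $\x$ or merely weakly so, and whether the edge-label versus box-label case of $\GG\in\underline\y$ changes which row the relevant marked label sits in. A secondary subtlety is that $\GG$ here may itself be virtual or marked, which affects whether I can directly quote Lemma~\ref{lem:how_to_check_ballotness} or must first pass to the tableau with the virtual label instantiated as in (V.3); I would address this by noting that all the cited conditions (G.4), (G.6), (G.8) used in that lemma are among those (V.3) guarantees for virtual labels, so the argument goes through verbatim after that replacement.
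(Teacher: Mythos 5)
Your plan has a genuine gap: neither of your two cases is carried out by a correct step, and the case split on $\family(\GG^+)$ is not where the difficulty lies. In your first case you invoke Lemma~\ref{lem:strong_form_of_G10}(II) to produce ``a chain of marked labels and a $\bullet_{\GG^+}$ West of $\x$,'' but that lemma takes a \emph{marked} label as input, and your hypotheses supply none: $\bullet_{\GG^+}\in\x$ together with a $\GG$ NorthWest of it marks nothing, since marking requires being \emph{southeast} of a $\bullet_{\GG^+}$. In your second case the geometry is inverted: the box $\bbb$ in $\y$'s column and $\x$'s row lies \emph{South} of $\y$, so (G.4) gives $\lab(\bbb)>\GG$ or $\GG^!\in\bbb$, not $\lab(\bbb)<\GG$; and even granting $\lab(\bbb)<\GG$, markedness of $\lab(\bbb)$ would require a $\bullet_{\GG^+}$ northwest of $\bbb$, which you have not produced (the only bullet in hand is at $\x$, strictly East of $\bbb$). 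You also describe this $\bbb$ as North of $\x$, i.e.\ you slide between the two corners of the rectangle spanned by $\y$ and $\x$; note the northeast corner box need not even lie in the skew shape $\nu/\lambda$, so the hedge ``whichever makes the conditions bite'' does not resolve the ambiguity.

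The actual argument is shorter and uniform in $\family(\GG^+)$. Take $\bbb$ in $\y$'s column and $\x$'s row (this box does lie in $\nu/\lambda$). By (G.2) it contains a genetic label. By (G.4), comparing with the $\GG\in\y$ or $\GG\in\underline{\y}$ above it in the same column, either $\lab(\bbb)>\GG$ or $\GG^!\in\bbb$. In the first case $\lab(\bbb)\succeq\GG^+$ (as $\GG^+$ is the immediate $\prec$-successor of $\GG$), while $\bbb$ is northwest of $\bullet_{\GG^+}\in\x$, contradicting (G.9). In the second case the $\bullet_{\GG^+}$ witnessing the marking is northwest of $\bbb$, hence the $\bullet_{\GG^+}\in\x$ is southeast of it, contradicting (G.2). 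None of Lemma~\ref{lem:strong_form_of_G10}, Lemma~\ref{lem:strong_form_of_G13} or Lemma~\ref{lem:how_to_check_ballotness} is needed, and no discussion of virtual labels arises.
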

\begin{proof}
Suppose otherwise. Consider the box $\bbb$ that is in $\y$'s column and $\x$'s row. By (G.2) it contains a genetic label. By (G.4) either $\lab(\bbb) > \GG$ or else $\GG^! \in \bbb$. If $\GG^! \in \bbb$, then $\bbb$ is southeast of a $\bullet_{\GG^+}$ by definition. This contradicts (G.2). If $\GG < \lab(\bbb)$, we contradict (G.9). 
\end{proof}

\begin{lemma}
\label{lem:same999}
Let $c$ be a column of a $\GG$-good tableau $T$. Suppose $\bullet_\GG \in c$ and either $\GG \in c$ or $\circled{\GG} \in c$. Further suppose that $\EE^! \in \y$, where $\y$ is a box of column $c^\rightarrow$.
Then $\circled{\GG} \in \underline{\y}$. 
\end{lemma}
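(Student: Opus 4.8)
The statement concerns a $\GG$-good tableau $T$ where column $c$ contains $\bullet_\GG$ and (genetic or virtual) $\GG$, and where $\EE^!$ sits in a box $\y$ of the adjacent column $c^\rightarrow$; we must show $\circled{\GG}\in\underline{\y}$. My approach is to verify directly that placing a $\GG$ on $\underline{\y}$ satisfies the three defining requirements (V.1)--(V.3) of a virtual label, which is exactly what $\circled{\GG}\in\underline{\y}$ means.

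First I would locate $\EE$ relative to $\bullet_\GG$: since $\EE^!\in\y$, by definition $\y$ is southeast of some $\bullet_\GG$, and by Lemma~\ref{lem:strong_form_of_G10}(II) the $\bullet_\GG$ in $\y$'s row lies in a box West of $\y$; combined with (G.2) and the hypothesis that $\bullet_\GG\in c$, this $\bullet_\GG$ must be the one in column $c$, so $\bullet_\GG\in\y^\leftarrow$. Also $\EE\prec\GG$ because $\EE^!$ is marked. Next, for (V.1) I need that a $\GG$ on $\underline{\y}$ would not be marked: it sits southeast of $\bullet_\GG$, so if it were marked it would be an $\FF^!$ with $\FF\prec\GG$, impossible for family $\GG$ itself — so it is unmarked, giving (V.1) (and indeed $\GG\succeq\GG$). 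For (V.2), I must show such a $\GG$ would violate (G.7), i.e.\ not be maximally west in its gene: this is where the hypothesis ``$\GG\in c$ or $\circled{\GG}\in c$'' enters — there is already an instance of gene $\GG$ in column $c=\,c^\rightarrow\!{}^\leftarrow$, strictly West of $\underline{\y}$, so a new $\GG$ on $\underline{\y}$ is not westmost. (If the existing instance is virtual, I use (V.2) for \emph{that} label to know a genuine $\GG$ could be placed there, so the new one on $\underline{\y}$ is still not westmost.)

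The substantive work is (V.3): I must check that adding $\GG$ to $\underline{\y}$ does not violate (G.1), (G.4), (G.5), (G.6), (G.8), (G.9), (G.12). Most of these are short: (G.1) — $\GG$ on $\underline{\y}$ is too high only if $\family(\GG)>\mathrm{row}(\y)$, but $\EE\prec\GG$ forces $\family(\EE)\le\family(\GG)$ and $\EE^!\in\y$ already constrains the row via (G.1) for $\EE$ plus Lemma~\ref{lem:strong_form_of_G13}'s chain of marked labels of intermediate families on $\underline{\y}$, so $\family(\GG)$ cannot exceed $\mathrm{row}(\y)$; (G.5) — I need no other label of family $\family(\GG)$ already on $\underline{\y}$, which follows from Lemma~\ref{lemma:Gsoutheast}/Lemma~\ref{lem:how_to_check_ballotness} against the $\GG$ in column $c$; (G.9) — any $\bullet_{\GG'}$ southeast of the new $\GG$ must have $\GG\prec\GG'$, handled by comparing with the genuine or virtual $\GG$ already present in column $c$ and (G.9)/(G.2); (G.6) — the new $\GG$ is East of the column-$c$ instance, so the subscript comparison is automatic, and West of nothing in its gene that would need checking beyond what Lemma~\ref{lem:strong_form_of_G13}/(G.13) already guarantee. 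The main obstacle I anticipate is (G.8) (ballotness): I must show that for every genotype, choosing the new $\GG$ on $\underline{\y}$ instead of the column-$c$ instance keeps the word ballot. The key point is that the column-$c$ and $\underline{\y}$ positions lie on essentially the same diagonal (columns $c$ and $c^\rightarrow$, with $\underline{\y}$ one row lower), so $N$ for the new $\GG$ equals that of the old one; together with the presence of the $\GG^-$-family (or lower) label $\EE^!$ to the right and the reading order (columns right-to-left), the swap cannot create a ballotness violation — this is precisely the kind of ``same diagonal, same $N$'' argument used in the proof of Corollary~\ref{cor:strengthedAGM} and encoded in Lemma~\ref{lem:how_to_check_ballotness} and Lemma~\ref{lem:draggable_labels_in_complete_sets}, which I would invoke. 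Having verified (V.1)--(V.3), the definition of virtual label yields $\circled{\GG}\in\underline{\y}$, completing the proof.
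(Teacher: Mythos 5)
Your overall strategy --- verifying (V.1)--(V.3) directly for a $\GG$ placed on $\underline{\y}$ --- could in principle work, but it skips the central difficulty of this lemma. By Lemma~\ref{lem:strong_form_of_G13} (iterating (G.13) from $\EE^!\in\y$), the edge $\underline{\y}$ already carries a label $\GG'$ or $\circled{\GG'}$ with $\family(\GG')=\family(\GG)$ and $N_{\GG'}=N_\EE$. If this $\GG'$ were $\GG^+$ rather than $\GG$, your verification would fail: a nonvirtual $\GG^+\in\underline{\y}$ destroys (G.5) for the augmented tableau, and even a virtual $\circled{\GG^+}$ destroys (G.6), since by (V.2) it certifies an actual $\GG^+$ strictly West of the $\GG$ you want to add. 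So your argument must prove $\GG'=\GG$ (equivalently $N_\EE=N_\GG$), and none of the lemmas you cite can do this: Lemma~\ref{lemma:Gsoutheast} compares instances of a single gene, while Lemmas~\ref{lem:how_to_check_ballotness} and~\ref{lem:draggable_labels_in_complete_sets} compare genes of \emph{different} families, whereas the competition here is between the consecutive genes $\GG$ and $\GG^+$ of the \emph{same} family. Excluding $\GG'=\GG^+$ is precisely the bulk of the paper's proof: assuming $\GG'=\GG^+$, the equality $N_\EE=N_{\GG^+}$ together with (G.8) shows $\EE^-$ has the same family as $\EE$ and must be read before every $\GG$ or $\circled{\GG}$; (G.4) and (G.6) then force every $\EE^-$ into column $c$; and $\bullet_\GG\in\y^\leftarrow$ (which follows from Lemma~\ref{lem:strong_form_of_G10} together with (G.11) --- not from (G.2), as you assert; (G.2) alone does not exclude a $\bullet_\GG$ in $\y$'s row strictly West of $c$ with the hypothesized $\bullet_\GG$ higher up in $c$) combined with (G.11) and (G.12) shows $\EE^-$ cannot sit anywhere in $c$, a contradiction. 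Your (G.5) and (G.8) checks silently presuppose this conclusion; in particular the ``same diagonal, same $N$'' remark is vacuous, because $N_\GG$ depends only on the gene and not on where an instance is placed. This is a genuine gap at the lemma's crux.

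Two further remarks. You never establish the strict inequality $\EE<\GG$ --- you only have $\EE\prec\GG$ from markedness --- yet it is needed both to invoke Lemma~\ref{lem:strong_form_of_G13} and for your (G.4) check against $\EE^!\in\y$; it comes from (G.6), since the hypothesis places a $\GG$ or $\circled{\GG}$ in column $c$, West of $\EE$. Finally, once $\GG'=\GG$ is known there is no need to re-verify (V.1)--(V.3) at all: Lemma~\ref{lem:strong_form_of_G13} already asserts $\GG$ or $\circled{\GG}\in\underline{\y}$, and (G.7) for the good tableau $T$ (there is a $\GG$ strictly West, by hypothesis or by (V.2)) rules out the nonvirtual option, yielding $\circled{\GG}\in\underline{\y}$ immediately --- this is the paper's short finish, which your plan replaces with a long and, as written, incomplete case analysis.
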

\begin{proof}
Since $\EE^!$ appears in $T$, $\EE \prec \GG$. Since $\EE$ appears East of some $\GG$, by (G.6) this implies $\EE < \GG$.

Hence by Lemma~\ref{lem:strong_form_of_G13}, there is either $\GG' \in \underline{\y}$ or $\circled{\GG'} \in \underline{\y}$ with $\family(\GG') = \family(\GG)$. It remains to show $\GG' = \GG$, for then by (G.7), $\circled{\GG} \in \underline{\y}$.

Suppose $\GG' \neq \GG$. Then by (G.4), (G.5) and (G.6), $\GG' = \GG^+$. By Lemma~\ref{lem:strong_form_of_G13}, $N_\EE = N_{\GG^+}$; thus $\family(\EE^-) = \family(\EE)$ by (G.8). Also by (G.8), every instance of $\EE^-$ must be read before any $\GG$ or $\circled{\GG}$. By (G.4), $\EE^- \notin c^\rightarrow$. By (G.6), $\EE^-$ does not appear East of $c^\rightarrow$. But by assumption either $\GG \in c$ or $\circled{\GG} \in c$, so $\EE^-$ must appear in $c$.

Consider the box $\y^\leftarrow$. By Lemma~\ref{lem:strong_form_of_G10}, either $\bullet_\GG \in \y^\leftarrow$ or some $\DD^! \in \y^\leftarrow$. The latter is impossible by (G.11), since $\bullet_\GG \in c$. Hence $\bullet_\GG \in \y^\leftarrow$. 

Now $\EE^-$ cannot appear South of $\y^\leftarrow$ in $c$, for then it would be marked, in violation of (G.11). We have $\EE^- \notin \y^\leftarrow$, since $\bullet_\GG \in \y^\leftarrow$. By (G.12), $\EE^-$ cannot appear North of $\y^\leftarrow$ in $c$. This contradicts that $\EE^-$ 
must appear in $c$, and therefore the assumption $\GG'\neq \GG$.
\end{proof}

\section{Snakes of good tableaux}\label{sec:snakes}
In this section, we give structural results about certain subsets of a good tableau; these will play a critical role in the definition of our generalized \textit{jeu de taquin} (given in Section~\ref{sec:swaps}).
\subsection{Snakes} Let $T$ be a $\GG$-good tableau. Let $\GG = g_k$ and consider the set 
of \emph{boxes} in $T$ that contain either
$\bullet_\GG$ or $\GG$. This set decomposes into edge-connected components $R$ that we call {\bf presnakes}.
A {\bf short ribbon} is a connected skew shape without a
$2\times 2$ subshape and where each row and column contains at most two
boxes. 
\begin{lemma}
\label{lem:presnake_description}
Each presnake $R$ is a short ribbon. Any row of $R$ with two boxes is \ytableausetup{boxsize=1.2em} 
$\ytableaushort{{\bullet_{\GG}} {\GG}}$. Any column of $R$ with two boxes is 
$\ytableaushort{{\bullet_{\GG}}, {\GG}}$.
\end{lemma}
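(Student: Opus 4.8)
The plan is to analyze the local structure of a presnake $R$ using the column and row conditions on good tableaux, principally (G.4) and (G.3), together with the fact that $\bullet_\GG$'s are isolated. First I would recall that $R$ consists of boxes containing either $\GG$ or $\bullet_\GG$, and is by construction edge-connected. I would show first that no $2\times 2$ square of boxes can lie in $R$: if it did, then (by (G.2)) at most one of the four boxes contains $\bullet_\GG$, so at least two boxes in the same column contain the genetic label $\GG$; since $\GG$ appears in two vertically adjacent boxes, (G.4) forces the lower one to be a marked instance $\GG^!$ with $\GG$ immediately above it — but (G.4) only permits that exception when \emph{both} labels are box labels, which they are not here. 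Even granting marked genetic labels, one checks the $2\times 2$ scenario still collapses: a $2\times 2$ block would put two $\GG$'s in some row, contradicting Lemma~\ref{lemma:Gsoutheast} (no $\HH$ southEast of another $\HH$, applied with $\HH=\GG$) or forcing a configuration ruled out by (G.3). So $R$ has no $2\times 2$ subshape.

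Next I would bound the number of boxes in each row and column of $R$ by two, and simultaneously identify their contents. Consider a row of $R$ containing two boxes of $R$ in consecutive columns (consecutiveness within the row follows once we know rows have at most two boxes and $R$ is connected, or can be argued directly). The two boxes are each a $\GG$ or a $\bullet_\GG$; they cannot both be $\GG$ (that would be a $\GG$ southEast of a $\GG$ — two genetic labels of the same family in the same row with one east of the other — contradicting Lemma~\ref{lemma:Gsoutheast}, or directly (G.3) fails since $\GG \not\prec \GG$), and they cannot both be $\bullet_\GG$ by (G.2). So one box holds $\bullet_\GG$ and the other $\GG$. To fix the order, I use (G.9): if $\GG$ were west of $\bullet_\GG$ in the same row, then $\GG$ is northwest-or-west of $\bullet_\GG$; actually $\GG$ directly west of $\bullet_\GG$ is southeast-free but (G.9) says a label northwest of $\bullet_\GG$ must be $\prec\GG$, which doesn't immediately apply to due-west. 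Here instead I invoke (G.3): the row reads $\prec$-increasingly except for the exceptional pattern $\HH\,\bullet_\GG\,\FF^!$; the only way $\GG$ and $\bullet_\GG$ sit adjacently consistent with (G.3) (and with $\GG$ not being a marked label following $\bullet_\GG$, since then we'd need the $\GG$ to be $\prec\GG$, impossible) is $\bullet_\GG$ then $\GG$. Hence the row is $\ytableaushort{{\bullet_\GG}{\GG}}$. An entirely parallel argument using (G.4) — two boxes in a column cannot both be $\GG$ (the lower would be $\GG^!$, but (G.4)'s exception requires box labels, and moreover this would violate (G.12)'s requirement of an intervening $\bullet_\GG$ in the \emph{row}, impossible in a single column) and cannot both be $\bullet_\GG$ by (G.2); the column $<$-increases so $\bullet_\GG$ must be above $\GG$ — gives that any two-box column of $R$ is $\ytableaushort{{\bullet_\GG},{\GG}}$.

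Finally, I would combine "no $2\times 2$ subshape" with "each row and column has at most two boxes" to conclude $R$ is a short ribbon, matching the stated definition; connectedness is automatic since presnakes were defined as edge-connected components. The main obstacle I anticipate is the bookkeeping around marked genetic labels: (G.4) explicitly allows an unmarked $\FF$ directly above $\FF^!$ when both are box labels, and one must be careful that this exception genuinely cannot occur within $R$ (where the labels in question are the genetic label $\GG$, never a box label $\bullet_\GG$), and similarly that the (G.3) exception $\HH\,\bullet_\GG\,\FF^!$ cannot manufacture a forbidden row configuration — so the argument must repeatedly cite Lemmas~\ref{lemma:Gsoutheast} and~\ref{lemma:markedHiswestmost} to rule out a second instance of $\GG$ appearing marked in a bad position. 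Once these exceptional patterns are dispatched, the ribbon structure follows directly from the increasingness conditions.
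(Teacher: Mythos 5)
Your overall plan is the paper's: rule out a $2\times2$ block and cap rows and columns at two boxes using (G.2)--(G.4), then pin down which box holds $\bullet_\GG$. But the decisive ordering step is not actually established in your write-up. You set (G.9) aside on the grounds that it ``doesn't immediately apply to due-west,'' yet by the paper's footnote conventions the lowercase compound ``northwest'' means \emph{weakly} north and \emph{weakly} west, so a $\GG$ due west of a $\bullet_\GG$ (or due north of it, in the column case) is northwest of that $\bullet_\GG$, and (G.9) forces $\GG\prec\GG$, a contradiction. This is exactly how the paper concludes that a two-box row is $\bullet_\GG\,\GG$ and a two-box column has $\bullet_\GG$ on top. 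The substitutes you offer cannot do this job: (G.3) and (G.4) constrain the genetic labels \emph{ignoring all $\bullet$'s}, so they say nothing about where the $\bullet_\GG$ sits, and the configurations with $\GG$ immediately west of, or immediately above, $\bullet_\GG$ violate neither of them. As written, the ordering claims of the lemma are unproved.

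Two smaller misreadings are worth fixing, though they do not sink the rest. In (G.4), ``box labels'' means labels occupying boxes (as opposed to edge labels), so the two $\GG$'s in a putative column of the $2\times2$ block \emph{are} box labels and that clause does not exclude the exception; what excludes it is the single observation the paper's proof opens with, namely that in a $\GG$-good tableau no $\GG^!$ exists (marking requires a label $\prec\GG$, and $\GG\not\prec\GG$) --- a fact you state only in passing in the row discussion and should promote to the front of the argument. Likewise (G.12) is irrelevant to two labels in the same column, since it concerns a label strictly north and strictly west of another. Your fallback for the $2\times2$ case via Lemma~\ref{lemma:Gsoutheast} (two $\GG$'s in one row) is correct, and the final assembly into a short ribbon is fine once rows and columns are as claimed; so the proof is repaired simply by restoring (G.9) with the correct (weak) reading of ``northwest'' and by invoking the nonexistence of $\GG^!$ where the (G.4) exception threatens.
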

\begin{proof}
Since $T$ is $\GG$-good, there is no $\GG^!$. So any column of $R$ has at most one $\GG$ by (G.4) and at most one
$\bullet_{\GG}$ by (G.2). Hence any column of $R$ has at most two boxes. By (G.9) if $\bullet_{\GG}$ and $\GG$ are in the same
column, the $\bullet_{\GG}$ is to the north. The description of rows of $R$ holds by (G.2), (G.3) and (G.9).
That $R$ is a skew shape with no $2\times 2$ subshape then follows immediately.
\end{proof}

A {\bf snake} $S$ is a presnake $R$ extended by (R.1)--(R.3):
\begin{itemize}
\item[(R.1)] If the box immediately right of the northmost
$\bullet_\GG$ in $R$ contains $\GG^+$ with $\family(\GG^+) = \family(\GG)$, then adjoin this box to $R$.
\item[(R.2)] If the box immediately left of the southmost $\GG$ in $R$
contains a marked label, adjoin this box to $R$.
\item[(R.3)] If $\x$ in the northmost row of $R$ contains $\bullet_\GG$,
$\lab(\x^\rightarrow)$ is marked and either $\GG$ or $\circled{\GG}
\in \underline{\x^\rightarrow}$, then adjoin $\x^\rightarrow$ to $R$.
\end{itemize}
 \emph{The entries of
$S$ are its box labels and labels appearing on the bottom edges of its boxes}.

\begin{example}
\ytableausetup{boxsize=1.5em}
Below are snakes for $\GG = 2_2$: 
\[\Scale[1]{$\begin{picture}(80,34)
\put(0,19){$\ytableaushort{\none {\bullet_{2_2}} {2_3}, {\bullet_{2_2}} {2_2}}$,}
\put(6,-4){$2_2$}
\end{picture}$}
\Scale[1]{$\begin{picture}(30,30)
\put(0,19){$\ytableaushort{{2_2}}$,}
\end{picture}$}
\Scale[1]{$\begin{picture}(80,30)
\put(0,19){$\ytableaushort{\none {\bullet_{2_2}} {2_2}, {2_1^!} {2_2}}$,}
\put(5,-5){$3_1$}
\put(24,-5){$3_2$}
\end{picture}$}
\Scale[1]{$\begin{picture}(80,30)
\put(0,19){$\ytableaushort{{\bullet_{2_2}} {1_3^!}}$.}
\put(21,16){$2_2$}
\end{picture}$}\]

On the other hand,
\Scale[.8]{$\begin{picture}(58,30)
\put(0,19){$\ytableaushort{\none {\bullet_{2_2}} {3_1}, {\bullet_{2_2}} {2_2}}$}
\put(6,-4){$2_2$}
\end{picture}$} is \emph{not} a snake, even if $3_1 = 2_2^+$ ((R.1) does not apply).
\qed
\end{example}

\begin{example}[Snakes can share a row]
\label{exa:cansharerow}
$\Scale[0.8]{\ytableausetup{boxsize=1.6em}
\begin{picture}(75,37)
\put(0,20){$\ytableaushort{{*(lightgray)\blank} {*(lightgray)\blank} {*(lightgray)\blank} {1_2}, {*(Dandelion) \bullet_{2_2}} {*(SkyBlue) 1_1^!} {*(SkyBlue) 2_2}}$}
\put(22,-3){$\Scale[.7]{2_1^!, 3_1}$}
\put(44,-3){$3_2$}
\end{picture}}$ 
contains two snakes as colored.\qed
\end{example}

\begin{example}[Snakes can share a column]
\label{exa:cansharecolumn}
$\ytableausetup{boxsize=1.3em}\Scale[0.8]{\ytableaushort{{*(lightgray)\blank} 
{*(SkyBlue) \bullet_{1_1}} {*(SkyBlue) 1_2}, {*(Dandelion) \bullet_{1_1}} {*(Dandelion) 1_2}, {*(Dandelion) 1_1}}}$ has two snakes as colored. \qed
\end{example}

\begin{lemma}\label{lem:snakes_are_short_ribbons}
Every snake $S$ is a short ribbon.
\end{lemma}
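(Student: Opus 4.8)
The plan is to show that extending a presnake $R$ by the moves (R.1)--(R.3) cannot create a $2\times 2$ block nor a row or column with three boxes, so that the result $S$ remains a short ribbon. By Lemma~\ref{lem:presnake_description}, $R$ itself is a short ribbon with a very rigid local structure: its only two-box rows are $\ytableaushort{{\bullet_{\GG}}{\GG}}$ and its only two-box columns are $\ytableaushort{{\bullet_{\GG}},{\GG}}$. Each of (R.1)--(R.3) adjoins a single box adjacent to a box of $R$, so the work is entirely local: I would go rule by rule and check that the newly adjoined box, together with the boxes of $R$ in its row and column, still forms a short ribbon.

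First I would handle (R.1): it adjoins $\x^\rightarrow$ to the right of the northmost $\bullet_\GG \in \x$. I must rule out that $\x$ already had a box to its right in $R$ (it would have to contain $\GG$, but then $\x^\rightarrow$ would contain $\GG^+$, contradicting (G.4) or (G.6) with the $\GG$), and that $\x^\rightarrow$ or the box below $\x^\rightarrow$ creates a column of three; since $\x$ is the northmost $\bullet_\GG$, any box of $R$ in $\x^\rightarrow$'s column lies weakly south, and a two-box column of $R$ is $\ytableaushort{{\bullet_{\GG}},{\GG}}$, which combined with a third box from (R.1) would force two $\bullet_\GG$'s or two $\GG$'s in a column, impossible by (G.2)/(G.4). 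Analogously for (R.2), which adjoins the box left of the southmost $\GG$: I would argue the adjoined marked box cannot already be in $R$ (it is marked, not equal to $\GG$ or $\bullet_\GG$), and it cannot complete a $2\times 2$ or a triple row/column, using that $\GG$ is the southmost and the column-structure of $R$ from Lemma~\ref{lem:presnake_description}. For (R.3), which adjoins $\x^\rightarrow$ where $\x$ holds the northmost-row $\bullet_\GG$: here a subtlety is that (R.3) and (R.1) could in principle both fire on the same $\bullet_\GG$, but (R.1) adjoins a box containing $\GG^+$ while (R.3) adjoins a box containing a \emph{marked} label, so they cannot both apply to one $\bullet_\GG$; and when two $\bullet_\GG$'s sit in one column (forming $\ytableaushort{{\bullet_{\GG}},{\GG}}$ is wrong — rather $R$'s two-box columns have a $\bullet_\GG$ over a $\GG$), so the northmost-row one is the top, and adjoining to its right cannot reach a box already occupied.

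The main obstacle is bookkeeping the interactions \emph{between} the three moves and across the two or three distinct snakes that may share a row or column (as in Examples~\ref{exa:cansharerow} and~\ref{exa:cansharecolumn}): a priori one could fear that, say, (R.2) on one snake adjoins a box that (R.1) or (R.3) wants for another, producing a configuration that is locally not a short ribbon. I would dispatch this by noting that each of (R.1)--(R.3) can adjoin at most one box to a given presnake $R$ (there is a unique northmost $\bullet_\GG$, a unique southmost $\GG$, and at most one northmost-row $\bullet_\GG$ by (G.2) applied within a short ribbon), so $S$ has at most $|R|+3$ boxes clustered tightly, and then a short finite case analysis of where the (at most) three new boxes can sit relative to $R$'s rigid skeleton — using (G.2), (G.3), (G.4), (G.9), and (G.11) to forbid the bad local pictures — closes the argument. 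The row/column-sharing examples are in fact reassuring rather than problematic: sharing occurs only along a single row or column edge, never producing a $2\times 2$, precisely because the shared box is a $\bullet_\GG$ or $\GG$ whose neighbors are constrained by (G.2) and (G.3).
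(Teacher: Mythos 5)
Your proposal is correct and takes essentially the paper's route: start from Lemma~\ref{lem:presnake_description} and check rule-by-rule that (R.1)--(R.3) preserve shortness. The paper compresses this into the single observation that, by the rigid form of two-box rows of $R$, (R.1) and (R.3) can only fire when the northmost row of $R$ is a single box containing $\bullet_\GG$ (and (R.2) only when the relevant end is a single box), which makes the inter-snake worries and most of the column-of-three bookkeeping in your sketch unnecessary.
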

\begin{proof}
$S$ is built by adjoining boxes to a presnake $R$.
By Lemma~\ref{lem:presnake_description}, $R$ is a short ribbon.
In view of Lemma~\ref{lem:presnake_description}, (R.1) and (R.3) only apply if the northmost row of $R$ is a single box with $\bullet_\GG$. 
So adjoining a box to the right maintains shortness.
Similarly, (R.2) maintains shortness.
\end{proof}

\begin{lemma}[Disjointness and relative positioning of snakes]\label{lem:snakes_arranged_SW-NE}
Suppose $S, S'$ are snakes obtained from distinct
presnakes $R, R'$ respectively. Up to relabeling of the snakes, one of the following holds:
\begin{itemize}
\item[(I)]  $S$ is entirely SouthWest of the $S'$ (that is, if $\bbb, \bbb'$ are
respectively boxes of these snakes, then $\bbb$ is SouthWest of $\bbb'$).
\item[(II)] $S$ consists of a single box containing $\bullet_\GG$ with neither $\GG$ nor $\circled{\GG}$ on its lower edge; further, this box appears West of and in the same row as the southmost row of $S'$, and all intervening box labels are marked; cf.~Example~\ref{exa:cansharerow}.
\item[(III)] $S$ involves an (R.1) extension, adjoining a $\GG^+$ in some box $\w$, while $S'=\{\bullet_\GG\in \w^\uparrow\}$
or $S'=\{\bullet_\GG \in \w^\uparrow, \GG^+\in \w^{\uparrow\rightarrow}\}$; cf.~ Example~\ref{exa:cansharecolumn}.
\end{itemize}
In particular, $S$ and $S'$ are box disjoint.
\end{lemma}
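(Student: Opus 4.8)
The strategy is to analyze how two distinct presnakes $R, R'$ can be positioned relative to one another, using the short-ribbon structure from Lemma~\ref{lem:presnake_description}, and then to control how the extensions (R.1)--(R.3) can interact. First I would observe that since both presnakes consist of boxes carrying $\bullet_\GG$ or $\GG$, and no two $\bullet_\GG$ are SouthEast of each other by (G.2) while no two $\GG$'s are SouthEast of each other by Lemma~\ref{lemma:Gsoutheast}, the only genuinely ``bad'' configurations across distinct presnakes are those where a $\bullet_\GG$ of one sits directly above a $\GG$ of the other (same column) or where a $\bullet_\GG$ of one sits weakly west of, and in the same row as, a box of the other. Since presnakes are edge-connected components, two boxes in adjacent positions (sharing an edge) must lie in the same presnake; thus $R$ and $R'$ can touch only ``diagonally'' or across a gap. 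Ruling out diagonal adjacency of a $\bullet_\GG$ in $R$ with a $\GG$ in $R'$ to the SouthEast (which would violate (G.9)) and symmetric cases, the default outcome is alternative (I): one presnake is entirely SouthWest of the other.

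The work then lies in the two exceptional cases, which arise precisely from extensions. For (II): if some $\bullet_\GG \in \w$ in one presnake sits in the same row as, and west of, a box $\bbb$ of the other presnake, I would first use Lemma~\ref{lem:strong_form_of_G10}(II) (applied to any marked label that must appear between them, forced by (G.3)) together with (G.11) to show all intervening box labels are marked and that $\w$'s presnake has nothing else in that row. Then I would argue this presnake $R$ is the single box $\{\bullet_\GG\}$: it cannot extend left by (R.2) since the box to the left of a southmost $\GG$ — but there is no $\GG$ here — and it cannot extend right by (R.1) or (R.3) without colliding with the marked labels, which (G.11) forbids from sharing a column with $\bullet_\GG$; moreover neither $\GG$ nor $\circled\GG$ lies on $\underline{\w}$, again essentially by Lemma~\ref{lem:same999} applied to the marked label immediately to the east. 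For (III): if $R$ uses an (R.1) extension adjoining $\GG^+ \in \w$ with $\family(\GG^+)=\family(\GG)$, and there is another presnake touching $\w$, the only place it can touch (given edge-connectedness puts $\bullet_\GG \in \w^\leftarrow$ already in $R$) is via $\bullet_\GG\in\w^\uparrow$; I would then use (G.9) and (G.2) to pin down that $S'$ is either that single box or that box plus its own (R.1)-extension $\GG^+\in\w^{\uparrow\rightarrow}$, and check no further extension of $S'$ is possible without violating (G.11) or (G.2).

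Finally, box-disjointness of $S$ and $S'$ follows: in case (I) it is immediate; in cases (II) and (III) the explicit descriptions show the adjoined boxes of one snake are marked-label or $\GG^+$ boxes that cannot coincide with the $\bullet_\GG$/$\GG$ boxes of the other, and the extension rules were just shown not to produce new overlaps. The main obstacle I anticipate is case~(III): one must carefully rule out the scenario where \emph{both} $S$ and $S'$ attempt (R.1) extensions in a way that doubly-covers a box or creates a forbidden $2\times2$-type configuration, and one must verify the (R.3) clause (``if $\y=\z=\x^\rightarrow$ \ldots then $\y^\rightarrow$ does not contain a marked label nor another instance of the gene of $\ell'$'') interacts correctly here — this is where the bookkeeping with $N_\EE = N_\FF$ and Lemma~\ref{lem:draggable_labels_in_complete_sets} will be needed to prevent a third presnake from sneaking in. Case~(II) should be comparatively routine once Lemma~\ref{lem:strong_form_of_G10} is invoked, and the generic case~(I) is essentially a direct consequence of (G.2), (G.9), and Lemma~\ref{lemma:Gsoutheast}.
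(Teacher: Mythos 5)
Your overall architecture (generic case gives (I); row-sharing gives (II); column-sharing arises only through extensions and gives (III)) matches the paper's proof, and your reduction of the generic case to (G.2), (G.9), (G.4) and Lemma~\ref{lemma:Gsoutheast} is fine up to minor citation slips. The genuine gap is in case (II). The heart of that case is to show the western snake is literally the single box $\{\bullet_\GG\in\w\}$ with no $\GG$ and no $\circled{\GG}$ on $\underline{\w}$ --- equivalently, that no $\GG$ or $\circled{\GG}$ occurs anywhere weakly west of the marked box $\y^\leftarrow$ adjacent to the other snake's $\GG\in\y$ (this is Claim~\ref{claim:snake_claim} in the paper). Your tools do not deliver this. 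Lemma~\ref{lem:same999} points the wrong way: if $\GG$ or $\circled{\GG}$ did sit in $\w$'s column, that lemma would produce $\circled{\GG}\in\underline{\w^\rightarrow}$, which, together with the marked label in $\w^\rightarrow$, is exactly the hypothesis of an (R.3) extension --- so far from blocking the extension it would trigger it, leaving you with a two-box snake sharing a row with $S'$, a configuration outside your trichotomy and with no contradiction in hand. Likewise (G.11) is silent here: the marked labels sit in columns east of $\w$ that contain no $\bullet_\GG$, so it cannot be what rules out (R.1)/(R.3); indeed a marked label in $\w^\rightarrow$ is precisely what (R.3) asks for, and the only thing that blocks (R.3) is the absence of $\GG$ or $\circled{\GG}$ on $\underline{\w^\rightarrow}$, which is the very fact you are trying to establish. (Your exclusion of (R.2) is similarly circular, since it presupposes the presnake has no $\GG$ below $\w$.)

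What is actually needed is the ballotness/(G.13) argument the paper runs at $\y^\leftarrow$: since $\FF^!\in\y^\leftarrow$ with $\FF\prec\GG$, Lemma~\ref{lem:strong_form_of_G13} forces a (possibly virtual) label of $\GG$'s family on $\underline{\y^\leftarrow}$ with the same $N$-statistic as $\FF$; it cannot be $\GG$ itself, since $\GG\in\y$ East of the $\FF\in\y^\leftarrow$ would then violate Lemma~\ref{lem:how_to_check_ballotness}; hence it is $\prec\GG$, and (G.6) (with (G.4), and (V.3) for virtual labels) then forbids any $\GG$ or $\circled{\GG}$ weakly west of $\y^\leftarrow$'s column. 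This one claim simultaneously shows the western presnake has no box below $\w$, has no $\GG$ or $\circled{\GG}$ on $\underline{\w}$, and admits no (R.1)/(R.3) extension, which is exactly what (II) requires. Your plan for case (III) is workable and essentially the paper's Case 2, but note that the $N$-bookkeeping you reserve for (III) is precisely the ingredient missing from your treatment of (II).
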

\begin{proof} 
By Lemma~\ref{lem:presnake_description}, (G.2) and/or (G.4), $R$ and $R'$ share at most one row and do not share
a column. Moreover, one sees that $R$ is southWest of $R'$ (say). By (R.1)--(R.3), $S$ and $S'$ share a row if and only if 
$R$ and $R'$ do. 

\noindent
{\sf Case 1: ($R$ and $R'$ share a row $r$):} The northmost row of $R$ and the southmost row of $R'$ are in row $r$. We must show that (II) holds and that $S, S'$ are box disjoint. 

By (G.2), (G.9) and Lemma~\ref{lemma:Gsoutheast}, $R$ has in row $r$ only a $\bullet_\GG\in \x$ while $R'$ has in $r$ only $\GG\in \y$. Since $S\neq S'$, $\y\neq \x^\rightarrow$. 
By (G.3), $\lab(\y^\leftarrow) \prec \GG$, so we have some marked label $\FF^!\in \y^\leftarrow$. Therefore $R'$ extends to $S'$ by (R.2). 

\begin{claim}\label{claim:snake_claim}
No $\GG$ or $\circled{\GG}$ appears in columns west of $\y^\leftarrow$.
\end{claim}
\begin{proof}
Since $\FF\prec\GG$, we are done by (G.4) and (G.6)
if $\family(\FF) = \family(\GG)$. 
Thus assume
$\FF < \GG$. By Lemma~\ref{lem:strong_form_of_G13}, there is either $\GG' \in \underline{\y^\leftarrow}$ or $\circled{\GG'} \in \underline{\y^\leftarrow}$
such that $\family(\GG')=\family(\GG)$ and $N_\FF=N_{\GG'}$. By (G.6), $\GG' \preceq \GG$ 
because $\GG\in \y$. If $\GG' = \GG$, then since $N_\FF=N_{\GG'(=\GG)}$, the $\GG\in \y$ and $\FF^!\in \y^\leftarrow$ combine to contradict Lemma~\ref{lem:how_to_check_ballotness}. Thus $\GG'\prec \GG$ and we
are done by (G.6) and (G.4).
\end{proof}

By Claim~\ref{claim:snake_claim}, $R = \{\bullet_{\GG}\in \x\}$ without
$\GG$ or $\circled{\GG} \in \underline{\x}$. Observe that $R$ cannot extend to $S$ by (R.1), since (R.1) requires $\GG^+\in \x^\rightarrow$, which contradicts (G.3) in view of $\GG\in \y$. It cannot be extended by (R.2) since $\GG\not\in \x$. If $R$ 
were extended by (R.3), there would be a $\GG$ or $\circled{\GG}$ in $\underline{\x^\rightarrow}$ in violation of Claim~\ref{claim:snake_claim}. Thus $R = S = \{ \x\}$.
 
By Lemma~\ref{lem:strong_form_of_G10}(II), all labels
strictly between $\x$ and $\y$ are marked. Hence (II) holds. Since $\y^\leftarrow\not\in S$, we
see by (R.1)--(R.3) that $S$ and $S'$ are box disjoint.

\noindent
{\sf Case 2: ($R$ and $R'$ do \emph{not} share a row)}: We may assume $S$ and $S'$ share a column, for if they do not, then clearly (I) and box-disjointness both hold. Since $R$ and $R'$ do not share a column, $S$ and $S'$ can only 
share a column if $R$ is extended East by (R.1) or (R.3) or if $R'$ is
extended West by (R.2). Let $\x$ be the northeastmost box of $R$ and 
$\y$ be the southwestmost box of $R'$.

\noindent
{\sf Subcase 2.1: ($R$ is extended by {\normalfont (R.1)}):} Since 
$\lab(\x^\rightarrow)=\GG^+$ and $\family(\GG^+) = \family(\GG)$, by (G.6) $R'$ cannot contain any $\GG$'s 
and therefore $R'=\{\bullet_\GG \in \y\}$. Hence (R.2)
does not extend $R'$. 
By assumption, $\x^\rightarrow$ and $\y$ are in the
same column. Hence by (G.4) and (G.11), $\y = \x^{\rightarrow\uparrow}$. 
By (G.6), $R'$ is not extended by (R.3), since $\GG^+ \in \x^\rightarrow$ and (R.3) requires $\GG \in \underline{\y^\rightarrow}$ or $\circled{\GG} \in \underline{\y^\rightarrow}$. If $R'$ is extended by 
(R.1), we obtain the second scenario described by (III) (and $S$, $S'$ are box disjoint).
If $R'$ is not extended by any of (R.1)--(R.3), then we have the first scenario described by (III) (and $S$, $S'$ are box disjoint).   

\noindent
{\sf Subcase 2.2: ($R$ is extended by {\normalfont (R.3)}):} Let $c$ be $\x^\rightarrow$'s column. We have $\FF^!\in \x^\rightarrow$ and either $\GG \in \underline{\x^{\rightarrow}}$ or $\circled{\GG} \in \underline{\x^{\rightarrow}}$. Moreover $N_\FF = N_\GG$. Hence by Lemma~\ref{lem:how_to_check_ballotness}, no $\GG$ appears East of $c$. Thus $R'=\{\bullet_\GG \in \y\}$. By (G.11), $\y \notin c$.
Thus $S$ and $R'$ do not share a column. Since $\bullet_\GG \in \y$, $R'$ is not extended by
(R.2). Thus $S$ and $S'$ do not share a column.

\noindent
{\sf Subcase 2.3: ($R'$ is extended by {\normalfont (R.2)}; $R$ is not extended by either {\normalfont (R.1)} or {\normalfont (R.3)}):} Here $\GG\in \y$ and $\FF^!\in \y^\leftarrow$. By Lemma~\ref{lem:strong_form_of_G13}, either $\family(\FF) = \family(\GG)$ or else we have $\GG' \in \underline{\y^\leftarrow}$ or $\circled{\GG'} \in \underline{\y^\leftarrow}$ such that $\family(\GG')=\family(\GG)$. Hence by (G.4) and (G.11),
$R$ cannot contain a box in the column of $\y^\leftarrow$. Hence $R, S'$ do not share a column. Hence by the assumption of the subcase, $S$ and $S'$ do not share a column.
\end{proof}

\subsection{Snake sections}
We decompose each snake $S$ into three {\bf snake sections}
denoted ${\tt head}(S)$, ${\tt body}(S)$ and ${\tt tail}(S)$
as follows:

\begin{definition-lemma}\label{def-lem:snake_classification}
\gap
\begin{itemize}
\item[(I)] If a snake $S$ has at least two rows and its southmost row has two boxes, then $\head(S)$ is the southmost row of $S$, $\tail(S)$ is the northmost row and $\body(S)$ is the remaining rows.

\item[(II)] If a snake $S$ has at least two rows and its southmost row has exactly one box, then $\head(S)$ is empty, $\tail(S)$ is the northmost row and $\body(S)$ is the other rows.

\item[(III)] If $S$ has exactly one row, then $S$ is one of the following (edge labels not depicted):
\ytableausetup{boxsize=1.2em}
\[\mbox{{\rm{(i)}} $S=\ytableaushort{\GG}=\body(S)$; \ \
{\rm{(ii)}} $S=\ytableaushort{{\bullet_\GG}}=\head(S)$; \ \ {\rm{(iii)}} $S=\ytableaushort{{\bullet_\GG} \GG}=\head(S)$;}\]
\[\mbox{{\rm{(iv)}}
$R=\ytableaushort{{\bullet_\GG} {\GG^+}}=\head(S)$; \ \
{\rm{(v)}} $S=\ytableaushort{{\FF^!} \GG}=\head(S)$;}\]
\[\mbox{{\rm{(vi)}} $S=\ytableaushort{{\bullet_\GG} {\FF^!}}=\tail(S)$ (with 
$\GG$ or $\circled{\GG}$ on the lower right edge).}\]
\end{itemize}
\end{definition-lemma}
\begin{proof}
It is only required to verify that in (III) all possible one-row snakes are shown. This is done by
combining Lemma~\ref{lem:presnake_description} and (R.1)--(R.3).
\end{proof}

\begin{lemma}[Properties of $\head,\body,\tail$]
\label{lem:piecesobservations}
\gap
\begin{itemize}
\item[(I)] If $\head(S) = \{\x\}$, then $\bullet_{\GG}\in \x$.
\item[(II)] If $\head(S)=\{\x,\x^\rightarrow\}$, then
$\ytableausetup{boxsize=1.2em} \head(S)=\ytableaushort{{\FF^!} \GG}$,
$\ytableaushort{{\bullet_{\GG}} \GG}$ or
$\ytableaushort{{\bullet_{\GG}} {\GG^+}}$.
\item[(III)] $\body(S)$ is a short ribbon consisting only of $\bullet_{\GG}$'s and $\GG$'s (with no edge label $\GG$'s or $\circled{\GG}$'s).
\item[(IV)] If $\tail(S) = \{\x\}$, then  $\tail(S)= \ytableaushort{{\bullet_{\GG}}}$ and $S$ has at least two rows.
\item[(V)] If $\tail(S)=\{\x,\x^\rightarrow\}=\ytableaushort{{\bullet_{\GG}} \GG}$ or $\ytableaushort{{\bullet_{\GG}} {\GG^+}}$, then $S$ has at least two rows,
$\GG \notin \underline{\x}$ and $\circled{\GG} \notin \underline{\x}$.
\item[(VI)] If $\tail(S)=\{\x, \x^\rightarrow\}$ and $\GG$ or $\circled{\GG} \in \underline{\x^\rightarrow}$, then
$\tail(S)= 
\begin{picture}(31,17)
\put(0,2){$\ytableaushort{{\bullet_{\GG}} {\FF^!}}$}
\put(18,-2){$\GG$} 
\end{picture}$
or
$\begin{picture}(45,17)
\put(0,2){$\ytableaushort{{\bullet_{\GG}} {\FF^!}}$.}
\put(17,-2){$\Scale[0.7]{\circled{\GG}}$} 
\end{picture}$
\item[(VII)] If $S$ has at least two rows, then $\GG \in \x^\downarrow$ where $\x$ is the westmost box of $\tail(S)$.
\end{itemize}
\end{lemma}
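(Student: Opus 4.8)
The plan is to verify each of the seven assertions (I)--(VII) by combining the structure of snakes established in Lemma~\ref{lem:presnake_description} (presnakes are short ribbons with the prescribed two-box rows and columns), the enumeration of one-row snakes in Definition-Lemma~\ref{def-lem:snake_classification}, and the extension rules (R.1)--(R.3). Throughout, the governing principle is that the ``interior'' of a snake is a presnake consisting only of $\bullet_\GG$'s and $\GG$'s, with the head/tail possibly carrying the extra box adjoined by one of (R.1)--(R.3); most of the claims are just a matter of reading off which extension could have produced the configuration in question and invoking the relevant good-tableau axiom to rule out the alternatives.

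First I would dispatch (I), (II), and (IV), (V), (VI), which are essentially combinatorial bookkeeping. For (I): if $\head(S)=\{\x\}$ is a single box, then by the case analysis of Definition-Lemma~\ref{def-lem:snake_classification} (cases (III)(ii) when $S$ has one row, or case (I) is impossible since that head has two boxes, so we are in case (II)'s degenerate reading or a one-row snake), the only one-box heads are $\ytableaushort{{\bullet_\GG}}$; alternatively, a one-box southmost row of a multi-row snake must contain $\bullet_\GG$ or $\GG$ by definition of presnake, but if it contained only $\GG$ then by Lemma~\ref{lem:presnake_description} the box above would have to be $\bullet_\GG$ in the same column, making the head (case II reading) --- one must check against the snake-section definition that this forces $\bullet_\GG\in\x$; the cleanest route is: a one-box head arises only in case (III)(ii) or when (R.2) fails to apply to a southmost single $\GG$-box, and in the latter situation the snake-section rules put that box in $\body$, not $\head$, so indeed $\head(S)=\{\x\}$ forces $\bullet_\GG\in\x$. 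For (II): a two-box head with southmost row of two boxes is, by Lemma~\ref{lem:presnake_description}, either $\ytableaushort{{\bullet_\GG}\GG}$ (the presnake row) or an (R.2)-extension $\ytableaushort{{\FF^!}\GG}$, or an (R.1)-extension $\ytableaushort{{\bullet_\GG}{\GG^+}}$ of a one-row snake; cross-referencing Definition-Lemma~\ref{def-lem:snake_classification}(I) and (III)(iii)--(v) gives exactly the three listed shapes. Statements (IV), (V), (VI) are read off the same way from cases (II), (I), and (III)(vi) respectively, using that $\tail$ is the northmost row: a one-box tail is $\ytableaushort{{\bullet_\GG}}$ (a $\GG$-only northmost box would be a one-row snake $\ytableaushort{\GG}$, which is a $\body$, not a $\tail$), and a two-box tail is either the presnake row $\ytableaushort{{\bullet_\GG}\GG}$, an (R.1) extension $\ytableaushort{{\bullet_\GG}{\GG^+}}$, or an (R.3) extension $\ytableaushort{{\bullet_\GG}{\FF^!}}$ with $\GG$ or $\circled{\GG}$ on the lower-right edge --- and (R.3) is the \emph{only} way a $\GG$ or $\circled{\GG}$ gets onto the lower edge of a tail box, giving (V) and (VI).

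Statement (III) is the substantive one and where I expect the main obstacle. The claim is that $\body(S)$ consists only of $\bullet_\GG$'s and $\GG$'s as box labels, with no $\GG$ or $\circled{\GG}$ appearing on the \emph{bottom edges} of body boxes. That $\body(S)$ is a short ribbon of $\bullet_\GG$'s and $\GG$'s is immediate from Lemma~\ref{lem:presnake_description} together with the observation that all of (R.1)--(R.3) adjoin boxes only to the northmost or southmost row, hence only to the head or tail, never to the body. The delicate part is ruling out edge-label $\GG$'s and $\circled{\GG}$'s on body boxes. Here I would argue: suppose $\GG$ or $\circled{\GG}\in\underline{\x}$ for a body box $\x$. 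If $\x$ contains $\GG$, then by Lemma~\ref{lem:presnake_description} the box above $\x$ in the same column is $\bullet_\GG$; but then the box $\x^\downarrow$ below $\x$ in $S$ --- which exists since $\x$ is in the body, so not the southmost row, unless the snake bends --- together with the edge label gives a configuration contradicting (G.4) (two instances of family $\family(\GG)$, the one in $\x$ and the edge one, stacked) or (G.7)/Lemma~\ref{lemma:markedHiswestmost} (the edge $\GG$ would not be maximally west, since there is a $\GG$ in $\x$ just above it). If instead $\bullet_\GG\in\x$, then a $\GG$ or $\circled{\GG}$ on $\underline{\x}$ together with $\bullet_\GG\in\x$ is exactly the trigger pattern for an (R.3)-type situation, but (R.3) only adjoins to the \emph{northmost} row; since $\x$ is a body box there is a box of $S$ strictly north of it, and one checks this forces two $\bullet_\GG$'s in a southeast relationship (the body $\bullet_\GG$ has another body box to its north-or-northwest carrying $\bullet_\GG$ in the short-ribbon zigzag), contradicting (G.2) --- more carefully, one uses that the short-ribbon structure of the presnake puts the body $\bullet_\GG$'s and $\GG$'s in a fixed staircase, and a lower edge $\GG$ under a body $\bullet_\GG$ would, via (G.4) applied to the box below, create the forbidden stacking. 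The bookkeeping of exactly which adjacent box to play off against requires care, but every branch terminates in a violation of (G.2), (G.4), (G.7), or Lemma~\ref{lem:how_to_check_ballotness}.

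Finally, for (VII): if $S$ has at least two rows, let $\x$ be the westmost box of $\tail(S)$. By Lemma~\ref{lem:presnake_description}, $\x$ contains $\bullet_\GG$ (it is the northmost $\bullet_\GG$ of the presnake, or the unique box of a one-box tail, which is $\bullet_\GG$ by (IV)). Since $S$ has a second row, the presnake $R$ bends downward from $\x$'s row into the next row; by the short-ribbon structure and Lemma~\ref{lem:presnake_description}, the only way to continue a short ribbon downward from a box containing $\bullet_\GG$ is to have a box directly below it, and that box of $R$ contains $\GG$ (a column with two boxes of $R$ is $\ytableaushort{{\bullet_\GG},\GG}$ by Lemma~\ref{lem:presnake_description}). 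Hence $\GG\in\x^\downarrow$. The only subtlety is confirming that the second row of $S$ is reached via $\x^\downarrow$ rather than via some other box of the tail row; but $\x$ was chosen as the \emph{westmost} box of the tail, and a short ribbon descends from its westmost box in the top row precisely at that box (descending from the eastmost box would leave the westmost box with no neighbor, contradicting connectedness of the ribbon having $\geq 2$ rows), so $\x^\downarrow\in S$ and contains $\GG$. Assembling (I)--(VII) completes the proof.
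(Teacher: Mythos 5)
Your treatment of (I), (II), (IV) and (VII) tracks the paper's route (reduce to Lemma~\ref{lem:presnake_description}, Definition-Lemma~\ref{def-lem:snake_classification}, and the fact that (R.1)--(R.3) only touch the northmost and southmost rows), and your (III) is a workable variant: where the paper localizes edge labels $\GG$, $\circled{\GG}$ to the extreme rows via (G.12), you exclude them from body boxes by a column-stacking argument -- a body box either contains $\GG$ itself or contains $\bullet_\GG$ with $\GG\in\x^\downarrow$ in the presnake, and in either case a $\GG$ or $\circled{\GG}$ on the lower edge violates (G.4) (respectively the (G.4) clause of (V.3)). That mechanism is sound, though your side remarks about (G.7) and about two $\bullet_\GG$'s violating (G.2) are not needed and not quite right as stated.

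The genuine gap is in (V), and it infects (VI). The sentence ``(R.3) is the only way a $\GG$ or $\circled{\GG}$ gets onto the lower edge of a tail box'' is not an argument: edge labels are part of the given good tableau, not something produced by the extension rules -- (R.1)--(R.3) only decide which \emph{boxes} are adjoined to the presnake -- so nothing in those rules prevents a $\GG$ or $\circled{\GG}$ from sitting on $\underline{\x}$ when $\tail(S)=\{\x,\x^\rightarrow\}$ has the form $\ytableausetup{boxsize=1.2em}\ytableaushort{{\bullet_{\GG}} \GG}$ or $\ytableaushort{{\bullet_{\GG}} {\GG^+}}$. Moreover (R.3)'s hypothesis concerns $\underline{\x^\rightarrow}$, not $\underline{\x}$, so it does not even address the edge that (V) is about. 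What is actually needed (and what the paper does) is: such a tail forces $S$ to have at least two rows (a one-row snake of that shape is a \emph{head}, by Definition-Lemma~\ref{def-lem:snake_classification}(III)(iii),(iv)), so $\x^\downarrow$ is a presnake box with $\GG\in\x^\downarrow$ by Lemma~\ref{lem:presnake_description}, and then $\GG$ or $\circled{\GG}\in\underline{\x}$ would put two labels of $\GG$'s family consecutively in one column, contradicting (G.4), respectively the (G.4) condition in (V.3). Likewise for (VI) you must actively rule out the tails $\ytableaushort{{\bullet_{\GG}} \GG}$ and $\ytableaushort{{\bullet_{\GG}} {\GG^+}}$ in the presence of $\GG$ or $\circled{\GG}\in\underline{\x^\rightarrow}$ (again (G.4) in that column), or argue directly as the paper does: $\bullet_\GG\in\x$ by Lemma~\ref{lem:presnake_description}, $\bullet_\GG\notin\x^\rightarrow$ by (G.2), $\lab(\x^\rightarrow)<\GG$ by (G.4), hence $\lab(\x^\rightarrow)$ is marked because it lies southeast of the $\bullet_\GG\in\x$. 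Your proposal supplies none of these steps, so (V) and (VI) are unproved as written.
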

\begin{proof}
If $S$ has one row, then by Definition-Lemma~\ref{def-lem:snake_classification}(III) these claims are clear (or irrelevant).
Thus assume $S$ has at least two rows.

\noindent
(I): Under the assumption that $S$ has at least two rows, the claim is vacuous since
by Definition-Lemma~\ref{def-lem:snake_classification}(I,II) we know $|\head(S)|\neq 1$.

\noindent
(II): Either the southmost row of $S$ is
$\ytableaushort{{\FF^!} \GG}$ if (R.2) was used, or it is
$\ytableaushort{{\bullet} \GG}$ if (R.2) was not used; cf. Lemma~\ref{lem:presnake_description}.

\noindent
(III): That $\body(S)$ is a short ribbon is clear, since $S$ is a short ribbon by Lemma~\ref{lem:snakes_are_short_ribbons}. 
Boxes of $\body(S)$ only contain $\GG$ or $\bullet_{\GG}$ because (R.1)--(R.3) adjoin boxes only to the northmost
or southmost row (and if the southmost row of $S$ has two boxes, then by definition that row is not
part of $\body(S)$). By (G.12), an edge label $\GG$ or ${\circled\GG}$ can only appear in the northmost or southmost row of $S$.
In those cases, the row is not part of $\body(S)$ by Definition-Lemma~\ref{def-lem:snake_classification}(I,II).

\noindent
(IV): $\tail(S)$ is the northmost
row of $S$ and, since $|\tail(S)|=1$, it is the northmost row of the presnake of $S$. Thus we are done by Lemma~\ref{lem:presnake_description}.

\noindent
(V): $\tail(S)$ is the northmost row and by Lemma~\ref{lem:presnake_description}, $\GG\in \x^\downarrow$ ($\x^\downarrow$
is in the presnake of $S$) so $\GG,\circled{\GG}\not\in \underline{\x}$ by (G.4).

\noindent
(VI): $\x$ is in the presnake of $S$ and so by Lemma~\ref{lem:presnake_description}, $\bullet_{\GG}\in \x$. 
By (G.2), $\bullet_{\GG}\not\in \x^\rightarrow$.
By
(G.4), $\lab(\x^\rightarrow)<\GG$ and so $\lab(\x^\rightarrow)$ is marked, since it is southeast of the $\bullet_{\GG}\in \x$.

\noindent
(VII): $\x$ and $\x^\downarrow$ are part of the presnake of $S$. Now apply Lemma~\ref{lem:presnake_description}.
\end{proof}

\section{Genomic jeu de taquin}\label{sec:swaps}
\subsection{Miniswaps}
We first define {\bf miniswaps} on snake sections of a $\GG$-good tableau. The output is a formal sum of tableaux.
Below, interpret $\bullet=\bullet_\GG$ before the miniswap and $\bullet=\bullet_{\GG^+}$ after the miniswap. We depict $\circled{\GG}$ whenever it exists. Labels and virtual labels from other genes 
are not depicted unless relevant to the miniswap's definition. For a box $\x$, define
\[\beta(\x):=1-\frac{t_{{\sf Man}(\x)}}{t_{{\sf Man}(\x)+1}}
\text{ \ \ \ and  \ \ \ }
\hat{\beta}(\x) :=  1 - \beta(\x) =\frac{t_{{\sf Man}(\x)}}{t_{{\sf Man}(\x)+1}}.\] Note that if $\x = \alpha /\beta$, then $\hat{\beta}(\x) = \wt \alpha/\beta$, as defined in Section~\ref{sec:structure_of_proof}.
If a snake section is empty, then $\m$ acts trivially, so below
we assume otherwise.

\subsubsection{Miniswaps on $\head(S)$}
\gap

\noindent
\ytableausetup{boxsize=1em}
{\sf (Case H1: $\head(S)=\{\x\}$ and $\GG \in \underline{\x}$):} 
\[\begin{picture}(350,12)
\put(0,0){$\head(S) = \ytableaushort{\bullet}\mapsto \m(\head(S)) = \mbox{$\beta(\x)$} \cdot \ytableaushort{\GG} + \gamma \cdot \ytableaushort{\bullet}$}
\put(60,-5){$\GG$}
\put(266,10){$\GG$}
\end{picture}
\]
Set $\gamma := 0$ if ${\rm row}(\x) = \family(\GG)$ (that is, if $\GG \in \overline{\x}$ would be too high); otherwise set $\gamma := 1$.

\noindent
\ytableausetup{boxsize=1em}
{\sf (Case H2: $\head(S)=\{\x\}$ and $\circled{\GG} \in \underline{\x}$):} 
\[\begin{picture}(350,18)
\put(0,0){$\head(S)=\ytableaushort{{\bullet}} \mapsto
\m(\head(S)) = \ytableaushort{{\bullet}} + \beta(\x) \cdot \ytableaushort{\GG}$}
\put(59,-4){\Scale[.7]{\circled{\GG}}}
\end{picture}
\]

\noindent
\ytableausetup{boxsize=1em}
{\sf (Case H3: $\head(S)=\{\x\}$ and Cases H1/H2 do not apply):} 
\[\begin{picture}(350,18)
\put(0,0){$\head(S)=\ytableaushort{\bullet} \mapsto  \m(\head(S)) = \ytableaushort{\bullet}$}
\end{picture}
\]

\noindent
\ytableausetup{boxsize=1em}
{\sf (Case H4: $\head(S)=\{\x,\x^\rightarrow\}$, $\GG \in \x^\rightarrow$, and $\GG \in \underline{\sf x}$):} 
\[\begin{picture}(350,14)
\put(0,0){$\head(S) = \ytableaushort{\bullet \GG} \mapsto \m(\head(S)) = 0$}
\put(60,-4){$\GG$}
\end{picture}
\]

\noindent
{\sf (Case H5: $\head(S)=\{\x,\x^\rightarrow\}$, $\GG \in \x^\rightarrow$, and $\GG\not\in {\underline{{\sf x}}}$):}

\ytableausetup{boxsize=1em}
{\sf (Subcase {\sf  H5.1:} $\HH\in \underline{{\sf x}^\rightarrow}$, $\family(\mathcal{H}) = \family(\GG) + 1$ and $N_\mathcal{H}  = N_\GG$):}
\[\begin{picture}(350,14)
\put(0,0){$\head(S) = \ytableaushort{\bullet \GG} \mapsto \m(\head(S)) = \ytableaushort{\bullet {\GG^!}}$}
\put(71,-6){$\HH$}
\put(215,-6){$\HH$}
\end{picture}
\]

\ytableausetup{boxsize=1em}
{\sf (Subcase {\sf H5.2}: $\circled{\HH}\in \underline{{\sf x}^\rightarrow}$, $\family(\mathcal{H}) = \family(\GG) + 1$ and $N_\mathcal{H}  = N_\GG$):}
\[\begin{picture}(350,18)
\put(0,0){$\head(S) = \ytableaushort{\bullet \GG} \mapsto \m(\head(S)) = \ytableaushort{\bullet {\GG^!}} + \hat{\beta}(\x) \cdot \ytableaushort{\GG \bullet}$}
\put(71,-6){\Scale[.7]{$\circled{\HH}$}}
\put(215,-6){\Scale[.7]{$\circled{\HH}$}}
\end{picture}
\]

\ytableausetup{boxsize=1em}
{\sf (Subcase {\sf H5.3}: Subcases {\sf H5.1}/{\sf H5.2} do not apply):}
\[\begin{picture}(350,14)
\put(0,0){$\head(S) = \ytableaushort{\bullet \GG} \text{ or }  \ytableaushort{\bullet \GG} \mapsto \m(\head(S)) = \hat{\beta}(\x) \cdot \ytableaushort{\GG \bullet}$}
\put(59,-5){$\Scale[0.7]{\circled{\GG}}$}
\end{picture}
\]

\noindent
\ytableausetup{boxsize=1.3em}
{\sf (Case H6: $\head(S)=\{\x,\x^\rightarrow\}$, $\GG^+ \in \x^\rightarrow$, and $\GG \in \underline{\x}$):} 
\[\begin{picture}(350,14)
\put(0,0){$\head(S) = \ytableaushort{\bullet {\GG^+}} \mapsto \m(\head(S)) = \mbox{$\beta(\x)$} \cdot\ytableaushort{\GG {\GG^+}} + \alpha \cdot \ytableaushort{\GG \bullet}$}
\put(62,-5){$\GG$}
\put(321,-5){$\GG^+$}
\end{picture}
\]
Set $\alpha:=0$ if the second tableau has two $\bullet_{\GG^+}$'s in the same column; otherwise set $\alpha:=\hat{\beta}(\x)$.

\noindent
\ytableausetup{boxsize=1.3em}
{\sf (Case H7: $\head(S)=\{\x,\x^\rightarrow\}$, $\GG^+ \in \x^\rightarrow$, and $\circled{\GG} \in \underline{\x}$):}
\[\begin{picture}(110,14)
\put(-120,0){$\head(S) = \ytableaushort{{\bullet}{\GG^+}} \mapsto \m(\head(S)) =
\ytableaushort{{\bullet} {\GG^+}} + \beta(\x) \cdot \ytableaushort{\GG {\GG^+}} + \alpha \cdot \ytableaushort{\GG {\bullet}}$}
\put(-60,-5){\Scale[.8]{\circled{\GG}}}
\put(247,-5){$\GG^+$}
\end{picture}
\]
Set $\alpha:=0$ if the third tableau has two $\bullet_{\GG^+}$'s in the same column; otherwise set $\alpha:=\hat{\beta}(\x)$.

\noindent
\ytableausetup{boxsize=1.3em}
{\sf (Case H8: $\head(S)=\{\x,\x^\rightarrow\}$, $\GG^+ \in \x^\rightarrow$, and Cases H6 and H7 do not apply):}
\[\begin{picture}(110,14)
\put(-120,0){$\head(S) = \ytableaushort{{\bullet}{\GG^+}} \mapsto \m(\head(S)) =
\ytableaushort{{\bullet} {\GG^+}}$}
\end{picture}
\]

\noindent
\ytableausetup{boxsize=1.3em}
{\sf (Case H9: $\head(S)=\{\x,\x^\rightarrow\}$, $\FF^! \in \x$, and $\GG \in \x^\rightarrow$):}
\[\begin{picture}(110,14)
\put(-120,0){$\head(S) = \ytableaushort{{\FF^!} \GG} \mapsto \m(\head(S)) = \ytableaushort{{\FF^!} {\GG^!}}$}
\end{picture}
\]

\begin{lemma}\label{lem:head_wd}
Every nonempty $\head(S)$ falls into exactly one
of {\sf H1}--{\sf H9}.
\end{lemma}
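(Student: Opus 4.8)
The proof is a routine case-check, so the plan is to verify two things: (1) every nonempty $\head(S)$ falls into \emph{at least} one of the cases {\sf H1}--{\sf H9}, and (2) the cases are mutually exclusive. Both will follow from the structural classification of $\head(S)$ already established.

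First I would invoke Definition-Lemma~\ref{def-lem:snake_classification} and Lemma~\ref{lem:piecesobservations}(I)--(II) to pin down the possible shapes of a nonempty $\head(S)$. There are only two: either $\head(S)=\{\x\}$ with $\bullet_\GG\in\x$ (by Lemma~\ref{lem:piecesobservations}(I)), or $\head(S)=\{\x,\x^\rightarrow\}$ which by Lemma~\ref{lem:piecesobservations}(II) is one of $\ytableausetup{boxsize=1.2em}\ytableaushort{{\FF^!}\GG}$, $\ytableaushort{{\bullet_\GG}\GG}$, or $\ytableaushort{{\bullet_\GG}{\GG^+}}$. In the single-box case, the dichotomy on the lower edge $\underline{\x}$ --- namely whether $\GG\in\underline{\x}$, or $\circled{\GG}\in\underline{\x}$, or neither --- exhausts the possibilities and distributes the single-box snakes into {\sf H1}, {\sf H2}, {\sf H3} respectively; these three are visibly mutually exclusive since $\GG$ and $\circled{\GG}$ cannot both sit on one edge (and $\circled{\GG}$ is by definition only placed where $\GG$ is absent). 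In the two-box case, the three shapes from Lemma~\ref{lem:piecesobservations}(II) are pairwise distinct: $\FF^!\in\x$ is a marked genetic label (so not a box label), so the $\ytableaushort{{\FF^!}\GG}$ shape is disjoint from the two $\bullet_\GG$-headed shapes, and $\GG\neq\GG^+$ distinguishes the remaining two. This shape $\ytableaushort{{\FF^!}\GG}$ is exactly {\sf H9}.

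It then remains to subdivide the two shapes $\ytableaushort{{\bullet_\GG}\GG}$ and $\ytableaushort{{\bullet_\GG}{\GG^+}}$ according to the contents of $\underline{\x}$ and $\underline{\x^\rightarrow}$. For $\ytableaushort{{\bullet_\GG}\GG}$: if $\GG\in\underline{\x}$ we are in {\sf H4}; otherwise $\GG\notin\underline{\x}$, and we are in {\sf H5}, which is then partitioned into {\sf H5.1}/{\sf H5.2}/{\sf H5.3} by whether $\underline{\x^\rightarrow}$ carries an $\HH$ (resp.\ $\circled\HH$) of family $\family(\GG)+1$ with $N_\HH=N_\GG$, or neither --- again mutually exclusive since $\HH$ and $\circled\HH$ cannot coexist on one edge and (G.5) forbids a second label of that family. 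For $\ytableaushort{{\bullet_\GG}{\GG^+}}$: the trichotomy $\GG\in\underline{\x}$ / $\circled\GG\in\underline{\x}$ / neither sorts it into {\sf H6} / {\sf H7} / {\sf H8}, these being mutually exclusive for the same reason as {\sf H1}--{\sf H3}.

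The only point requiring slight care --- and the place I expect the bookkeeping to be mildly delicate rather than genuinely hard --- is confirming the two-box shapes really are \emph{only} the three listed in Lemma~\ref{lem:piecesobservations}(II), i.e.\ that no configuration like $\ytableausetup{boxsize=1.2em}\ytableaushort{{\bullet_\GG}{\FF^!}}$ can arise as $\head(S)$ (such a configuration is a $\tail$, by Definition-Lemma~\ref{def-lem:snake_classification}(III)(vi), not a $\head$, when $S$ has one row, and for $S$ with $\geq 2$ rows a two-box southmost row is governed by Lemma~\ref{lem:piecesobservations}(II)). Since Lemma~\ref{lem:piecesobservations}(II) already asserts exactly this, the lemma follows by assembling these observations: every nonempty $\head(S)$ lies in one of {\sf H1}--{\sf H9}, and the defining conditions of these nine cases are pairwise incompatible, so it lies in exactly one.
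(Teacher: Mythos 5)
Your proposal is correct and follows essentially the same route as the paper's proof: split by $|\head(S)|\in\{1,2\}$ using the snake structure, invoke Lemma~\ref{lem:piecesobservations}(I) to get $\bullet_\GG\in\x$ in the one-box case and Lemma~\ref{lem:piecesobservations}(II) to list the three two-box shapes, then sort by the contents of the relevant lower edges. The only difference is that you spell out the mutual-exclusivity checks (which the paper leaves implicit in its "exactly one of $\GG$, $\circled{\GG}$ or neither" phrasing), which is fine but not a different argument.
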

\begin{proof}
Since $\head(S)\neq \emptyset$, $|\head(S)|\in \{1,2\}$
by Lemma~\ref{lem:snakes_are_short_ribbons}. If $\head(S)=\{\x\}$,
then by Lemma~\ref{lem:piecesobservations}(I), $\bullet_\GG\in\x$. 
Then $\underline{\x}$ contains exactly one of $\GG, \circled{\GG}$ or neither; these are respectively Cases
{\sf H1}, {\sf H2} and {\sf H3}. If $\head(S)=\{\x,\x^\rightarrow\}$, 
see Lemma~\ref{lem:piecesobservations}(II):
one possibility is $\FF^! \in \x$ and $\GG \in \x^\rightarrow$; this is {\sf H9}. Otherwise, $\bullet_\GG\in \x$ and $\x^{\rightarrow}$ contains $\GG$ or  $\GG^+$. The cases where $\GG\in\x^\rightarrow$ are covered by {\sf H4}--{\sf H5}. 
The cases where $\GG^+\in\x^\rightarrow$ are covered by {\sf H6}--{\sf H8}.
\end{proof}

\subsubsection{Miniswaps on $\body(S)$}
Let ${\tt body_{\bullet_{\GG}}}(S)=\{\x\in \body(S)\colon \bullet_{\GG}\in \x\}$.

\noindent
{\sf (Case B1: $\body(S) = S$):}
By Definition-Lemma~\ref{def-lem:snake_classification}, $S= \ytableausetup{boxsize=1em} \ytableaushort{\GG}$.  Define
\[\begin{picture}(250,14)
\put(-60,0){
$\body(S)= \ytableausetup{boxsize=1em} \ytableaushort{\GG}\mapsto\m( \body(S)) = \ytableaushort{\GG}$}
\end{picture}.\]

\noindent
{\sf (Case B2: The southmost row of $\body(S)$ contains two boxes):}
Replace each $\GG$ in $\body(S)$ with $\bullet_{\GG^+}$ and each $\bullet_{\GG}$ with $\GG$, emitting a weight $\prod_{\x \in {\tt body_\bullet}(S)} \hat{\beta}(\x)$.
That is (cf. Lemma~\ref{lem:piecesobservations}(III)),
\ytableausetup{boxsize=1em}
\[\begin{picture}(250,25)
\put(-60,18){$\body(S) = \ytableaushort{\none \none \bullet \GG, \none \bullet \GG, \bullet \GG}\mapsto \m(\body(S)) = \mbox{$\prod_{\x \in {\tt body_{\bullet_{\GG}}}(S)} \hat{\beta}(\x)$}\cdot\ytableaushort{\none \none \GG \bullet, \none \GG \bullet, \GG \bullet}$}
\end{picture}
\]

\noindent
{\sf (Case B3: Cases B1/B2 do not apply):}
Replace each $\GG$ in $\body(S)$ with $\bullet_{\GG^+}$ and each $\bullet_{\GG}$ with $\GG$, 
emitting $-\prod_{\x \in {\tt body_\bullet}(S)} \hat{\beta}(\x)$.
That is (cf. Lemma~\ref{lem:piecesobservations}(III)),
\ytableausetup{boxsize=1em}
\[\begin{picture}(250,25)
\put(-60,18){$\body(S) = \ytableaushort{ \none \bullet \GG,  \bullet \GG,  \GG}\mapsto \m(\body(S)) = \mbox{$-\prod_{\x \in {\tt body_{\bullet_{\GG}}}(S)} \hat{\beta}(\x)$}\cdot\ytableaushort{ \none \GG \bullet,  \GG \bullet, \bullet}$}
\end{picture}
\]

\begin{lemma}\label{lem:body_wd}
Every nonempty $\body(S)$ falls into exactly one of {\sf B1}--{\sf B3}.
\end{lemma}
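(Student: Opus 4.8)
The plan is to read off this trichotomy from the structural description of $\body(S)$ that is already available. First I would invoke Lemma~\ref{lem:piecesobservations}(III): $\body(S)$ is a short ribbon each of whose boxes carries exactly one of $\bullet_\GG$ or $\GG$, and it has no edge-label $\GG$'s or $\circled{\GG}$'s. This is the substantive input; in particular it guarantees that the operations described in {\sf B2} and {\sf B3} (swapping $\bullet_\GG\leftrightarrow\GG$ on every box of the ribbon) are meaningful, so that the only thing requiring argument is that a nonempty $\body(S)$ satisfies exactly one of the three defining conditions.

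Exhaustiveness is immediate, since {\sf B3} is defined to be precisely the case in which {\sf B1} and {\sf B2} both fail. For mutual exclusivity, {\sf B3} is disjoint from {\sf B1} and {\sf B2} by fiat, so it remains to rule out that {\sf B1} and {\sf B2} hold simultaneously. Here I would argue that if $\body(S)=S$, then $S$ has empty head and empty tail; consulting Definition-Lemma~\ref{def-lem:snake_classification}, parts (I) and (II) are excluded (each produces a nonempty tail, and (I) also a nonempty head), so $S$ must be one of the one-row snakes of part (III), and among those only case (i) has both head and tail empty. Thus $S$ is a single box containing $\GG$, whose unique (hence southmost) row has one box, so the hypothesis of {\sf B2} fails. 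Therefore exactly one of {\sf B1}--{\sf B3} applies to every nonempty $\body(S)$.

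The only mild subtlety — the ``main obstacle,'' such as it is — lies in the last step: one must be careful to run through all parts of Definition-Lemma~\ref{def-lem:snake_classification}, not merely the one-row case, when deducing that $\body(S)=S$ forces $S$ to be the single box $\{\GG\}$. Everything else is bookkeeping, and the pictures displayed in {\sf B2} and {\sf B3} are then simply illustrations of the two possible shapes of a short ribbon of $\bullet_\GG$'s and $\GG$'s according to whether its southmost row has two boxes or one, as guaranteed by Lemma~\ref{lem:piecesobservations}(III).
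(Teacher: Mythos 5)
Your proposal is correct and follows essentially the same route as the paper: the only nontrivial check is that if {\sf B1} applies (i.e.\ $\body(S)=S$), then Definition-Lemma~\ref{def-lem:snake_classification} forces $S$ to be the single box containing $\GG$, so the hypothesis of {\sf B2} fails, and exhaustiveness is automatic since {\sf B3} is defined as the complementary case. Your extra appeal to Lemma~\ref{lem:piecesobservations}(III) is harmless but not needed for the trichotomy itself.
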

\begin{proof}
If {\sf B1} applies, then by Definition-Lemma~\ref{def-lem:snake_classification}, $S= \ytableausetup{boxsize=1em} \ytableaushort{\GG}$. The lemma follows.
\end{proof}

\subsubsection{Miniswaps on $\tail(S)$}
\gap

\noindent
\ytableausetup{boxsize=1.3em}
{\sf (Case T1: $\tail(S)=\{\x\}$):}
\[\begin{picture}(350,17)
\put(0,0){$\tail(S)=\ytableaushort{\bullet} \mapsto  \m(\tail(S)) = -\hat{\beta}(\x) \cdot \ytableaushort{\GG}$}
\end{picture}
\]

\noindent
{\sf (Case T2: $\tail(S) = \{\x,\x^\rightarrow\}$ and $\GG\in \x^\rightarrow$):}
\[\begin{picture}(350,17)
\put(0,0){$\tail(S)=\ytableaushort{\bullet \GG} \mapsto \m(\tail(S)) = \hat{\beta}(\x) \cdot \ytableaushort{\GG \bullet}$}
\end{picture}\]

\noindent
\ytableausetup{boxsize=1.3em}
{\sf (Case T3: $\tail(S) = \{\x,\x^\rightarrow\}$ and $\GG^+ \in \x^\rightarrow$):} 
\[\begin{picture}(350,17)
\put(0,4){$\tail(S) = \ytableaushort{\bullet {\GG^+}} \mapsto \m(\tail(S)) = \mbox{$-\hat{\beta}(\x)$} \cdot\ytableaushort{\GG {\GG^+}} + \alpha \cdot \ytableaushort{\GG \bullet}$}
\put(329,0){$\GG^+$}
\end{picture}
\]
Set $\alpha:=0$ if the second tableau has two $\bullet_{\GG^+}$'s in the same column; otherwise set $\alpha:=\hat{\beta}(\x)$.

\noindent
{\sf (Case T4: $\tail(S) = \{\x,\x^\rightarrow\}$, $\GG \in \underline{\x^\rightarrow}$):}
Let $Z=\{\ell\in \underline{\x^\rightarrow}\colon\FF \prec \ell \prec \GG\}$.

\ytableausetup{boxsize=2em}
{\sf (Subcase {\sf T4.1}: $\mathcal{H}\in\underline{\x^\rightarrow}$, $\family(\mathcal{H}) = \family(\GG) + 1$ and $N_\mathcal{H}  = N_\GG$):}
\[\begin{picture}(370,20)
\put(0,0){$\tail(S) = \ytableaushort{\bullet {\mathcal{F}^!}} \mapsto \m(\tail(S)) = \ytableaushort{\bullet {\mathcal{F}^!}}$}
\put(84,-2){\Scale[.6]{Z, \GG, \HH}}
\put(251,-2){\Scale[.6]{Z, \GG^!, \HH}}
\end{picture}
\]

\ytableausetup{boxsize=2.2em}
{\sf (Subcase {\sf T4.2}: $\circled{\mathcal{H}}\in
\underline{\x^\rightarrow}$, $\family(\mathcal{H}) = \family(\GG) + 1$ and $N_\mathcal{H}  = N_\GG$):}
\[\begin{picture}(370,20)
\put(0,0){$\tail(S) = \ytableaushort{\bullet {\mathcal{F}^!}} \mapsto \m(\tail(S)) = \ytableaushort{\bullet {\mathcal{F}^!}} + \hat{\beta}(\x) \cdot \ytableaushort{\GG \bullet}$}
\put(84,-3){\Scale[.6]{Z, \GG, \circled{\HH}}}
\put(256,-3){\Scale[.6]{Z, \GG^!, \circled{\HH}}}
\put(334,25){\Scale[.6]{\mathcal{F}, Z}}
\end{picture}
\]

\ytableausetup{boxsize=1.4em}
{\sf (Subcase {\sf T4.3}: Subcases {\sf T4.1}/{\sf T4.2} do not apply):}
\[\begin{picture}(370,18)
\put(0,0){$\tail(S) = \ytableaushort{\bullet {\mathcal{F}^!}} \mapsto \m(\tail(S)) = \hat{\beta}(\x) \cdot \ytableaushort{\GG \bullet}$}
\put(78,-2){\Scale[.6]{Z, \GG}}
\put(244,16){\Scale[.6]{\mathcal{F}, Z}}
\end{picture}
\]

\noindent
\ytableausetup{boxsize=1.6em}
{\sf (Case T5: $\tail(S) = \{\x,\x^\rightarrow\}$, $\circled{\GG} \in \underline{\x^\rightarrow}$, $\GG \notin \underline{\x}$):}
Let $Z=\{\ell\in \underline{\x^\rightarrow}\colon\FF \prec \ell \prec \GG\}$.
\[\begin{picture}(370,18)
\put(0,0){$\tail(S) = \ytableaushort{\bullet {\FF^!}} \mapsto \m(\tail(S)) = \hat{\beta}(\x) \cdot \ytableaushort{\GG \bullet}$}
\put(79,-2){\Scale[.6]{Z, \circled{\GG}}}
\put(250,18){\Scale[.6]{\FF, Z}}
\end{picture}
\]

\noindent
\ytableausetup{boxsize=1.2em}
{\sf (Case T6: $\tail(S) = \{ \x, \x^\rightarrow \}$, $\circled{\GG} \in \underline{\x^\rightarrow}$, $\GG \in \underline{\x}$):}
\[\begin{picture}(370,10)
\put(0,0){$\tail(S) = \ytableaushort{\bullet {\mathcal{F}^!}} \mapsto \m(\tail(S)) = 0$}
\put(76,-3){\Scale[.6]{\circled{\GG}}}
\put(63,-3){\Scale[.8]{\GG}}
\end{picture}
\]

\begin{lemma}\label{lem:tail_wd}
Every nonempty $\tail(S)$ falls into exactly one of {\sf T1}--{\sf T6}.
\end{lemma}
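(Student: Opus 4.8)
The plan is to mirror the proof of Lemma~\ref{lem:head_wd}: organize the case analysis by $|\tail(S)|$ and then by the occupants of the boxes and the lower right edge of $\tail(S)$.

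First, since $\tail(S)$ is one row of the short ribbon $S$ (Lemma~\ref{lem:snakes_are_short_ribbons}), $|\tail(S)|\in\{1,2\}$. If $|\tail(S)|=1$, say $\tail(S)=\{\x\}$, then Lemma~\ref{lem:piecesobservations}(IV) forces $\bullet_\GG\in\x$, and we are in Case~{\sf T1}; none of {\sf T2}--{\sf T6} can apply since each of those presupposes $|\tail(S)|=2$.

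Now suppose $|\tail(S)|=2$, say $\tail(S)=\{\x,\x^\rightarrow\}$. Because $\tail(S)$ is the northmost row of $S$, I would argue from Lemma~\ref{lem:presnake_description} and the extension rules (R.1)--(R.3) that it arises in exactly one of three ways: (a) it is already the northmost row of the presnake $R$, so that $\bullet_\GG\in\x$ and $\GG\in\x^\rightarrow$; (b) the northmost row of $R$ is the single box $\{\x\}=\{\bullet_\GG\}$, extended eastward by (R.1), so that $\GG^+\in\x^\rightarrow$ with $\family(\GG^+)=\family(\GG)$; or (c) that single box $\{\bullet_\GG\}$ is extended eastward by (R.3), so that $\x^\rightarrow$ carries a marked label and $\GG$ or $\circled{\GG}$ lies on $\underline{\x^\rightarrow}$. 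Here (R.2) need not be considered: it adjoins a box only to the southmost $\GG$, so it cannot affect the northmost row unless $S$ has a single row, and the one-row snakes with nonempty tail are exactly those of Definition-Lemma~\ref{def-lem:snake_classification}(III)(vi), which already have shape~(c). In all three cases $\bullet_\GG\in\x$; case~(a) is Case~{\sf T2}, case~(b) is Case~{\sf T3}, and in case~(c), Lemma~\ref{lem:piecesobservations}(VI) identifies the occupant of $\x^\rightarrow$ as $\FF^!$, giving Case~{\sf T4} when $\GG\in\underline{\x^\rightarrow}$, Case~{\sf T5} when $\circled{\GG}\in\underline{\x^\rightarrow}$ and $\GG\notin\underline{\x}$, and Case~{\sf T6} when $\circled{\GG}\in\underline{\x^\rightarrow}$ and $\GG\in\underline{\x}$.

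Finally, I would verify mutual exclusivity. {\sf T1} is the only one-box case. {\sf T2} and {\sf T3} are separated by $\GG\neq\GG^+$, and both differ from {\sf T4}--{\sf T6}, where $\x^\rightarrow$ holds a marked genetic label and $\GG$ (or $\circled{\GG}$) sits on $\underline{\x^\rightarrow}$: having $\GG$ or $\GG^+$ as a box label in $\x^\rightarrow$ together with a label of that family on $\underline{\x^\rightarrow}$ would violate (G.4). Among {\sf T4},{\sf T5},{\sf T6}, the first two are disjoint from the third (and from each other) because $\GG$ and $\circled{\GG}$ cannot simultaneously occupy $\underline{\x^\rightarrow}$, by (G.5) and the defining condition (V.2) of a virtual label; and {\sf T5} versus {\sf T6} is decided by whether $\GG\in\underline{\x}$. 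The one step that is more than bookkeeping --- and the main thing to get right --- is the enumeration (a)--(c): one must check that (R.1)--(R.3) together with Lemma~\ref{lem:presnake_description} genuinely exhaust the possible shapes of a two-box northmost row of a snake, using in particular that an unmarked box label immediately east of $\bullet_\GG$ is forced by (G.3) to be $\GG^+$ (an adjacent $\GG$ would already belong to the presnake). With that in hand the lemma follows.
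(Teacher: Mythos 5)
Your proof is correct and follows essentially the same route as the paper's: split on $|\tail(S)|\in\{1,2\}$ via Lemma~\ref{lem:snakes_are_short_ribbons}, use Lemma~\ref{lem:piecesobservations}(IV) for the one-box case, and enumerate the two-box possibilities from Lemma~\ref{lem:presnake_description}, the extension rules (R.1)--(R.3) and Definition-Lemma~\ref{def-lem:snake_classification}. The extra checks you include (ruling out (R.2) for the northmost row and verifying mutual exclusivity) are fine elaborations of what the paper leaves implicit.
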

\begin{proof}
Since $\tail(S)\neq\emptyset$, $|\tail(S)|\in\{1,2\}$
by Lemma~\ref{lem:snakes_are_short_ribbons}.
If $|\tail(S)|=1$, then by Lemma~\ref{lem:piecesobservations}(IV),
$\tail(S)=\ytableausetup{boxsize=1.1em}\ytableaushort{{\bullet_{\GG}}}$; this is covered
by {\sf T1}. Suppose $\tail(S)=\{\x,\x^\rightarrow\}$. By Lemma~\ref{lem:presnake_description},
(R.1)--(R.3) and Definition--Lemma~\ref{def-lem:snake_classification},
$\tail(S)=\ytableaushort{{\bullet_{\GG}} \GG}$ (handled by {\sf T2}), $\ytableausetup{boxsize=1.2em}\tail(S)=\ytableaushort{{\bullet_{\GG}} {\GG^+}}$ (handled by {\sf T3})
or $\tail(S)=\ytableaushort{{\bullet_{\GG}} {\FF^!}}$ with $\GG$ or $\circled{\GG} \in \underline{\x^\rightarrow}$ (handled by {\sf T4}, {\sf T5} or {\sf T6}).
\end{proof}

\subsection{Swaps and slides}
\label{subsection:swapsandslides}
We define $\swap_{\GG}(T)$ and
${\tt slide}_{\{\x_i\}}(T)$ for a good tableau $T$. Define ${\tt swap}_\GG$ on a single snake $S$ by applying $\m$ simultaneously to $\head(S)$, $\body(S)$, and $\tail(S)$ (where the conditions on each $\m$ refer to the original $S$). 

\begin{lemma}
On the edges shared by two adjacent snake sections, the modifications to the labels given by the two miniswaps are compatible. 
\end{lemma}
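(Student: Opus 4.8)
The statement to prove is that when we apply the miniswaps $\m$ simultaneously to $\head(S)$, $\body(S)$, and $\tail(S)$, the modifications they make to the labels on shared edges agree, so that $\swap_\GG$ is well-defined on each snake $S$. The plan is a case analysis organized by which pairs of adjacent sections a snake $S$ can have, using the classification of snakes in Definition-Lemma~\ref{def-lem:snake_classification} and the structural facts in Lemma~\ref{lem:piecesobservations}.

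First I would isolate exactly which edges are shared. A horizontal edge of the diagram is touched by a miniswap only when it is the lower edge $\underline{\x}$ of a box $\x$ in the section being modified, or carries a label emitted/moved by that section. Since $\body(S)$ contains only $\bullet_\GG$'s and $\GG$'s in boxes with no edge-label $\GG$'s or $\circled{\GG}$'s (Lemma~\ref{lem:piecesobservations}(III)), and since $\head(S)$ and $\tail(S)$ are single rows (or single boxes), the only potentially shared edges are: (a) the lower edge of a box of $\body(S)$ that is also the upper edge of a box of $\head(S)$, when $\head(S)$ is the southmost row and has two boxes; and (b) the lower edge of a box of $\tail(S)$ that is the upper edge of a box of $\body(S)$. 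But an \emph{upper} edge of a box is never the edge $\underline{\y}$ for the box $\y$ above it unless those two boxes are vertically adjacent, and the miniswaps only alter \emph{lower} edges of their own boxes. So in fact the miniswaps on distinct sections never write on the same edge as an ``edge label'' operation; what must be checked instead is that the \emph{box} contents along the seam row are modified consistently — i.e., the box that belongs to two descriptions (e.g. the southmost row of $\body$ in Case B2 versus $\head$ in Cases H1–H9) is transformed the same way by both.

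Concretely, I would go through the seams. When $\head(S)$ is nonempty and is the southmost row with two boxes, the overlap with $\body(S)$ is the westmost box of $\head(S)$; comparing Lemma~\ref{lem:piecesobservations}(II) (giving $\head(S)=\ytableaushort{{\FF^!}\GG}$, $\ytableaushort{{\bullet_{\GG}}\GG}$, or $\ytableaushort{{\bullet_{\GG}}{\GG^+}}$) against Cases B2/B3 and H4–H9, one checks box-by-box that a $\bullet_\GG$ in that seam box goes to $\GG$ under both prescriptions, a $\GG$ goes to $\bullet_{\GG^+}$ under both, and a marked $\FF^!$ is left untouched by both. The analogous check at the $\tail$–$\body$ seam uses Lemma~\ref{lem:piecesobservations}(V)–(VII): the westmost box of $\tail(S)$ carries $\bullet_\GG$ and its box below carries $\GG$, consistent with the $\bullet_\GG \mapsto \GG$, $\GG \mapsto \bullet_{\GG^+}$ pattern of B2/B3 and of T1–T6. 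The weight factors multiply across sections by definition of $\swap_\GG$, so there is nothing to reconcile there. The remaining subtlety is the virtual/marked labels $\circled{\GG}$ and $\GG^!$ introduced or consumed in the more elaborate cases (H5.1–H5.2, H6–H7, T3–T5): here I would verify that such labels are always placed on edges interior to a single section, never on a seam, using (G.11), (G.12), and Lemma~\ref{lem:strong_form_of_G13}.

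The main obstacle I anticipate is bookkeeping rather than conceptual depth: enumerating all $(\head,\body)$ and $(\body,\tail)$ adjacency types allowed by Definition-Lemma~\ref{def-lem:snake_classification}, and for each confirming that the shared box's transformation and the placement of any auxiliary $\bullet$, $\GG^!$, or $\circled{\GG}$ labels agree between the two miniswap rules. This is finite and each individual comparison is immediate from the explicit tableau pictures defining the miniswaps, so the proof amounts to saying ``inspect the cases'' with a pointer to which of H1–H9, B1–B3, T1–T6 meet at each seam and why they agree there.
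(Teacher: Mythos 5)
There is a genuine gap, and it sits exactly at the point your plan dismisses as unproblematic. Your pivotal claim that ``the miniswaps only alter lower edges of their own boxes'' is false: {\sf H1} moves the $\GG$ from $\underline{\x}$ to $\overline{\x}$, and {\sf T4.2}, {\sf T4.3}, {\sf T5} deposit $\FF$ and $Z$ on $\overline{\x}$ (the upper edge of the left box of $\tail(S)$). These upper-edge writes are precisely the candidates for a conflict on a shared edge, so they cannot be waved away; they must be analyzed. The actual resolution is that the only head miniswap writing to the northeast upper edge is {\sf H1}, and a one-box $\head(S)$ occurs only when $\head(S)=S$ (Definition-Lemma~\ref{def-lem:snake_classification}, Lemma~\ref{lem:piecesobservations}(I)), so that edge is not shared with any other section; similarly the tail's upper edge is never shared because $\tail(S)$ is the northmost row, and the only tail miniswap involving $\underline{\x}$ for $\x$ the left box of $\tail(S)$ (the genuine tail/body seam) is {\sf T6}, which forces $\tail(S)=S$ (combine Lemma~\ref{lem:piecesobservations}(VII) with (G.4)/(G.7)), so again there is nothing to reconcile. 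Your proposal never isolates {\sf H1} and {\sf T6} as the critical cases nor supplies the ``section equals the whole snake'' observations that make them harmless.

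Compounding this, the check you substitute in their place is vacuous: you propose to verify that ``the box that belongs to two descriptions'' (e.g.\ the seam box of $\head$ versus $\body$) is transformed consistently, but by Definition-Lemma~\ref{def-lem:snake_classification} the sections $\head(S)$, $\body(S)$, $\tail(S)$ are box-disjoint --- $\body(S)$ is the set of \emph{remaining} rows --- so no box is governed by two miniswaps and there is no box-level compatibility to check. The content of the lemma lives entirely on the horizontal edges along the seams (the lower edge of a body/tail box coinciding with the upper edge of the head/body box below it), and a correct proof must show that at most one of the two adjacent sections ever modifies such an edge, which is exactly what the two case-specific observations above accomplish.
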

\begin{proof}
Suppose the lower of the two adjacent sections is $\head(S)$. The only miniswap that introduces a label to  the northeast edge (i.e., $\overline \x$ if
$\head(S)=\{\x\}$ or $\overline{\x^\rightarrow}$ if $\head(S)=\{\x,\x^\rightarrow\}$) is {\sf H1}. However in that case 
$\head(S)=S$ and the compatibility issue is moot. 
Since $\body$ miniswaps do not affect edge labels, the remaining check is when a $\tail$ miniswap involves $\underline{\x}$ where
$\x$ is the left box of $\tail(S)$. This only occurs in {\sf T6}. In this case $\tail(S)=S$, so compatibility is again moot.
\end{proof}

\begin{lemma}[Swap commutation]\label{lem:miniswap_commutation}
If $S_1, S_2$ are distinct snakes in a $\GG$-good tableau $T$, then applying ${\tt swap}_\GG$ to $S_1$ commutes with applying ${\tt swap}_\GG$ to $S_2$.
\end{lemma}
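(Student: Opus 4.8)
The plan is to prove Lemma~\ref{lem:miniswap_commutation} by showing that the two operations ${\tt swap}_\GG|_{S_1}$ and ${\tt swap}_\GG|_{S_2}$ modify disjoint pieces of data and that the (conditional) definitions of the relevant miniswaps are insensitive to whichever one is performed first. The key structural input is Lemma~\ref{lem:snakes_arranged_SW-NE}: distinct snakes are box-disjoint and fall into exactly one of the three configurations (I) SW--NE disjoint, (II) the ``share a row'' picture of Example~\ref{exa:cansharerow}, or (III) the ``share a column'' picture of Example~\ref{exa:cansharecolumn}. So the first step is to recall that classification and to note that each miniswap on a snake section of $S_i$ only alters: the box labels inside boxes of $S_i$, the edge labels on $\underline{\x}$ and $\underline{\x^\rightarrow}$ for the two boxes of the relevant section (and in a few cases $\overline{\x}$, but as the preceding lemma observes that only happens when the section equals all of $S$), and emits a scalar weight depending only on ${\sf Man}$-values of boxes of $S_i$. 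Hence in Case (I), where $S_1$ and $S_2$ occupy strictly disjoint rows and columns, there is simply nothing to check: the modified boxes and edges are disjoint, the emitted weights are independent, and no conditional clause in any miniswap refers to a label that the other swap could have touched.

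The substantive work is in Cases (II) and (III), so I would handle those next, one at a time. In Case (II), by Lemma~\ref{lem:snakes_arranged_SW-NE}(II), one snake—say $S_1$—is the single box $\{\x\}$ with $\bullet_\GG \in \x$ and neither $\GG$ nor $\circled{\GG}$ on $\underline{\x}$; this is Case {\sf H3} or {\sf T1} or {\sf B}-type, none of which inspects or alters any edge label, and {\sf H3}/{\sf B1} emit no weight while {\sf T1} emits $-\hat\beta(\x)$ depending only on $\x$. Meanwhile $S_2$ lives in the same row but strictly east, with all intervening box labels marked; a miniswap on ${\tt head}(S_2)$ acts on the two leftmost boxes of $S_2$ and the edges beneath them, all strictly east of $\x$, so again the modified data are disjoint. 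The only thing to verify carefully is that the \emph{conditions} selecting the miniswap case for each snake do not see the other snake's modifications: the conditions for $S_1$'s miniswap concern only $\underline{\x}$ (which $S_2$ never touches, since $S_2$'s boxes and their lower edges are east of $\x$), and the conditions for $S_2$'s miniswap—e.g.\ whether $\FF^! \in \x_{S_2}$ in {\sf H9}, or whether some $\HH$ with the right $N$-value and family sits on a lower edge in {\sf H5}—concern boxes and edges east of $\x$, which $S_1$'s miniswap (being {\sf H3}/{\sf T1}/{\sf B1}) leaves untouched. So the outcomes commute on the nose.

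Case (III) is the delicate one and I expect it to be the main obstacle. Here, by Lemma~\ref{lem:snakes_arranged_SW-NE}(III), $S_1$ involves an (R.1) extension adjoining $\GG^+$ in a box $\w$, and $S_2$ is either $\{\bullet_\GG \in \w^\uparrow\}$ or $\{\bullet_\GG \in \w^\uparrow,\ \GG^+ \in \w^{\uparrow\rightarrow}\}$; thus $S_1$ and $S_2$ genuinely share the column of $\w$. The miniswap on $\tail(S_2)$ is {\sf T1} (if $S_2$ is a single box) or {\sf T3} (if $S_2 = \{\bullet_\GG, \GG^+\}$), and in {\sf T3} there is the $\alpha$-clause: ``set $\alpha := 0$ if the second tableau has two $\bullet_{\GG^+}$'s in the same column.'' The point is that this clause looks at the column of $\w^\uparrow$, i.e.\ exactly the column shared with $S_1$—and $S_1$'s miniswap, being a {\sf head} or {\sf body} miniswap that turns its $\GG^+$ in $\w$ into a $\bullet_{\GG^+}$ (cf.\ Cases {\sf H6}--{\sf H8}, {\sf B2}/{\sf B3}, {\sf T3}), is precisely what creates or fails to create the potentially-conflicting $\bullet_{\GG^+}$ in $\w$. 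So I must show that the value of $\alpha$ in $S_2$'s {\sf T3} miniswap is the \emph{same} whether computed before or after $S_1$'s swap, and symmetrically that $S_1$'s own $\alpha$-clause (if $S_1$'s relevant section is {\sf H6}/{\sf H7}/{\sf T3}) sees the same column data regardless of order. The resolution is that ``two $\bullet_{\GG^+}$'s in the same column'' is a property one checks \emph{after both swaps are performed}—the condition as stated in the miniswap is to be read relative to the globally post-swap tableau—so it is manifestly order-independent; alternatively, one argues directly that $S_1$ contributes a $\bullet_{\GG^+}$ to the shared column exactly when $S_1$'s section is of {\sf B2}-type or {\sf H6}/{\sf H7}/{\sf T3}-with-nonzero-$\alpha$ type, a condition about $S_1$ alone, and feeds that fixed Boolean into $S_2$'s $\alpha$. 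Either way, after pinning down this reading, the modified box labels (inside $S_1$ versus inside $S_2$, disjoint by box-disjointness), modified edge labels (none of the relevant miniswaps here touch edges, or touch only edges beneath their own boxes), and emitted weights (products of $\hat\beta$ over ${\tt body}_\bullet$ or single $\hat\beta$ factors, depending only on each snake's own boxes) are all independent, and commutation follows. I would close by remarking that the ``furthermore'' clause at the end of (G.12)—about $\y^\rightarrow$ in the last scenario—together with (R.3) guarantees no further overlap is possible, so the three cases of Lemma~\ref{lem:snakes_arranged_SW-NE} are exhaustive and the argument is complete. $\qed$
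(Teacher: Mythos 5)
Your skeleton (casework over the three relative positions of Lemma~\ref{lem:snakes_arranged_SW-NE}, with disjointness of the modified data handling cases (I) and (II)) is reasonable, but the crucial case (III) rests on a misidentified miniswap. There the upper snake $S_2$ is $\{\bullet_\GG\in\w^\uparrow\}$ or $\{\bullet_\GG\in\w^\uparrow,\ \GG^+\in\w^{\uparrow\rightarrow}\}$: a \emph{one-row} snake, so by Definition-Lemma~\ref{def-lem:snake_classification}(III) it equals its own $\head$ and has empty $\tail$; the miniswaps {\sf T1}/{\sf T3} you assign to it cannot apply, since those require a snake with at least two rows (Lemma~\ref{lem:piecesobservations}(IV,V)). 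Because $\GG^+\in\w$ with $\family(\GG^+)=\family(\GG)$, (G.4) forbids $\GG$ or $\circled{\GG}$ on the shared edge $\overline{\w}=\underline{\w^\uparrow}$, so $S_2$ undergoes the \emph{trivial} miniswap {\sf H3} or {\sf H8}; and since no miniswap on the northmost row of $S_1$ ever writes to $\overline{\w}$, this remains true whether or not $S_1$ has already been swapped. That observation is exactly what the paper's (very short) proof pivots on, after reducing to the shared-horizontal-edge situation and noting that virtual-label locations in one snake are unaffected by swapping the other; it is absent from your argument, and your discussion of ``$S_2$'s {\sf T3} $\alpha$-clause'' (and of ``feeding a Boolean into $S_2$'s $\alpha$'') is aimed at a clause that does not exist for $S_2$.

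The genuine $\alpha$-subtlety you sense lives in $S_1$'s miniswap ({\sf H6}/{\sf H7} if $S_1$ has one row, {\sf T3} otherwise): its $\alpha:=0$ clause is precisely what annihilates the output placing $\bullet_{\GG^+}\in\w$ directly below $S_2$'s bullet. You do flag this (``symmetrically \ldots $S_1$'s own $\alpha$-clause''), and your resolution --- read ``two $\bullet_{\GG^+}$'s in the same column'' in the fully swapped tableau, equivalently as the condition on $T$ that $\bullet_\GG\in\x^{\rightarrow\uparrow}$ --- is consistent with how the paper actually uses $\alpha$ (e.g.\ in the (G.2)-preservation argument of Appendix~\ref{sec:forward_goodness_proof} and in Subcase 0.2.3.2 of Appendix~\ref{sec:blockdecomp}), and it does make $\alpha$ order-independent. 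So the gap is repairable, but as written the key case is argued from the wrong miniswap classification; a smaller slip of the same kind occurs in case (II), where the single-box snake is forced to be {\sf H3}, not ``{\sf H3} or {\sf T1} or {\sf B}-type.''
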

\begin{proof}
By definition, the locations of virtual labels in one snake are unaffected by swapping another snake.
Hence if the snakes do not share a horizontal edge, there is no concern.
If they do, this is the situation of
Lemma~\ref{lem:snakes_arranged_SW-NE}(III). The northmost row $r$ of the lower snake (say $S_1$) is $\{\x,\x^\rightarrow\}$ with $\GG^{+}\in\x^\rightarrow$. Hence by (G.4), $\GG,\circled{\GG}\not\in \overline{\x^\rightarrow}$. 
By inspection, no miniswap involving $r$ affects $\overline{\x^\rightarrow}$.
Now, the upper snake $S_2$ has a single row, which by the previous sentence is either an {\sf H3} or {\sf H8} $\head$, irregardless of whether
we have acted on $r$ already. Therefore, $\swap_{\GG}$
acts trivially on $S_2$ whether we act on $S_1$ or $S_2$ first. 
\end{proof}

Lemma~\ref{lem:miniswap_commutation} permits us to define the {\bf swap} operation $\swap_\GG$ on a $\GG$-good tableau as
the result of applying $\swap_\GG$ to all snakes (in arbitrary order). Extend
$\swap_\GG$
to a ${\mathbb Z}[t_1^{\pm 1},\ldots, t_n^{\pm 1}]$-linear
operator.

An {\bf inner corner} of $\nu/\lambda$ is a maximally southeast box of $\lambda$. An {\bf outer corner} of $\nu/\lambda$ is a
maximally southeast box of $\nu/\lambda$.

Let $T\in {\tt Bundled}(\nu/\lambda)$ and $\{\x_i\}$ be a subset of the inner corners of $\nu/\lambda$.
Define $T^{(1_1)}$ to be $T$ with $\bullet_{1_1}$ placed in each $\x_i$.

\begin{lemma}\label{lem:adding_bullets_ok}
Each $T^{(1_1)}$ is $1_1$-good.
\end{lemma}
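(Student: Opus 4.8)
The plan is to verify conditions (G.1)--(G.13) directly for $T^{(1_1)}$, exploiting that $T$ itself is $1_1$-good (indeed $\GG$-good for every $\GG$) by Lemma~\ref{lem:bundled_tableaux_are_good}, so that most conditions are inherited. The only change between $T$ and $T^{(1_1)}$ is the insertion of box labels $\bullet_{1_1}$ into certain inner corners of $\nu/\lambda$; since $T$ has no box labels at all, every condition referring to $\bullet_\GG$'s is either vacuous for $T$ or becomes a claim purely about how the newly inserted $\bullet_{1_1}$'s interact with the genetic labels.

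First I would observe that inner corners of $\nu/\lambda$ are maximally southeast boxes of $\lambda$, hence pairwise incomparable in the northwest/southeast order and in distinct rows and columns; this immediately gives (G.2). For (G.1), (G.5), (G.6), (G.7), (G.8) I would note these conditions do not mention box labels and so hold for $T^{(1_1)}$ exactly because they hold for $T$. For (G.9): a genetic label $\FF$ northwest of some $\bullet_{1_1}$ must satisfy $\FF \prec 1_1$, but $1_1$ is the $\prec$-minimum gene, so this would force $\FF$ to be a gene strictly below $1_1$, which is impossible --- hence \emph{no} genetic label is northwest of any $\bullet_{1_1}$, and (G.9) holds vacuously (in fact this also shows no marked labels $\FF^!$ exist, since marking requires a $\bullet_\GG$ northwest). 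Consequently (G.3), (G.4), (G.10), (G.11), (G.13) all hold: the ``marked'' exceptions never trigger, so (G.3) and (G.4) reduce to the ordinary increase conditions inherited from $T$ via (S.1)--(S.2); (G.10) and (G.13) have no marked labels to constrain; and (G.11) holds because no column contains a marked label. Finally (G.12) for two same-family genetic labels is inherited from the corresponding argument for $T$ (in $T$, as in Lemma~\ref{lem:bundled_tableaux_are_good}, no same-family genetic label is northwest of another, by (S.1)/(S.2)), so the hypothesis of (G.12) is never met.

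The last point requiring genuine attention is the virtual-label bookkeeping: one must check the $\bullet_{1_1}$'s do not spoil the defining conditions (V.1)--(V.3) for virtual labels, i.e.\ that the set of places where a virtual $\circled{\HH}$ sits is consistent with the $1_1$-goodness notion. Here (V.1) is automatic since any $\circled{\HH}$ is southeast of a $\bullet_{1_1}$ only with $\HH \succeq 1_1$, which always holds; (V.2) and (V.3) refer only to genetic-label conditions already checked. Thus $T^{(1_1)}$ is $1_1$-good.

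The main obstacle --- and it is a mild one --- is simply being careful that every one of the thirteen conditions plus the three virtual conditions is genuinely unaffected; the conceptual content is entirely that $1_1$ being $\prec$-minimal means no genetic label can be marked or lie northwest of a $\bullet_{1_1}$, which collapses all the ``box-label'' clauses to vacuity and leaves only the conditions inherited verbatim from the bundled (hence semistandard, ballot, no-label-too-high) structure of $T$.
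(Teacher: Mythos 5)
Your overall route is the paper's own: check (G.1)--(G.13) directly, note that most conditions are untouched by inserting $\bullet_{1_1}$'s at inner corners, and observe that the conditions involving marked labels are vacuous (the paper's proof is exactly this, in three sentences). However, your justification of (G.9) is circular. You write that a genetic label $\FF$ northwest of some $\bullet_{1_1}$ ``must satisfy $\FF \prec 1_1$'' --- but that is the content of (G.9) itself, the very condition you are verifying; you then use the impossibility of $\FF \prec 1_1$ to conclude that no label lies northwest of a bullet and hence that (G.9) holds vacuously. What is actually needed is the geometric observation you invoked only for (G.2): the $\bullet_{1_1}$'s occupy inner corners of $\nu/\lambda$, i.e.\ maximally southeast boxes of $\lambda$, whereas every genetic label of $T$ sits in a box of $\nu/\lambda$ or on a horizontal edge weakly below the southern border of $\lambda$, and no such position is northwest of such a corner. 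That one sentence is the missing step, and it is the only substantive point in the lemma beyond (G.2).

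A second, smaller slip: your parenthetical deduction that ``no marked labels exist'' reverses the relevant relation. A label $\FF$ is marked when a $\bullet_\GG$ lies northwest of \emph{it} (together with $\FF \prec \GG$), and in $T^{(1_1)}$ many genetic labels do lie southeast of the inserted bullets; so ``no label is northwest of any bullet'' does not by itself rule out marking. The correct (and simpler) reason is that marking requires $\FF \prec \GG = 1_1$, which is impossible since $1_1$ is $\prec$-minimal. With these two repairs, your handling of (G.3), (G.4), (G.10), (G.11), (G.13) (vacuous or unaffected), of (G.12) (hypothesis never met, as in Lemma~\ref{lem:bundled_tableaux_are_good}), and of the remaining inherited conditions goes through and coincides with the paper's argument.
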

\begin{proof}
(G.2) is clear.
By Lemma~\ref{lem:bundled_tableaux_are_good}, $T$ is good; (G.1), (G.3)--(G.8) and (G.12) are unaffected by adding
$\bullet_{1_1}$'s to inner corners. (G.9)--(G.11) and (G.13) hold vacuously.
\end{proof}

The {\bf slide} of $T$ at $\{\x_i\}$ is
\begin{equation}
\label{eqn:defjdt}
{\tt slide}_{\{\x_i\}}(T) := \swap_{\GG_{\rm max}} \circ \swap_{\GG_{\rm max}^-}\circ \cdots \circ \swap_{1_1}(T^{(1_1)}),
\end{equation}
with all
$\bullet_{\GG_{\rm max}^+}$'s deleted. 
If $\Sigma$ is a formal ${\mathbb Z}[t_1^{\pm 1},\ldots, t_n^{\pm 1}]$-linear sum of tableaux we write $V\in \Sigma$ to  mean
$V$ occurs in $\Sigma$ with nonzero coefficient. The following proposition, proved in Appendix~\ref{sec:forward_goodness_proof}, 
shows (\ref{eqn:defjdt}) is well-defined.

\begin{proposition}[Swaps preserve goodness]
\label{prop:goodness_preservation}
If $T$ is a $\GG$-good tableau, then each $U\in\swap_\GG(T)$ is $\GG^+$-good.
\end{proposition}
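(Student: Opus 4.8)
The plan is to verify each of the thirteen conditions (G.1)--(G.13) for an arbitrary tableau $U$ occurring in $\swap_\GG(T)$, reducing to a local analysis on the snakes of $T$. Recall that $\swap_\GG$ acts independently on the snakes of $T$ (Lemma~\ref{lem:miniswap_commutation}), and each snake $S$ is a short ribbon (Lemma~\ref{lem:snakes_are_short_ribbons}) whose sections have the restricted forms enumerated in Definition-Lemma~\ref{def-lem:snake_classification}; moreover, distinct snakes interact only in the three controlled ways of Lemma~\ref{lem:snakes_arranged_SW-NE}. Thus one fixes a choice of outcome tableau in each miniswap on each snake, and must check that the resulting filling $U$ --- with $\bullet_\GG$'s now replaced by $\GG$'s, $\circled{\GG}$'s, and fresh $\bullet_{\GG^+}$'s according to Cases H, B, T --- is $\GG^+$-good, using that $T$ was $\GG$-good together with the structural lemmas of Sections~\ref{sec:good_tableaux}--\ref{sec:snakes}.

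The natural organization is to group the conditions by what they constrain. The conditions governing genetic labels alone --- (G.1), (G.4), (G.5), (G.6), (G.8), (G.12) --- are the subtlest, because a miniswap converts a $\bullet_\GG$ into a genetic $\GG$ (or $\circled\GG$), creating a \emph{new} genetic label of family $\family(\GG)=\family(\GG^+)$ or $\family(\GG)+1$; one must check this new label does not violate column-strictness, the row $\prec$-order, ballotness, or the NorthWest condition. Here the key inputs are: Lemma~\ref{lem:presnake_description} (shape of presnakes), Lemma~\ref{lem:how_to_check_ballotness} and Lemma~\ref{lem:draggable_labels_in_complete_sets} (controlling ballotness via the $N$-statistic), and the ``strong forms'' Lemmas~\ref{lem:strong_form_of_G10} and~\ref{lem:strong_form_of_G13} (which locate marked labels and forced companion labels on edges). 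The conditions governing the new box label $\bullet_{\GG^+}$ --- (G.2), (G.9), (G.10), (G.11), (G.13) for $\EE^!$ of family $<\family(\GG^+)$, and the ``northwest/southeast'' positioning --- are handled using Lemma~\ref{lemma:Gsoutheastofbullet}, Lemma~\ref{lem:GandbulletG+}, and Lemma~\ref{lem:same999}, which were evidently engineered precisely for this step: e.g., Lemma~\ref{lem:GandbulletG+} says a $\bullet_{\GG^+}$ in a $\GG^+$-good tableau cannot have a $\GG$ to its NorthWest, which is exactly what (G.9) demands of $U$ once we note the old $\bullet_\GG$-positions and the new $\bullet_{\GG^+}$-positions are constrained by the snake geometry. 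The conditions (G.3), (G.7) on rows and edge-maximality, and the virtual-label conditions (V.1)--(V.3) that feed into (G.13), are checked by direct inspection of each miniswap picture; the special three-label exception $\ytableausetup{boxsize=1.5em}\ytableaushort{\HH{\bullet_\GG}{\FF^!}}$ in (G.3) is created exactly by Cases H5.1/H5.2/H9 producing $\GG^!$, so one tracks where marked labels are born.

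The main obstacle I expect is (G.8) (ballotness) together with (G.12), because these are \emph{global} conditions: a miniswap on one snake creates a new genetic label $\GG$ or $\circled\GG$, and one must rule out that it lies East of some other $\GG$ of the same family elsewhere in the tableau (which would break ballotness by Lemma~\ref{lem:how_to_check_ballotness}), or that it creates a same-family pair violating the snake-certificate of (G.12). The resolution is to show that the $N$-statistic is preserved in the relevant sense --- the new $\GG$ inherits $N_\GG$ from the $\bullet_\GG$ it replaces, and the snake extensions (R.1)--(R.3) were defined precisely so that the companion labels forced by (G.13)/Lemma~\ref{lem:strong_form_of_G13} move consistently --- so that ballotness of every genotype of $U$ follows from ballotness of the corresponding genotype of $T$. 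This bookkeeping, spread over the many cases, is the real content; because it is long but essentially a finite case-check driven by the lemmas already in hand, it is deferred to Appendix~\ref{sec:forward_goodness_proof}.
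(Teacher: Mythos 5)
Your proposal adopts the same overall strategy as the paper (a condition-by-condition check of (G.1)--(G.13) for each $U\in\swap_\GG(T)$, driven by inspection of the miniswaps and the structural lemmas of Sections~\ref{sec:good_tableaux}--\ref{sec:snakes}, with the (V.1)--(V.3) verification for the virtual labels prescribed by {\sf H5.2} and {\sf T4.2} folded into the (G.13) discussion), but as written it is only a plan: the entire case analysis, which is the actual content of the proposition, is explicitly deferred, so nothing is yet proved. More seriously, the one place where you commit to a mechanism would fail as stated. For (G.8) you assert that the new $\GG$ ``inherits $N_\GG$'' and that ballotness of every genotype of $U$ follows from ballotness of a corresponding genotype of $T$. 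There is no such genotype-to-genotype correspondence: the swap relocates labels (it creates new instances of $\GG$ in boxes that held $\bullet_\GG$, pushes $\GG^+$ onto lower edges in {\sf H6}/{\sf H7}/{\sf T3}, and in {\sf T4}/{\sf T5} moves the marked $\FF$ and the set $Z$ from $\x^\rightarrow$ and $\underline{\x^\rightarrow}$ up to $\overline{\x}$), so the column reading words change and ballotness does not transfer formally. What is actually needed, and what the paper proves, is the positional statement that for genes $\EE,\FF$ with $\family(\FF)=\family(\EE)+1$ and $N_\EE=N_\FF$, no instance of $\FF$ in $U$ lies East of any instance of $\EE$; this is established by a case analysis on $\family(\EE)$ relative to $\family(\GG)$, using the already-established condition (G.4) of $U$ together with Lemma~\ref{lem:how_to_check_ballotness} applied to $T$ --- an argument your sketch does not supply.

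Two further inaccuracies in the outline: the exceptional (G.3) configuration (a marked label immediately right of the new $\bullet_{\GG^+}$ with a larger label immediately left) is not created by {\sf H5.1}/{\sf H5.2}/{\sf H9} --- in those outputs the label left of the bullet is $\prec\GG$ by $T$'s (G.9), so the row is still $\prec$-increasing --- it arises from the {\sf T4.2}/{\sf T4.3}/{\sf T5} outputs, where $\GG$ lands left of the new $\bullet_{\GG^+}$ with a marked $\FF^!\prec\GG$ surviving to its right. And verifying that the prescribed $\circled{\HH}$'s of {\sf H5.2}/{\sf T4.2} satisfy (V.1)--(V.3) is not ``direct inspection of each miniswap picture'': it requires checking (G.4), (G.8), (G.12), etc.\ for the tableau with the virtual label materialized, which in the paper is a separate lemma inside the (G.13) step. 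Finally, the paper orders the thirteen verifications so that each uses only previously established conditions of $U$ (for instance (G.3) uses $U$'s (G.4)--(G.6), and (G.8) uses $U$'s (G.4)); your grouping of conditions ``by what they constrain'' ignores this dependency structure, which must be respected for the case-check to be non-circular.
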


\begin{lemma}[Swaps preserve content]\label{lem:content_preservation}
If $T$ is a $\GG$-good tableau of content $\mu$, then each $U\in\swap_\GG(T)$ has content $\mu$.
\end{lemma}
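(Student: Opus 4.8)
The plan is to reduce the claim to a per-snake, indeed per-miniswap, bookkeeping statement. Recall that the content of a tableau records, for each family $i$, the number of \emph{genes} of family $i$ — that is, the number of distinct subscripts $j$ for which some (non-virtual) label $i_j$ appears. Since $\swap_\GG$ acts by applying $\m$ simultaneously to $\head(S)$, $\body(S)$, $\tail(S)$ over all snakes $S$, and these snakes are box-disjoint by Lemma~\ref{lem:snakes_arranged_SW-NE}, it suffices to track how each individual miniswap changes the multiset of genes present in $T$. Genetic labels outside all snakes are untouched, so we only need to understand, for a fixed snake $S$ of a $\GG$-good tableau, which genes appear among the entries of $S$ before the miniswap and which appear after. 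The governing principle is that a miniswap only ever converts instances of $\GG$ (a gene of $\family(\GG)$) and instances of $\bullet_\GG$ (a \emph{box} label, contributing nothing to content) into instances of $\GG$, $\GG^!$, $\GG^+$, or $\bullet_{\GG^+}$ — never introducing or destroying any gene of a family other than $\family(\GG)$ or $\family(\GG^+)$.

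First I would dispose of the families untouched by any miniswap: inspecting Cases H1--H9, B1--B3, T1--T6, every genetic label that is \emph{created} is an instance of $\GG$ or $\GG^!$ (family $\family(\GG)$) or $\GG^+$ (family $\family(\GG^+)$), and every genetic label that is \emph{destroyed} is an instance of $\GG$ (the marked $\FF^!$ in Cases T4--T5 and H9, and the $\circled{\GG}$'s and ancillary labels $Z,\HH$, are either virtual or are merely relocated between box and edge, not destroyed). Hence for $i \notin \{\family(\GG), \family(\GG^+)\}$ the content in family $i$ is manifestly unchanged. Next I would treat family $\family(\GG)$: the point is that after $\swap_\GG$ runs, $\circled{\GG}$'s become the operative edge labels where needed and, crucially, by the snake/presnake structure (Lemma~\ref{lem:presnake_description}, Definition-Lemma~\ref{def-lem:snake_classification}) the gene $\GG$ either persists somewhere in each snake that contained it, or is deliberately converted to $\GG^+$ en masse in a $\body$ miniswap (Cases B2, B3) precisely when the $\head$/$\tail$ miniswaps of that same snake re-supply a $\GG$ (respectively $\GG^+$) — this is the content of the ``$+$'' versus ``$-$'' sign cases and the $\gamma,\alpha$ choices. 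One checks case by case that the number of distinct subscripts of family $\family(\GG)$ surviving across the whole tableau is preserved. Finally, family $\family(\GG^+)$: a new gene $\GG^+$ is created by a miniswap only in the situation where it is simultaneously \emph{consumed} from an adjacent position (the $\GG^+\in \x^\rightarrow$ of Cases H6--H8, T3, or the (R.1) extension), so again no net change; and a $\bullet_{\GG^+}$ is purely a box label, invisible to content, and is deleted entirely at the end of ${\tt slide}$.

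The cleanest organization is therefore: (1) observe genes outside snakes are inert; (2) for each of the finitely many miniswap cases, list the genes of $S$ before and after and verify the multiset of \emph{families} of genes is unchanged — doing this once for $\head$, once for $\body$, once for $\tail$; (3) conclude by disjointness of snakes and linearity of $\swap_\GG$. The main obstacle is step (2) in the $\tail$ cases T4 and T5, where the edge of $\x^\rightarrow$ carries the auxiliary labels $Z$, $\GG$ (or $\circled\GG$), and $\HH$: one must be careful that the marked $\FF^!$ that disappears from the box is not the sole instance of its gene (it is not, by Lemma~\ref{lemma:markedHiswestmost} together with (G.13)/Lemma~\ref{lem:strong_form_of_G13}, which guarantee a companion label of the same gene or a required ballot-partner elsewhere), and that the relabeling $\GG \mapsto \GG^!$ is a genuine no-op for content (a marked label still counts toward its gene). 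Once these potential pitfalls are checked, the rest is routine case enumeration.
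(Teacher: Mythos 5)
Your overall frame (box-disjointness of snakes, per-miniswap bookkeeping, inertness of all families other than $\family(\GG)$) is the same as the paper's, but your plan breaks down exactly where the paper's proof does its real work. Step (2) of your organization --- check for each miniswap that ``the multiset of families of genes'' of the snake is unchanged --- is simply false for {\sf H2}, {\sf H7} and {\sf T5}: there the snake contains no nonvirtual instance of $\GG$ at all (only $\bullet_\GG$'s, possibly a marked label, and a virtual $\circled{\GG}$), yet an output of the miniswap contains a genuine $\GG$, so the gene set of the snake strictly grows; the same happens at the section level in {\sf T1} and {\sf T3}, where a $\bullet_\GG$ becomes $\GG$. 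What saves content preservation is a fact you never state: whenever such a $\GG$ is created, the gene $\GG$ already appears elsewhere in $T$ --- for {\sf H2}/{\sf H7}/{\sf T5} because a virtual $\circled{\GG}$ exists only when a nonvirtual $\GG$ lies further west in $T$ (condition (V.2)), and for {\sf T1}/{\sf T3} because by Lemma~\ref{lem:piecesobservations}(IV,V,VII) these tails occur only in snakes with at least two rows, with $\GG$ in the box directly below the $\bullet_\GG$. This ``no new gene'' direction is precisely the paper's proof; your remark that the $\circled{\GG}$'s ``become the operative edge labels where needed'' gestures at it but supplies no argument, so this is a genuine gap rather than routine case enumeration.

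Two secondary corrections. In {\sf B2}/{\sf B3} the $\GG$'s of the body are replaced by the \emph{box} labels $\bullet_{\GG^+}$, not by genetic labels $\GG^+$, so nothing is ``converted to $\GG^+$ en masse''; and the signs and the coefficients $\alpha,\gamma$ have nothing to do with re-supplying content (they govern weights and suppress invalid outputs), so that sentence reflects a misreading of the miniswaps even though it does not affect the truth of the lemma. Your ``no gene of $T$ is lost'' direction is correct, but it must be checked per snake rather than per section: e.g.\ when $\body(S)$ is a single box containing $\GG$, miniswap {\sf B3} deletes that $\GG$ from its section, and it is the tail miniswap of the same snake that restores an instance of $\GG$; your phrase ``persists somewhere in each snake that contained it'' is the right statement, it just needs the actual verification you defer.
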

\begin{proof}
No miniswap eliminates genes in a section. We consider each 
miniswap that introduces a new gene to a section; this gene must be $\GG$. We
show that $\GG$ appears elsewhere in $T$.
The first case is {\sf H2}, which produces a $\GG$ in its section, where there was only a $\circled{\GG}$ previously. $\circled{\GG}$ only appears if some $\GG$ is west of it in $T$. The same analysis applies \emph{verbatim} to {\sf H7} and {\sf T5}.
The remaining cases are {\sf T1} and {\sf T3}. By Lemma~\ref{lem:piecesobservations}(IV, V), the snake on which these miniswaps act has at 
least two rows. Moreover, there is a $\GG$ directly below the $\bullet_{\GG}$ under consideration. In particular, $\GG$ already appeared in $T$.
\end{proof}

\begin{lemma}\label{lem:bullets_migrate_out}
No label is strictly southeast of a 
$\bullet_{\GG_{\rm max}^+}$ in any $U\in\swap_{\GG_{\rm max}} \circ \swap_{\GG_{\rm max}^-}\circ \cdots \circ \swap_{1_1}(T^{(1_1)})$. 
In particular, all $\bullet_{\GG_{\rm max}^+}$'s are at outer corners of $\nu/\lambda$.
\end{lemma}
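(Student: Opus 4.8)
\textbf{Proof plan for Lemma~\ref{lem:bullets_migrate_out}.}
The plan is to track the box labels $\bullet_{\GG_{\rm max}^+}$ throughout the iterated swap and show that, once such a box label is created, nothing can ever again appear strictly southeast of it. First I would observe that Proposition~\ref{prop:goodness_preservation} already gives us a lot: after applying $\swap_{\GG_{\rm max}}$, the tableau $U$ is $\GG_{\rm max}^+$-good, and by (G.2) no $\bullet_{\GG_{\rm max}^+}$ is southeast of another. So the real content is to control \emph{genetic} labels (and their edge instances and virtual labels) relative to the $\bullet_{\GG_{\rm max}^+}$'s. The key point is that $\GG_{\rm max}^+ = (\ell(\mu)+1)_1$ has family strictly larger than any family that actually appears in a tableau of content $\mu$ (by Lemma~\ref{lem:content_preservation}, content is preserved throughout, so only families $1,\dots,\ell(\mu)$ occur as genetic labels). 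Therefore, in a $\GG_{\rm max}^+$-good tableau, every genetic label $\FF$ satisfies $\FF \prec \GG_{\rm max}^+$, i.e.\ every genetic label is ``$\prec$-smaller'' than the box-label gene $\GG_{\rm max}^+$.

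Given this, I would argue as follows. Suppose $\FF \in \x$ or $\FF\in\underline\x$ or $\circled{\FF}\in\underline\x$ with $\x$ strictly southeast of some box $\y$ containing $\bullet_{\GG_{\rm max}^+}$. Since $\family(\FF) \le \ell(\mu) < \family(\GG_{\rm max}^+)$, we have $\family(\FF) \le \family(\GG_{\rm max}^+)$, so Lemma~\ref{lemma:Gsoutheastofbullet} (applied with $\GG := \GG_{\rm max}^+$) immediately forbids $\x$ being SouthEast of $\y$ — a contradiction. This handles non-virtual and virtual genetic labels in one stroke. (For the virtual case one uses (V.3), which guarantees the tableau obtained by instantiating $\circled{\FF}$ still satisfies the structural conditions that Lemma~\ref{lemma:Gsoutheastofbullet} needs; alternatively one notes Lemma~\ref{lemma:Gsoutheastofbullet}'s proof only used (G.2), (G.4), (G.11), and the strong form of (G.13), all of which hold for virtual labels.) Thus no label of any kind lies strictly southeast of a $\bullet_{\GG_{\rm max}^+}$ in the final output $U$.

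For the ``in particular'' clause: the underlying set of boxes of $U$ is still $\nu/\lambda$ (swaps do not change the shape), and a box $\y$ with a $\bullet_{\GG_{\rm max}^+}$ has no box of $\nu/\lambda$ strictly southeast of it carrying any label; but every box of $\nu/\lambda$ other than the outer corners has a box of $\nu/\lambda$ to its east or south (hence strictly southeast of some box), and in a $\GG_{\rm max}^+$-good tableau every box of $\nu/\lambda$ is filled with a genetic label or a $\bullet$-label. More directly: if $\y$ is not a maximally-southeast box of $\nu/\lambda$, then $\y^\rightarrow$ or $\y^\downarrow$ lies in $\nu/\lambda$, and then some box strictly southeast of $\y$ (namely $\y^{\rightarrow\downarrow}$, or one of $\y^\rightarrow,\y^\downarrow$ together with the argument that the shape is a skew shape so a box southeast exists) contains a label, contradicting the previous paragraph. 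So every $\bullet_{\GG_{\rm max}^+}$ sits at an outer corner of $\nu/\lambda$.

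The main obstacle I anticipate is the bookkeeping in the ``in particular'' clause: one must be slightly careful that $\y$ not being an outer corner really forces a \emph{strictly} southeast box of $\nu/\lambda$ to exist and to be occupied — this uses that $\nu/\lambda$ is a skew shape (so it is ``staircase-convex'' enough that a box with something to its east \emph{and} something to its south, or the corner box, lies in the shape) together with the fact that $\GG_{\rm max}^+$-goodness fills every box. The genetic-label part, by contrast, should be essentially immediate from Lemma~\ref{lemma:Gsoutheastofbullet} once the family inequality $\family(\FF)\le\family(\GG_{\rm max}^+)$ is noted, so I would keep that part brief and spend the words on the shape argument.
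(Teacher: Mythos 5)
There is a genuine gap in the genetic-label part, and it is exactly where the lemma's real content lies. In this paper ``southeast'' (lowercase) is weak: the lemma must exclude labels East of the $\bullet_{\GG_{\rm max}^+}$ in its own row, South of it in its own column, and even on the bottom edge $\underline{\y}$ of the very box $\y$ containing the $\bullet$. This full strength is what the later applications need (e.g.\ Lemma~\ref{lem:deleting_outer_bullets_ok} inside Corollary~\ref{cor:codomain}, and the (P.2) case in the proof of Proposition~\ref{thm:weight_preservation}(III)), and it is also what makes your ``in particular'' step work: a non-outer-corner $\y$ only guarantees that $\y^\rightarrow$ or $\y^\downarrow$ lies in $\nu/\lambda$, and those boxes are in $\y$'s row or column, not SouthEast of $\y$. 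Lemma~\ref{lemma:Gsoutheastofbullet} forbids only the strictly SouthEast position (strictly south \emph{and} strictly east), so your one-stroke application of it covers none of the row, column, or $\underline{\y}$ cases; nor can that lemma be upgraded, since in a general $\GG$-good tableau marked labels $\FF^!$ routinely sit East of a $\bullet_\GG$ in its row. (Your use of it for the genuinely SouthEast case is fine, since $\family(\FF)\leq\family(\GG_{\rm max}^+)$ does hold.)

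The paper closes the missing cases with the same content observation you made, but fed into different lemmas: since every genetic label has family at most $\ell(\mu)<\family(\GG_{\rm max}^+)$, any genetic label weakly southeast of the $\bullet_{\GG_{\rm max}^+}$ is \emph{marked}; a marked label in the $\bullet$'s column contradicts (G.11); and a marked label in the $\bullet$'s row (in a box or on a bottom edge, including $\underline{\y}$ itself) contradicts Lemma~\ref{lem:strong_form_of_G13}, which would force a (possibly virtual, hence by (V.2) also an actual) label of family $\ell(\mu)+1$ onto a bottom edge, impossible by Lemma~\ref{lem:content_preservation}. The strictly SouthEast case reduces to the column case by looking at the box in $\y$'s column and the label's row, which lies in the skew shape, is not a $\bullet$ by (G.2), and hence carries a marked label. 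So your anticipated difficulty is inverted: the shape bookkeeping in the ``in particular'' clause is the easy part (every box of a good filling is occupied, and (G.2) handles other $\bullet$'s), while the row/column/edge exclusions are the heart of the proof and are absent from your argument.
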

\begin{proof}
By Proposition~\ref{prop:goodness_preservation}, 
$U$ is $\GG_{\rm max}^+$-good. 
Let $\x$ be a box of $U$ and $\bullet_{\GG_{\rm max}^+}\in \x$. There is no $\bullet_{\GG_{\rm max}^+}$
strictly southeast of $\x$ by (G.2). By definition, there is no label $\QQ$ in $T^{(1_1)}$ with $\family(\QQ) \geq \family(\GG_{\rm max}^+)$. Hence by Lemma~\ref{lem:content_preservation}, there are no such labels in $U$. Therefore, any genetic label $\ell$ southeast of $\x$ is marked. Clearly, we may assume $\ell$ is in
$\x$'s row or column. If $\ell$ is in $\x$'s column, we contradict (G.11). 
If $\ell$ is in $\x$'s row, we contradict Lemma~\ref{lem:strong_form_of_G13}.
\end{proof}

Clearly,
\begin{lemma}\label{lem:deleting_outer_bullets_ok}
If $T$ is a good tableau with no genetic label southeast of a $\bullet$, then deleting all $\bullet$'s gives a bundled tableau.\qed
\end{lemma}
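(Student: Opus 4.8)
The final statement to prove is Lemma~\ref{lem:deleting_outer_bullets_ok}: if $T$ is a good tableau with no genetic label southeast of a $\bullet$, then deleting all $\bullet$'s yields a bundled tableau. The plan is to verify directly that the resulting filling $T'$ satisfies all the defining conditions of a member of ${\tt Bundled}(\nu/\lambda)$, namely semistandardness (S.1)--(S.4), ballotness (G.8), no genetic label too high (G.1), and that each edge label is maximally west in its gene (G.7).

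First I would observe that deleting box labels $\bullet_\GG$ cannot affect conditions that are purely about genetic labels and horizontal edges and which never required the presence of a $\bullet$ to hold: (G.1) (no genetic label too high), (G.5)/(S.3), (G.6)/(S.4), (G.7), and (G.8)/ballotness all carry over verbatim from $T$ to $T'$ since none of these conditions involve $\bullet$'s. So the only conditions needing genuine argument are (S.1) (labels $\prec$-increase along rows) and (S.2) (labels $<$-increase down columns), because the good-tableau analogues (G.3) and (G.4) permit exceptions precisely in the vicinity of a $\bullet$.

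For (S.1): condition (G.3) allows the row to fail $\prec$-increase only at a pattern of three consecutive boxes $\ytableausetup{boxsize=1.2em}\ytableaushort{\HH {\bullet_\GG} {\FF^!}}$ with $\HH > \FF$, i.e., the failure always straddles a $\bullet$. After deleting the $\bullet$, the two neighboring boxes become adjacent; but the right one contains a marked label $\FF^!$, which means a $\bullet_\GG$ lies northwest of it, hence there is a genetic label (namely $\FF^!$ itself, or $\HH$) southeast of that $\bullet$ — contradicting the hypothesis. Wait: the cleanest route is that the hypothesis ``no genetic label southeast of a $\bullet$'' forces $T$ to have \emph{no marked labels at all} (a marked label $\FF^!$ is by definition southeast of some $\bullet_\GG$), so the exceptional three-box pattern of (G.3) never occurs and (S.1) holds. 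For (S.2): condition (G.4) allows a column to fail $<$-increase only when an unmarked $\FF$ sits directly above an $\FF^!$ with both being box labels; again, since there are no marked labels, this exception is vacuous, so (S.2) holds. Similarly (G.7) already gives ``each edge label is maximally west in its gene,'' which is the bundled condition.

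I expect essentially no obstacle here — this is genuinely a ``clearly'' lemma — the only mild subtlety is making the observation that the stated hypothesis (no genetic label southeast of a $\bullet$) rules out \emph{marked} labels entirely, after which every exceptional clause in (G.1)--(G.13) that distinguishes ``good'' from ``bundled+virtual'' collapses, and one reads off that $T'$ is bundled with exactly the virtual labels of $T$. I would write:

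\begin{proof}
Let $T$ satisfy the hypotheses and let $T'$ be the result of deleting all $\bullet$'s from $T$. First note that $T$ has no marked label: by definition $\EE^!$ denotes a genetic label lying southeast of some $\bullet$, which the hypothesis forbids. Consequently the exceptional clauses in (G.3) and (G.4) (which involve a $\bullet$ and a marked label) never occur, so in $T'$ the labels $\prec$-increase along rows and $<$-increase down columns; that is, $T'$ satisfies (S.1) and (S.2). Conditions (S.3) and (S.4) for $T'$ are exactly (G.5) and (G.6) for $T$. No genetic label of $T'$ is too high, since this was already true in $T$ by (G.1) and deleting $\bullet$'s does not change the genetic labels or their positions. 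Ballotness of every genotype of $T'$ is (G.8) for $T$, again unaffected by deleting $\bullet$'s. Finally, each edge label of $T'$ is maximally west in its gene by (G.7). Hence $T' \in {\tt BallotGen}(\nu/\lambda)$ and, by (G.7), $T'$ is bundled.
\end{proof}
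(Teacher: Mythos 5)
Your verification of the filling conditions is fine, and your key observation — that the hypothesis forbids marked labels (a marked label is by definition a genetic label southeast of a $\bullet$), so the exceptional clauses of (G.3) and (G.4) never fire and (S.1)--(S.2) follow, while (S.3), (S.4), (G.1), (G.7), (G.8) transfer verbatim — is exactly the right reduction. (The paper offers no written proof here; it treats the lemma as immediate, so there is no "official" argument to compare against.)

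However, there is a concrete gap: you never address what happens to the \emph{shape}. Deleting a $\bullet$ empties its box, so for the result to be a tableau (bundled or otherwise) you must check that the remaining filled boxes still form a skew shape, i.e.\ that every $\bullet$ sits at an outer corner of $\nu/\lambda$, and moreover that its lower edge carries no edge labels (which would be orphaned by the deletion). This is precisely the other job of the hypothesis: the box immediately east or south of a $\bullet$, if it existed, would contain either a genetic label (forbidden, being southeast of the $\bullet$) or another $\bullet$ (forbidden by (G.2)), and an edge label on the $\bullet$'s lower edge is likewise southeast of it; hence each $\bullet$ is at an outer corner, the $\bullet$'s occupy distinct rows and columns by (G.2), and deletion yields a tableau of shape $\delta/\lambda$ for some $\delta\subseteq\nu$. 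Relatedly, your concluding sentence "$T' \in {\tt BallotGen}(\nu/\lambda)$" is literally false whenever $T$ contains a $\bullet$: the outer shape shrinks (this is the whole point of the lemma's use in Corollary~\ref{cor:codomain}, where the output lands in $B_{\lambda,\mu}^{\nu}$ or $B_{\lambda,\mu}^{\delta}$ with $\delta\in\nu^-$). The fix is two or three lines, but as written the proposal neither proves nor even states the shape claim, and this is where the hypothesis does its essential work beyond excluding marked labels.
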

 
\begin{corollary}
\label{cor:codomain}
Given $\rho\in\lambda^+$ and a tableau $T\in B_{\rho,\mu}^\nu$, any tableau $U \in {\tt slide}_{\rho/\lambda}(T)$ is in either $B_{\lambda,\mu}^\nu$ or $B_{\lambda,\mu}^\delta$ for some $\delta\in\nu^-$.
\end{corollary}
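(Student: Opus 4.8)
The plan is to assemble Corollary~\ref{cor:codomain} from the structural results already in hand, chiefly Proposition~\ref{prop:goodness_preservation} (swaps preserve goodness), Lemma~\ref{lem:content_preservation} (swaps preserve content), Lemma~\ref{lem:bullets_migrate_out} (terminal bullets migrate to outer corners), and Lemma~\ref{lem:deleting_outer_bullets_ok} (deleting bullets then yields a bundled tableau). Concretely, the statement unpacks as follows: start from $T\in B_{\rho,\mu}^\nu$ where $\rho\in\lambda^+$; since $\rho\supsetneq\lambda$ with $\rho/\lambda$ having no two boxes in a common row or column, the boxes of $\rho/\lambda$ are exactly the inner corners of $\nu/\lambda$ being filled, so $T^{(1_1)}$ in the definition of ${\tt slide}_{\rho/\lambda}$ is obtained by placing $\bullet_{1_1}$ on those inner corners. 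By Lemma~\ref{lem:adding_bullets_ok}, $T^{(1_1)}$ is $1_1$-good.

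First I would apply Proposition~\ref{prop:goodness_preservation} repeatedly along the composition in (\ref{eqn:defjdt}): each $\swap$ carries a $\GG$-good tableau to a formal sum of $\GG^+$-good tableaux, so every $U'$ appearing with nonzero coefficient in $\swap_{\GG_{\rm max}}\circ\cdots\circ\swap_{1_1}(T^{(1_1)})$ is $\GG_{\rm max}^+$-good. Then Lemma~\ref{lem:bullets_migrate_out} tells me that in such a $U'$ no label is strictly southeast of a $\bullet_{\GG_{\rm max}^+}$, and all $\bullet_{\GG_{\rm max}^+}$'s sit at outer corners of $\nu/\lambda$. Applying Lemma~\ref{lem:deleting_outer_bullets_ok}, deleting these bullets from $U'$ produces a bundled tableau $U$; this $U$ is precisely the corresponding term of ${\tt slide}_{\rho/\lambda}(T)$. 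The shape of $U$ is $\nu/\lambda$ with the outer-corner boxes that held $\bullet_{\GG_{\rm max}^+}$ removed; since these are outer corners, the resulting shape is $\nu'/\lambda$ for some $\nu'$ with $\lambda\subseteq\nu'\subseteq\nu$ and $\nu/\nu'$ having no two boxes in a row or column — that is, $\nu'=\nu$ (no bullets left) or $\nu'=\delta$ for some $\delta\in\nu^-$.

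Finally, I need $U\in B_{\lambda,\mu}^{\nu'}$, i.e. that $U$ has content $\mu$: this is exactly Lemma~\ref{lem:content_preservation} applied along the composition (content is $\mu$ throughout, and deleting only $\bullet_{\GG_{\rm max}^+}$'s — which are box labels, not genes — does not change content). Putting these together: $U$ is a bundled tableau of shape $\nu/\lambda$ or $\delta/\lambda$ ($\delta\in\nu^-$) with content $\mu$, hence $U\in B_{\lambda,\mu}^\nu$ or $U\in B_{\lambda,\mu}^\delta$ as claimed.

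I do not expect a genuine obstacle here; the corollary is a bookkeeping consequence of the four lemmas cited, and essentially all the real work has been discharged in Proposition~\ref{prop:goodness_preservation} and Lemma~\ref{lem:bullets_migrate_out}. The one point needing a sentence of care is the identification that removing the outer-corner boxes carrying terminal bullets yields a shape of the form $\delta/\lambda$ with $\delta\in\nu^-$ (or $\nu/\lambda$ itself) — this follows because outer corners of $\nu/\lambda$ are maximally southeast boxes of $\nu/\lambda$, so no two of them lie in the same row or column, matching the definition of $\nu^-$.
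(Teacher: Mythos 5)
Your argument is correct and follows the paper's proof essentially verbatim: add the bullets (Lemma~\ref{lem:adding_bullets_ok}), preserve goodness and content through the swaps (Proposition~\ref{prop:goodness_preservation}, Lemma~\ref{lem:content_preservation}), push the terminal $\bullet_{\GG_{\rm max}^+}$'s to outer corners (Lemma~\ref{lem:bullets_migrate_out}), and delete them (Lemma~\ref{lem:deleting_outer_bullets_ok}). The only cosmetic differences are that the paper cites (G.2) to see at most one $\bullet_{\GG_{\rm max}^+}$ is deleted in any row or column, whereas you observe directly that distinct outer corners of $\nu/\lambda$ occupy distinct rows and columns, and that the $\bullet_{1_1}$'s are placed at the inner corners of $\nu/\rho$ (the shape of $T$), not of $\nu/\lambda$ --- a harmless slip in your wording.
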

\begin{proof}
By Lemma~\ref{lem:bundled_tableaux_are_good}, $T$ is a good tableau. By Lemma~\ref{lem:adding_bullets_ok}, adding $\bullet_{1_1}$ to each box of $\rho/\lambda$ gives a good tableau $T^{(1_1)}$. By Proposition~\ref{prop:goodness_preservation}, each swap gives a formal sum of good tableaux. By Lemma~\ref{lem:bullets_migrate_out}, after all swaps, $\bullet_{\GG_{\rm max}^+}$'s are at outer corners with no labels strictly southeast. By Lemma~\ref{lem:deleting_outer_bullets_ok}, deleting these $\bullet_{\GG_{\rm max}^+}$'s gives a bundled tableau (namely $U$). $U$ has shape $\nu/\lambda$ or $\delta/\lambda$ for $\delta \in \nu^-$, since there is at most one $\bullet_{\GG_{\rm max}^+}$ deleted in any row or column by (G.2). Content preservation is Lemma~\ref{lem:content_preservation}.
\end{proof}

\subsection{Examples}
We give a number of examples of computing ${\tt slide}_{\{\x_i\}}(T)$. It is convenient to encode the computations
in a diagram. Each non-terminal tableau has its snakes differentiated by color. The notation above each arrow indicates the types
of the snakes from southwest to northeast, for example {\sf H5.3}/$\emptyset$/{\sf T2} means the head of the snake is {\sf H5.3}, the body is empty and the tail is {\sf T2}. The notation below arrows indicates the product of the coefficients coming from each miniswap (we will assume for this purpose that the lower left corner of $T$ coincides with the lower left
corner of $k\times (n-k)$). 
Each $U\in {\tt slide}_{\{\x_i\}}(T)$ is a terminal tableau of the diagram. Moreover, $[U]{\tt slide}_{\{\x_i\}}(T)$
is the sum of the products of the coefficients over all directed paths from $T$ to $U$.

\noindent
\begin{example} 
\ytableausetup{boxsize=1.5em}
\[\Scale[0.8]{\begin{tikzpicture}[node distance=1cm, auto,]
 \node (A) {$T^{(1_1)}=\begin{picture}(43,8)
\put(0,4){$\ytableaushort{{*(lightgray)\blank} {*(lightgray)\blank}, {*(Dandelion)\bullet_{1_1}} {*(Dandelion)1_1}}$}
\put(24,-19){$2_1$}
\end{picture}$};
 \node[right=2cm of A] (B) {$\begin{picture}(43,8)
\put(0,4){$\ytableaushort{{*(lightgray)\blank} {*(lightgray)\blank}, {*(Dandelion)\bullet_{2_1}} {*(Dandelion)1^!_1}}$}
\put(24,-19){$2_1$}
\end{picture}
$}
   edge[pil, <-] node[above]{{\sf H5.1}/$\emptyset$/$\emptyset$} node[below]{$1$} (A);
 \node[right=2cm of B] (C) {$\begin{picture}(50,8)
\put(0,4){$\ytableaushort{{*(lightgray)\blank} {*(lightgray)\blank}, {2_1} {*(Dandelion)\bullet_{3_1}}}$}
\put(4,1){$1_1$}
\put(22,3){\Scale[.7]{$\circled{1_1}$}}
\end{picture}$}
   edge[pil, <-] node[above]{{\sf T4.3}/$\emptyset$/$\emptyset$} node[below]{$\frac{t_1}{t_2}$} (B.east);
 \node[right=2cm of C] (D) {$\begin{picture}(50,8)
\put(0,4){$\ytableaushort{{*(lightgray)\blank} {*(lightgray)\blank}, {2_1}}$}
\put(4,1){$1_1$}
\put(22,3){\Scale[.7]{$\circled{1_1}$}}
\end{picture}$}
   edge[pil,<-] node[above]{Delete $\bullet$'s} (C);
\end{tikzpicture}}
\]
\qed
\end{example}

\begin{example}
\ytableausetup{boxsize=1.5em}
\[\Scale[0.8]{\begin{tikzpicture}[node distance=1cm, auto]
\node (A) {$T^{(1_1)} = \begin{picture}(40,12)
\put(0,4){$\ytableaushort{{*(lightgray)\blank} {*(lightgray)\blank}, {*(Dandelion) \bullet_{1_1}} {*(Dandelion) 1_1},{2_1}}$}
\put(22,-19){\Scale[.7]{$\circled{2_1}$}}
\end{picture}$};
\node[junct, right=2cm of A] (dummy1) {};
\node[above right=3cm of dummy1] (B) {$\begin{picture}(43,8)
\put(0,4){$\ytableaushort{{*(lightgray)\blank} {*(lightgray)\blank}, {*(Dandelion) \bullet_{2_1}} {*(Dandelion) 1^!_1},{*(Dandelion) 2_1}}$}
\put(22,-19){\Scale[.7]{$\circled{2_1}$}}
\end{picture}$};
\node[below right=3cm of dummy1] (C) {$\begin{picture}(43,8)
\put(0,4){$\ytableaushort{{*(lightgray)\blank} {*(lightgray)\blank}, {1_1} {*(SkyBlue) \bullet_{2_1}},{*(Dandelion) 2_1}}$}
\put(22,2){\Scale[.7]{$\circled{1_1}$}}
\end{picture}$};
\node[right=2cm of C] (E) {\begin{picture}(43,8)
\put(0,4){$\ytableaushort{{*(lightgray)\blank} {*(lightgray)\blank}, {1_1} {*(Dandelion) \bullet_{3_1}},{2_1}}$}
\put(22,2){\Scale[.7]{$\circled{1_1}$}}
\end{picture}};
\node[right=2cm of E] (F) {\begin{picture}(43,8)
\put(0,4){$\ytableaushort{{*(lightgray)\blank} {*(lightgray)\blank}, {1_1},{2_1}}$}
\put(22,2){\Scale[.7]{$\circled{1_1}$}}
\end{picture}};
\node[right=2cm of B] (D) {\begin{picture}(43,8)
\put(0,4){$\ytableaushort{{*(lightgray)\blank} {*(lightgray)\blank}, {2_1} {*(SkyBlue)\bullet_{3_1}},{*(Dandelion)\bullet_{3_1}}}$}
\put(4,2){$1_1$}
\put(22,2){\Scale[.7]{$\circled{1_1}$}}
\end{picture}};
\node[right=2cm of D] (G) {\begin{picture}(43,8)
\put(0,4){$\ytableaushort{{*(lightgray)\blank} {*(lightgray)\blank}, {2_1}}$}
\put(4,2){$1_1$}
\put(22,2){\Scale[.7]{$\circled{1_1}$}}
\end{picture}};
\path  (A) edge[und] node[above]{{\sf H5.2}/$\emptyset$/$\emptyset$} (dummy1);
\path (dummy1) edge[pil, bend left=20] node[below]{$1$} (B);
\path (dummy1) edge[pil, bend right=20] node[below]{$\frac{t_2}{t_3}$} (C);
\path (C) edge[pil] node[above](lab1){{\sf H3}/$\emptyset$/$\emptyset$} node[above=0.5 of lab1]{$\emptyset$/{\sf B1}/$\emptyset$} node[below]{1} (E);
\path (E) edge[pil] node[above]{Delete $\bullet$'s} (F);
\path (B) edge[pil] node[above]{$\emptyset$/{\sf B3}/{\sf T5}} node[below]{$-1\cdot\frac{t_2}{t_3}$} (D);
\path (D) edge[pil] node[above]{Delete $\bullet$'s} (G);  
\end{tikzpicture}}
\]
\qed
\end{example}

\begin{example}
\ytableausetup{boxsize=1.5em}
\[\Scale[0.8]{$\begin{tikzpicture}
\node (A) {$T^{(1_1)}=\begin{picture}(55,20)
\put(0,0){$\ytableaushort{{*(Dandelion) \bullet_{1_1}} {*(Dandelion) 1_2} {1_3}}$}
\put(5,-6){$1_1$}
\end{picture}$};
\node[junct, right=2cm of A] (dummy1) {};
\node[above right=3cm of dummy1] (B) {$\begin{picture}(55,20)
\put(0,0){$\ytableaushort{{1_1} {*(Dandelion) 1_2} {1_3}}$}
\end{picture}$};
\node[below right=3cm of dummy1] (C) {$\begin{picture}(55,20)
\put(0,0){$\ytableaushort{{1_1} {*(Dandelion) \bullet_{1_2}} {*(Dandelion) 1_3}}$}
\put(25,-6){$1_2$}
\end{picture}$};
\node[right=3cm of B] (D) {$\begin{picture}(55,20)
\put(0,0){$\ytableaushort{{1_1} {1_2} {*(Dandelion) 1_3}}$}
\end{picture}$};
\node[right=3cm of C] (E) {$\begin{picture}(55,20)
\put(0,0){$\ytableaushort{{1_1} {1_2} {*(Dandelion) \bullet_{1_3}}}$}
\put(42,-6){${1_3}$}
\end{picture}$};
\node[right=3cm of D] (F) {$\begin{picture}(55,20)
\put(0,0){$\ytableaushort{{1_1} {1_2} {1_3}}$}
\end{picture}$};
\node[junct, right=2cm of C] (dummy2) {};
\node[junct, right=2cm of E] (dummy3) {};
\path (A) edge[und] node[above]{${\sf H6}/\emptyset/\emptyset$} (dummy1);
\path (dummy1) edge[pil, bend left=20] node[below right]{$1 - \frac{t_1}{t_2}$} (B);
\path (dummy1) edge[pil, bend right=20] node[below]{$\frac{t_1}{t_2}$} (C);
\path (B) edge[pil] node[below]{$1$} node[above]{$\emptyset$/{\sf B1}/$\emptyset$} (D);
\path (dummy2) edge[pil, bend right=15] node[below]{$\frac{t_2}{t_3}$} (E);
\path (D) edge[pil] node[below]{$1$} node[above]{$\emptyset$/{\sf B1}/$\emptyset$} (F);
\path (dummy2) edge[pil, bend left=20] node[below right]{$1-\frac{t_2}{t_3}$} (D);
\path (C) edge[und] node[above]{${\sf H6}/\emptyset/\emptyset$} (dummy2);
\path (dummy3) edge[pil, bend left=20] node[below right]{$1-\frac{t_3}{t_4}$} (F);
\path (E) edge[und] node[above]{${\sf H1}/\emptyset/\emptyset$} (dummy3);
\end{tikzpicture}$}
\]
\qed
\end{example}

\begin{example}
\ytableausetup{boxsize=1.2em}
\[\Scale[.78]{$
\begin{tikzpicture}
\node (A) {$T^{(1_1)}=\begin{picture}(64,50)
\put(0,40){$\ytableaushort{{*(lightgray)\blank} {*(lightgray)\blank} {*(lightgray)\blank} {*(lightgray)\blank}, {*(lightgray)\blank} {*(lightgray)\blank} {*(Dandelion) \bullet_{1_1}} {*(Dandelion)1_2}, {*(lightgray)\blank} {*(Dandelion)\bullet_{1_1}} {*(Dandelion)1_1},
{*(Dandelion)\bullet_{1_1}} {*(Dandelion)1_1}}$}
\end{picture}$};
\node[junct, right=2cm of A] (dummy1) {};
\node[above right=3cm of dummy1] (B) {\begin{picture}(60,50)
\put(0,40){$\ytableaushort{{*(lightgray)\blank} {*(lightgray)\blank} {*(lightgray)\blank} {*(lightgray)\blank}, {*(lightgray)\blank} {*(lightgray)\blank} {1_1} {*(Red) 1_2}, {*(lightgray)\blank} {1_1} {*(SkyBlue) \bullet_{1_2}}, {1_1} {*(Dandelion) \bullet_{1_2}}}$}
\end{picture}};
\node[below right=3cm of dummy1] (C) {\begin{picture}(60,50)
\put(0,40){$\ytableaushort{{*(lightgray)\blank} {*(lightgray)\blank} {*(lightgray)\blank} {*(lightgray)\blank}, {*(lightgray)\blank} {*(lightgray)\blank}
{1_1} {*(Red)\bullet_{1_2}}, {*(lightgray)\blank} {1_1} {*(SkyBlue) \bullet_{1_2}}, {1_1} {*(Dandelion) \bullet_{1_2}}}$}
\put(48,20){$1_2$}
\end{picture}};
\node[right=5cm of B] (D) {\begin{picture}(60,50)
\put(0,40){$\ytableaushort{{*(lightgray)\blank} {*(lightgray)\blank} {*(lightgray)\blank} {*(lightgray)\blank}, {*(lightgray)\blank} {*(lightgray)\blank} {1_1} {1_2}, {*(lightgray)\blank} {1_1} {*(SkyBlue) \bullet_{2_1}}, {1_1} {*(Dandelion) \bullet_{2_1}}}$}
\end{picture}};
\node[junct, right=2cm of C] (dummy2) {};
\node[below right=0.5 and 3 of dummy2] (E) {$\begin{picture}(60,50)
\put(0,40){$\ytableaushort{{*(lightgray)\blank} {*(lightgray)\blank} {*(lightgray)\blank} {*(lightgray)\blank}, {*(lightgray)\blank} {*(lightgray)\blank} {1_1} {*(Red) \bullet_{2_1}}, {*(lightgray)\blank} {1_1} {*(SkyBlue) \bullet_{2_1}}, {1_1} 
{*(Dandelion)\bullet_{2_1}}}$}
\put(48,37){${1_2}$}
\end{picture}$};
\node[right=2 of D] (F) {$\begin{picture}(60,50)
\put(0,36){$\ytableaushort{{*(lightgray)\blank} {*(lightgray)\blank} {*(lightgray)\blank} {*(lightgray)\blank}, {*(lightgray)\blank} {*(lightgray)\blank} {1_1} {1_2}, {*(lightgray)\blank} {1_1}, {1_1}}$}
\end{picture}$};
\node[right=2 of E] (G) {$\begin{picture}(60,50)
\put(0,40){$\ytableaushort{{*(lightgray)\blank} {*(lightgray)\blank} {*(lightgray)\blank} {*(lightgray)\blank}, {*(lightgray)\blank} {*(lightgray)\blank} {1_1}, {*(lightgray)\blank} {1_1}, {1_1}}$}
\put(48,37){${1_2}$}
\end{picture}$};
\path (A) edge[und] node[above]{{\sf H5.3}/{\sf B2}/{\sf T3}} (dummy1);
\path (dummy1) edge[pil, bend right=20] node[below right, pos=.85]{$\frac{t_1}{t_2} \cdot \frac{t_3}{t_4} \cdot \left(-\frac{t_5}{t_6}\right)$} (B);
\path (dummy1) edge[pil, bend left=20] node[below left, pos=.6]{$\frac{t_1}{t_2} \cdot \frac{t_3}{t_4} \cdot \frac{t_5}{t_6}$} (C);
\path (B) edge[pil] node[below]{$1$} node[above](lab3){$\emptyset$/{\sf B1}/$\emptyset$} node[above=0.5 of lab3]{{\sf H3}/$\emptyset$/$\emptyset$} node[above=1 of lab3]{{\sf H3}/$\emptyset$/$\emptyset$} (D);
\path (C) edge[und] node[above](sec3){{\sf H1}/$\emptyset$/$\emptyset$} node[above=0.5 of sec3]{{\sf H3}/$\emptyset$/$\emptyset$} node[above=1 of sec3]{{\sf H3}/$\emptyset$/$\emptyset$} (dummy2);
\path (dummy2) edge[pil, bend right=20] node[below right, near end]{$1 - \frac{t_6}{t_7}$} (D);
\path (dummy2) edge[pil, bend left=10] node[below]{$1$} (E);
\path (D) edge[pil] node[above]{Delete $\bullet$'s} (F);
\path (E) edge[pil] node[above]{Delete $\bullet$'s} (G);
\end{tikzpicture}
$}\]
\qed
\end{example}

\begin{example} 
\ytableausetup{boxsize=1.5em}
\[\Scale[0.7]{$
\begin{tikzpicture}
\node (A) {$T^{(1_1)}=\begin{picture}(57,10)
\put(0,4){$\ytableaushort{{*(lightgray)\blank} {*(lightgray)\blank} {*(lightgray)\blank}, {*(Dandelion)\bullet_{1_1}} {*(Dandelion) 1_1} {1_2}}$}
\put(23,-18){$2_1$}
\put(41,-18){$2_2$}
\end{picture}$};
\node[right=2 of A] (B) {$\begin{picture}(57,10)
\put(0,4){$\ytableaushort{{*(lightgray)\blank} {*(lightgray)\blank} {*(lightgray)\blank}, {*(Dandelion) \bullet_{1_2}} {*(SkyBlue) 1_1^!} {*(SkyBlue) 1_2}}$}
\put(23,-18){$2_1$}
\put(41,-18){$2_2$}
\end{picture}$};
\node[right=1.8 of B] (C) {$\begin{picture}(57,10)
\put(0,4){$\ytableaushort{{*(lightgray)\blank} {*(lightgray)\blank} {*(lightgray)\blank}, {*(Dandelion) \bullet_{2_1}} {*(Dandelion) 1_1^!} {1_2^!}}$}
\put(23,-18){$2_1$}
\put(41,-18){$2_2$}
\end{picture}$};
\node[right=2 of C] (D) {$\begin{picture}(57,10)
\put(0,4){$\ytableaushort{{*(lightgray)\blank} {*(lightgray)\blank} {*(lightgray)\blank}, {2_1} {*(Dandelion) \bullet_{2_2}} {*(Dandelion) 1_2^!}}$}
\put(4,1){$1_1$}
\put(41,-18){$2_2$}
\end{picture}$};
\node[right=2 of D] (E) {$\begin{picture}(57,10)
\put(0,4){$\ytableaushort{{*(lightgray)\blank} {*(lightgray)\blank} {*(lightgray)\blank}, {2_1} {2_2} {*(Dandelion) \bullet_{3_1}}}$}
\put(4,1){$1_1$}
\put(23,1){$1_2$}
\put(40,1){$\Scale[.8]{\circled{1_2}}$}
\end{picture}$};
\node[right=2 of E] (F) {$\begin{picture}(57,10)
\put(0,4){$\ytableaushort{{*(lightgray)\blank} {*(lightgray)\blank} {*(lightgray)\blank}, {2_1} {2_2}}$}
\put(4,1){$1_1$}
\put(23,1){$1_2$}
\put(40,1){$\Scale[0.8]{\circled{1_2}}$}
\end{picture}$};
\path (A) edge[pil] node[above]{{\sf H5.1}/$\emptyset$/$\emptyset$} node[below]{$1$} (B);
\path (B) edge[pil] node[above](l2){{\sf H9}/$\emptyset$/$\emptyset$} node[above=.5 of l2]{{\sf H3}/$\emptyset$/$\emptyset$} node[below]{$1$} (C);
\path (C) edge[pil] node[above]{$\emptyset$/$\emptyset$/{\sf T4.3}} node[below]{$\frac{t_1}{t_2}$} (D);
\path (D) edge[pil] node[above]{$\emptyset$/$\emptyset$/{\sf T4.3}} node[below]{$\frac{t_2}{t_3}$} (E);
\path (E) edge[pil] node[above]{Delete $\bullet$'s} (F);
\end{tikzpicture}
$}\]
\qed
\end{example}

\begin{example} 
\ytableausetup{boxsize=1.5em}
\[\Scale[0.78]{$
\begin{tikzpicture}
\node (A) {$T^{(1_1)}=\Scale[0.8]{\begin{picture}(72,10)
\put(0,4){$\ytableaushort{{*(lightgray)\blank} {*(lightgray)\blank} {*(lightgray)\blank} {1_2}, {*(lightgray)\blank} {*(Dandelion) \bullet_{1_1}} {*(Dandelion) 1_2}}$}
\put(5,-18){$1_1$}
\put(23,-18){\Scale[.7]{$\circled{1_1}$}}
\end{picture}}$};
\node[junct, right=2 of A] (dummy1) {};
\node[above right=3 and 3.5 of dummy1] (B) {$\Scale[0.8]{\begin{picture}(72,20)
\put(0,16){$\ytableaushort{{*(lightgray)\blank} {*(lightgray)\blank} {*(lightgray)\blank} {*(SkyBlue) 1_2}, {*(lightgray)\blank} {*(Dandelion) \bullet_{1_2}} {*(Dandelion) 1_2}}$}
\put(5,-6){$1_1$}
\end{picture}}$};
\node[right=3.5 of dummy1] (C) {$\Scale[0.8]{\begin{picture}(72,20)
\put(0,4){$\ytableaushort{{*(lightgray)\blank} {*(lightgray)\blank} {*(lightgray)\blank} {*(SkyBlue) 1_2}, {*(lightgray)\blank} {1_1} {*(Dandelion) 1_2}}$}
\put(5,-18){$1_1$}
\end{picture}}$};
\node[below right=3 and 3.5 of dummy1] (D) {$\Scale[0.8]{\begin{picture}(72,20)
\put(0,4){$\ytableaushort{{*(lightgray)\blank} {*(lightgray)\blank} {*(lightgray)\blank} {*(SkyBlue) 1_2}, {*(lightgray)\blank} {1_1} {*(Dandelion) \bullet_{1_2}}}$}
\put(5,-18){$1_1$}
\put(41,-18){$1_2$}
\end{picture}}$};
\node[right=4 of B] (E) {$\Scale[0.8]{\begin{picture}(72,20)
\put(0,16){$\ytableaushort{{*(lightgray)\blank} {*(lightgray)\blank} {*(lightgray)\blank} {1_2}, {*(lightgray)\blank} {1_2} {*(Dandelion) \bullet_{2_1}}}$}
\put(5,-6){$1_1$}
\put(39,14){\Scale[.8]{\circled{1_2}}}
\end{picture}}$};	
\node[right=4 of C] (F) {$\Scale[0.8]{\begin{picture}(72,20)
\put(0,4){$\ytableaushort{{*(lightgray)\blank} {*(lightgray)\blank} {*(lightgray)\blank} {1_2}, {*(lightgray)\blank} {1_1} {1_2}}$}
\put(5,-18){$1_1$}
\end{picture}}$};
\node[junct, right=2 of D] (dummy2) {};
\node[below right=2 and 2 of dummy2] (G) {$\Scale[0.8]{\begin{picture}(72,20)
\put(0,22){$\ytableaushort{{*(lightgray)\blank} {*(lightgray)\blank} {*(lightgray)\blank} {1_2}, {*(lightgray)\blank} {1_1} {*(Dandelion) \bullet_{2_1}}}$}
\put(5,0){$1_1$}
\put(42,18){$1_2$}
\end{picture}}$};
\node[right=2 of E] (H) {$\Scale[0.8]{\begin{picture}(72,20)
\put(0,16){$\ytableaushort{{*(lightgray)\blank} {*(lightgray)\blank} {*(lightgray)\blank} {1_2}, {*(lightgray)\blank} {1_2} }$}
\put(5,-6){$1_1$}
\put(39,14){\Scale[.8]{\circled{1_2}}}
\end{picture}}$};
\node[right=2 of F] (I) {$\Scale[0.8]{\begin{picture}(72,20)
\put(0,4){$\ytableaushort{{*(lightgray)\blank} {*(lightgray)\blank} {*(lightgray)\blank} {1_2}, {*(lightgray)\blank} {1_1} {1_2}}$}
\put(5,-18){$1_1$}
\end{picture}}$};
\node[right=2 of G] (J) {$\Scale[0.8]{\begin{picture}(72,20)
\put(0,22){$\ytableaushort{{*(lightgray)\blank} {*(lightgray)\blank} {*(lightgray)\blank} {1_2}, {*(lightgray)\blank} {1_1}}$}
\put(5,0){$1_1$}
\put(42,18){$1_2$}
\end{picture}}$};
\path (A) edge[und] node[above]{{\sf H7}/$\emptyset$/$\emptyset$} (dummy1);
\path (dummy1) edge[pil, bend right=20] node[below, near end]{$1$} (B);
\path (dummy1) edge[pil] node[below, near end]{$1 -\frac{t_2}{t_3}$} (C);
\path (dummy1) edge[pil, bend left=20] node[below, near end]{$\frac{t_2}{t_3}$} (D);
\path (B) edge[pil] node[below]{$\frac{t_2}{t_3}$} node[above](l){$\emptyset$/{\sf B1}/$\emptyset$} node[above=0.5 of l]{{\sf H5.3}/$\emptyset$/$\emptyset$} (E);
\path (C) edge[pil] node[below]{$1$} node[above](ll){$\emptyset$/{\sf B1}/$\emptyset$} node[above=0.5 of l]{$\emptyset$/{\sf B1}/$\emptyset$} (F);
\path (D) edge[und] node[above](lll){$\emptyset$/{\sf B1}/$\emptyset$} node[above=0.5 of l]{{\sf H1}/$\emptyset$/$\emptyset$} (dummy2);
\path (dummy2) edge[pil, bend right=20, shorten >=11pt] node[below right]{$1 - \frac{t_3}{t_4}$} (F);
\path (dummy2) edge[pil, bend left=20, shorten >=15pt] node[below]{$1$} (G);
\path (E) edge[pil] node[above]{Delete $\bullet$'s} (H);
\path (F) edge[pil] node[above]{Delete $\bullet$'s} (I);
\path (G) edge[pil] node[above]{Delete $\bullet$'s} (J);
\end{tikzpicture}
$}\]
\qed
\end{example}

\begin{example}
\ytableausetup{boxsize=1.5em}
\[\Scale[0.8]{$
\begin{tikzpicture}
\node (A) {$T^{(1_1)} = \begin{picture}(77,16)
\put(0,4){$\ytableaushort{{*(lightgray)\blank} {*(lightgray)\blank} {*(SkyBlue) \bullet_{1_1}} {*(SkyBlue) 1_2}, {*(lightgray)\blank} {*(Dandelion)\bullet_{1_1}} {*(Dandelion) 1_2}}$}
\put(5,-18){$1_1$}
\put(22,-19){\Scale[.7]{$\circled{1_1}$}}
\end{picture}$};
\node[junct, right=2 of A] (dummy1) {};
\node[above right=2 and 2 of dummy1] (B) {$\begin{picture}(77,16)
\put(0,4){$\ytableaushort{{*(lightgray)\blank} {*(lightgray)\blank} {*(Dandelion)\bullet_{1_2}} {*(Dandelion)1_2}, {*(lightgray)\blank} {*(Dandelion)\bullet_{1_2}} {*(Dandelion)1_2}}$}
\put(5,-18){$1_1$}
\end{picture}$};
\node[below right=2 and 2 of dummy1] (C) {$\begin{picture}(77,16)
\put(0,4){$\ytableaushort{{*(lightgray)\blank} {*(lightgray)\blank} {*(Dandelion)\bullet_{1_2}} {*(Dandelion)1_2}, {*(lightgray)\blank} {1_1} {*(Dandelion)1_2}}$}
\put(5,-18){$1_1$}
\end{picture}$};
\node[right=2 of B] (D) {$\begin{picture}(77,16)
\put(0,4){$\ytableaushort{{*(lightgray)\blank} {*(lightgray)\blank} {1_2} {*(SkyBlue)\bullet_{2_1}}, {*(lightgray)\blank} {1_2} {*(Dandelion)\bullet_{2_1}}}$}
\put(5,-18){$1_1$}
\end{picture}$};
\node[right=2 of C] (E) {$\begin{picture}(77,16)
\put(0,4){$\ytableaushort{{*(lightgray)\blank} {*(lightgray)\blank} {1_2} {*(SkyBlue)\bullet_{2_1}}, {*(lightgray)\blank} {1_1} {*(Dandelion)\bullet_{2_1}}}$}
\put(5,-18){$1_1$}
\end{picture}$};
\node[right=2 of D] (F) {$\begin{picture}(77,16)
\put(0,4){$\ytableaushort{{*(lightgray)\blank} {*(lightgray)\blank} {1_2}, {*(lightgray)\blank} {1_2}}$}
\put(5,-18){$1_1$}
\end{picture}$};
\node[right=2 of E] (G) {$\begin{picture}(77,16)
\put(0,4){$\ytableaushort{{*(lightgray)\blank} {*(lightgray)\blank} {1_2}, {*(lightgray)\blank} {1_1}}$}
\put(5,-18){$1_1$}
\end{picture}$};
\path (A) edge[und] node[above](l){{\sf H8}/$\emptyset$/$\emptyset$} node[above=0.5 of l]{{\sf H7}/$\emptyset$/$\emptyset$} (dummy1);
\path (dummy1) edge[pil, bend right=20, shorten >=18pt] node[below right]{$1$} (B);
\path (dummy1) edge[pil, bend left=20, shorten >=7pt] node[below left]{$1 -\frac{t_2}{t_3}$} (C);
\path (B) edge[pil] node[above]{{\sf H5.3}/$\emptyset$/{\sf T2}} node[below]{$\frac{t_2}{t_3} \cdot \frac{t_4}{t_5}$} (D);
\path (C) edge[pil] node[above]{$\emptyset$/{\sf B3}/{\sf T2}} node[below]{$-1 \cdot \frac{t_4}{t_5}$} (E);
\path (D) edge[pil] node[above]{Delete $\bullet$'s} (F);
\path (E) edge[pil] node[above]{Delete $\bullet$'s} (G);
\end{tikzpicture}
$}\] \qed
\end{example}

\section{Ladders}\label{sec:ladders}
Let $U$ be a $\GG^+$-good tableau. Consider the boxes of $U$ containing $\bullet_{\GG^+}$ or unmarked $\GG$. This set decomposes into maximal edge-connected components, which we call {\bf ladders}.

\begin{example}
\[\begin{picture}(200,105)
\put(-30,95){$\ytableaushort{
{*(lightgray)\blank} {*(lightgray)\blank} {*(lightgray)\blank} {*(lightgray)\blank} {1_2},
{*(lightgray)\blank} {*(lightgray)\blank} {*(lightgray)\blank} {1_1} {2_2},
{*(lightgray)\blank} {*(lightgray)\blank} {*(lightgray)\blank} {*(Red) 2_1} {3_2},
{*(lightgray)\blank} {*(lightgray)\blank} {*(Red) 2_1} {*(Red)\bullet_{2_2}},
{*(lightgray)\blank} {*(SkyBlue) \bullet_{2_2}} {2_1^!},
{*(Dandelion) \bullet_{2_2}}
}$}
\put(90,70){This $2_2$-good tableau has three ladders; we}
\put(90,57){have given each ladder a separate color.}
\put(90, 44){(All virtual labels are depicted.)}
\put(313,0){$\qed$}
\put(-24,-2){$3_1$}
\put(-8,15){$\Scale[0.7]{\circled{3_1}}$}
\put(10,15){$\Scale[0.7]{\circled{3_1}}$}
\end{picture}\]
\end{example}

\begin{lemma}\label{lem:ladder_row_classification}
A row $r$ of a ladder $L$ is one of the following (edge labels other than $\GG$ and virtual labels are not shown):
\ytableausetup{boxsize=1.3em}
\[{\sf{(L1)} \ \ } \ytableaushort{\bullet} \  \ \
{\sf{(L2)}} \ \  \ytableaushort{\GG}  \ \ \
{\sf{(L3)}} \ \  \begin{picture}(15,13)
\put(0,0){$\ytableaushort{\bullet}$}
\put(5,12){$\GG$}
\end{picture} \ \ \
{{\sf{(L4)}}} \ \ \ytableaushort{\GG \bullet}
\]
\end{lemma}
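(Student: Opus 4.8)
The plan is to classify rows of a ladder $L$ directly from the definition of ladder (maximal edge-connected set of boxes containing $\bullet_{\GG^+}$ or unmarked $\GG$) together with the goodness axioms (G.1)--(G.13) for the $\GG^+$-good tableau $U$. First I would observe that a row of $L$ consists of consecutive boxes each holding either $\bullet_{\GG^+}$ or an unmarked $\GG$, so I must rule out three or more boxes in a row of $L$ and then enumerate the possibilities for one or two boxes. For the count, note that by (G.2) at most one $\bullet_{\GG^+}$ can occur in any row of $U$ (two would be southeast-related), and by (G.4) together with the fact that $U$ is $\GG^+$-good there is no $\GG^!$, so within a row the unmarked $\GG$'s are governed by (G.6): if $\GG$ appears West of $\GG$ then the subscripts must be equal, which they are, so a priori several $\GG$'s could sit in one row — but (G.12) forbids this, since two same-family genetic labels in the same row would require a $\bullet_{\GG^+}$ strictly between them in that row, and that $\bullet_{\GG^+}$ would not be part of $L$'s row if... actually it would be, so I need to be more careful: the intervening $\bullet_{\GG^+}$ would be in $L$ and the two $\GG$'s in $L$, making a row of three or more. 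So I must instead argue that a $\GG$ immediately East of a $\bullet_{\GG^+}$ is impossible unless it is the unique rightmost box.

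Here is the cleaner route. Suppose $L$ has a row $r$ with at least two boxes of $L$. Reading left to right, consider any two $L$-boxes that are horizontally adjacent. By (G.2) they cannot both be $\bullet_{\GG^+}$. If the left is $\GG$ and the right is $\bullet_{\GG^+}$, this is the shape {\sf (L4)}; I then claim there is no further $L$-box to the right: a box right of $\bullet_{\GG^+}$ holding $\GG$ would be marked as $\GG^!$ (being southeast of $\bullet_{\GG^+}$), contradicting that $U$ is $\GG^+$-good and hence has no $\GG^!$; and a box right of $\bullet_{\GG^+}$ holding $\bullet_{\GG^+}$ contradicts (G.2). Also there is no $L$-box immediately left of the $\GG$ in {\sf (L4)}: if it held $\GG$, then (G.12) applied to the two $\GG$'s would force a $\bullet_{\GG^+}$ strictly between them in row $r$, impossible since they are adjacent; if it held $\bullet_{\GG^+}$, then the $\GG$ would be southeast of it, hence $\GG^!$, again impossible. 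If instead the left $L$-box is $\bullet_{\GG^+}$ and the right is $\GG$, the same "$\GG$ southeast of $\bullet_{\GG^+}$ is marked" argument rules this out entirely. So the only two-box row of $L$ is {\sf (L4)}, and every other row of $L$ has exactly one box, which is either $\bullet_{\GG^+}$ or an unmarked $\GG$.

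Finally, for a single-box row $\{\x\}$ of $L$ with $\bullet_{\GG^+}\in\x$, the lower edge $\underline{\x}$ either contains an unmarked $\GG$ (possibly virtual) or it does not: these are {\sf (L3)} and {\sf (L1)} respectively; and a single-box row with an unmarked $\GG$ in the box is {\sf (L2)}. (One should check that {\sf (L3)}'s edge label being $\GG$ is consistent: by (G.4) a $\GG$ on $\underline{\x}$ while $\bullet_{\GG^+}\in\x$ is allowed since $\bullet_{\GG^+}$ is a box label, not a genetic label, and (G.12)/(V.3) permit it; this is exactly the configuration produced by the snake miniswaps of Section~\ref{sec:swaps}.) Combining, every row of $L$ is one of {\sf (L1)}--{\sf (L4)}, as claimed. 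The main obstacle I anticipate is the bookkeeping to be sure no other edge-labeled or virtual-label configurations slip through — in particular verifying that in cases {\sf (L1)} and {\sf (L2)} there is no unmarked $\GG$ (real or virtual) hiding on an edge that would connect $\x$ to another $L$-box below it, and making the {\sf (L3)} vs.\ {\sf (L1)} dichotomy precise using (V.1)--(V.3) — but this is routine checking against the axioms rather than a genuine difficulty.
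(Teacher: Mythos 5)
Your overall target is right, but several load-bearing steps are not. First, the assertion that a $\GG^+$-good tableau ``has no $\GG^!$'' is false: since $\GG\prec\GG^+$, any $\GG$ southeast of a $\bullet_{\GG^+}$ \emph{is} marked, and such $\GG^!$'s genuinely occur in $\GG^+$-good tableaux (e.g.\ they are produced by miniswaps {\sf H5.1} and {\sf H9}); the statement you are transplanting is the one for presnakes in a $\GG$-good tableau (Lemma~\ref{lem:presnake_description}), where it holds because $\GG\not\prec\GG$. Your conclusions at those two spots survive, but for a different reason: a marked $\GG$ is simply excluded from ladders by definition, so it cannot be an $L$-box. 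Second, (G.12) does not apply to two $\GG$'s in the same row --- ``NorthWest'' means strictly north \emph{and} strictly west --- so it cannot force a $\bullet_{\GG^+}$ between them; adjacent $\GG\,\GG$ is ruled out directly by (G.3), and the general same-row statement is Lemma~\ref{lemma:Gsoutheast}, which is the tool the paper uses.

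The more structural gap is that your case analysis only examines horizontally adjacent $L$-boxes and their immediate neighbours, which does not prove the classification. At this stage you may not assume the boxes of $L$ lying in one row are contiguous or few: the short-ribbon/skew-shape structure of ladders (Lemma~\ref{lemma:laddersareshort}) is proved \emph{after}, and from, the present lemma, and a ladder is only required to be edge-connected globally, so a priori a row of $L$ could contain two non-adjacent boxes joined through other rows --- a configuration your argument never touches and which also escapes your bound on the number of boxes per row. The paper closes this by global per-row counts: at most one $\bullet_{\GG^+}$ in any row by (G.2) and at most one $\GG$ in any row by Lemma~\ref{lemma:Gsoutheast}, so a ladder row has at most two boxes; if it has two, the unmarked $\GG$ must be West of the $\bullet_{\GG^+}$ (else it would be marked), and (G.4) together with (G.7) excludes a $\GG$ edge label in that two-box row --- a part of the statement (the pictures do display $\GG$ edge labels) that your proposal does not address at all. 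If you want to keep your local route, you would still need Lemma~\ref{lemma:Gsoutheast} (or an argument via (G.3), (G.9) and Lemma~\ref{lemma:markedHiswestmost}) to kill non-adjacent same-row configurations, at which point you have essentially reconstructed the paper's proof.
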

\begin{proof}
By (G.2), at most one $\bullet_{\GG^+}$ occurs in each row. By 
Lemma~\ref{lemma:Gsoutheast}, at most one $\GG$ appears in each row. Thus
$r$ has at most two boxes. If it has one box, $r$ is clearly {\sf L1}, {\sf L2} or {\sf L3}.
If $r$ has two boxes, then it has one box label $\GG$ and one box label $\bullet_{\GG^+}$. Since the $\GG$ is not marked, it is West of the $\bullet_{\GG^+}$.
By (G.4) and (G.7), no edge label $\GG$ is possible in this two-box scenario. Thus {\sf L4} is the only two box possibility.
\end{proof}

\begin{lemma}
\label{lemma:laddersareshort}
A ladder $L$ is a short ribbon where each column with $2$ boxes is $\ytableaushort{\GG, {\bullet}}$.
\end{lemma}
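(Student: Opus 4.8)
The plan is to combine the row classification of Lemma~\ref{lem:ladder_row_classification} with conditions (G.2) and (G.4) to control the columns of $L$, and then to extract the skew-shape property from edge-connectedness. First I would settle the numerology. By Lemma~\ref{lem:ladder_row_classification}, every row of $L$ meets $L$ in at most two boxes. For columns: two boxes of $L$ in one column cannot both carry $\bullet_{\GG^+}$, since the lower one would be (weakly) southeast of the upper one, violating (G.2); and they cannot both carry an unmarked $\GG$, since (G.4) forbids a repeated column entry except when the lower instance is marked. Hence a column meets $L$ in at most two boxes, and a two-box column carries exactly one $\GG$ and one $\bullet_{\GG^+}$. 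If the $\bullet_{\GG^+}$ were the higher of the two, then the $\GG$ would lie directly below a $\bullet_{\GG^+}$ with $\GG \prec \GG^+$, hence be southeast of it, hence be marked --- impossible in a ladder. So the $\GG$ is the higher box. (That the two boxes are adjacent will follow once we know $L$ is a skew shape, since columns of a skew shape are contiguous.)

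Next I would rule out a $2\times 2$ subshape. If $\x,\x^\rightarrow,\x^\downarrow,\x^{\downarrow\rightarrow}\in L$, then the rows of $\x$ and of $\x^\downarrow$ each meet $L$ in two boxes, so by Lemma~\ref{lem:ladder_row_classification} both are of type {\sf L4}; thus $\bullet_{\GG^+}\in\x^\rightarrow$ and $\bullet_{\GG^+}\in\x^{\downarrow\rightarrow}$, giving two $\bullet_{\GG^+}$'s in one column, contradicting (G.2).

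The main work --- and the step I expect to be the obstacle --- is showing $L$ is a skew shape. Edge-connectedness forces the nonempty rows of $L$ to form a contiguous block of row indices, with every two consecutive such rows sharing a column (otherwise $L$ would split across the gap). Writing $C_i$ for the column interval that row $i$ contributes (length at most two, and of type {\sf L4} in the two-box case), I would show that $\min C_i$ and $\max C_i$ both weakly decrease as $i$ increases; this is precisely the statement that $L$ is the set-difference of two Young diagrams. If $\min C_{i+1}>\min C_i$, then overlap of the length-$\leq 2$ intervals $C_i,C_{i+1}$ forces $C_i$ to be a type-{\sf L4} pair whose $\bullet_{\GG^+}$ sits at column $m:=\min C_{i+1}$, and then $(i+1,m)\in L$ lies directly below that $\bullet_{\GG^+}$: its content is either another $\bullet_{\GG^+}$ in column $m$ (violating (G.2)) or an unmarked $\GG$ directly below a $\bullet_{\GG^+}$ (the marked-label contradiction from the first paragraph). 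If instead $\max C_{i+1}>\max C_i$, the symmetric analysis makes $C_{i+1}$ a type-{\sf L4} pair with $\GG$ at column $M:=\max C_i$ and puts a box of $L$ directly above that $\GG$ in column $M$, whose content is either a second unmarked $\GG$ (violating (G.4)) or a $\bullet_{\GG^+}$ directly above a $\GG$ (again the marked-label contradiction).

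Hence $L$ is a skew shape; combined with connectedness, the row and column bounds, and the absence of a $2\times 2$, $L$ is a short ribbon, and since the columns of a skew shape are contiguous, every two-box column of $L$ is the vertical domino $\ytableaushort{\GG,{\bullet}}$ with $\GG$ above $\bullet_{\GG^+}$. The interval bookkeeping of the third paragraph is the only nonroutine part; everything else is immediate from the quoted conditions and Lemma~\ref{lem:ladder_row_classification}.
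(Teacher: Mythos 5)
Your proof is correct and follows essentially the same route as the paper's: columns are controlled by (G.2), (G.4) and the marking convention, rows by Lemma~\ref{lem:ladder_row_classification}, a $2\times 2$ is excluded, and the skew-shape property then follows. The only differences are cosmetic: you rule out the $2\times 2$ via two $\bullet_{\GG^+}$'s in a column (G.2) where the paper invokes (G.3) on two adjacent $\GG$'s, and you write out the interval-monotonicity bookkeeping for the skew-shape step that the paper simply declares immediate.
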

\begin{proof}
In each column, there is at most one $\bullet_{\GG^+}$ by (G.2) and at most one $\GG$ by (G.4).  If the column consists of $\bullet_{\GG+}$ and $\GG$, then the $\GG$ is North of the $\bullet_{\GG^+}$, since otherwise the $\GG$ is marked. Therefore the columns are as described.

If $L$ has a $2\times 2$ subsquare the North box of each column must contain $\GG$, violating (G.3).
Each row has at most two boxes by Lemma~\ref{lem:ladder_row_classification}.
That $L$ is a skew shape is now immediate from the descriptions of $L$'s rows and columns.
\end{proof}

\begin{lemma}[Relative positioning of ladders]
\label{lem:ladders.sw.ne}
Suppose $U$ is $\GG^+$-good, and that $L, M$ are distinct ladders of $U$. Then, up to 
relabeling of the ladders, $L$ is entirely SouthWest of $M$ (that is, if $\bbb, \bbb'$ are boxes of $L, M$ respectively, then $\bbb$ is SouthWest of $\bbb'$).
\end{lemma}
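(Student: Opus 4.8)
The plan is to mimic the proof of Lemma~\ref{lem:snakes_arranged_SW-NE} but in the simpler ladder setting, where the only labels involved are $\bullet_{\GG^+}$'s and unmarked $\GG$'s. First I would recall from Lemma~\ref{lemma:laddersareshort} that each ladder is a short ribbon whose two-box columns are $\ytableausetup{boxsize=1.2em}\ytableaushort{\GG,{\bullet}}$ and whose rows are among {\sf L1}--{\sf L4} (Lemma~\ref{lem:ladder_row_classification}). Since $L$ and $M$ are distinct maximal edge-connected components, they are box disjoint; the entire content of the lemma is the claim that one is strictly SouthWest of the other, i.e.\ that they can share neither a row nor a column.

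The key steps: (1) \emph{They cannot share a row.} Suppose $\bbb\in L$ and $\bbb'\in M$ lie in the same row $r$, say $\bbb$ West of $\bbb'$, chosen as close together as possible. Each of $\bbb,\bbb'$ contains $\bullet_{\GG^+}$ or an unmarked $\GG$. By (G.2) at most one $\bullet_{\GG^+}$ is in $r$, and by Lemma~\ref{lemma:Gsoutheast} at most one $\GG$ is in $r$; so $\{\lab(\bbb),\lab(\bbb')\}=\{\GG,\bullet_{\GG^+}\}$. If $\GG=\lab(\bbb)$ is West of $\bullet_{\GG^+}=\lab(\bbb')$, then by (G.3) every label strictly between them $\prec$-increases, hence each intervening box label lies strictly between $\GG$ and $\GG^+$ in $\prec$; but there is no such gene, so $\bbb$ and $\bbb'$ are adjacent and thus in the same ladder — contradiction. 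If instead $\bullet_{\GG^+}=\lab(\bbb)$ is West of $\GG=\lab(\bbb')$, then the $\GG$ is southeast of a $\bullet_{\GG^+}$, hence would be marked, contradicting that ladder entries are unmarked $\GG$'s. (2) \emph{They cannot share a column.} Suppose $\bbb\in L$, $\bbb'\in M$ in the same column, say $\bbb$ North of $\bbb'$, chosen as close as possible. By (G.2) and (G.4), $\{\lab(\bbb),\lab(\bbb')\}=\{\GG,\bullet_{\GG^+}\}$ with $\GG$ North (else the lower $\GG$ would be marked, as in Lemma~\ref{lemma:laddersareshort}). So $\GG=\lab(\bbb)$ is directly above $\bullet_{\GG^+}=\lab(\bbb')$ if they are adjacent — but then they are in the same ladder, contradiction — so assume there is at least one box strictly between them in the column. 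Each intervening box label is $<$-between $\GG$ and $\GG^+$ by (G.4); if $\family(\GG^+)=\family(\GG)$ there is no such family, a contradiction; if $\family(\GG^+)=\family(\GG)+1$ then any intervening label $\DD$ satisfies $\family(\DD)=\family(\GG)$, forcing $\DD\preceq\GG$, and by (G.4) strictness $\DD\prec\GG$, so $\DD$ is an instance of an earlier gene of family $\family(\GG)$; then $\bbb$ (or rather the box containing $\DD$) together with $\GG\in\bbb$ would violate Lemma~\ref{lemma:Gsoutheast} unless there is a $\bullet_{\GG'}$ in between — but $\GG\in\bbb$ is the only relevant box label of this family in this part of the column, and we reach a contradiction with (G.12)/(G.2) by the intervening-$\bullet$ requirement conflicting with the positions. (3) Combining (1) and (2): since $L$ and $M$ share neither a row nor a column and are both connected, one lies entirely SouthWest or entirely NorthEast of the other; relabeling if needed gives the statement.

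Concretely I would phrase (3) as: pick $\bbb\in L$ and $\bbb'\in M$; by (1) and (2), $\bbb'$ is either strictly NorthEast or strictly SouthWest of $\bbb$. Connectedness of $L$ and $M$ (each a short ribbon, so each lies within a band of diagonals) together with the no-shared-row/column property then forces this relation to be uniform over all choices of $\bbb,\bbb'$; indeed if some pair were SW and another NE, transitivity of ``SouthWest of'' applied along the connecting paths in $L$ and $M$ would produce a shared row or column. After relabeling so that $L$ is the SouthWest one, we are done.

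The main obstacle I expect is step (2), ruling out a shared column. When $\family(\GG^+)=\family(\GG)+1$, the intervening boxes can a priori contain earlier-gene labels of family $\family(\GG)$, and ruling these out requires carefully invoking Lemma~\ref{lem:how_to_check_ballotness} (or directly (G.8) and (G.12)) to show such a label cannot sit strictly between two $\family(\GG)$-labels in the same column without an intervening $\bullet$ of the right gene, which then conflicts with (G.2) or with the positions of the $\bullet_{\GG^+}$'s defining $L$ and $M$. I would isolate this as a short sub-argument: if a column contains $\GG$ and, below it, another unmarked $\GG$ or a $\bullet_{\GG^+}$, then in fact the two must be adjacent, because any box strictly between them is forced to contain $\bullet_{\GG}$ or a marked label, either of which contradicts (G.11) (since a $\bullet_{\GG^+}$ is already present in that column at $\bbb'$, or one will be after accounting for $\GG$) or contradicts that $L,M$ are \emph{maximal} connected components.
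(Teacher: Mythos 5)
The central gap is in your step (3). Ruling out a shared row and a shared column does \emph{not} force every pair $\bbb\in L$, $\bbb'\in M$ to be SouthWest/NorthEast related: the pair could be NorthWest/SouthEast related (strictly north \emph{and} strictly west), which shares neither a row nor a column and is precisely the configuration in which neither ladder is entirely SouthWest of the other. So your claim ``by (1) and (2), $\bbb'$ is either strictly NorthEast or strictly SouthWest of $\bbb$'' is false, and no connectivity or transitivity argument repairs it --- combinatorially, a one-box ladder NorthWest of another one-box ladder shares no row or column with it. This configuration has to be excluded by the goodness axioms, and that is where the paper spends its first (and hardest) case: two $\bullet_{\GG^+}$'s are excluded by (G.2), two unmarked $\GG$'s by Lemma~\ref{lemma:Gsoutheast}, a $\bullet_{\GG^+}$ NorthWest of a $\GG$ because the latter would be marked (cf.\ Lemma~\ref{lemma:Gsoutheastofbullet}), and --- the nontrivial subcase --- an unmarked $\GG$ NorthWest of a $\bullet_{\GG^+}$ by Lemma~\ref{lem:GandbulletG+}. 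Your proposal never addresses any of these, so even with steps (1) and (2) repaired the proof is incomplete.

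Step (2) also does not go through as written, and the fix is simpler than the sub-argument you sketch. (G.4) only bounds the intervening labels from \emph{below} (they are $>$-greater than the $\GG$ in $\bbb$); it gives no upper bound, because the box $\bbb'$ below contains $\bullet_{\GG^+}$, which (G.4) ignores. The missing ingredient --- here and also for the upper bound in your row case (i), which you attribute to (G.3) alone --- is (G.9): every label northwest of the $\bullet_{\GG^+}\in\bbb'$ is $\prec\GG^+$. Combined with (G.4) (resp.\ (G.3)) this leaves no room for any intervening genetic label, whether or not $\family(\GG^+)=\family(\GG)$, so the two boxes are adjacent and lie in the same ladder, a contradiction; this is exactly the paper's argument. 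Your detour through a label $\DD$ with $\family(\DD)=\family(\GG)$ and $\DD\prec\GG$ contradicts the lower bound you had just extracted from (G.4), and Lemma~\ref{lemma:Gsoutheast} does not apply to it, since that lemma concerns two instances of the \emph{same} gene. The closing sketch (``any box strictly between them is forced to contain $\bullet_\GG$ or a marked label'') is also unjustified: a $\GG^+$-good tableau contains no $\bullet_\GG$'s at all, and a marked label in that column is ruled out by (G.11) in view of the $\bullet_{\GG^+}\in\bbb'$, so neither alternative can carry the argument.
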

\begin{proof}
Suppose not. There are three cases to consider:

\noindent
{\sf Case 1: ($\bbb\in L$ is NorthWest of $\bbb'\in M$):}
By definition, $\bbb$ and $\bbb'$ contain either $\bullet_{\GG^+}$ or $\GG$. 
By (G.2) and Lemmas~\ref{lemma:Gsoutheast}, \ref{lemma:Gsoutheastofbullet} and~\ref{lem:GandbulletG+}, we see that no combination of these choices is possible.

\noindent
{\sf Case 2: ($\bbb$ is North and in the same column as $\bbb'$):}
If $\bullet_{\GG^+}\in \bbb$ and  $\bullet_{\GG^+}\in \bbb'$,
we violate (G.2). If  $\bullet_{\GG^+}\in \bbb$ and  $\GG\in \bbb'$, then the latter would be marked.
Hence $\GG\in \bbb$. Since $\GG\in \bbb'$ or $\bullet_{\GG^+} \in \bbb'$, we have by (G.4) and (G.9) that $\bbb^\downarrow=\bbb'$ and so $\bbb,\bbb'$ are in the same ladder,
contradicting $L\neq M$. 

\noindent
{\sf Case 3: ($\bbb$ is West and in the same row as $\bbb'$):} 
By (G.2), at least one of $\bbb, \bbb'$ contains
$\GG$. By Lemma~\ref{lemma:Gsoutheast}, at least one of $\bbb, \bbb'$ contains $\bullet_{\GG^+}$. If $\GG \in \bbb$ and $\bullet_{\GG^+}\in \bbb'$, then by (G.3) and (G.9), $\bbb'=\bbb^\rightarrow$, contradicting $L\neq M$. If $\bullet_{\GG^+} \in \bbb$ and $\GG \in \bbb'$, then the latter is marked. 
\end{proof}

\section{Reverse genomic jeu de taquin}\label{sec:reversal}
Let $r$ be a ladder row in a $\GG^+$-good tableau $U$ and let $\x$ be the westmost box in $r$. We define
the {\bf reverse miniswap} operation $\revm$ on $r$. The cases below are labeled in accordance with the classification of Lemma~\ref{lem:ladder_row_classification}. Below, each $\bullet$ on the left
 of the ``$\mapsto$'' is a $\bullet_{\GG^+}$, while
on the right it is a $\bullet_\GG$.

\ytableausetup{boxsize=1.2em}
\noindent
{\sf (Case L1):} 

{\sf (Subcase L1.1: $\GG \in \x^\uparrow$):}
\[\begin{picture}(110,14)
\put(-120,0){$r = \ytableaushort{\bullet} \mapsto \revm(r) = \ytableaushort{\GG}$}
\end{picture}
\]

{\sf (Subcase L1.2: $\GG \notin \x^\uparrow$):} 
\[\begin{picture}(110,14)
\put(-120,0){$r = \ytableaushort{\bullet} \mapsto \revm(r) = \ytableaushort{\bullet}$}
\end{picture}
\]

\noindent
{\sf (Case L2):}  

{\sf (Subcase L2.1: $\bullet_{\GG^+} \in \x^\downarrow$ or $\GG^! \in \x^\downarrow$):} 
\[\begin{picture}(110,14)
\put(-120,0){$r = \ytableaushort{\GG} \mapsto \revm(r) = \ytableaushort{\bullet}$}
\end{picture}
\]

{\sf (Subcase L2.2: $\bullet_{\GG^+} \notin \x^\downarrow$, $\GG^! \notin \x^\downarrow$, $\GG^! \notin \x$, $\x$ contains the westmost $\GG$):}
\[\begin{picture}(110,14)
\put(-120,0){$r = \ytableaushort{\GG} \mapsto \revm(r) = \ytableaushort{\GG} + \begin{picture}(25,12)
\put(0,0){$\ytableaushort{\bullet}$.}
\put(4,-4){$\GG$}
\end{picture}$}
\end{picture}
\]

{\sf (Subcase L2.3: $\bullet_{\GG^+} \notin \x^\downarrow$, $\GG^! \notin \x^\downarrow$, $\GG^! \notin \x$, $\x$ does \emph{not} contain the westmost $\GG$):}
\[\begin{picture}(110,14)
\put(-120,0){$r = \ytableaushort{\GG} \mapsto \revm(r) = \ytableaushort{\GG} + \begin{picture}(25,12)
\put(0,0){$\ytableaushort{\bullet}$.}
\put(2,-4){$\Scale[.8]{\circled{\GG}}$}
\end{picture}$}
\end{picture}
\]

\noindent
{\sf (Case L3):}
\[\begin{picture}(110,14)
\put(-120,0){$r = \begin{picture}(18,12)
\put(0,0){$\ytableaushort{\bullet}$}
\put(4,12){$\GG$}
\end{picture} \mapsto \revm(r) = \begin{picture}(23,18)
\put(0,0){$\ytableaushort{\bullet}$.}
\put(3,-5){$\GG$}
\end{picture}$}
\end{picture}
\]

\noindent
{\sf (Case L4):} 

{\sf (Subcase L4.1:  $\GG^+ \in \underline{\x^\rightarrow}$ with $\family(\GG^+) = \family(\GG)$, and either $\bullet_{\GG^+} \in \x^\downarrow$ or $\GG^! \in \x^\downarrow$):}
\[\begin{picture}(110,14)
\put(-120,0){$r = \begin{picture}(32,12)
\put(0,0){$\ytableaushort{\GG \bullet}$}
\put(16,-4){$\GG^+$}
\end{picture} \mapsto \revm(r) = \begin{picture}(40,18)
\put(0,0){$\ytableaushort{\bullet {\GG^+}}$}
\end{picture}$}
\end{picture}
\]

{\sf (Subcase L4.2: $\GG^+ \in \underline{\x^\rightarrow}$ with $\family(\GG^+) = \family(\GG)$, $\bullet_{\GG^+} \notin \x^\downarrow$, $\GG^! \notin \x^\downarrow$ and $\x$ contains the westmost $\GG$):}
\[\begin{picture}(110,14)
\put(-120,0){$r = \begin{picture}(32,12)
\put(0,0){$\ytableaushort{\GG \bullet}$}
\put(16,-4){$\GG^+$}
\end{picture} \mapsto \revm(r) = \begin{picture}(40,18)
\put(0,0){$\ytableaushort{\bullet {\GG^+}}$}
\put(3,-4){$\GG$}
\end{picture}$}
\end{picture}
\]

{\sf (Subcase L4.3: $\GG^+ \!\in\! \underline{\x^\rightarrow}$ with $\family(\GG^+) = \family(\GG)$, $\bullet_{\GG^+} \!\notin\! \x^\downarrow$, $\GG^! \notin \x^\downarrow$ and $\x$ does \emph{not} contain the westmost $\GG$):}
\[\begin{picture}(110,14)
\put(-120,0){$r = \begin{picture}(32,12)
\put(0,0){$\ytableaushort{\GG \bullet}$}
\put(16,-4){$\GG^+$}
\end{picture} \mapsto \revm(r) = \begin{picture}(40,18)
\put(0,0){$\ytableaushort{\bullet {\GG^+}}$}
\put(2,-4){$\Scale[.8]{\circled{\GG}}$}
\end{picture}$}
\end{picture}
\]

\ytableausetup{boxsize=1.8em}
{\sf (Subcase L4.4: there is \emph{no} $\GG^+ \in \underline{\x^\rightarrow}$ with $\family(\GG^+) = \family(\GG)$,
and $\x$ contains the westmost $\GG$):}
Let $A$ be the labels in $\overline{\x}$, $Z = \{ \EE \in A \colon N_\GG = N_\EE\}$, $Z^\sharp = Z \cup \{\GG\}$, $\FF = \min Z^\sharp$, $A'' = Z^\sharp \backslash \{\FF\}$, and $A' = A \backslash Z$.
\[\begin{picture}(110,24)
\put(-120,0){$r = \begin{picture}(50,24)
\put(5,0){$\ytableaushort{\GG \bullet}$}
\put(11,17){$A$}
\end{picture} \mapsto \revm(r) =
 \begin{picture}(100,24)
\put(5,0){$\ytableaushort{\bullet \FF}$.}
\put(11,17){$A'$}
\put(33,-5){$A''$}
\end{picture}$}
\end{picture}
\]

{\sf (Subcase L4.5: there is \emph{no} $\GG^+ \in \underline{\x^\rightarrow}$ with $\family(\GG^+) = \family(\GG)$, and
 $\x$ does \emph{not} contain the westmost $\GG$):}
Let $A,Z, Z^\sharp, \FF$ and $A'$ be as in {\sf L4.4}; also let
$A''' = Z \backslash \{\FF\}$.
\[\begin{picture}(110,24)
\put(-120,0){$r = \begin{picture}(52,24)
\put(5,0){$\ytableaushort{\GG \bullet}$}
\put(11,17){$A$}
\end{picture} \mapsto \revm(r) =
 \begin{picture}(100,24)
\put(5,0){$\ytableaushort{\bullet \FF}$.}
\put(11,17){$A'$}
\put(28,-5){$\Scale[.6]{A''', \circled{\GG}}$}
\end{picture}$}
\end{picture}
\]

\begin{lemma}
Every ladder row falls into exactly one of the above cases. 
\end{lemma}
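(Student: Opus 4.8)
The plan is to prove this by a straightforward case analysis on the shape of the ladder row $r$ as classified in Lemma~\ref{lem:ladder_row_classification}, following exactly the same pattern used to prove the analogous well-definedness lemmas for miniswaps (Lemmas~\ref{lem:head_wd}, \ref{lem:body_wd}, \ref{lem:tail_wd}). By Lemma~\ref{lem:ladder_row_classification}, $r$ is of type {\sf L1}, {\sf L2}, {\sf L3}, or {\sf L4}, and these four types are mutually exclusive (they are distinguished by the number of boxes and by which of $\bullet_{\GG^+}$ and unmarked $\GG$ occupy them). So it suffices to check, within each of these types, that the subcases listed for $\revm$ partition all possibilities.

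For {\sf L1} (a single box $\x$ with $\bullet_{\GG^+}$), the subcases {\sf L1.1} and {\sf L1.2} are defined by whether or not $\GG \in \x^\uparrow$, which is obviously an exhaustive dichotomy. For {\sf L2} (a single box $\x$ containing unmarked $\GG$), I would observe that the three subcases {\sf L2.1}, {\sf L2.2}, {\sf L2.3} are governed first by whether $\bullet_{\GG^+} \in \x^\downarrow$ or $\GG^! \in \x^\downarrow$ ({\sf L2.1}), and otherwise by whether or not $\x$ contains the westmost $\GG$ ({\sf L2.2} versus {\sf L2.3}); here one must note that when {\sf L2.1} fails, the extra hypothesis $\GG^! \notin \x$ appearing in {\sf L2.2} and {\sf L2.3} is automatic, since $\x$ contains the \emph{unmarked} $\GG$ by the definition of a ladder row of type {\sf L2} and a box cannot contain both $\GG$ and $\GG^!$. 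For {\sf L3} there is only one subcase and nothing to check. For {\sf L4} (two boxes $\ytableausetup{boxsize=1.2em}\ytableaushort{\GG \bullet}$ with the $\GG$ unmarked), the five subcases {\sf L4.1}--{\sf L4.5} split first on whether there exists $\GG^+ \in \underline{\x^\rightarrow}$ with $\family(\GG^+) = \family(\GG)$: if yes, we further split by whether $\bullet_{\GG^+} \in \x^\downarrow$ or $\GG^! \in \x^\downarrow$ ({\sf L4.1}) and, failing that, by whether $\x$ contains the westmost $\GG$ ({\sf L4.2} versus {\sf L4.3}, with $\GG^! \notin \x^\downarrow$ automatic once {\sf L4.1} fails); if no such $\GG^+$ exists, we split by whether $\x$ contains the westmost $\GG$ ({\sf L4.4} versus {\sf L4.5}). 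This exhausts all cases.

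The main (though still routine) point to be careful about is the implicit claim that the uniqueness assertion of Lemma~\ref{lem:how_to_check_ballotness} makes the phrase ``the westmost $\GG$'' in these subcases well-posed and that in {\sf L4.1}, {\sf L4.4}, {\sf L4.5} there is at most one candidate edge label $\GG^+ \in \underline{\x^\rightarrow}$ with $\family(\GG^+) = \family(\GG)$ to be concerned about---but this follows from (G.5) (two labels of the same family cannot share an edge) together with (G.4) applied to the box $\x^\rightarrow$. Since the cited classification already pins down the local picture completely and the subcase predicates are built from a nested sequence of genuine dichotomies, the verification is mechanical; I therefore expect no real obstacle, only the bookkeeping of confirming that the side-hypotheses attached to each subcase (the various $\GG^! \notin \x$, $\GG^! \notin \x^\downarrow$ conditions) are forced rather than restrictive, exactly as in the proof of Lemma~\ref{lem:tail_wd}.

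\begin{proof}
By Lemma~\ref{lem:ladder_row_classification}, $r$ is of type {\sf L1}, {\sf L2}, {\sf L3}, or {\sf L4}, and these types are mutually exclusive. We treat each in turn.

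If $r$ is of type {\sf L1}, then $r=\{\x\}$ with $\bullet_{\GG^+}\in\x$, and the subcases {\sf L1.1} and {\sf L1.2} are distinguished by whether or not $\GG\in\x^\uparrow$; this is exhaustive and exclusive.

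If $r$ is of type {\sf L3}, there is a single subcase and nothing to check.

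If $r$ is of type {\sf L2}, then $r=\{\x\}$ with an unmarked $\GG\in\x$. In particular $\GG^!\notin\x$. If $\bullet_{\GG^+}\in\x^\downarrow$ or $\GG^!\in\x^\downarrow$, we are in {\sf L2.1}. Otherwise, the hypotheses $\bullet_{\GG^+}\notin\x^\downarrow$, $\GG^!\notin\x^\downarrow$, $\GG^!\notin\x$ all hold, and we are in {\sf L2.2} or {\sf L2.3} according to whether or not $\x$ contains the westmost $\GG$. By the uniqueness in Lemma~\ref{lem:how_to_check_ballotness}, ``the westmost $\GG$'' is well-defined, so these subcases are exhaustive and exclusive.

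If $r$ is of type {\sf L4}, then $r=\{\x,\x^\rightarrow\}$ with $\GG\in\x$ unmarked and $\bullet_{\GG^+}\in\x^\rightarrow$; in particular $\GG^!\notin\x$. By (G.4) applied to $\x^\rightarrow$ and (G.5), there is at most one label $\GG^+\in\underline{\x^\rightarrow}$ with $\family(\GG^+)=\family(\GG)$. Suppose first such a label exists. If $\bullet_{\GG^+}\in\x^\downarrow$ or $\GG^!\in\x^\downarrow$, we are in {\sf L4.1}. Otherwise $\bullet_{\GG^+}\notin\x^\downarrow$ and $\GG^!\notin\x^\downarrow$, and we are in {\sf L4.2} or {\sf L4.3} according to whether or not $\x$ contains the westmost $\GG$. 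If no such label $\GG^+$ exists, we are in {\sf L4.4} or {\sf L4.5} according to whether or not $\x$ contains the westmost $\GG$. Using Lemma~\ref{lem:how_to_check_ballotness} for the well-posedness of ``the westmost $\GG$'', these five subcases are exhaustive and exclusive. This completes the proof.
\end{proof}
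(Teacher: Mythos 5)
Your proof is correct and takes the same route as the paper, which simply declares the lemma "tautological, given Lemma~\ref{lem:ladder_row_classification}"; your case-by-case verification that the subcase predicates are nested exhaustive dichotomies is just that tautology spelled out. One cosmetic remark: Lemma~\ref{lem:how_to_check_ballotness} is not really what makes "the westmost $\GG$" well-posed (the condition is just "some $\GG$ West of $\x$ or not", a genuine dichotomy regardless), but this misattribution does not affect the argument.
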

\begin{proof}
This is tautological, given Lemma~\ref{lem:ladder_row_classification}.
\end{proof}

\begin{lemma}
No $\revm$ affects an edge that is shared by two rows of the same ladder $L$.
\end{lemma}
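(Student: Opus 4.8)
The plan is to examine what edges are shared between two rows of a single ladder $L$ and to check, case by case, that the reverse miniswap $\revm$ applied to either row does not modify such an edge. First I would recall from Lemma~\ref{lemma:laddersareshort} that $L$ is a short ribbon in which the only two-box columns are $\ytableausetup{boxsize=1.2em}\ytableaushort{\GG,{\bullet}}$. Thus an edge shared by two rows $r^\uparrow$ (above) and $r$ (below) of $L$ is the lower edge $\underline{\x}$ of some box $\x$ in $r^\uparrow$ that equals the upper edge $\overline{\x^\downarrow}$ of the box $\x^\downarrow$ directly below it in $r$, and by the column description $\x$ must contain $\GG$ while $\x^\downarrow$ contains $\bullet_{\GG^+}$.

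Next I would inspect the reverse miniswap cases of Lemma~\ref{lem:ladder_row_classification}'s classification (cases {\sf L1}--{\sf L4}) to see which ones touch a horizontal edge at all. The only cases in which $\revm$ writes a (possibly virtual) label onto a horizontal edge are {\sf L2.2}, {\sf L2.3}, {\sf L4.2}--{\sf L4.5}: these place a $\GG$ or $\circled{\GG}$ on $\underline{\x}$ where $\x$ is the westmost box of the row. So there are two potential conflicts to rule out: (a) the lower row $r$ is of one of these types and the label it emits lands on $\underline{\x}$ where $\x$ is in $r$ but shared with $r^\uparrow$; (b) the upper row $r^\uparrow$ reads or depends on an edge $\underline{\x}$ for $\x$ in $r^\uparrow$ that is shared with $r$. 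For (a): in cases {\sf L2} and {\sf L4} the westmost box $\x$ of $r$ contains an unmarked $\GG$ (type {\sf L2} or {\sf L4}), so if $\underline{\x}$ were shared with a row below, that row below would contain $\bullet_{\GG^+}\in\x^\downarrow$ and $\GG\in\x$; but then $\x^\downarrow$ lies in $L$ and is South of $\x$, so $r$ is \emph{not} the northmost-in-this-column row — and the emitted label goes on $\underline{\x}$, i.e.\ the edge below the $\GG$, which by Lemma~\ref{lemma:laddersareshort} is exactly the interface with $\bullet_{\GG^+}$; however, all of {\sf L2.2}, {\sf L2.3}, {\sf L4.2}, {\sf L4.3} are conditioned on $\bullet_{\GG^+}\notin \x^\downarrow$ and $\GG^!\notin\x^\downarrow$, so this configuration is excluded; and {\sf L4.4}, {\sf L4.5} emit onto $\underline{\x}$ only after noting $\x$ is the westmost box — I would observe that in those subcases $\x$ still has the $\GG$, and the column below $\x$ inside $L$, if nonempty, forces $\bullet_{\GG^+}\in\x^\downarrow$, again contradicting the hypothesis that no $\GG^+$ of the right family sits on $\underline{\x^\rightarrow}$ together with the shape constraints. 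For (b): I would check that the upper row's miniswap, when it is of type {\sf L1}, {\sf L3} (which touch no horizontal edge except possibly reading $\x^\uparrow$) or {\sf L2}, {\sf L4} with $\x$ the westmost box of $r^\uparrow$, only writes to $\underline{(\x^\uparrow)}$ — the edge \emph{above} the box $\x^\uparrow$'s lower neighbor within $r^\uparrow$ — and that this is never the shared interface, since the shared interface is $\underline{\x}=\overline{\x^\downarrow}$ with $\GG\in\x$, $\bullet_{\GG^+}\in\x^\downarrow$, and in {\sf L2}/{\sf L4} the box holding $\GG$ in $r^\uparrow$ is itself the westmost box, whose lower edge is acted on only when the subcase hypotheses ($\bullet_{\GG^+}\notin\x^\downarrow$, etc.) hold, which again they do not in this shared-edge configuration.

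The cleanest way to organize the writeup is: fix a shared horizontal edge $e=\underline{\x}$ of $L$, deduce $\GG\in\x$ and $\bullet_{\GG^+}\in\x^\downarrow$ from Lemma~\ref{lemma:laddersareshort}, then run through the finite list of reverse-miniswap cases that could alter $e$ and show each is incompatible with $\bullet_{\GG^+}\in\x^\downarrow$ (for the lower row acting on $e$) or with $\GG\in\x$ being positioned as the westmost box whose $\x^\downarrow$ contains $\bullet_{\GG^+}$ (for the upper row acting on $e$). The main obstacle is bookkeeping: I expect the argument itself to be entirely mechanical once the shape constraint from Lemma~\ref{lemma:laddersareshort} is in hand, but one must be careful to enumerate \emph{every} reverse miniswap that writes to an edge and match each against the precise side-conditions defining that subcase, since it is exactly those side-conditions ($\bullet_{\GG^+}\notin\x^\downarrow$, $\GG^!\notin\x^\downarrow$) that do the work — so the proof will read as: ``by Lemma~\ref{lemma:laddersareshort} a shared edge sits below a $\GG$ and above a $\bullet_{\GG^+}$; every $\revm$ that modifies an edge requires the absence of exactly such a $\bullet_{\GG^+}$; hence no conflict,'' with the bulk of the writing spent making the "every such case requires" clause explicit.
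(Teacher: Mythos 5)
Your overall strategy coincides with the paper's: by Lemma~\ref{lemma:laddersareshort} an edge shared by two rows of $L$ lies below a box containing $\GG$ and above a box containing $\bullet_{\GG^+}$, and the reverse miniswaps that write a label onto the lower edge of a $\GG$-box, namely {\sf L2.2}, {\sf L2.3}, {\sf L4.2}, {\sf L4.3}, carry the explicit hypothesis $\bullet_{\GG^+}\notin\x^\downarrow$ (and $\GG^!\notin\x^\downarrow$), so they cannot be acting across a shared edge. For those four subcases your argument is correct and is essentially the paper's.

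However, your enumeration of the edge-affecting cases has genuine errors that leave a gap. First, {\sf L3} does affect horizontal edges --- it moves the $\GG$ from $\overline{\x}$ to $\underline{\x}$ --- contrary to your claim that it ``touches no horizontal edge,'' and {\sf L4.1} affects an edge by \emph{deleting} the $\GG^+$ from $\underline{\x^\rightarrow}$ (as do {\sf L4.2} and {\sf L4.3}); neither {\sf L3} nor {\sf L4.1} appears in your list, and ``affects'' must include removals. Second, {\sf L4.4} and {\sf L4.5} do not place a label on $\underline{\x}$ of the westmost box: they erase labels from $\overline{\x}$ and write $A''$, respectively $A'''\cup\{\circled{\GG}\}$, onto $\underline{\x^\rightarrow}$, the lower edge of the \emph{eastmost} box. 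Consequently the contradiction you propose for these subcases --- that $\bullet_{\GG^+}\in\x^\downarrow$ would clash with the hypothesis that no $\GG^+$ of $\GG$'s family lies on $\underline{\x^\rightarrow}$ --- is a non sequitur: the two conditions concern different locations and are compatible. The repair is easy and positional: every edge touched by {\sf L3}, {\sf L4.1}, {\sf L4.4}, {\sf L4.5} (and the $\underline{\x^\rightarrow}$ part of {\sf L4.2}, {\sf L4.3}) is either the lower edge of a box containing $\bullet_{\GG^+}$ or the upper edge of a box containing $\GG$ (for {\sf L3}, note that a ladder box above its $\bullet$-box would contain $\GG$ by Lemma~\ref{lemma:laddersareshort}, and then $\GG\in\x^\uparrow$ together with $\GG\in\overline{\x}$ would violate $U$'s (G.4)); since by Lemma~\ref{lemma:laddersareshort} a $\bullet_{\GG^+}$-box is the bottom, and a $\GG$-box the top, of its ladder column, no ladder box lies across such an edge, so it is not shared. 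With that correction your case analysis closes, but as written the {\sf L4.4}/{\sf L4.5} step fails and {\sf L3}, {\sf L4.1} are left unexamined.
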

\begin{proof}
No $\revm$ affects the upper (virtual) edge labels of the right box of a ladder row. Hence it suffices to analyze
those cases that affect the lower (virtual)  edge labels of the left box of a ladder row. These are {\sf L2.2}, {\sf L2.3},
{\sf L3}, {\sf L4.2} and {\sf L4.3}. In each case there can be no ladder row of $L$ below, by Lemma~\ref{lemma:laddersareshort}.
Hence that edge is not shared.
\end{proof}

Thus it makes sense to define $\revswap_{\GG^+}$ on a ladder $L$, by applying $\revm$ to each row of $L$ simultaneously (where the conditions on each $\revm$ refer to the original ladder $L$). 

\begin{lemma}
\label{lemma:laddercommute}
If $L_1, L_2$ are distinct ladders in a $\GG^+$-good tableau $U$, then applying $\revswap_{\GG^+}$ to $L_1$ commutes with applying $\revswap_{\GG^+}$ to $L_2$.
\end{lemma}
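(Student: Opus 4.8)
The plan is to mimic the proof of Lemma~\ref{lem:miniswap_commutation}, reducing the statement to a purely local analysis of shared edges. Since $\revm$ is defined row-by-row with the ``frozen'' convention that all conditions refer to the \emph{original} ladder $L_i$, the only way that applying $\revswap_{\GG^+}$ to $L_1$ could fail to commute with applying it to $L_2$ is if the two operations, viewed as edits of genetic/virtual/box labels of $U$, touch a common box or edge and the order in which the edits are applied matters. First I would observe, via Lemma~\ref{lem:ladders.sw.ne}, that two distinct ladders are positioned so that one, say $L_1$, is entirely SouthWest of the other, $L_2$; in particular they are box-disjoint. Thus no $\revm$ of $L_1$ rewrites a box of $L_2$ or vice versa, and the only possible interaction is through a horizontal edge $\underline{\x}$ (equivalently $\overline{\x^\downarrow}$) that is the bottom edge of a box of one ladder and the top edge of a box of the other.

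Next I would locate precisely which $\revm$ cases read or write edge labels, and on which edges. As recorded in the proof of the preceding lemma, the reverse miniswaps that \emph{modify} an edge label of the left box of a ladder row are {\sf L2.2}, {\sf L2.3}, {\sf L3}, {\sf L4.2}, {\sf L4.3}, {\sf L4.4}, {\sf L4.5}: each of these writes a (possibly virtual) $\GG$ onto the lower edge of the left box. The cases that \emph{read} edge data are {\sf L1.1}/{\sf L1.2} (test whether $\GG\in\x^\uparrow$, i.e.\ inspect $\overline{\x}$), {\sf L2.1}, {\sf L4.1}--{\sf L4.3} (test $\GG^!\in\x^\downarrow$, i.e.\ inspect the box below, not an edge), and {\sf L4.1}--{\sf L4.5} (test for $\GG^+\in\underline{\x^\rightarrow}$). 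Because of the frozen-condition convention, any such \emph{read} is performed against $U$ itself, not against a partially-updated tableau, so reads never see the other ladder's writes. It therefore only remains to rule out two distinct writes landing on the same edge, and to check that a write by one ladder and a read by the other are never on the same edge \emph{in a way that the frozen convention has not already neutralized} --- the latter is automatic, so really only write/write collisions matter.

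Now I would show a write/write collision is impossible. A write by $L_1$ always deposits a label on the bottom edge $\underline{\x}$ of the \emph{westmost} box $\x$ of one of its rows; by Lemma~\ref{lemma:laddersareshort} that $\x$ has nothing below it in $L_1$, so $\x^\downarrow\notin L_1$. For $L_1$'s write and $L_2$'s write to coincide we would need this same edge $\underline{\x}=\overline{\x^\downarrow}$ to be the upper edge of the westmost box of some row of $L_2$, i.e.\ $\x^\downarrow$ is the westmost box of a row of $L_2$ and that $\revm$ is one of {\sf L1.1}, {\sf L3} (these are the only cases reading/writing the \emph{upper} edge --- and in fact {\sf L1.1} only reads it, {\sf L3} only moves a label off it downward, so neither genuinely writes the shared edge). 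A moment's inspection of the case list shows no reverse miniswap writes to the \emph{top} edge of its left box at all; every edge modification is to a \emph{bottom} edge. Hence the edge $\underline{\x}$ modified by $L_1$ could only be touched by $L_2$ if $\x$ itself were the westmost box of a row of $L_2$, contradicting box-disjointness (Lemma~\ref{lem:ladders.sw.ne}). Therefore the edit-sets of the two $\revswap_{\GG^+}$ operations are disjoint, and since with frozen conditions each is simply a fixed relabelling of a fixed set of boxes/edges, the two relabellings commute.

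The main obstacle I anticipate is the bookkeeping in the second paragraph: one must be scrupulous about the distinction between a miniswap \emph{reading} a piece of data (where the frozen-condition convention saves us regardless of collisions) and \emph{writing} it (where genuine non-commutativity could in principle arise), and about the fact that ``edge'' here means a horizontal edge that is simultaneously $\underline{\x}$ for the box above and $\overline{\x^\downarrow}$ for the box below, so that the relevant adjacency is vertical. Once it is pinned down that (i) every edge-\emph{write} is to a bottom edge of a ladder-row's westmost box, (ii) that box has no ladder box beneath it, and (iii) distinct ladders are box-disjoint and SW--NE arranged, commutativity is immediate, exactly paralleling Lemma~\ref{lem:miniswap_commutation}. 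I would also briefly note, as in that lemma's proof, that virtual-label \emph{locations} in $L_2$ are computed from $U$ and so are unaffected by operating on $L_1$, which is what licenses treating each $\revswap_{\GG^+}$ as a fixed relabelling in the first place.
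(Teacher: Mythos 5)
Your argument reaches the right conclusion, but it works much harder than necessary and, in doing so, briefly wobbles exactly where the paper's proof is decisive. The paper's entire proof is one sentence: distinct ladders, being distinct \emph{maximal} edge-connected components, by definition share no edges at all, so the two reverse swaps edit disjoint sets of boxes and edges and trivially commute. In particular, the configuration you spend most of your time analyzing --- a horizontal edge that is $\underline{\x}$ for a box $\x\in L_1$ and $\overline{\x^\downarrow}$ for a box $\x^\downarrow\in L_2$ --- cannot occur: such $\x$ and $\x^\downarrow$ would be edge-adjacent and hence in the same ladder (equivalently, they are vertically adjacent boxes in the same column, which already contradicts the strict SouthWest/NorthEast positioning of Lemma~\ref{lem:ladders.sw.ne} that you cite). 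Because you do not notice this, your read/write bookkeeping carries the load, and it contains one imprecise step: you assert that {\sf L3} ``only moves a label off'' the top edge and so ``neither genuinely writes the shared edge,'' but removing the $\GG$ from $\overline{\x^\downarrow}$ is a modification of that edge, and if the shared-edge configuration were possible, an add-then-remove versus remove-then-add discrepancy could in principle break commutativity. The step is harmless only because the configuration is vacuous --- which is precisely the paper's point. So: correct in substance and in its reliance on disjointness plus the frozen-condition convention (as in Lemma~\ref{lem:miniswap_commutation}), but the miniswap case analysis can be deleted entirely once you observe that distinct ladders share no edges by definition.
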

\begin{proof}
This follows, since by definition $L_1$ and $L_2$ do not share any edges.
\end{proof}

Lemma~\ref{lemma:laddercommute} permits us to define the {\bf reverse swap} $\revswap_{\GG^+}$ on a $\GG^+$-good tableau by applying $\revswap_{\GG^+}$ to all ladders (in arbitrary order). We extend this to a $\mathbb{Z}[t_1^{\pm 1}, \dots, t_n^{\pm 1}]$-linear operator. 

\begin{lemma}[Reverse swaps preserve content] 
\label{lemma:reversecontentpres}
If $U$ is $\GG^+$-good and of content $\mu$, then each $T \in \revswap_{\GG^+}(U)$
has content $\mu$. 
\end{lemma}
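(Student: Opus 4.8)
The statement is the exact mirror of Lemma~\ref{lem:content_preservation} (swaps preserve content), so my plan is to mirror that proof. The content of a tableau is the number of genes of each family, so I must show that $\revm$ (hence $\revswap_{\GG^+}$) neither destroys a gene of $U$ nor creates a brand new one. First I would observe that no reverse miniswap \emph{eliminates} a gene from a ladder row: inspecting cases {\sf L1}--{\sf L4}, every genetic label $\GG$, $\GG^+$, or $\FF$ that is present on the left of ``$\mapsto$'' still appears (possibly as a marked copy, possibly on an edge) on the right, and the box labels $\bullet_{\GG^+}\mapsto\bullet_\GG$ are not genetic. So no family loses a gene. The substance is therefore to check that whenever a reverse miniswap \emph{introduces} a genetic label not present in the row before, that label already occurs elsewhere in $U$, so the gene count of its family is unchanged.

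Next I would enumerate the cases in which a genetic label is genuinely introduced. Scanning the list: {\sf L1.1} produces a $\GG$ where the box previously held $\bullet_{\GG^+}$; {\sf L2.2} and {\sf L2.3} each produce a second tableau carrying a (non-virtual, resp.\ virtual) edge $\GG$; {\sf L3} produces an edge $\GG$; {\sf L4.2} and {\sf L4.3} produce an edge $\GG$ (non-virtual, resp.\ virtual); and {\sf L4.4}, {\sf L4.5} produce the edge labels $A''$, resp.\ $A'''$ together with $\FF$, all drawn from the multiset $Z^\sharp=Z\cup\{\GG\}$ that already sat on $\overline{\x}$. The virtual-label cases ({\sf L2.3}, {\sf L4.3}, {\sf L4.5}) are immediate, since a virtual $\circled{\GG}$ by definition (conditions (V.1)--(V.3)) can only be placed on an edge that lies east of a genuine $\GG$, so that $\GG$ already contributes to the family count. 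For {\sf L1.1} and {\sf L3}, the hypothesis is precisely $\GG\in\x^\uparrow$ (for {\sf L3}, the $\GG$ in $\overline{\x}$ in the depicted ladder row), so a genuine $\GG$ is present in $U$; and for {\sf L2.2}, {\sf L4.2}, {\sf L4.4}, the subcase condition ``$\x$ contains the westmost $\GG$'' combined with (G.12) (applied to any two same-family labels NorthWest of each other) forces another instance of family $\family(\GG)$ to exist, or else one argues directly that the label $\FF$ introduced (resp.\ the labels of $A''$) is already in $\overline{\x}$ of the \emph{input} ladder row, hence already in $U$. In every case the introduced label either duplicates a label already present in $U$ or lies in the same family as one that does, so no family gains a gene.

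Finally I would assemble these observations into the conclusion: $\revswap_{\GG^+}$ is built from $\revm$ applied to the rows of each ladder, and ladders are box-disjoint and even edge-disjoint, so the local analysis of each ladder row is independent; since no ladder row swap changes the per-family gene count, neither does $\revswap_{\GG^+}$ on a single ladder, nor (by Lemma~\ref{lemma:laddercommute}) the full reverse swap. Hence each $T\in\revswap_{\GG^+}(U)$ has content $\mu$. The main obstacle, as in Lemma~\ref{lem:content_preservation}, is the bookkeeping in the edge-label cases {\sf L4.4}/{\sf L4.5}: one must be careful that partitioning $Z^\sharp$ into $\{\FF\}\sqcup A''$ (resp.\ $\{\FF\}\sqcup A'''$) really just redistributes labels already sitting on $\overline{\x}$ of the source tableau, so that no spurious gene of family $\family(\GG)$ is manufactured and none is lost; everything else is routine case-checking against the definitions and the structural lemmas of Section~\ref{sec:good_tableaux}.
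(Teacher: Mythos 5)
Your second half (no new gene is manufactured) is essentially fine and matches the paper, but your first half rests on a false claim, and that claim is exactly where the real work of this lemma lies. You assert that ``no reverse miniswap eliminates a gene from a ladder row: every genetic label present on the left of $\mapsto$ still appears on the right.'' Inspect {\sf L2.1} and {\sf L4.1} again: in {\sf L2.1} the row $\ytableausetup{boxsize=1.2em}\ytableaushort{\GG}$ becomes $\ytableaushort{\bullet}$ with nothing else, and in {\sf L4.1} the box label $\GG$ becomes $\bullet_\GG$ while only the edge label $\GG^+$ moves into the box — in both cases the instance of $\GG$ in that row is destroyed. So reverse miniswaps genuinely can delete instances of a gene, and if the deleted instance were the only one (or if all instances sat in such rows), the content could a priori drop. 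Your proposed proof never addresses this, so it has a genuine gap.

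The paper's argument confronts this head on: take the westmost instance $\ell$ of a gene $\HH$ of $U$; if its ladder row undergoes anything other than {\sf L2.1} or {\sf L4.1}, an $\HH$ survives in that row, and if it undergoes {\sf L2.1} or {\sf L4.1}, then the very hypotheses of those subcases say $\bullet_{\HH^+} \in \x^\downarrow$ or $\HH^! \in \x^\downarrow$. In the first case {\sf L1.1} fires at $\x^\downarrow$ and produces an $\HH$ there in $T$; in the second, the marked $\HH^! \in \x^\downarrow$ lies in no ladder (ladder boxes contain only $\bullet_{\HH^+}$ or unmarked $\HH$) and therefore persists into $T$. That is the content-preservation mechanism your proposal is missing; the converse direction (only {\sf L1.1} can introduce a new gene, and its hypothesis $\HH \in \x^\uparrow$ shows the gene already existed) is the easy part and your treatment of it, while somewhat more elaborate than necessary (e.g.\ in {\sf L2.2} the box label $\GG$ simply remains, so no appeal to (G.12) is needed), is acceptable.
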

\begin{proof}
Let $\HH$ be a gene in $U$. We must show $\HH \in T$. Let $\ell$ be the westmost instance of $\HH$ in $U$. If $\ell$ is not part of a ladder,
$\HH$ appears in the same location in $T$ and we are done. Thus suppose $\ell$ is in a ladder row $r$. Consider the reverse miniswap applied to $r$. If it is anything but {\sf L2.1} or {\sf L4.1}, then there is an $\HH$ in that row of $T$. If it is {\sf L2.1} or {\sf L4.1}, let $\x$ be the box containing $\ell$. By definition, $U$ has $\bullet_{\HH^+} \in \x^\downarrow$ or $\HH^! \in \x^\downarrow$. In the former case, the miniswap applied at $\x^\downarrow$ is {\sf L1.1}, so $\HH$ appears in $\x^\downarrow$ in $T$. In the latter case, $\x^\downarrow$ is not in a ladder, so $\HH$ appears in $\x^\downarrow$ in $T$. 

Conversely suppose $\HH$ is not a gene in $U$. We must show it does not appear in $T$. If it appeared in $T$, it must be created by some miniswap. Clearly no miniswap but {\sf L1.1} could possibly introduce a new gene. But if we apply {\sf L1.1} at some box $\x$ of $U$, introducing $\HH \in \x$ in $T$, then $U$ has $\HH \in \x^\uparrow$ by definition, so $\HH$ was indeed a gene of $U$.
\end{proof}

We prove the following proposition in Appendix~\ref{sec:backwards_goodness_proof}.
\begin{proposition}[Reverse swaps preserve goodness]\label{prop:goodness_preservation_reverse}
If $U$ is $\GG^+$-good, each $T \in \revswap_{\GG^+}(U)$ is $\GG$-good.
\end{proposition}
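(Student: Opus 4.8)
The plan is to verify that each of the thirteen goodness conditions (G.1)--(G.13), together with the virtual-label conditions (V.1)--(V.3), survives the application of a single $\revswap_{\GG^+}$. Since reverse swaps act independently on distinct ladders (Lemma~\ref{lemma:laddercommute}), and outside of ladders the tableau is unchanged, every violation of a goodness condition in $T \in \revswap_{\GG^+}(U)$ must involve at least one box or edge that was altered by some reverse miniswap. Thus the argument proceeds condition by condition, and for each condition we examine the finitely many configurations of a changed box/edge (classified by Lemma~\ref{lem:ladder_row_classification} into cases {\sf L1}--{\sf L4} and their subcases) relative to its neighbors, using the $\GG^+$-goodness of $U$ as the hypothesis. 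Because $\revswap_{\GG^+}$ is essentially the inverse of $\swap_{\GG}$ at the level of combinatorics, many of the local checks mirror (in reverse) the corresponding checks in Proposition~\ref{prop:goodness_preservation}, so in the appendix one can often cite or adapt those verifications.

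The key steps, in order: (1) First establish that each $T$ is a genuine edge-labeled genomic filling of $\nu/\lambda$ --- the $j$'s of each family still form an initial segment, no genetic label is too high (G.1), and rows/columns behave correctly (G.3)--(G.6). The too-high check is the most delicate here: a reverse miniswap can turn $\bullet_{\GG^+}$ into $\GG$ (cases {\sf L1.1}, {\sf L3}, and the second summands of {\sf L2.2}, {\sf L2.3}, {\sf L4.2}--{\sf L4.5}), and one must confirm the newly-placed $\GG$ or $\circled\GG$ does not sit too high; this uses the structure of ladders (Lemma~\ref{lemma:laddersareshort}, which forces the $\GG$-above-$\bullet_{\GG^+}$ column shape) together with (G.1) for $U$. (2) Next handle the $\bullet_\GG$ conditions (G.2), (G.9), (G.10), (G.11): after the swap every $\bullet_{\GG^+}$ of $U$ that remains a bullet becomes $\bullet_\GG$, plus possibly new $\bullet_\GG$'s are created where {\sf L2.1}/{\sf L4.1} pushed a bullet down, so one verifies no two $\bullet_\GG$'s are SE-comparable and that marked labels and $\bullet_\GG$-columns interact correctly --- this is where Lemmas~\ref{lemma:Gsoutheast}, \ref{lemma:Gsoutheastofbullet}, \ref{lem:GandbulletG+}, \ref{lem:strong_form_of_G10} and the relative-positioning Lemma~\ref{lem:ladders.sw.ne} do the heavy lifting. (3) Then verify the ballot/bundling conditions (G.7), (G.8) and the virtual-label conditions, noting that $\revswap_{\GG^+}$ preserves content (Lemma~\ref{lemma:reversecontentpres}) and that the subcase splits {\sf L2.2}/{\sf L2.3}, {\sf L4.2}/{\sf L4.3}, {\sf L4.4}/{\sf L4.5} were designed precisely so that the re-created $\GG$ is genetic exactly when it is westmost in its gene and virtual otherwise --- this makes (G.7) and (V.2) automatic, while (G.8) follows from the $\GG^+$-ballotness of $U$ plus Lemma~\ref{lem:how_to_check_ballotness}. (4) Finally check (G.12), (G.13): (G.12) asks that same-family labels never sit strictly NW/SE of each other without an intervening $\bullet_\GG$, and since $\GG$ and $\GG^+$ can have the same family, the configurations created by {\sf L4.1}--{\sf L4.5} (which move the $\GG^+$-edge-label leftward and create a $\bullet_\GG$) must be shown to produce exactly the pictures in (G.12); (G.13) is checked using Lemma~\ref{lem:strong_form_of_G13} applied to $U$ and tracking how the sets $Z$, $A''$, $A'''$ in {\sf L4.4}/{\sf L4.5} redistribute the edge labels.

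The main obstacle will be step (4), specifically re-establishing (G.12) in the cases where $\revm$ acts by {\sf L4.1}--{\sf L4.5}. In those cases a box label $\GG$ in $\x$ is converted to $\bullet_\GG$, the neighboring $\bullet_{\GG^+}$ in $\x^\rightarrow$ receives a $\GG^+$ (or a redistributed family-$\family(\GG)$ edge label $\FF$), and a $\GG$ or $\circled\GG$ may be dropped on $\underline\x$ --- so the ``same family $\GG/\GG^+$'' instances now straddle a freshly created $\bullet_\GG$ in a configuration that must match one of the five/six pictures in (G.12), \emph{including} the delicate final clause about $\y^\rightarrow$ not containing a marked label or another instance of $\ell'$'s gene. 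Verifying this requires knowing that in $U$ the box $\x^\rightarrow$ held $\bullet_{\GG^+}$ adjacent to the \emph{tail} or \emph{head} of the relevant ladder, and chasing through how $U$'s (G.2), (G.11), (G.13) and Lemma~\ref{lem:virtualG13label} constrain what lies in $\x^{\rightarrow\rightarrow}$ and $\underline{\x^\rightarrow}$. This is the analogue of the trickiest part of the forward direction and is naturally deferred to Appendix~\ref{sec:backwards_goodness_proof}; the remaining conditions, while numerous, are routine local case analysis once the ladder-shape lemmas are in hand.
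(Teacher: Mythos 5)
Your proposal matches the paper's strategy: Appendix~\ref{sec:backwards_goodness_proof} proves the proposition exactly by verifying (G.1)--(G.13) one at a time (ordered so each check uses only previously verified conditions), via local case analysis on the ladder-row types {\sf L1}--{\sf L4} using $U$'s $\GG^+$-goodness together with the ladder-structure lemmas, and with a separate lemma certifying the virtual labels prescribed by {\sf L2.3}, {\sf L4.3}, {\sf L4.5}. Your calibration of where the work lies differs slightly from the paper's (there (G.1) is nearly immediate, while (G.4), (G.8), (G.12), (G.13) and the virtual-label consistency carry most of the weight), but the plan is the same and would go through.
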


\begin{lemma}
\label{lemma:veryusefulfact}
Let $T$ be a $\GG$-good tableau
and $U\in \swap_{\GG}(T)$. 
\begin{itemize}
\item[(I)] If $\lab_U(\x) = \GG$, then $\lab_T(\x) \in \{\bullet_{\GG}, \GG\}$.
\item[(II)] If $\lab_U(\x) = \bullet_{\GG^+}$, then $\lab_T(\x) \in \{ \bullet_{\GG}, \GG, \FF^!, \GG^+\}$.
\item[(III)] If $\lab_U(\x) = \GG^!$, then $\lab_T(\x) = \GG$.
\end{itemize}
\end{lemma}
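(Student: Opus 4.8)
\textbf{Proof plan for Lemma~\ref{lemma:veryusefulfact}.} The strategy is a direct case analysis on the miniswaps that can produce the label appearing in $\x$ in $U$. Since $\swap_\GG$ acts snake-by-snake and each snake is decomposed into $\head$, $\body$, and $\tail$ sections, every box of $U$ either lies outside every snake of $T$ (in which case $\lab_U(\x) = \lab_T(\x)$ and the claims are trivial, as a box label $\bullet_{\GG^+}$ or genetic label $\GG$ or $\GG^!$ in $U$ could only have come from the same in $T$, but $T$ has no $\bullet_{\GG^+}$ and no $\GG^!$ by $\GG$-goodness---actually one must be slightly careful here, see below) or it lies in exactly one snake section, and then its label in $U$ is determined by the corresponding miniswap formula. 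So the plan is: first reduce to the case that $\x$ lies in a snake section of $T$, then for each of (I), (II), (III) scan the lists of miniswap outputs in Section~\ref{sec:swaps} and record which input labels can give rise to the specified output label.

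For (I): I would look through every miniswap (H1--H9, B1--B3, T1--T6) and find every occurrence of a non-marked $\GG$ on the right-hand side. In $\head$: H1 produces $\GG$ from a box $\x$ that held $\bullet_{\GG}$; H2, H5.2, H5.3 likewise move $\bullet_\GG \mapsto \GG$; H6, H7 produce a left-box $\GG$ from $\bullet_\GG$. In $\body$: B1 keeps $\GG \mapsto \GG$; B2, B3 produce $\GG$ in boxes that held $\bullet_\GG$. In $\tail$: T1, T2, T3, T4.2, T4.3, T5 all produce $\GG$ from $\bullet_\GG$. In every instance the preimage label is $\bullet_\GG$ or $\GG$, giving (I). One also must note that no miniswap introduces $\GG$ in a box where the input had some other genetic label, and that a box which was \emph{not} in any snake of $T$ but has $\lab_U(\x)=\GG$ must have had $\lab_T(\x)=\GG$ already (the swap never changes labels outside the snakes).

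For (II): similarly, scan for $\bullet_{\GG^+}$ on right-hand sides. These arise in: B2, B3 (from a box holding $\GG$); H6, H7 when the right box $\x^\rightarrow$ held $\GG^+$ and the ``$\alpha$'' term is taken, so there $\lab_T(\x^\rightarrow)=\GG^+$ but the $\bullet_{\GG^+}$ ends up in $\x$'s row---care is needed to track which box receives the $\bullet$; T3 likewise; H9 turns $\GG\mapsto \GG^!$ but the box that was $\FF^!$ is unchanged, so the relevant appearances of $\bullet_{\GG^+}$ trace back to boxes holding $\bullet_\GG$, $\GG$, $\GG^+$, or (in the (R.2)/(R.3)-extended cases, where the left box holds $\FF^!$) to $\FF^!$. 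Collecting, the preimage is in $\{\bullet_\GG,\GG,\FF^!,\GG^+\}$, which is (II). For (III): a marked $\GG^!$ is produced only by H5.1, H5.2, H5.3, H9, and in each of those the box in question held $\GG$ (an unmarked $\GG$) in $T$; outside snakes there are no $\GG^!$'s in $T$ at all, so this case cannot occur there. Hence $\lab_T(\x)=\GG$, giving (III).

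The main obstacle is bookkeeping: the miniswap pictures describe two-box sections, and one must be careful about \emph{which} of the two boxes receives which label in $U$, and about the convention (stated at the start of Section~\ref{sec:swaps}) that $\bullet$ means $\bullet_\GG$ before the miniswap and $\bullet_{\GG^+}$ after. There is also the subtlety that a single box of $U$ can lie in the overlap region described in Lemma~\ref{lem:snakes_arranged_SW-NE}(III) (two snakes sharing a column) or (II) (sharing a row); one should check that in those overlap scenarios the conclusions still hold, which they do because Lemma~\ref{lem:snakes_arranged_SW-NE} pins down exactly what those shared boxes contain before and after. Once the case list is organized by output label rather than by miniswap, the verification is mechanical, so I would present it as a compact table rather than prose.
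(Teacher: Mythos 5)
Your proposal is correct and is essentially the paper's own argument: the paper proves this lemma with the single line ``By inspection of the miniswaps,'' which is exactly the case-by-case scan (outside snakes nothing changes; inside snakes, read off the preimage label from each miniswap output) that you spell out. The extra bookkeeping you flag (which box of a two-box section receives which label, and the shared-row/column scenarios of Lemma~\ref{lem:snakes_arranged_SW-NE}) is fine but not needed beyond what the inspection already gives.
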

\begin{proof}
By inspection of the miniswaps.
\end{proof}

\begin{lemma}
\label{lem:useful_fact_reverse}
Let $U$ be a $\GG^+$-good tableau and $T \in \revswap_{\GG^+}(U)$.
\begin{itemize}
\item[(I)] If $\lab_U(\x) = \GG^!$, then $\lab_T(\x) = \GG$.
\item[(II)] If $\lab_U(\x) = \GG$, then $\lab_T(\x) \in \{\GG, \bullet_\GG\}$.
\item[(III)] If $\lab_U(\x) = \bullet_{\GG^+}$, then $\lab_T(\x) \in \{\bullet_\GG, \GG, \GG^+, \FF^!\}$.
 If moreover $\lab_T(\x) = \GG^+$, then $\lab_T(\x^\leftarrow) = \bullet_\GG$, while if moreover $\lab_T(\x) = \FF^!$, then $N_\FF = N_\GG$, $\lab_T(\x^\leftarrow) = \bullet_\GG$ and either $\GG \in \underline{\x}$ or $\circled{\GG} \in \underline{\x}$.
\end{itemize}
\end{lemma}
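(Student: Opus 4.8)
The plan is to prove Lemma~\ref{lem:useful_fact_reverse} by direct case analysis on the reverse miniswaps defined in Section~\ref{sec:reversal}, exactly parallel to the proof of Lemma~\ref{lemma:veryusefulfact}, since the statement is entirely local: the label in box $\x$ of $T \in \revswap_{\GG^+}(U)$ is determined by the reverse miniswap applied to the ladder row containing $\x$ (and, in the cases producing new edge labels, possibly the row directly above). First I would recall that $\revswap_{\GG^+}$ acts by applying $\revm$ to each ladder row, that rows not in a ladder are untouched, and that $\x$ can only change its label if it lies in a ladder (which by definition consists of boxes containing $\bullet_{\GG^+}$ or unmarked $\GG$). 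So for (I), if $\lab_U(\x)=\GG^!$, then $\x$ is \emph{not} in a ladder (ladders contain only unmarked $\GG$'s), hence $\lab_T(\x)=\GG^!$; but wait — I must double-check whether $\GG^!$ can be introduced or removed. Actually $\lab_T(\x)=\GG$ is claimed, so $\x$ \emph{does} change: the resolution is that $\x$ is in a ladder after all only if $U$ is viewed via reverse slides — re-examining, if $\lab_U(\x) = \GG^!$ this marked label becomes unmarked because the $\bullet_{\GG^+}$ that was southeast-responsible for the mark (per the marking convention, $\GG^!$ means southeast of a $\bullet_{\GG^+}$) migrates; inspection of {\sf L2.1} and {\sf L4.1} (the cases where a $\bullet_{\GG^+}$ moves north/west into the row above a $\GG^!$) shows the mark is dropped. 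This is the kind of bookkeeping the proof must spell out carefully.

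For (II), if $\lab_U(\x)=\GG$ then $\x$ is in a ladder, in a row of type {\sf L2}, {\sf L3}, or {\sf L4} (with $\GG$ the western box). Scanning the subcases: {\sf L2.1} sends $\GG \mapsto \bullet_{\GG}$ (i.e.\ $\bullet$ on the right side is $\bullet_\GG$, per the stated convention); {\sf L2.2} and {\sf L2.3} send $\GG \mapsto \GG + (\text{tableau with }\bullet_\GG)$, so both $\GG$ and $\bullet_\GG$ occur; {\sf L3} sends the box with $\bullet_{\GG^+}$ (not the box with the edge label $\GG$) so if $\lab_U(\x)=\GG$ as a \emph{box} label $\x$ is not the relevant box — here I need to be careful about which box of the {\sf L3} configuration we mean, but the box-label $\GG$ in {\sf L3} sits above, in the previous row, and is handled by that row's miniswap; {\sf L4.1}--{\sf L4.5} all send the western $\GG$-box to a $\bullet$-box, which is $\bullet_\GG$ after the reverse swap. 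In every case $\lab_T(\x)\in\{\GG,\bullet_\GG\}$, giving (II).

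For (III), if $\lab_U(\x)=\bullet_{\GG^+}$ then $\x$ is in a ladder, and it is the $\bullet$-box of a row of type {\sf L1}, {\sf L3}, or {\sf L4} (eastern box). Going through: {\sf L1.1} sends $\bullet_{\GG^+}\mapsto \GG$ (and then $\x^\uparrow$ contained $\GG$ in $U$, but for the refinement about $\lab_T(\x^\leftarrow)$ note in {\sf L1} the row is a single box, so there is no western box — here the refinement clauses about $\lab_T(\x^\leftarrow)=\bullet_\GG$ only bite when the output is $\GG^+$ or $\FF^!$, which happens only in {\sf L4.1}--{\sf L4.5}); {\sf L1.2} leaves $\bullet_{\GG^+}$ — but the codomain is $\GG^+$-good becoming $\GG$-good, so this $\bullet_{\GG^+}$ — hmm, actually after $\revswap$ the $\bullet$'s are $\bullet_\GG$'s, so {\sf L1.2}'s output $\bullet$ is $\bullet_\GG$, consistent with the claim; {\sf L3} sends $\bullet_{\GG^+}\mapsto \bullet_\GG$; {\sf L4.1} gives $\bullet_{\GG^+}\mapsto \GG^+$ with western box $\bullet_\GG$ — matching the refinement; {\sf L4.2}/{\sf L4.3} give $\GG^+$ in the eastern box with $\bullet_\GG$ to its west and $\GG$ (resp. $\circled\GG$) on $\underline{\x}$ — wait, the edge label $\GG$/$\circled\GG$ is placed on $\underline{\x^\leftarrow}$ in those subcases, so I should track that the refinement's ``$\GG\in\underline\x$ or $\circled\GG\in\underline\x$'' refers to the box now holding $\FF^!$; {\sf L4.4}/{\sf L4.5} produce $\bullet\,\FF$ with $\FF=\min Z^\sharp$, and $\FF^!$ is marked precisely because $\bullet_\GG$ now sits to its west, and $N_\FF=N_\GG$ by the definition $\FF\in Z^\sharp$ and $Z=\{\EE: N_\GG=N_\EE\}$; the edge-label clause follows from $A'''\cup\{\circled\GG\}$ or the $\GG^+$-condition failing. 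The main obstacle is precisely this last refinement in (III): correctly matching each of the six {\sf L4} subcases against the three claimed consequences (value at $\x$, value at $\x^\leftarrow$, presence of $\GG$ or $\circled\GG$ on $\underline{\x}$), and in particular verifying $N_\FF=N_\GG$ holds in the $\FF^!$ output cases — this requires unwinding the definitions of $Z$, $Z^\sharp$, and $\FF$ in {\sf L4.4}/{\sf L4.5}. Everything else is routine inspection of the miniswap tables.
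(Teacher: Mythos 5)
Your overall route is the paper's route: the paper proves this lemma in one line, ``by inspection of the reverse miniswaps,'' and your case-by-case scan of {\sf L1}--{\sf L4} for parts (II) and (III) is exactly that inspection, carried out correctly (including the observation that the clause ``$\GG\in\underline{\x}$ or $\circled{\GG}\in\underline{\x}$'' only attaches to the $\FF^!$ outputs {\sf L4.4}/{\sf L4.5}, and that $N_\FF=N_\GG$ comes from $\FF\in Z^\sharp$; note also that when $Z=\emptyset$ these two miniswaps output an unmarked $\GG$, which still lies in the allowed set).

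The one genuine misstep is your justification of (I). You resolve the apparent tension by saying the mark on the $\GG^!\in\x$ is dropped ``because the $\bullet_{\GG^+}$ responsible for it migrates,'' and you propose to verify this by inspecting {\sf L2.1} and {\sf L4.1}. That argument does not work: the $\bullet_{\GG^+}$ northwest of $\x$ that causes the mark in $U$ need not be adjacent to $\x$'s row at all, and it need not move --- for instance it may sit in a ladder row of type {\sf L1.2} (or {\sf L3}), in which case it stays put and simply becomes a $\bullet_\GG$ in $T$, still northwest of $\x$. So ``the bullet goes away'' is false in general, and checking two miniswaps cannot establish (I). The correct reason is purely the marking convention: marks in a $\GG$-good tableau are placed only on labels $\FF\prec\GG$ that lie southeast of a $\bullet_\GG$, and $\GG\not\prec\GG$, so a label of gene $\GG$ can never be marked in $T$. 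Combine this with the observation you already made --- a box containing $\GG^!$ is not in any ladder, and no reverse miniswap alters the box label of a non-ladder box --- and (I) follows: the same gene-$\GG$ label sits in $\x$ in $T$, necessarily unmarked. With that substitution, your proof is complete and coincides with the paper's.
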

\begin{proof}
By inspection of the reverse miniswaps.
\end{proof}

For a good tableau $T$ of shape $\nu/\lambda$, 
define a {\bf $T$-patch} of $\nu/\lambda$ as one of the following:
\begin{itemize}
\item[(Pat.1)] A row of a snake of $T$ (including both upper and lower edges of the row). 
\item[(Pat.2)] A box not in a snake (the box excludes the edges). 
\item[(Pat.3)] A horizontal edge not bounding a box of a snake.
\end{itemize}
Clearly, 
the set $\{P\}$ of $T$-patches covers $\nu/\lambda$. Given a tableau $W$ of shape $\nu/\lambda$, let
$W|_{P}$ be the tableau obtained by restricting $W$ to $P$.

\begin{proposition}
\label{prop:swap/revswap_inversion}
Let $T, U$ be good.
Then $U \in \swap_{\GG}(T)$ if and only if $T \in \revswap_{\GG^+}(U)$.
\end{proposition}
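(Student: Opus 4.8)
The plan is to prove Proposition~\ref{prop:swap/revswap_inversion} by a local, patch-by-patch analysis: show that $\swap_\GG$ and $\revswap_{\GG^+}$ are mutually inverse when restricted to the relevant local configurations, and then assemble the local inverses into a global one. The key organizational tool is the notion of $T$-patch. First I would observe that the combinatorial data of $\swap_\GG$ factors through snakes: if $T$ is $\GG$-good, the boxes of $T$ carrying $\bullet_\GG$ or $\GG$ organize into disjoint presnakes, hence disjoint snakes (Lemma~\ref{lem:snakes_arranged_SW-NE}), and by Lemma~\ref{lem:miniswap_commutation} the swap is the product of independent miniswaps on the snake sections. Dually, for a $\GG^+$-good $U$, the boxes carrying $\bullet_{\GG^+}$ or unmarked $\GG$ form disjoint ladders (Lemma~\ref{lem:ladders.sw.ne}), and $\revswap_{\GG^+}$ is the product of independent reverse miniswaps on ladder rows (Lemma~\ref{lemma:laddercommute}). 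So the claim reduces to a bijective matching between the outputs of forward miniswaps on snake sections and the inputs of reverse miniswaps on ladder rows.

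The core of the proof is then the following. Suppose $U \in \swap_\GG(T)$; I would verify that applying $\revswap_{\GG^+}$ to $U$ produces a formal sum containing $T$ with the correct (nonzero) coefficient, and conversely. The crucial point is that the ladders of $U$ are exactly the images of the snakes of $T$ under the swap, and a single swap-miniswap on a snake section followed by the reverse-miniswap(s) on the corresponding ladder row(s) returns the original section. This is checked by going through the cases: Lemma~\ref{lemma:veryusefulfact} tells us that in $U$ each $\GG$, $\bullet_{\GG^+}$, or $\GG^!$ came from a short list of possibilities in $T$, and Lemma~\ref{lem:useful_fact_reverse} gives the matching dual list — these two lemmas are precisely set up so that the case-matching closes. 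For instance, an {\sf H1} miniswap (producing $\beta(\x)\cdot\ytableaushort{\GG}$ or $\gamma\cdot\ytableaushort{\bullet}$ with $\GG\in\overline\x$) is inverted by {\sf L2.1}/{\sf L2.2} and {\sf L1.1}; a {\sf B2}/{\sf B3} body migration is inverted by the ladder running through the corresponding column; and the {\sf T4}/{\sf T5} tail cases with their $Z$-sets of intermediate edge labels are inverted by {\sf L4.4}/{\sf L4.5}, where the same $Z^\sharp$, $\FF = \min Z^\sharp$ bookkeeping appears. I would present this as a table or case list, relegating the routine per-case verification to the observation that it is ``by inspection'' — exactly as the proofs of Lemmas~\ref{lemma:veryusefulfact} and~\ref{lem:useful_fact_reverse} already are.

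The main obstacle, and the step I would spend the most care on, is the \emph{global} compatibility: showing that the snake decomposition of $T$ and the ladder decomposition of any $U\in\swap_\GG(T)$ are dual to each other, i.e.\ that each ladder of $U$ lies in the ``patch footprint'' of a unique snake of $T$ and vice versa, so that the local inversions can be run in parallel without interference. This requires knowing that boxes of $T$ outside every snake are unchanged by $\swap_\GG$ (immediate, since miniswaps only touch snake sections and their edges), that boxes of $U$ outside every ladder are unchanged by $\revswap_{\GG^+}$ (same reason), and — the genuinely delicate part — that a box/edge altered by a forward miniswap is recognized by the reverse procedure as belonging to a ladder, and conversely. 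Here one must rule out ``spurious'' ladders in $U$ that do not come from snakes of $T$ and ``spurious'' snakes of $T$ whose image is not a full ladder; the configurations where snakes share a row or column (Lemma~\ref{lem:snakes_arranged_SW-NE}(II),(III) and Examples~\ref{exa:cansharerow}, \ref{exa:cansharecolumn}) are where this is subtlest, and the corresponding ladder-sharing configurations from Lemma~\ref{lem:ladders.sw.ne}'s analysis must be matched against them. Once this footprint-duality is established, one restricts to each $T$-patch $P$ (equivalently, each ladder/snake-row locus), applies the already-verified local inversion using Lemmas~\ref{lemma:veryusefulfact} and~\ref{lem:useful_fact_reverse}, and concludes via Lemma~\ref{lem:miniswap_commutation} and Lemma~\ref{lemma:laddercommute} that the products agree, giving $U\in\swap_\GG(T) \iff T\in\revswap_{\GG^+}(U)$.
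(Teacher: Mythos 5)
Your plan is essentially the paper's own proof: the paper also works patch-by-patch via the $T$-patch decomposition, proves exactly your ``footprint duality'' as two short structural claims (every ladder row of $U$ sits in a distinct $T$-patch, and every (Pat.1) patch not coming from an {\sf H9} snake section contains a ladder row), and then matches forward and reverse miniswaps case by case. Be aware, though, that the bulk of the paper's effort lies in the per-case applicability verifications you defer as ``by inspection'' (they genuinely use the goodness axioms, Lemma~\ref{lem:how_to_check_ballotness}, Lemma~\ref{lemma:child123}, etc.), and your sample matchings need correcting — e.g.\ the two outputs of {\sf H1} are reversed by {\sf L2.2} and {\sf L3}, not by {\sf L2.1}/{\sf L1.1} — while the slogan ``ladders of $U$ are exactly the images of snakes of $T$'' must be qualified to exclude {\sf H9} sections, as the paper does.
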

\begin{proof}
($\Rightarrow$)
Suppose $U \in \swap_\GG(T)$. We show $T \in \revswap_{\GG^+}(U)$.

\begin{claim}
\label{claim:patchb}
Every ladder row $r$ of $U$ is contained in a distinct $T$-patch. 
\end{claim}
\begin{proof}
Distinctness is clear. We now argue containment.
If $r$ has one box, containment is trivial.
Otherwise, $r$ has two boxes, and we are in case {\sf L4} of the ladder row classification of Lemma~\ref{lem:ladder_row_classification}. So, in $U$, each box of $r$ 
contains $\bullet_{\GG^+}$ or $\GG$. One considers all possibilities,
under Lemma~\ref{lemma:veryusefulfact}, for the entries in $T$ 
of the boxes of $r$. Since $T$ is good, 
these boxes of $T$ either form a row of a snake section or are
\ytableausetup{boxsize=1.3em}
$\!\! \ytableaushort{\GG {\GG^+}}$. We are done by (Pat.1) in the former case.
The latter case cannot occur, since 
by inspection of the miniswaps,
this cannot swap to {\sf L4}. 
\end{proof}

By the definitions, notice that $\revswap_{\GG^+}(U)\neq 0$. Moreover:

\begin{claim}
\label{claim:patcha}
For each $T$-patch $P$, there exists $W\in \revswap_{\GG^+}(U)$ such that $W|_P=T|_P$ (ignoring virtual labels). 
\end{claim}
\begin{proof}
If $P$ is type (Pat.2), then by definition $T|_P=U|_P$, since $P$ is not part of a snake. In particular
$U|_P$ does not contain $\GG$ or $\bullet_{\GG^+}$. So $U|_P$ is not part of a ladder of $U$. Hence for any $W\in \revswap_{\GG^+}(U)$, $W|_P=U|_P=T|_P$ as
desired.  

If $P$ is type (Pat.3), then 
$T|_P=U|_P$, since $P$ is not part of a snake. Moreover, by definition,
no box $\y$ bounded by the edge $P$ is part of a snake in $T$. 
Therefore, $\bullet_{\GG},\GG\not\in \y$ in $T$.
Hence $\bullet_{\GG^+},\GG\not\in \y$ in $U$. So $P$ does not bound a box of a ladder of $U$. Thus for any $W\in \revswap_{\GG^+}(U)$, $W|_P=U|_P=T|_P$.

Finally if $P$ is type (Pat.1), by inspection of the miniswaps,
combined with Claim~\ref{claim:patchb}, $U|_P$ contains at most one ladder row $r$,
and possibly a non-ladder box $\y$. Since $\revswap_{\GG^+}$ does not
affect $\y$, it suffices to indicate the reverse miniswap
on $r$ to give our desired $W|_P=T|_P$. We refer to the list of outputs described in Section~\ref{sec:swaps}.

\noindent
{\sf H1:} Use {\sf L2.2} or {\sf L3} respectively on the two $\m$ outputs.

\noindent
{\sf H2:}  Use {\sf L1.2} or {\sf L2.3} respectively on the two $\m$ outputs.

\noindent
{\sf H3:} Use {\sf L1.2}: By $T$'s (G.2) and (G.9) and Lemma~\ref{lemma:veryusefulfact}(I) applied to $T$, we have $\GG\notin \x^\uparrow$ in $U$. 

\noindent
{\sf H4:} This case does not arise, since here $U$ does not exist.

\noindent
{\sf H5.1:} Use {\sf L1.2}.

\noindent
{\sf H5.2:} For the first output, use {\sf L1.2}. For the second output, use {\sf L4.4} or {\sf L4.5}. We must show
in the latter cases that $Z=\emptyset$. Otherwise if $\EE\in Z$, then $\EE\in \overline{\x}$ in $T$. Since $N_\EE=N_\GG$
in both $T$ and $U$, this contradicts Lemma~\ref{lem:how_to_check_ballotness} for $T$.

\noindent
{\sf H5.3:} Use {\sf L4.4} or {\sf L4.5}. The argument that these apply is the same as for {\sf H5.2}.

\noindent
{\sf H6:}  Use {\sf L2.2} for the first output and {\sf L4.2} for the second. By Lemma~\ref{lemma:veryusefulfact}(II) and $T$'s (G.2) and (G.4), 
$\bullet_{\GG^+}\notin \x^\downarrow$;  by Lemma~\ref{lemma:veryusefulfact}(I) and $T$'s (G.2) or (G.4),
$\GG^!\not\in \x^\downarrow$; that the $\GG\in \x$ is westmost follows from $T$'s (G.7) and
Claim~\ref{lemma:child123} applied to $T$.

\noindent
{\sf H7:} Use {\sf L1.2} for the first output: By Lemma~\ref{lemma:veryusefulfact}(I) and $T$'s (G.2) and (G.9), $U$ has 
$\GG\notin \x^\uparrow$. Use {\sf L2.3} for the second output and {\sf L4.3} for the third: By Lemma~\ref{lemma:veryusefulfact}(II) and $T$'s (G.2) or (G.4),
$\bullet_{\GG^+}\notin \x^\downarrow$; by Lemma~\ref{lemma:veryusefulfact}(I) and $T$'s (G.2) or (G.4),
$\GG^!\notin \x^\downarrow$; that the $\GG\in \x$ is not westmost follows from $T$'s $\circled{\GG}\in \underline{\x}$.

\noindent
{\sf H8:} Use {\sf L1.2}: By $T$'s (G.2) and (G.9) and Lemma~\ref{lemma:veryusefulfact}(I), $U$ has $\GG\notin \x^\uparrow$. 

\noindent
{\sf H9:} Here $r$ does not exist.

\noindent
{\sf B1:} Use {\sf L2.2} or {\sf L2.3}: By Lemma~\ref{lemma:veryusefulfact}(II) and $T$'s 
(G.2) or (G.4), $\bullet_{\GG^+}\not\in \x^\downarrow$; by Lemma~\ref{lemma:veryusefulfact}(III) and $T$'s (G.4), $\GG^!\notin \x^\downarrow$. 

\noindent
{\sf B2:} Use {\sf L4.4} or {\sf L4.5}; applicability is as for {\sf H5.2}.

\noindent
{\sf B3:} If we are not in the bottom row, we may use {\sf L4.4} or {\sf L4.5} as for {\sf B2}.
Otherwise, use {\sf L1.1}.

\noindent
{\sf T1:} Use {\sf L2.1}: By Lemma~\ref{lem:piecesobservations}(IV, VII), $T$ has $\GG\in \x^\downarrow$, so the
hypothesis holds by inspection of the miniswaps. 

\noindent
{\sf T2:} Use {\sf L4.4} or {\sf L4.5}; applicability is as for {\sf H5.2}.

\noindent
{\sf T3:} Use {\sf L2.1} or {\sf L4.1}; applicability is as for {\sf T1}.

\noindent
{\sf T4.1:} Use {\sf L1.2}: By $T$'s (G.2) and (G.12) and Lemma~\ref{lemma:veryusefulfact}(I), $U$ has $\GG\notin \x^\uparrow$ 
and $\GG\notin \overline{\x}$.

\noindent
{\sf T4.2:} Use {\sf L1.2} on the first output; applicability is as for {\sf T4.1}. Use {\sf L4.4} on the second output; applicability is as for {\sf H5.2}.

\noindent
{\sf T4.3:} Use {\sf L4.4}; applicability is as for {\sf H5.2}.

\noindent
{\sf T5:} Use {\sf L4.5}; applicability is as for {\sf H5.2}.

\noindent
{\sf T6:} This case does not arise, since here $U$ does not exist.
\end{proof}

By definition, $\revswap_{\GG^+}(U)$ is obtained by acting on ladder rows of $U$ 
independently. By Claim~\ref{claim:patcha}, it follows that $\revswap_{\GG^+}(U)$
is also obtained by acting on the $T$-patches of $U$ independently. Thus
($\Leftarrow$) holds by Claim~\ref{claim:patchb}.

\noindent
$(\Leftarrow)$ Suppose $T\in \revswap_{\GG^+}(U)$. We 
show $U$ is in $\swap_\GG(T)$. 

Recall $\swap_{\GG}(T)$ is a formal sum, given by independently replacing each snake section in each prescribed way. 
Trvially, by (Pat.1), each snake section is a union of $T$-patches. 
Moreover, if a snake section $\sigma$ consists of more than one $T$-patch, then $\sigma$ is a $\body$ with at least two rows, and hence either 
{\sf B2} or {\sf B3}. Therefore $\m(\sigma)$ has a unique output in this case.
Since $\swap_\GG$ acts trivially on the $T$-patches of types (Pat.2) 
and (Pat.3), by Lemma~\ref{lem:miniswap_commutation}, 
it follows that 
$\swap_\GG(T)$ is also given by acting independently 
on the $T$-patches of $T$. It remains to show that locally at $P$, we may swap $T|_P$ to obtain $U|_P$.

To make these local verifications, we use:
\begin{claim}\label{claim:newclaim}
\gap
\begin{itemize}
\item[(I)] Every ladder row of $U$ sits in a distinct $T$-patch of type (Pat.1). 
\item[(II)] Every $T$-patch $P$ of type (Pat.1) not coming from an {\sf H9} snake section, contains a ladder row of $U$.
\end{itemize}
\end{claim}
\begin{proof}
(I): By Lemma~\ref{lem:useful_fact_reverse}, every ladder row of $U$ is contained in a $T$-patch of type (Pat.1). Consider a $T$-patch $P$ of type (Pat.1); $P$ consists of at most two boxes. If $P$ does not consist of two boxes, clearly at most one ladder row of $U$ can be contained in it. If $P$ consists of two boxes, they are joined by a vertical edge. Since distinct ladder rows do not share a vertical edge, it follows that distinct ladder rows of $U$ are contained in distinct $T$-patches.

(II): By inspection of the reverse miniswaps.
\end{proof}

If $P$ is type (Pat.2) or (Pat.3), 
then by Claim~\ref{claim:newclaim}, $P$ does not intersect any ladder row of $U$. Thus $T|_P=U|_P$. By definition, $P$ is not part of any snake in $T$. Hence for any $V\in \swap_\GG(T)$, $V|_P=T|_P=U|_P$ as desired.  

Finally suppose $P$ is a patch of type (Pat.1). If it comes from an {\sf H9} snake section, then $V\in \swap_\GG(T)$, $V|_P=T|_P=U|_P$. Otherwise, by Claim~\ref{claim:newclaim}, $P$ contains a unique ladder row in $U$. We consider each ladder row type 
in turn and indicate the miniswaps
on $T|_P$ that give our desired $V|_P=U|_P$. We refer to the list of outputs described at the beginning of Section~\ref{sec:reversal}. The following case analysis completes the proof of $(\Rightarrow)$.

\noindent
{\sf L1.1:} Use {\sf B3}: Since $\lab_U(\x^\uparrow)=\GG$,
we apply at $\x^\uparrow$ either {\sf L2.1}, {\sf L4.1}, {\sf L4.4} or {\sf L4.5}. In each case
$\lab_T(\x^\uparrow)=\bullet_{\GG^+}$. Hence $\x$ and $\x^\uparrow$ are part of
the southmost two rows of a snake of $T$. We claim $\x^\leftarrow$ is not part of this snake. Note that by assumption $\x^\leftarrow$ is not part
of any ladder of $U$. Thus $\lab_U(\x^\leftarrow)=\lab_T(\x^\leftarrow)$ and
$\lab_T(\x^\leftarrow)\notin \{\bullet_\GG,\GG\}$. If $\x^\leftarrow$ is part of $\x$'s snake in $T$, then $\lab_T(\x^\leftarrow)=\FF^!\prec \GG$ and southeast of
some $\bullet_\GG$. Hence in $U$, $\x^\leftarrow$ is southeast of
some $\bullet_{\GG^+}$; this contradicts $U$'s (G.2) in view of $U$'s $\bullet_\GG\in \x$. Thus $\x$ is the unique box of the southmost row of its snake and by Definition-Lemma~\ref{def-lem:snake_classification}, it is
the southmost row of a {\sf B3} snake section.

\noindent
{\sf L1.2:} Use {\sf H2}, {\sf H3}, {\sf H7}, {\sf H8}, {\sf T4.1} or {\sf T4.2}: 
Since $\lab_{T}(\x^\downarrow)=\GG$, $\lab_U(\x^\downarrow)\in \{\bullet_{\GG^+},\GG\}$.
Hence by Lemma~\ref{lemma:laddersareshort}, $\x^\downarrow$ is not in $\x$'s snake in $T$.
Since $\lab_T(\x)=\bullet_\GG$, $\x^\uparrow$ is not in $\x$'s snake in $T$. Hence $\x$ is in a one-row snake. Since {\sf L1.2} applies, $\lab_U(\x^\rightarrow) \neq \GG$, so $\lab_T(\x^\rightarrow)\neq \GG$. 
Thus $\x$'s snake in $T$ is type (ii), (iv) or (vi)
in Definition-Lemma~\ref{def-lem:snake_classification}(III).
Type (ii) uses {\sf H2} or {\sf H3}; type (iv) uses {\sf H7} or {\sf H8};
type (vi) uses {\sf T4.1} or {\sf T4.2}.

\noindent
{\sf L2.1:} Use {\sf T1} or {\sf T3}: By assumption, $\lab_U(\x^\downarrow) \in \{\bullet_{\GG^+}, \GG^!\}$. Hence by inspection of the reverse miniswaps, $\lab_T(\x^\downarrow)=\GG$. Since $\lab_T(\x) = \bullet_\GG$, $\x^\uparrow$ is not in $\x$'s snake. Hence by Definition-Lemma~\ref{def-lem:snake_classification}(I,II), $\x$ is in its snake's {\tt tail}. 
By $T$'s (G.3), $\lab_T(\x^\rightarrow) \succ \GG$, so $\lab_U(\x^\rightarrow)\neq \FF^!$. Thus either
{\sf T1} or {\sf T3} applies.

\noindent
{\sf L2.2:} Use {\sf B1} for the first output. By assumption and $U$'s (G.9), $U$ has no $\bullet_{\GG^+}$ adjacent to $\x$. Moreover by $U$'s (G.4), no box adjacent to $\x$ is in any ladder. Hence $T$ has no $\bullet_\GG$ adjacent to $\x$. If $\FF^! \in \x^\leftarrow$ in $T$, then (possibly marked) $\FF \in \x^\leftarrow$ in $U$. If $\lab_U(\x^\leftarrow) = \FF^!$, then we contradict unmarked $\GG \in \x$ in $U$. If $\lab_U(\x^\leftarrow)$ is unmarked, then $U$ has no $\bullet_{\GG^+}$ northwest of $\x^\leftarrow$. By $U$'s (G.3) and (G.4), $U$ has no $\GG$ northwest of $\x^\leftarrow$. But since $\FF^! \in \x^\leftarrow$ in $T$, $T$ has a $\bullet_{\GG^+}$ northwest of $\x^\leftarrow$. Hence by Lemma~\ref{lemma:child123}, $U$ has a $\bullet_{\GG^+}$ or $\GG$ northwest of $\x^\leftarrow$, a contradiction.

Use {\sf H1} or {\sf H6} for the second output. Since $\x^\rightarrow$ is not in any ladder of $U$, $\lab_U(\x^\rightarrow) = \lab_T(\x^\rightarrow)$. Moreover by $U$'s (G.3), $\lab_U(\x^\rightarrow) \succ \GG$, so $\lab_T(\x^\rightarrow) \succ \GG$. If $\lab_T(\x^\rightarrow) = \GG^+$, {\sf H6} applies. Otherwise, {\sf H1} applies. 

\noindent
{\sf L2.3:} Use {\sf B1} for the first output; applicability is as for the first output of {\sf L2.2}.
Use {\sf H2} or {\sf H7} for the second output. Since $\x^\rightarrow$ is not in any ladder of $U$, $\lab_U(\x^\rightarrow) = \lab_T(\x^\rightarrow)$. Moreover by $U$'s (G.3), $\lab_U(\x^\rightarrow) \succ \GG$, so $\lab_T(\x^\rightarrow) \succ \GG$. If $\lab_T(\x^\rightarrow) = \GG^+$, {\sf H7} applies. Otherwise, {\sf H2} applies. 

\noindent
{\sf L3:} Use {\sf H1}. By $U$'s (G.12), $\lab_U(\x^\rightarrow) \notin \{ \GG, \GG^+ \}$. Moreover by $U$'s (G.13) and (G.12), $\lab_U(\x^\rightarrow)$ is not marked, so $\lab_U(\x^\rightarrow) \succeq \GG^+$. Thus $\lab_U(\x^\rightarrow) \succ \GG^+$.  Since $\x^\rightarrow$ is not in any ladder of $U$, $\lab_T(\x^\rightarrow) \succ \GG^+$.

\noindent
{\sf L4.1:} Use {\sf T3}. By inspection of the reverse miniswaps, $T$ has $\GG \in \x^\downarrow$. Hence $\x$'s snake in $T$ has at least two rows. Hence $\x$ is part of its snake's $\tail$.

\noindent
{\sf L4.2:} Use {\sf H6}.

\noindent
{\sf L4.3:} Use {\sf H7}.

\noindent
{\sf L4.4:} If $Z \neq \emptyset$, use {\sf T4.2} or {\sf T4.3}. Otherwise use {\sf H5.3}, {\sf B2}, {\sf B3} or {\sf T2}. If $Z \neq \emptyset$, some {\sf T4} applies. If it is {\sf T4.1}, $T$ has $\HH \in \underline{\x^\rightarrow}$ with $\family(\HH) = \family(\GG)+1$ and $N_\HH = N_\GG$. Hence $U$ also has $\HH \in \underline{\x^\rightarrow}$, contradicting Lemma~\ref{lem:how_to_check_ballotness} for $U$. If $Z = \emptyset$, there is nothing to check.

\noindent
{\sf L4.5:} If $Z \neq \emptyset$, use {\sf T5}. Otherwise use {\sf H5.3}, {\sf B2}, {\sf B3} or {\sf T2}.
\end{proof}

The following proposition characterizes good tableaux in terms of forward
swapping.

\begin{proposition}
A tableau $U$ is $\GG$-good if and only if $U\in 
\swap_{\GG^-}\circ\cdots \circ \swap_{1_2}\circ \swap_{1_1}(T^{(1_1)})$
for some bundled tableau $T$ and choice of inner corners of $T$ to initially
place $\bullet_{1_1}$'s in.
\end{proposition}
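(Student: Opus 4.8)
The plan is to prove both directions by combining the goodness-preservation results (Propositions~\ref{prop:goodness_preservation} and~\ref{prop:goodness_preservation_reverse}) with the swap/reverse-swap inversion result (Proposition~\ref{prop:swap/revswap_inversion}), inducting on $\family(\GG)$. For the ``if'' direction, suppose $U \in \swap_{\GG^-}\circ\cdots\circ\swap_{1_1}(T^{(1_1)})$ for some bundled $T$ and choice of inner corners. By Lemma~\ref{lem:bundled_tableaux_are_good}, $T$ is good, and by Lemma~\ref{lem:adding_bullets_ok}, $T^{(1_1)}$ is $1_1$-good. Applying Proposition~\ref{prop:goodness_preservation} repeatedly (once for each of $\swap_{1_1}, \swap_{1_2}, \dots, \swap_{\GG^-}$), each intermediate formal sum consists of good tableaux, so every tableau appearing with nonzero coefficient after applying $\swap_{\GG^-}$ is $\GG$-good. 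Hence $U$ is $\GG$-good. (One should also note that ``$U\in\Sigma$'' means $U$ occurs in $\Sigma$ with nonzero coefficient, as defined before Proposition~\ref{prop:goodness_preservation}, so this chaining is legitimate.)

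For the ``only if'' direction, I would induct on $\family(\GG)$. Let $U$ be $\GG$-good. Apply $\revswap_{\GG}$ to $U$; by Proposition~\ref{prop:goodness_preservation_reverse}, every $T' \in \revswap_{\GG}(U)$ is $\GG^-$-good, and $\revswap_\GG(U)\neq 0$ (this nonvanishing is observed inside the proof of Proposition~\ref{prop:swap/revswap_inversion}), so we may pick such a $T'$. By Proposition~\ref{prop:swap/revswap_inversion} (applied with $\GG^-$ in the role of $\GG$), since $T'$ is good and $U$ is good with $T' \in \revswap_{(\GG^-)^+}(U) = \revswap_{\GG}(U)$, we get $U \in \swap_{\GG^-}(T')$. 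By the inductive hypothesis applied to the $\GG^-$-good tableau $T'$, there is a bundled tableau $T$ and a choice of inner corners with $T' \in \swap_{(\GG^-)^-}\circ\cdots\circ\swap_{1_1}(T^{(1_1)})$. Chaining, $U \in \swap_{\GG^-}\circ\swap_{(\GG^-)^-}\circ\cdots\circ\swap_{1_1}(T^{(1_1)})$, as desired. The base case $\GG = 1_1$ requires a separate argument: a $1_1$-good tableau $U$ has no $\bullet_{1_1}$'s only if\dots\ actually $1_1$-good tableaux may contain $\bullet_{1_1}$'s, and the claim is that $U = T^{(1_1)}$ for the bundled tableau $T$ obtained by deleting all $\bullet_{1_1}$'s from $U$ and a choice of inner corners equal to the boxes that held those $\bullet_{1_1}$'s. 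One must check $T$ is genuinely bundled: since $U$ is $1_1$-good, (G.1)--(G.8) hold, and since family $1$ is the smallest, no genetic label is marked and there are no $\GG^!$'s, so deleting $\bullet_{1_1}$'s leaves a filling satisfying (S.1)--(S.4), ballotness, no-too-high, and the bundled condition (G.7); the $\bullet_{1_1}$'s must sit at inner corners of the resulting shape by (G.2) and (G.9).

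The main obstacle is the base case and the subtlety of exactly which $\bullet$'s appear where. Specifically, one must confirm that a $1_1$-good $U$ really does decompose as $T^{(1_1)}$ with $T$ bundled: one needs the $\bullet_{1_1}$-boxes of $U$ to be precisely inner corners of $\nu/\lambda$ after removal, which follows from (G.2) (no $\bullet$ southeast of another $\bullet$) together with the observation that a $\bullet_{1_1}$ cannot have a genetic label weakly southeast of it in $U$ --- otherwise that label would be marked, but no family-$1$ label can be marked since marking requires a strictly smaller family to its west (there is none). A secondary concern is making sure the inductive step's appeal to Proposition~\ref{prop:swap/revswap_inversion} has its hypotheses met: it requires both $T'$ and $U$ good, which we have, and the equivalence is stated symmetrically, so no issue arises. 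I expect the write-up to be short once these bookkeeping points about family $1$ and inner corners are dispatched.
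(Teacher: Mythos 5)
Your argument is essentially the paper's: the paper handles the ``only if'' direction in a single pass, taking any tableau in $\revswap_{1_2}\circ\cdots\circ\revswap_{\GG^-}\circ\revswap_{\GG}(U)$, invoking Proposition~\ref{prop:goodness_preservation_reverse} to see it is $1_1$-good, using (G.2) and (G.9) to place the $\bullet_{1_1}$'s at inner corners of a bundled tableau, and then concluding with Propositions~\ref{prop:goodness_preservation} and~\ref{prop:swap/revswap_inversion}; you merely unroll this as an induction on $\GG$ (note: induction in $\prec$-order on $\GG$, not on $\family(\GG)$, since the step passes from $\GG$ to $\GG^-$), and your ``if'' direction coincides with the paper's appeal to Lemma~\ref{lem:adding_bullets_ok} and Proposition~\ref{prop:goodness_preservation}.

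One correction to your closing discussion: the assertion that ``a $\bullet_{1_1}$ cannot have a genetic label weakly southeast of it'' is false and points in the wrong direction --- in a typical $T^{(1_1)}$ the genetic labels lie precisely southeast of the $\bullet_{1_1}$'s, and they are unmarked exactly because no gene precedes $1_1$. What the inner-corner claim actually requires is that no genetic label lies \emph{northwest} of a $\bullet_{1_1}$, which is (G.9) with $\GG=1_1$, combined with (G.2); since you do cite (G.2) and (G.9) correctly in the base-case paragraph, the proof stands, but the southeast remark should be deleted.
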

\begin{proof}
($\Rightarrow$) Given a $\GG$-good tableau $U$, let $T^{(1_1)}$ be any 
tableau appearing in $\revswap_{1_2}\circ \cdots \circ
\revswap_{\GG^-}\circ \revswap_{\GG}(U)$. By Proposition~\ref{prop:goodness_preservation_reverse}, $T^{(1_1)}$ is a $1_1$-good tableau. By $T^{(1_1)}$'s (G.2)
and (G.9), the $\bullet_{1_1}$'s of $T^{(1_1)}$ are at inner corners and
there is no genetic label northwest of a $\bullet_{1_1}$. Let $T$ be obtained
by removing the $\bullet_{1_1}$'s of $T^{(1_1)}$. Then it is clear $T$ is a bundled tableau. Now  $U\in 
\swap_{\GG^-}\circ\cdots \circ \swap_{1_2}\circ \swap_{1_1}(T^{(1_1)})$
holds by Propositions~\ref{prop:goodness_preservation} and~\ref{prop:swap/revswap_inversion}.

($\Leftarrow$) Immediate from 
Lemma~\ref{lem:adding_bullets_ok} and
Proposition~\ref{prop:goodness_preservation}.
\end{proof}

\section{The reversal tree}\label{sec:reversal_tree}
\label{sec:walkways}

\subsection{Walkways}
An {\bf $i$-walkway} $W$ in an $(i+1)_1$-good tableau $T$ is an edge-connected component of the collection of boxes $\x$ in $T$ 
such that:
\begin{itemize}
\item[(W.1)] $\bullet_{(i+1)_1}\in \x$; or
\item[(W.2)] $i_k\in \x$ and $\x$ is not southeast of a $\bullet_{(i+1)_1}$ 
(equivalently, $i_k\in \x$ is not marked). 
\end{itemize}

\begin{lemma}[Structure of an $i$-walkway]\label{lem:walkway_structure}
Let $W$ be an $i$-walkway.
\begin{itemize}
\item[(I)] Each column $c$ of $W$ has at most two boxes; if $c$ has two boxes, the southern box contains $\bullet_{(i+1)_1}$.
\item[(II)] $W$ has no $2\times 2$ subsquare. 
\item[(III)] $W$ is an edge-connected skew shape. 
\item[(IV)] The $\bullet_{(i+1)_1}$'s are at outer corners of $W$. 
\item[(V)] The box and upper edge labels of family $i$
form a $\prec$-interval in the set of genes.
\end{itemize}
\end{lemma}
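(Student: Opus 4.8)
The plan is to prove each of the five parts (I)--(V) by exploiting the conditions (G.1)--(G.13) together with the basic structural lemmas already established for good tableaux, specializing to the gene $\GG = (i+1)_1$ and noting that $\family(\GG) = i+1$, so that $\GG^- = i_{\mu_i}$ is the $\prec$-largest gene of family $i$. A key observation to record at the outset is that every box or upper-edge label of family $i$ that is \emph{not} marked is, by definition, a box of $W$ (via (W.2)), and conversely every box of $W$ either carries $\bullet_{(i+1)_1}$ or an unmarked family-$i$ label.

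For (I), I would argue that each column $c$ of $W$ contains at most one $\bullet_{(i+1)_1}$ by (G.2) and at most one family-$i$ label by Lemma~\ref{lem:draggable_labels_in_complete_sets} combined with (S.2)-type column-strictness (G.4): two distinct family-$i$ genes in the same column would force intermediate families in that column (by Lemma~\ref{lem:draggable_labels_in_complete_sets}), which is impossible since no family between $i$ and $i$ exists; alternatively, genuinely distinct family-$i$ genes $i_j \prec i_k$ stacked in a column contradict (G.4)'s $<$-strictness unless one is marked, and marked labels are excluded from $W$. So a two-box column consists of one $\bullet_{(i+1)_1}$ and one unmarked $i_k$; by (G.9), the $\bullet_{(i+1)_1}$ cannot be northwest of the $i_k$ (since $i_k \prec (i+1)_1$ would be fine, but $i_k$ northwest of $\bullet$ requires $i_k \prec \GG$ which holds, so I must instead use (G.4)): if the $\bullet$ were north of the $i_k$, then $i_k$ would be southeast of a $\bullet_{(i+1)_1}$, i.e.\ marked, contradicting membership in $W$. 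Hence the southern box carries $\bullet_{(i+1)_1}$. For (II), a $2\times 2$ subsquare would, by (I), have $\bullet_{(i+1)_1}$ in the south box of each of its two columns, violating (G.2). Part (III) is then immediate: a connected skew shape is exactly a connected collection of boxes with no $2\times 2$ subsquare and the row/column structure just established (no column or row with a "gap"); connectedness is built into the definition of walkway, and no disconnection can arise within the bounding rectangle of $W$ because the family-$i$ labels $\prec$-increase along rows by (G.3) so their supports are intervals.

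For (IV), if some $\bullet_{(i+1)_1} \in \x$ is not at an outer corner of $W$, then $\x^\rightarrow$ or $\x^\downarrow$ lies in $W$. If $\x^\rightarrow \in W$ it carries $\bullet_{(i+1)_1}$ (impossible by (G.2)) or an unmarked $i_k$; the latter would be marked since $\x^\rightarrow$ is southeast-adjacent, hence genuinely southeast, of the $\bullet_{(i+1)_1} \in \x$ --- contradiction. Similarly $\x^\downarrow$: either a second $\bullet$ (violating (G.2)) or an unmarked $i_k$ that is marked by being southeast of $\x$ --- contradiction. So $\bullet_{(i+1)_1}$'s sit at outer corners. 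Finally for (V), I want to show the set of family-$i$ genes appearing (in boxes or on upper edges) among the entries of $W$ forms a $\prec$-interval of genes. Since the $j$'s appearing for family $i$ form an initial segment of $\mathbb{Z}_{>0}$, it suffices to show that if $i_a \prec i_c$ both appear in $W$ and $a < b < c$, then $i_b$ also appears in $W$. By (G.8) (ballotness), $i_b$ appears \emph{somewhere} in $T$. By (G.3), (G.6) and Lemma~\ref{lemma:Gsoutheast}, the family-$i$ labels in $W$ are arranged in weakly-increasing $\prec$-order as one moves through $W$ from southwest to northeast, and I would use Lemma~\ref{lem:how_to_check_ballotness} together with the edge-connectedness to sandwich $i_b$: its instances cannot lie strictly east of all the $W$-instances of $i_c$ nor strictly west of all $W$-instances of $i_a$ without violating ballotness or (G.6), and an unmarked such instance is forced into $W$ by connectivity and the short-ribbon structure, while a marked instance would, via Lemma~\ref{lemma:markedHiswestmost}, not be westmost in its gene --- but $i_b$ occurring between $i_a$ and $i_c$ forces each to be realizable westmost, giving a contradiction.

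The main obstacle I anticipate is part (V): unlike (I)--(IV), which are purely local consequences of (G.2), (G.4), (G.9) applied to adjacent boxes, the interval property is a genuinely global statement about how family-$i$ genes are distributed across the whole tableau, and it requires carefully combining ballotness (G.8), the West-comparison rule (G.6), the non-marked-outside-$W$ characterization, and the short-ribbon geometry just proved. In particular I expect to need a clean argument that the family-$i$ entries of $W$, read in the natural order along the ribbon, realize a \emph{contiguous} block of the $\prec$-chain $i_1 \prec i_2 \prec \cdots \prec i_{\mu_i}$ --- the delicate point being that a ``skipped'' gene $i_b$ cannot hide among the marked labels southeast of the $\bullet_{(i+1)_1}$'s, which is exactly where Lemma~\ref{lem:strong_form_of_G13} and Lemma~\ref{lemma:markedHiswestmost} enter. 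If a slicker bookkeeping is available, it would be to first prove (V) for the \emph{predecessor} relation (show $i_k \in W \Rightarrow i_{k-1} \in W$ whenever $i_{k-1}$ exists and $i_k$ is not the minimal family-$i$ gene present) and then iterate.
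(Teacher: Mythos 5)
Your parts (I), (II) and (IV) are essentially the paper's arguments and are fine (the detour through Lemma~\ref{lem:draggable_labels_in_complete_sets} in (I) is off target, since that lemma compares genes of \emph{different} families, but your alternative via (G.4) plus the observation that marked labels are excluded from $W$ is exactly what is needed). The problems are in (III) and (V). For (III), the characterization you rely on --- ``a connected collection of boxes with no $2\times 2$ subshape and gapless rows and columns is a skew shape'' --- is false: the three boxes $(1,2),(2,2),(2,3)$ (a box, the box below it, and that box's right neighbor) are edge-connected, contain no $2\times2$, and have contiguous rows and columns, yet form no skew shape, precisely because $(1,2)$ is strictly NorthWest of $(2,3)$. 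The real content of (III), which your proposal omits entirely, is showing that no box $\y$ of $W$ is strictly NorthWest of another box $\z$ of $W$. This needs a case analysis on the contents: if $\bullet_{(i+1)_1}\in\y$ and an unmarked $i_k\in\z$, then $\z$ is southeast of a $\bullet_{(i+1)_1}$, contradicting (W.2); if $i_k\in\y$ and $\bullet_{(i+1)_1}\in\z$, one considers the box $\bbb$ in $\y$'s column and $\z$'s row, which by (G.2) contains a genetic label, by (G.4) of family strictly larger than $i$, contradicting (G.9); and two family-$i$ box labels in distinct rows with one strictly NorthWest of the other contradict (G.12). Your appeal to (G.3) cannot close this, since (G.3) constrains a single row while the offending configuration lives in two rows, and (G.12) and (G.9) never appear in your argument.

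For (V), your argument as written does not go through. You misquote Lemma~\ref{lemma:markedHiswestmost}: it asserts that a marked label \emph{is} maximally west in its gene, not that it fails to be, so the contradiction you claim at the end (``a marked instance would \ldots not be westmost'') does not exist, and the promised sandwiching is never actually carried out --- ``forced into $W$ by connectivity and the short-ribbon structure'' is an assertion, not a proof, because a skipped gene could a priori sit as a marked label or in a row outside $W$ within the same column range. A correct and much shorter route, and the one the paper takes, is to use (III) first: edge-connectedness plus the skew-shape property give that $W$ occupies consecutive columns; if $i_a\prec i_b\prec i_c$ with $i_a,i_c$ among the (box or upper-edge) labels of $W$ but $i_b$ skipped, then $i_b$ exists in $T$ (subscripts form an initial segment) and by (G.6) every instance of $i_b$ lies in a column weakly between the columns of those $i_a$ and $i_c$, hence in a column of $W$ already carrying a family-$i$ label; two distinct genes of family $i$ in one column violate (G.4) (the marked exception applies only within a single gene), with (G.5) handling coincident edges. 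No ballotness (G.8) is needed. So the overall plan is salvageable, but (III) is missing its essential step and (V) needs to be rebuilt along these lines.
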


Therefore, each $i$-walkway looks 
like:
 \[\ytableausetup{boxsize=0.9em}
\ytableaushort{
\none \none \none \none \none \none \none \star \star { \star} { \star} { \blank} ,
\none \none \star \star { \star} {\star} \star \bullet, { \star} \star \bullet }.
\]
where each $\star$ is a genetic label and the blank box contains either $\bullet_{(i+1)_1}$ or a genetic label.

\noindent
\emph{Proof of Lemma~\ref{lem:walkway_structure}:}
(I): By (G.2), at most one box of $c$ comes from (W.1). 
By (G.4), at most one box of $c$ comes from (W.2). Thus the first
assertion of (I) holds. The second assertion holds by (W.2).

(II): Suppose $W$ contains a $2\times 2$ subsquare. Then the two southern boxes of the subsquare
contain $\bullet_{(i+1)_1}$'s by (I), contradicting (G.2).  

(III): $W$ is edge-connected by definition. In view of (II), it remains to show there are no two boxes $\y, \z$ of $W$
with $\y$ NorthWest of $\z$. Suppose otherwise. 
By (G.2), at least one of $\y, \z$ contains a genetic label.
If $\bullet_{(i+1)_1}\in\y$ and $i_k\in \z$, we violate (W.2). If $\bullet_{(i+1)_1} \in \z$ and $i_k \in \y$, consider the box $\bbb$ in $\y$'s column and $\z$'s row. By (G.2), $\bbb$ contains a genetic label. By (G.4), $\lab(\bbb) > i_k$. Since $\bullet_{(i+1)_1}\in \z$, this contradicts (G.9).  
Finally, if $i_k \in \y$ and $i_h \in \z$, then we contradict (G.12).

(IV): Immediate from (W.2) and (G.2). 

(V): By the edge-connectedness of $W$ we know that $W$ occupies
consecutive columns. Thus we are done by (G.4)--(G.6).\qed

\subsection{Walkway reversal}
Let $U \in B_{\lambda, \mu}^\alpha$ for some $\alpha \in \{\nu\} \cup \nu^-$. 
Obtain $U^{(0)}$ from $U$ by
placing $\bullet_{(\ell(\mu)+1)_1}$ in each box of $\nu/\alpha$. The root of the {\bf reversal tree} ${\mathfrak T}_U$ is $U^{(0)}$. The children $\{U^{(1)}\}$ of $U^{(0)}$ are the tableaux in the formal sum 
$\revswap_{\ell(\mu)_1^+} \circ \dots \circ \revswap_{(\ell(\mu)+1)_1}(U^{(0)})$. 
By Proposition~\ref{prop:goodness_preservation_reverse}, each $U^{(1)}$ is 
$\ell(\mu)_1$-good. We define the children $\{U^{(2)}\}$ of a $U^{(1)}$ by reverse swapping successively through labels of family $\ell(\mu)-1$, etc. Similarly, all tableaux thus obtained are also good. (A tableau may have a copy of itself as a child; this occurs only if $U^{(0)}$ has no $\bullet_{(\ell(\mu)+1)_1}$'s.)
After $\ell(\mu)-i$ steps, a descendant $U'=U^{(\ell(\mu)-i)}$ is an $(i+1)_1$-good tableau. 

\begin{lemma}
\label{lem:connection_walkways_to_ladders}
Let $U'$ be an $(i+1)_1$-good tableau. If $\ell$ is a box or edge label that is not in an $i$-walkway,
then $\ell$ appears in the same location in every $T\in\revswap_{i_1^+} \circ \dots \circ \revswap_{(i+1)_1}(U')$.   
\end{lemma}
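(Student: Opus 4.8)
The plan is to track what each composition $\revswap_{i_1^+}\circ\cdots\circ\revswap_{(i+1)_1}$ can and cannot move, and to argue that a label outside every $i$-walkway lies entirely outside the domain of all these reverse swaps. Recall that for each gene $\HH$ of family between $i_1$ and $i_1^+ = (i+1)_1$ (and of course family $i+1$ itself via the box labels $\bullet_{(i+1)_1}$), the operator $\revswap_\HH$ acts only on \emph{ladders}, i.e.\ on maximal edge-connected components of boxes containing $\bullet_{\HH^+}$ or unmarked $\HH$; moreover $\revm$ only modifies the labels of a ladder row and the (possibly virtual) labels on its lower and upper horizontal edges, and never touches a box disjoint from all ladders or an edge not bounding a ladder box (this is the content of the two lemmas preceding the definition of $\revswap$, together with Lemma~\ref{lem:useful_fact_reverse}). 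So the first step is to reduce the claim to: every box or edge label that gets modified at \emph{some} stage of $\revswap_{i_1^+}\circ\cdots\circ\revswap_{(i+1)_1}(U')$ sits in an $i$-walkway of $U'$.

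The key observation for the induction is that the union of the $i$-walkways of $U'$ is a ``closed'' region under this whole sequence of reverse swaps: by (W.1)--(W.2) an $i$-walkway consists exactly of the $\bullet_{(i+1)_1}$'s together with the unmarked family-$i$ genetic labels in boxes, and by Lemma~\ref{lem:walkway_structure} it is an edge-connected skew short-ribbon-like region whose $\bullet_{(i+1)_1}$'s sit at outer corners. I would argue by downward induction on the genes of family $i$ (from $i_1^+$ down to $i_1$): after applying $\revswap_{(i+1)_1}$, every $\bullet_{(i+1)_1}$ has been converted (via {\sf L1}--{\sf L4}) into either $\bullet_{i_1^+}$ or an $i$-family label, and each ladder of that step is contained in an $i$-walkway (a ladder for $\revswap_{(i+1)_1}$ consists of boxes with $\bullet_{(i+1)_1}$ or unmarked $i_1^+$, which after the preceding remark are among the $\bullet_{(i+1)_1}$'s and family-$i$ labels — hence inside an $i$-walkway by (W.1)--(W.2)). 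Inductively, each subsequent $\revswap_{\HH}$ with $\family(\HH)=i$ has its ladders inside $i$-walkways as well, because its ladder boxes carry $\bullet_{\HH^+}$ or unmarked $\HH$, and $\HH^+$ (when of family $i$) and $\HH$ are again family-$i$ labels; when $\HH = i_1^+$ and $\HH^+ = (i+1)_1$, the $\bullet_{(i+1)_1}$'s produced are still inside the $i$-walkway since they occupy boxes that previously held unmarked $i_1^+$. Thus no modification ever escapes the $i$-walkways.

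Conversely — and this is the only genuinely delicate point — I must make sure a label outside all $i$-walkways is not disturbed as a \emph{side effect}: a reverse miniswap on a ladder row $r$ can change labels on $\overline{\x}$ and $\underline{\x}$ where $\x$ is the left box of $r$, and I need those edges, when they carry a label $\ell$ \emph{not} of family $i$, to actually belong to the walkway containing $r$ or else not be altered. Here the structure of $\revm$ is the safeguard: inspecting cases {\sf L1}--{\sf L4}, the only labels on those edges that are created, deleted, or moved are family-$i$ labels (including virtual $\circled{\GG}$) and, in {\sf L4.1}--{\sf L4.5}, the label set $A$ on $\overline{\x}$ which consists only of family-$i$ labels by (G.4)/(G.6) since $\GG\in\x$ is of family $i$; non-family-$i$ edge labels are merely passengers (the sets $A'$) and are never relocated off their edge. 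Hence such an $\ell$ is untouched. Combining the two directions gives the statement.

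The main obstacle I anticipate is bookkeeping the ``passenger'' edge labels in the {\sf L4.4}/{\sf L4.5} cases (and the analogous virtual-label reshuffling) carefully enough to be certain that a label not of family $i$ truly stays put, rather than being, say, split between $\overline{\x}$ and $\underline{\x^\rightarrow}$; this requires a careful reading of the definitions of $Z$, $A'$, $A''$, $A'''$ and the virtual-label conventions (V.1)--(V.3), but it is purely a case check, not a conceptual difficulty. Once that is dispatched, the argument is: (i) modifications occur only on ladder rows and their bounding edges; (ii) all ladders across $\revswap_{i_1^+}\circ\cdots\circ\revswap_{(i+1)_1}$ lie inside $i$-walkways, by the downward induction on family-$i$ genes using (W.1)--(W.2) and Lemma~\ref{lem:walkway_structure}; (iii) the only labels actually altered on the bounding edges are family-$i$ (hence walkway) labels and virtual family-$i$ labels; therefore a box or edge label not in an $i$-walkway is fixed throughout, proving Lemma~\ref{lem:connection_walkways_to_ladders}.
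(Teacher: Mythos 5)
Your high-level plan (everything a reverse swap touches is confined to the $i$-walkways, so a label outside them is never disturbed) is the same strategy as the paper's, but as written there is a genuine gap at the crucial step (ii). To conclude that a ladder box of an \emph{intermediate} tableau $V$ lies in an $i$-walkway of $U'$, you invoke (W.1)--(W.2) on the grounds that the box carries $\bullet_{\HH^+}$ or an unmarked family-$i$ label. But (W.2) refers to markedness in $U'$, i.e.\ relative to the $\bullet_{(i+1)_1}$'s of $U'$, whereas ladder membership at the later stage refers to markedness relative to the $\bullet_{\HH^+}$'s of $V$; these are not the same condition, and you never rule out that a family-$i$ label which is \emph{marked} in $U'$ (hence its box is outside every walkway) becomes unmarked at some later stage and joins a ladder, which would move it. This is exactly the point the paper handles: if $\ell\in\x$ has $\family(\ell)=i$ and $\x$ is in no walkway, then by (W.2) $\x$ is southeast of a $\bullet_{(i+1)_1}$ in $U'$, and one checks by inspection of the reverse miniswaps that this ``southeast of a bullet'' relation persists (bullets are never destroyed during reversal and only move weakly northwest, their subscripts decreasing in step with the swaps), so when $\revswap_{\HH^+}$ is applied for $\ell$'s own gene $\HH$, $\ell$ is still marked and $\x$ is not in any ladder. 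Without this persistence argument your induction does not close.

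A secondary, factual error: your claim (iii) that the set $A$ of labels on $\overline{\x}$ ``consists only of family-$i$ labels by (G.4)/(G.6)'' and that non-family-$i$ edge labels are never relocated is backwards. By (G.4)/(S.2) every label on $\overline{\x}$ is $<$-smaller than $\GG\in\x$, so $A$ consists of labels of family strictly less than $i$; and {\sf L4.4}/{\sf L4.5} genuinely relocate such labels (the set $Z\setminus\{\FF\}$ moves to $\underline{\x^\rightarrow}$, and $\FF$ itself, which may have family $<i$, moves into the box $\x^\rightarrow$). This does not threaten the lemma -- once (ii) is correctly established, those edges and boxes are walkway edges and boxes, so the lemma makes no claim about them -- but as stated your step (iii) is false and your conclusion leans on it. The paper sidesteps both issues by arguing directly about the label $\ell$: a box label of family $\neq i$ is never in any ladder; a box label of family $i$ outside a walkway is marked and stays marked (the argument above); and an edge label outside a walkway is handled by reducing to the adjacent box, which by the same reasoning never joins a ladder.
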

\begin{proof}
The case analysis is as follows:

\noindent
{\sf Case 1: ($\ell\in \x$ is a box label in $U'$)}: 

\noindent
{\sf Subcase 1.1: ($\family(\ell)\neq i$)}: 
During the reversal process $\revswap_{i_1^+} \circ \dots \circ \revswap_{(i+1)_1}$, the label $\ell$ is never part of any ladder 
consisting of $\HH$ and $\bullet_{\HH^+}$ where $\HH\in \{i_1,\ldots, i_{\mu_i}\}$. Thus $\revswap_{\HH^+}$ does not move $\ell$. 

\noindent
{\sf Subcase 1.2: ($\family(\ell)= i$):} Since $\x$ is not part of an
$i$-walkway, by (W.2) it is southeast of a $\bullet_{(i+1)_1}$ in $U'$. By inspection of the reverse miniswaps, this remains true for each tableau $V$ 
appearing in the reversal process $\revswap_{i_1^+} \circ \dots \circ \revswap_{(i+1)_1}$. The box $\x$ is never part of a ladder during this process, for when we 
apply $\revswap_{\HH^+}$, where $\HH$ is $\ell$'s gene,
$\bullet_{\HH^+}$ is northwest of $\x$ and so $\ell^!\in \x$. 
The case then follows.

\noindent
{\sf Case 2: ($\ell$ is an edge label in $U'$)}: Let $\x$ and $\x^\downarrow$ be the boxes adjacent to the edge.

\noindent
{\sf Subcase 2.1: ($\x$ and $\x^\downarrow$ do not contain a label of family $i$ in $U'$):} 
As above, $\x$ and $\x^\downarrow$ are not part of a ladder
consisting of $\HH$ and $\bullet_{\HH^+}$, where $\HH\in \{i_1,\ldots, i_{\mu_i}\}$. Hence neither is the $\ell$ in question, and
so this $\ell$ remains fixed throughout the reversal process. 

\noindent
{\sf Subcase 2.2: ($\x$ or $\x^\downarrow$ contains a label $\HH$ of family $i$ in $U'$):} 
By (G.4), at most one of $\x$ or $\x^\downarrow$ contains such a label. Without loss of generality, 
suppose it is $\x$ (the argument in the other case is the same). Since $\ell \in \underline{\x}$ is not part of an $i$-walkway,
neither is $\x$. By the arguments of {\sf Subcase 1.2}, $\x$ is never part of a ladder, since $\HH^! \in \x$. Thus $\underline{\x}$ is unchanged.
\end{proof}

Consider an $i$-walkway $W$ of $U'$. 
By Lemma~\ref{lem:walkway_structure}(V), the genes of family $i$ in $W$ form an interval; let it be $(w_1, \dots, w_n)$ in increasing $\prec$-order. 

\begin{lemma}[Characterization of one-row walkway reversals]
\label{lemma:onerowreversalcharacterization}
Let $W$ be a $1$-row $i$-walkway in an $(i+1)_1$-good tableau $U'$. 
Let $\aaa$ and $\z$ be the westmost and eastmost boxes of $W$, respectively. Consider the region $\mathcal{R}$ occupied by $W$.
\begin{itemize}
\item[(I)] Suppose $U'$ has $\bullet_{(i+1)_1} \in \z$ and \emph{no} label of family $i$
in ${\overline \z}$. Then there exists a filling $R$ of ${\mathcal R}$ with  $\bullet_{i_1}\in \aaa$ and $w_1 \notin \underline{\aaa}$ such that for any $V
\in \revswap_{i_1^+} \circ \dots \circ \revswap_{(i+1)_1}(U')$, $V|_\mathcal{R}= R$.
\item[(II)] Suppose $U'$ has $\bullet_{(i+1)_1} \in {\sf z}$ and a label of family
$i$ in ${\overline \z}$. Then there exists a filling $R$ of ${\mathcal R}$ with $\bullet_{i_1}\in \aaa$ and 
either $w_1 \in \underline{\aaa}$ or $\circled{w_1} \in {\underline \aaa}$ such that for any $V
\in \revswap_{i_1^+} \circ \dots \circ \revswap_{(i+1)_1}(U')$, $V|_\mathcal{R}= R$.
\item[(III)] Suppose $U'$ has a label of family $i$ in $\z$.
Then there exist two fillings $R, R'$ of ${\mathcal R}$ such that
\begin{itemize}
\item[(i)]  $R$ has $w_1\in \aaa$;
\item[(ii)] $R'$ has $\bullet_{i_1}\in \aaa$ and either $w_1 \in \underline{\aaa}$ or $\circled{w_1} 
\in \underline\aaa$;
\item[(iii)] $R$ and $R'$ are otherwise identical; and
\item[(iv)] for any $V
\in \revswap_{i_1^+} \circ \dots \circ \revswap_{(i+1)_1}(U')$, $V|_\mathcal{R}\in\{R,R'\}$.
\end{itemize}
\end{itemize}
\end{lemma}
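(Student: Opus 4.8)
The plan is to prove Lemma~\ref{lemma:onerowreversalcharacterization} by carefully tracing the effect of $\revswap_{i_1^+} \circ \dots \circ \revswap_{(i+1)_1}$ on a one-row $i$-walkway $W$, processing the genes of family $i$ in decreasing $\prec$-order and showing that at each stage the relevant portion of the tableau is a short-ribbon ladder row whose reverse miniswap is forced (except possibly for the final, westmost step). First I would set up notation: let $W$ occupy boxes $\x_1 = \aaa, \x_2, \dots, \x_m = \z$ in a single row, with genes of family $i$ appearing as $(w_1, \dots, w_n)$ in increasing $\prec$-order, where $w_1$ is westmost. By Lemma~\ref{lem:walkway_structure}(V), the family-$i$ labels in box and upper-edge positions form a $\prec$-interval, and by Lemma~\ref{lem:walkway_structure}(I),(IV), a single-row walkway is literally a row of unmarked $i$-labels with possibly $\bullet_{(i+1)_1}$ at the eastmost box (an outer corner). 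Then I would invoke Lemma~\ref{lem:connection_walkways_to_ladders} to note that everything outside $\mathcal{R}$ is untouched, so it suffices to track $V|_{\mathcal{R}}$.

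The main body of the argument is an induction on the reverse-swap steps $\revswap_{w_n}, \revswap_{w_{n-1}}, \dots, \revswap_{w_1}$ (interleaved with the vacuous $\revswap_{\HH^+}$ for genes $\HH$ of family $i$ not appearing in $W$, which act trivially on $\mathcal{R}$ by Lemma~\ref{lemma:reversecontentpres}-type reasoning). The inductive claim is that after processing $\revswap_{w_k}$, the box currently holding $\bullet$ (namely the $\bullet_{(i+1)_1}$ that has migrated, or in intermediate stages a $\bullet_{w_k}$-type label) sits just west of the boxes containing $w_k, w_{k+1}, \dots$, and that the configuration to its west is exactly a ladder row of type {\sf L4} (when there is an $i$-label immediately west) until we reach the westmost $i$-label $w_1$. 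For case (I): $\bullet_{(i+1)_1} \in \z$ with no family-$i$ label in $\overline{\z}$ means the first reverse miniswap at $\z$ is {\sf L1.1} (since $w_n \in \z^{\leftarrow}$... wait, more precisely the box immediately west of $\z$ contains $w_n$, so {\sf L4.1}/{\sf L4.2}/{\sf L4.3} type considerations apply); I would check that the hypothesis ``no $\GG^+ \in \underline{\x^{\rightarrow}}$'' forces us into the {\sf L4.4}/{\sf L4.5} family, and then that $\x$ not being at the westmost $\GG$ (for all steps but the last) forces {\sf L4.5} or {\sf L4.3}, giving a unique output with a $\circled{\GG}$ emitted, which gets overwritten at the next step — so these intermediate $\circled{}$'s and edge labels are absorbed and do not survive. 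The westmost step at $\aaa$ is where the branching could occur, but since in case (I) there is ``no label of family $i$ in $\overline{\z}$'' one shows by the ballotness bookkeeping (Lemma~\ref{lem:how_to_check_ballotness}, as in the {\sf H5.2} analysis of Proposition~\ref{prop:swap/revswap_inversion}) that $Z = \emptyset$, killing the branch and forcing $w_1 \notin \underline{\aaa}$. For case (II), the family-$i$ label in $\overline{\z}$ forces the appearance of $w_1$ or $\circled{w_1}$ on $\underline{\aaa}$ via the {\sf L4.4}/{\sf L4.5} distinction at the last step, but still no branching because $\bullet_{(i+1)_1} \in \z$ pins down the eastmost behavior. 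Case (III) is where $\z$ itself holds a family-$i$ label, so $W$ has \emph{no} $\bullet_{(i+1)_1}$ at all until one is created; here the westmost reverse miniswap genuinely branches — {\sf L2.2}/{\sf L2.3} or {\sf L4.2}/{\sf L4.3} — producing the two fillings $R$ (keeping $w_1 \in \aaa$, i.e.\ the ``identity'' branch) and $R'$ (moving $\bullet_{i_1}$ into $\aaa$ and emitting $w_1$ or $\circled{w_1}$ below), identical elsewhere.

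I expect the main obstacle to be the bookkeeping of the emitted edge labels and virtual labels (the $Z$, $Z^\sharp$, $A$, $A'$, $A''$, $A'''$ data in {\sf L4.4}/{\sf L4.5}) and verifying they are consistently overwritten as the $\bullet$ migrates west, so that the \emph{final} filling $R$ (resp.\ $R, R'$) has the clean form asserted — in particular that exactly one copy of $w_1$ (or $\circled{w_1}$) survives on $\underline{\aaa}$ in the appropriate cases, with all intermediate $\circled{w_k}$'s for $k > 1$ having been consumed. This requires appealing repeatedly to (G.4)--(G.6), (G.12), (G.13) and the ballotness characterization Lemma~\ref{lem:how_to_check_ballotness} to control which family-$i$ and family-$(i{+}1)$ labels with matching $N$-values can appear where, exactly mirroring the case-by-case verifications already carried out in the proof of Proposition~\ref{prop:swap/revswap_inversion}. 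Once the uniqueness (resp.\ exactly-two-ness) of $V|_{\mathcal{R}}$ is established, the stated properties (i)--(iv) of $R$ and $R'$ — that they agree off the westmost box and edge, and that the $w_1$-versus-$\bullet_{i_1}$-with-$w_1$-below dichotomy is exactly the one recorded — are read off directly from the definitions of the reverse miniswaps {\sf L2.2}, {\sf L2.3}, {\sf L4.2}, {\sf L4.3}, {\sf L4.4}, {\sf L4.5}.
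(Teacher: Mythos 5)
Your overall skeleton (follow the reverse swaps east to west through the one-row walkway and classify each stage by ladder-row type, as the paper does via induction on the number of boxes of $W$) is the right one, but the miniswap bookkeeping that carries the actual content of the lemma is wrong in two concrete places. First, in case (II) you attribute the appearance of $w_1$ or $\circled{w_1}$ on $\underline{\aaa}$ to ``the {\sf L4.4}/{\sf L4.5} distinction at the last step.'' But {\sf L4.4} and {\sf L4.5} never place anything on $\underline{\x}$: they deposit $A''$ (resp.\ $A''',\circled{\GG}$) on $\underline{\x^{\rightarrow}}$ and $A'$ on $\overline{\x}$, so if the final step were {\sf L4.4}/{\sf L4.5} you would conclude $w_1\notin\underline{\aaa}$, i.e.\ case (I)'s conclusion under case (II)'s hypothesis. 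The mechanism you are missing is {\sf L3} together with {\sf L4.1}--{\sf L4.3}: the family-$i$ label on $\overline{\z}$ is first moved to $\underline{\z}$ by the {\sf L3} reverse miniswap, and then at every subsequent step the eastern box carries $\GG^+$ on its lower edge, so the applicable miniswap is {\sf L4.1}/{\sf L4.2}/{\sf L4.3}; it is the final {\sf L4.2}/{\sf L4.3} at $\{\aaa,\aaa^{\rightarrow}\}$ that deposits $w_1$ or $\circled{w_1}$ on $\underline{\aaa}$ (this is exactly how the paper extends the unique reversal of $\overline W$ in its part (II)). Your proposal never invokes {\sf L3} or {\sf L4.1}--{\sf L4.3}, so as written it cannot distinguish (I) from (II). Relatedly, the ``$Z=\emptyset$ by ballotness'' point is a red herring for uniqueness: {\sf L4.4}/{\sf L4.5} have a single output regardless of $Z$; what the hypothesis of (I) buys (via the inductive hypothesis applied to $\overline W$) is that no family-$i$ label sits on $\underline{\aaa^{\rightarrow}}$, which is what forces the last step into {\sf L4.4}/{\sf L4.5} rather than {\sf L4.1}--{\sf L4.3}.

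Second, in case (III) your claim that ``the westmost reverse miniswap genuinely branches'' while the earlier steps are forced is false, and it hides the main point. When $\z$ contains a family-$i$ label there is no $\bullet_{(i+1)_1}$ in $W$ at all, so already the first relevant reverse swap acts on the single-box ladder at $\z$ by {\sf L2.2}/{\sf L2.3}, which has two outputs; branching then recurs at every step (in the branch where the label stayed put, the next ladder row is again a single box and branches; in the branch where a bullet was created, the next step is a forced {\sf L4.1}--{\sf L4.3}). The substance of (III) is that all these branches collapse so that only two restrictions to $\mathcal{R}$ survive, differing only at $\aaa$ and $\underline{\aaa}$; this is precisely the paper's argument that applying {\sf L4.2}/{\sf L4.3} to $\overline R'\cup\{\aaa\}$ reproduces $R'$, while {\sf L2.2}/{\sf L2.3} applied to $\overline R\cup\{\aaa\}$ yields exactly $\{R,R'\}$, with the {\sf L4.2}-versus-{\sf L4.3} choice matching the {\sf L2.2}-versus-{\sf L2.3} choice. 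Without this collapsing step your argument establishes neither (iii) nor (iv), so the proposal as it stands has a genuine gap in both (II) and (III).
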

\begin{proof}
We argue (I)--(III) separately, by induction on the number of boxes of $W$. The base cases (where $W$ consists of a single box $\aaa=\z$) are clear by Lemma~\ref{lem:ladder_row_classification}
and inspection of the reverse miniswaps. 
Assume $W$ has at least two boxes and
let ${\overline W}$ be $W$ with $\aaa$ removed.

(I): By induction, ${\overline W}$ reverses uniquely to some ${\overline R}$,
which has a $\bullet_{w_2}\in\aaa^\rightarrow$ and $w_2 \notin \underline{\aaa^\rightarrow}$. (By a technical modification of the hypotheses, we may apply the inductive hypothesis to this partial walkway here and below.) This extends uniquely by {\sf L4.4} or {\sf L4.5} (followed by some number of
applications of {\sf L1.2}) to an $R$ with the claimed properties. 

(II): The unique reversal ${\overline R}$ of ${\overline W}$ has a $\bullet_{w_2}\in\aaa^\rightarrow$ and $w_2 \in \underline{\aaa^\rightarrow}$. (By (V.2), $\circled{w_2} \notin \underline{\aaa^\rightarrow}$.) We obtain the desired unique reversal $R$ by applying {\sf L4.2} or {\sf L4.3} to $\{\aaa, \aaa^\rightarrow\}$ in $\overline{R} \cup \{ \aaa \}$.  

(III): There are precisely two reversals of 
${\overline W}$: ${\overline R}$ and ${\overline R}'$.
The former reversal has $w_2\in \aaa^\rightarrow$, while the latter
has $\bullet_{w_2}\in \aaa^\rightarrow$ and $w_2\in \underline{\aaa^\rightarrow}$. (By (V.2),
$\circled{w_2}\notin \underline{\aaa^\rightarrow}$.)  Applying {\sf L4.2} or {\sf L4.3} (as appropriate) to
$\{\aaa, \aaa^\rightarrow\}$ in ${\overline R}'\cup \{\aaa\}$
returns $R'$ as described. Applying
{\sf L2.2} or {\sf L2.3} (as appropriate) to $\aaa$ in ${\overline R}\cup \{\aaa\}$ returns
precisely $R$ and $R'$. (We apply {\sf L4.2} to ${\overline R}'\cup \{\aaa\}$ exactly when we apply {\sf L2.2} to ${\overline R}\cup \{\aaa\}$.) 
\end{proof}

\begin{lemma}[Characterization of multirow walkway reversals]
\label{lemma:tworowreversalcharacterization}
Let $W$ be an $i$-walkway \emph{with at least two rows} in an $(i+1)_1$-good tableau $U'$. Let $\aaa$ and $\z$ be the westmost and eastmost boxes, respectively, in its southmost row. Thus $\bullet_{(i+1)_1}\in \z$.
Let $\mathcal{R}$ be the region occupied by 
$W$.
\begin{itemize}
\item[(I)] Suppose $\aaa = \z$. Then there exists a filling $R$ of ${\mathcal R}$ with $w_1 \in \aaa$ such that for any $V \in \revswap_{i_1^+} \circ \dots \circ \revswap_{(i+1)_1}(U')$, $V|_\mathcal{R} = R$.
\item[(II)] Suppose $\aaa \neq \z$ and $\lab_W(\z^\leftarrow) = \lab_W(\z^\uparrow)$. Then there exists a filling $R$ of $\mathcal{R}$ with $\bullet_{i_1}\in \aaa$ and no label of family $i$ on $\underline{\aaa}$ such that for any $V \in \revswap_{i_1^+} \circ \dots \circ \revswap_{(i+1)_1}(U')$, $V|_\mathcal{R} = R$. 
\item[(III)] Suppose $\aaa \neq \z$ and $\lab_W(\z^\leftarrow) \neq \lab_W(\z^\uparrow)$. Then there exist two fillings $R, R'$ of $\mathcal{R}$ such that 
\begin{itemize}
\item[(i)] $R$ has $w_1 \in \aaa$;
\item[(ii)] $R'$ has $\bullet_{i_1} \in \aaa$ and either $w_1 \in \underline{\aaa}$ or $\circled{w_1} \in \underline{\aaa}$;
\item[(iii)] $R$ and $R'$ are otherwise identical; and
\item[(iv)] for any $V \in \revswap_{i_1^+} \circ \dots \circ \revswap_{(i+1)_1}(U')$, $V|_\mathcal{R} \in \{ R, R' \}$.
\end{itemize}
\end{itemize} 
\end{lemma}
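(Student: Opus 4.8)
The plan is to mirror the structure of Lemma~\ref{lemma:onerowreversalcharacterization} but with an induction organized by rows of $W$ rather than by boxes within a single row. The key observation is that an $i$-walkway $W$ with at least two rows is, by Lemma~\ref{lem:walkway_structure}, an edge-connected skew short ribbon in which each two-box column has $\bullet_{(i+1)_1}$ in its southern box; removing the southmost row of $W$ leaves either a shorter multirow walkway or a one-row walkway, to which we may apply either the present lemma (inductively) or Lemma~\ref{lemma:onerowreversalcharacterization}. So I would first set up this decomposition: let $r$ be the southmost row of $W$, with westmost box $\aaa$ and eastmost box $\z$ (where $\bullet_{(i+1)_1}\in\z$), and let $W'$ be $W$ with $r$ deleted. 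The box $\z^\uparrow$ lies in the southmost row of $W'$; by Lemma~\ref{lem:walkway_structure}(I) it is the unique box sitting above $\z$, and $\lab_W(\z^\uparrow)$ is a genetic label of family $i$ (it is the ``blank'' box of the walkway picture when $W'$ is one-row, or a $\bullet_{(i+1)_1}$-topped box otherwise --- in all cases the reverse miniswap at $\z$ is {\sf L1.1}, since $\GG\in\z^\uparrow$ in the appropriate sense).

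Next I would run the induction. By Lemma~\ref{lemma:reversecontentpres} and Proposition~\ref{prop:goodness_preservation_reverse} the reversals are well-behaved; by Lemma~\ref{lem:connection_walkways_to_ladders} labels outside $W$ are irrelevant, so we may work entirely inside $\mathcal{R}$. Applying $\revm$ at row $r$: the box $\z$ is {\sf L1.1} (so $\GG$ lands in $\z$), while the remaining boxes of $r$ (if any) form a ladder row of type {\sf L4} or a chain of {\sf L1} boxes ending at $\aaa$. The reversal of $r$ is therefore governed by whether $\aaa$ does or does not contain the westmost $\GG$ --- but since $\z$ now receives a fresh $\GG$ and all of $r$ except $\z$ consists of $\GG$'s/$\bullet$'s being pushed west, the relevant case is determined by the reversal of $W'$ one level up, exactly as in the proof of Lemma~\ref{lemma:onerowreversalcharacterization}(I)--(III). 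Concretely: in case (I), $\aaa=\z$, the row $r$ is a single {\sf L1.1} box, and $W'$ reverses uniquely (by induction on rows, or by Lemma~\ref{lemma:onerowreversalcharacterization}(I) when $W'$ is one-row with $\bullet\in\z^\uparrow$ and no family-$i$ label on $\overline{\z^\uparrow}$) to a filling with $w_2\in\z^\uparrow=\aaa^\uparrow$; this extends uniquely to $R$ with $w_1\in\aaa$. In case (II), the hypothesis $\lab_W(\z^\leftarrow)=\lab_W(\z^\uparrow)$ forces $W'$ to reverse uniquely into the ``{\sf (I)}-type'' configuration (unique reversal, $\bullet_{i_1}$ at its westmost box, no family-$i$ edge label below), and then $r$ reverses uniquely via the appropriate {\sf L4} case followed by {\sf L1.2}'s, yielding the unique $R$ with $\bullet_{i_1}\in\aaa$ and no family-$i$ label on $\underline{\aaa}$. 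In case (III), $W'$ has two reversals $\overline{R},\overline{R}'$ (of the ``{\sf (III)}-type''), and propagating each through $r$ via {\sf L2.2}/{\sf L2.3} resp.\ {\sf L4.2}/{\sf L4.3} (choosing the virtual-vs.-genetic $w_1$ according to whether $\aaa$ holds the westmost $\GG$, exactly as in Lemma~\ref{lemma:onerowreversalcharacterization}) gives precisely the stated $R,R'$, which agree off the box $\aaa$.

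The main obstacle --- and the step I would treat most carefully --- is the interface between row $r$ and row $r^\uparrow$: one must check that the reverse miniswap applied at the western portion of $r$ is compatible with (and forced by) the reversal already determined for $W'$, i.e.\ that the shared vertical and horizontal edges do not create ambiguity or a goodness violation. The key inputs here are: that no $\revm$ affects an edge shared by two rows of the same ladder (so the decomposition into rows is clean); that Lemma~\ref{lem:walkway_structure}(III) guarantees $W$ is a skew shape so the rows of $W$ are ``stacked'' in the expected staircase pattern; and Lemma~\ref{lem:useful_fact_reverse}, which pins down what $\lab_T(\x^\leftarrow)$ and the edge $\underline{\x}$ must look like when a $\bullet_{(i+1)_1}$ reverses to $\GG^+$ or $\FF^!$. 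One also needs (G.12) and Lemma~\ref{lem:how_to_check_ballotness} to rule out spurious family-$i$ labels appearing to the east that would change which {\sf L4} subcase applies --- the same arguments invoked in the proof of Lemma~\ref{lemma:onerowreversalcharacterization} and in Claim~\ref{claim:patcha}. Once these compatibility checks are in place, gluing $R$ (resp.\ $R,R'$) for row $r$ onto the inductively-produced filling of $W'$ produces the claimed filling(s) of $\mathcal{R}$, and the uniqueness/bifurcation statements follow by tracking the single branching choice, which occurs only at the box $\aaa$.
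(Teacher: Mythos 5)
There is a genuine gap, and it sits exactly at the interface you flag. Your uniform decomposition (delete the whole southmost row $r$ and apply the lemma inductively to $W'=W\setminus r$, which still contains $\z^\uparrow$) is not the paper's decomposition (the paper removes the westmost column $\{\z,\z^\uparrow\}$ in (I), either $\{\aaa,\z,\z^\uparrow\}$ or just $\aaa$ in (II), and the southmost row \emph{together with} $\z^\uparrow$ in (III)), and the difference matters. The lemma you want to invoke for $W'$ describes the reversals of $W'$ \emph{in isolation}, where any bifurcation occurs at $W'$'s own southwest box, namely $\z^\uparrow$, via {\sf L2.2}/{\sf L2.3} or {\sf L4.2}--{\sf L4.5}. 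But inside the reversal of $W$, when the gene sitting in $\z^\uparrow$ is processed, the box below it contains a $\bullet$, so the reverse miniswap at $\z^\uparrow$ is forced to be {\sf L2.1} or {\sf L4.1}; neither of the two standalone outcomes for $W'$ at $\z^\uparrow$ actually occurs. This is why the paper keeps $\z^\uparrow$ out of the inductive object in case (III) and shows explicitly that the two sub-reversals $\overline{R},\overline{R}'$ \emph{merge} into one reversal at the interface ({\sf L2.1}+{\sf L1.1} versus {\sf L4.1}+{\sf L1.1}), with the genuine bifurcation of case (III) created afresh at $\aaa$ by Lemma~\ref{lemma:onerowreversalcharacterization}(III) applied to the southmost row. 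Your mechanism for (III) --- that $R,R'$ arise by propagating $\overline{R},\overline{R}'$ of $W'$ through $r$ --- therefore attributes the branching to the wrong place, and your case (II) step (``the hypothesis forces $W'$ to reverse uniquely into the (I)-type configuration'') is asserted rather than proved; the hypothesis $\lab_W(\z^\leftarrow)=\lab_W(\z^\uparrow)$ is a statement about a gene shared between the two bottom rows, which your row-stripping cuts in half.

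A second, concrete error: the blanket claim that ``in all cases the reverse miniswap at $\z$ is {\sf L1.1}'' fails in case (II). There $\lab_W(\z^\leftarrow)=\lab_W(\z^\uparrow)$, so at the stage when that gene is reversed, $\z^\leftarrow$, $\z$ and $\z^\uparrow$ all lie in one ladder and the ladder row containing $\z$ is the two-box row $\{\z^\leftarrow,\z\}$ of type {\sf L4}; the paper correspondingly applies {\sf L4.4}/{\sf L4.5} at $\{\aaa,\z\}$ (after {\sf L4.5} one row up), which is exactly how the unique reversal of (II) acquires $\bullet_{i_1}\in\aaa$ with no label of family $i$ on $\underline{\aaa}$. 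With {\sf L1.1} at $\z$ you cannot reach that conclusion. To repair your argument you would either have to tailor the removed set case by case as the paper does, or else formulate and prove a strengthened induction hypothesis about reversing a partial walkway that has a $\bullet$ directly beneath the westmost box of its southmost row (and, in case (II), shares a gene with the row below); as written, the verbatim appeal to the lemma for $W'$ plus ``{\sf L1.1} at $\z$'' does not go through.
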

\begin{proof}
\noindent
(I): Let $\overline W$ be $W$ with the 
two boxes in the westmost column of $W$ removed. If $\overline{W} = \emptyset$, then $W=\{\bullet_{(i+1)_1} \in \z, w_1\in\z^\uparrow\}$;
here we obtain the desired result by use of {\sf L1.1}
and {\sf L2.1}. 
Hence assume $\overline{W} \neq \emptyset$. Clearly, 
\begin{equation}
\label{eqn:wewe1233}
\lab_W(\z^{\uparrow\rightarrow}) \in \{ w_2, \bullet_{(i+1)_1} \}.
\end{equation}
Depending on whether
$\overline W$ has multiple rows, by induction or by Lemma~\ref{lemma:onerowreversalcharacterization},
there are at most two reversals of ${\overline W}$.

\noindent
{\sf Case 1: ($\overline{W}$ has a unique reversal $\overline{R}$)}: By (\ref{eqn:wewe1233}) and induction/Lemma~\ref{lemma:onerowreversalcharacterization}, we
have two scenarios possible:

\noindent
{\sf Subcase 1.1: ($\overline{R}$ has $\bullet_{w_2} \in \z^{\uparrow\rightarrow}$ and no labels of family $i$ appear
on $\underline{\z^{\uparrow\rightarrow}}$):} Here we extend to a unique reversal of $W$ by applying {\sf L4.4} or {\sf L4.5} at $\z^\uparrow$ and {\sf L1.1} at $\z$. This results in 
$w_1\in \z=\aaa$. 

\noindent
{\sf Subcase 1.2: ($\overline{R}$ has
$w_2\in \z^{\uparrow\rightarrow}$):} We extend to a unique reversal of $W$ by applying {\sf L2.1} at $\z^\uparrow$ and {\sf L1.1} at $\z$. This results in 
$w_1\in \z=\aaa$, as desired.

\noindent
{\sf Case 2: ($\overline{W}$ has two reversals $\overline R$ and ${\overline R}'$):} By (\ref{eqn:wewe1233}) and 
induction/Lemma~\ref{lemma:onerowreversalcharacterization},
${\overline R}$ and ${\overline R}'$ differ only in $\z^{\uparrow\rightarrow}$: $\overline R$ has $w_2\in \z^{\uparrow\rightarrow}$ whereas $\overline R'$ has $\bullet_{w_2} \in \z^{\uparrow\rightarrow}$ and 
$w_2\in \underline{\z^{\uparrow\rightarrow}}$. 
By {\sf L2.1} and {\sf L1.1} in the $\overline{R}$ case and by {\sf L4.1} and {\sf L1.1} in the $\overline{R}'$ case, both extend to the same reversal $R$ of $W$; here $R$ has $w_1 \in \z = \aaa$, as claimed.

(II): We have some cases.

\noindent
{\sf Case 1: (The southmost row of $W$ has exactly two boxes $\{\aaa=\z^\leftarrow, \z\}$):}
Let ${\overline W}$ be $W$ with $\{\aaa, \z, \z^\uparrow\}$ removed. If $\overline W$ is empty, the result is clear, so we may assume otherwise. Thus (\ref{eqn:wewe1233}) still holds. Depending on whether
$\overline W$ has multiple rows or not, either by induction or by Lemma~\ref{lemma:onerowreversalcharacterization},
it follows there are at most two reversals of ${\overline W}$. 

\noindent
{\sf Subcase 1.1: ($\overline{W}$ has a unique reversal ${\overline R}$):} 
By (\ref{eqn:wewe1233}) 
and induction/Lemma~\ref{lemma:onerowreversalcharacterization}, two scenarios are possible:

\noindent
{\sf Subcase 1.1.1: ($\overline{R}$ has $\bullet_{w_2} \in \z^{\uparrow\rightarrow}$ and no label of family $i$
on $\underline{\z^{\uparrow\rightarrow}}$):} 
We extend to a unique reversal $R$ of $W$ by applying {\sf L4.5} at $\{\z^\uparrow, \z^{\uparrow\rightarrow} \}$ and either {\sf L4.4} or {\sf L4.5} (as required) at $\{\aaa, \z\}$; $R$ has  $\bullet_{w_1} \in \aaa$ and no label of family $i$ on $\underline{\aaa}$. 

\noindent
{\sf Subcase 1.1.2: ($w_2\in \z^{\uparrow\rightarrow}$):} We extend to a unique reversal $R$ of $W$ by applying {\sf L2.1} at $\z^\uparrow$ and either {\sf L4.4} or {\sf L4.5} (as required) at $\z$. This again results in 
$\bullet_{w_1} \in \aaa$ and no label of family $i$ on $\underline{\aaa}$.

\noindent
{\sf Subcase 1.2: ($\overline{W}$ has two reversals $\overline R$ and $\overline R'$):} By (\ref{eqn:wewe1233}) and 
induction/Lemma~\ref{lemma:onerowreversalcharacterization},
${\overline R}$ and ${\overline R}'$ differ only in $\z^{\uparrow\rightarrow}$: $\overline R$ has a $w_2\in \z^{\uparrow\rightarrow}$ whereas $\overline R'$ has a 
$\bullet_{w_2} \in \z^{\uparrow\rightarrow}$ and 
$w_2\in \underline{\z^{\uparrow\rightarrow}}$. 
 By {\sf L2.1} and {\sf L4.4} or {\sf L4.5} in the $\overline{R}$ case and by {\sf L4.1} and {\sf L4.4} or {\sf L4.5} in the $\overline{R}'$ case, both extend to the same reversal $R$ of $W$. $R$ has $\bullet_{w_1} \in \z = \aaa$.
 
In each of the Subcases above, we are done after applying
a sequence of {\sf L1.2}'s at $\aaa$. 

\noindent
{\sf Case 2: (The southmost row of $W$ contains at least three boxes):}
Let $\overline W$ be $W$ with $\aaa$ removed. By induction, ${\overline W}$ has a unique
reversal ${\overline R}$ with $\bullet_{w_2}$ in $\aaa^\rightarrow$ and no label of of family $i$ on $\underline{\aaa^\rightarrow}$. 
Now we uniquely extend ${\overline R}$ to a reversal $R$ of $W$ by applying {\sf L4.4} or {\sf L4.5} at $\{\aaa, \aaa^\rightarrow \}$; $R$ has $\bullet_{w_1} \in \aaa$ and no label of of family $i$ on $\underline{\aaa}$, and the result follows
after applying
a sequence of {\sf L1.2}'s at $\aaa$. 

(III): 
Let $\overline{W}$ be $W$ with the southmost row and $\z^\uparrow$ removed.
Recall $\lab_W(\z) = \bullet_{(i+1)_1}$ and suppose $W$ has $w_{q-1}\in \z^\leftarrow$
and $w_q\in \z^{\uparrow}$. If $\overline W$ is empty, we are done by
applying {\sf L2.1} at $\z^\uparrow$ and {\sf L1.1} at $\z$, followed by application of Lemma~\ref{lemma:onerowreversalcharacterization}(III)
to the southmost row.
Thus assume ${\overline W}$ is not empty. By 
induction or Lemma~\ref{lemma:onerowreversalcharacterization}, 
there are at most two reversals of ${\overline W}$: 

\noindent
{\sf Case 1: ($\overline{W}$ has a unique reversal $\overline{R}$):} Observe that exactly one of the following two cases holds.

\noindent
{\sf Subcase 1.1: 
($\overline{R}$ has $\bullet_{w_{q+1}} \in \z^{\uparrow\rightarrow}$ and no label of family $i$
on $\underline{\z^{\uparrow\rightarrow}}$):}
Apply {\sf L4.4} at $\z^\uparrow$ and {\sf L1.1} at~$\z$. 

\noindent
{\sf Subcase 1.2: (${\overline R}$ has $w_{q+1}\in \z^{\uparrow\rightarrow}$):} 
Apply {\sf L2.1} at $\z^\uparrow$ and {\sf L1.1} at $\z$.

\noindent
{\sf Case 2: ($\overline{W}$ has two reversals $\overline R$ and $\overline R'$):} By
induction/Lemma~\ref{lemma:onerowreversalcharacterization},
${\overline R}$ and ${\overline R}'$ differ only in 
$\z^{\uparrow\rightarrow}$: $\overline R$ has $w_{q+1}\in \z^{\uparrow\rightarrow}$ whereas $\overline R'$ has a 
$\bullet_{w_{q+1}} \in \z^{\uparrow\rightarrow}$ and 
$w_{q+1}\in \underline{\z^{\uparrow\rightarrow}}$. 
Apply {\sf L2.1} and {\sf L1.1} in the $\overline{R}$ case.
Apply {\sf L4.1} and {\sf L1.1} in the $\overline{R}'$ case.

In each of the cases above, the indicated reverse miniswaps leave us with
the southmost row having $w_1\in \aaa$ and $w_q\in \z$. We
complete the reversal using
 Lemma~\ref{lemma:onerowreversalcharacterization}(III), yielding the desired
conclusion.
\end{proof}

\begin{proposition}\label{cor:walkway_reversals}
The children of a node $U'$ in ${\mathfrak T}_U$ are obtained by replacing each walkway $W$ with $R$
or $R,R'$ (as defined in Lemmas~\ref{lemma:onerowreversalcharacterization}
and~\ref{lemma:tworowreversalcharacterization}) independently in all possible ways.
\end{proposition}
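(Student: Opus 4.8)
The plan is to assemble three facts already in hand: labels outside of walkways are untouched by reversal (Lemma~\ref{lem:connection_walkways_to_ladders}), the reverse miniswaps on distinct ladders commute and are supported on their own boxes and incident edges (Lemma~\ref{lemma:laddercommute} and inspection of the reverse miniswaps), and the explicit descriptions of how a single walkway reverses (Lemmas~\ref{lemma:onerowreversalcharacterization} and~\ref{lemma:tworowreversalcharacterization}).

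First I would fix $i$ and let $U'$ be an $(i+1)_1$-good node of $\mathfrak{T}_U$, so its children form the formal sum $\Sigma := \revswap_{i_1^+}\circ\cdots\circ\revswap_{(i+1)_1}(U')$. Let $W_1,\dots,W_m$ be the $i$-walkways of $U'$, occupying the regions $\mathcal{R}_1,\dots,\mathcal{R}_m$; these are pairwise disjoint, since distinct walkways are distinct edge-connected components and so share neither boxes nor edges. By Lemma~\ref{lem:connection_walkways_to_ladders}, every box label and edge label of $U'$ lying outside $\bigcup_j \mathcal{R}_j$ occupies the same position in every $V\in\Sigma$ as in $U'$.

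Next I would argue that the reversal process \emph{factors} over the regions $\mathcal{R}_j$. At any intermediate stage of the composition $\revswap_{i_1^+}\circ\cdots\circ\revswap_{(i+1)_1}$ one applies some $\revswap_{\HH^+}$ with $\HH$ a gene of family $i$, and each ladder it acts on consists of boxes holding $\bullet_{\HH^+}$ or an unmarked $\HH$; tracing through the reverse miniswaps — equivalently, reading off the inductive descriptions in the proofs of Lemmas~\ref{lemma:onerowreversalcharacterization} and~\ref{lemma:tworowreversalcharacterization} — shows that every such box lies in the region $\mathcal{R}_j$ of a single walkway and that no ladder row ever leaves $\mathcal{R}_j$. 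Since by Lemma~\ref{lemma:laddercommute} and the definitions the reverse miniswaps on distinct ladders commute and alter only their own boxes and incident edges, it follows that $V\in\Sigma$ is determined by the tuple $(V|_{\mathcal{R}_1},\dots,V|_{\mathcal{R}_m})$ together with the fixed complement, and conversely that every tuple of legal restrictions assembles to an element of $\Sigma$; that is, $\Sigma$ is obtained by choosing, independently for each $j$, one of the possible reversals of $W_j$ supported on $\mathcal{R}_j$, leaving $U'$ unchanged off $\bigcup_j\mathcal{R}_j$.

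Finally I would invoke the characterization lemmas region by region. For a one-row walkway $W_j$, Lemma~\ref{lemma:onerowreversalcharacterization} shows that in its cases (I) and (II) there is a single filling $R$ with $V|_{\mathcal{R}_j}=R$ for every $V\in\Sigma$, while in case (III) the restriction ranges over exactly $\{R,R'\}$ and both values are attained; for a walkway with at least two rows the same dichotomy holds by Lemma~\ref{lemma:tworowreversalcharacterization}. Combining this with the independence established above, the elements of $\Sigma$ — the children of $U'$ — are exactly the tableaux obtained from $U'$ by replacing each $W_j$ by $R$ (when $W_j$ falls into the single-filling case) or by either $R$ or $R'$ (when $W_j$ falls into the two-filling case), independently in all possible ways, which is the assertion. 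The main obstacle is the factorization claim of the third step: one must be certain that at every intermediate tableau of the composition no ladder straddles two walkway regions and every reverse miniswap confines its modifications to a single $\mathcal{R}_j$. The cleanest route is to read this off the inductive constructions in the proofs of Lemmas~\ref{lemma:onerowreversalcharacterization} and~\ref{lemma:tworowreversalcharacterization}, being careful about the ``technical modification of the hypotheses'' used there to apply the inductive step to partial walkways.
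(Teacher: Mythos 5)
Your proposal is correct and follows essentially the same route as the paper: the paper's proof is just two sentences, citing Lemma~\ref{lem:connection_walkways_to_ladders} for the fact that nothing changes outside the walkways and edge-disjointness of walkways for independence, with the characterization Lemmas~\ref{lemma:onerowreversalcharacterization} and~\ref{lemma:tworowreversalcharacterization} built into the statement. Your more detailed confinement-of-ladders argument is simply an expansion of the paper's terse "independence follows from walkways being edge-disjoint" (and is in the spirit of Claim~\ref{claim:i_k_walkway_reversals} proved later for the modified $i_k$-walkways).
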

\begin{proof}
That nothing changes outside the walkways is Lemma~\ref{lem:connection_walkways_to_ladders}.
Independence follows from walkways being edge-disjoint. 
\end{proof}

\begin{proposition}\label{cor:reversal_tree_is_tree}
${\mathfrak T}_U$ is a tree.
\end{proposition}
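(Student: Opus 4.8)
The plan is to establish the two properties that make $\mathfrak{T}_U$ a rooted tree: that it is connected, and that every node other than the root $U^{(0)}$ has a unique parent. Connectedness, and finiteness, are immediate from the construction: the depth is bounded by $\ell(\mu)$, and by Proposition~\ref{cor:walkway_reversals} each node has only finitely many children. So the whole content is the uniqueness of the parent, which I would prove by induction on the depth.

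The first reduction is to recover the depth of a node from the node itself. Using Proposition~\ref{prop:goodness_preservation_reverse} and inspecting the reverse miniswaps, every node of $\mathfrak{T}_U$ that carries a box label carries \emph{only} the box label $\bullet_{(j+1)_1}$ corresponding to its type as a $(j+1)_1$-good tableau; hence such a node occurs only at depth $\ell(\mu)-j$, so no such tableau occurs at two distinct depths. The remaining nodes are box-label-free, and these occur only when $U^{(0)}$ itself carries no box label (i.e.\ $\alpha=\nu$), in which case they form a single chain descending from the root (the ``self-copies'' noted in Section~\ref{sec:reversal_tree}) and are handled directly. So it suffices to fix a node $V$ of depth $d\ge 1$ carrying a box label --- hence $(i+1)_1$-good with $i=\ell(\mu)-d$ --- and to show that $V$ is a child of at most one $(i+2)_1$-good node $P$.

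Here I would reconstruct $P$ from $V$ region by region. By Lemma~\ref{lem:connection_walkways_to_ladders}, $P$ and $V$ agree outside the $(i+1)$-walkways of $P$; and by Lemmas~\ref{lemma:onerowreversalcharacterization} and~\ref{lemma:tworowreversalcharacterization} each $(i+1)$-walkway $W$ of $P$ becomes, in $V$, a filling $R$ of the same region, or one of a pair $\{R,R'\}$ that differ only in the westmost box of the region's southmost row and on that box's lower edge, with $R$ and $R'$ always arising from the same $W$. Since those same lemmas force the westmost box of each such region to carry $\bullet_{(i+1)_1}$ in $V$, the box labels of $V$ locate these regions, so the regions themselves are determined by $V$; and a region-filling determines the walkway $W$ it came from, which one sees by running the region forward through the genes of family $i+1$ and invoking Propositions~\ref{prop:swap/revswap_inversion} and~\ref{prop:goodness_preservation}. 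Reassembling these $W$'s with the (unchanged) complement yields a single candidate for $P$, so $V$ has at most one parent; the inductive hypothesis applied to that parent then completes the argument.

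The hard part is the assertion that a region-filling occurring in the child $V$ is the reverse image of a \emph{unique} walkway of the parent --- equivalently, that the forward swap $\swap_{(i+1)_{\mu_{i+1}}}\circ\cdots\circ\swap_{(i+1)_1}$, though it branches in general, has a unique output on the relevant region of a node of $\mathfrak{T}_U$. I expect this to require a case analysis through the miniswap definitions of Section~\ref{sec:swaps} checked against the goodness conditions (G.1)--(G.13), in the same spirit as the proof of Proposition~\ref{prop:swap/revswap_inversion}; with that in hand, the rest is bookkeeping with the walkway lemmas already established.
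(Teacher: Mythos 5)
Your reduction to ``every node has at most one parent'' is the right target, but the proposal does not actually establish it, and this is precisely the content of the proposition. The step you defer --- that a region-filling occurring in the child is the reverse image of a \emph{unique} walkway of a \emph{unique} tree-node parent, i.e.\ that the forward swap through family $i+1$, ``though it branches in general, has a unique output on the relevant region of a node of $\mathfrak{T}_U$'' --- is exactly the nontrivial assertion, and you leave it as ``I expect this to require a case analysis.'' Note that the forward slide of a reversal genuinely branches even locally (this is visible in Claim~\ref{claim:forward_swap_walkway_coeffs}, where sliding $R$ or $R'$ produces several walkway-type configurations with nonzero coefficients), so singling out the tree-node output requires a real argument, not bookkeeping. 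Moreover, one of your supporting claims is false: Lemmas~\ref{lemma:onerowreversalcharacterization} and~\ref{lemma:tworowreversalcharacterization} do \emph{not} force the westmost box $\aaa$ of each walkway region to carry $\bullet_{(i+1)_1}$ in the child. In the $R$-type reversals (case (III) of both lemmas, and case (I) of the multirow lemma) the box $\aaa$ carries the genetic label $w_1$, not a bullet; and a one-row walkway with no bullet (case (III) of the one-row lemma) has an $R$-reversal containing no bullet anywhere, so such regions cannot be located from the box labels of $V$ at all. (Your claim that bullet-free nodes occur only when $\alpha=\nu$ is likewise unjustified, since reverse miniswap {\sf L1.1} can absorb bullets.)

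The paper avoids parent-reconstruction entirely and gets the result much more cheaply: induct on depth to show that any two \emph{distinct} $i_1$-good nodes differ in the placement of a label whose family is at least $i$. Siblings with a common parent differ, by the two characterization lemmas, in whether $w_1$ occupies the box $\aaa$ or its lower edge $\underline{\aaa}$; nodes with distinct parents inherit, by induction, a difference in a label of strictly higher family. Since the subsequent reverse swaps (through families $<i$) move only labels of strictly smaller families, this differing label is untouched, so two distinct nodes can never produce the same child --- which is the uniqueness of the parent you were after, with no need to determine the parent from the child.
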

\begin{proof}
Let $U'$ and $U''$ be distinct $i_1$-good nodes of ${\mathfrak T}_U$. By induction and Lemmas~\ref{lemma:onerowreversalcharacterization} and~\ref{lemma:tworowreversalcharacterization}, $U'$ and $U''$ differ in the placement of
a label of family strictly larger than $i$. This label is unaffected by later reverse swaps, so $U'$
and $U''$ cannot have the same child.
\end{proof}

\begin{proposition}[Characterization of reversal tree leaves]
\label{rev:claim.c}
\gap
\begin{itemize} 
\item[(I)] Let $L$ be a leaf of ${\mathfrak T}_U$. Then if we ignore the $\bullet_{1_1}$'s, either $L = U$ or $L\in \Lambda^+$ and has shape $\nu/\rho$ for some $\rho \in \lambda^+$. Moreover, $[U]{\tt slide}_{\rho/\lambda}(L)\neq 0$.
\item[(II)] If $M\in\Lambda^+$ has shape $\nu/\rho$ and $[U]{\tt slide}_{\rho/\lambda}(M)\neq 0$, then $M$ appears as a leaf of ${\mathfrak T}_U$. \qed
\end{itemize} 
\end{proposition}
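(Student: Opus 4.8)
The plan is to exploit the inversion established in Proposition~\ref{prop:swap/revswap_inversion}, namely that $U\in\swap_\GG(T)$ if and only if $T\in\revswap_{\GG^+}(U)$, and to track what happens to a root-to-leaf path through $\mathfrak T_U$. First I would record the key structural fact about the reversal tree: after $\ell(\mu)-i$ levels, every node $U^{(\ell(\mu)-i)}$ is $(i+1)_1$-good, and the transition from one level to the next is exactly $\revswap_{i_1^+}\circ\cdots\circ\revswap_{(i+1)_1}$ (by Proposition~\ref{cor:walkway_reversals}, carried out independently across walkways). Hence a leaf $L$ of $\mathfrak T_U$ is a $1_1$-good tableau obtained from $U^{(0)}$ by the full composite of reverse swaps. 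By $L$'s (G.2) and (G.9), the $\bullet_{1_1}$'s of $L$ sit at inner corners with no genetic label northwest of them; deleting them yields a bundled tableau $L'$ of some shape $\nu/\rho$. Since reverse swaps preserve content (Lemma~\ref{lemma:reversecontentpres}), $L'$ has content $\mu$; and since the $\bullet_{1_1}$'s of $L$ were placed at inner corners of $\nu/\rho$ as in the hypotheses of Section~\ref{subsection:swapsandslides}, with at most one per row and column, $\rho\in\lambda^+$ or $\rho=\lambda$. In the former case $L'\in B^\nu_{\rho,\mu}$ so $L'\in\Lambda^+$; in the latter, $L'\in B^\nu_{\lambda,\mu}$ and — I would argue — $L'=U$, because the composite of reverse swaps was then trivial on genetic labels, i.e.\ $\nu=\alpha$ and $U^{(0)}$ had no extra bullets, forcing $L=U^{(1_1)}$ essentially and $L'=U$. (This matches the parenthetical ``a tableau may have a copy of itself as a child''.)

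For part (I) it remains to show $[U]{\tt slide}_{\rho/\lambda}(L')\neq 0$. By definition ${\tt slide}_{\rho/\lambda}(L') = \swap_{\GG_{\rm max}}\circ\cdots\circ\swap_{1_1}((L')^{(1_1)})$ with terminal bullets deleted, and by construction $(L')^{(1_1)}$ is precisely $L$ (with $\bullet_{1_1}$'s restored). Now I would run Proposition~\ref{prop:swap/revswap_inversion} in the forward direction, level by level up the path $L=U^{(\ell(\mu))},\dots,U^{(1)},U^{(0)}$: at each step, since $U^{(j)}\in\revswap_{\GG^+}(U^{(j-1)})$ for the appropriate $\GG$, the inversion gives $U^{(j-1)}\in\swap_\GG(U^{(j)})$, i.e.\ $U^{(j-1)}$ occurs with nonzero coefficient in $\swap_\GG(U^{(j)})$. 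Composing, $U^{(0)}$ occurs with nonzero coefficient in $\swap_{\GG_{\rm max}}\circ\cdots\circ\swap_{1_1}(L)$, and after deleting terminal bullets this is $U$ (up to the $\bullet_{1_1}$ bookkeeping, which is benign since both $U^{(0)}$ and $U$ agree on genetic labels once bullets are handled as in Corollary~\ref{cor:codomain}). Therefore $[U]{\tt slide}_{\rho/\lambda}(L')\neq 0$.

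For part (II), suppose $M\in\Lambda^+$ has shape $\nu/\rho$ with $\rho\in\lambda^+$ and $[U]{\tt slide}_{\rho/\lambda}(M)\neq 0$. Then $U$ appears with nonzero coefficient in $\swap_{\GG_{\rm max}}\circ\cdots\circ\swap_{1_1}(M^{(1_1)})$ after deleting terminal bullets; pick a directed path of swaps realizing this, giving intermediate tableaux $M^{(1_1)}=V_0,V_1,\dots,V_N$ with $V_N$ equal to $U$ with $\bullet_{\GG_{\rm max}^+}$'s restored, each $V_{j}\in\swap_{\GG}(V_{j-1})$ for the relevant $\GG$. By Proposition~\ref{prop:swap/revswap_inversion} applied in reverse, $V_{j-1}\in\revswap_{\GG^+}(V_j)$ for each $j$; reading this chain from $V_N$ down to $V_0$ exhibits $M^{(1_1)}$ as obtained from $U$ with terminal bullets by exactly the sequence of reverse swaps that defines descent in $\mathfrak T_U$. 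Hence $M$ (with $\bullet_{1_1}$'s deleted) is a node of $\mathfrak T_U$; since its only genetic labels are of positive family and no further reverse swap changes them, and since $\rho\supsetneq\lambda$ means every $\bullet_{1_1}$ has already ``migrated'' to an inner corner of $\nu/\rho$, no reverse swap of family $\geq 1$ can act nontrivially, so $M$ is in fact a leaf.

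The main obstacle I anticipate is the careful bookkeeping of the $\bullet_{1_1}$'s versus the $\bullet_{\GG_{\rm max}^+}$'s across the two ends of the slide — i.e.\ matching the ``initial bullet placement at inner corners'' convention of the slide with the ``bullets at outer corners of $\nu/\alpha$'' convention used to form $U^{(0)}$ in the reversal tree — and verifying that ``nonzero coefficient'' genuinely propagates through every composite (one must check no miniswap output that is needed is killed by an $\alpha=0$ or a ``$\m=0$'' branch along the chosen path). Both are handled by invoking Proposition~\ref{prop:swap/revswap_inversion} patch-by-patch rather than re-deriving anything: the inversion statement is symmetric and already encodes exactly the compatibility needed, so the argument is really just assembling Propositions~\ref{prop:goodness_preservation}, \ref{prop:goodness_preservation_reverse}, \ref{prop:swap/revswap_inversion}, \ref{cor:walkway_reversals}, \ref{cor:reversal_tree_is_tree} and Lemma~\ref{lemma:reversecontentpres} into one induction on the level in $\mathfrak T_U$.
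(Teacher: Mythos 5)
Your proposal is correct and follows essentially the same route as the paper: Proposition~\ref{prop:goodness_preservation_reverse} gives that a leaf is $1_1$-good, conditions (G.2) and (G.9) place the $\bullet_{1_1}$'s at inner corners in distinct rows and columns (yielding $L=U$ or shape $\nu/\rho$ with $\rho\in\lambda^+$), and both the nonvanishing of $[U]{\tt slide}_{\rho/\lambda}(L)$ and part (II) come from Proposition~\ref{prop:swap/revswap_inversion}. Your level-by-level unwinding of the inversion and the bullet bookkeeping merely spell out what the paper leaves implicit.
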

\begin{proof}
(I): By Proposition~\ref{prop:goodness_preservation_reverse}, $L$ is $1_1$-good. By (G.9), there are no labels northwest of a $\bullet_{1_1}$. By (G.2), $\bullet_{1_1}$'s appear in distinct rows and columns. This proves the second sentence. The third sentence then follows from Proposition~\ref{prop:swap/revswap_inversion}.

(II): Immediate from Proposition~\ref{prop:swap/revswap_inversion}.
\end{proof}

\section{The recurrence coefficients}\label{sec:recurrence_proof}

Given $U \in B_{\lambda, \mu}^\alpha$, where $\alpha \in \{\nu\} \cup \nu^-$, let ${\tt leaf}({\mathfrak T}_U)$ be the collection of leaves of the tree $\mathfrak{T}_U$ defined in Section~\ref{sec:walkways}.

Let $W$ be an $i$-walkway of shape $\dnu / \dlambda$ with $\bullet_{(i+1)_1}$'s in $\dnu / \dalpha$. 
Let $S$ be a reversal of $W$, as defined by Lemmas~\ref{lemma:onerowreversalcharacterization} and~\ref{lemma:tworowreversalcharacterization}.
Let $\aaa$ be the southwestmost box of $W$, $\bbb$ be the northeastmost box of $W$ and $\z$ the eastmost box of $W$'s southmost row.
By Lemma~\ref{lem:walkway_structure}(V), the labels of family $i$ of $S$ form an interval 
$(w_1,\ldots,w_n)$ with respect to $\prec$.
Let $\upper{\dalpha}$ denote $\dalpha$ with its southmost row deleted, and set $\upper{\dlambda} := \dlambda \cap 
\upper{\dalpha}$.
Let $\Delta(S,W) :=(\text{$\#\bullet_{i_1}$'s in $S$})-(\text{$\#\bullet_{(i+1)_1}$'s in $W$})$. For a tableau $T$, 
let $\widetilde{T}$ denote $T$ excluding boxes containing $w_1$ and outer corners containing $\bullet_{w_1^+}$. 

\begin{claim}\label{claim:forward_swap_walkway_coeffs}
\gap
\begin{itemize}
\item[(I.i)] 
If $S$ has $w_1 \notin \aaa^\rightarrow$ and $w_1$ or $\circled{w_1} \in \underline{\aaa}$, while $W$ has either at least two rows or $w_n \in \bbb$, then $[W]{\tt slide}_{\drho/\dlambda}(S) = (-1)^{\Delta(S,W)-1} (1-\wt \dalpha / (\upper{\dalpha} \cup \dlambda)) 
\wt \upper{\dalpha}/\upper{\dlambda}$.
\item[(I.ii)] If $S$ has $w_1 \notin \aaa^\rightarrow$ and $w_1$ or $\circled{w_1} 
\in \underline{\aaa}$, while $W$ has exactly one row and $w_n \in \overline{\bbb}$, then $[W]{\tt slide}_{\drho/\dlambda}(S) = (-1)^{\Delta(S,W)} \wt \dalpha/\dlambda$.
\item[(II)] If $S$ has $\bullet_{i_1} \in \aaa$, $w_1 \in \aaa^\rightarrow$ and $w_1 \notin \underline{\aaa}$, 
then $[W]{\tt slide}_{\drho/\dlambda}(S) = (-1)^{\Delta(S,W)} \wt \dalpha/\dlambda$. 
\item[(III)] If $S$ has $w_1 \in \aaa$, then $[W]{\tt slide}_{\drho/\dlambda}(S) = (-1)^{\Delta(S,W)} 
\wt \upper{\dalpha}/\upper{\dlambda}$.
\end{itemize}
\end{claim}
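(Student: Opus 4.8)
The plan is to prove Claim~\ref{claim:forward_swap_walkway_coeffs} by induction on the number of boxes of the walkway $W$, mirroring the inductive structure of the proofs of Lemmas~\ref{lemma:onerowreversalcharacterization} and~\ref{lemma:tworowreversalcharacterization}. In the base case $W$ is a single box, so $\mathcal{R}$ is one box or a $2\times 1$ column; here the reversal $S$, the forward slide ${\tt slide}_{\drho/\dlambda}(S)$ restricted to $\mathcal{R}$, and $\Delta(S,W)\in\{-1,0,1\}$ are all computed directly by reading off the relevant miniswap outputs from Section~\ref{sec:swaps}, and each of the four formulas is checked by hand. For the inductive step I peel off the southwestmost box $\aaa$ of $W$ (or, when the southmost row of $W$ is the single box $\z=\aaa$, the two-box column $\{\aaa,\aaa^\uparrow\}$, exactly as in Lemma~\ref{lemma:tworowreversalcharacterization}), obtaining a strictly smaller walkway $\overline{W}$ whose reversal is the restriction $\overline{S}$ of $S$. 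The characterization lemmas tell us which of the cases (I.i), (I.ii), (II), (III) governs $\overline{W},\overline{S}$ and which reverse miniswap ({\sf L1.1}--{\sf L4.5}) was applied at $\aaa$ (or at $\{\aaa,\aaa^\uparrow\}$) to pass from $\overline{S}$ to $S$.

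Granting $[\overline{W}]{\tt slide}(\overline{S})$ from the induction hypothesis, the inductive step becomes a local computation. By Proposition~\ref{prop:swap/revswap_inversion} the forward slide inverts the reverse swaps patch by patch, and its proof already pairs each reverse-miniswap case with the forward miniswap(s) that invert it; so, knowing which reverse miniswap built $\aaa$ into $S$, I can name the forward miniswap (of type {\sf H}, {\sf B} or {\sf T}) whose action reproduces the $W$-configuration at $\aaa$ and read off its emitted coefficient. In every case that coefficient is one of $1$, $\beta(\aaa)=1-\hat{\beta}(\aaa)$, $\hat{\beta}(\aaa)$, $-\hat{\beta}(\aaa)$, or (for a {\sf B2} or {\sf B3} body with at least two rows) a product $\pm\prod_{\x}\hat{\beta}(\x)$; the minus signs arise precisely from {\sf B3}-type bodies and from the tails {\sf T1} and {\sf T3}, while the $\alpha$- and $\gamma$-switches of {\sf H1}, {\sf H6}, {\sf H7}, {\sf T3} are what turn off the unwanted summands of the branching miniswaps. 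Since the reversal tree is a tree (Proposition~\ref{cor:reversal_tree_is_tree}) and forward swaps preserve goodness (Proposition~\ref{prop:goodness_preservation}), exactly one forward path from $S$ produces $W$ on the patch containing $\aaa$, so multiplying this coefficient by $[\overline{W}]{\tt slide}(\overline{S})$ genuinely gives $[W]{\tt slide}_{\drho/\dlambda}(S)$, not a partial sum.

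It then remains to verify that adjoining $\aaa$ shifts the three quantities appearing in the formulas consistently. Recalling $\wt D=\prod_{\x\in D}\hat{\beta}(\x)$, the products $\wt\dalpha/(\upper{\dalpha}\cup\dlambda)$, $\wt\upper{\dalpha}/\upper{\dlambda}$ and $\wt\dalpha/\dlambda=\wt\dalpha/(\upper{\dalpha}\cup\dlambda)\,\wt\upper{\dalpha}/\upper{\dlambda}$ run respectively over the family-$i$ boxes of $W$ in the southmost row of $\dalpha$, over those not in that row, and over all of them; so the new $\hat{\beta}$-factor emitted at $\aaa$ is exactly the factor by which the appropriate one of these grows, the appropriate one being dictated by whether $\aaa$ holds $\bullet_{(i+1)_1}$ or a family-$i$ label in $W$ and by whether $\aaa$'s row is the southmost row of $\dalpha$. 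The change $\Delta(S,W)-\Delta(\overline{S},\overline{W})\in\{-1,0,1\}$ is read off from Lemmas~\ref{lemma:veryusefulfact} and~\ref{lem:useful_fact_reverse}, which pin down the possible contents of $\aaa$ in $S$ versus $W$; the $(-1)$-exponent jumps by precisely the sign emitted at $\aaa$; and the passage between cases (I.i), (I.ii), (II), (III) for $\overline{W}\mapsto W$ is governed by the same dichotomies as in the characterization lemmas. In particular the split between (I.i) and (I.ii) — the presence of the factor $1-\wt\dalpha/(\upper{\dalpha}\cup\dlambda)$ together with the shift of $\Delta$ by one — traces back at the top of the walkway to whether $w_n$ sits in the box $\bbb$ (so that a $\bullet_{(i+1)_1}$ is created and survives, as in the $\gamma=1$ branch of {\sf H1} or in {\sf H2}) or on the edge $\overline{\bbb}$ (so $\gamma=0$ and no such $\bullet$ survives).

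The main obstacle is the size of the case analysis and the sign bookkeeping rather than any conceptual point: one must run through every reverse-miniswap case {\sf L1.1}--{\sf L4.5}, match it with its inverting forward miniswap, check in each branching forward case that exactly one summand feeds locally into $W$, and confirm that the accumulated signs (from {\sf B3}, {\sf T1}, {\sf T3} and the various $\alpha$/$\gamma$ conditions) together with the $\hat{\beta}$-factors assemble into exactly $(-1)^{\Delta(S,W)}$ or $(-1)^{\Delta(S,W)-1}$ times the stated weight. I do not expect any ingredient beyond Proposition~\ref{prop:swap/revswap_inversion} and the two characterization lemmas to be needed; the difficulty is entirely in the careful, patient execution.
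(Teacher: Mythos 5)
There is a genuine gap, and it is the central assertion of your inductive step: the claim that ``exactly one forward path from $S$ produces $W$,'' so that $[W]{\tt slide}_{\drho/\dlambda}(S)$ factors as (local coefficient at $\aaa$)$\times[\overline{W}]{\tt slide}(\overline{S})$. This is false. The forward miniswaps that occur at the southwest end of these walkways ({\sf H1}, {\sf H2}, {\sf H6}, {\sf H7}, {\sf T3}, {\sf H5.2}, {\sf T4.2}) genuinely branch, and in general \emph{both} branches survive and eventually produce $W$, so $[W]{\tt slide}_{\drho/\dlambda}(S)$ is a sum of two contributions, typically governed by \emph{different} cases of Claim~\ref{claim:forward_swap_walkway_coeffs}. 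For instance, in case (I.i) with $\aaa^\rightarrow\neq\z$ one has locally $\swap_{w_1}(S)=\beta(\aaa)S'+\hat{\beta}(\aaa)S''$, and both $[W]{\tt slide}(\widetilde{S'})$ (computed by case (III)) and $[W]{\tt slide}(\widetilde{S''})$ (computed by case (I.i)) are nonzero; the stated answer arises only after adding them, via a telescoping identity of the shape $\bigl(1-\hat{\beta}(\aaa)\bigr)+\hat{\beta}(\aaa)(1-Z)=1-\hat{\beta}(\aaa)Z$. This is also visible in the formulas themselves: the factor $1-\wt\,\dalpha/(\upper{\dalpha}\cup\dlambda)$ in (I.i) is $1$ minus a \emph{product} of several $\hat{\beta}$'s, which can never be produced by multiplying together local factors of the forms $1$, $\beta(\aaa)$, $\pm\hat{\beta}(\aaa)$, $\pm\prod\hat{\beta}$ that your single-path recursion emits. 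Your appeal to Proposition~\ref{cor:reversal_tree_is_tree} does not help: that proposition says distinct nodes of the reversal tree have distinct children under \emph{reverse} swapping; it says nothing about uniqueness of forward slide paths from a fixed $S$ to a fixed $W$, and your parenthetical claim that the $\alpha$- and $\gamma$-switches ``turn off the unwanted summands'' is incorrect except in the degenerate configurations (two $\bullet$'s in a column, too-high labels) for which those switches were designed.

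The correct argument (and the one the paper gives) inducts on the number of genes of family $i$ in $S$ rather than on boxes: apply $\swap_{w_1}$ once, obtaining a short formal sum $c'S'+c''S''$, remove the $w_1$-boxes to get $\widetilde{S'},\widetilde{S''}$ with one fewer gene, identify which case of the claim each of them falls under, and then verify the algebraic identity $c'\cdot[W]{\tt slide}(\widetilde{S'})+c''\cdot[W]{\tt slide}(\widetilde{S''})=$ the stated right-hand side, in each of a dozen geometric subcases (e.g.\ $\aaa^\rightarrow=\z$ versus $\aaa^\rightarrow\neq\z$, and whether the northmost $w_1$ sits directly below $\bullet_{i_1}$). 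Your peel-off-the-southwest-box framework could in principle be repaired by replacing the multiplicative step with exactly this kind of two-term additive recursion, but as written the proposal would compute only one of the two summands and would be unable to account for the case split (I.i)/(I.ii) or for the mixed factor $(1-\wt\,\dalpha/(\upper{\dalpha}\cup\dlambda))\,\wt\,\upper{\dalpha}/\upper{\dlambda}$.
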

\begin{proof} 
We simultaneously induct on the number of genes of family $i$ in $S$. (We gloss over some technical reindexing in the arguments below.)
We check the base case of one gene directly from the swapping rules of Section~\ref{sec:swaps}. Now let us assume that $S$ has at least two genes of 
family $i$ and the claims hold for 
situations with fewer genes of family $i$.

In the illustrative examples below that accompany the general analysis, we use for simplicity $1,2,\ldots$ to 
represent $w_1,w_2,\ldots$ respectively. Also, for simplicity, our examples assume $\aaa$ is the southwest
corner of $k\times (n-k)$, i.e., $\beta(\aaa)=1-\frac{t_1}{t_2}$.

\noindent
{\sf Case {\normalfont (I.i).1}: ($\aaa^\rightarrow \neq \z$)}:
Consider $
\begin{picture}(90,20)
\put(1,1){$S = \ytableaushort{\none \none \bullet 4 5, \bullet 2 3}$}
\put(28,-15){$1$}
\end{picture}
$
and $W = \ytableaushort{\none \none 3 4 5, 1 2 \bullet}$. Then
\[\ytableausetup{boxsize=0.9em}
\begin{picture}(300,20)
\put(0,5){$\swap_1 (S) =
(1-\frac{t_1}{t_2})\ytableaushort{\none \none \bullet 4 5, 1 2 3}+
\frac{t_1}{t_2}\ytableaushort{\none \none \bullet 4 5, 1 \bullet 3}
:=(1-\frac{t_1}{t_2})S'+\frac{t_1}{t_2}S''.$}
\put(197,-12){$2$}
\end{picture}
\]
Inductively by (III), $[W]{\tt slide}(\widetilde{S'}) = \frac{t_4}{t_7}$. 
Inductively by (I.i), $[W]{\tt slide}(\widetilde{S''}) = (1-\frac{t_2}{t_3})\frac{t_4}{t_7}$. 
Hence $[W]{\tt slide}(S) = \left(1-\frac{t_1}{t_2} \right)\frac{t_4}{t_7} + \frac{t_1}{t_2} \left(1-\frac{t_2}{t_3}\right)\frac{t_4}{t_7} = \left(1-\frac{t_1}{t_3}\right)\frac{t_4}{t_7}$,
as desired. In general, 
\begin{align*}
[W]{\tt slide}(S) \!\! &= \!\! \left(1-\hat{\beta}(\aaa) \right) \! (-1)^{\Delta(S,W)-1} \wt \upper{\dalpha}/\upper{\dlambda} + 
\hat{\beta}(\aaa) (-1)^{\Delta(S,W)-1} \! \left(1- \frac{\wt \dalpha / (\upper{\dalpha} \cup \dlambda)}
{\hat{\beta}(\aaa)}
\right) \! \wt \upper{\dalpha}/\upper{\dlambda} \\
&= \!\! (-1)^{\Delta(S,W)-1} (1-\wt \dalpha / (\upper{\dalpha} \cup \dlambda)) \wt \upper{\dalpha}/\upper{\dlambda}.
\end{align*}

\noindent
{\sf Case {\normalfont (I.i).2}: ($\aaa^\rightarrow = \z$)}: 
Let
$S=
\begin{picture}(48,23)
\put(1,13){$\ytableaushort{\none \none \none \bullet, \none \bullet 2 3,
 \bullet 2}$}
\put(4,-15){$1$}
\end{picture}
$
and $W = 
\begin{picture}(48,23)
\put(1,13){$\ytableaushort{\none \none \none 3, \none 2 3 \bullet,
1 \bullet}$}
\end{picture}$. 
Then
\[\ytableausetup{boxsize=0.9em}
\swap_1 (S) =
\left(1-\frac{t_1}{t_2}\right)
\begin{picture}(48,23)
\put(1,13){$\ytableaushort{\none \none \none \bullet, \none \bullet 2 3, 1 2}$}
\end{picture}
:=\left(1-\frac{t_1}{t_2}\right)S'.\] 
By (III), $[W]{\tt slide}(\widetilde{S'}) = \frac{t_3}{t_5}\frac{t_6}{t_7}$. Hence 
$[W]{\tt slide}(S) = (1-\frac{t_1}{t_2}) \frac{t_3}{t_5}\frac{t_6}{t_7}$, as desired. 
In general, 
\[
  [W]{\tt slide}(S)\!=\! \left(1-\hat{\beta}(\aaa) \right) (-1)^{\Delta(S,W)-1}\! \wt \upper{\dalpha}/\upper{\dlambda} 
\!  =\! (-1)^{\Delta(S,W)-1}\! (1-\wt \dalpha / (\upper{\dalpha} \cup \dlambda)) \wt \upper{\dalpha}/\upper{\dlambda}.\]

\noindent
{\sf Case} (I.ii):
Let
$S=
\begin{picture}(37,20)
\put(1,0){$\ytableaushort{
 \bullet 2 3}$}
\put(4,-6){$1$}
\end{picture}
$\ \ 
 and $W = \begin{picture}(37,20)
\put(1,0){$\ytableaushort{
 1 2 \bullet}$}
\put(26,8){$3$}
\end{picture}$. 
Then $\swap_1 (S) =
 (1-\frac{t_1}{t_2})
 \ytableaushort{ 1 2 3} + \frac{t_1}{t_2} \begin{picture}(37,20)
\put(1,0){$\ytableaushort{
 1 \bullet 3}$}
\put(15,-6){$2$}
\end{picture} :=(1-\frac{t_1}{t_2})S' + \frac{t_1}{t_2} S''$. By Lemma~\ref{lemma:onerowreversalcharacterization}, 
$[W]{\tt slide}(\widetilde{S'}) = 0$. By (I.ii), 
$[W]{\tt slide}(\widetilde{S''}) = \frac{t_2}{t_3}$. Hence $[W]{\tt slide}(S) 
= \frac{t_1}{t_2} \frac{t_2}{t_3} = \frac{t_1}{t_3}$, as desired. 
In general, 
\[
  [W]{\tt slide}(S)\!=\! \hat{\beta}(\aaa) (-1)^{\Delta(S,W)}\! \frac{1}{\hat{\beta}(\aaa)} \wt \dalpha/\dlambda
\!  =\! (-1)^{\Delta(S,W)} \wt \dalpha/\dlambda.\]

 \noindent
{\sf Case {\normalfont (II).1}: ($\aaa^\rightarrow \neq \z$)}:
Let $S = \ytableaushort{\none \none \none \bullet 4 5,
 \bullet 1 2 3}$ and $W = \ytableaushort{\none \none \none  3 4 5,
  1 2 3 \bullet}$. Then
$\swap_1 (S) =
 \frac{t_1}{t_2}
 \ytableaushort{\none \none \none \bullet 4 5,
 1 \bullet 2 3}:=\frac{t_1}{t_2}S'$. 
  By (II), $[W]{\tt slide}(\widetilde{S'}) = \frac{t_2}{t_4}\frac{t_5}{t_8}$. Hence 
$[W]{\tt slide}(S) = \frac{t_1}{t_2} \cdot \frac{t_2}{t_4}\frac{t_5}{t_8} = \frac{t_1}{t_4}\frac{t_5}{t_8}$, as desired. 
In general,
\[
[W]{\tt slide}(S) = \hat{\beta}(\aaa) \cdot (-1)^{\Delta(S,W)} \frac{1}{\hat{\beta}(\aaa)} \wt \dalpha/\dlambda 
= (-1)^{\Delta(S,W)} \wt \dalpha/\dlambda.
\]

\noindent
{\sf Case {\normalfont (II).2}: ($\aaa^\rightarrow = \z$ and the northmost $w_1 \in S$ is \emph{not} immediately below $\bullet_{i_1}$)}:
Let $S = \!\!\!\!\! \ytableaushort{\none \none \none \none \bullet 4 5,
 \none \bullet 1 2 3,
 \bullet 1}$ and $W = \!\!\!\!\! \ytableaushort{\none \none \none \none 3 4 5,
 \none 1 2 3 \bullet,
  1 \bullet}$. Then
 $\swap_1 (S) = \frac{t_1}{t_2}\frac{t_3}{t_4} \!\!\!
 \ytableaushort{\none \none \none \none \bullet 4 5,
 \none 1 \bullet 2 3,
 1 \bullet} := \frac{t_1}{t_2}\frac{t_3}{t_4} S' $.
 By (II), $[W]{\tt slide}(\widetilde{S'}) = -\frac{t_4}{t_6}\frac{t_7}{t_{10}}$. Hence $[W]{\tt slide}(S) 
= \frac{t_1}{t_2}\frac{t_3}{t_4} \cdot\left( -\frac{t_4}{t_6}\frac{t_7}{t_{10}}\right) 
= -\frac{t_1}{t_2}\frac{t_3}{t_6}\frac{t_7}{t_{10}}$, as desired. 
In general,
\[  [W]{\tt slide}(S) = \prod_{\x : \lab_W(\x) = 1} \hat{\beta}(\x) \cdot (-1)^{\Delta(S,W)} 
\prod_{\y : \lab_W(\y) > 1} \hat{\beta}(\y) \\
  = (-1)^{\Delta(S,W)} \wt \dalpha/\dlambda.\]

\noindent
{\sf Case {\normalfont (II).3}: ($\aaa^\rightarrow = \z$ and the northmost $w_1 \in S$ is immediately below $\bullet_{i_1}$)}:
Let $S= \!\!\!\!\! \ytableaushort{\none \none \none \none \bullet 5 6,
 \none \bullet 2 3 4,
 \bullet 1}$ and $W = \!\!\!\!\! \ytableaushort{\none \none \none \none 4 5 6,
 \none 1 2 3 \bullet,
  1 \bullet}$. Then
 \[
 \begin{picture}(400,28)
 \put(0,16){$\swap_1(S) = -\frac{t_1}{t_2}\frac{t_3}{t_4}
 \!\!\! \ytableaushort{\none \none \none \none \bullet 5 6,
 \none 1 2 3 4,
 1 \bullet}
 + \frac{t_1}{t_2}\frac{t_3}{t_4} \!\!\!
 \ytableaushort{\none \none \none \none \bullet 5 6,
 \none 1 \bullet 3 4,
 1 \bullet} :=  -\frac{t_1}{t_2}\frac{t_3}{t_4} S' +  \frac{t_1}{t_2}\frac{t_3}{t_4} S''.
 $}
 \put(217,0){$2$}
 \end{picture}
 \]
 By (III), $[W]{\tt slide}(\widetilde{S'}) = \frac{t_7}{t_{10}}$. 
By (I.i), $[W]{\tt slide}(\widetilde{S''}) = \left(1-\frac{t_4}{t_6} \right)\frac{t_7}{t_{10}}$.
 Hence $[W]{\tt slide}(S) = -\frac{t_1}{t_2}\frac{t_3}{t_4} \frac{t_7}{t_{10}}
+\frac{t_1}{t_2}\frac{t_3}{t_4} \left(1-\frac{t_4}{t_6}\right)\frac{t_7}{t_{10}}= 
-\frac{t_1}{t_2}\frac{t_3}{t_6}\frac{t_7}{t_{10}}$,
as desired.
Depending whether (I.i) or (I.ii) applies inductively, we have in general respectively  
\begin{align*}
[W]{\tt slide}(S) &=- \prod_{\x: \lab_{W}(\x)=1} \hat{\beta}(\x) \cdot Y (-1)^{\Delta(S,W)-1} + 
\prod_{\x : \lab_{W}(\x)=1} \hat{\beta}(\x) \cdot (1-Z)Y (-1)^{\Delta(S,W)-1} \\ &= (-1)^{\Delta(S,W)} 
YZ\prod_{\x:\lab_{W}(\x)=1} \hat{\beta}(\x) = (-1)^{\Delta(S,W)} \wt \dalpha/\dlambda
\end{align*} or
\begin{align*}
[W]{\tt slide}(S) &= \prod_{\x : \lab_{W}(\x)=1} \hat{\beta}(\x) \cdot Z (-1)^{\Delta(S,W)} \\
&= (-1)^{\Delta(S,W)} Z \prod_{\x : \lab_{W}(\x)=1} \hat{\beta}(\x) = (-1)^{\Delta(S,W)} \wt \dalpha/\dlambda,
\end{align*}
where $Y$ is the weight of the boxes of $W$ that contain genetic labels and are North of all $w_1$'s 
and $Z$ is the weight of the boxes of $W$ that contain genetic labels greater than $w_1$ and are not 
North of all $w_1$'s.

\noindent
{\sf Case {\normalfont (III).1}: ($\aaa \neq \z$)}: Let 
$S = \!\!\!\!\! \ytableaushort{\none \none \none \none \bullet 5 6,
 \none \bullet 2 3 4,
  1 2}$ and $W = \!\!\!\!\! \ytableaushort{\none \none \none \none 4 5 6,
 \none  2 3 4 \bullet,
  1 \bullet}$. Then $\swap_1(S) = \!\!\!\!\! \ytableaushort{\none \none \none \none \bullet 5 6, \none \bullet 2 3 4, 1 2}:=S'$.
  By (III), $[W]{\tt slide}(\widetilde{S'}) = \frac{t_3}{t_6}\frac{t_7}{t_{10}}$. 
Hence $[W]{\tt slide}(S) = \frac{t_3}{t_6}\frac{t_7}{t_{10}}$, as desired.
In general, $[W]{\tt slide}(S) = (-1)^{\Delta(S,W)} \wt \upper{\dalpha}/\upper{\dlambda}$.

\noindent
{\sf Case {\normalfont (III).2}: ($\aaa=\z$ and the northmost $w_1 \in S$ is \emph{not} immediately below $\bullet_{i_1}$)}: 
Let  $S = \ytableaushort{\none \none \none \bullet 4 5, \bullet 1 2 3, 1}$ and 
$W = \ytableaushort{\none \none \none 3 4 5, 1 2 3 \bullet, \bullet}$. Then
$\swap_1(S) = 
-\frac{t_2}{t_3} \ytableaushort{\none \none \none \bullet 4 5, 1 \bullet 2 3, \bullet} := -\frac{t_2}{t_3} S'$.
By (II), $[W]{\tt slide}(\tilde{S'}) = -\frac{t_3}{t_5}\frac{t_6}{t_9}$. Hence 
$[W]{\tt slide}(S) = - \frac{t_2}{t_3} \cdot \left( -\frac{t_3}{t_5}\frac{t_6}{t_9} \right) 
= \frac{t_2}{t_5}\frac{t_6}{t_9}$, as desired.
In general, 
\[[W]{\tt slide}(S) = -\prod_{\x: \lab_{W}(\x)=1} \hat{\beta}(\x) \cdot  (-1)^{\Delta(S,W)-1} 
\prod_{\y: \lab_{W}(\y)>1} \hat{\beta}(\y) = (-1)^{\Delta(S,W)} \wt \upper{\dalpha} / \upper{\dlambda}.\]

\noindent
{\sf {\normalfont Case (III).3}: ($\aaa=\z$ and the northmost $w_1\in S$ 
is immediately below $\bullet_{i_1}$)}:
Let $S = \ytableaushort{\none \none \none \bullet 4 5, \bullet 2 3 4, 1}$ and 
$W = \ytableaushort{\none \none \none 4 5 \bullet, 1 2 3 \bullet, \bullet}$. Then 
$\begin{picture}(350,23)
\put(0,0){$\swap_1(S) = \frac{t_2}{t_3}
\ytableaushort{\none \none \none \bullet 4 5, 1 2 3 4, \bullet}
-\frac{t_2}{t_3}
\ytableaushort{\none \none \none \bullet 4 5, 1 \bullet 3 4, \bullet} := \frac{t_2}{t_3}S' - \frac{t_2}{t_3}S''$
\put(-131,-16){$2$}
}
\end{picture}
$
By (III), $[W]{\tt slide}(\tilde{S'}) = -\frac{t_6}{t_8}$. By (I.i), 
$[W]{\tt slide}(\tilde{S''}) = -(1 - \frac{t_3}{t_5})\frac{t_6}{t_8}$. Hence 
$[W]{\tt slide}(S) =\frac{t_2}{t_3} \cdot \left( -\frac{t_6}{t_8} \right) - \frac{t_2}{t_3} \cdot 
\left( -(1 - \frac{t_3}{t_5})\frac{t_6}{t_8} \right) = -\frac{t_2}{t_5}\frac{t_6}{t_8} ,$ as desired.
Depending whether (I.i) or (I.ii) applies inductively, we have in general respectively
\begin{align*}
[W]{\tt slide}(S) &= \prod_{\x: \lab_{W}(\x)=1} \hat{\beta}(\x) \cdot  (-1)^{\Delta(S,W)} Y 
- \prod_{\x: \lab_{W}(\x)=1} \hat{\beta}(\x) \cdot (-1)^{\Delta(S,W)} (1-Z)Y \\
&= (-1)^{\Delta(S,W)} Y Z \prod_{\x: \lab_{W}(\x)=1} \hat{\beta}(\x) 
= (-1)^{\Delta(S,W)} \wt \upper{\dalpha} / \upper{\dlambda}
\end{align*} or
\begin{align*}
[W]{\tt slide}(S) &= - \prod_{\x: \lab_{W}(\x)=1} \hat{\beta}(\x) \cdot (-1)^{\Delta(S,W) - 1} Z \\
&= (-1)^{\Delta(S,W)} Z \prod_{\x: \lab_{W}(\x)=1} \hat{\beta}(\x) = (-1)^{\Delta(S,W)} \wt \upper{\dalpha} / \upper{\dlambda},
\end{align*}
where $Y$ is the weight of the boxes of $W$ containing genetic labels and are North of all 
$w_1$'s and $Z$ is the weight of the boxes of $W$ containing genetic labels greater than $w_1$ and are not North of all $w_1$'s.
\end{proof}

\begin{example}\label{ex:reversal_tree}
Let $\lambda=(1)$, $\nu=(3,2)$ and $\mu=(2,1)$. Consider $\ytableausetup{boxsize=1.1em}
U=\ytableaushort{{*(lightgray) \blank} {1_1} {1_2},{1_1} {2_1}}\in\Lambda
$. Below, we give the reversal tree $\mathfrak{T}_U$. 
 
\begin{center}
\begin{tikzpicture}[
    scale = .71, transform shape, thick,
    every node/.style = {draw, circle, minimum size = 10mm},
    grow = down,  
    level 1/.style = {sibling distance=12cm},
    level 2/.style = {sibling distance=6cm}, 
    level distance = 3.8cm
  ]
 
  \node[head] (U)
          {$\ytableaushort{ {*(lightgray) \blank} {1_1} {1_2},{1_1} {2_1}}$}
   child {   node [head] (U') {\ytableaushort{{*(lightgray) \blank} {1_1} {1_2},{1_1} {2_1}}}
     child { node [head] (A) {$\begin{picture}(40,32)
\put(3,16){${\ytableaushort{{*(lightgray)\blank} {1_1} {1_2}, {1_1} {2_1}}}$}
\end{picture}
$}}
     child { node [head] (B) {$\begin{picture}(40,32)
\put(3,16){${\ytableaushort{{*(lightgray)\blank} {1_1} {1_2}, {\bullet} {2_1}}}$}
\put(7,-3){$1_1$}
\end{picture}
$}}
     child { node [head] (C) {$\begin{picture}(40,32)
\put(3,16){${\ytableaushort{{*(lightgray)\blank} {\bullet} {1_2}, {1_1} {2_1}}}$}
\put(18,14){$1_1$}
\end{picture}
$}}
     child { node [head] (D) {$\begin{picture}(40,32)
\put(3,16){${\ytableaushort{{*(lightgray)\blank} {\bullet} {1_2}, {\bullet} {2_1}}}$}
\put(7,-2){$1_1$}
\put(18,14){$1_1$}
\end{picture}
$}}
   }
   child {   node [head] (U'') {\begin{picture}(40,32)
\put(3,16){$\ytableaushort{{*(lightgray)\blank} {1_1} {1_2}, {1_1} \bullet}$}
\put(19,-2){$2_1$}
\end{picture}}
     child { node [head] (E) {$\begin{picture}(40,32)
\put(3,16){${\ytableaushort{{*(lightgray)\blank} {\bullet} {1_2}, {\bullet} {1_1}}}
$}
\put(19,-3){$2_1$}
\end{picture}
$}}
   };

  \begin{scope}[nodes = {draw = none}]
    \path (U) -- (U') node [midway, left]  {\color{blue} $1$};
    \path (U')--(A) node [pos=0.15, left] {\color{blue} $1$};
    \path (U')     -- (B) node [near end, left]  {\color{blue} $\left(1-\frac{t_1}{t_2}\right)$};
    \path (U')     -- (C) node [near end, right] {\color{blue} $\left(1-\frac{t_3}{t_5}\right)$};
    \path (U')--(D) node [pos=0.15, right]{\color{blue} $\ \ \ \ \ \ \left(1-\frac{t_1}{t_2}\right)\left(1-\frac{t_3}{t_5}\right)$};
    \path (U) -- (U'') node [midway, right] {\color{blue} $1 - \frac{t_2}{t_3}$};
    \path (U'')     -- (E) node [midway, left]  {\color{blue} $-\frac{t_1}{t_2}\frac{t_3}{t_5}$};
    \begin{scope}[nodes = {below = 30pt}]
      \node            at (A) {\color{red} $0$};
      \node [name = X] at (B) {\color{red} $+1$};
      \node            at (C) {\color{red} $+1$};
      \node            at (D) {\color{red} $-1$};
      \node [name = Y] at (E) {\color{red} $-1$};
    \end{scope}
  \end{scope}
\end{tikzpicture}
\end{center}
Each edge is labeled (in {\color{blue} blue}) by
$[U']{\tt swap}_{i_{\mu_i}}\circ \cdots\circ
{\tt swap}_{i_1}(V')$ where $U'$ is the parent of the $i_1$-good tableau $V'$. This label agrees with the
application of Claim~\ref{claim:forward_swap_walkway_coeffs} to each $i$-walkway of $V'$.
Below each leaf (in {\color{red} red}) is the coefficient in $\Lambda^+$ (i.e., $(-1)^{|\rho/\lambda|+1}$ if nonzero). 
\qed
\end{example}

\begin{lemma}
\label{rev:claim.e}
Suppose $U'$ is an $(i+1)_1$-good node of $\mathfrak{T}_U$. 
Let $\Gamma$ be the boxes of $U$ containing labels of family $i$. Then
$\sum_{V'}(-1)^{\text{$1+\#\bullet$'s in $V'$}}[U']\swap_{i_{\mu_i}} \circ \dots \circ \swap_{i_1} (V')=(-1)^{\text{$1+\#\bullet$'s in $U'$}} \wt \Gamma$,
where the sum is over all children $V'$ of $U'$ in ${\mathfrak T}_U$.
\end{lemma}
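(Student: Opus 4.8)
The plan is to reduce the statement to a local claim about individual walkways and then assemble it multiplicatively. The key structural input is Proposition~\ref{cor:walkway_reversals}, which says the children of $U'$ are obtained by replacing each $i$-walkway $W$ of $U'$ independently: a walkway $W$ of type described in Lemma~\ref{lemma:onerowreversalcharacterization}(I), (II) or Lemma~\ref{lemma:tworowreversalcharacterization}(I), (II) has a unique reversal $R$, while a walkway of type (III) has exactly two reversals $R, R'$. Moreover $\swap_{i_{\mu_i}} \circ \cdots \circ \swap_{i_1}$ acts locally: since swapping affects only boxes inside snakes and the forward swap is the inverse of the reverse swap (Proposition~\ref{prop:swap/revswap_inversion}), applying the composite forward swaps to a child $V'$ and extracting the $[U']$-coefficient factors as a product over the (reversed) walkways. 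First I would set up this factorization precisely, writing $[U']\swap_{i_{\mu_i}} \circ \cdots \circ \swap_{i_1}(V') = \prod_W [W]{\tt slide}_{\drho/\dlambda}(S_W)$, where $S_W$ is the reversal of $W$ appearing in $V'$, and noting that the sign $(-1)^{1 + \#\bullet\text{'s in }V'}$ also factors: $\#\bullet$'s in $V'$ minus $\#\bullet$'s in $U'$ equals $\sum_W \Delta(S_W, W)$, by the definition of $\Delta$ and the fact that only labels of family $i$ and the relevant bullets change.

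Next I would invoke Claim~\ref{claim:forward_swap_walkway_coeffs} to evaluate each factor $[W]{\tt slide}_{\drho/\dlambda}(S_W)$. For a walkway $W$ with a unique reversal $S$, that reversal falls into exactly one of the cases (I.i), (I.ii), (II) of the Claim depending on whether $W$ has $w_n$ in a box or on an edge and on the local geometry; the Claim gives $[W]{\tt slide}(S)$ as $(-1)^{\Delta(S,W)-1}(1-\wt\dalpha/(\upper\dalpha\cup\dlambda))\wt\upper\dalpha/\upper\dlambda$ in case (I.i) and $(-1)^{\Delta(S,W)}\wt\dalpha/\dlambda$ in cases (I.ii) and (II). For a walkway $W$ with two reversals $R$ (with $w_1 \in \aaa$, case (III)) and $R'$ (with $\bullet_{i_1} \in \aaa$ and $w_1$ or $\circled{w_1} \in \underline{\aaa}$, case (I.i)), the Claim gives $[W]{\tt slide}(R) = (-1)^{\Delta(R,W)}\wt\upper\dalpha/\upper\dlambda$ and $[W]{\tt slide}(R') = (-1)^{\Delta(R',W)-1}(1-\wt\dalpha/(\upper\dalpha\cup\dlambda))\wt\upper\dalpha/\upper\dlambda$. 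The crucial arithmetic observation is that $\Delta(R,W) = \Delta(R',W) - 1$ (the reversal $R'$ has one more $\bullet_{i_1}$ than $R$, namely the one in $\aaa$), so the signs align and the sum over these two children telescopes:
\[
[W]{\tt slide}(R) + [W]{\tt slide}(R') = (-1)^{\Delta(R,W)}\left(\wt\upper\dalpha/\upper\dlambda + \left(1 - \wt\dalpha/(\upper\dalpha\cup\dlambda)\right)\wt\upper\dalpha/\upper\dlambda\right),
\]
wait --- more carefully, $[W]{\tt slide}(R) + [W]{\tt slide}(R') = (-1)^{\Delta(R,W)}\wt\dalpha/\dlambda$ after using $\wt\dalpha/\dlambda = \hat\beta(\aaa)\,\wt\upper\dalpha/\upper\dlambda$ together with the identity $1 = \hat\beta(\aaa) + (1-\hat\beta(\aaa))$; in every case the total contribution of the walkway $W$, summed over its reversals, is $(-1)^{\epsilon_W}\wt(\Gamma_W)$ where $\Gamma_W$ is the set of family-$i$ boxes of $U$ lying inside the region of $W$ and $\epsilon_W$ is a consistent sign.

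Finally I would multiply over all walkways. Since the walkways of $U'$ are edge-disjoint (hence box-disjoint) and partition the family-$i$ boxes together with the relevant bullet boxes, distributing the product $\prod_W \big(\sum_{S_W} [W]{\tt slide}(S_W)\big) = \prod_W (-1)^{\epsilon_W}\wt(\Gamma_W)$ over the possible choices of reversal in each walkway exactly reproduces $\sum_{V'}(-1)^{1+\#\bullet\text{'s in }V'}[U']\swap_{i_{\mu_i}}\circ\cdots\circ\swap_{i_1}(V')$, and the right-hand side becomes $(-1)^{1+\#\bullet\text{'s in }U'}\prod_W \wt(\Gamma_W) = (-1)^{1+\#\bullet\text{'s in }U'}\wt\Gamma$ once one checks the sign bookkeeping $(-1)^{1+\#\bullet\text{'s in }U'}\prod_W(-1)^{\Delta(\cdot,W)} = (-1)^{1+\#\bullet\text{'s in }V'}$ term by term. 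The main obstacle I anticipate is precisely this sign bookkeeping combined with the case management: one must verify that each walkway of $U'$ really does fall into one of the enumerated scenarios of Lemmas~\ref{lemma:onerowreversalcharacterization} and~\ref{lemma:tworowreversalcharacterization} (so that Claim~\ref{claim:forward_swap_walkway_coeffs} applies with no gaps), and that the telescoping identity $\Delta(R,W) = \Delta(R',W)-1$ holds uniformly across the one-row and multi-row (III) cases. Once the local contributions are pinned down, the assembly is a routine application of distributivity over disjoint walkways and the multiplicativity of $\wt(\cdot)$ over disjoint sets of boxes.
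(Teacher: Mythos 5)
Your proposal follows essentially the same route as the paper's proof: decompose $U'$ into its $i$-walkways, use Proposition~\ref{cor:walkway_reversals} to factor the sum over children into a product of per-walkway sums, evaluate each reversal via Claim~\ref{claim:forward_swap_walkway_coeffs}, and show that the (at most two) reversals of each walkway combine to give the paper's local identity (\ref{eqn:tya987}), after which multiplicativity over the edge-disjoint walkways finishes the argument. One correction to your displayed telescoping: the unsigned sum $[W]{\tt slide}(R)+[W]{\tt slide}(R')$ does \emph{not} collapse --- with $\Delta(R',W)=\Delta(R,W)+1$ it equals $(-1)^{\Delta(R,W)}\bigl(2-\wt \dalpha/(\upper{\dalpha}\cup\dlambda)\bigr)\wt\upper{\dalpha}/\upper{\dlambda}$ --- rather, one must sum the bullet-signed coefficients $(-1)^{\text{$\#\bullet$'s in $R$}}[W]{\tt slide}(R)+(-1)^{\text{$\#\bullet$'s in $R'$}}[W]{\tt slide}(R')$, where the extra $\bullet_{i_1}$ in $R'$ supplies the relative minus sign that makes the two terms combine to $(-1)^{\text{$\#\bullet$'s in $W$}}\wt\dalpha/\dlambda$, exactly as in (\ref{eqn:tya987}). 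With that fix, your sign bookkeeping (via $\#\bullet$'s in $V'$ minus $\#\bullet$'s in $U'$ $=\sum_W\Delta(S_W,W)$) and the final assembly over walkways coincide with the paper's argument.
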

\begin{proof}
Consider boxes of $U'$ containing unmarked labels of family $i$ or $\bullet_{(i+1)_1}$. 
By (W.1) and (W.2), these boxes decompose into an edge-disjoint union of $i$-walkways $W_1,W_2,\ldots, W_t$. 
Let $\Gamma_j$ be the boxes of $W_j$ in $U$ containing labels of family $i$; thus  
$\Gamma=\Gamma_1 \sqcup \Gamma_2
\sqcup \cdots \sqcup \Gamma_t$.
Let $R_j$ and $R_j'$ (if it exists) be the  reversal(s) defined by Lemmas~\ref{lemma:onerowreversalcharacterization} and \ref{lemma:tworowreversalcharacterization} with respect to the walkway $W_j$. As computed by Claim~\ref{claim:forward_swap_walkway_coeffs}, let $a_j$ be the coefficient
of $W_j$ obtained by sliding $R_j$. Let
$b_j$ be the coefficient
of $W_j$ obtained by sliding $R_j'$ if it exists; set $b_j:=0$
if $R_j'$ does not exist.
We now assert that
\begin{equation}
\label{eqn:tya987}
(-1)^{\text{$\#\bullet$'s in $R_j$}}a_j
+(-1)^{\text{$\#\bullet$'s in $R_j'$}}b_j=
(-1)^{\text{$\#\bullet$'s in $W_j$}}\wt \Gamma_j.
\end{equation}
Suppose there is a unique reversal (i.e., $b_j=0$). This occurs under 
Lemma~\ref{lemma:onerowreversalcharacterization}(I,II) and Lemma~\ref{lemma:tworowreversalcharacterization}(I,II).
In these four cases, $R_j$ is respectively the $S$ from (II), (I.ii), (III) and (II) of Claim~\ref{claim:forward_swap_walkway_coeffs}. Hence in each of these cases, (\ref{eqn:tya987}) is immediate from the apposite case of Claim~\ref{claim:forward_swap_walkway_coeffs} (note that for Lemma~\ref{lemma:tworowreversalcharacterization}(I), the southmost row of $W_j$ has a single box and $\upper{\overline{\alpha}}/\upper{\overline{\lambda}} = \overline{\alpha}/\overline{\lambda} = \Gamma_j$). 
Suppose there are two reversals. This occurs under Lemma~\ref{lemma:onerowreversalcharacterization}(III) and Lemma~\ref{lemma:tworowreversalcharacterization}(III), which show that $R_j$ is the $S$ from Claim~\ref{claim:forward_swap_walkway_coeffs}(III)
and $R_j'$ is the $S$ from Claim~\ref{claim:forward_swap_walkway_coeffs}(I.i). 
Hence (\ref{eqn:tya987}) also follows in these cases, by adding the two apposite coefficients given by  Claim~\ref{claim:forward_swap_walkway_coeffs}. 

Since by Proposition~\ref{cor:walkway_reversals} all $V'$ are obtained
by independent replacements of $W_j$ by $R_j$ and $R_j'$
(if it exists),
\begin{align*}
\sum_{V'}(-1)^{\text{$1+\#\bullet$'s in $V'$}}[U']\swap_{i_{\mu_i}} \circ \dots \circ \swap_{i_1} (V') &=
-\prod_{j=1}^t \left( (-1)^{\text{$\#\bullet$'s in $R_j$}}a_j
+(-1)^{\text{$\#\bullet$'s in $R_j'$}}b_j \right) \\
 &=  -\prod_{j=1}^t (-1)^{\text{$\#\bullet$'s in $W_j$}}\wt \Gamma_j \\
&= (-1)^{\text{$1+\#\bullet$'s in $U'$}}\wt \Gamma.
\end{align*}
\end{proof}

\begin{lemma}
\label{lemma:Lambdaminusrecweight}
Let $U'$ be an $(i+1)_1$-good node of ${\mathfrak T}_U$.
Let $\Gamma^{(i)}$ be the set of boxes
$\{\x\in \alpha/\lambda:\family(\lab_U(\x))\leq i\}$. Then
\[
\sum_{T}(-1)^{1+\text{$\#\bullet$'s in $T$}}[U']\swap_{i_{\mu_i}} \circ
\swap_{i_{\mu_i}^{-}}\circ \cdots \circ
\swap_{1_1^+} \circ \swap_{1_1}(T)=\wt(\Gamma^{(i)})(-1)^{1+\text{$\#\bullet$'s in $U'$}},
\]
where the sum is over all $T\in {\tt leaf}({\mathfrak T}_U)$ that are
descendants of $U'$.
\end{lemma}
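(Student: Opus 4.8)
The plan is to prove this by downward induction on $i$, peeling off one family of labels at a time via the reversal-tree structure. The base case is $i = \ell(\mu)$: here $U'$ is itself the root $U^{(0)}$, $\Gamma^{(\ell(\mu))} = \alpha/\lambda$ equals the set of all boxes of the skew shape, and the claim amounts to saying that summing over all leaves descended from $U^{(0)}$ (i.e.\ all leaves of $\mathfrak{T}_U$), weighted by $(-1)^{1 + \#\bullet}$, recovers $(-1)^{1+\#\bullet\text{'s in }U^{(0)}}\wt(\alpha/\lambda)$. Actually I expect it is cleaner to induct from the top: assume the statement for all $(i+1)_1$-good nodes, and deduce it for a given $(i+1)_1$-good node $U'$ by passing to its children (which are $i_1$-good, hence $((i-1)+1)_1$-good in the indexing of the inductive hypothesis).

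The key step is to interpose the children of $U'$ in the reversal tree. Let $\{V'\}$ be the children of $U'$; each $V'$ is $i_1$-good. Every leaf $T$ descended from $U'$ is descended from exactly one such $V'$ (Proposition~\ref{cor:reversal_tree_is_tree}), and applying $\swap_{i_{\mu_i}}\circ\cdots\circ\swap_{1_1}$ to $T$ factors as $\left(\swap_{i_{\mu_i}}\circ\cdots\circ\swap_{i_1}\right)\circ\left(\swap_{i_{\mu_i}^-}\circ\cdots\circ\swap_{1_1}\right)$ applied to $T$. So I would first push the inner composition $\swap_{i_{\mu_i}^-}\circ\cdots\circ\swap_{1_1}$ through: for fixed $V'$, summing over all leaves $T$ descended from $V'$, the inductive hypothesis applied to $V'$ (which is an $(i'+1)_1$-good node with $i' = i-1$) gives $\sum_T (-1)^{1+\#\bullet\text{ in }T}[V']\left(\swap_{(i-1)_{\mu_{i-1}}}\circ\cdots\circ\swap_{1_1}\right)(T) = \wt(\Gamma^{(i-1)}_{V'})(-1)^{1+\#\bullet\text{ in }V'}$, where $\Gamma^{(i-1)}_{V'}$ is the set of boxes of $V'$'s shape holding labels of family $\le i-1$. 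Then I would apply $\swap_{i_{\mu_i}}\circ\cdots\circ\swap_{i_1}$ to the resulting $\wt(\Gamma^{(i-1)}_{V'})\,[V']$ terms and sum over children $V'$, invoking Lemma~\ref{rev:claim.e} to get $\sum_{V'}(-1)^{1+\#\bullet\text{ in }V'}[U']\left(\swap_{i_{\mu_i}}\circ\cdots\circ\swap_{i_1}\right)(V') = (-1)^{1+\#\bullet\text{ in }U'}\wt\Gamma$, where $\Gamma$ is the set of family-$i$ boxes of $U$.

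The arithmetic that needs care is the bookkeeping of which box-set of $U$ a given $\Gamma^{(i-1)}_{V'}$ corresponds to and how it combines with $\Gamma$. The point is that the $i$-walkway reversals of Lemmas~\ref{lemma:onerowreversalcharacterization} and~\ref{lemma:tworowreversalcharacterization} rearrange only the family-$i$ labels and the $\bullet$'s; the boxes carrying labels of family $\le i-1$ in $U$ sit in the same locations in $U'$ and in $\Gamma^{(i-1)}_{V'}$ (up to the at most one-box wiggle $\upper{\dalpha}$ versus $\dalpha$ accounted for in Claim~\ref{claim:forward_swap_walkway_coeffs}). So $\Gamma^{(i)} = \Gamma \sqcup \Gamma^{(i-1)}$ as box sets, and $\wt$ is multiplicative over disjoint unions, giving $\wt(\Gamma^{(i)}) = \wt(\Gamma)\wt(\Gamma^{(i-1)})$. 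Chaining the two displayed identities then yields exactly $\sum_T (-1)^{1+\#\bullet\text{ in }T}[U']\left(\swap_{i_{\mu_i}}\circ\cdots\circ\swap_{1_1}\right)(T) = \wt(\Gamma^{(i)})(-1)^{1+\#\bullet\text{ in }U'}$, as required.

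The main obstacle will be making the interchange of the two halves of the swap composition rigorous: one must check that after running the inner swaps on a leaf $T$ to obtain a formal sum of tableaux in the coordinate $[V']$, it is legitimate to then apply the outer swaps and that this commutes correctly with the summation over leaves — essentially that $\swap$ is linear (which it is, by construction) and that no leaf of $\mathfrak{T}_U$ gets double-counted or dropped, which is Proposition~\ref{cor:reversal_tree_is_tree}. A secondary subtlety is that the inductive hypothesis as stated is about nodes of $\mathfrak{T}_U$ that are $(i'+1)_1$-good for the \emph{same} $U$; since the children $V'$ of $U'$ are genuine nodes of $\mathfrak{T}_U$ (by the definition of the reversal tree), this is exactly the right form, but one should state the induction carefully so that the hypothesis for $i-1$ is applied to \emph{each} child $V'$ of $U'$ rather than to $U'$ itself. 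Once the indexing is set up, everything else is a one-line chain of equalities using Lemma~\ref{rev:claim.e} and multiplicativity of $\wt$.
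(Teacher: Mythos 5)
Your proposal is essentially the paper's own argument: induct on $i$, split the leaves below $U'$ according to their unique child ancestor $V'$ (using Proposition~\ref{cor:reversal_tree_is_tree}), factor the swap composition so the inductive hypothesis applies at each $i_1$-good child, invoke Lemma~\ref{rev:claim.e} for the family-$i$ swaps, and finish with $\wt(\Gamma^{(i)})=\wt(\Gamma)\cdot\wt(\Gamma^{(i-1)})$. The only adjustments needed are that the base case for this (upward) induction is $i=0$, where $U'$ is itself a leaf and $\Gamma^{(0)}=\emptyset$ makes both sides equal $(-1)^{1+\text{$\#\bullet$'s}}$ — not $i=\ell(\mu)$ as in your first paragraph — and that $\Gamma^{(i-1)}$ is defined from the labels of $U$ itself, so no per-child set $\Gamma^{(i-1)}_{V'}$ (and no ``one-box wiggle'' bookkeeping) is required.
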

\begin{proof}
We induct on $i\geq 0$. In the base case $i=0$, $U'=T$ for 
$T\in {\tt leaf}({\mathfrak T}_U)$ and the lefthand side equals 
$(-1)^{1+\text{$\#\bullet$'s in $T$}}$. This equals the righthand
side since $\Gamma^{(0)}=\emptyset$ so $\wt \Gamma^{(0)}=1$.

Now let $i>0$. We have
$\sum_{T}(-1)^{1+\text{$\#\bullet$'s in $T$}}[U']\swap_{i_{\mu_i}}\circ
\swap_{i_{\mu_i}^{-}}\circ \cdots \circ
\swap_{1_1^+}\circ\swap_{1_1}(T)$
\begin{align*}
= & \sum_{\text{$V'$ a child of $U'$}} \ \ \sum_{T\in {\tt leaf}({\mathfrak T}_{U'}) \text{\ below $V'$}} (-1)^{\text{$1+\#\bullet$'s in $T$}}[U']\swap_{i_{\mu_i}}\circ
\cdots \circ \swap_{i_{1}}\circ \\ 
\ & \ \ \ \ \ \ \ \ \ \ \ \ \ \ \ \ \  \ \ \ \ \ \ \ \ \ \ \ \ \ \ \ \ \ \ \hspace{2in} \swap_{(i-1)_{\mu_{i-1}}}\circ
\cdots \circ \swap_{1_{1}}(T)\\
= & \sum_{\text{$V'$ a child of $U'$}} \ \ \sum_{T\in {\tt leaf}({\mathfrak T}_{U'}) \text{\ below $V'$}} (-1)^{\text{$1+\#\bullet$'s in $T$}}[U']\swap_{i_{\mu_i}}\circ
\cdots \circ \swap_{i_{1}}(V')\cdot \\ 
\ & \ \ \ \ \ \ \ \ \ \ \ \ \ \ \ \ \  \ \ \ \ \ \ \ \ \ \ \ \ \ \ \ \ \ \ \hspace{2in} [V']\swap_{(i-1)_{\mu_{i-1}}}\circ
\cdots \circ \swap_{1_{1}}(T).
\end{align*} 
The previous equality is since ${\mathfrak T}_{U}$ is a tree (Proposition~\ref{cor:reversal_tree_is_tree}) and
$V'$ is the unique child of $U'$ that is an ancestor of $T$. The previous
summation equals
\begin{align*}
\ &  \sum_{\text{$V'$ a child of $U'$}} \!\!\!\!\!\!
[U']{\tt swap}_{i_{\mu_i}}\circ
\cdots \circ {\tt swap}_{i_{1}}(V')\!\!\!\!\!\!\!\!\!\!\!\!
 \sum_{T\in {\tt leaf}({\mathfrak T}_{U'}) \text{\ below $V'$}} \!\!\!\!\!\!\!\!\! (-1)^{\text{$1+\#\bullet$'s in $T$}}  [V']{\tt swap}_{(i-1)_{\mu_{i-1}}}\!\!\!\!\!\!\circ
\cdots \circ {\tt swap}_{1_{1}}(T)\\
= & \sum_{\text{$V'$ a child of $U'$}} [U']{\tt swap}_{i_{\mu_i}}\circ
\cdots \circ {\tt swap}_{i_{1}}(V') \cdot {\tt wt}(\Gamma^{(i-1)})
(-1)^{\text{$1+\#\bullet$'s in $V'$}} \text{\ \ \ \ \ \ \ \ \ \ \ \ \ \ (by induction)}\\ 
= & \ {\tt wt}(\Gamma^{(i-1)} ) \sum_{\text{$V'$ a child of $U'$}} (-1)^{\text{$1+\#\bullet$'s in $V'$}} [U']{\tt swap}_{i_{\mu_i}}\circ
\cdots \circ {\tt swap}_{i_{1}}(V')\\
= & \ {\tt wt}(\Gamma^{(i-1)} ) \cdot (-1)^{\text{$1+\#\bullet$'s in $U'$}}{\wt}(\Gamma) \text{ \ \  \hspace{2.5in} (by Lemma~\ref{rev:claim.e})}\\
= &  (-1)^{\text{$1+\#\bullet$'s in $U'$}}  {\tt wt}(\Gamma^{(i)} ),
\end{align*} 
since by definition ${\wt}(\Gamma^{(i)})={\wt}(\Gamma) \cdot {\wt}(\Gamma^{(i-1)})$.
\end{proof}

\begin{proposition}
\label{prop:Lambdaminusrecweight}
For $U\in B_{\lambda,\mu}^\alpha$,
\begin{equation}
\label{eqn:lambda-eqn}
\sum_{T \in {\tt leaf}({\mathfrak T}_U)}(-1)^{|\rho(T)/\lambda|+1}[U]{\tt slide}_{\rho(T)/\lambda}(T)=\wt(\alpha/\lambda)(-1)^{|\nu/\alpha|+1}
\end{equation}
where $\rho(T)\in \{\lambda\}\cup \lambda^+$ is the ``inner shape'' of $T$, i.e., $T$ has shape $\nu/\rho(T)$.
\end{proposition}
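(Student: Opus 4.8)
\textbf{Proof strategy for Proposition~\ref{prop:Lambdaminusrecweight}.}
The plan is to reduce the proposition to the special case of Lemma~\ref{lemma:Lambdaminusrecweight}, applied at the top of the reversal tree. Recall that $\mathfrak{T}_U$ has root $U^{(0)}$ obtained from $U$ by putting $\bullet_{(\ell(\mu)+1)_1}$ in each box of $\nu/\alpha$, and its leaves $T \in {\tt leaf}(\mathfrak{T}_U)$ are the $1_1$-good tableaux produced after reverse swapping through all families. By Proposition~\ref{rev:claim.c}, each leaf $T$, after ignoring $\bullet_{1_1}$'s, either equals $U$ or lies in $\Lambda^+$ with shape $\nu/\rho(T)$ for $\rho(T)\in\lambda^+$, and in every case $[U]{\tt slide}_{\rho(T)/\lambda}(T)\neq 0$. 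So the left-hand side of (\ref{eqn:lambda-eqn}) is genuinely a sum over leaves, where $(-1)^{|\rho(T)/\lambda|+1}$ is exactly the coefficient that $T$ carries in $\Lambda^+$ (taking $\rho(T)=\lambda$ to contribute $(-1)^{1}=-1$ when $T=U$; note $|\lambda/\lambda|+1=1$).

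First I would rewrite the sign $(-1)^{|\rho(T)/\lambda|+1}$ in terms of $\bullet$-counts. Since each leaf $T$ is $1_1$-good with $\bullet_{1_1}$'s precisely at the boxes of $\rho(T)/\lambda$ (by (G.2) and (G.9), as in the proof of Proposition~\ref{rev:claim.c}), we have $|\rho(T)/\lambda| = \#\bullet\text{'s in }T$, so $(-1)^{|\rho(T)/\lambda|+1} = (-1)^{1+\#\bullet\text{'s in }T}$. Next, by Proposition~\ref{prop:swap/revswap_inversion} and the definition (\ref{eqn:defjdt}) of ${\tt slide}$, the forward slide ${\tt slide}_{\rho(T)/\lambda}(T)$ is computed by applying $\swap_{1_1}, \swap_{1_1^+}, \dots, \swap_{\GG_{\max}}$ in succession, then deleting $\bullet_{\GG_{\max}^+}$'s; thus $[U]{\tt slide}_{\rho(T)/\lambda}(T) = [U^{(0)}]\,\swap_{\GG_{\max}}\circ\cdots\circ\swap_{1_1}(T^{(1_1)})$, where the target $U$ recovers $U^{(0)}$ upon undeleting its $\bullet_{\GG_{\max}^+}$'s at the outer corners of $\nu/\alpha$. (One must observe that the $\bullet$-count of $U^{(0)}$ is $|\nu/\alpha|$, which accounts for the $(-1)^{|\nu/\alpha|+1}$ on the right-hand side.)

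With these identifications, (\ref{eqn:lambda-eqn}) becomes exactly the statement of Lemma~\ref{lemma:Lambdaminusrecweight} applied to the root node $U' = U^{(0)}$ with $i = \ell(\mu)$: the sum over $T\in{\tt leaf}(\mathfrak{T}_U)$ descending from $U^{(0)}$ (which is all leaves) of $(-1)^{1+\#\bullet\text{'s in }T}\,[U^{(0)}]\swap_{\ell(\mu)_{\mu_{\ell(\mu)}}}\circ\cdots\circ\swap_{1_1}(T)$ equals $\wt(\Gamma^{(\ell(\mu))})(-1)^{1+\#\bullet\text{'s in }U^{(0)}}$. Here $\Gamma^{(\ell(\mu))} = \{\x\in\alpha/\lambda : \family(\lab_U(\x))\leq \ell(\mu)\} = \alpha/\lambda$ (every genetic label of $U$ has family at most $\ell(\mu)$, the number of nonzero rows of the content $\mu$), so $\wt(\Gamma^{(\ell(\mu))}) = \wt(\alpha/\lambda)$; and $\#\bullet\text{'s in }U^{(0)} = |\nu/\alpha|$, giving the sign $(-1)^{1+|\nu/\alpha|} = (-1)^{|\nu/\alpha|+1}$. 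This matches the right-hand side of (\ref{eqn:lambda-eqn}) precisely.

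The main obstacle is bookkeeping the three-way matching consistently: (i) that $\swap_{1_1^+}$ in Lemma~\ref{lemma:Lambdaminusrecweight} is interpreted as $\swap_{1_2}$ or $\swap_{2_1}$ according to whether $\mu_1>1$, and more generally that the composite $\swap_{\ell(\mu)_{\mu_{\ell(\mu)}}}\circ\cdots\circ\swap_{1_1}$ in the lemma is literally the same operator chain as $\swap_{\GG_{\max}}\circ\cdots\circ\swap_{1_1}$ in the definition of ${\tt slide}$ (no $\GG_{\max}^+$-swap occurs, since there are no genes of family $\ell(\mu)+1$, by Lemma~\ref{lem:content_preservation} every tableau in play has content $\mu$); and (ii) that the $\bullet_{1_1}$ placement on a leaf $T$ — namely at $\rho(T)/\lambda$ — is exactly what the ``$T^{(1_1)}$'' notation in (\ref{eqn:defjdt}) refers to, so no spurious bullets are introduced or lost. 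Once these conventions are aligned — which is the content of the proof of Proposition~\ref{prop:swap/revswap_inversion} and the reversal-tree setup — the proposition is immediate from Lemma~\ref{lemma:Lambdaminusrecweight}.
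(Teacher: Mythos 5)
Your argument is correct and is exactly the paper's route: the paper proves this proposition in one line by taking $U'$ to be the root of ${\mathfrak T}_U$ (i.e.\ $U$ with its $\bullet_{(\ell(\mu)+1)_1}$'s) in Lemma~\ref{lemma:Lambdaminusrecweight}, with $i=\ell(\mu)$, so that $\Gamma^{(\ell(\mu))}=\alpha/\lambda$ and the $\bullet$-count of the root is $|\nu/\alpha|$. Your additional bookkeeping — matching $(-1)^{|\rho(T)/\lambda|+1}$ with $(-1)^{1+\#\bullet\text{'s in }T}$ via the $\bullet_{1_1}$ placement at $\rho(T)/\lambda$, and identifying $[U]{\tt slide}_{\rho(T)/\lambda}(T)$ with the coefficient of the root in the undeleted swap composite — is precisely the implicit content of that one-line reduction.
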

\begin{proof}
Take $U'=U$ in Lemma~\ref{lemma:Lambdaminusrecweight}. 
\end{proof}

Now assume $U\in B_{\lambda,\mu}^\nu$. The root of ${\mathfrak T}_U$ contains no $\bullet_{(\ell(\mu) + 1)_1}$'s. One leaf of $\mathfrak{T}_U$ is $U$ itself. This is the unique leaf not in $\Lambda^+$. Let ${\tt leaf}^*(\mathfrak{T}_U)$ be the collection of all other leaves.

\begin{proposition}
\label{prop:Lambdarecweight}
For $U\in B_{\lambda,\mu}^{\nu}$,
\begin{equation}
\label{eqn:lambdaeqn}
\sum_{T \in {\tt leaf}^*(\mathfrak{T}_U)}(-1)^{|\rho(T)/\lambda|+1}[U]{\tt slide}_{\rho(T)/\lambda}(T)=1-{\tt wt}(\nu/\lambda)
\end{equation}
where $\rho(T) \in \lambda^+$ is the ``inner shape'' of $T$, i.e., $T$ has shape $\nu/\rho(T)$.
\end{proposition}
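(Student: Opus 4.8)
The plan is to deduce (\ref{eqn:lambdaeqn}) from the already-established identity (\ref{eqn:lambda-eqn}) of Proposition~\ref{prop:Lambdaminusrecweight} by specializing to the case $\alpha = \nu$ and then separating out the contribution of the single exceptional leaf $U$. First I would apply Proposition~\ref{prop:Lambdaminusrecweight} with $\alpha = \nu$. The right-hand side then becomes $\wt(\nu/\lambda)(-1)^{|\nu/\nu|+1} = -\wt(\nu/\lambda)$, since $|\nu/\nu| = 0$. The left-hand side is a sum over all $T \in {\tt leaf}(\mathfrak{T}_U)$ of $(-1)^{|\rho(T)/\lambda|+1}[U]{\tt slide}_{\rho(T)/\lambda}(T)$.

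The key structural input is the observation recorded just before the statement: because $U \in B_{\lambda,\mu}^\nu$, the root $U^{(0)}$ of $\mathfrak{T}_U$ carries no $\bullet_{(\ell(\mu)+1)_1}$'s (as $\nu/\alpha = \nu/\nu = \emptyset$), so $U$ itself appears as a leaf of $\mathfrak{T}_U$, and by Proposition~\ref{rev:claim.c}(I) it is the \emph{unique} leaf not lying in $\Lambda^+$ — all other leaves have shape $\nu/\rho(T)$ with $\rho(T) \in \lambda^+$ and these are precisely the elements of ${\tt leaf}^*(\mathfrak{T}_U)$. Thus I would split the sum on the left of (\ref{eqn:lambda-eqn}) as the term coming from $T = U$ plus $\sum_{T \in {\tt leaf}^*(\mathfrak{T}_U)}(-1)^{|\rho(T)/\lambda|+1}[U]{\tt slide}_{\rho(T)/\lambda}(T)$. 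For the exceptional term: here $\rho(U) = \lambda$, so $|\rho(U)/\lambda| = 0$ and the sign is $(-1)^{0+1} = -1$; and ${\tt slide}_{\lambda/\lambda}(U)$ — where we add $\bullet_{1_1}$'s to the empty set of inner corners of $\nu/\lambda$ and then swap through all genes, deleting $\bullet_{\GG_{\rm max}^+}$'s at the end — returns $U$ with coefficient $1$, since by Lemma~\ref{lem:bundled_tableaux_are_good} the tableau $T^{(1_1)} = U$ has no $\bullet_{1_1}$'s, hence every snake miniswap acts trivially (all snakes are single boxes of {\sf B1} type, i.e.\ bare $\GG$'s) and $\swap_\GG$ is the identity on $U$ for every $\GG$. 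Therefore $[U]{\tt slide}_{\lambda/\lambda}(U) = 1$ and the exceptional term equals $-1$.

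Combining, (\ref{eqn:lambda-eqn}) with $\alpha=\nu$ reads
\[
-1 + \sum_{T \in {\tt leaf}^*(\mathfrak{T}_U)}(-1)^{|\rho(T)/\lambda|+1}[U]{\tt slide}_{\rho(T)/\lambda}(T) = -\wt(\nu/\lambda),
\]
and transposing the $-1$ yields exactly (\ref{eqn:lambdaeqn}). The only point requiring genuine care — and hence the main obstacle — is the verification that the exceptional leaf $U$ contributes precisely $-1$: I must check both that ${\tt slide}_{\lambda/\lambda}(U)$ involves no $\bullet_{1_1}$'s to begin with (immediate, since $\lambda^+$-to-$\lambda$ slides place bullets in the boxes of $\rho/\lambda = \lambda/\lambda = \emptyset$) and that all subsequent swaps therefore fix $U$ tableau-by-tableau with coefficient $1$ (which follows from the {\sf B1} rule in Section~\ref{sec:swaps}, since every $\GG$-snake of a bundled tableau is a single box containing a bare $\GG$), and finally that $U$ really is the \emph{only} leaf with inner shape $\lambda$ — equivalently the only leaf not in $\Lambda^+$ — which is Proposition~\ref{rev:claim.c}(I) together with $\nu/\alpha = \emptyset$. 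Everything else is bookkeeping of signs.
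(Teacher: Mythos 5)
Your proposal is correct and is essentially the paper's own argument: specialize Proposition~\ref{prop:Lambdaminusrecweight} to $\alpha=\nu$ and peel off the unique exceptional leaf $U$ (which the paper notes just before the statement, and whose slide coefficient $[U]{\tt slide}_{\lambda/\lambda}(U)=1$ you verify via the trivial {\sf B1} swaps). The paper compresses all of this into one sentence; your write-up just makes the sign bookkeeping and the triviality of the bullet-free slide explicit.
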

\begin{proof}
This is immediate from Proposition~\ref{prop:Lambdaminusrecweight}, since $\nu=\alpha$ and the contribution from the excluded leaf is $1$.
\end{proof}

\begin{example}
In Example~\ref{ex:reversal_tree}, summing the weights below the left child of $U$ gives $1 - \frac{t_1}{t_2}\frac{t_3}{t_5}$, in agreement with 
Lemma~\ref{rev:claim.e}. Proposition~\ref{prop:Lambdarecweight} asserts in this case that
\[1-{\tt wt}(\nu/\lambda)=1-\frac{t_1}{t_5}=\left(1-\frac{t_1}{t_2}\right)+
\left(1-\frac{t_3}{t_5}\right)-\left(1-\frac{t_1}{t_2}\right)\left(1-\frac{t_3}{t_5}\right)
+\frac{t_1}{t_2}\frac{t_3}{t_5}\cdot\left(1-\frac{t_2}{t_3}\right),\]
as the reader may verify. \qed
\end{example}

Recall
$\Lambda^+ = \sum_{\rho\in \lambda^+} (-1)^{|\rho/\lambda|+1} \sum_{T \in B_{\rho, \mu}^\nu} T$. For $T \in B_{\rho, \mu}^\nu$, write 
$T^{(1_1)}$ (cf.~Section~\ref{subsection:swapsandslides}) for $T$ with $\bullet_{1_1}$ in each box of $\rho / \lambda$.

Now set
\begin{equation}
\label{eqn:P_G}
P_{\GG} := \sum_{\rho\in \lambda^+} (-1)^{|\rho/\lambda|-1} \sum_{T \in B_{\rho, \mu}^\nu} \swap_{\GG^-} \circ \swap_{(\GG^{-})^-} \circ \dots \circ \swap_{1_1}(T^{(1_1)}).
\end{equation}
In particular, $P_{1_1}$ is $\Lambda^+$ where each $T$ is replaced by $T^{(1_1)}$. By Lemma~\ref{lem:adding_bullets_ok} and Proposition~\ref{prop:goodness_preservation}, each $P_\GG$ is a formal sum of $\GG$-good tableaux.

The main conclusion of this section is
\begin{proposition}
\label{prop:slideLambda+equals}
$P_{\GG_{\rm max}^+}$ with all $\bullet_{\GG_{\rm max}^+}$'s removed equals $\Lambda+\Lambda^-$.
\end{proposition}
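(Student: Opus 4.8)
The idea is to establish the stronger identity ``$P_{\GG}$ with all $\bullet_{\GG}$'s deleted equals $\Lambda + \Lambda^-$ restricted to tableaux with no label of family $\geq\family(\GG)$'' by downward induction on $\GG$ along the chain $1_1 \prec 1_2 \prec \cdots \prec \GG_{\rm max} \prec \GG_{\rm max}^+$, but in fact the cleanest route is to compute the coefficient $[U]P_{\GG_{\rm max}^+}$ for an arbitrary bundled tableau $U \in {\tt Bundled}(\nu/\lambda)$ or $U \in {\tt Bundled}(\delta/\lambda)$ with $\delta \in \nu^-$, and match it term-by-term against the coefficient of $U$ in $\Lambda + \Lambda^-$. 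Concretely, fix such a $U$ (with the appropriate content $\mu$; tableaux of the wrong content contribute $0$ to both sides by Lemma~\ref{lem:content_preservation}). By definition, $P_{\GG_{\rm max}^+}$ with $\bullet_{\GG_{\rm max}^+}$'s removed is exactly $\sum_{\rho \in \lambda^+} (-1)^{|\rho/\lambda|-1}\sum_{T \in B_{\rho,\mu}^\nu} {\tt slide}_{\rho/\lambda}(T)$, whose $U$-coefficient is $-\sum_{T}(-1)^{|\rho(T)/\lambda|+1}[U]{\tt slide}_{\rho(T)/\lambda}(T)$ summed over relevant $T$. So I must show
\[
\sum_{T}(-1)^{|\rho(T)/\lambda|+1}[U]{\tt slide}_{\rho(T)/\lambda}(T) = -[U](\Lambda+\Lambda^-).
\]

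\textbf{Key steps.} First, I would invoke Proposition~\ref{prop:swap/revswap_inversion} (swap/reverse-swap inversion) to reorganize the left-hand sum: the tableaux $T \in \Lambda^+$ with $[U]{\tt slide}_{\rho(T)/\lambda}(T) \neq 0$ are precisely the leaves of the reversal tree ${\mathfrak T}_U$ other than $U$ itself, by Proposition~\ref{rev:claim.c}. Thus the sum over $T$ becomes a sum over ${\tt leaf}^*({\mathfrak T}_U)$ when $U$ has shape $\nu/\lambda$, and over ${\tt leaf}({\mathfrak T}_U)$ when $U$ has shape $\delta/\lambda$ with $\delta \in \nu^-$ (in the latter case $U$ is not itself a leaf of the relevant form). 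Second, I apply the already-established recurrence-coefficient computations: Proposition~\ref{prop:Lambdarecweight} gives $\sum_{T \in {\tt leaf}^*({\mathfrak T}_U)}(-1)^{|\rho(T)/\lambda|+1}[U]{\tt slide}_{\rho(T)/\lambda}(T) = 1 - \wt(\nu/\lambda)$ when $U \in B_{\lambda,\mu}^\nu$, which is exactly $-[U]\Lambda$ by the definition $\Lambda = (1-\wt\nu/\lambda)\sum_{T \in B_{\lambda,\mu}^\nu} T$ once we note $U \notin B_{\lambda,\mu}^\delta$ for any $\delta \in \nu^-$ so $[U]\Lambda^- = 0$; and Proposition~\ref{prop:Lambdaminusrecweight} gives $\sum_{T \in {\tt leaf}({\mathfrak T}_U)}(-1)^{|\rho(T)/\lambda|+1}[U]{\tt slide}_{\rho(T)/\lambda}(T) = \wt(\delta/\lambda)(-1)^{|\nu/\delta|+1}$ when $U \in B_{\lambda,\mu}^\delta$, which is exactly $-[U]\Lambda^-$ by the definition $\Lambda^- = \sum_{\delta \in \nu^-}(-1)^{|\nu/\delta|+1}(\wt\delta/\lambda)\sum_{T \in B_{\lambda,\mu}^\delta}T$, while $[U]\Lambda = 0$ since $U$ does not have shape $\nu/\lambda$. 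Third, I combine these two cases: since every $U$ appearing with nonzero coefficient in $\Lambda+\Lambda^-$ (equivalently in $P_{\GG_{\rm max}^+}$ with bullets removed, by Corollary~\ref{cor:codomain}) is a bundled tableau of shape $\nu/\lambda$ or $\delta/\lambda$, and the two cases are mutually exclusive, we get $[U](P_{\GG_{\rm max}^+} \text{ with bullets removed}) = [U](\Lambda+\Lambda^-)$ for all $U$, hence the formal sums are equal.

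\textbf{Main obstacle.} The genuine content has already been extracted into Propositions~\ref{prop:Lambdarecweight} and~\ref{prop:Lambdaminusrecweight} (which rest on the reversal-tree machinery of Section~\ref{sec:reversal_tree} and Claim~\ref{claim:forward_swap_walkway_coeffs}), so the remaining difficulty is essentially bookkeeping: one must be careful that ``$P_{\GG_{\rm max}^+}$ with bullets removed'' genuinely coincides with ${\tt slide}$ applied to $\Lambda^+$ — that is, that the $\bullet_{\GG_{\rm max}^+}$'s left over after all swaps sit at outer corners with nothing southeast (Lemma~\ref{lem:bullets_migrate_out}) so that deletion yields the bundled tableaux of the shapes claimed (Lemma~\ref{lem:deleting_outer_bullets_ok}, Corollary~\ref{cor:codomain}) — and that the sign $(-1)^{|\rho(T)/\lambda|-1}$ in~\eqref{eqn:P_G} matches the sign $(-1)^{|\rho(T)/\lambda|+1}$ in the definition of $\Lambda^+$ (they agree, being congruent mod $2$). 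The one subtlety worth spelling out is that for a fixed target $U$, the contributing leaves $T$ of ${\mathfrak T}_U$ may have varying inner shapes $\rho(T)$, so the identification of $\sum_T (-1)^{|\rho(T)/\lambda|+1}[U]{\tt slide}_{\rho(T)/\lambda}(T)$ with a single clean expression genuinely requires Propositions~\ref{prop:Lambdarecweight} and~\ref{prop:Lambdaminusrecweight} rather than a naive term-by-term match — but those propositions are exactly designed for this, so no new argument is needed.
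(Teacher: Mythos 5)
Your argument follows the paper's own proof: you identify the $T\in\Lambda^+$ contributing to $[U]P_{\GG_{\rm max}^+}$ with the leaves of the reversal tree ${\mathfrak T}_U$ via Propositions~\ref{prop:swap/revswap_inversion} and~\ref{rev:claim.c}, evaluate the resulting sums by Propositions~\ref{prop:Lambdarecweight} and~\ref{prop:Lambdaminusrecweight}, and use Corollary~\ref{cor:codomain} (with Lemmas~\ref{lem:bullets_migrate_out} and~\ref{lem:deleting_outer_bullets_ok}) to control the codomain — exactly the route of Section~\ref{sec:recurrence_proof}. The one flaw is a pair of canceling sign slips: since $(-1)^{|\rho/\lambda|-1}=(-1)^{|\rho/\lambda|+1}$, the $U$-coefficient of $P_{\GG_{\rm max}^+}$ (with bullets removed) is $+\sum_T(-1)^{|\rho(T)/\lambda|+1}[U]{\tt slide}_{\rho(T)/\lambda}(T)$, not its negative, and correspondingly $1-\wt(\nu/\lambda)=+[U]\Lambda$ and $(-1)^{|\nu/\delta|+1}\wt(\delta/\lambda)=+[U]\Lambda^-$, not $-[U]\Lambda$ and $-[U]\Lambda^-$ as you assert; the two errors cancel, so your final conclusion is correct, but the intermediate identities should be stated without the spurious minus signs.
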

\begin{proof}
By Corollary~\ref{cor:codomain} each tableau appearing in $P_{\GG_{\rm max}^+}$ (with $\bullet_{\GG_{\rm max}^+}$'s removed) is a tableau
in $\Lambda+\Lambda^-$. On the other hand, given any $U$ appearing in $\Lambda+\Lambda^-$, we constructed the tree ${\mathfrak T}_U$ in Section~\ref{sec:reversal_tree}. By Proposition~\ref{rev:claim.c}, the leaves of ${\mathfrak T}_U$ are exactly those tableaux $T\in \Lambda^+$ such that $U\in {\tt slide}_{\rho/\lambda}(T)$. It remains to show that $[U] P_{\GG_{\rm max}^+}=1-{\tt wt}(\nu/\lambda)$
if $U\in \Lambda$ and $[U] P_{\GG_{\rm max}^+}=(-1)^{|\nu/\delta|+1}{\tt wt}(\delta/\lambda)$ if $U\in \Lambda^-$ and the shape of
$U$ is $\delta/\lambda$. These are precisely the statements of Propositions~\ref{prop:Lambdarecweight} and~\ref{prop:Lambdaminusrecweight}, respectively. 
\end{proof}

\section{Weight preservation}\label{sec:weight_preservation}
\subsection{Fine tableaux and their weights}\label{sec:weights_for_bulleted_tableaux}
A tableau is {\bf fine} if it is good or can be obtained from a good tableau by swapping some subset of its snakes, i.e.\ it appears in the formal sum of tableaux resulting from this partial swap.

Let $T$ be fine and fix $\x\in T$.
Suppose $\ell\in\underline{\x}$. Define $\mathtt{edgefactor}(\ell)$ as in Section~\ref{sec:tableau_weights}; see (\ref{eqn:edgefactordef}).
The {\bf edge weight} $\mathtt{edgewt}(T) :=
\prod_\ell \mathtt{edgefactor}(\ell)$, where the product is over all (non-virtual) edge labels of $T$.

Suppose $T$ is obtained by swapping some of the snakes of the good tableau $S$ and $U$ is obtained 
from $T$ by swapping the remaining snakes. We define the 
positions in $T$ of a {\bf virtual label} $\circled\HH$ as follows. Consider a box $\x$ in column $c$.
If $c$ intersects a snake in $S$ that has been swapped in $T$, and that snake is not the upper 
snake described in Lemma~\ref{lem:snakes_arranged_SW-NE}(III), then 
$\circled{\HH}\in \underline{\x}$ (in $T$) if and only if
$\circled{\HH}\in \underline{\x}$ (in $U$). Otherwise, $\circled{\HH}\in \underline{\x}$ (in $T$) if and only if $\circled{\HH}\in \underline{\x}$ (in $S$). Observe that if $T$ is indeed good, this definition is clearly consistent with the definition of virtual labels in a good tableau.

Suppose $\circled{\HH}\in\underline{\x}$.
If $\lab_T(\x)$ is marked and each $\FF \in \underline{\x}$ with $\FF \prec \GG$ is marked, then 
\begin{equation}\label{eqn:funny_virtual_factor}
\mathtt{virtualfactor}_{\underline{\x}\in T}(\circled{\HH}) := -{\tt edgefactor}_{\underline{\x}\in T}(\HH) = \frac{t_{\sf Man(\x)}}{t_{r + N_{\HH} + 1-\family(\HH)+{\sf Man(\x)}}} - 1.
\end{equation}
Otherwise 
\begin{equation}\label{eqn:normal_virtual_factor}
\mathtt{virtualfactor}_{\underline{\x} \in T}(\circled{\HH}) := 1-{\tt edgefactor}_{\underline{\x} \in T}(\HH) = \frac{t_{\sf Man(\x)}}{t_{r + N_{\HH} + 1-\family(\HH)+{\sf Man(\x)}}} .
\end{equation}
The {\bf virtual weight} $\mathtt{virtualwt}(T)$ is
$\prod_{\circled{\ell}} \mathtt{virtualfactor}(\circled{\ell})$,
where the product is over all instances of virtual labels.

Call $\x \in T$ {\bf productive} if any of the following hold:
\begin{itemize}
\item[(P.1)] $\lab_T(\x) < \lab_T(\x^\rightarrow)$ or $\x^\rightarrow \notin T$;
\item[(P.2)] $\bullet_{i_{k+1}}\in \x$, $i_k\in\x^\leftarrow$, $i_{k+1}\in\underline{\x}$, and either $\family(\lab(\x^\rightarrow))\neq i$ or $\x^\rightarrow\not\in T$;
\item[(P.3)] $\HH\in\x$, $\bullet_\GG \in \x^\rightarrow$, and $\underline{\x^\rightarrow}$ does not contain a label of the same family as $\HH$; or
\item[(P.4)] $i_k\in\x$, $i_{k+1} \in \x^\rightarrow$ and $\bullet_{i_{k+1}} \in \x^{\rightarrow \uparrow}$, with $\x$ not SouthEast of a $\bullet_{i_{k+1}}$.
\end{itemize}

Define $\mathtt{boxfactor}(\x)$ and
{\bf box weight} $\mathtt{boxwt}(T)=\prod_\x \mathtt{boxfactor}(\x)$ as
in Section~\ref{sec:tableau_weights}, specifically (\ref{eqn:boxfactordef}), with the addendum that $\bullet_\HH \in \x$ is evaluated like $\HH\in \x$.

\pagebreak

\begin{example}
\gap
\begin{itemize} 
\item The right two boxes of $\ytableausetup{boxsize=0.9em}\ytableaushort{{1_1} {1_2} {2_1}}$ are productive by
(P.1). The left box is not productive. 
\item The left box of $\ytableausetup{boxsize=1.2em}
\begin{picture}(30,13)
\put(0,2){$\ytableaushort{{1_1} {\bullet_{1_2}}}$}
\put(17,-3){$1_2$}
\end{picture}$ is not productive. The right box is productive by (P.2).
\item  The first and third boxes of $\ytableaushort{{1_1} {\bullet_{1_2}} {2_1}}$ are productive by
(P.3) and (P.1) respectively. The middle box is not productive; a box with $\bullet_{1_2}$
is productive only if (P.2) holds.
\item  The right box of the second row in both
$\Scale[0.8]{\begin{picture}(35,22)
\ytableausetup{boxsize=1.2em}
\put(0,10){$\ytableaushort{{*(lightgray)\blank} {\bullet_{1_2}}, {1_1} {1_2}}$}
\end{picture}}$  and $\Scale[0.8]{\begin{picture}(35,25)
\ytableausetup{boxsize=1.2em}
\put(0,10){$\ytableaushort{{*(lightgray)\blank} {\bullet_{1_1}}, {1_1} {1_2}}$}
\end{picture}}$ is productive by (P.1). The left box in the
second row is productive only in the first case, by (P.4).\qed
\end{itemize}
\end{example}

Finally the {\bf weight} is
\[\wt(T) := (-1)^{d(T)}\mathtt{edgewt}(T) \cdot {\tt virtualwt}(T)\cdot {\tt boxwt}(T),\]
where $d(T)=\sum_{\GG} \left( |\GG|-1 \right)$, the sum is over genes $\GG$,
and $|\GG|$ is the (multiset) cardinality of $\GG$ (not including virtual labels). We will view $\wt$ as a
${\mathbb Z}[t_1^{\pm 1}, \ldots, t_n^{\pm n}]$-linear operator of formal sums of tableaux.

By Lemma~\ref{lem:bundled_tableaux_are_good}, bundled tableaux are good and hence also fine. Hence for a bundled tableau $B$, we have two {\it a priori} distinct notions of $\wt B$. The following lemma justifies our failure to distinguish these notationally: 
\begin{lemma}
For $B$ a bundled tableau, $\wt B$ as a fine tableau equals $\wt B$ as a bundled tableau.
\end{lemma}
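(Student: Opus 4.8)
The claim is that the two definitions of $\wt B$ agree when $B$ is a bundled tableau (hence also good, hence fine). The plan is to unwind both definitions and check term-by-term. The ``bundled'' weight is $\wt B = \widetilde{\wt}(B)$ by Lemma~\ref{lem:wt=tildewt}, and $\widetilde{\wt}(B)$ is defined (Section~\ref{sec:bundled_tableaux}) as a product of $\mathtt{edgefactor}(\ell)$ over (non-virtual) edge labels, $\mathtt{boxfactor}(\x)$ over productive boxes, factors $1 - \mathtt{edgefactor}_{\underline{\x}}(\ell)$ over virtual labels $\circled{\ell}$, times $(-1)^{d(B)}$. The ``fine'' weight is $(-1)^{d(B)} \mathtt{edgewt}(B) \cdot \mathtt{virtualwt}(B) \cdot \mathtt{boxwt}(B)$. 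So it suffices to match the three products (the sign factor $d(B)$ is literally the same formula in both places), i.e. to show: (a) the edge weights agree, (b) the virtual weights agree, (c) the box weights agree.

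For (a): the fine $\mathtt{edgewt}(B)$ is a product of $\mathtt{edgefactor}(\ell)$ over all non-virtual edge labels of $B$, with $\mathtt{edgefactor}$ defined by the very same formula (\ref{eqn:edgefactordef}) used in Section~\ref{sec:tableau_weights}; so (a) is immediate. For (c): one must check the notion of ``productive'' box coincides. A bundled tableau has no $\bullet_\GG$'s, so conditions (P.2), (P.3), (P.4) are vacuous, and (P.1) reads ``$\lab(\x) < \lab(\x^\rightarrow)$ or $\x^\rightarrow \notin B$''. The Section~\ref{sec:tableau_weights} notion of productive is ``$\lab(\x) < \lab(\x^\rightarrow)$'' for a nonempty box. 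The only apparent discrepancy is the boundary case $\x^\rightarrow \notin B$. Here I would note that in the original (Section~\ref{sec:tableau_weights}) setup, $L_{\lambda,\mu}^\nu$ is defined via $\wt T$ with $\mathtt{boxwt}$ a product over productive boxes, and the Proposition~\ref{lem:bundled_main_theorem} identity $L_{\lambda,\mu}^\nu = \sum_B \widetilde{\wt}(B)$ already forces the matching; more carefully, I would recall that the $\widetilde{\wt}$ convention counts a rightmost box of a row as productive (this is how the examples in Section~\ref{sec:tableau_weights}, e.g.\ Example~\ref{exa:(2),(2,1),(2,2)}, are computed — the $1_2$ in a rightmost box contributes a $\mathtt{boxfactor}$), so both notions agree. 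The box factor formula (\ref{eqn:boxfactordef}) is identical in both, and the addendum ``$\bullet_\HH \in \x$ is evaluated like $\HH \in \x$'' is vacuous here. So (c) holds.

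For (b): this is the step requiring the most care, and I expect it to be the main obstacle. The bundled definition assigns each virtual label $\circled{\ell} \in \underline{\x}$ the factor $1 - \mathtt{edgefactor}_{\underline{\x}}(\ell)$, i.e.\ exactly $\mathtt{virtualfactor}$ in the ``normal'' case (\ref{eqn:normal_virtual_factor}). The fine definition, however, has the alternative ``funny'' case (\ref{eqn:funny_virtual_factor}), used when $\lab_T(\x)$ is marked and every $\FF \in \underline{\x}$ with $\FF \prec \GG$ is marked. I would argue this funny case never arises for a bundled tableau: $B$ has no $\bullet_\GG$'s, hence no marked labels $\FF^!$ at all (a mark $\FF^!$ by definition requires a $\bullet_\GG$ to its NorthWest), so $\lab_B(\x)$ is never marked and the hypothesis of (\ref{eqn:funny_virtual_factor}) fails. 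Therefore $\mathtt{virtualfactor}$ reduces to (\ref{eqn:normal_virtual_factor}) for every virtual label of $B$, matching the bundled convention. It remains to check that the \emph{locations} of virtual labels coincide: the fine definition of virtual-label positions (Section~\ref{sec:weights_for_bulleted_tableaux}) places $\circled{\HH}$ according to whether columns intersect swapped snakes, but for a good tableau $B$ itself (no partial swap performed), every column falls into the ``otherwise'' clause, so $\circled{\HH} \in \underline{\x}$ in the fine sense iff $\circled{\HH} \in \underline{\x}$ in the good-tableau sense; and by Lemma~\ref{lem:bundled_tableaux_are_good} the $\GG$-good virtual labels of $B$ coincide with its bundled virtual labels. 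Assembling (a), (b), (c) and the common sign $(-1)^{d(B)}$ completes the proof. I would close by remarking that the content of this lemma is really bookkeeping: the fine-tableau weight was defined precisely so as to extend the bundled weight, and the only genuine things to verify are the vacuity of the $\bullet$-dependent clauses (P.2)--(P.4), the vacuity of the ``funny'' virtual factor, and the coincidence of the productive-box boundary convention.
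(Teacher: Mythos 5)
Your proposal is correct and takes essentially the same route as the paper's proof: the edge weights and the sign $(-1)^{d(B)}$ coincide by definition, the absence of $\bullet$'s reduces productivity to (P.1) so the box weights agree, the virtual-label locations coincide via Lemma~\ref{lem:bundled_tableaux_are_good} and the consistency remark in Section~\ref{sec:weights_for_bulleted_tableaux}, and the absence of marked labels rules out the factor (\ref{eqn:funny_virtual_factor}), so ${\tt virtualwt}(B)$ matches the inclusion--exclusion form of $\widetilde{\wt}(B)$ from Lemma~\ref{lem:wt=tildewt}. Your extra care about the boundary clause ``$\x^\rightarrow \notin T$'' in (P.1) (resolved via the convention visible in Example~\ref{exa:(2),(2,1),(2,2)}) only makes explicit a convention the paper's proof treats as implicit when it asserts the two notions of productive box coincide; the brief appeal to Proposition~\ref{lem:bundled_main_theorem} is unnecessary and should be dropped in favor of that convention, but this does not affect correctness.
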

\begin{proof}
By definition, the two notions of $\mathtt{edgewt}(B)$ coincide, as do the two notions of $d(B)$. Since $B$ has no $\bullet$'s, only (P.1) is available to effect productivity. Hence the two notions of productive boxes coincide, and thus, by definition, so too do the two notions of $\mathtt{boxwt}(B)$. As remarked above, the locations of virtual labels are the same, whether we think of $B$ as bundled or fine.
By Lemma~\ref{lem:wt=tildewt}, $\wt B$ as a bundled tableau is
\[
(-1)^{d(B)} \mathtt{edgewt}(B) \mathtt{boxwt}(B) \prod_{\circled{\ell}} \left( 1 - {\tt edgefactor}(\ell) \right),
\] where the product is over all instances of virtual labels and ${\tt edgefactor}(\ell)$ means the factor that would be given by $\ell$ in $\circled{\ell}$'s place. Since $B$ is bundled, it has no marked labels. Hence ${\tt virtualwt}(B)$ is calculated using only (\ref{eqn:normal_virtual_factor}), not (\ref{eqn:funny_virtual_factor}). Thus ${\tt virtualwt}(B) = \prod_{\circled{\ell}} \left( 1 - {\tt edgefactor}(\ell) \right)$, and the lemma follows.
\end{proof}

\subsection{Main claim about weight preservation}
\begin{proposition}\label{thm:weight_preservation}
\gap
\begin{itemize}
\item[(I)] $\wt P_{1_1} = \wt \Lambda^+$.
\item[(II)] For every $\GG$, $\wt P_{\GG} = \wt P_{1_1}$.
\item[(III)] $\wt P_{\GG_{\rm max}^+} = \wt \Lambda + \wt \Lambda^{-}$.
\end{itemize}
\end{proposition}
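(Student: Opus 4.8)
The three parts are of rather different character, and I would prove them in the order (I), (III), (II), deferring the genuinely hard part to last. Parts (I) and (III) are essentially bookkeeping: (I) asks only that attaching $\bullet_{1_1}$'s at inner corners of $\rho/\lambda$ does not change the weight of a bundled tableau. The only subtlety is that a box $\x$ with $\bullet_{1_1}\in\x$ that was productive via (P.1) in the old sense stays productive — either by (P.1) again (if $\x^\rightarrow\notin T$, which happens exactly at the inner corners, since these are maximally southeast boxes of $\lambda$) or else $\x^\rightarrow\in\nu/\lambda$ and $\lab(\x)<\lab(\x^\rightarrow)$ holds vacuously or by (S.1), (S.2); and conversely. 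In fact since $\bullet_{1_1}\in\x$ forces $\x$ to be an inner corner, $\x^\rightarrow\notin T$ always, so (P.1) applies and $\mathtt{boxfactor}(\x)$ is evaluated as if $1_1\in\x$. No edge labels, marked labels, or virtual-label positions change, and $d(T)$ is unaffected since $\bullet$'s do not count toward genes; hence $\wt P_{1_1}=\wt\Lambda^+$. Part (III) is also a matter of matching definitions: by Proposition~\ref{prop:slideLambda+equals}, $P_{\GG_{\rm max}^+}$ with all $\bullet_{\GG_{\rm max}^+}$'s removed equals the formal sum $\Lambda+\Lambda^-$ of bundled tableaux. Deleting $\bullet_{\GG_{\rm max}^+}$'s from a fine tableau with no genetic label southeast of a $\bullet$ (Lemma~\ref{lem:bullets_migrate_out}, Lemma~\ref{lem:deleting_outer_bullets_ok}) changes no edge or virtual weight; and a box containing $\bullet_{\GG_{\rm max}^+}$ at an outer corner has $\x^\rightarrow\notin T$, so it is productive via (P.1) both before and after deletion, with the same $\mathtt{boxfactor}$. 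Thus $\wt P_{\GG_{\rm max}^+}=\wt\Lambda+\wt\Lambda^-$.

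The substance of the proposition is part (II), which by transitivity reduces to the single statement: if $T$ is $\GG$-good then $\wt\swap_\GG(T)=\wt(T)$. Since $\swap_\GG$ acts independently on snakes (Lemma~\ref{lem:miniswap_commutation}) and independently on $T$-patches, and since a single snake is a union of $T$-patches that are disjoint from the patches of other snakes, it suffices to prove that a single snake's swap is weight-preserving — \emph{after} accounting for how the swap of one snake section changes the weight contributions of boxes and edges that straddle the boundary between sections, and between a snake and the non-snake part of the tableau. The plan is: (a) for each snake $S$, isolate the collection $C(S)$ of box-, edge-, and virtual-label factors in $\wt$ that are affected by $\swap_\GG$ on $S$ — this includes the snake section boxes and their lower edges, but also the box $\x^\rightarrow$ immediately east of a snake row (its $\mathtt{boxfactor}$ can switch on or off via productivity conditions (P.1)--(P.4) when a $\GG$ or $\bullet$ moves into or out of $\x$), the box $\x^\leftarrow$ west of a head/body/tail, and the virtual labels on edges in columns meeting $S$; (b) for each of the cases {\sf H1}--{\sf H9}, {\sf B1}--{\sf B3}, {\sf T1}--{\sf T6} and each product of cases occurring on a multi-section snake, verify by direct computation that $\sum_{\text{outputs}}(\text{coefficient})\cdot(\text{new }C(S)\text{-factor}) = (\text{old }C(S)\text{-factor})$, where the coefficients are the $\beta(\x),\hat\beta(\x),\alpha,\gamma$ emitted by $\m$; (c) check the sign bookkeeping: $d(T)$ changes only when a gene gains or loses a label, which happens precisely in {\sf H2}, {\sf H7}, {\sf T1}, {\sf T3}, {\sf T5} (creation) — but Lemma~\ref{lem:content_preservation} shows content is preserved, so these are compensated, and one must track how the $(-1)^{|\GG|-1}$'s interact with the signs $-\hat\beta(\x)$ appearing in {\sf B3}, {\sf T1}, {\sf T3}.

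The identities in step (b) are the kind of telescoping one expects: for instance in {\sf H1}, $\beta(\x)\cdot t_{\mathrm{Man}(\x)}/t_{\cdots} + \gamma\cdot 1$ must reproduce the old edgefactor of $\GG\in\underline\x$ together with whatever happened to $\mathtt{boxfactor}(\x)$, and the split into $\beta(\x)$ and $\hat\beta(\x)=1-\beta(\x)$ pieces is exactly what makes $a = \beta a' + \hat\beta a''$-type identities close up; the virtualfactor formulas (\ref{eqn:funny_virtual_factor}) vs.\ (\ref{eqn:normal_virtual_factor}) are designed precisely so that a $\circled\GG$ switching between ``all $\prec\GG$ labels on its edge marked'' and not — which is what a swap can do — contributes the needed extra sign. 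I expect the main obstacle, and the bulk of the work, to be the sheer case volume in (b)--(c): each miniswap case must be paired with the relevant productivity transitions of neighboring boxes, and the multi-row $\body$ cases {\sf B2}, {\sf B3} involve products $\prod_{\x\in\mathtt{body}_{\bullet_\GG}(S)}\hat\beta(\x)$ that must telescope against the box factors of the $\GG$'s being replaced by $\bullet_{\GG^+}$'s. This is the ``significant subtlety'' flagged in the introduction — that the map is not weight-preserving on individual tableaux — so I anticipate needing an auxiliary involution pairing certain output tableaux (e.g.\ pairing a tableau where an optional $\circled\GG$-to-$\GG$ promotion occurred with the one where it did not) to get the cancellation; identifying the right pairing, section by section, is where the real care is required. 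I would organize (b)--(c) as a table indexed by snake-section type, prove the local identity for each entry, and then assemble via the independence of patches established above.
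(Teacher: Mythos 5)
Your parts (I) and (III) follow the paper's argument, though your parenthetical in (I) is off: a box of $\rho/\lambda$ carrying $\bullet_{1_1}$ need not have $\x^\rightarrow\notin T^{(1_1)}$ (e.g.\ $\lambda=(1)$, $\rho=(2)$ or $(1,1)$, $\nu=(2,1)$), and more importantly such a box must come out \emph{non}productive — if it were productive with $\mathtt{boxfactor}$ evaluated as though $1_1\in\x$, you would pick up a nontrivial extra factor and $\wt T^{(1_1)}\neq\wt T$. The paper's check is exactly that none of (P.1)--(P.4) can make these bullet boxes productive (bullets are productive only via (P.2), which is impossible for $\bullet_{1_1}$), so the productive sets of $T$ and $T^{(1_1)}$ coincide.

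The genuine gap is in (II). Your reduction ``it suffices to prove that a single snake's swap is weight-preserving,'' i.e.\ $\wt\swap_\GG(T)=\wt(T)$ for each $\GG$-good $T$, is false as stated: miniswaps {\sf H4} and {\sf T6} output $0$, and in cases such as {\sf H5.2}/{\sf T4.2} and the {\sf H6}/{\sf H7} configurations with a $\bullet_\GG$ stacked above, the outputs of a single tableau do not reweight to the input. You do flag the subtlety and anticipate ``an involution pairing certain output tableaux,'' but that locates the pairing in the wrong place. In the paper the cancellation is between \emph{distinct $\GG$-good input tableaux} occurring in the formal sum $P_\GG$: one groups snakes across different tableaux into blocks $\mathcal{B}_i$ (Appendix~\ref{sec:blockdecomp}), and the identity to verify, (D.2), is $\sum_{T\in\Gamma_i}[T]P_\GG\cdot\wt(\swapset_{\mathcal{B}_i}(T))=\sum_{T\in\Gamma_i}[T]P_\GG\cdot\wt(T)$ (schematically), a statement weighted by the coefficients $[T]P_\GG$. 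For the problematic cases the block pairs $T$ with a partner $\overline T=\phi_j^{\pm1}(T)$ built by the pairing maps of Section~\ref{sec:pairing}, and the argument only closes because one knows how the coefficients compare: $[T]P_{i_k}=-[\phi_1(T)]P_{i_k}$, $[T]P_{i_k}=-[\phi_2(T)]P_{i_k}$, $[T]P_{i_k}=[\phi_3(T)]P_{i_k}$, $[T]P_{i_k}=[\phi_4(T)]P_{i_k}$ (Propositions~\ref{prop:bundle_buddies:poofseesaw/empty}, \ref{prop:bundle_buddies:expseesaw/empty}, \ref{prop:bundle_buddies:horizontal/death}, \ref{prop:bundle_buddies:T6andT4}). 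These coefficient identities are not local data of $T$; their proofs run through the reverse-swap and reversal-tree machinery of Sections~\ref{sec:reversal}--\ref{sec:recurrence_proof} (in particular Claim~\ref{claim:forward_swap_walkway_coeffs} and the walkway analysis), which your proposal omits entirely. Without some substitute for these comparisons of multiplicities inside $P_\GG$, the case-by-case local identities you plan in steps (b)--(c) cannot be assembled into $\wt P_{\GG^+}=\wt P_\GG$, so the core of (II) is missing.
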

\begin{proof}
We will first prove the easier statements (I) and (III).

(I):  Suppose $T \in B_{\rho, \mu}^\nu$ for some $\rho \in \lambda^+$. It is enough to show $\wt T = \wt T^{(1_1)}$. Certainly ${\tt edgewt}(T) = {\tt edgewt}(T^{(1_1)})$ and $d(T) = d(T^{(1_1)})$. Adding $\bullet_{1_1}$'s preserves the virtual labels' locations, so ${\tt virtualwt}(T) = {\tt virtualwt}(T^{(1_1)})$.

A productive box in $T$ is also productive in $T^{(1_1)}$ and has the same ${\tt boxfactor}$. Suppose $\x$ is a productive box of $T^{(1_1)}$ that is not productive in $T$. It satisfies one of (P.1)--(P.4). If $\x$ satisfies (P.1), it is productive in $T$. 
If it satisfies (P.2), then $\bullet_{1_1}\in \x$ and $\x^\leftarrow$ contains a label,
contradicting $\x^\leftarrow\in \rho$.
If it satisfies (P.3), then $\x^\rightarrow \in \rho$, contradicting that $\x$ contains a label. Finally if $\x$ satisfies (P.4), then $\bullet_{i_{k+1}}\in\x^{\rightarrow \uparrow}$ and $i_k\in \x$. But every $\bullet$ in $T^{(1_1)}$ is $\bullet_{1_1}$. Hence $i_{k+1}=1_1$, which is impossible since $1_0$ is not a label in our alphabet. Thus the productive boxes of $T$ and $T^{(1_1)}$ are the same, and with the same
respective {\tt boxfactor}s. Therefore, $\wt T = \wt T^{(1_1)}$.

(III): Suppose $U\in P_{\GG_{\rm max}^+}$
and let ${\widetilde U}$ be given by deleting each $\bullet_{{\GG^{+}_{\rm max}}}$.
Proposition~\ref{prop:slideLambda+equals} states $P_{\GG_{\rm max}^+}$ with all $\bullet_{\GG_{\rm max}^+}$'s removed equals $\Lambda+\Lambda^-$.
Thus, it suffices to show $\wt U = \wt {\widetilde U}$. Clearly, ${\tt edgewt}(U) = {\tt edgewt}({\widetilde U})$ and $d(U) = d(\widetilde{U})$. 
One checks that the virtual labels of $U$ and the virtual labels of ${\widetilde U}$ 
appear in the same places. 
Hence ${\tt virtualwt}(U) = {\tt virtualwt}({\widetilde U})$.

Suppose $\x$ is productive in $U$. Then it satisfies one of
(P.1)--(P.4). If $\x$ satisfies (P.1) in $U$, then it satisfies (P.1) in ${\widetilde U}$. 
Now $\x$ cannot satisfy (P.2) in $U$, since if it did, 
$\bullet_{\GG^+_{\rm max}}\in\x$ and $\underline{\x}$ contains a label, contradicting Lemma~\ref{lem:bullets_migrate_out}.
If $\x$ satisfies (P.3) in $U$, then it satisfies (P.1) in ${\widetilde U}$. If $\x$ satisfies (P.4) in $U$, then $\bullet_{\GG^+_{\rm max}}\in\x^{\rightarrow \uparrow}$ but is not an outer corner, again contradicting Lemma~\ref{lem:bullets_migrate_out}.
Thus if $\x$ is productive in $U$, it is productive in ${\widetilde U}$. Conversely,
if $\x$ is productive in ${\widetilde U}$, it satisfies (P.1), since there are no $\bullet_{\GG^+_{\rm max}}$'s in ${\widetilde U}$. Hence $\x$ satisfies (P.1) or (P.3) in $U$. Thus the productive boxes of $U$ and ${\widetilde U}$ are the same. These boxes have the same
${\tt boxfactor}$s. Thus ${\tt boxwt}(U) = {\tt boxwt}({\widetilde U})$.

(II): We induct on $\GG$ with respect to $\prec$. The base case $\GG=1_1$ is trivial. The inductive hypothesis is that
$\wt P_\GG=\wt P_{1_1}$. Our inductive step is to show $\wt P_{\GG^+} = \wt P_{\GG}$.

Consider the set 
\[{\tt Snakes}_{\GG}=\{S \text{\ is a snake in $T$}\colon [T]P_{\GG}\neq 0\}.\]
We emphasize that each $S\in {\tt Snakes}_{\GG}$ refers to a particular 
instance of a snake in a specific tableau $T\in P_{\GG}$. In particular,
${\tt Snakes}_{\GG}$ is not a multiset.

For ${\mathcal B}\subseteq {\tt Snakes}_{\GG}$ 
define $\swapset_{{\mathcal B}}(T)$ to be the formal
sum of fine tableaux obtained by swapping each snake of ${\mathcal B}$
that appears in $T$ (done in any order, as permitted by
Lemma~\ref{lem:miniswap_commutation}).

We will construct $m$ subsets ${\mathcal B}_i$ such that (D.1) and (D.2) below hold: 
\begin{itemize}
\item[(D.1)] We have
a disjoint union
${\tt Snakes}_{\GG}=\bigsqcup_{1\leq i\leq m} {\mathcal B}_{i}$.
\item[(D.2)] For every $1 \leq i \leq m$ and $J \subseteq \{1, \dots, \hat{i}, \dots, m\}$, let $\mathcal{B}_J := \cup_{j \in J} \mathcal{B}_j$. Then 
\begin{equation}
\label{eqn:abc}
\sum_{T\in \Gamma_i} [T]P_{\GG}\cdot \wt (\swapset_{\mathcal{B}_J} (T)) = \sum_{T \in \Gamma_i} [T]P_{\GG} \cdot\wt (\swapset_{\mathcal{B}_i} \circ 
\swapset_{\mathcal{B}_J}(T)),
\end{equation}
where  $\Gamma_i:=\{T\in P_\GG\colon \text{$T$ contains a snake from ${\mathcal B}_i$}\}$.
\end{itemize}

\begin{claim}
The existence of $\{{\mathcal B}_i\}$  satisfying (D.1) and (D.2) implies
$\wt(P_{\GG^+})=\wt(P_{\GG})$.
\end{claim}
\begin{proof}
By Lemma~\ref{lem:miniswap_commutation}, snakes 
may be swapped in any order, so choose an arbitrary ordering of the blocks $\mathcal{B}_i$.
By (D.1),
$P_{\GG^+}:={\tt swap}_{\GG}(P_{\GG})={\tt swapset}_{{\mathcal B}_m}\circ\cdots\circ {\tt swapset}_{{\mathcal B}_1}(P_{\GG})$. Thus
\begin{align*}
{\tt wt}(P_{\GG^+})&= \wt ({\tt swap}_{\GG}(P_{\GG})) \\
&= \wt ( {\tt swapset}_{{\mathcal B}_m}\circ\cdots\circ {\tt swapset}_{{\mathcal B}_1}(P_{\GG})) \\
&= \wt ( {\tt swapset}_{{\mathcal B}_{m-1}}\circ\cdots\circ {\tt swapset}_{{\mathcal B}_1}(P_{\GG}))
\end{align*}
Here we have just used (\ref{eqn:abc}) from (D.2) 
together with linearity of
${\wt}$ and ${\swapset}_{{\mathcal B}_i}$ and the triviality
$\swapset_{\mathcal{B}_J} (T) = \swapset_{\mathcal{B}_i} \circ 
\swapset_{\mathcal{B}_J}(T)$ for $T\not\in \Gamma_i$.
Repeating this argument $m-1$ further times, we obtain the desired equality with
${\tt wt}(P_{\GG})$.
\end{proof}

In order to provide the desired decomposition, we need to first construct certain \emph{``pairing'' maps}. These are given in Section~\ref{sec:pairing}. Given these, the description of the
decomposition satisfying (D.1) and (D.2) is relatively straightforward and is found in Appendix~\ref{sec:blockdecomp}. 
\end{proof}

\subsection{Pairing maps}
\label{sec:pairing}
Let $G_{\lambda,\mu}^\nu(\GG)$ be the set of $\GG$-good tableaux of shape $\nu/\lambda$ and content $\mu$.
For $\QQ \prec \GG$ and $T \in G_{\lambda,\mu}^\nu(\GG)$, let $\mathfrak{R}_\QQ(T) := \{ V \in \revswap_{\QQ^+} \circ \dots \circ \revswap_\GG(T)\}$.

\begin{lemma}\label{lem:back_and_forth}
For any genes $\QQ \prec \GG$ and any tableau $T \in G_{\lambda,\mu}^\nu(\GG)$
\begin{equation*}
\mathfrak{R}_\QQ(T) = \{ W \in G_{\lambda,\mu}^\nu(\QQ) : T \in \swap_{\GG^-} \circ \dots \circ \swap_\QQ(W) \}.
\end{equation*}
\end{lemma}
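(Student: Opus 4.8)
The plan is to prove both inclusions of the set equality, using the already-established inversion result Proposition~\ref{prop:swap/revswap_inversion} as the single-step engine and iterating it. Recall that Proposition~\ref{prop:swap/revswap_inversion} asserts, for good tableaux $T,U$, that $U \in \swap_\HH(T)$ if and only if $T \in \revswap_{\HH^+}(U)$. Combined with Proposition~\ref{prop:goodness_preservation} (swaps preserve goodness) and Proposition~\ref{prop:goodness_preservation_reverse} (reverse swaps preserve goodness), this lets us chain the relation across the whole interval of genes from $\QQ$ up to $\GG$.

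First I would set up the induction on the number of genes strictly between $\QQ$ and $\GG$ in the total order $\prec$ (equivalently, on the ``distance'' from $\QQ$ to $\GG$). The base case is $\QQ = \GG^-$, i.e.\ $\mathfrak{R}_{\GG^-}(T) = \{V \in \revswap_{\GG}(T)\}$; here the claim reads $\{V \in \revswap_\GG(T)\} = \{W \in G_{\lambda,\mu}^\nu(\GG^-) : T \in \swap_{\GG^-}(W)\}$, which is exactly Proposition~\ref{prop:swap/revswap_inversion} (with $\HH = \GG^-$), once we note via Proposition~\ref{prop:goodness_preservation_reverse} that every element of $\revswap_\GG(T)$ is indeed $\GG^-$-good and hence lies in $G_{\lambda,\mu}^\nu(\GG^-)$ — shape and content are preserved by Lemma~\ref{lemma:reversecontentpres} and inspection of the reverse miniswaps, so membership in $G_{\lambda,\mu}^\nu(\GG^-)$ is automatic.

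For the inductive step, write $\QQ^+ = \RR$, so that $\QQ \prec \RR \preceq \GG^-$, and decompose $\revswap_{\QQ^+} \circ \dots \circ \revswap_\GG = \revswap_{\QQ^+} \circ \big(\revswap_{\RR^+} \circ \dots \circ \revswap_\GG\big)$. An element $V$ lies in $\mathfrak{R}_\QQ(T)$ iff there is an intermediate $\RR$-good tableau $W'$ with $W' \in \mathfrak{R}_\RR(T)$ (applying the inductive hypothesis, $W' \in G_{\lambda,\mu}^\nu(\RR)$ with $T \in \swap_{\GG^-} \circ \dots \circ \swap_\RR(W')$) and $V \in \revswap_{\RR}(W')$; but $\revswap_\RR = \revswap_{\QQ^+}$, so by the base-case form of Proposition~\ref{prop:swap/revswap_inversion} this last condition is equivalent to $V \in G_{\lambda,\mu}^\nu(\QQ)$ and $W' \in \swap_\QQ(V)$. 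Chaining $W' \in \swap_\QQ(V)$ with $T \in \swap_{\GG^-} \circ \dots \circ \swap_\RR(W')$ gives $T \in \swap_{\GG^-} \circ \dots \circ \swap_\QQ(V)$, and conversely any $W \in G_{\lambda,\mu}^\nu(\QQ)$ with $T \in \swap_{\GG^-} \circ \dots \circ \swap_\QQ(W)$ factors through some intermediate $\RR$-good $W'$ (namely any tableau appearing in $\swap_\QQ(W)$ that is an ancestor of $T$ under the remaining swaps), which is in $\mathfrak{R}_\RR(T)$ by the inductive hypothesis and satisfies $W \in \revswap_{\QQ^+}(W')$ by Proposition~\ref{prop:swap/revswap_inversion}. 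This establishes both inclusions.

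The main obstacle I anticipate is bookkeeping about whether ``$V \in \revswap_{\QQ^+} \circ \cdots$'' really means there exists a genuine chain of intermediate tableaux each appearing with nonzero coefficient, rather than merely a formal-sum coincidence — i.e.\ one must be careful that the composition of operators is being read as a composition of (multivalued) maps on individual good tableaux, which is legitimate because each $\revswap$ and $\swap$ is defined tableau-by-tableau and preserves goodness (Propositions~\ref{prop:goodness_preservation} and~\ref{prop:goodness_preservation_reverse}), so every intermediate term is itself a good tableau of the appropriate gene-type to which the next operator applies. Once that is spelled out, the argument is a clean iteration of Proposition~\ref{prop:swap/revswap_inversion}; the shape/content compatibility needed to land in $G_{\lambda,\mu}^\nu(\QQ)$ versus merely being $\QQ$-good is handled by Lemmas~\ref{lem:content_preservation} and~\ref{lemma:reversecontentpres} together with the fact that swaps and reverse swaps never change the outer/inner shape.
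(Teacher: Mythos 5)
Your argument is correct and is essentially the paper's own proof: the paper disposes of this lemma in one line by citing Proposition~\ref{prop:swap/revswap_inversion} together with Lemmas~\ref{lem:content_preservation} and~\ref{lemma:reversecontentpres} and Propositions~\ref{prop:goodness_preservation} and~\ref{prop:goodness_preservation_reverse}, which is exactly the single-step inversion plus goodness/content preservation that you iterate. Your explicit induction on the $\prec$-distance from $\QQ$ to $\GG$ merely spells out the chaining that the paper leaves implicit in the word ``immediate.''
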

\begin{proof}
This is immediate from Proposition~\ref{prop:swap/revswap_inversion}, noting that, by  Lemmas~\ref{lem:content_preservation} and~\ref{lemma:reversecontentpres} and Propositions~\ref{prop:goodness_preservation} and~\ref{prop:goodness_preservation_reverse}, both forward and reverse swaps preserve goodness and content.
\end{proof}

Let ${\mathcal S}_1$ be the subset of tableaux in
$G_{\lambda,\mu}^\nu(i_k)$ with a box $\x$ such that for some $\ell \geq k$,
$\bullet_{i_k} \in \x$,
$\bullet_{i_k} \in \x^{\rightarrow\uparrow}$,
$i_{\ell+1}  \in \x^\rightarrow$ and
$\circled{i_\ell} \in \underline{\x}$,
i.e.\ locally the tableau is
\ytableausetup{boxsize=1.6em}
${\mathcal C}_1=\Scale[0.8]{\begin{picture}(40,37)
\put(0,22){\ytableaushort{\none {\bullet_{i_k}}, {\bullet_{i_k}} {i_{\ell+1}}}}
\put(5,-3){$\Scale[.7]{\circled{i_\ell}}$}
\end{picture}}$ (with possibly additional edge labels), where $\x$ southwestmost depicted box.
Let ${\mathcal S}_1'$ be the subset of tableaux in
$G_{\lambda,\mu}^\nu(i_k)$ 
with a box $\x$ such that
$i_\ell \in \x$,
$i_{\ell+1} \in \x^\rightarrow$,
$\bullet_{i_k} \in \x^{\rightarrow\uparrow}$, $i_\ell$ appears outside of $\x$ and
no $\bullet_{i_k}$ appears West of $\x$ in $\x$'s row.
Locally the tableau is
${\mathcal C}_1'=\Scale[0.8]{\begin{picture}(40,35)
\put(0,17){\ytableaushort{\none {\bullet_{i_k}}, {i_\ell} {i_{\ell+1}}}}
\end{picture}}$ (with possibly additional edge labels).

\begin{lemma}
\label{lemma:C1unique}
If $T\in {\mathcal S}_1$ (respectively, ${\mathcal S}_1'$), there is a unique ${\mathcal C}_1$ (respectively, ${\mathcal C}_1'$) that it contains.
\end{lemma}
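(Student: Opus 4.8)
The plan is to show that the local configuration $\mathcal{C}_1$ (respectively $\mathcal{C}_1'$) appearing in $T$ is pinned down by the data $(k,\ell)$ together with the position of the box $\x$, and then that $(k,\ell)$ and $\x$ are themselves forced. First I would observe that $\GG = i_k$ is fixed, since $T$ is $i_k$-good, so the gene in the box labels $\bullet_{i_k}$ is determined; hence a configuration $\mathcal{C}_1$ in $T$ is specified by the choice of its southwestmost box $\x$ and of the index $\ell \geq k$ for which $\circled{i_\ell}\in\underline{\x}$ (the remaining depicted entries being $\bullet_{i_k}\in\x$, $\bullet_{i_k}\in\x^{\rightarrow\uparrow}$, $i_{\ell+1}\in\x^\rightarrow$). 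So it suffices to prove that at most one such pair $(\x,\ell)$ can occur.

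For the uniqueness of $\ell$ given $\x$: both $\circled{i_\ell}$ and $\circled{i_{\ell'}}$ cannot lie on the same edge $\underline{\x}$ with $\ell\neq\ell'$ unless that violates (G.5), (G.6) or the fact that an edge carries at most one virtual label of a given family (an edge label is maximally west in its gene by (G.7), and virtual labels inherit the positivity constraints of (V.3)); and in any case the presence of $i_{\ell+1}\in\x^\rightarrow$ with $\bullet_{i_k}\in\x$ forces, via (G.3), (G.5) and (G.6) applied across $\underline\x$ and $\overline{\x^\rightarrow}$, that the family-$i$ content of $\underline\x$ relevant to the configuration is exactly $i_\ell$. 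For the uniqueness of $\x$: suppose two boxes $\x$ and $\y$ both anchor a copy of $\mathcal{C}_1$. Each has $\bullet_{i_k}$ in it and $\bullet_{i_k}$ in its NorthEast-neighbor, so by (G.2) (no $\bullet_\GG$ southeast of another, in particular distinct rows and columns) these four box-label positions are highly constrained; using Lemma~\ref{lem:snakes_arranged_SW-NE} (disjointness and SouthWest–NorthEast arrangement of snakes) one sees the two snakes carrying these $\bullet_{i_k}$'s must be arranged so that the two candidate configurations cannot coexist — concretely, scenario (III) of Lemma~\ref{lem:snakes_arranged_SW-NE} already forces a unique $\bullet_\GG\in\w^\uparrow$ over an $(R.1)$-adjoined $\GG^+$, which matches the shape of $\mathcal{C}_1$, and two such cannot overlap without contradicting (G.2) or (G.4). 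The same argument runs for $\mathcal{C}_1'$, where now $i_\ell\in\x$ (unmarked) rather than $\circled{i_\ell}\in\underline\x$; here uniqueness of $\ell$ uses additionally Lemma~\ref{lemma:Gsoutheast} (no $\HH$ southEast of another) to prevent a second unmarked family-$i$ label from fitting into the pattern, and uniqueness of $\x$ uses the hypothesis ``no $\bullet_{i_k}$ appears West of $\x$ in $\x$'s row'' to rule out translating the configuration leftward.

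I expect the main obstacle to be the case analysis for uniqueness of the anchor box $\x$: one must carefully enumerate how two overlapping or adjacent instances of the $\bullet_{i_k}/\bullet_{i_k}$ ``staircase'' relate, and invoke exactly the right combination of (G.2), (G.3), (G.4), (G.9), Lemma~\ref{lemma:Gsoutheast}, Lemma~\ref{lemma:Gsoutheastofbullet}, Lemma~\ref{lem:GandbulletG+} and Lemma~\ref{lem:snakes_arranged_SW-NE} to close off each subcase. The subtlety is that $\mathcal{C}_1$ involves an $(R.1)$-type extension pattern (a $\GG^+$, here $i_{\ell+1}$, to the right of a $\bullet_\GG$), so the two $\bullet_{i_k}$'s need not lie in the same presnake, and one has to argue across presnakes; Lemma~\ref{lem:snakes_arranged_SW-NE}(III) is precisely the tool for this, and the proof should reduce to checking that its scenario forces the anchor position uniquely. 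The remaining verifications — that the local entries other than $\x$ and $\ell$ are determined once $(\x,\ell)$ is — are routine from the goodness axioms and the definition of virtual labels, and I would dispatch them in a sentence each.
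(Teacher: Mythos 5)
There is a genuine gap in your treatment of the uniqueness of the anchor box $\x$, which is the heart of the lemma. Your argument there rests on (G.2) and Lemma~\ref{lem:snakes_arranged_SW-NE}, concluding that two candidate configurations ``cannot overlap without contradicting (G.2) or (G.4).'' But overlap is not the danger: (G.2) and Lemma~\ref{lem:snakes_arranged_SW-NE} are perfectly consistent with several pairwise disjoint patterns $\bullet_{i_k}\in\x$, $\bullet_{i_k}\in\x^{\rightarrow\uparrow}$ strung out along the NorthEast-going staircase of $\bullet_{i_k}$'s, and scenario (III) of that lemma (two snakes sharing a column) says nothing about two candidate anchors sitting far apart in the tableau. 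So the route you propose does not address the situation it must rule out, and no refinement of the ``overlapping staircases'' case analysis will close it. Likewise, for $\mathcal{C}_1'$ the hypothesis that no $\bullet_{i_k}$ lies West of $\x$ in $\x$'s row only constrains $\x$'s own row and cannot by itself exclude a second instance elsewhere.

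The mechanism the paper actually uses is different and is exactly what your sketch omits: the hypothesis $\circled{i_\ell}\in\underline{\x}$ (respectively $i_\ell\in\x$ for $\mathcal{C}_1'$) forces, via (G.6) (applied through (V.3) in the virtual case), that the $i_{\ell+1}\in\x^\rightarrow$ is the \emph{westmost} instance of its gene: an $i_{\ell+1}$ strictly West of $\x$'s column would lie West of an $i_\ell$, contradicting (G.6), and an $i_{\ell+1}$ elsewhere in $\x$'s or $\x^\rightarrow$'s column is excluded by (G.4) and (G.9) since the labels in question are unmarked (as $\ell+1>k$). Because the westmost instance of a gene occupies a single box, this pins down $\x^\rightarrow$, hence $\x$, for the gene in question; and $\ell$ is then simply read off from $\lab(\x^\rightarrow)$, which also replaces your separate discussion of ``uniqueness of $\ell$ given $\x$'' (it is the box label, not the content of the edge, that determines $\ell$). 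This westmost-ness observation is the entire content of the paper's one-line proof; without it, your proposal does not establish the uniqueness claim.
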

\begin{proof}
Let $\x$ be the lower-left box of any fixed choice of 
$\mathcal{C}_1$ in $T$.
Since $\circled{i_\ell} \in \underline{\x}$, the $i_{\ell+1} \in \x^\rightarrow$ is westmost in $T$ by (G.6). Hence this configuration is unique.
The argument for the other claim is the same, except we replace ``$\circled{i_\ell} \in \underline{\x}$'' 
with ``$i_\ell \in \x$''.
\end{proof}

For $T\in {\mathcal S}_1$, let $\phi_1(T)$ to be the same tableau with the unique ${\mathcal C}_1$ replaced by ${\mathcal C}_1'$. (By this we mean that we delete the labels specified in ${\mathcal C}_1$ and add the labels specified in ${\mathcal C}_1'$; any additional edge labels in $T$ are unchanged.)

\begin{lemma}\label{lem:phi1_welldefined}
$\phi_1:{\mathcal S}_1\to {\mathcal S}_1'$ is a bijection.
\end{lemma}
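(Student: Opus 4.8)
The plan is to show $\phi_1$ is a well-defined map into $\mathcal{S}_1'$ and then exhibit an explicit inverse. First I would verify that $\phi_1(T)$ is again $i_k$-good. The only conditions that could be disrupted by deleting the labels of $\mathcal{C}_1$ and inserting those of $\mathcal{C}_1'$ are the local ones near $\x$ and $\x^\rightarrow$ (and the column/row conditions involving them): (G.2) since we remove a $\bullet_{i_k}$ from $\x$ and keep the one in $\x^{\rightarrow\uparrow}$; (G.4) and (G.9) since $i_\ell$ now sits in $\x$ below the $\bullet_{i_k}$ in $\x^{\rightarrow\uparrow}$'s column neighbor; (G.12) for the family-$i$ labels $i_\ell$ and $i_{\ell+1}$; (G.11) and (G.13) near $\x$; and the virtual-label conditions (V.1)--(V.3) governing $\circled{i_\ell}$, which disappears from $\underline{\x}$ in $\mathcal{C}_1'$. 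The key observation making these checks routine is that in $\mathcal{C}_1$ the $\circled{i_\ell}\in\underline{\x}$ together with $\bullet_{i_k}\in\x$ forces, via Lemma~\ref{lem:virtualG13label}, that $\family(i_\ell)=\family(i_k)$ and $\bullet_{i_k}\in\x^\leftarrow$ is \emph{not} the case here; rather one uses the structure of $\mathcal{C}_1$ directly to see that promoting $\circled{i_\ell}$ to an honest $i_\ell$ in $\x$ is exactly what makes $\phi_1(T)$ satisfy (G.4)/(G.12) while no longer needing a virtual label there. I would also confirm $\phi_1(T)\in\mathcal{S}_1'$: the box $\x$ of $\phi_1(T)$ has $i_\ell\in\x$, $i_{\ell+1}\in\x^\rightarrow$, $\bullet_{i_k}\in\x^{\rightarrow\uparrow}$; that $i_\ell$ appears outside $\x$ follows because $\circled{i_\ell}$ was a virtual label, which by (V.2) is not maximally west in its gene, so some genuine $i_\ell$ lies strictly west; and no $\bullet_{i_k}$ lies West of $\x$ in $\x$'s row because in $\mathcal{C}_1$ the box $\x$ itself held the westmost relevant $\bullet_{i_k}$ of that row (any $\bullet_{i_k}$ further west would, with $i_\ell$ now in $\x$, violate (G.11) or be ruled out by Lemma~\ref{lem:strong_form_of_G10}).

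Next I would define the candidate inverse $\psi_1:\mathcal{S}_1'\to\mathcal{S}_1$ sending $T$ to the tableau with its unique $\mathcal{C}_1'$ (which is unique by Lemma~\ref{lemma:C1unique}) replaced by $\mathcal{C}_1$: delete the $i_\ell\in\x$, put $\bullet_{i_k}$ in $\x$, and place $\circled{i_\ell}$ on $\underline{\x}$. The same kind of local analysis shows $\psi_1(T)$ is $i_k$-good and lies in $\mathcal{S}_1$; the one point needing care is that $\circled{i_\ell}$ is genuinely a \emph{virtual} label of $\psi_1(T)$ in the sense of (V.1)--(V.3), i.e.\ that an honest $i_\ell$ there would violate (G.7) (it would not be maximally west, since $T\in\mathcal{S}_1'$ guarantees $i_\ell$ appears outside $\x$, hence strictly west by (G.6)) while the other conditions (G.1),(G.4),(G.5),(G.6),(G.8),(G.9),(G.12) are satisfied for that phantom $i_\ell$. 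The latter follows from the goodness of $T$ together with the structural facts that the relevant $N$-values and families match, using Lemma~\ref{lem:virtualG13label} and (G.13).

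Finally, $\psi_1\circ\phi_1=\mathrm{id}$ and $\phi_1\circ\psi_1=\mathrm{id}$ are immediate once well-definedness is established, since the replacements are mutually inverse \emph{local} modifications, the configurations $\mathcal{C}_1,\mathcal{C}_1'$ being unique in their tableaux by Lemma~\ref{lemma:C1unique} and untouched elsewhere. I expect the main obstacle to be the bookkeeping around the virtual label $\circled{i_\ell}$: one must check both that it correctly \emph{appears} (all of (V.1)--(V.3)) in the target of $\psi_1$ and correctly \emph{disappears} when applying $\phi_1$, and simultaneously that promoting/demoting it does not silently break (G.12) or (G.13) for nearby labels of families between $\family(i_\ell)$ and $\family(i_k)$ — this is where Lemmas~\ref{lem:strong_form_of_G13}, \ref{lem:virtualG13label}, and \ref{lem:how_to_check_ballotness} do the real work. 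Everything else is a finite case check against the list (G.1)--(G.13) and (V.1)--(V.3).
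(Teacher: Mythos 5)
Your proposal is correct and takes essentially the same route as the paper: delete/insert the local configurations $\mathcal{C}_1\leftrightarrow\mathcal{C}_1'$, verify goodness together with the virtual-label conditions (V.1)--(V.3) in both directions, and conclude the maps are mutually inverse from the uniqueness of the configurations (Lemma~\ref{lemma:C1unique}). One caveat: Lemma~\ref{lem:virtualG13label} is not really the operative tool (its hypotheses require a marked label, which $\mathcal{C}_1$ lacks, and $\family(i_\ell)=\family(i_k)$ holds by definition), and ``no $\bullet_{i_k}$ West of $\x$ in its row'' is immediate from $T$'s (G.2) rather than (G.11); the genuinely nontrivial checks are (G.3) and (G.12) for $\phi_1(T)$ and, for the inverse, (G.2) via (G.9) and (G.11) via Lemma~\ref{lem:strong_form_of_G10} and (G.4), all of which fit inside the finite case check you describe, exactly as in the paper.
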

\begin{proof}
Let $\phi_1^{-1}:{\mathcal S}_1'\to {\mathcal S}_1$ be the putative inverse of $\phi_1$, defined by replacing ${\mathcal C}_1'$ in a $T\in {\mathcal S}_1'$ by ${\mathcal C}_1$. We are done once we show that $\phi_1$ and $\phi_1^{-1}$ are well-defined 
since the maps are clearly injective and are mutually inverse. 

Let $\x$ be the southwestmost box in the unique (by Lemma~\ref{lemma:C1unique}) $\mathcal{C}_1$ in $T$.

($\phi_1$ is well-defined):  Let $T\in {\mathcal S}_1$. We only need that $\phi_1(T)$ is good. 
Conditions (G.1) and (G.2) hold trivially in $\phi_1(T)$. (G.3) holds if $\x^\leftarrow$ is empty. Suppose $\FF\in\x^\leftarrow$. By $T$'s (G.9), $\FF \prec i_k$. Hence $\FF \prec i_\ell$, and (G.3) holds in $\phi_1(T)$. The $\circled{i_\ell} \in \underline{\x}$ in $T$ shows that $\phi_1(T)$ satisfies (G.4), (G.6) and (G.8). (G.5), (G.7), (G.9), (G.11) hold trivially. 
Since $i_{\ell+1}\in \x^\rightarrow$ is not marked, by  Lemma~\ref{lem:strong_form_of_G10}(II) there is no marked label in $T$ 
in $\x$'s row, so (G.10) and (G.13) hold for $\phi_1(T)$.
For (G.12), suppose $T$ has labels $\ell, \ell'$ that violate (G.12) in $\phi_1(T)$. Since $\ell$ must be northWest of $\x$, by $T$'s (G.9), $\ell \prec i_k$. Since $\ell'$ must be southeast of $\x$, by $T$'s (G.3), (G.4) and (G.11), $\ell' \succ i_{\ell+1}$. Hence $\family(\ell) = \family(\ell') = i$. If $\ell$ is North of $\x$, then by (G.4) the box of $\x$'s row directly below $\ell$ contains a label that violates $T$'s (G.9). By $T$'s (G.4), $\ell'$ is not South of $\x^\rightarrow$. Hence $\ell, \ell'$ are box labels 
in the row of $\x$, and no violation of $\phi_1(T)$'s (G.12) occurs.

($\phi_1^{-1}$ is well-defined): Let $T'\in {\mathcal S}_1'$. We must show that (G.7) and (G.13) hold in $\phi_1^{-1}(T')$ and that (G.1)--(G.6) and (G.8)--(G.12) hold even if the virtual label is replaced by a nonvirtual one (cf. (V.1)--(V.3)).
(G.1), (G.3)--(G.10), (G.12) and (G.13) are trivial to verify. To verify (G.2) for  $\phi_1^{-1}(T')$, it suffices to show $T'$ has no 
$\bullet_{i_k}$ South of $\x$ in the same column, or West of $\x$ in the same row. (G.9) for $T'$ rules out the 
possibility of $\bullet_{i_k}$ South of $\x$ in the same column of $T'$. By definition, there is no $\bullet_{i_k}$ West of $\x$ in the same row. To see (G.11) for $\phi_1^{-1}(T')$, we check there is no marked label $\FF^!$ in the column of $\x$. Such a label cannot appear North of $\x$ in $T'$ by Lemma~\ref{lem:strong_form_of_G10} and (G.2), considering the $\bullet_{i_k} \in \x^{\rightarrow \uparrow}$. By (G.4), it cannot appear South of $\x$ in $T'$ either.
\end{proof}

\begin{proposition}\label{prop:bundle_buddies:poofseesaw/empty}
For each $T \in \mathcal{S}_1$, $[T]P_{i_k} = -[\phi_1(T)]P_{i_k}$.
\end{proposition}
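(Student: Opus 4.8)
The claim is that the bijection $\phi_1:\mathcal{S}_1\to\mathcal{S}_1'$ reverses the coefficient in $P_{i_k}$. Recall $P_{i_k}=\swap_{i_k^-}\circ\cdots\circ\swap_{1_1}(\text{the }T^{(1_1)}\text{'s})$, so by Lemma~\ref{lem:back_and_forth} (i.e.\ Proposition~\ref{prop:swap/revswap_inversion}) the coefficient $[T]P_{i_k}$ is a sum over the fiber $\mathfrak{R}_{1_1}(T)$ of products of miniswap coefficients along the reverse-swapping path from $T$ back to a bundled-with-bullets tableau, weighted by the $(-1)^{|\rho/\lambda|-1}$ sign. The strategy is to exhibit, for each $T\in\mathcal{S}_1$, a \emph{coefficient-matching correspondence} between the reversal histories of $T$ and those of $\phi_1(T)$ that multiplies the total contribution by $-1$.

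\textbf{Key steps.} First I would localize the difference. The tableaux $T$ and $\phi_1(T)$ agree everywhere except in the patch consisting of the box $\x$, its right neighbor $\x^\rightarrow$, the box $\x^{\rightarrow\uparrow}$, and the edge $\underline{\x}$; in $T$ this patch is $\mathcal{C}_1$ and in $\phi_1(T)$ it is $\mathcal{C}_1'$. The key observation (to be extracted from Lemma~\ref{lem:strong_form_of_G10}, the structure of snakes, and the miniswap list of Section~\ref{sec:swaps}) is that during the reverse-swapping process $\revswap_{i_1^+}\circ\cdots\circ\revswap_{i_k}$ down to family below $i$, the snake/ladder structure outside this patch is identical for $T$ and $\phi_1(T)$, so the two fibers $\mathfrak{R}_{1_1}(T)$ and $\mathfrak{R}_{1_1}(\phi_1(T))$ are in natural bijection via the analogous local replacement, and all miniswap coefficients \emph{away} from the patch agree. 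Second, I would identify precisely which reverse miniswap acts on the patch in each case. In $T=\mathcal{C}_1$ the box $\x$ carries $\bullet_{i_k}$ with $\circled{i_\ell}\in\underline{\x}$ and $\bullet_{i_k}\in\x^{\rightarrow\uparrow}$ with $i_{\ell+1}\in\x^\rightarrow$; when $\revswap_{i_\ell^+}$ (the reverse swap through family $i$ at the appropriate stage) acts here, the relevant ladder row is of type \textsf{L3} or \textsf{L1}, producing a specific $\hat\beta(\x)$-type factor. In $\phi_1(T)=\mathcal{C}_1'$ the box $\x$ carries an unmarked $i_\ell$ and the reverse miniswap on that row (type \textsf{L2.2}/\textsf{L2.3} or \textsf{L4.2}/\textsf{L4.3}, depending on the $\GG^+\in\underline{\x^\rightarrow}$ condition) again emits a factor; comparing the two lists one sees the product of coefficients along the two matched paths differ by exactly a sign, coming from the sign in the $\body$ miniswap (cf.\ \textsf{B3} versus \textsf{B2}) or equivalently from the shape difference $|\rho/\lambda|$ changing parity. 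Third, I would do the bookkeeping of the $(-1)^{|\rho/\lambda|-1}$ prefactor: since $\mathcal{C}_1$ has one more $\bullet_{i_k}$ than $\mathcal{C}_1'$ has genetic labels in the relevant configuration (or a similar off-by-one in bullet count), the shape of the bundled tableau at the bottom of the tree differs by one box, so $|\rho/\lambda|$ changes parity, which is exactly the sign needed. Putting the matched local coefficient comparison together with the identity of all other coefficients and the parity flip gives $[T]P_{i_k}=-[\phi_1(T)]P_{i_k}$.

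\textbf{Main obstacle.} The hard part will be the careful case analysis of exactly which reverse miniswap fires on the patch in $T$ versus $\phi_1(T)$ and verifying that the emitted coefficients, together with the bullet-count parity, assemble to a clean sign of $-1$ --- in particular handling the two sub-cases according to whether there is a $\GG^+\in\underline{\x^\rightarrow}$ with $\family(\GG^+)=\family(\GG)$ (which toggles between the \textsf{L4.1}--\textsf{L4.3} and \textsf{L4.4}--\textsf{L4.5} families of reverse miniswaps), and confirming that the extra edge labels permitted in $\mathcal{C}_1,\mathcal{C}_1'$ never interfere. I would also need to check that $\phi_1$ and its inverse preserve the property of lying in the image of $P_{i_k}$ (i.e.\ that both sides' fibers are nonempty and correspond), which is essentially guaranteed by Lemma~\ref{lem:phi1_welldefined} together with Proposition~\ref{prop:swap/revswap_inversion} and the goodness-preservation propositions, but deserves an explicit remark. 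Everything else is a routine—if lengthy—comparison against the miniswap tables.
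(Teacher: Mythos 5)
Your reduction of the general $k$ to the case $k=1$ (the observation that the patch bullets only ever undergo trivial {\sf L1.2} reverse miniswaps on the way down to an $i_1$-good tableau, and trivial {\sf H3} miniswaps on the way back up, so that $\phi_1$ bijects $\mathfrak{R}_{i_1}(T)$ with $\mathfrak{R}_{i_1}(\phi_1(T))$ with matching swap coefficients) is exactly the paper's reduction step. The gap is in the core case $k=1$. You assert that the full fibers $\mathfrak{R}_{1_1}(T)$ and $\mathfrak{R}_{1_1}(\phi_1(T))$ are in natural bijection with all miniswap coefficients away from the patch agreeing, so that a single local sign does the job. This localization fails: in $T$ the box $\x$ carries a bullet while in $\phi_1(T)$ it carries the genetic label $i_\ell$, so as soon as one reverse-swaps through families $i-1, i-2,\dots,1$ the extra bullet of $T$ migrates away from the patch and changes the ladder/walkway structure over a large region. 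Individual reversal histories of $T$ and $\phi_1(T)$ and their coefficients do \emph{not} match term by term (the walkway reversals also branch differently); only the aggregated sums over the fibers obey the clean relation. Likewise, attributing the sign to a {\sf B3}-versus-{\sf B2} discrepancy at the patch misidentifies the mechanism.

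What the paper does instead, and what your proposal is missing, is an aggregation argument: delete from $T$ (and from $T^\dagger=\phi_1(T)$) all labels of family $\geq i$, all marked labels, and their boxes, keeping the $\bullet_{i_1}$'s. Claim~\ref{claim:content_squeezing} shows the truncations are good, ballot tableaux of a common partition content and common shape $\theta/\lambda$, differing only in that $\widetilde{T}$ has one more $\bullet_{i_1}$ than $\widetilde{T^\dagger}$. Then the global reversal-tree weight identity, Proposition~\ref{prop:Lambdaminusrecweight}, gives the \emph{total} coefficient as $(-1)^{1+\#\bullet\text{'s}}\cdot\wt(\theta/\lambda)$, a quantity depending only on the common shape and the bullet-count parity; the opposite parities produce the factor $-1$. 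Your parity intuition ("one more bullet, so the inner shape changes parity") points in the right direction, but without reducing the coefficient to this shape-dependent global formula the parity observation alone proves nothing, since the path-by-path coefficients genuinely differ. To repair the proposal you would need to import the truncation step together with Proposition~\ref{prop:Lambdaminusrecweight} (hence the whole walkway machinery of Sections~\ref{sec:reversal_tree}--\ref{sec:recurrence_proof}), rather than a patch-local comparison of miniswap tables.
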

\begin{proof}
Let $T^\dagger:=\phi_1(T)$.

\noindent
{\sf Special case $k=1$:} 
Let $\widetilde{T}$ be the tableau obtained from $T$ by deleting:
\begin{itemize}
\item all labels of family $i$ and 
greater; 
\item all marked labels; and
\item all boxes containing a deleted box label. 
\end{itemize} 
Notice that any label SouthEast of a deleted label or a $\bullet_{i_1}$ will have been deleted.

As well we reindex the genes so that the subscripts of each family form an initial segment of ${\mathbb Z}_{>0}$. (This reindexing
is only possibly needed if $T$ contained a marked label.) We leave $\bullet_{i_1}$'s in place. In the same way, produce ${\widetilde{T^\dagger}}$ from $T^\dagger$. By definition of $\phi_1$,
$\widetilde{T}$ has one more $\bullet_{i_1}$ than $\widetilde{T^\dagger}$ and otherwise the two tableaux are exactly the same (the family $i$ labels of ${\mathcal C}_1$ and ${\mathcal C}_1'$ having been deleted). 

 Ignoring $\bullet_{i_1}$'s, $\widetilde{T}, 
\widetilde{T^\dagger}$ are of some common skew shape $\theta / \lambda$. If we include the $\bullet_{i_1}$'s, their respective total shapes are some $\omega/\lambda$ and $\omega^\dagger/\lambda$ where $\omega,\omega^\dagger\in\theta^+$.

\begin{claim}
\label{claim:content_squeezing}
${\widetilde T}\in G_{\lambda,\widetilde\mu}^{\omega}(i_1)$ and ${\widetilde {T^\dagger}}\in G_{\lambda,\widetilde\mu}^{\omega^\dagger}(i_1)$
where $\widetilde\mu$ is a partition (e.g., if $T$ has no marked labels then $\widetilde\mu:=(\mu_1,\mu_2,\ldots,\mu_{i-1})$).
Thus, ${\widetilde T}$ and ${\widetilde{T^\dagger}}$ (with $\bullet_{i_1}$'s removed) are in $B_{\lambda, \widetilde\mu}^{\theta}$.
\end{claim}
\begin{proof}
We prove the claim for ${\widetilde T}$; the proof for ${\widetilde {T^\dagger}}$ is essentially the same. 

 Clearly, (G.1)--(G.7), (G.9) and (G.12) for ${\widetilde T}$ are inherited from the assumption $T$ is good. (G.10), (G.11) and (G.13) are vacuous for ${\widetilde T}$. It remains to show (G.8) holds for ${\widetilde T}$ 
(which moreover implies $\widetilde\mu$ is a partition).

Suppose $\widetilde T$ fails (G.8). Then there is a least $q$ such that $\widetilde T$ has a ballotness violation between 
families $q$ and $q+1$. That is, in some genotype $G$ of
$\widetilde T$ there are more labels of family $q+1$ than of family $q$ in some initial segment of ${\tt word}(G)$.
Since we have deleted all labels of family $i$ and greater, $q < i - 1$. By failure of (G.8), either there exist 
labels $q_r$ and $(q+1)_s$ of $\widetilde{T}$ with $N_{q_r} = N_{(q+1)_s}$ such that $(q+1)_s$ appears before $q_r$ in ${\tt word}(G)$, or else there is a label $(q+1)_s$ of $\widetilde{T}$ with $N_{(q+1)_s} > N_{q_v}$ for all $v$. Let $q_{r'}$ (if $q_r$ exists) and $(q+1)_{s'}$ be the corresponding labels of $T$. We assert in the former case that $N_{q_{r'}} \leq N_{(q+1)_{s'}}$ in $T$.
In the latter case, we assert $N_{(q+1)_{s'}} > N_{q_{v'}}$ in $T$ for all $v'$. Either of these inequalities
contradicts $T$'s (G.8). 

To see these assertions, suppose that $q_h$ is a gene of $T$ that is entirely deleted in the construction of $\widetilde T$ (i.e.\ every instance of $q_h$ in $T$ is marked). 
Consider an instance of $q_h$ in $T$ in $\x$ or $\underline{\x}$. 
Since this $q_h$ is marked and $q < i-1$, by Lemma~\ref{lem:strong_form_of_G13} we know $T$ has some nonvirtual and 
marked $(q+1)^!_{z} \in \underline{\x}$ with $N_{q_h} = N_{(q+1)_{z}}$. 
By $T$'s (G.7), there is no $(q+1)_{z}$ West of $\x$ in $T$. 
By $T$'s Lemma~\ref{lem:how_to_check_ballotness}, there is no $(q+1)_{z}$ East of $\x$  in $T$. 
Hence the $(q+1)^!_{z} \in \underline{\x}$ is the only $(q+1)_{z}$ in $T$. 
Since it is marked, the gene $(q+1)_{z}$ is entirely deleted in $\widetilde T$. 
By this argument, if $q_{\hat h}$ is any other gene of $T$ that is entirely deleted in the construction of $\widetilde T$, there is 
a distinct $(q+1)_{\hat z}$ with $N_{q_{\hat h}}=N_{(q+1)_{\hat z}}$ that is also entirely deleted in $\widetilde T$. 
Hence $N_{q_{r'}} \geq N_{(q+1)_{s'}}$ or $N_{(q+1)_{s'}} > N_{q_{v'}}$ in $T$ for all $v'$, as asserted.

The last sentence of the claim follows from the first by Lemma~\ref{lem:deleting_outer_bullets_ok}, since no genetic label is southeast of a $\bullet_{i_1}$.
\end{proof}

In view of Claim~\ref{claim:content_squeezing}, it makes sense to speak of ${\mathfrak T}_{\widetilde{T}}$ and of 
${\mathfrak T}_{\widetilde{T^\dagger}}$. By Proposition~\ref{prop:Lambdaminusrecweight},
\begin{equation}
\label{eqn:klm912}
\sum_{L \in {\tt leaf}({\mathfrak T}_{\widetilde{T}})}(-1)^{|\rho(L)/\lambda|+1}[\widetilde{T}]{\tt slide}_{\rho(L)/\lambda}(L)
=(-1)^{1 + \text{\# of $\bullet$'s in $\widetilde{T}$}} \cdot \wt(\theta/\lambda).
\end{equation}
Similarly, 
\begin{equation}
\label{eqn:klm913}
\sum_{L \in {\tt leaf}({\mathfrak T}_{\widetilde{T^\dagger}})}(-1)^{|\rho(L)/\lambda|+1}[\phi_1(\widetilde{T})]{\tt slide}_{\rho(L)/\lambda}(L)
=(-1)^{1 + \text{\# of $\bullet$'s in $\widetilde{T^\dagger}$}} \cdot \wt(\theta/\lambda).
\end{equation}
In particular, these quantities differ by a factor of $-1$.

By inspection of the reverse miniswaps, $\revswap_{a_q}$ for $1\leq a\leq i-1$ does not affect
any labels of family $i$ or greater or any labels that are marked in $T$. Hence one sees that
$\revswap_{1_2} \circ \dots \circ  \revswap_{(i-1)_{\mu_{i-1}}}\circ \revswap_{i_1}(T)$ (respectively $T^\dagger$) is the same as
$\revswap_{1_2} \circ \dots \circ \revswap_{(i-1)_{\mu_{i-1}}}\circ\revswap_{i_1}(\widetilde{T})$  (respectively $\widetilde{T^\dagger}$) 
followed by  adding back the
labels of $T \setminus {\widetilde T}$ (respectively $T^\dagger \setminus {\widetilde{T^\dagger}}$).
Therefore,  by our comparison of (\ref{eqn:klm912}) and (\ref{eqn:klm913}) above,
\[[T]P_{i_1} = (-1)^{1 + \text{\# of $\bullet$'s in $\widetilde{T}$}} \cdot \wt(\theta/\lambda) = -[T^\dagger]P_{i_1},\]
as desired.

\noindent
{\sf Reduction to the $k=1$ case:}
In the calculation of $\revswap_{i_2}\circ \revswap_{i_3}\circ\cdots\circ \revswap_{i_k}(T)$ and $\revswap_{i_2}\circ \revswap_{i_3}\circ\cdots\circ \revswap_{i_k}(T^\dagger)$, it is straightforward by inspection that each reverse miniswap involving either $\bullet$ of ${\mathcal C}_1$ or 
the $\bullet$ of ${\mathcal C}_1'$ is {\sf L1.2}.
Therefore there exists an instance of $\mathcal{C}_1$ in each $W \in \mathfrak{R}_{i_1}(T)$ and an instance of $\mathcal{C}_1'$ in each $W' \in \mathfrak{R}_{i_1}(T^\dagger)$. By Lemma~\ref{lemma:C1unique}, these instances are unique. Extending $\phi_1$ linearly, since $T$ and $T^\dagger$ are the same outside of the regions $\mathcal{C}_1$ and $\mathcal{C}_1'$, it is easy to see inductively that for all $2 \leq q \leq k$, \[\phi_1(\revswap_{i_q} \circ \dots \circ \revswap_{i_k}(T)) = \revswap_{i_q} \circ \dots \circ \revswap_{i_k}(T^\dagger).\] In particular, $\phi_1$ bijects $\mathfrak{R}_{i_1}(T)$ with $\mathfrak{R}_{i_1}(T^\dagger)$. 

Let $V \in \mathfrak{R}_{i_1}(T)$. By the $k=1$ case above, $[V]P_{i_1} = -[\phi_1(V)]P_{i_1}.$ Moreover, when we apply $\swap_{i_k^-} \circ \dots \circ \swap_{i_1}$ to $V$ and $\phi_1(V)$, 
each miniswap involving a $\bullet$ of ${\mathcal C}_1$ or ${\mathcal C}_1'$ is {\sf H3}. Hence, 
$[T]\swap_{i_k^-} \circ \dots \circ \swap_{i_1}(V) = [T^\dagger]\swap_{i_k^-} \circ \dots \circ \swap_{i_1}(\phi_1(V))$. Thus by Lemma~\ref{lem:back_and_forth},
$[T]P_{i_k} = -[T^\dagger]P_{i_k}$.
\end{proof}

Let ${\mathcal S}_2$ be the subset of tableaux in $G_{\lambda,\mu}^{\nu}(i_k)$
with a box $\x$ such that $\bullet_{i_k} \in \x$, $\bullet_{i_k} \in \x^{\rightarrow\uparrow}$, $i_{k+1} \in \x^\rightarrow$ and $i_k \in \underline{\x}$, i.e.\ locally the tableau is
\ytableausetup{boxsize=1.6em}
$\mathcal{C}_2 = \Scale[0.8]{\begin{picture}(40,37)
\put(0,19){\ytableaushort{\none {\bullet_{i_k}}, {\bullet_{i_k}} {i_{k+1}}}}
\put(7,-4){$i_k$}
\end{picture}}$ (with possibly additional edge labels). Let ${\mathcal S}_2'$ be the subset of tableaux in
$G_{\lambda,\mu}^{\nu}(i_k)$ with a box $\x$ such that $i_k \in \x$, $i_{k+1} \in \x^\rightarrow$, $\bullet_{i_k} \in \x^{\rightarrow\uparrow}$, no $i_k$ appears outside of $\x$ and no $\bullet_{i_k}$ appears West of $\x$ in $\x$'s row. Locally the tableau is
${\mathcal C}_2'=\Scale[0.8]{\begin{picture}(40,37)
\put(0,17){\ytableaushort{\none {\bullet_{i_k}}, {i_k} {i_{k+1}}}}
\end{picture}}$ (with possibly additional edge labels). 

\begin{lemma}
If $T\in {\mathcal S}_2$ (respectively, ${\mathcal S}_2'$), there is a unique ${\mathcal C}_2$ (respectively, ${\mathcal C}_2'$) that it contains.
\end{lemma}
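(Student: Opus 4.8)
The plan is to mirror exactly the proof of Lemma~\ref{lemma:C1unique} (the uniqueness statement for $\mathcal{C}_1$ and $\mathcal{C}_1'$), since the configurations $\mathcal{C}_2$ and $\mathcal{C}_2'$ are structurally the same as $\mathcal{C}_1$ and $\mathcal{C}_1'$ with the roles of $i_\ell$ and $\circled{i_\ell}$ replaced by $i_k$ (an unmarked genetic box label) and $i_k \in \underline{\x}$ (an unmarked edge label). So first I would fix a choice of $\mathcal{C}_2$ in $T \in \mathcal{S}_2$, letting $\x$ be its lower-left box. The key observation is that since $i_k \in \underline{\x}$ is an (unmarked) edge label, condition (G.6) forces the $i_{k+1} \in \x^\rightarrow$ to be westmost among all instances of $i_{k+1}$ in $T$: indeed if some $i_{k+1}$ were West of $\x^\rightarrow$, it would be West of the $i_k$ on $\underline{\x}$ (since $\underline{\x}$ is on the same row, or more precisely using (G.6) together with the positions), contradicting (G.6)'s requirement that family-$i$ labels have weakly increasing subscripts from west to east. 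Once $\x^\rightarrow$ is pinned down as the box containing the westmost $i_{k+1}$, the box $\x = (\x^\rightarrow)^\leftarrow$ is determined, and then all the remaining data of $\mathcal{C}_2$ (the $\bullet_{i_k}$'s in $\x$ and $\x^{\rightarrow\uparrow}$, the $i_k$ on $\underline{\x}$) is determined, giving uniqueness.

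For the $\mathcal{C}_2'$ case the argument is the same: fix a choice of $\mathcal{C}_2'$ with lower-left box $\x$; now the relevant hypothesis is that $i_k \in \x$ with no other $i_k$ appearing outside $\x$. But I still want to conclude that $i_{k+1} \in \x^\rightarrow$ is westmost. Here I would use (G.6) applied to the $i_k \in \x$ and the $i_{k+1}$: any $i_{k+1}$ strictly West of $\x^\rightarrow$ would have to be weakly West of $\x$, hence (by (G.6), since $k \leq k+1$) this is permitted positionally only if... — actually the cleaner route, exactly as in Lemma~\ref{lemma:C1unique}, is to note that the $i_k \in \x$ together with any $i_{k+1}$ West of $\x^\rightarrow$ would force, via (S.1)/(G.3)-increasingness along the row, a contradiction, or more directly: by (G.6) an $i_{k+1}$ cannot be West of the $i_k \in \x$, and it cannot be in $\x$ itself by (G.4)/(G.5), so the westmost $i_{k+1}$ is in $\x^\rightarrow$ or further east; combined with the hypothesis that $\bullet_{i_k} \in \x^{\rightarrow\uparrow}$ and (G.9) (no label greater than $i_k$ northwest of that bullet... ), one pins down $\x^\rightarrow$. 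I would simply invoke ``the argument for the other claim is the same, except we replace `$i_k \in \underline{\x}$' with `$i_k \in \x$' '' exactly as the paper does for $\mathcal{C}_1'$.

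The main (minor) obstacle is making sure the parenthetical clause ``$i_k$ appears outside of $\x$'' in the definition of $\mathcal{S}_2'$ interacts correctly: in $\mathcal{S}_2'$ we are told no $i_k$ appears outside $\x$, so that side condition is automatic, whereas in $\mathcal{S}_1'$ we needed $i_\ell$ to appear outside $\x$. I should double check the uniqueness argument does not secretly use that distinction — it does not, since uniqueness only needs the westmostness of $i_{k+1} \in \x^\rightarrow$, which comes purely from (G.6) and the hypotheses on $\x$. Given all of this, the write-up is essentially one short paragraph.

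\begin{proof}
Let $\x$ be the lower-left box of any fixed choice of $\mathcal{C}_2$ in $T$.
Since $i_k \in \underline{\x}$, the $i_{k+1} \in \x^\rightarrow$ is westmost in $T$ by (G.6). Hence this configuration is unique.
The argument for the other claim is the same, except we replace ``$i_k \in \underline{\x}$'' with ``$i_k \in \x$''.
\end{proof}
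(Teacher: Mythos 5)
Your proof is correct, but it takes a (mildly) different route from the paper's. You transplant the argument of Lemma~\ref{lemma:C1unique}: you pin down the configuration by showing the $i_{k+1}\in\x^\rightarrow$ is the westmost member of its gene, deducing this from the $i_k$ on $\underline{\x}$ (resp.\ in $\x$) via (G.6). The paper instead anchors uniqueness on the $i_k$ itself: for $\mathcal{C}_2$ it invokes (G.7), so the edge label $i_k\in\underline{\x}$ is maximally west in its gene (hence, with (G.4)/(G.5), it is the unique edge $i_k$ in $T$), which determines $\x$; and for $\mathcal{C}_2'$ it simply uses the defining hypothesis of $\mathcal{S}_2'$ that no $i_k$ appears outside $\x$, so the $i_k$ is literally unique. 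Both arguments are one-liners and both share the same small unstated step (ruling out a second instance of the relevant gene in the same column, which needs (G.4)/(G.5) and the observation that a family-$i$ label $\succ i_k$ cannot be marked in an $i_k$-good tableau), so your write-up is at the same level of rigor as the paper's proof of Lemma~\ref{lemma:C1unique}. What your version buys is uniformity — the same sentence handles $\mathcal{C}_2$ and $\mathcal{C}_2'$, and, as you correctly note, it never uses the ``$i_k$ appears only in $\x$'' clause of $\mathcal{S}_2'$; what the paper's version buys is directness, since (G.7) makes the edge label itself the canonical witness rather than routing through the neighboring $i_{k+1}$.
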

\begin{proof}
Let $\x$ be the southwestmost box of a $\mathcal{C}_2$ in $T$.
By (G.7), the $i_k \in \underline{\x}$ is the westmost $i_k$ in $T$; hence this configuration is unique.
The claim about ${\mathcal C}_2'$ is clear since the $i_k$ is unique.
\end{proof}

For $T\in {\mathcal S}_2$, let $\phi_2(T)$ be $T$ with the unique ${\mathcal C}_2$ replaced 
by ${\mathcal C}_2'$. 

\begin{lemma}\label{lem:phi2_welldefined}
$\phi_2:{\mathcal S}_2\to {\mathcal S}_2'$ is a bijection.
\end{lemma}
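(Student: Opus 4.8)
The plan is to mimic exactly the proof of Lemma~\ref{lem:phi1_welldefined}, since $\phi_2$ is the evident analogue of $\phi_1$ (replacing the virtual label $\circled{i_\ell} \in \underline{\x}$ by a genetic label $i_k \in \underline{\x}$, and the requirement that ``$i_\ell$ appears outside of $\x$'' by ``no $i_k$ appears outside of $\x$''). First I would introduce the putative inverse $\phi_2^{-1}:\mathcal{S}_2' \to \mathcal{S}_2$, defined by replacing the unique $\mathcal{C}_2'$ in a tableau $T' \in \mathcal{S}_2'$ by $\mathcal{C}_2$; as in Lemma~\ref{lem:phi1_welldefined}, once $\phi_2$ and $\phi_2^{-1}$ are shown to be well-defined (i.e.\ land in $\mathcal{S}_2'$, respectively $\mathcal{S}_2$) we are done, since the two maps are manifestly injective and mutually inverse.

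The core of the argument is the well-definedness checks, which amount to verifying conditions (G.1)--(G.13). For $\phi_2$, let $T \in \mathcal{S}_2$ and let $\x$ be the southwestmost box of its unique $\mathcal{C}_2$. Conditions (G.1), (G.2) hold trivially; (G.3) holds because any $\FF \in \x^\leftarrow$ satisfies $\FF \prec i_k$ by $T$'s (G.9), so $\FF \prec i_k$ continues to sit left of $i_k \in \x$. Conditions (G.4), (G.6), (G.8) in $\phi_2(T)$ follow because $i_k$ already appeared on $\underline{\x}$ in $T$, so nothing about the relative order of family-$i$ labels has changed in a way that could create a violation — indeed moving $i_k$ from the edge $\underline{\x}$ into the box $\x$ (while the box previously held $\bullet_{i_k}$) cannot break column-strictness since there is no family-$i$ label South of $\x$ in $\x$'s column (by $T$'s (G.9), considering $\bullet_{i_k} \in \x^{\rightarrow\uparrow}$ and (G.2)). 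Conditions (G.5), (G.7), (G.9), (G.11) hold trivially; and since $i_{k+1} \in \x^\rightarrow$ is unmarked, Lemma~\ref{lem:strong_form_of_G10}(II) shows there are no marked labels in $\x$'s row of $T$, giving (G.10) and (G.13). For (G.12), suppose $\ell$ NorthWest of $\x$ and $\ell'$ southeast of $\x$ form a same-family pair with no intervening $\bullet_{\GG}$; then as in the $\phi_1$ argument $\family(\ell) = \family(\ell') = i$ and one checks via $T$'s (G.4), (G.9), (G.11) that $\ell, \ell'$ must both be box labels in $\x$'s row, contradicting the configuration. The check that $\phi_2^{-1}$ is well-defined is symmetric: one must verify (G.7) and (G.13) for $\phi_2^{-1}(T')$ and that (G.1)--(G.12) survive promoting the edge $i_k$ to a nonvirtual edge label — but here the label $i_k$ being moved is genuinely nonvirtual already, so the (V.1)--(V.3) bookkeeping is even simpler than in Lemma~\ref{lem:phi1_welldefined}. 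The key point for (G.2) of $\phi_2^{-1}(T')$ is that $T'$ has no $\bullet_{i_k}$ South of $\x$ in its column (ruled out by (G.9)) and none West of $\x$ in its row (ruled out by the definition of $\mathcal{S}_2'$); and for (G.11), that there is no marked label in $\x$'s column, which follows from Lemma~\ref{lem:strong_form_of_G10} and (G.2) above $\x$ (using $\bullet_{i_k} \in \x^{\rightarrow\uparrow}$) and from (G.4) below $\x$.

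The main obstacle, as in the $\phi_1$ case, is the (G.8) ballotness check for $\phi_2^{-1}$: moving $i_k$ off the edge $\underline{\x}$ and into $\x$ could in principle disturb a genotype's ballot count. The decisive observation is that in $\mathcal{C}_2'$ the genetic label $i_k \in \x$ is \emph{the only} instance of the gene $i_k$ in $T'$ (by the definition of $\mathcal{S}_2'$), so every genotype chooses exactly this copy, and the reading word position of $i_k$ is unchanged whether it is recorded as a box label of $\x$ or an edge label of $\underline{\x}$ — hence ${\tt word}(G)$ is literally unchanged and ballotness transfers verbatim. Thus no new argument is required beyond the template already established; I expect the whole proof to read ``The argument is entirely analogous to Lemma~\ref{lem:phi1_welldefined}, replacing the virtual label $\circled{i_\ell}$ throughout by the genetic label $i_k$ and using that $i_k \in \x$ is the unique instance of its gene in a tableau of $\mathcal{S}_2'$; we omit the details,'' possibly with the one or two points above spelled out for the reader's convenience.
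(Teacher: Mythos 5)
Your proposal is correct and takes exactly the paper's approach: the paper's entire proof is the remark that the lemma ``may be proved almost exactly as Lemma~\ref{lem:phi1_welldefined},'' which is precisely the adaptation you carry out. The only detail worth adding is that well-definedness of $\phi_2$ also requires the non-(G.$n$) clause of $\mathcal{S}_2'$ that no $i_k$ appears outside $\x$ in $\phi_2(T)$ --- i.e.\ that the edge label $i_k\in\underline{\x}$ is the unique instance of its gene in $T\in\mathcal{S}_2$ --- which, unlike the analogous clause in the $\phi_1$ case (immediate from (V.2)), needs a word of justification: any other $i_k$ would have to lie East of the $i_{k+1}\in\x^\rightarrow$, contradicting (G.6), once columns weakly west of $\x^\rightarrow$ are ruled out by (G.7), (G.4) and (G.9).
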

\begin{proof}
This may be proved almost exactly as Lemma~\ref{lem:phi1_welldefined}.
\end{proof}

\begin{proposition}\label{prop:bundle_buddies:expseesaw/empty}
For each $T \in \mathcal{S}_2$, $[T]P_{i_k} = -[\phi_2(T)]P_{i_k}$.
\end{proposition}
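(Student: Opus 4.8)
The plan is to imitate closely the proof of Proposition~\ref{prop:bundle_buddies:poofseesaw/empty}, since the configurations $\mathcal{C}_2, \mathcal{C}_2'$ differ from $\mathcal{C}_1, \mathcal{C}_1'$ only in that the family-$i$ label on $\underline{\x}$ (resp.\ in $\x$) has subscript $k$ rather than some $\ell \geq k$, and the relevant $\bullet$'s are $\bullet_{i_k}$ rather than $\bullet_{i_k}$ with a trailing $i_{\ell+1}$. First I would record, via Lemma~\ref{lem:phi2_welldefined}, that $\phi_2$ is a bijection $\mathcal{S}_2 \to \mathcal{S}_2'$ fixing everything outside the unique $\mathcal{C}_2$/$\mathcal{C}_2'$ patch. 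Then, exactly as in the special case $k=1$ of Proposition~\ref{prop:bundle_buddies:poofseesaw/empty}, I would form $\widetilde{T}$ and $\widetilde{T^\dagger}$ (with $T^\dagger := \phi_2(T)$) by deleting all labels of family $\geq i$, all marked labels, and all boxes whose box label was deleted, then reindexing so subscripts of each family form an initial segment; the key point is that $\widetilde{T}$ has exactly one more $\bullet_{i_1}$ (after relabeling $\bullet_{i_k}$ appropriately in the reduction step — see below) than $\widetilde{T^\dagger}$ and the two tableaux otherwise agree, because the family-$i$ labels of $\mathcal{C}_2$ and $\mathcal{C}_2'$ get deleted and the two $\bullet$'s of $\mathcal{C}_2$ versus the one $\bullet$ of $\mathcal{C}_2'$ are the sole discrepancy.

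Next I would invoke the analogue of Claim~\ref{claim:content_squeezing}: both $\widetilde{T}$ and $\widetilde{T^\dagger}$ lie in $G_{\lambda,\widetilde{\mu}}^{\omega}(i_1)$ (resp.\ $G_{\lambda,\widetilde{\mu}}^{\omega^\dagger}(i_1)$) for a common partition $\widetilde{\mu}$ and common ``undotted'' skew shape $\theta/\lambda$, with $\omega, \omega^\dagger \in \theta^+$. The proof of ballotness (G.8) for the truncated tableaux is \emph{verbatim} the one in Claim~\ref{claim:content_squeezing}: a fully deleted gene $q_h$ of $T$ (every instance marked, $q < i-1$) forces, by Lemma~\ref{lem:strong_form_of_G13}, a marked $(q+1)^!_z \in \underline{\x}$ with $N_{q_h} = N_{(q+1)_z}$, which by (G.7) and Lemma~\ref{lem:how_to_check_ballotness} is the unique instance of $(q+1)_z$ and hence is also fully deleted, giving the needed injection on deleted genes. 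Applying Proposition~\ref{prop:Lambdaminusrecweight} to $\mathfrak{T}_{\widetilde{T}}$ and to $\mathfrak{T}_{\widetilde{T^\dagger}}$ yields that
\[
\sum_{L} (-1)^{|\rho(L)/\lambda|+1}[\widetilde{T}]{\tt slide}_{\rho(L)/\lambda}(L) = (-1)^{1+\#\bullet\text{'s in }\widetilde{T}}\,\wt(\theta/\lambda)
\]
and similarly for $\widetilde{T^\dagger}$, so the two quantities differ by $-1$; tracking back that $\revswap_{a_q}$ for $a \leq i-1$ leaves family-$\geq i$ and marked labels of $T$ alone, this gives $[T]P_{i_1} = -[T^\dagger]P_{i_1}$ in the $k=1$ case.

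Finally I would carry out the reduction to $k=1$ exactly as at the end of Proposition~\ref{prop:bundle_buddies:poofseesaw/empty}: in computing $\revswap_{i_2} \circ \cdots \circ \revswap_{i_k}(T)$ and of $T^\dagger$, each reverse miniswap touching a $\bullet$ of $\mathcal{C}_2$ or $\mathcal{C}_2'$ is {\sf L1.2} (here one must double-check the {\sf L1.2} hypothesis $\GG \notin \x^\uparrow$ holds for the relevant boxes, using (G.2) and (G.9) as in the {\sf H3} verification in Claim~\ref{claim:patcha}), so the patches $\mathcal{C}_2$, $\mathcal{C}_2'$ persist and are unique in each $W \in \mathfrak{R}_{i_1}(T)$, $W' \in \mathfrak{R}_{i_1}(T^\dagger)$; extending $\phi_2$ linearly, one gets inductively $\phi_2(\revswap_{i_q} \circ \cdots \circ \revswap_{i_k}(T)) = \revswap_{i_q} \circ \cdots \circ \revswap_{i_k}(T^\dagger)$ and hence a bijection $\mathfrak{R}_{i_1}(T) \leftrightarrow \mathfrak{R}_{i_1}(T^\dagger)$. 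For $V \in \mathfrak{R}_{i_1}(T)$, the $k=1$ case gives $[V]P_{i_1} = -[\phi_2(V)]P_{i_1}$, and when we forward-swap $\swap_{i_k^-} \circ \cdots \circ \swap_{i_1}$, every miniswap involving a $\bullet$ of $\mathcal{C}_2$ or $\mathcal{C}_2'$ is {\sf H3}, so $[T]\swap_{i_k^-} \circ \cdots \circ \swap_{i_1}(V) = [T^\dagger]\swap_{i_k^-} \circ \cdots \circ \swap_{i_1}(\phi_2(V))$; summing over $V$ and using Lemma~\ref{lem:back_and_forth} yields $[T]P_{i_k} = -[T^\dagger]P_{i_k}$, as desired. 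The main obstacle I anticipate is verifying the {\sf L1.2}/{\sf H3} case-determinations for the $\bullet$'s of $\mathcal{C}_2$ and $\mathcal{C}_2'$ — i.e.\ that the two $\bullet_{i_k}$'s stacked as in $\mathcal{C}_2$ and the lone $\bullet_{i_k}$ of $\mathcal{C}_2'$ genuinely reverse and forward-swap trivially at every intermediate stage; everything else is a routine transcription of the $\mathcal{S}_1$ argument, with $i_{\ell+1}$ specialized to $i_{k+1}$.
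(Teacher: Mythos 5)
Your \textsf{Special case $k=1$} is fine --- it is exactly the paper's argument, which indeed just cites the $k=1$ case of Proposition~\ref{prop:bundle_buddies:poofseesaw/empty} verbatim. The gap is the reduction to $k=1$: the step you flag as ``the main obstacle'' is not a routine verification but is actually false, and it is precisely where $\mathcal{S}_2$ differs from $\mathcal{S}_1$. In $\mathcal{S}_1$ the edge label on $\underline{\x}$ is the \emph{virtual} $\circled{i_\ell}$ with $\ell \geq k$, so by (V.2) a genuine $i_\ell$ sits West of $\x$; combining this with (G.4) and (G.6) rules out an unmarked $i_{k-1}$ in $\x^\leftarrow$, which is what makes every reverse miniswap touching the $\bullet$'s an {\sf L1.2}. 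In $\mathcal{S}_2$ the $i_k \in \underline{\x}$ is a genuine label, westmost in its gene, and nothing in the definition of $\mathcal{S}_2$ forbids an unmarked $i_{k-1} \in \x^\leftarrow$ (this is the generic walkway situation, and it is compatible with (G.6), (G.4) and (G.12)). When it occurs, $\revswap_{i_k}$ sees the ladder row $\{\x^\leftarrow,\x\}$ of type {\sf L4} --- in fact {\sf L4.1}--{\sf L4.3}, since the genuine $i_k$ lies on $\underline{\x}$ --- so the $\bullet$ moves into $\x^\leftarrow$ and the edge label $i_k$ is lifted into the box $\x$: the patch $\mathcal{C}_2$ does not persist, and $\phi_2$ does not commute with reversal. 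Worse, no bijection $\mathfrak{R}_{i_1}(T) \to \mathfrak{R}_{i_1}(T^\dagger)$ of the sort you posit can exist in general: because the westmost $i_k$ of $T^\dagger$ is a box label while in $T$ it is pinned to $\underline{\x}$ beneath a $\bullet_{i_k}$, the $i_k$-walkway through $\x$ reverses \emph{uniquely} in $T$ but in \emph{two} ways in $T^\dagger$ (the modified Lemmas~\ref{lemma:onerowreversalcharacterization}(II) versus (III) and \ref{lemma:tworowreversalcharacterization}(II) versus (III)), so $\mathfrak{R}_{i_1}(T^\dagger)$ is strictly larger than $\mathfrak{R}_{i_1}(T)$. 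The parallel claim that all forward miniswaps at these $\bullet$'s are {\sf H3} fails for the same reason: sliding the common reversal $R'$ back up produces both $W$ and $W^\dagger$ with nontrivial coefficients.

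This is why the paper's proof of the present proposition abandons the $\mathcal{S}_1$-style patch argument for $k>1$ and instead introduces $i_k$-walkways: it shows reverse swapping acts independently on the walkway $W$ through $\x$ and on its complement (Claim~\ref{claim:i_k_walkway_reversals}), identifies $\mathfrak{R}_{i_1}(T) \subseteq \mathfrak{R}_{i_1}(T^\dagger)$ via the inclusion $\iota$ with $\mathfrak{R}_{i_1}(T^\dagger) = \im \iota \sqcup \im f$ (where $f$ replaces the reversal $R'$ of the walkway by $R$), and then uses Claim~\ref{claim:forward_swap_walkway_coeffs} to compute that sliding $R$ recovers $W^\dagger$ with coefficient $1$ while sliding $R'$ yields $\beta W + (1-\beta) W^\dagger$; combining this with the $k=1$ sign relation for the west end of the walkway gives $[T]P_{i_k} = \alpha\beta\,[V]P_{i_1}$ and $[T^\dagger]P_{i_k} = \bigl(\alpha(1-\beta)-\alpha\bigr)[V]P_{i_1} = -\alpha\beta\,[V]P_{i_1}$, whence the minus sign (with a parallel computation in the multirow case). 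So your proposal needs to be replaced from the reduction step onward by an argument of this walkway type; a transcription of the $\mathcal{S}_1$ reduction does not go through.
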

\begin{proof}
Let $T^\dagger:=\phi_2(T)$.
Let $\x$ be the southwestmost box of ${\mathcal C}_2$ in $T$. Then $\x$ is also the southwestmost box of
${\mathcal C}_1'$ in $T^\dagger$. 

\noindent
{\sf Special case $k=1$:} 
The proof is \emph{verbatim} the argument for the $k=1$ case of Proposition~\ref{prop:bundle_buddies:poofseesaw/empty}.

\noindent
{\sf Reduction to the $k=1$ case:}
Suppose $k>1$. Let $\mathcal{Z}$ be the set of boxes in an $i_k$-good tableau that either (1) contain $\bullet_{i_k}$ 
or (2) contain a label $\FF$ with
$i_1\preceq \FF\preceq i_{k-1}$ and are not southeast 
of a $\bullet_{i_k}$. Call an edge connected component of ${\mathcal Z}$ an {\bf $i_k$-walkway}.
We will now apply the development of $i$-walkways,
from Sections~\ref{sec:reversal_tree} and~\ref{sec:recurrence_proof}, 
in slightly modified form to the
{\bf $i_k$-walkways}.
To be more precise, Lemmas~\ref{lem:walkway_structure}, 
\ref{lem:connection_walkways_to_ladders}, \ref{lemma:onerowreversalcharacterization} and~\ref{lemma:tworowreversalcharacterization} are true
after replacing ``$(i+1)_1$'' with ``$i_k$'' and
``$i$-walkway'' with ``$i_k$-walkway''. In addition, Claim~\ref{claim:forward_swap_walkway_coeffs} holds \emph{verbatim}. 
The proofs are trivial modifications of those given. 

Let $W$ be the $i_k$-walkway of $T$ containing $\x$ 
($W$ includes all edges of boxes in $W$). Let $W^\dagger$ be the analogous $i_k$-walkway of $T^\dagger$. Note that $W$ and $W^\dagger$ have the same skew shape.

\begin{claim}
\label{claim:i_k_walkway_reversals}
Let ${\mathbb S}$, ${\mathbb S}'$ and ${\mathbb T}$ 
be respectively the set of reversals of $W$, $W^\dagger$ and $W^c$ (the complement of $W$)
under $\revswap_{i_2}\circ\cdots\circ \revswap_{i_k}$. Then:
\begin{itemize}
\item[(I)] ${\mathfrak R}_{i_1}(T)=\{V\in G_{\lambda,\mu}^\nu(i_1) : V|_{W}\in {\mathbb S}, V|_{W^c}\in {\mathbb T}\}$
\item[(II)] ${\mathfrak R}_{i_1}(T^\dagger)=\{V'\in G_{\lambda,\mu}^\nu(i_1) : V'|_{W^\dagger}\in {\mathbb S}', V'|_{(W^\dagger)^c}\in {\mathbb T}\}$
\end{itemize}
\end{claim}
\begin{proof}
We prove only (I), as the proof of (II) is similar (using $T|_{W^c}=T^\dagger|_{W^c}$).
Fix $2\leq h\leq k$ and let $L$ be a ladder of $A\in{\mathfrak R}_{i_h}(T)$. 
$L$ contains only $\bullet_{i_h}$ and unmarked $i_{h-1}$. 
Each of the boxes $\x$ of $L$ is in ${\mathcal Z}$:
This is clear if $h=k$ and follows for smaller $h$ by induction.  
Thus $L \subseteq {\mathcal Z}$.
Therefore, since $L$ is edge connected, it sits inside an
edge connected component of ${\mathcal Z}$. Thus, since
$W$ is one such component,
reverse swapping acts independently on $W$ and $W^c$. 
\end{proof}

\noindent
{\sf Case 1: ($W$ (and hence $W^\dagger$) has a single row):} By construction, $\x$ is the eastmost box of $W$
and $W^\dagger$.  By Lemma~\ref{lemma:onerowreversalcharacterization}(II), for every $V \in \mathfrak{R}_{i_1}(T)$, $V|_W = R'$. By 
Lemma~\ref{lemma:onerowreversalcharacterization}(III), for every $V' \in \mathfrak{R}_{i_1}(T^\dagger)$, $V'|_{W^\dagger} \in \{R, R'\}$ 
where this $R'$ is the same as in the previous sentence.  

Since $R'$ is the unique reversal of $W$ and is a reversal of $W^{\dagger}$, we have
${\mathfrak R}_{i_1}(T)\subseteq {\mathfrak R}_{i_1}(T^\dagger)$ by Claim~\ref{claim:i_k_walkway_reversals}.
Let $\iota : \mathfrak{R}_{i_1}(T) \to \mathfrak{R}_{i_1}(T^\dagger)$ be the inclusion map. Let $f : \mathfrak{R}_{i_1}(T) \to \mathfrak{R}_{i_1}(T^\dagger)$ 
be the map given by replacing the $R'$ occupying the region $W$ with $R$. 
Again appealing to Claim~\ref{claim:i_k_walkway_reversals} we see that these maps are well-defined, injective and  
$\mathfrak{R}_{i_1}(T^\dagger) = \im \iota \sqcup \im f$. 

By Claim~\ref{claim:forward_swap_walkway_coeffs}(III), forward swapping 
$R$ produces $W^\dagger$ with coefficient $1$. By Claim~\ref{claim:forward_swap_walkway_coeffs} (part (I.i) or (I.ii), as appropriate) 
forward swapping $R'$ produces $\beta W + (1 - \beta) W^\dagger$ for some $\beta$.  Moreover,
when applying $\swap_{i_{k-1}} \circ \dots \circ \swap_{i_1}$ to $V\in  \mathfrak{R}_{i_1}(T)$
or $V'\in  \mathfrak{R}_{i_1}(T^\dagger)$, every snake lies entirely inside some 
edge-connected component of $\mathcal{Z}$. $W$ is one of these components. 
Thus, for each $V\in \mathfrak{R}_{i_1}$, $[T] \swap_{i_{k-1}} \circ \dots \circ \swap_{i_1}(V)$ factors as a contribution from the region $W$ times a contribution from $\mathcal{Z} \setminus W$. That is, for the same $\alpha$,
\[ \sum_{V \in \mathfrak{R}_{i_1}(T)} [T] \swap_{i_{k-1}} \circ \dots \circ \swap_{i_1}(V) =  \alpha\beta, \ \sum_{V' \in \mathfrak{R}_{i_1}(T^\dagger)} [T^\dagger] \swap_{i_{k-1}} \circ \dots \circ \swap_{i_1}(V') =  \alpha\]
\[\sum_{V' \in \mathfrak{R}_{i_1}(T^\dagger)} [T^\dagger] \swap_{i_{k-1}} \circ \dots \circ \swap_{i_1}(V') =  \alpha(1-\beta).\]

Therefore,  
\begin{align*}
[T]P_{i_k}& = \sum_{V \in \mathfrak{R}_{i_1}(T)} [V]P_{i_1} \cdot [T] \swap_{i_{k-1}} \circ \dots \circ \swap_{i_1}(V) = [V]P_{i_1} \alpha \beta,
\end{align*}
while
\begin{align*}
 [T^\dagger]P_{i_k} &= \sum_{V' \in \mathfrak{R}_{i_1}(T^\dagger)} [V']P_{i_1} \cdot [T^\dagger] \swap_{i_{k-1}} \circ \dots \circ \swap_{i_1}(V') \\
&= \sum_{V \in \mathfrak{R}_{i_1}(T)} [\iota(V)]P_{i_1} \cdot [T^\dagger] \swap_{i_{k-1}} \circ \dots \circ \swap_{i_1}(\iota(V))\\
&+ \sum_{V \in \mathfrak{R}_{i_1}(T)} [f(V)]P_{i_1} \cdot [T^\dagger] \swap_{i_{k-1}} \circ \dots \circ \swap_{i_1}(f(V)) 
\end{align*}
\begin{align*}
&= \sum_{V \in \mathfrak{R}_{i_1}(T)} [V]P_{i_1} \cdot [T^\dagger] \swap_{i_{k-1}} \circ \dots \circ \swap_{i_1}(V) \\
&- \sum_{V \in \mathfrak{R}_{i_1}(T)} [V]P_{i_1} \swap_{i_{k-1}} \circ \dots \circ \swap_{i_1}(f(V)) \\
&= [V]P_{i_k} (\alpha(1-\beta) - \alpha).
\end{align*}
Now, $[T]P_{i_k} = -[T^\dagger]P_{i_k}$ follows.

\noindent
{\sf Case 2: ($W$ (and hence $W^\dagger$) has at least two rows):} There are three cases to consider, corresponding to the case of 
 Lemma~\ref{lemma:tworowreversalcharacterization}.

In Cases (I) and (II) of Lemma~\ref{lemma:tworowreversalcharacterization}, $W$ and $W^\dagger$ have a unique reversal $R$. By Claim~\ref{claim:forward_swap_walkway_coeffs}(III) or Claim~\ref{claim:forward_swap_walkway_coeffs}(II) respectively, 
forward swapping $R$ produces $\beta W - \beta W^\dagger$ for some $\beta$. 
In Case (III) of Lemma~\ref{lemma:tworowreversalcharacterization}, $W$ and $W^\dagger$ share the same pair of reversals $R, R'$. By Claim~\ref{claim:forward_swap_walkway_coeffs}(III) and (I.i), forward swapping $R$ produces $\beta W - \beta W^\dagger$ for some $\beta$, while forward swapping $R'$ produces $\beta' W - \beta' W^\dagger$ for some $\beta'$. Using these facts, one may argue similarly to {\sf Case 1} to deduce $[T]P_{i_k} = -[T^\dagger]P_{i_k}$.
\end{proof}

Let ${\mathcal S}_{3}$ be the subset of
tableaux in $G_{\lambda, \mu}^\nu(i_k)$ with a box $\x$ such that $\bullet_{i_k} \in \x$, $i_k \in \x^\rightarrow$ and $i_k \in \underline{\x}$, i.e. locally the tableau is
\ytableausetup{boxsize=1.6em}
${\mathcal C}_3=\Scale[0.8]{\begin{picture}(40,22)
\put(0,0){$\ytableaushort{{\bullet_{i_k}} {i_k}}$.}
\put(5,-3){$i_k$}
\end{picture}}$ (with possibly additional edge labels). 
Let ${\mathcal S}_{3}'$ be the subset of tableaux in $G_{\lambda,\mu}^\nu(i_k)$ with a box $\x$ such that $\bullet_{i_k} \in \x$, $i_k \in \x^\rightarrow$, no $i_k$ appears West of $\x^\rightarrow$, $i_{k-1}\notin\x^\leftarrow$, and
$(i+1)_h\notin\underline{\x^\rightarrow}$ where $N_{i_k}=N_{(i+1)_h}$. Locally the tableau is
${\mathcal C}_3'=\Scale[0.8]{\begin{picture}(42,22)
\put(0,0){$\ytableaushort{{\bullet_{i_k}} {i_k}}$}
\end{picture}}$ (with possibly additional edge labels).

\begin{lemma}
\label{lemma:C3unique}
If $T\in {\mathcal S}_3$ (respectively, ${\mathcal S}_3'$), there is a unique ${\mathcal C}_3$ (respectively, 
${\mathcal C}_3'$) that it contains.
\end{lemma}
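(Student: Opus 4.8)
The statement asserts the uniqueness of the configuration $\mathcal{C}_3$ (respectively $\mathcal{C}_3'$) inside a tableau $T$ that lies in $\mathcal{S}_3$ (respectively $\mathcal{S}_3'$). The overall approach mirrors the proofs of Lemmas~\ref{lemma:C1unique} and~\ref{lem:phi1_welldefined} above: pin down the box $\x$ using a westmost-ness argument from ballotness, then observe that the remaining data of $\mathcal{C}_3$ is determined once $\x$ is fixed. First I would fix a choice of $\mathcal{C}_3$ in $T$ and let $\x$ be its lower-left box, so that $\bullet_{i_k}\in\x$, $i_k\in\x^\rightarrow$ and $i_k\in\underline{\x}$. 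The key point is that the edge label $i_k\in\underline{\x}$ forces $\x$ to be essentially canonical: by (G.7) this $i_k\in\underline{\x}$ is maximally west in its gene, and hence by (G.6) it is the westmost instance of $i_k$ among all box and edge labels in $T$. This determines the column of $\x$ (and then $\x$ itself, since the $i_k\in\underline{\x}$ sits on a unique horizontal edge, and (G.2) plus the presence of $\bullet_{i_k}\in\x$ pins the row via Lemma~\ref{lem:strong_form_of_G10} / (G.11)). Once $\x$ is fixed, every label appearing in $\mathcal{C}_3$ — namely $\bullet_{i_k}$, the $i_k\in\x^\rightarrow$, and the $i_k\in\underline{\x}$ — is forced, so the configuration is unique.

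For the $\mathcal{C}_3'$ case the argument is even more direct, as the authors indicate by the phrase ``is clear since the $i_k$ is unique''. Here $T\in\mathcal{S}_3'$ means there is a box $\x$ with $\bullet_{i_k}\in\x$, $i_k\in\x^\rightarrow$, no $i_k$ West of $\x^\rightarrow$, $i_{k-1}\notin\x^\leftarrow$, and $(i+1)_h\notin\underline{\x^\rightarrow}$ where $N_{i_k}=N_{(i+1)_h}$. The defining condition that \emph{no} $i_k$ appears West of $\x^\rightarrow$ means that the $i_k\in\x^\rightarrow$ is the westmost instance of $i_k$ in $T$, hence is unique; therefore its box $\x^\rightarrow$ is unique, so $\x$ is unique, and again all the labels of $\mathcal{C}_3'$ are thereby forced. (One should also check that the presence of $\bullet_{i_k}\in\x$ is not in conflict with uniqueness: by (G.2) at most one $\bullet_{i_k}$ can occur in a given row, and the $\bullet_{i_k}$ in question must be immediately left of the westmost $i_k$, so it too is uniquely located.)

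I would then assemble these observations into a short two-paragraph proof, writing out explicitly the chain (G.7)$\Rightarrow$(G.6)$\Rightarrow$westmost for the $\mathcal{C}_3$ case and the direct ``no $i_k$ West of $\x^\rightarrow$''$\Rightarrow$westmost observation for the $\mathcal{C}_3'$ case, in each case concluding that fixing the westmost $i_k$ fixes $\x$ and hence the whole local picture.

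\textbf{Anticipated obstacle.} The only subtlety — and the step I expect to require the most care — is confirming that in the $\mathcal{C}_3$ case the \emph{row} of $\x$ (not just its column) is determined: a priori there could be two different horizontal edges in the same column each carrying a copy of $i_k$. This cannot happen because (S.2)/(G.4) forces labels of a fixed family to be $<$-strictly increasing down a column, so a single column contains at most one label of family $i$ on its box-or-edge labels; combined with the westmost-ness already established, this nails down $\x$ uniquely. Everything else is a routine unwinding of the definitions of $\mathcal{S}_3$, $\mathcal{S}_3'$, and the goodness axioms, entirely parallel to Lemma~\ref{lemma:C1unique}.
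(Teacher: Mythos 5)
Your proposal is correct and follows essentially the same route as the paper, whose proof is just the compressed version of your argument: the edge label $i_k$ is westmost in its gene by (G.7) (with (G.4)/(G.5) implicitly ruling out a second copy in the same column, exactly as in your "anticipated obstacle" paragraph), and for $\mathcal{C}_3'$ the $i_k\in\x^\rightarrow$ is westmost by assumption. Your intermediate appeal to (G.6) is unnecessary, since (G.7) already says the edge label is maximally west among all instances of its gene, but this does not affect correctness.
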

\begin{proof}
If ${\mathcal C}_3$ occurs in a good tableau, it is unique since the edge $i_k$ is westmost in its gene by (G.7). Similarly
${\mathcal C}_3'$ is unique since the $i_k\in \x^\rightarrow$ is westmost by assumption.
\end{proof}

Define $\phi_3(T)$ to be $T$ with the unique ${\mathcal C}_3$ replaced by ${\mathcal C}_3'$.

\begin{lemma}\label{lem:phi3_welldefined}
$\phi_{3}:{\mathcal S}_{3}\to {\mathcal S}_{3}'$ is a bijection.
\end{lemma}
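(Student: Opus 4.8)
The plan is to mirror the proof of Lemma~\ref{lem:phi1_welldefined} almost \emph{verbatim}, since the local configurations $\mathcal{C}_3$ and $\mathcal{C}_3'$ differ only in their edge labels and the governing conditions are of the same flavor as before. First I would introduce the putative inverse $\phi_3^{-1}:\mathcal{S}_3'\to\mathcal{S}_3$, which replaces the unique (by Lemma~\ref{lemma:C3unique}) instance of $\mathcal{C}_3'$ in $T'\in\mathcal{S}_3'$ by $\mathcal{C}_3$: concretely, one adds an $i_k$ to the lower edge $\underline{\x}$ of the box $\x$ containing $\bullet_{i_k}$. As before, the maps are visibly injective and mutually inverse on the nose once we know their images land in the stated sets, so the entire content of the lemma is the well-definedness of each direction. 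Throughout, I write $\x$ for the southwestmost depicted box of the relevant local configuration.

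Next, for ($\phi_3$ well-defined): given $T\in\mathcal{S}_3$, I must check that $\phi_3(T)$ is $i_k$-good, i.e.\ still satisfies (G.1)--(G.13) after deleting the $i_k$ from $\underline{\x}$. Deleting an edge label cannot create any new violation of the ``too high'' condition (G.1), the box-label increasingness conditions (G.3)--(G.4), the distinctness condition (G.5), or (G.9)--(G.11); ballotness (G.8) and (G.6) are only easier after a deletion. The conditions that need care are (G.7) and (G.13): one must verify that in $\phi_3(T)$ there is \emph{no} remaining $i_k$ West of $\x^\rightarrow$ (this is exactly the defining clause of $\mathcal{S}_3'$, and it follows because in $T$ the edge $i_k\in\underline{\x}$ was westmost in its gene by $T$'s (G.7), while the $i_k\in\x^\rightarrow$ is the next instance), and that no marked label creating a (G.13) obligation was being satisfied by the deleted $i_k$; the latter is handled via Lemma~\ref{lem:strong_form_of_G13} applied to $T$ together with the hypotheses $i_{k-1}\notin\x^\leftarrow$ and $(i+1)_h\notin\underline{\x^\rightarrow}$ with $N_{i_k}=N_{(i+1)_h}$ built into $\mathcal{S}_3'$. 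This last point is where I expect to spend the most effort, since it is the only place where the somewhat technical defining conditions of $\mathcal{S}_3'$ (rather than their obvious shadows) actually get used; everything else is bookkeeping entirely parallel to Lemma~\ref{lem:phi1_welldefined}.

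For ($\phi_3^{-1}$ well-defined): given $T'\in\mathcal{S}_3'$, I must check that $\phi_3^{-1}(T')$ satisfies (G.1)--(G.13), including verifying the conditions (G.1)--(G.6), (G.8), (G.9), (G.12) in the strengthened sense permitting the newly added $i_k\in\underline{\x}$ to be treated as if nonvirtual (cf.\ (V.1)--(V.3)), together with (G.7) and (G.13) which are the conditions that could be newly violated by an \emph{addition}. (G.7) holds because, by the defining conditions of $\mathcal{S}_3'$, there is no $i_k$ West of $\x^\rightarrow$ and hence none West of $\x$, so the added edge $i_k$ is westmost in its gene. (G.13) is the one substantive check: if some marked $\EE^!$ lies in $\x$ or $\underline{\x}$ of $T'$, then $\phi_3^{-1}(T')$ must still satisfy (G.13), but since in $T'$ we have $\bullet_{i_k}\in\x$, condition (G.11) forbids a marked label in $\x$'s column, and (G.9)/(G.4) constrain the labels adjacent to $\x$, so no such $\EE^!$ obligation is disturbed; the condition $(i+1)_h\notin\underline{\x^\rightarrow}$ with $N_{i_k}=N_{(i+1)_h}$ is precisely what rules out the problematic case via Lemma~\ref{lem:how_to_check_ballotness}. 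Finally (G.2) holds since $T'$ has no $\bullet_{i_k}$ South of $\x$ in its column (by $T'$'s (G.9)) nor West of $\x$ in its row (by the defining condition of $\mathcal{S}_3'$), exactly as in Lemma~\ref{lem:phi1_welldefined}. Assembling these checks completes the proof that $\phi_3$ and $\phi_3^{-1}$ are inverse bijections $\mathcal{S}_3\leftrightarrow\mathcal{S}_3'$.
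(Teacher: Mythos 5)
Your overall architecture (define $\phi_3^{-1}$ by re-inserting the edge label, observe the maps are mutually inverse, reduce everything to well-definedness) is the same as the paper's, but the plan misplaces where the actual work lies, in both directions. For $\phi_3$ itself, showing $\phi_3(T)\in\mathcal{S}_3'$ means \emph{proving} all three extra clauses in the definition of $\mathcal{S}_3'$, yet you treat $i_{k-1}\notin\x^\leftarrow$ and ``$(i+1)_h\notin\underline{\x^\rightarrow}$ with $N_{i_k}=N_{(i+1)_h}$'' as ``hypotheses built into $\mathcal{S}_3'$.'' In this direction they are conclusions, not hypotheses: the paper gets $i_{k-1}\notin\x^\leftarrow$ from the final sentence of $T$'s (G.12) (the configuration $i_{k-1}\in\x^\leftarrow$, $\bullet_{i_k}\in\x$, $i_k\in\underline{\x}$, $i_k\in\x^\rightarrow$ is exactly the forbidden one), and rules out $(i+1)_h\in\underline{\x^\rightarrow}$ by Lemma~\ref{lem:how_to_check_ballotness} applied to the edge label $i_k\in\underline{\x}$. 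Your plan never establishes these two clauses. Also, the (G.13) worry you raise in the forward direction is vacuous: since $\bullet_{i_k}\in\x$, condition (G.11) forbids any marked label in $\x$'s column, so no (G.13) obligation can live on $\x$ or $\underline{\x}$ in the first place.

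For $\phi_3^{-1}$ the gap is larger: (G.13) is \emph{not} the substantive check (again vacuous by (G.11)); the real work is elsewhere. First, (G.4)/(G.5) for the inserted $i_k\in\underline{\x}$ require knowing that no label of family $i$ appears anywhere in $\x$'s column — this is the paper's Claim~\ref{claim:3bundlebundle}, proved using (G.12), (G.11), (G.6) and the ``no $i_k$ West of $\x^\rightarrow$'' hypothesis, and it does not follow from generic ``bookkeeping.'' Second, and most importantly, (G.8): adding a selectable instance of the gene $i_k$ creates new genotypes, and ruling out a ballotness violation is the longest part of the paper's argument; it is exactly here (not in (G.13)) that the hypothesis $(i+1)_h\notin\underline{\x^\rightarrow}$ and Lemma~\ref{lem:how_to_check_ballotness} are used, together with the column claim, (G.9), (G.11), (G.3), (G.4), (G.6) and Lemma~\ref{lemma:Gsoutheast}. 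Third, (G.12) for the new edge label is where $i_{k-1}\notin\x^\leftarrow$ is actually needed (to exclude the forbidden configuration in (G.12)'s last sentence), but you only invoke that hypothesis in the forward direction. Finally, a minor point: the phrase about treating the new label ``as if nonvirtual (cf.\ (V.1)--(V.3))'' is imported from the $\phi_1$ situation, where $\mathcal{C}_1$ carries a virtual label; here the edge label of $\mathcal{C}_3$ is genuinely nonvirtual, so that framing does not apply — one simply verifies the goodness axioms for the honest label, along the lines above.
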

\begin{proof}
Define the (putative) inverse $\phi_3^{-1}$ by replacing 
${\mathcal C}_3'$ with ${\mathcal C}_3$.
Once we establish that  $\phi_3$ and $\phi_3^{-1}$ are well-defined, we are done, since
$\phi_3$ and $\phi_3^{-1}$ are clearly mutually inverse.

Let $T\in {\mathcal S}_3$. Trivially, each (G.$n$) holds for $\phi_{3}(T)$. By $T$'s (G.12), $i_{k-1}\notin \x^\leftarrow$. If $(i+1)_h \in \underline{\x^\rightarrow}$ in $\phi_3(T)$ with $N_{i_k} = N_{(i+1)_h}$, then $T$ would violate Lemma~\ref{lem:how_to_check_ballotness}. By $T$'s (G.4) and (G.7), the $i_k\in \x^\rightarrow$ is westmost in $\phi_3(T)$.

Now let $T\in {\mathcal S}_3'$. We check the goodness conditions for $\phi_3^{-1}(T)$.
\begin{claim}\label{claim:3bundlebundle}
No label of family $i$ appears in $\x$'s column in $T$.
\end{claim}
\begin{proof}
By $T$'s (G.12), there are no labels of family $i$ North of $\x$ and in its column.
By $T$'s (G.11), a label $\ell$ South of $\x$ and in its column
is not marked, i.e., $\ell\succeq i_k$. Since we assumed the $i_k\in \x^\rightarrow$ is westmost, $\ell\neq i_k$ .
By $T$'s (G.6), $\ell\neq i_l$ for $l>k$. Hence $i_k<\ell$.
\end{proof}

\noindent
(G.4) and (G.5):  By $T$'s (G.9), all labels North of $\x$ and in its column are of family at most $i$. By $T$'s (G.11), all labels South of $\x$ and in its column are of family at least $i$. Hence by Claim~\ref{claim:3bundlebundle}, $\phi_3^{-1}(T)$'s (G.4) and (G.5) follow.

\noindent
(G.8): If there is a genotype $G$ of $\phi_3^{-1}(T)$ that is not ballot, then it uses the $i_k \in \underline{\x}$. Furthermore, since $T$ is ballot, some $(i+1)_h$ with $N_{i_k} = N_{(i+1)_h}$ appears in ${\tt word}(G)$ before the $i_k \in \underline{\x}$. By Lemma~\ref{lem:how_to_check_ballotness} applied to $T$, this $(i+1)_h$ can only be South of $\x^\rightarrow$ and in $\x^\rightarrow$'s column or North of $\x$ and in $\x$'s column. By $T$'s (G.9), it cannot be North of $\x$ and in its column. Suppose it appears South of $\x^\rightarrow$ and in its column. By assumption, $(i+1)_h \notin \underline{\x^\rightarrow}$. Hence suppose it appears south of $\x^{\rightarrow\downarrow}$, and consider $\lab(\x^\downarrow)$. By (G.11) $\family(\lab(\x^\downarrow)) \geq i$. By Claim~\ref{claim:3bundlebundle}, $\family(\lab(\x^\downarrow)) \neq i$. By $T$'s (G.3) and (G.4), $\lab(\x^\downarrow) \prec (i+1)_h$. Hence $\family(\lab(\x^\downarrow)) = i+1$. But by Lemma~\ref{lemma:Gsoutheast}, $\lab(\x^\downarrow) \neq (i+1)_h$. Hence by $T$'s (G.6), $(i+1)_{h-1} \in \x^\downarrow$. This creates a (G.8) violation in $T$, as this label is read before any $i_{k-1}$.

\noindent
(G.12): Since $T$ is good, if $\phi_3^{-1}(T)$ violates (G.12), the violation involves the $i_k \in \underline{\x}$. Since by assumption $i_{k-1} \notin \x^\leftarrow$, the last sentence of (G.12) does not apply. Suppose $i_j$ is SouthEast of $\underline{\x}$, then it is also SouthEast of $i_k \in \x^\rightarrow$, which will lead to a violation of $T$'s (G.12). Suppose $i_j$ is NorthWest of $\underline{\x}$, then to avoid a violation of $T$'s (G.12) with the $i_k \in \x^\rightarrow$, $i_j$ must be either in $\x$'s row or in an upper edge of that row. Since we have $\bullet_{i_k} \in \x$, this avoids violating $\phi_3^{-1}(T)$'s (G.12).

All of the remaining (G.$n$)-conditions are trivial to verify.
\end{proof}

\begin{proposition}\label{prop:bundle_buddies:horizontal/death}
For $T\in {\mathcal S}_{3}$,  $[T]P_{i_k} = [\phi_{3}(T)]P_{i_k}$.
\end{proposition}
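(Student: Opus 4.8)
\textbf{Proof proposal for Proposition~\ref{prop:bundle_buddies:horizontal/death}.}

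The plan is to mirror the structure of the proofs of Propositions~\ref{prop:bundle_buddies:poofseesaw/empty} and~\ref{prop:bundle_buddies:expseesaw/empty}, but now tracking a \emph{sign-preserving} rather than sign-reversing pairing. Write $T^\dagger := \phi_3(T)$ and let $\x$ be the southwestmost box of the unique (by Lemma~\ref{lemma:C3unique}) configuration ${\mathcal C}_3$ in $T$; this $\x$ is also the box of ${\mathcal C}_3'$ in $T^\dagger$. The two tableaux agree outside the edge $\underline{\x}$: $T$ has $i_k\in\underline{\x}$ while $T^\dagger$ does not. I expect the combinatorial content to be concentrated in a \textbf{special case $k=1$}, followed by a \textbf{reduction to that case} via $i_k$-walkways, exactly as in Proposition~\ref{prop:bundle_buddies:expseesaw/empty}.

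For the $k=1$ case, I would apply the reversal-tree/weight machinery of Section~\ref{sec:recurrence_proof}. Truncate $T$ (and $T^\dagger$) to $\widetilde T$ (resp.\ $\widetilde{T^\dagger}$) by deleting all labels of family $\geq i$, all marked labels, and all boxes whose box label was deleted, reindexing genes to keep initial-segment subscripts, and keeping $\bullet_{i_1}$'s in place --- just as in the proof of Proposition~\ref{prop:bundle_buddies:poofseesaw/empty}. An analogue of Claim~\ref{claim:content_squeezing} shows $\widetilde T$ and $\widetilde{T^\dagger}$ are good tableaux of partition content. The crucial difference from the earlier propositions is that here $\widetilde T$ and $\widetilde{T^\dagger}$ have the \emph{same} number of $\bullet_{i_1}$'s (deleting the edge label $i_k$ of ${\mathcal C}_3$ removes no box and no bullet), so $\widetilde{T^\dagger}$ differs from $\widetilde T$ only in the presence of one extra (non-virtual) edge label of family $1$; both have the same inner/outer shape $\theta/\lambda$ and the same ambient shape $\omega/\lambda$. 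Then Proposition~\ref{prop:Lambdaminusrecweight} applied to ${\mathfrak T}_{\widetilde T}$ and to ${\mathfrak T}_{\widetilde{T^\dagger}}$ gives the same right-hand side $(-1)^{1+\#\bullet\text{'s}}\wt(\theta/\lambda)$ with the \emph{same} sign, and by inspection $\revswap$ through families $1,\dots,i-1$ (and the subsequent forward swaps) does not touch the edge label $i_k$ or any label of family $\geq i$; so $[T]P_{i_1}=[T^\dagger]P_{i_1}$.

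For the reduction to $k=1$, I would reuse verbatim the $i_k$-walkway formalism introduced in the proof of Proposition~\ref{prop:bundle_buddies:expseesaw/empty} (Lemmas~\ref{lem:walkway_structure}, \ref{lem:connection_walkways_to_ladders}, \ref{lemma:onerowreversalcharacterization}, \ref{lemma:tworowreversalcharacterization} and Claim~\ref{claim:forward_swap_walkway_coeffs}, with ``$(i+1)_1$'' replaced by ``$i_k$''), noting that $T$ and $T^\dagger$ have $i_k$-walkways $W$ and $W^\dagger$ of the same skew shape containing $\x$, agreeing outside $\underline{\x}$. An analogue of Claim~\ref{claim:i_k_walkway_reversals} localizes all reverse and forward swaps to $W$ (resp.\ $W^\dagger$) and its complement; since $W$ and $W^\dagger$ have the same reversals $R$ (and possibly $R'$) and forward-swapping each produces the \emph{same} coefficient on $W$ as on $W^\dagger$ (by Claim~\ref{claim:forward_swap_walkway_coeffs}, with $\aaa^\rightarrow=\z$ since $\x$ is the eastmost box of the walkway), the contributions to $[T]P_{i_k}$ and $[T^\dagger]P_{i_k}$ agree. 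Extending $\phi_3$ linearly, $\phi_3$ bijects ${\mathfrak R}_{i_1}(T)$ with ${\mathfrak R}_{i_1}(T^\dagger)$ and, invoking the $k=1$ case fiberwise together with Lemma~\ref{lem:back_and_forth}, one concludes $[T]P_{i_k}=[T^\dagger]P_{i_k}$.

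The main obstacle I anticipate is the bookkeeping in the $k=1$ case: verifying the goodness/ballotness conditions for the truncations $\widetilde T,\widetilde{T^\dagger}$ (the analogue of Claim~\ref{claim:content_squeezing}) when $T$ contains marked labels, and checking carefully that the reindexing does not disturb the parameters $N_{\cdot}$ on which \texttt{edgefactor} and the miniswap cases depend. A secondary subtlety is confirming that the defining conditions of ${\mathcal C}_3'$ (no $i_k$ west of $\x^\rightarrow$; $i_{k-1}\notin\x^\leftarrow$; no $(i+1)_h\in\underline{\x^\rightarrow}$ with $N_{i_k}=N_{(i+1)_h}$) are exactly what is needed to ensure each $i_k$-walkway reversal touching $\x$ is the anticipated miniswap type (so that the forward-swap coefficients on $W$ and $W^\dagger$ genuinely coincide), but this is the same kind of verification already carried out in Lemma~\ref{lem:phi3_welldefined}.
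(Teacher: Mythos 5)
There is a genuine gap in your reduction to the $k=1$ case. The fiberwise trick works for $\phi_1$ in Proposition~\ref{prop:bundle_buddies:poofseesaw/empty} only because the genetic labels of ${\mathcal C}_1$ are constrained merely to have subscript $\geq k$, so every $V\in{\mathfrak R}_{i_1}(T)$ again contains an instance of ${\mathcal C}_1$ for subscript $1$ and $\phi_1(V)$ is defined. For $\phi_3$ the defining data of the configuration is the gene $i_k$ itself (in $\x^\rightarrow$ and on $\underline{\x}$), and these labels are inert under $\revswap_{i_2}\circ\cdots\circ\revswap_{i_k}$: a tableau $V\in{\mathfrak R}_{i_1}(T)$ still has $i_k\in\x^\rightarrow$ and $i_k\in\underline{\x}$ with $k>1$ (the subscript does not decrement), so $V\notin{\mathcal S}_3$ for subscript $1$ and $\phi_3$ is simply not defined on it; the assertions ``$\phi_3$ bijects ${\mathfrak R}_{i_1}(T)$ with ${\mathfrak R}_{i_1}(T^\dagger)$'' and ``invoke the $k=1$ case fiberwise'' therefore do not typecheck. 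Relatedly, $W$ and $W^\dagger$ do not have literally the same reversals (they differ by the passenger edge label $i_k$), so what your reduction actually requires at the bottom is $[V]P_{i_1}=[V']P_{i_1}$ where $V'$ is $V$ with that $i_k\in\underline{\x}$ deleted --- a statement your $k=1$ case does not supply. (A smaller slip: your truncation deletes all labels of family $\geq i$, so $\widetilde{T}=\widetilde{T^\dagger}$ exactly; it is $T$, not $T^\dagger$, that carries the extra edge label, and it has family $i$, not $1$.)

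The paper's proof is much shorter and avoids the truncation/reversal-tree and walkway machinery entirely, precisely because --- unlike $\phi_1$ and $\phi_2$ --- the map $\phi_3$ creates no bullet/box discrepancy and hence no sign to track. Since ${\mathcal C}_3$ and ${\mathcal C}_3'$ agree in boxes, box labels and bullets, and the edge label $i_k\in\underline{\x}$ is untouched by every reverse and forward miniswap arising in the computation of $P_{i_k}$, deletion of that edge label gives (by inspection and downward induction on $\QQ$) a bijection $f_\QQ:{\mathfrak R}_\QQ(T)\to{\mathfrak R}_\QQ(T^\dagger)$ all the way down to $1_1$; corresponding leaves have the same number of $\bullet_{1_1}$'s, hence equal coefficients in $P_{1_1}$, and forward swapping commutes with the deletion, so Lemma~\ref{lem:back_and_forth} yields $[T]P_{i_k}=[\phi_3(T)]P_{i_k}$. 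If you repair your outline by replacing ``$\phi_3$ fiberwise'' with this deletion map and verifying its compatibility with the reverse and forward swaps, you will have reproduced the paper's argument, and the $k=1$/walkway apparatus becomes unnecessary.
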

\begin{proof}
Let $T^\dagger:=\phi_{3}(T)$.
By inspection of the reverse miniswaps, and downward induction on $\QQ$, there is a bijection 
$f_\QQ:{\mathfrak R}_\QQ(T) \to {\mathfrak R}_\QQ(T^\dagger)$ 
given by deletion of the $i_k\in \underline{\x}$.
If $L\in {\mathfrak R}_{1_1}(T)$, then $L$ and $f_{1_1}(L)$ have the same number of $\bullet_{1_1}$'s.
Hence, $[L]P_{1_1} = [f_{1_1}(L)]P_{1_1}$; cf. (\ref{eqn:P_G}).

Extend $f_{\QQ}$ linearly. By inspection of the miniswaps,  
\[f_{i_k}(\swap_{i_k^{-}} \circ \dots \circ \swap_{1_1}(L))=
\swap_{i_k^{-}} \circ \dots \circ \swap_{1_1}(f_{1_1}(L)).\]
Hence by Lemma~\ref{lem:back_and_forth}, $[T]P_{i_k} = [T^\dagger]P_{i_k}$.
\end{proof}

Let $\mathcal{S}_4$ be the subset of tableaux in $G_{\lambda,\mu}^\nu(i_k)$ with a box $\x$ such that $\bullet_{i_k} \in \x$, $\FF^! \in \x^\rightarrow$, $i_k \in \underline{\x}$ and $\circled{i_k} \in \underline{\x^\rightarrow}$, i.e.\ locally the tableau is
\ytableausetup{boxsize=1.8em}
$\mathcal{C}_4 = \Scale[0.8]{\begin{picture}(45,22)
\put(0,0){$\ytableaushort{{\bullet_{i_k}} {\FF^!}}$}
\put(5,-4){$i_k$}
\put(26,-4){$\Scale[.8]{\circled{i_k}}$}
\end{picture}}$ (with possibly additional edge labels). 
Let $\mathcal{S}_4'$ be the subset of tableaux in $G_{\lambda,\mu}^\nu(i_k)$ with a box $\x$ such that $\bullet_{i_k} \in \x$, $\FF^! \in \x^\rightarrow$, $i_k \in \underline{\x^\rightarrow}$, $(i+1)_h\not\in \underline{\x^\rightarrow}$ if $N_{(i+1)_h} = N_{i_k}$, and $i_{k-1}\not\in \x^\leftarrow$.
Locally the tableau is
$\mathcal{C}_4'=\Scale[0.8]{\ytableausetup{boxsize=1.8em}
\begin{picture}(45,24)
\put(0,0){$\ytableaushort{{\bullet_{i_k}} {\FF^!}}$}
\put(28,-4){$i_k$}
\end{picture}}$ (with possibly additional edge labels).

\begin{lemma}
If $T\in {\mathcal S}_4$ (respectively, ${\mathcal S}_4'$), there is a unique ${\mathcal C}_4$ (respectively, ${\mathcal C}_4'$) that it contains.
\end{lemma}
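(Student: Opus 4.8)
The statement to be proved is the uniqueness assertion for the configurations $\mathcal{C}_4$ and $\mathcal{C}_4'$ inside a $\GG$-good tableau; this is entirely analogous to Lemmas~\ref{lemma:C1unique} and~\ref{lemma:C3unique}, so the plan is to follow those proofs essentially verbatim, adapting the defining conditions of $\mathcal{S}_4$ and $\mathcal{S}_4'$. First I would take $T \in \mathcal{S}_4$ and fix any occurrence of $\mathcal{C}_4$ in $T$; let $\x$ be its southwestmost box (the one containing $\bullet_{i_k}$), so that $\FF^! \in \x^\rightarrow$, $i_k \in \underline{\x}$, and $\circled{i_k} \in \underline{\x^\rightarrow}$. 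The key observation is that the edge label $i_k \in \underline{\x}$ must be the westmost instance of the gene $i_k$ in $T$: this is exactly condition (G.7), which forces every edge label to be maximally west in its gene. Once $\underline{\x}$ is pinned down as containing the westmost $i_k$, the box $\x$ itself is determined, and then $\x^\rightarrow$, $\FF^!$, and the other labels of $\mathcal{C}_4$ are determined; hence the occurrence is unique.

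For $T \in \mathcal{S}_4'$ the argument is the same with one substitution: here the relevant edge label is $i_k \in \underline{\x^\rightarrow}$, and the definition of $\mathcal{S}_4'$ does not directly say this is westmost, but by (G.7) any edge label $i_k$ is westmost in its gene, so again the edge $\underline{\x^\rightarrow}$ carrying an $i_k$ is unique, which pins down $\x^\rightarrow$, hence $\x$, hence the whole configuration $\mathcal{C}_4'$. (One should note that in $\mathcal{S}_4'$ the clause ``$(i+1)_h \notin \underline{\x^\rightarrow}$ if $N_{(i+1)_h} = N_{i_k}$'' together with Lemma~\ref{lem:how_to_check_ballotness} is what guarantees this $i_k$ really is the globally westmost instance of its gene, ruling out the possibility that some earlier $i_k$ sits elsewhere; but for the bare uniqueness statement it suffices that any fixed occurrence is determined by (G.7).)

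I do not anticipate a genuine obstacle here: this lemma is a routine bookkeeping statement of the same flavor as the earlier uniqueness lemmas, and the only subtlety is making sure one invokes the correct edge in each of the two cases ($\underline{\x}$ for $\mathcal{C}_4$, $\underline{\x^\rightarrow}$ for $\mathcal{C}_4'$) before applying (G.7). The proof should read:

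\begin{proof}
Suppose $T \in \mathcal{S}_4$ and fix an occurrence of $\mathcal{C}_4$; let $\x$ be its southwestmost box. By (G.7), the $i_k \in \underline{\x}$ is the westmost $i_k$ in $T$, so $\x$ is determined; hence the configuration is unique. Now suppose $T \in \mathcal{S}_4'$ and fix an occurrence of $\mathcal{C}_4'$; let $\x$ again be its southwestmost box. By (G.7), the $i_k \in \underline{\x^\rightarrow}$ is the westmost $i_k$ in $T$, so $\x^\rightarrow$, and hence $\x$, is determined; thus this configuration is also unique.
\end{proof}
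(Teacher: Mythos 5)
Your proposal is correct and is essentially the paper's argument: the paper's proof simply observes that by (G.7) the tableau contains at most one edge label $i_k$, which is exactly the westmost-in-its-gene observation you use to pin down $\underline{\x}$ (resp.\ $\underline{\x^\rightarrow}$) and hence the whole configuration. The parenthetical remark about the $(i+1)_h$ clause in $\mathcal{S}_4'$ is unnecessary — (G.7) alone does the work, as you yourself note — but it does not affect the validity of the proof.
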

\begin{proof}
This follows since by (G.7), 
$T$ contains at most one edge label $i_k$.
\end{proof}

Set $\phi_4 : \mathcal{S}_4 \to \mathcal{S}_4'$ by replacing $\mathcal{C}_4$ with $\mathcal{C}_4'$.

\begin{lemma}\label{lem:phi4_welldefined}
$\phi_4:{\mathcal S}_4\to {\mathcal S}_4'$ is a bijection.
\end{lemma}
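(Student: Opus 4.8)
The plan is to prove Lemma~\ref{lem:phi4_welldefined} by the same template already used for Lemmas~\ref{lem:phi1_welldefined}, \ref{lem:phi2_welldefined} and~\ref{lem:phi3_welldefined}: exhibit an explicit (putative) inverse $\phi_4^{-1}:{\mathcal S}_4'\to {\mathcal S}_4$ obtained by replacing the unique ${\mathcal C}_4'$ by ${\mathcal C}_4$, observe that $\phi_4$ and $\phi_4^{-1}$ are visibly mutually inverse and injective once well-definedness is established, and then verify well-definedness of each direction by checking the goodness conditions (G.1)--(G.13) and the virtual-label conditions (V.1)--(V.3) where relevant. So the real content is two symmetric verifications.

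First I would check $\phi_4$ is well-defined: given $T\in {\mathcal S}_4$ with the unique ${\mathcal C}_4$ at box $\x$, I must show $\phi_4(T)$ is $i_k$-good. Most conditions are inherited or trivial; the substantive points are (a) removing $i_k$ from $\underline{\x}$ and putting it on $\underline{\x^\rightarrow}$ (replacing the virtual $\circled{i_k}$ there) preserves (G.4),(G.5),(G.6),(G.8) --- this is forced because in $T$ we already had $\circled{i_k}\in\underline{\x^\rightarrow}$, whose defining conditions (V.3) include exactly (G.4),(G.5),(G.6),(G.8); (b) the three membership conditions defining ${\mathcal S}_4'$ --- $(i+1)_h\notin\underline{\x^\rightarrow}$ when $N_{(i+1)_h}=N_{i_k}$, and $i_{k-1}\notin\x^\leftarrow$ --- hold in $\phi_4(T)$: the first would otherwise contradict Lemma~\ref{lem:how_to_check_ballotness} applied to $T$ (since $\circled{i_k}$ and $(i+1)_h$ would both sit on $\underline{\x^\rightarrow}$ with $N_{i_k}=N_{(i+1)_h}$, forcing an East/West violation), and $i_{k-1}\notin\x^\leftarrow$ follows from $T$'s (G.12) applied to the $i_k\in\underline\x$ and a hypothetical $i_{k-1}$ West of it, exactly as in the proof of Lemma~\ref{lem:phi3_welldefined}; (c) (G.7) still holds since by (G.4) and (G.7) for $T$ the new edge label $i_k\in\underline{\x^\rightarrow}$ is westmost in its gene; (d) (G.10),(G.13) for the marked $\FF^!\in\x^\rightarrow$ persist because $\bullet_{i_k}\in\x$ is still in that row and the relevant companion label on $\underline{\x^\rightarrow}$ supplied by (G.13) for $T$ is unchanged (note $\circled{i_k}$ is replaced by a nonvirtual $i_k$, which only strengthens (G.13)).

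Next I would check $\phi_4^{-1}$ is well-defined: given $T\in {\mathcal S}_4'$ with the unique ${\mathcal C}_4'$ at $\x$, I must show $\phi_4^{-1}(T)$ is $i_k$-good, and --- as in the (V.1)--(V.3) part of the earlier lemmas --- that the conditions survive even treating the reinstated $\circled{i_k}\in\underline{\x^\rightarrow}$ as if it were a nonvirtual $i_k$. The key subclaim, paralleling Claim~\ref{claim:3bundlebundle}, is that no label of family $i$ other than the ones shown appears in $\x$'s column or $\x^\rightarrow$'s column: North of $\x$ this is blocked by (G.12) together with $\bullet_{i_k}\in\x$, South of $\x$ by (G.11) (no marked label in the $\bullet_{i_k}$-column) and the westmost-ness hypothesis on the $i_k\in\underline{\x^\rightarrow}$, and in $\x^\rightarrow$'s column by (G.4),(G.6) around the $\FF^!$ and the $i_k$; this yields (G.4),(G.5),(G.6) for $\phi_4^{-1}(T)$. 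Ballotness (G.8) is the analogue of the (G.8) argument in Lemma~\ref{lem:phi3_welldefined}: a failing genotype must use the new $i_k\in\underline{\x}$, forcing some $(i+1)_h$ with $N_{(i+1)_h}=N_{i_k}$ read earlier, which by Lemma~\ref{lem:how_to_check_ballotness} for $T$ can only sit South of $\x^\rightarrow$ in its column or North of $\x$ in its column; the latter is killed by (G.9), and the former is killed by chasing $\lab(\x^\downarrow)$ exactly as before (using (G.11), the family-$i$ subclaim, (G.3),(G.4),(G.6), and Lemma~\ref{lemma:Gsoutheast}) to produce an $(i+1)_{h-1}\in\x^\downarrow$ contradicting $T$'s own (G.8). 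Finally (G.12) for $\phi_4^{-1}(T)$ is handled as in Lemma~\ref{lem:phi3_welldefined}: the hypothesis $i_{k-1}\notin\x^\leftarrow$ disposes of the last sentence of (G.12), and any family-$i$ label NorthWest or SouthEast of the new $i_k\in\underline\x$ would already violate $T$'s (G.12) against the $i_k\in\x^\rightarrow$ --- unless it lies in $\x$'s row or an upper edge thereof, which is permitted since $\bullet_{i_k}\in\x$. The remaining conditions ((G.1),(G.2),(G.3),(G.7),(G.9),(G.10),(G.11),(G.13)) are routine, noting (G.7) holds because $T$ has no edge label $i_k$ besides the displayed one.

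The main obstacle is the ballotness bookkeeping in the $\phi_4^{-1}$ direction: verifying (G.8) requires tracking $N$-values of the interacting family-$i$ and family-$(i+1)$ genes carefully and invoking Lemma~\ref{lem:how_to_check_ballotness} and Lemma~\ref{lemma:Gsoutheast} in the right order, and one must be scrupulous that the reinstated virtual label $\circled{i_k}$ genuinely satisfies (V.1)--(V.3) in $\phi_4^{-1}(T)$ (in particular that $\circled{i_k}$ is not maximally west in its gene --- which is exactly why $i_k\in\underline{\x}$ must be present --- and that replacing it by $i_k$ would not break (G.8), the content of the argument just sketched). Because the configurations ${\mathcal C}_4,{\mathcal C}_4'$ and the surrounding hypotheses are chosen precisely to make these checks go through, I expect the proof to reduce, after the two explicit verifications above, to the single sentence that $\phi_4$ and $\phi_4^{-1}$ are evidently mutually inverse bijections; indeed the published proof may well dispatch this with ``This may be proved almost exactly as Lemma~\ref{lem:phi3_welldefined}.''
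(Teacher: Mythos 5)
Your proposal is correct and follows essentially the same route as the paper's proof: an explicit inverse, the key claim that no label of family $i$ appears in $\x$'s column, Lemma~\ref{lem:how_to_check_ballotness} both for the membership conditions of ${\mathcal S}_4'$ and for the (G.8)/ballotness chase through $\x^\downarrow$ and $\x^{\rightarrow\downarrow}$, and the last-sentence-of-(G.12) check via the hypothesis $i_{k-1}\notin\x^\leftarrow$. One small imprecision worth fixing: to rule out $(i+1)_h\in\underline{\x^\rightarrow}$ with $N_{(i+1)_h}=N_{i_k}$ you should invoke Lemma~\ref{lem:how_to_check_ballotness} against the nonvirtual $i_k\in\underline{\x}$ (which is genuinely West of that edge), not against the virtual $\circled{i_k}$ sitting on the same edge, since two labels on one edge are not East/West of each other --- this is exactly how the paper applies it (and likewise the comparison label in your (G.12) discussion is the edge label $i_k\in\underline{\x^\rightarrow}$, not a box label in $\x^\rightarrow$).
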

\begin{proof}
Define a putative inverse $\phi^{-1}_4:{\mathcal S}_4'\to {\mathcal S}_4$ by replacing 
$\mathcal{C}_4'$ with $\mathcal{C}_4$. Clearly, $\phi_4$ and $\phi_4^{-1}$ are mutually inverse. It remains to check 
well-definedness. Indeed,
it is trivial to check each goodness condition holds for $\phi_4(T)$. By Lemma~\ref{lem:how_to_check_ballotness} for $T$, there is not $(i+1)_h \in \underline{\x^\rightarrow}$ with $N_{(i+1)_h} = N_{i_k}$. By $T$'s (G.12), $i_{k-1}\not\in \x^\leftarrow$. Thus $\phi_4$ is well-defined.

\begin{claim}\label{claim:4bundlebundle}
No label of family $i$ appears in $\x$'s column in $T$.
\end{claim}
\begin{proof}
By $T$'s (G.12), $i_\ell$ cannot appear North of $\x$ and in its column.
If $i_\ell$ is South of $\x$ and in its column, then by $T$'s (G.6) and (G.7),
$\ell < k$, so this $i_\ell$ is marked, contradicting $T$'s (G.11). 
\end{proof}

Now let $T\in {\mathcal S}_4'$. We check the goodness conditions for $\phi_4^{-1}(T)$:

(G.4) and (G.5):  By $T$'s (G.9), every label North of $\x$ and in its column has family at most $i$. 
By $T$'s (G.11), every label South of $\x$ and in its column has family at least $i$. Moreover, by Claim~\ref{claim:4bundlebundle}, no label of family $i$ appears in $\x$'s column in $T$. Hence (G.4) and (G.5) hold in $\phi_4^{-1}(T)$.

(G.8): Suppose $\phi_4^{-1}(T)$ has a nonballot genotype $G$. By $T$'s (G.8), $G$ must use the $i_k\in \underline\x$. 
Also by $T$'s (G.8), some $(i+1)_h$ with
$N_{(i+1)_h}=N_{i_k}$ appears in ${\tt word}(G)$ before this $i_k\in \underline\x$. By $T$'s (G.9) and (G.8), this $(i+1)_h$ appears South of $\x^\rightarrow$ and in $\x^\rightarrow$'s column. By $T$'s (G.4) and the first hypothesis on ${\mathcal S}_4'$,
in fact $(i+1)_h\in \x^{\rightarrow\downarrow}$. By $T$'s (G.3), $\family(\lab(\x^\downarrow)) \leq i+1$. By (G.11) and the $\bullet_{i_k} \in \x$, $\family(\lab(\x^\downarrow)) \geq i$.
By Claim~\ref{claim:4bundlebundle}, no
label of family $i$ appears in $\x$'s column in $T$. 
Thus $\family(\lab(\x^\downarrow)) = i+1$.
Then by $T$'s (G.3) and (G.6), $(i+1)_{h-1}\in\x^\downarrow$. Hence by Claim~\ref{claim:4bundlebundle}, this contradicts Lemma~\ref{lem:how_to_check_ballotness} for $T$.

(G.12): If there is an $i_\ell$ SouthEast of the $i_k\in \underline\x$ in $\phi_4^{-1}(T)$, then we either violate $T$'s (G.2),
(G.4) or (G.12). Now suppose there is an $i_\ell$ NorthWest of $i_k \in \underline\x$ in $\phi_4^{-1}(T)$. By $T$'s (G.12), this $i_\ell$ is West and either in $\x$'s row or on the upper edge of that row.
If $i_\ell\in \x^\leftarrow$, then $\ell=k-1$ by $T$'s (G.6). However then we contradict the last hypothesis 
on ${\mathcal S}_4'$. So the $i_\ell$ and $i_k$ satisfy (G.12).

The remaining goodness conditions are trivial to verify.
\end{proof}

\begin{proposition}\label{prop:bundle_buddies:T6andT4}
For each $T \in \mathcal{S}_4$, $[T]P_{i_k} = [\phi_4(T)]P_{i_k}$.
\end{proposition}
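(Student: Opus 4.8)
The plan is to follow the template established by Propositions~\ref{prop:bundle_buddies:poofseesaw/empty}, \ref{prop:bundle_buddies:expseesaw/empty}, and especially \ref{prop:bundle_buddies:horizontal/death}, since $\phi_4$ is the closest analogue of $\phi_3$: both modify only an edge label $i_k$, moving it one box east along the bottom of a row already containing $\bullet_{i_k}$ and a marked label, without adding or deleting any $\bullet$. Write $T^\dagger := \phi_4(T)$ and let $\x$ be the southwestmost box of the unique $\mathcal{C}_4$ in $T$; then $\x$ is the southwestmost box of the unique $\mathcal{C}_4'$ in $T^\dagger$. The first step is to produce, by downward induction on $\QQ \prec i_k$ and inspection of the reverse miniswaps, a bijection $f_\QQ : \mathfrak{R}_\QQ(T) \to \mathfrak{R}_\QQ(T^\dagger)$ given by deleting the $i_k \in \underline{\x}$ (equivalently, by moving it from $\underline{\x}$ to $\underline{\x^\rightarrow}$). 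One must check that no reverse miniswap applied in the process $\revswap_{i_1^+}\circ\cdots\circ\revswap_{i_k}$ touches this particular edge label: the $\bullet_{i_k}\in\x$ together with $\FF^!\in\x^\rightarrow$ means $\x$ and $\x^\rightarrow$ form the $\tail$-type configuration of Definition-Lemma~\ref{def-lem:snake_classification}(III)(vi) in $T$, and in the reversal the relevant miniswaps at this locale are of types that fix $\underline{\x}$ and $\underline{\x^\rightarrow}$ (this is where Lemma~\ref{lem:how_to_check_ballotness}, Lemma~\ref{lem:useful_fact_reverse}, and the defining hypotheses of $\mathcal{S}_4$ and $\mathcal{S}_4'$ get used to rule out the miniswap branches that would interact with the moving label).

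The second step is the base of the recursion: for $L \in \mathfrak{R}_{1_1}(T)$, the tableaux $L$ and $f_{1_1}(L)$ have the same number of $\bullet_{1_1}$'s (moving an edge label changes no $\bullet$), so by the definition of $P_{1_1}$ in~(\ref{eqn:P_G}) we get $[L]P_{1_1} = [f_{1_1}(L)]P_{1_1}$. The third step is to extend $f_\QQ$ linearly and verify compatibility with forward swapping: by inspection of the miniswaps, $f_{i_k}(\swap_{i_k^-}\circ\cdots\circ\swap_{1_1}(L)) = \swap_{i_k^-}\circ\cdots\circ\swap_{1_1}(f_{1_1}(L))$, again because the locale $\{\x,\x^\rightarrow\}$ is swapped by miniswaps that treat the presence of the extra edge $i_k$ in $\underline{\x}$ versus $\underline{\x^\rightarrow}$ identically and produce the same coefficient in either case. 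Combining these with Lemma~\ref{lem:back_and_forth}, which expresses $[T]P_{i_k}$ as $\sum_{V}[V]P_{1_1}\cdot[T]\swap_{i_k^-}\circ\cdots\circ\swap_{1_1}(V)$ over $V\in\mathfrak{R}_{1_1}(T)$, yields $[T]P_{i_k} = [T^\dagger]P_{i_k}$, which is the claim.

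The main obstacle I expect is the first step: carefully confirming that across the \emph{entire} chain $\revswap_{i_1^+}\circ\cdots\circ\revswap_{i_k}$ the moving edge label $i_k$ is never disturbed, and that the box labels around $\x$ evolve identically in the $T$ and $T^\dagger$ branches. Unlike the $\phi_3$ case, here $\x^\rightarrow$ carries a marked label $\FF^!$ rather than an $i_k$, and there is a virtual $\circled{i_k}\in\underline{\x^\rightarrow}$ in $\mathcal{C}_4$ whose status must be tracked under the virtual-label conventions of (V.1)--(V.3); one must check this virtual label's location is governed only by data outside the moving edge so that the correspondence $f_\QQ$ is genuinely a bijection preserving all structure. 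The interplay with Lemma~\ref{lem:how_to_check_ballotness} (to guarantee that $\mathcal{C}_4'$ has no competing $(i+1)_h$ on $\underline{\x^\rightarrow}$) and the hypothesis $i_{k-1}\notin\x^\leftarrow$ (to keep the last clause of (G.12) inert) is what makes the miniswap branches line up, and verifying this branch-by-branch, while routine in spirit, is the delicate part. Everything else is a transcription of the $\phi_3$ argument.
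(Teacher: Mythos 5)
Your proposal is correct and takes essentially the same route as the paper: the paper defines the same bijection $f_\QQ$ (deleting the $i_k\in\underline{\x}$ and promoting the virtual $\circled{i_k}\in\underline{\x^\rightarrow}$ to a genuine $i_k$ --- your ``moving it one edge east'') and then runs verbatim the three-step argument of Proposition~\ref{prop:bundle_buddies:horizontal/death}: equal $\bullet_{1_1}$-counts give $[L]P_{1_1}=[f_{1_1}(L)]P_{1_1}$ via (\ref{eqn:P_G}), $f$ commutes with the forward swaps, and Lemma~\ref{lem:back_and_forth} concludes. The branch-by-branch miniswap verifications you flag as the delicate part are exactly what the paper compresses into ``by inspection of the (reverse) miniswaps.''
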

\begin{proof}
Let $T^\dagger=\phi_4(T)$. Let $f_\QQ:{\mathfrak R}_{\QQ}(T) \to {\mathfrak R}_{\QQ}(T^\dagger)$
be defined by deleting the $i_k\in {\underline\x}$ and replacing the $\circled{i_k}\in {\underline\x^\rightarrow}$ by
$i_k$. Now the proof proceeds exactly as that for Proposition~\ref{prop:bundle_buddies:horizontal/death}.
\end{proof}

\section{Proof of the conjectural $K_T$ rule from \cite{Thomas.Yong:H_T}}\label{sec:proof_of_conjecture}
We briefly recap the conjectural rule for $K_{\lambda,\mu}^{\nu}$ from \cite[Section~8]{Thomas.Yong:H_T}. 
An {\bf equivariant increasing tableau} is an edge-labeled filling of
$\nu/\lambda$ using the labels $1,2,\dots, |\mu|$ such that each label is strictly smaller than any label below it in its column and each \emph{box} label is strictly smaller than the box label immediately to its right.
Any subset of the boxes of $\nu/\lambda$
may be marked by $\star$'s, except that if $i$ and $i+1$ are box labels in the same row, then the box containing $i$ may \emph{not} be $\star$-ed.
Let ${\tt EqInc}(\nu/\lambda,|\mu|)$ denote
the set of all such equivariant increasing tableaux.

An {\bf alternating ribbon} $R$ is a filling of a short ribbon by two symbols such that
adjacent boxes are filled differently;
all edges except the southwestmost edge are empty; and if this edge is filled, it is filled with the other symbol than in the box above it.
Let ${\tt switch}(R)$ be the alternating ribbon of the same shape
where each box is instead filled with the other symbol. If the southwestmost
edge was filled by one of these symbols, that symbol is deleted.
If $R$ consists of a single box with only one
symbol used, then ${\tt switch}$ does nothing to it. Define
${\tt switch}$ to act on an edge-disjoint union of alternating ribbons, by acting on each
independently. 

\begin{example}
\ytableausetup{boxsize=1.2em}
Let $R =
\begin{picture}(53,22)
\put(0,10){$\ytableaushort{\none \heartsuit \spadesuit, \heartsuit \spadesuit}$.}
\put(3,-10){$\spadesuit$}
\end{picture}
$ Then ${\tt switch}(R) = \begin{picture}(60,20)
\put(0,10){$\ytableaushort{\none \spadesuit \heartsuit, \spadesuit \heartsuit}$.}
\end{picture}
$ \qed
\end{example}

Given $T\in {\tt EqInc}(\nu/\lambda,|\mu|)$ and an inner corner $\x\in\lambda$, label $\x$ with $\bullet$ and erase all $\star$'s. Call this tableau $V_1$. Consider
the alternating ribbons $\{ R_1\}$  made of $\bullet$ and $1$. $V_2$ is obtained by applying ${\tt switch}$ to each $R_1$. Now let $\{R_2\}$ be the collection of
ribbons consisting of $\bullet$ and $2$, and produce $V_3$ by applying ${\tt switch}$ to each $R_2$. Repeat until the $\bullet$'s have been switched past all the numerical
labels in $T$; the final placement of numerical labels gives ${\tt KEqjdt}_{\x}(T)$, the {\bf slide} of $T$ into $\x$. The sequence $V_1, V_2, \dots$ is the {\bf switch sequence} of $(T, \x)$. Finally, define ${\tt KEqrect}(T)$ by
successively applying ${\tt KEqjdt}_\x$ in {\bf column rectification order}, i.e., successively
pick $\x$ to be the eastmost inner corner.

\begin{lemma}\label{lem:increasing_goodness}
For $V_j$ in the switch sequence of $(T, \x)$: 
\begin{itemize}
\item[(I)] The numerical
box labels strictly increase along rows from left to right 
(ignoring $\bullet$'s).
\item[(II)] The numerical labels strictly increase down columns
(ignoring $\bullet$'s and reading labels of a given edge in increasing order).
\item[(III)] Every numerical label southeast of a $\bullet$ is at least $j$.
\item[(IV)] Every numerical label northwest of a $\bullet$ is strictly less than $j$. 
\end{itemize}
\end{lemma}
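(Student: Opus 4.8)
The statement is the exact analogue, for the ``K-theoretic equivariant jeu de taquin'' operation \texttt{KEqjdt}, of the goodness conditions (G.3), (G.4), (G.9) that we have already analyzed extensively for genomic jeu de taquin. The plan is to induct on $j$, the index of the term $V_j$ in the switch sequence of $(T,\x)$. For the base case $j=1$: $V_1$ is obtained from $T\in{\tt EqInc}(\nu/\lambda,|\mu|)$ by erasing $\star$'s and placing a single $\bullet$ at the inner corner $\x$. Parts (I) and (II) are then immediate from the definition of an equivariant increasing tableau (box labels strictly increase along rows, all labels strictly increase down columns). Part (III) holds because every numerical label of $V_1$ is at least $1$, and (IV) holds vacuously since no numerical label is northwest of $\x$ (as $\x$ is an inner corner of $\lambda$, hence has no box of $\nu/\lambda$ strictly north or strictly west of it inside $\lambda$; any box northwest of $\x$ lies in $\lambda$ and carries no label).

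For the inductive step, assume the four assertions hold for $V_j$ and pass to $V_{j+1}={\tt switch}(\{R_j\})$, where $\{R_j\}$ is the edge-disjoint family of alternating ribbons filled with $\bullet$ and $j$. Since \texttt{switch} acts independently on each ribbon and fixes everything outside the ribbons, I would localize the verification: a violation of (I)--(IV) in $V_{j+1}$ must involve a box whose content changed, i.e.\ a box of some $R_j$ (its label changes between $\bullet$ and $j$) or the southwestmost edge of some $R_j$ (where a $j$ may be deleted). The key structural input is that each $R_j$ is a short ribbon (no $2\times 2$, at most two boxes per row and column), so locally $V_j$ and $V_{j+1}$ look like one of a small list of configurations; one walks through each. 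For (III): a label $k>j$ sitting southeast of a $\bullet$ of $V_{j+1}$ that was not southeast of a $\bullet$ in $V_j$ can only arise if a $\bullet$ moved into a box $\y$ with $k$ in $\y^\rightarrow$ or $\y^\downarrow$ or on $\underline{\y^\rightarrow}$; in each case, in $V_j$ that box $\y$ held a $j<k$ and (I)/(II) applied to $V_j$ forces $k>j$, so actually $k\geq j+1$, which is exactly the claim for $V_{j+1}$. For (IV): a label $k\le j$ landing northwest of a $\bullet$ of $V_{j+1}$ must, by inspecting the ribbon move, either have been northwest of a $\bullet$ already (so $k<j$ by (IV) for $V_j$, hence $k<j+1$) or be a $j$ that was just swapped in place of a $\bullet$ along the ribbon, in which case the adjacent $\bullet$ in $V_{j+1}$ is to its southeast, not northwest, giving a contradiction. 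Parts (I) and (II) follow from the defining property of \texttt{switch} on an alternating ribbon: the rule that the southwestmost edge, if filled, carries the symbol opposite to the box above it is precisely what preserves strict column-increase, while strict row-increase is preserved because within a two-box row of a ribbon the $\bullet$ and $j$ trade places and any numerical neighbor outside the ribbon was already $>j$ (to the east) or $<j$ (to the west) by (I)/(IV) for $V_j$.

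The main obstacle I anticipate is the careful case analysis for (III) and (IV) at the \emph{endpoints} of a ribbon $R_j$ --- specifically the southwestmost box and edge --- where a $j$ may be created or destroyed rather than merely relocated, and where interaction with labels on the horizontal edges (rather than in boxes) must be handled. This is the same subtlety that (G.7) and the edge-label bookkeeping addressed in the genomic setting, and I would organize the argument around the short-ribbon classification exactly as in the proof of Lemma~\ref{lem:presnake_description} and Definition-Lemma~\ref{def-lem:snake_classification}, enumerating the one- and two-box rows of $R_j$ and checking each local picture against the four assertions. Everything else is routine propagation of the inductive hypothesis through a local, short-ribbon move.
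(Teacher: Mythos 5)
Your overall framework is exactly the paper's: the paper proves this lemma by simultaneous induction on $j$, examining the possible $2\times 2$ local pieces of $V_j$ as one passes to $V_{j+1}$, and it leaves the case analysis to the reader. So the plan (induct on $j$, localize to the ribbons $R_j$, use the short-ribbon structure) is the right one. However, the concrete arguments you sketch for the inductive step of (III) and (IV) are aimed at the wrong cases, and as written they would not establish the statements. For (III), the point of passing from threshold $j$ to threshold $j+1$ is that labels \emph{equal to} $j$, which were allowed southeast of a $\bullet$ in $V_j$, must no longer be southeast of any $\bullet$ in $V_{j+1}$: this concerns old $j$'s whose ribbon is a single box (hence unmoved) sitting southeast of a $\bullet$ whose ribbon is also a single box (hence unmoved), as well as freshly created $j$'s (at former $\bullet$ positions) relative to $\bullet$'s in other ribbons. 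Your argument treats only labels $k>j$ that become newly southeast of a $\bullet$, and the conclusion ``$k>j$, so actually $k\geq j+1$'' is vacuous for integers; the $k=j$ case, which is the substantive one and which for non-adjacent configurations requires weaving (I)--(IV) together (e.g.\ walking along the row/column between the $\bullet$ and the $j$ until one either contradicts (I)/(II)/(III) or forces the $j$ into the $\bullet$'s ribbon), is not addressed.

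Similarly for (IV): at threshold $j+1$, any label $k\leq j$ northwest of a $\bullet$ is automatically acceptable, so your discussion of ``a label $k\le j$ landing northwest of a $\bullet$'' checks nothing. What must be ruled out is a label $\geq j+1$ (which does not move at this step) lying northwest of a \emph{newly created} $\bullet$, i.e.\ northwest of a box that contained $j$ in $V_j$; this follows from (I), (II) and (IV) for $V_j$ (a label $>j$ weakly northwest of a $j$ forces a contradiction, using (IV) to handle interspersed $\bullet$'s), but that is a different argument from the one you give. Relatedly, both your sketch and the induction implicitly rely on structural facts about $V_j$ that are not among (I)--(IV) as stated — e.g.\ that no two $\bullet$'s are edge-adjacent and that the $\{\bullet,j\}$-components really are alternating short ribbons with label content only on the southwestmost edge — which must either be carried along in the induction or extracted from (I)--(IV); the paper glosses over this too, but since you are supplying the details the paper omits, these are the points your case analysis actually has to hit.
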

\begin{proof}
These are proved by simultaneous induction on $j$.
In the inductive step, one considers any $2\times 2$ local piece of $V_j$
and studies the possible cases that can arise as one transitions from $V_j\to V_{j+1}$; we leave the straightforward details to the reader.
\end{proof}

A set of labels is a {\bf horizontal strip} if they are
arranged in increasing order from southwest to northeast, with no two labels of the set in the same column.

\begin{lemma}\label{lem:horizontal_stripness_preservation}
Let $T \in {\tt EqInc}(\nu/\lambda, |\mu|)$ and $\x \in \lambda$ be an inner corner. Then $\{i, i+1, \dots, j\}$ forms a horizontal strip in $V_k$ of the switch sequence of $(T, \x)$ if and only it does so in $V_{k+1}$.
\end{lemma}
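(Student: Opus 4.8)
The plan is to track the effect of a single \texttt{switch} operation on a fixed horizontal strip $H := \{i, i+1, \dots, j\}$ through one step $V_k \to V_{k+1}$ of the switch sequence, and to argue that the ``horizontal strip'' property is preserved. First I would observe that the transition $V_k \to V_{k+1}$ is effected by applying \texttt{switch} to the collection of alternating ribbons made of $\bullet$ and the label $m$, where $m$ is the label being switched past at stage $k$. Since \texttt{switch} only moves instances of the single label $m$ (and the $\bullet$'s), the positions of every label of $H$ other than $m$ are unchanged; so the only thing to check is that if $m \in H$ (say $m \in \{i, \dots, j\}$), then the instances of $m$ move in a way compatible with $H$ remaining a horizontal strip. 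The instances of $m$ that lie in a ribbon each slide by one box — either east-to-west or along the ribbon — and I must show the resulting configuration of $m$'s, together with the fixed $m-1$ (if $m > i$, already in place below-left) and $m+1$ (if $m < j$), still has no two labels of $H$ in a common column and is arranged increasing from southwest to northeast.

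\textbf{Key steps.} (1) Reduce to a single alternating ribbon $R$ of $\bullet$ and $m$: since \texttt{switch} acts independently on edge-disjoint ribbons, and distinct ribbons of the $\bullet$-and-$m$ collection sit in ``southwest-to-northeast'' relative position (this is the ribbon analogue of Lemma~\ref{lem:ladders.sw.ne}/Lemma~\ref{lem:snakes_arranged_SW-NE}, or can be checked directly), it suffices to understand one ribbon. (2) Use Lemma~\ref{lem:increasing_goodness}(I),(II) applied to $V_k$ to constrain the shape of $R$: the $\bullet$'s and $m$'s alternate, each row/column has at most two boxes, and the $m$ in a two-box row is east of the $\bullet$, while the $m$ in a two-box column is north of the $\bullet$. (3) Check, case by case on whether $R$ has one row, one column, or a genuine ribbon turn, that \texttt{switch}$(R)$ keeps the $m$'s forming an ``anti-chain'' (no shared columns) in increasing SW-NE order — here the key local fact is that sliding the westmost $m$ of a row into the $\bullet$'s box moves it one column west, and this is compatible with the $m-1$ that (by the induction already available, or by $H$ being a strip in $V_k$) sat weakly southwest of it. (4) Finally, check the interface with $m-1$ and $m+1$: $m+1$ has not yet been switched so it is fixed, and $V_k$'s strip property places it northeast of every $m$; after the switch every $m$ has only moved west or down-left, so this is preserved. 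Similarly $m-1$ was switched at an earlier stage and is fixed in $V_k$; $H$ being a strip in $V_k$ places it southwest of every $m$, and again a westward/downward slide of the $m$'s preserves this.

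\textbf{Main obstacle.} The delicate point is step (3) at a ribbon turn, where the $\bullet$'s and $m$'s of $R$ form an honest staircase: one must verify that after \texttt{switch} no two $m$'s end up in the same column and that their vertical ordering is still consistent with the horizontal ordering required by a strip. This is exactly the kind of local bookkeeping that Lemma~\ref{lem:increasing_goodness} is designed to feed into, so I expect it to go through by inspecting the finitely many $2\times 2$ local pictures, parallel to the proof of Lemma~\ref{lem:increasing_goodness} itself. A secondary nuisance is making sure the edge-label case (when $\{i,\dots,j\}$ includes a label living on a horizontal edge of a box) is handled: here one uses that \texttt{switch} deletes the southwestmost edge symbol when it switches, and that Lemma~\ref{lem:increasing_goodness}(II) already tells us how edge labels of $V_k$ sit relative to box labels, so the strip condition on the edge instance of $m$ transfers correctly. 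I would present the ``only if'' and ``if'' directions together, since the \texttt{switch} operation at stage $k$ is reversed by the corresponding \texttt{switch} operation on $V_{k+1}$ (with the other symbol), making the argument symmetric.
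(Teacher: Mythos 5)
Your overall plan — reduce to the ribbons of $\bullet$ and the label $m$ currently being switched, constrain their shape via Lemma~\ref{lem:increasing_goodness}, and finish by a local case analysis — is the same strategy as the paper's (very terse) proof. However, the one piece of the local analysis you actually spell out is wrong, and the step built on it fails. By Lemma~\ref{lem:increasing_goodness}(III),(IV), inside a $\bullet$/$m$ ribbon of $V_k$ the $\bullet$'s sit to the northwest of the $m$'s: in a two-box column the $\bullet$ is \emph{north} and the $m$ is \emph{south} (an $m$ directly above a $\bullet$ would be a label northwest of a $\bullet$ that is not strictly less than $k$, contradicting (IV)). Consequently, under {\tt switch} the $m$'s move west or \emph{north}, not ``west or down-left'' as you assert in steps (2) and (4).

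This matters because step (4) is exactly where the content of the lemma lies, and your argument there rests on the wrong direction of motion. With the correct direction, the interfaces with $m-1$ and $m+1$ are \emph{not} automatic: an $m$ sliding north could a priori end up strictly north of an $m+1$ in its former row, and an $m$ sliding west could a priori land in the column of an $m-1$ directly above the vacated $\bullet$. Both configurations must be excluded, and the exclusion is not ``the labels only moved southwest-ward''; rather one shows, using Lemma~\ref{lem:increasing_goodness}(I)--(IV) together with column-strictness, that each such configuration forces the horizontal-strip hypothesis to fail already in $V_k$ (e.g.\ in the first scenario the box east of the $\bullet$ is forced to contain another $m$, putting $m$ and $m+1$ in a common column of $V_k$; in the second, $m-1$ would already lie strictly NorthWest of $m$ in $V_k$). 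This is precisely the $2\times 2$ case analysis the paper invokes, so your proposal needs that analysis carried out correctly rather than replaced by the monotone-motion argument. A smaller point: {\tt switch} is not literally inverted by switching again (edge labels are deleted), so the ``if'' direction should also be handled inside the same local case check rather than by appealing to symmetry.
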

\begin{proof}
This quickly reduces to consideration of the possibilities in a $2\times 2$ local piece of $V_k$. Then we proceed by 
straightforward case analysis using Lemma~\ref{lem:increasing_goodness}. 
\end{proof}

A label $\mathfrak s \in T$ is {\bf special}
if it is an edge label or
lies in a $\star$-ed box. At most one ${\mathfrak s}$ appears in a column $c$. In column rectification order, each slide ${\tt KEqjdt}_\x$ for $\x \in c$ moves an ${\mathfrak s}$ in $c$ at most one step North
(and it remains in $c$). A
special label ${\mathfrak s}$ in $c$ {\bf passes
through} $\x$ if it occupies $\x$ at any point during $c$'s rectification \emph{and} initially $\mathfrak{s} \notin \x$.
Let $\x_1,\ldots,\x_s$ be the boxes $\mathfrak{s}$ passes through and
let $\y_1,\ldots,\y_t$ be the numerically labeled boxes East of $\x_s$ in the same
row. Set
${\tt factor}_{K}({\mathfrak s}):=1-\prod_{i=1}^{s} {\hat \beta}(\x_i)\prod_{j=1}^{t}
{\hat\beta}(\y_j)$.
If ${\mathfrak s}$ does not move during
the rectification of $c$, then
${\tt factor}_{K}({\mathfrak s}) :=0$.
Now set
${\tt wt}_{K}(T):=\prod_{{\mathfrak s}} {\tt factor}_{K}({\mathfrak s})$,
where the product is over all special labels.
Lastly, we define ${\tt sgn}(T) :=  (-1)^{|\mu| - \#\text{$\star$'s in $T$} - \#\text{labels in $T$}}$.

Let $\mu[1] = ( 1, 2, 3, \ldots, \mu_1)$, $\mu[2] = (\mu_1+1,\mu_1+2,\ldots,\mu_1+\mu_2)$, etc.
Let $T_{\mu}$ be the {\bf superstandard tableau} of shape $\mu$, i.e., row $i$ is filled by $\mu[i]$.
The following is the conjecture of \cite{Thomas.Yong:H_T}:

\begin{theorem}
\label{thm:oldconj}
$K_{\lambda,\mu}^\nu=\sum_{T} {\tt sgn}(T) \cdot {\tt wt}_{K}(T)$,
where the sum is over \[\mathcal{A}_{\lambda,\mu}^\nu := \{T\in {\tt EqInc}(\nu/\lambda,|\mu|) \colon {\tt KEqrect}(T)=T_{\mu} \}.\]
\end{theorem}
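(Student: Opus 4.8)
\textbf{Proof plan for Theorem~\ref{thm:oldconj}.} The strategy is to deduce the conjecture of \cite{Thomas.Yong:H_T} from our Theorem~\ref{thm:main}, by exhibiting a weight-preserving, sign-preserving bijection between the indexing set $\mathcal{A}_{\lambda,\mu}^{\nu}$ of equivariant increasing tableaux that column-rectify to $T_\mu$ and the set ${\tt BallotGen}(\nu/\lambda)$ of ballot semistandard edge-labeled genomic tableaux of content $\mu$ from Theorem~\ref{thm:main}. Since $K_{\lambda,\mu}^{\nu}=L_{\lambda,\mu}^{\nu}=\sum_{T\in{\tt BallotGen}(\nu/\lambda),\ {\tt content}(T)=\mu}\wt(T)$, it suffices to produce such a bijection $\Psi$ with ${\tt sgn}(T)\cdot{\tt wt}_K(T)=\wt(\Psi(T))$ for all $T\in\mathcal{A}_{\lambda,\mu}^{\nu}$.

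First I would set up the combinatorial correspondence on the level of fillings. Given $T\in{\tt EqInc}(\nu/\lambda,|\mu|)$, the content of $T_\mu$ partitions the label set $\{1,\dots,|\mu|\}$ into the consecutive blocks $\mu[1],\mu[2],\dots$; the hypothesis ${\tt KEqrect}(T)=T_\mu$ means that, under column rectification, each label $\ell$ is assigned a well-defined ``family'' $i$ (namely the row of $\ell$ in $T_\mu$) and, within that family, a secondary index recording its position. The idea is to read off from the rectification history of $(T,\x_1),(T,\x_2),\dots$ (in column rectification order) the gene data: a box or edge label $\ell$ of family $i$ that rectifies to the $j$-th cell of row $i$ of $T_\mu$ becomes a genetic label $i_j$; a $\star$-ed box becomes a box whose label is declared to be a particular gene; and the horizontal-strip structure guaranteed by Lemma~\ref{lem:horizontal_stripness_preservation} ensures the genes of each family form the correct multiset. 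I would verify that the resulting filling satisfies (S.1)--(S.4): strict increase along rows and the $\prec$-order condition follow from Lemma~\ref{lem:increasing_goodness}(I); the column condition from Lemma~\ref{lem:increasing_goodness}(II); condition (S.3) (distinct families on an edge) and (S.4) (weakly increasing secondary indices West-to-East) from the horizontal-strip preservation and the increasing property of $T$. Ballotness of the genomic tableau should correspond precisely to the fact that each intermediate $V_k$ in every switch sequence has $\{1,\dots,k\}$ (suitably, each family-initial-segment) forming a horizontal strip, again via Lemma~\ref{lem:horizontal_stripness_preservation}; this is the content-$\mu$, rectify-to-$T_\mu$ condition translated into the language of genotypes. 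The ``no label is too high'' condition and the edge-labeling conventions of Section~\ref{sec:introduction} need to be matched against the constraints on $\star$'s and edge labels in ${\tt EqInc}$.

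Next I would match the weights. The quantity ${\tt factor}_K(\mathfrak{s})=1-\prod_i\hat\beta(\x_i)\prod_j\hat\beta(\y_j)$ attached to a special label $\mathfrak{s}$ is, by its very form, a single edgefactor: the product of $\hat\beta$'s over the boxes $\mathfrak{s}$ passes through during rectification of its column, together with the boxes East of it in its final row, should telescope to exactly $t_{{\sf Man}(\x)}/t_{r-i+N_{i_j}+1+{\sf Man}(\x)}$ where $\x$ is the box in $\Psi(T)$ carrying (or bounded below by) the image of $\mathfrak{s}$; here the ``$+{\sf Man}$'' shift arises because ${\sf Man}$ counts the north-east lattice steps from the southwest corner, and the $N_{i_j}$ and the row/family offset $r-i$ count exactly how far East and North the slide path travels. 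Thus ${\tt factor}_K(\mathfrak{s})={\tt edgefactor}(\ell)$ for the corresponding edge label $\ell$, and ${\tt wt}_K(T)$ becomes ${\tt edgewt}(\Psi(T))$ possibly together with the ${\tt boxwt}$ contribution coming from $\star$-ed boxes (which in the genomic picture are exactly the productive boxes, i.e.\ boxes $\x$ with $\lab(\x)<\lab(\x^\rightarrow)$, matched against the prohibition that a box containing $i$ with $i+1$ to its right cannot be $\star$-ed). Finally the sign: ${\tt sgn}(T)=(-1)^{|\mu|-\#\star-\#{\rm labels}}$ must equal $(-1)^{d(\Psi(T))}$ with $d(\Psi(T))=\sum_\GG(|\GG|-1)=|\nu|-|\lambda|-|\mu|+\#\text{edge labels of }\Psi(T)$; this is a bookkeeping identity relating the number of labels of $T$, the number of $\star$'s, and the number of genes versus the number of genetic instances, which I would check by counting how each label and each $\star$ of $T$ contributes to genes, box labels, and edge labels of $\Psi(T)$.

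The main obstacle I anticipate is establishing that $\Psi$ is a well-defined bijection onto all of ${\tt BallotGen}(\nu/\lambda)$ with the correct content --- that is, that the rectify-to-$T_\mu$ condition is genuinely equivalent to ballotness plus semistandardness plus the content constraint, and that the inverse map (building an equivariant increasing tableau and its $\star$-set from a genomic tableau, then checking it column-rectifies to $T_\mu$) is well-defined. This requires a careful induction through the switch sequences, tracking how $\bullet$-switching on alternating ribbons corresponds, label by label, to the genomic jeu de taquin miniswaps of Section~\ref{sec:swaps} (or at least to a suitable simplified version of them in the non-equivariant rectification), and arguing --- as in \cite[Section~4.1]{Thomas.Yong:H_T} and using Lemmas~\ref{lem:increasing_goodness} and~\ref{lem:horizontal_stripness_preservation} --- that the genotype/ballotness data is an invariant of rectification. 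Once this structural equivalence is in hand, the weight and sign identifications are essentially the telescoping computations sketched above. As noted in the introduction, this argument ``completes the strategy set out in \emph{loc.\ cit.}'' and is largely independent of the proof of Theorem~\ref{thm:main} itself, relying on it only as a black box.
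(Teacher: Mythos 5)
Your overall plan (standardize/semistandardize between equivariant increasing tableaux and genomic tableaux, transport ballotness through the switch sequences via Lemmas~\ref{lem:increasing_goodness} and~\ref{lem:horizontal_stripness_preservation}, and match ${\tt factor}_K$ with ${\tt edgefactor}$ by a Manhattan-distance computation) is the right skeleton, but the central claim as you state it is false: there is no weight-preserving, sign-preserving \emph{bijection} between $\mathcal{A}_{\lambda,\mu}^{\nu}$ and the content-$\mu$ tableaux of ${\tt BallotGen}(\nu/\lambda)$. The semistandardization map $\Phi$ forgets the $\star$'s, so it is many-to-one: the fiber $\Phi^{-1}(B)$ consists of all decorations of $\Psi(B)$ by subsets of $\star$'s on the boxes that may be starred, and these boxes are exactly the productive boxes of $B$. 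Consequently the two index sets generally have different cardinalities, and no element-wise identity ${\tt sgn}(A)\,{\tt wt}_K(A)=\wt(\Phi(A))$ can hold: a single starred box contributes ${\tt factor}_K(\mathfrak{s})=1-{\tt boxfactor}(\x)$ (Lemma~\ref{lemma:edge}(II)), not ${\tt boxfactor}(\x)$, and ${\tt sgn}(A)$ depends on the number of $\star$'s, which varies over the fiber while $d(\Phi(A))$ does not. Your ``telescoping'' picture is correct for edge labels (this is Lemma~\ref{lemma:edge}(I), using the structure of the partial rectifications $A^{(k)}$), but it cannot be made to work box-by-box for the $\star$-ed labels.

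The missing idea is a fiber-sum rather than a bijection: for each $B\in\mathcal{B}_{\lambda,\mu}^{\nu}$ one must sum $(-1)^{\#\star\text{'s}}\prod_{\text{starred }\ell}{\tt factor}_K(\ell)$ over all $A\in\Phi^{-1}(B)$, i.e.\ over all subsets of the productive boxes, and use the inclusion--exclusion identity $\sum_{S\subseteq[N]}(-1)^{|S|}\prod_{s\in S}(1-z_s)=z_1\cdots z_N$ to recover ${\tt boxwt}(B)$; this is Lemma~\ref{lemma:starwt}, and only after this collapse does $\sum_{A}{\tt sgn}(A)\,{\tt wt}_K(A)$ equal $L_{\lambda,\mu}^{\nu}=K_{\lambda,\mu}^{\nu}$. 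You should also note that the well-definedness checks go in both directions as two separate statements ($\Psi(B)\in\mathcal{A}_{\lambda,\mu}^{\nu}$ and $\Phi(A)\in\mathcal{B}_{\lambda,\mu}^{\nu}$, Propositions~\ref{prop:Psi_codomain} and~\ref{prop:Phi_codomain}), with ballotness preservation argued through every intermediate tableau of every switch sequence; no comparison with the equivariant miniswaps of Section~\ref{sec:swaps} is needed, so that part of your plan can be discarded. With the fiber-sum correction in place, the rest of your outline matches the paper's argument.
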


We will prove Theorem~\ref{thm:oldconj} (after some preparation) by connecting to Theorem~\ref{thm:main}.  

Let ${\mathcal B}_{\lambda,\mu}^\nu$ be the set of all $T \in {\tt BallotGen}(\nu/\lambda)$ that have content $\mu$.
We need a {\bf semistandardization map} $\Phi \colon {\mathcal A}_{\lambda,\mu}^\nu\to {\mathcal B}_{\lambda,\mu}^\nu$. Given $A\in {\mathcal A}_{\lambda,\mu}^\nu$, erase all $\star$'s and replace the labels $1,2,\ldots,\mu_1$  with
$1_1,1_2,\ldots,1_{\mu_1}$ respectively. Next, replace
$\mu_1+1,\mu_1+2,\ldots,\mu_1+\mu_2$ by
$2_1,2_2,\ldots,2_{\mu_2}$ respectively, etc. The result is
$\Phi(A)$. Note $\Phi$ is not bijective.
Define a {\bf standardization map} $\Psi \colon {\mathcal B}_{\lambda,\mu}^\nu\to {\mathcal A}_{\lambda,\mu}^\nu$ by reversing the above process in the obvious way; $\Psi(B)$ is $\star$-less.

\begin{lemma}
\label{lem:Asemistd}
For $B \in {\mathcal B}_{\lambda,\mu}^\nu$, $\Psi(B)\in {\tt EqInc}(\nu/\lambda,|\mu|)$.
\end{lemma}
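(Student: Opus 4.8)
The statement to prove is Lemma~\ref{lem:Asemistd}: for $B \in {\mathcal B}_{\lambda,\mu}^\nu$, the standardization $\Psi(B)$ lies in ${\tt EqInc}(\nu/\lambda,|\mu|)$. Recall that membership in ${\tt EqInc}(\nu/\lambda,|\mu|)$ requires: (i) the underlying filling uses exactly the labels $1,2,\dots,|\mu|$; (ii) each \emph{box} label is strictly smaller than the box label immediately to its right; (iii) each label is strictly smaller than any label below it in its column (reading multiple edge labels in increasing order); and (iv) the marking restriction --- but $\Psi(B)$ is by construction $\star$-less, so (iv) is vacuous. So there are really only three things to check, and the plan is to deduce each from the semistandardness conditions (S.1)--(S.4) together with the content hypothesis.

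\textbf{Step 1 (label set).} First I would observe that since $B$ has content $\mu = (c_1,c_2,\dots)$, by definition each family $i$ has exactly $c_i = \mu_i$ genes, and the subscripts that appear in family $i$ form an initial segment of ${\mathbb Z}_{>0}$, hence are exactly $1,2,\dots,\mu_i$. Under $\Psi$ the gene $i_j$ is sent to the integer $\mu_1 + \cdots + \mu_{i-1} + j$. As $i$ ranges over families and $j$ over $1,\dots,\mu_i$, these integers range over exactly $\{1,2,\dots,|\mu|\}$, each occurring as often as the corresponding gene occurs in $B$. So (i) holds. This step is routine bookkeeping.

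\textbf{Step 2 (rows).} For (ii), suppose $\ell \in \x$ and $\ell' \in \x^\rightarrow$ are adjacent box labels of $B$, say $\ell = i_j$ and $\ell' = k_m$. By (S.1), $\lab(\x) \prec \lab(\x^\rightarrow)$, i.e.\ $i_j \prec k_m$. If $i < k$, the standardized integers satisfy $\Psi(\ell) \leq \mu_1 + \cdots + \mu_{i-1} + \mu_i \leq \mu_1 + \cdots + \mu_{k-1} < \mu_1 + \cdots + \mu_{k-1} + m = \Psi(\ell')$, so strict increase holds. If $i = k$, then $j < m$ by definition of $\prec$, so $\Psi(\ell) = \mu_1 + \cdots + \mu_{i-1} + j < \mu_1 + \cdots + \mu_{i-1} + m = \Psi(\ell')$ again. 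Thus box labels strictly increase along rows.

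\textbf{Step 3 (columns).} For (iii), the $<$-order on genes becomes exactly strict inequality of families, which under $\Psi$ becomes strict inequality of the standardized integers (distinct families go to disjoint integer blocks, lower family to lower block). So (S.2) --- every label is $<$-strictly smaller than any label South in its column --- immediately gives that every standardized label in a box or edge of $B$ is strictly smaller than any standardized label strictly below it in the same column. It remains to handle labels sharing a single edge: if $i_j, k_m$ are on the same edge, (S.3) forces $i \neq k$, so they land in different integer blocks and are in particular distinct and comparable, consistent with reading an edge in increasing order. I expect this column verification to be the one requiring the most care about edge labels, but it is still a short case check; no genuine obstacle should arise since (S.2) and (S.3) are exactly tailored to it. Finally I would remark that the $|\mu|$ in ${\tt EqInc}(\nu/\lambda,|\mu|)$ matches because $B$ has content $\mu$ with $|\mu| = \sum_i \mu_i$ the number of genes, which by Step 1 equals the number of labels in $\Psi(B)$ counted with the appropriate range. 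This completes the verification that $\Psi(B) \in {\tt EqInc}(\nu/\lambda, |\mu|)$.

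\textbf{Expected main obstacle.} Honestly, there is no deep obstacle here: the lemma is a direct translation exercise between the two combinatorial languages, and every condition defining an equivariant increasing tableau corresponds to one of (S.1)--(S.4) (with the marking condition trivially satisfied since $\Psi$ produces $\star$-less tableaux). The only mild subtlety is being careful that the $<$-order on genes --- which is \emph{not} total --- nonetheless becomes total after standardization on the labels actually appearing, and that one reads multiple edge labels in the correct order; (S.3) is precisely what makes this work.
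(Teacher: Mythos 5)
Your proof is correct and follows essentially the same route as the paper, which simply notes that shape and content are clear, row strictness follows from (S.1), and column strictness from (S.2); your Steps 1–3 are just a more detailed unwinding of this (with (S.3) handling the minor point of distinct labels on a shared edge).
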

\begin{proof}
That $\Psi(B)$ has the desired shape and content is clear.
Row strictness follows from (S.1), and column strictness from (S.2).
\end{proof}

\begin{lemma}
\label{lem:Ahorizstrip}
For $B \in {\mathcal B}_{\lambda,\mu}^\nu$ and for each $i$, $\mu[i]$ forms a horizontal strip in $\Psi(B)$ and also in each tableau of any switch sequence during the column rectification of $\Psi(B)$.
\end{lemma}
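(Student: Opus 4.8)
The statement is Lemma~\ref{lem:Ahorizstrip}: for $B\in\mathcal{B}_{\lambda,\mu}^\nu$, each $\mu[i]$ is a horizontal strip in $\Psi(B)$ and remains so throughout column rectification. The plan is to first establish the claim for $\Psi(B)$ itself, and then invoke Lemma~\ref{lem:horizontal_stripness_preservation} to propagate it along any switch sequence. For the base assertion, fix $i$ and consider the labels $\mu[i] = (\mu_1+\cdots+\mu_{i-1}+1,\ldots,\mu_1+\cdots+\mu_i)$, which in $\Psi(B)$ come from the genetic labels $i_1, i_2, \ldots, i_{\mu_i}$ of $B$ (one needs here that each gene of family $i$ in $B$ contributes exactly one label to $\Psi(B)$, which is immediate from the construction of $\Psi$ being the inverse of $\Phi$ on $\star$-less tableaux, and content $\mu$ guaranteeing $c_i=\mu_i$ genes of family $i$).

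The two things to check for ``horizontal strip'' are: (a) the labels of $\mu[i]$ occupy distinct columns, and (b) they increase from southwest to northeast. For (a): if two labels of $\mu[i]$ shared a column, then $B$ would have two genetic labels of family $i$ in the same column. By (S.3) they cannot lie on the same edge; and since both are of family $i$, the $<$-strictness down columns of (S.2) is violated (as $<$ does not distinguish labels of the same family). So (a) holds. For (b): suppose $i_a$ occupies box or edge $\x$ and $i_b$ occupies box or edge $\y$ with $\y$ weakly northwest of $\x$ and (after (a)) strictly northwest. If they are in the same row this is impossible by (a) together with (S.4)/(S.1) ordering; in general, if $i_b$ is NorthWest of $i_a$ with both of family $i$, then the ballotness-type argument of Lemma~\ref{lem:how_to_check_ballotness} (or directly (S.1)/(S.2)) forces a contradiction: reading $B$ down columns right to left, the northwestern label $i_b$ would be read first, but ballotness of every genotype then fails between families $i-1$ and $i$ (when $a<b$, the count of family $i$ outpaces family $i-1$; more simply, $B$ being ballot and semistandard forces the westmost label of family $i$ to carry the smallest subscript, so subscripts weakly increase going west-to-east, hence east-to-west they weakly decrease, which combined with (a) gives strict northeast increase). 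Thus $\mu[i]$ is a horizontal strip in $\Psi(B)$.

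Finally, to extend the conclusion from $\Psi(B)$ to every tableau in a switch sequence arising during column rectification of $\Psi(B)$, one applies Lemma~\ref{lem:horizontal_stripness_preservation} with $\{i,i+1,\ldots,j\}$ taken to be $\mu[i]$: since $\mu[i]$ is a horizontal strip in the starting tableau $V_1 = \Psi(B)$ (with the inner corner marked, which does not affect horizontal-stripness of the numerical labels), it is a horizontal strip in $V_2$, hence in $V_3$, and so on through the entire switch sequence of each $\mathtt{KEqjdt}_\x$. Since column rectification is a finite composition of such slides and the output of one slide is the input of the next, an easy induction on the number of slides performed gives the claim for every intermediate tableau. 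I expect the main obstacle to be the precise bookkeeping in step (b) — namely confirming that ballotness plus semistandardness of $B$ really does force the subscripts of family-$i$ labels to be nondecreasing from west to east, so that distinct columns plus this monotonicity yields a genuine horizontal strip; but this is exactly the content already packaged in (S.1), (S.4), and Lemma~\ref{lem:how_to_check_ballotness}, so it should be routine.
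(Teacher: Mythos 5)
Your proof is correct and takes essentially the same route as the paper: the family-$i$ labels form a horizontal strip already in $B$ by the semistandardness conditions, this transfers to $\Psi(B)$ by the definition of $\Psi$, and Lemma~\ref{lem:horizontal_stripness_preservation} (iterated over the slides of column rectification) propagates the property through every switch sequence. The one quibble is your appeal to ballotness and Lemma~\ref{lem:how_to_check_ballotness} to exclude a NorthWest pair of family-$i$ labels — that lemma is stated for good tableaux and compares labels of \emph{different} families, so it does not literally apply here — but your fallback argument via (S.1)/(S.2) (together with (S.3)/(S.4)) is exactly what is needed and is what the paper's proof invokes.
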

\begin{proof}
By (S.2--4), the labels $i_1, \dots, i_{\mu_i}$ form a horizontal strip of $B$. The claim for $\Psi(B)$ is then immediate by definition of $\Psi$. The claim for the tableaux of the switch sequences then follows 
by Lemma~\ref{lem:horizontal_stripness_preservation}. 
\end{proof}

\begin{lemma}\label{lem:no_edge_labels}
Let $B \in {\mathcal B}_{\lambda,\mu}^\nu$. Then 
\begin{itemize}
\item[(I)] after column rectifying the eastmost $j$ columns of $\Psi(B)$, there are no edge labels in these eastmost $j$ columns; and 
\item[(II)] while rectifying the next column, there is never an edge label north of a $\bullet$ and in the same column, in any tableau of any switch sequence.
\end{itemize}
\end{lemma}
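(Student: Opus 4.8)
The proof proceeds by simultaneous induction on $j$, following the same strategy as Lemmas~\ref{lem:increasing_goodness} and~\ref{lem:horizontal_stripness_preservation}: (I) and (II) are proved together, since each rectification step of column $c$ consumes the boxes in column $c$ from the previous column of rectification, and the switch sequences for the relevant slide produce and destroy edge labels only in controlled ways. First I would set up notation: let $c$ be a column of $\Psi(B)$, and suppose by induction that (I) holds for the eastmost $j$ columns, so that when we begin rectifying column $c$ (the $(j+1)$-st from the east), every box East of $c$ is $\star$-free and edge-label-free. The slides ${\tt KEqjdt}_{\x}$ for inner corners $\x \in c$ are performed in column rectification order, each moving the special label $\mathfrak{s}$ of column $c$ (if any) at most one step north within $c$. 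The key is that edge labels can only be \emph{created} by a ${\tt switch}$ on an alternating ribbon whose southwestmost edge gets filled, and only \emph{destroyed} when that edge gets emptied.

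\textbf{Main steps for (II).} For the body of the argument, fix a single slide ${\tt KEqjdt}_{\x}$ with $\x \in c$, and examine its switch sequence $V_1, V_2, \dots$. I would show: in each $V_t$, there is no edge label strictly north of a $\bullet$ in the same column. This is the analogue of Lemma~\ref{lem:increasing_goodness}(IV) for edge labels. The base case $V_1$ follows because $B$ is bundled hence satisfies (G.7)/(S.1)--(S.4) and has all edge labels maximally west in their genes; after erasing $\star$'s and placing a single $\bullet$ at the inner corner $\x$, and using the inductive hypothesis that columns East of $c$ carry no edge labels, one checks directly that no edge label of $V_1$ lies north of the unique $\bullet$. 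For the inductive step $V_t \to V_{t+1}$, one does the standard $2\times 2$ (plus relevant edges) local case analysis: the only way a new edge label $\mathfrak{s}$ appears on an edge $\underline{\y}$ is if the alternating ribbon of $\bullet$'s and $t$'s being switched has its southwestmost edge at $\underline{\y}$ and $t \in \y$ with $\bullet \in \y^{\uparrow}$ before the switch; after the switch $\bullet \in \y$, so the $\bullet$ is not north of the new edge label but coincident with (the box above) it — one verifies no \emph{other} $\bullet$ lies north of $\y$ in $\y$'s column using Lemma~\ref{lem:increasing_goodness}(IV) applied to $V_t$ and the fact that a $2\times 2$ square cannot have two $\bullet$'s stacked. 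Conversely, moving a $\bullet$ north past an edge label $\mathfrak{s} \in \underline{\z}$: this would require $t \in \underline{\z}$ and $\bullet \in \z$, but then $\mathfrak{s}$ would be north of $\bullet$ in $V_t$, contradicting the inductive hypothesis. After the slide completes, the terminal tableau has at most one $\bullet$, which lands at an inner corner of the shape being rectified, so (I) for the $(j+1)$-st column reduces to checking that no edge label remains in column $c$ after the last slide of $c$ — which follows since each special label of $c$ is moved all the way to its rectified position (a box label, by Lemma~\ref{lem:increasing_goodness} and horizontal-strip considerations from Lemma~\ref{lem:Ahorizstrip}), or else ${\tt factor}_K(\mathfrak s) = 0$ and $\mathfrak{s}$ in fact sits in a box of $c$ already.

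\textbf{The main obstacle.} The delicate point is the transition in the switch sequence when a $\bullet$ and an edge label interact on a \emph{shared} horizontal edge between two rows of $c$ — precisely the situation where the column is locally $\ytableausetup{boxsize=1.2em}\ytableaushort{{\bullet},{t}}$ with a label on the edge between them. I expect I will need to invoke that $\{i, i+1, \dots, j\}$ forms a horizontal strip throughout (Lemma~\ref{lem:horizontal_stripness_preservation} and Lemma~\ref{lem:Ahorizstrip}) to rule out a $\bullet$ ever needing to pass a numerical label $t$ that sits on an edge directly below another instance of $t$ or directly below a $\bullet$ in the same column; without the horizontal-strip property one could imagine configurations where switching forces an edge label to be stranded north of a $\bullet$. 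Organizing this so that the induction on $j$ (over columns), the induction within a single slide (over $t$), and the horizontal-strip invariant all mesh cleanly is the bookkeeping crux; the individual local verifications are routine once the invariants are correctly stated.

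\textbf{Remark.} This lemma will be used to show that during rectification of column $c$, the ${\tt switch}$ operations behave exactly as in the non-equivariant ($\star$-free, edge-label-free) ${\tt Kjdt}$ on the boxes of $c$ together with a single special label, which is what is needed to match ${\tt wt}_K$ with the edge-weight and box-weight factors of Theorem~\ref{thm:main} under the semistandardization map $\Phi$.
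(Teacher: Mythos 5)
There is a genuine gap, rooted in a misreading of the {\tt switch} operation. By definition, ${\tt switch}$ replaces each box symbol by the other symbol and, \emph{if the southwestmost edge was filled, deletes that edge symbol} (the symbol reappears only in boxes). So ${\tt switch}$ never places a label on an edge: rectification can only \emph{absorb} edge labels into boxes, never create or relocate them onto edges. Your inductive step is organized around controlling ``the only way a new edge label $\mathfrak{s}$ appears on an edge $\underline{\y}$,'' which is a mechanism that does not exist; the whole apparatus of tracking creation/destruction of edge labels through the switch sequence is therefore built on a false premise. Once you observe that edge labels are never created, part (II) collapses to two sentences: columns East of $c$ carry no edge labels by (I), and any label of column $c$ lying North of the $\bullet$ must have been output by some ${\tt switch}$, hence is a box label.

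More seriously, the actual content of (I) --- that every edge label originally present in the eastmost $j$ columns of $\Psi(B)$ \emph{must} be absorbed into a box by the time those columns are rectified --- is not established by your argument; you defer it to ``each special label of $c$ is moved all the way to its rectified position \dots or else ${\tt factor}_K(\mathfrak{s})=0$ and $\mathfrak{s}$ in fact sits in a box already,'' but a vanishing factor means the label did not move, in which case it would remain an edge label, exactly what must be ruled out. The paper's proof is a direct counting argument with no induction on switch sequences: if an edge label $\ell \in \underline{\x}$ survives, it was on $\underline{\x}$ in $\Psi(B)$ (edges are never filled); with $\x$ in row $i$ and $\ell \in \mu[k]$, the ``too high'' condition forces $k \leq i$, column strictness (Lemma~\ref{lem:increasing_goodness}(II)) forces each of the $i$ boxes north of $\underline{\x}$ in its column to carry a label from some $\mu[f(e)]$ with $f(e) \leq k$, and the horizontal-strip property (Lemma~\ref{lem:Ahorizstrip}) makes $e \mapsto f(e)$ an injection into $\{1,\dots,k-1\}$ --- impossible since $k-1 < i$. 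You correctly anticipated that the horizontal-strip invariant is essential, but it is used for this pigeonhole step, not to referee $\bullet$--edge-label interactions inside a $2\times 2$ square.
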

\begin{proof}
(I): Suppose there were such an edge label $\ell \in \underline{\x}$ after rectifying the eastmost $j$ columns. Then $\ell \in \underline{\x}$ in $\Psi(B)$, since rectification never adds a label to any edge. Suppose $\x$ is in the $i$th row from the top of $\Psi(B)$. Then since no label of $B$ is too high, $\ell \in \mu[k]$ where $k \leq i$. Let the boxes North of $\underline{\x}$ and in the same column be $\x_1, \dots, \x_i = \x$ from north to south. By Lemma~\ref{lem:increasing_goodness}(II), we have for each $e$ that $\lab(\x_e) \in \mu[f(e)]$ for some $f(e) \leq k$. But then by Lemma~\ref{lem:Ahorizstrip}, $f : \{1, 2, \dots, i\} \to \{1, 2, \dots, k-1\}$ is injective, a contradiction.

(II): Let $c$ be the column currently being rectified. For the columns East of $c$, the claim follows from part (I), noting that rectification never adds a label to any edge. For column $c$ itself, the claim is vacuous if there is no $\bullet$ in $c$. If there is $\bullet \in c$, the claim follows from noting that every label of column $c$ North of this $\bullet$ must have participated in some ${\tt switch}$ and that ${\tt switch}$ never outputs any edge labels. 
\end{proof}

An equivariant
increasing tableau $T$ is {\bf ballot} if $\Phi(T)$ is ballot in the sense of Section~\ref{sec:ballot_property}. That is, for every ${\widetilde T}$ obtained by selecting one copy of each label in $T$, every initial segment of ${\widetilde T}$'s column reading word has, for each $i\geq 1$, at least as many
labels from $\mu[i]$ as from $\mu[i+1]$. We extend this definition to tableaux with $\bullet$'s by ignoring the $\bullet$'s.

\begin{lemma}
\label{lem:Aballot}
Let $B \in {\mathcal B}_{\lambda,\mu}^\nu$. Then $\Psi(B)$ is ballot, as is each tableau of any switch sequence during the column rectification of $\Psi(B)$.
\end{lemma}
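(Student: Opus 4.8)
The statement to be proved is Lemma~\ref{lem:Aballot}: for $B \in {\mathcal B}_{\lambda,\mu}^\nu$, the standardization $\Psi(B)$ is ballot, and so is every tableau arising in any switch sequence during column rectification. The natural strategy is the same one used for the companion structural lemmas (Lemmas~\ref{lem:Asemistd}, \ref{lem:Ahorizstrip}, \ref{lem:no_edge_labels}): reduce ballotness to a local statement about a single ${\tt switch}$ step and argue by induction, using the previously established increasingness (Lemma~\ref{lem:increasing_goodness}) and horizontal-strip (Lemma~\ref{lem:Ahorizstrip}) properties to control the column reading word.

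\textbf{Key steps.} First, observe that $\Psi(B)$ is ballot essentially by construction: its genetic relabeling $\Phi(\Psi(B))$ is $B$ (up to the $\star$-data which does not affect ballotness), and $B \in {\tt BallotGen}(\nu/\lambda)$ is ballot by hypothesis; so the base case of the induction is immediate. Second, I would set up the induction along the switch sequence $V_1, V_2, \dots$ for a fixed inner corner $\x$ during the rectification of a fixed column $c$, assuming $V_j$ is ballot and showing $V_{j+1}$ is ballot. The transition $V_j \to V_{j+1}$ applies ${\tt switch}$ to the alternating ribbons made of $\bullet$ and the label $j$ (in the original numbering of $\Psi(B)$, i.e.\ $j \in \mu[i]$ for the appropriate $i$). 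Since ${\tt switch}$ only moves copies of $j$ (and $\bullet$'s, which are invisible to ballotness), the column reading word changes only in the positions occupied by $j$'s. I would then argue that in every genotype — i.e.\ every choice of one copy of each label — the multiset of labels in any initial segment of the column reading word is unchanged in the relevant family counts, or changes only in a way that preserves the ballot inequality, because (a) by Lemma~\ref{lem:increasing_goodness}(II) the $j$'s still lie strictly below any label of a higher index in their column and strictly above any label of a smaller index, (b) by Lemma~\ref{lem:Ahorizstrip} the set $\mu[i]$ stays a horizontal strip so the copies of $j$ never collide in a column, and (c) a ${\tt switch}$ moves a $j$ from a box $\y$ to $\y^\uparrow$ or $\y^\leftarrow$, so in the column reading order (down columns, right to left) the $j$ moves \emph{earlier}; one checks that moving a label of family $i$ earlier in the word, while simultaneously possibly moving a $\bullet$ (hence nothing) can only help the ballot condition for the pair $(i,i+1)$ and is neutral for all other pairs. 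Third, after handling a single switch sequence, one concludes ballotness is preserved through the entire rectification of column $c$, and then — since ${\tt KEqrect}$ is just the composition of the ${\tt KEqjdt}_\x$ over inner corners in column rectification order — ballotness is preserved throughout the whole column rectification of $\Psi(B)$, giving the full statement.

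\textbf{Main obstacle.} The delicate point is step (c): carefully verifying that a single ${\tt switch}$ move cannot create a ballotness violation in \emph{any} genotype. Unlike the increasing and horizontal-strip properties, ballotness is a condition quantified over all choices of one copy per gene, so one must track how the reading word of an arbitrary genotype is affected when several copies of $j$ move simultaneously inside disjoint ribbons, and when a $\bullet$ passes a copy of $j-1$ or $j+1$ in an adjacent column. I expect the cleanest way to dispatch this is, as in the proofs of Lemmas~\ref{lem:increasing_goodness} and~\ref{lem:horizontal_stripness_preservation}, to reduce to a $2\times 2$ (plus the southwest edge) local window: enumerate the finitely many local configurations of $\bullet$ and $j$ that ${\tt switch}$ can act on, and check in each that the new local contribution to any initial segment of any genotype's column word preserves, for each $i$, the count of $\mu[i]$-labels being $\geq$ the count of $\mu[i+1]$-labels. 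The relevant invariant is that ${\tt switch}$ only ever moves a label of one fixed index $j$ strictly northwest (never southeast) in column-reading order, while the other labels stay put; combined with Lemmas~\ref{lem:increasing_goodness}(II,III,IV) this pins down the local picture enough that the verification, though requiring care, is routine. I would leave the straightforward case enumeration to the reader, in keeping with the style of the surrounding lemmas.
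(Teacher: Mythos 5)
Your high-level plan --- ballotness of $\Psi(B)$ by construction, then induction over each switch sequence using Lemmas~\ref{lem:increasing_goodness}, \ref{lem:Ahorizstrip} and~\ref{lem:no_edge_labels} --- is the same as the paper's. But the pivotal claim in your step (c) is false, and it fails exactly where the lemma has content. A ${\tt switch}$ moves a label $q$ into the adjacent $\bullet$-box, which is either directly above or directly to the left. In the first case the reading word is essentially unchanged (nothing lies between the two positions), but in the second case the $q$ moves one column \emph{west}, and since ${\tt word}$ reads columns from right to left, the $q$ is then read strictly \emph{later}, not earlier. This is the dangerous configuration: if $q\in\mu[i]$, a label $\bar q\in\mu[i+1]$ sitting below the vacated box in column $c^\rightarrow$ is now read before the $q$, and ballotness for the pair $(i,i+1)$ is genuinely threatened. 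Ruling this out is the heart of the paper's proof and is not a routine $2\times 2$ monotonicity check: one needs Lemma~\ref{lem:no_edge_labels}(I) (no edge labels East of the column being rectified), Lemma~\ref{lem:Ahorizstrip} (so that $q$ is the only $\mu[i]$-label in its new column and the label east of the box below the destination is $\bar q$), and Lemma~\ref{lem:increasing_goodness}(I,II,III) to force the box below the destination to contain a $\mu[i+1]$-label, contradicting ballotness of $V_q$. If your stated invariant (``the moved label only moves earlier in reading order, which can only help'') were correct, the lemma would be nearly vacuous; it is the west moves that carry all the difficulty.

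A secondary imprecision: the case in which the moved value lies in the \emph{larger} family of the violating pair (the paper's case $q\in\mu[i+1]$) is not simply ``neutral.'' Since ballotness quantifies over genotypes, one must produce, from a hypothetical nonballot genotype of $V_{q+1}$, a nonballot genotype of $V_q$; the paper does this by replacing each selected $\mu[i+1]$-label by an instance lying weakly east in $V_q$ while keeping the $\mu[i]$-selection. To repair your argument, split on whether the switched value $q$ lies in $\mu[i]$, in $\mu[i+1]$, or in neither for the violating pair $(i,i+1)$; treat the middle case by the genotype transfer just described; and in the first case replace the claimed direction-of-motion monotonicity by the contradiction argument outlined above.
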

\begin{proof}
Let $A = \Psi(B)$.
Since $B$ is ballot and $\Phi(A) = B$, $A$ is ballot by definition. 
Suppose that some $V_q$ is ballot, but $V_{q+1}$ is not. 
Then there exist $i$ and a $\widetilde{V_{q+1}}$ with a ballotness violation between $\mu[i]$ and $\mu[i+1]$.

If $q \notin \mu[i] \cup \mu[i+1]$, then the labels of $\mu[i]$ and $\mu[i+1]$ appear in the same locations in $V_q$ and $V_{q+1}$, contradicting that $V_q$ is ballot. 

If $q\in \mu[i+1]$, then
no $\mu[i]$-label moves.
For each $\ell\in \mu[i+1]$ appearing in $\widetilde{V_{q+1}}$, there is an $\ell$ east of that position in $V_q$. Hence we construct a nonballot $\widetilde{V_q}$ by choosing those corresponding $\ell$'s, the same labels from $\mu[i]$ as in $\widetilde{V_{q+1}}$, and all other labels arbitrarily. This contradicts that $V_q$ is ballot.  

Finally if $q\in \mu[i]$, then there is some $\x$ in column $c$ of $V_q$ with $\bullet \in \x$ and $q\in \x^\rightarrow$ such that the $q$ moving into
$\x$ violates ballotness in the columns East of
$c$. That is, locally the {\tt switch} is
\[\ytableausetup{boxsize=1.1em} V_q \supseteq \ytableaushort{a b, \bullet q, d e}\mapsto
\ytableaushort{a b, q \bullet , d e} \subseteq V_{q+1} \text{\, \, or \, \,}
V_q \supseteq \ytableaushort{a \bullet, \bullet q, d e}\mapsto
\ytableaushort{a q, q \bullet , d e} \subseteq V_{q+1},
\]
where the $\x$ is the left box of the second row. The $q \in \x^\rightarrow$ is Westmost in $V_q$, since otherwise the nonballotness of $V_{q+1}$ contradicts that $V_q$ is ballot. In particular, $q \neq d$. Hence by Lemma~\ref{lem:increasing_goodness}(III), $q<d$.

Since $V_q$ is ballot but $V_{q+1}$ is not, there is a $\bar{q} \in \mu[i+1]$
in $c^\rightarrow$ in $V_q$, and hence in $V_{q+1}$. By Lemma~\ref{lem:increasing_goodness}(II) applied to $V_q$, this $\bar{q}$ is below $q$ in $c^\rightarrow$. By Lemma~\ref{lem:no_edge_labels}(I), there are no edge labels East of column $c$. So in fact $e$ and hence $d$ both exist. Indeed by Lemma~\ref{lem:increasing_goodness}(II) and Lemma~\ref{lem:Ahorizstrip},
$e=\bar{q}$.  By Lemma~\ref{lem:Ahorizstrip}, $q$ is the only label of $\mu[i]$ that appears in $c$ in $V_{q+1}$. Hence $d\not\in \mu[i]$. Thus by Lemma~\ref{lem:increasing_goodness}(I) applied to $V_q$,
we conclude $d\in \mu[i+1]$. However this again contradicts that $V_q$ is ballot.
\end{proof}

For $A \in {\tt EqInc}(\nu/\lambda,|\mu|)$, let $A^{(k)}$ be the ``partial'' tableau that is the column rectification of the eastmost $k$ columns of $A$.
\begin{lemma}
\label{lem:A^k}
Let $B \in \mathcal{B}_{\lambda, \mu}^\nu$ and let $A = \Psi(B)$. For each $i$, the $i$th row of $A^{(k)}$ consists of a (possibly empty) final segment from $\mu[i]$.
\end{lemma}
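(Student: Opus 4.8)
The statement is a structural claim about the intermediate tableaux $A^{(k)}$ produced during column rectification of $A = \Psi(B)$; it should be proved by induction on $k$, the number of columns already rectified. The base case $k=0$ is vacuous (there are no boxes). For the inductive step, I would suppose the claim holds for $A^{(k)}$ and analyze what happens as we rectify the next column $c$ (the $(k+1)$st from the east) via its switch sequence $V_1, V_2, \dots$. The key point is to track, row by row, how the numerical labels of $\mu[i]$ in column $c$ merge with the final segment of $\mu[i]$ already occupying the $i$th row to the east.

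The main ingredients are already available in the excerpt. First, Lemma~\ref{lem:Ahorizstrip} tells us that each $\mu[i]$ forms a horizontal strip in $A$, in $A^{(k)}$, and in every tableau of the switch sequence; so the labels of $\mu[i]$ never stack up in a column. Second, Lemma~\ref{lem:increasing_goodness} gives the row- and column-strictness of the numerical labels in each $V_j$, together with the control on labels northwest/southeast of a $\bullet$. Third, Lemma~\ref{lem:no_edge_labels} guarantees there are no edge labels in the region already rectified, and none appears north of a $\bullet$ in column $c$ during its rectification — so after rectifying column $c$ all its labels sit in boxes, not on edges. The argument is then: by the inductive hypothesis, the $i$th row of $A^{(k)}$ east of $c$ is a final segment $(\mu_1 + \dots + \mu_{i-1} + m_i + 1, \dots, \mu_1 + \dots + \mu_i)$ of $\mu[i]$ for some $m_i$. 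When the $\bullet$ (placed at the inner corner of $c$) switches past the labels of column $c$, each label of column $c$ in row $i$ (there is at most one, by the horizontal-strip property) slides one step north or stays put, and by Lemma~\ref{lem:increasing_goodness}(I)--(II) plus the horizontal-strip property it must land at the west end of the existing final segment of $\mu[i]$ in row $i$, extending it to a longer final segment. Crucially, row-strictness forces that the label of $\mu[i]$ arriving in row $i$ from column $c$ is exactly the predecessor $\mu_1 + \dots + \mu_{i-1} + m_i$ of the current segment's smallest entry, since ballotness (Lemma~\ref{lem:Aballot}) and the horizontal-strip condition pin down which labels of $\mu[i]$ occupy which boxes.

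The step I expect to be the main obstacle is verifying that the label of $\mu[i]$ that ends up in the $i$th row of $A^{(k+1)}$, coming out of column $c$, attaches to form a \emph{contiguous} final segment — i.e.\ ruling out a gap between the newly arrived label and the previously present final segment, and ruling out the newly arrived label being too small (which would leave some label of $\mu[i]$ stranded in a lower row). This requires combining the column-strictness across rows (Lemma~\ref{lem:increasing_goodness}(II)), the fact that $\mu[i]$ is a horizontal strip throughout (Lemma~\ref{lem:Ahorizstrip}), and the ballotness constraint (Lemma~\ref{lem:Aballot}) to show that the labels of $\mu[i]$, being $\mu_i$ consecutive integers arranged as a horizontal strip whose rows are by hypothesis final segments, can only be rearranged by rectification into longer final segments. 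A careful bookkeeping of the switch sequence in a generic $2\times 2$ window — exactly the style of argument used to prove Lemmas~\ref{lem:increasing_goodness}, \ref{lem:horizontal_stripness_preservation} and~\ref{lem:Aballot} — should close this, but it is the delicate part and deserves explicit case analysis rather than being waved through.
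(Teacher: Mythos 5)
Your route differs from the paper's. The paper's proof of this lemma is \emph{static}, not a column-by-column induction: it simply collects properties of $A^{(k)}$ that the preceding lemmas already give for every tableau in every switch sequence --- strictly increasing rows and columns (Lemma~\ref{lem:increasing_goodness}(I,II)), each $\mu[i]$ a horizontal strip (Lemma~\ref{lem:Ahorizstrip}), no edge labels in the rectified region (Lemma~\ref{lem:no_edge_labels}(I)), and ballotness (Lemma~\ref{lem:Aballot}) --- together with one further observation that your plan does not mention and that does the real work: since $\mu[i]$ is a horizontal strip already in $A$ (no two of its labels in a column, increasing southwest to northeast), the labels of $\mu[i]$ lying in the eastmost $k$ columns of $A$ are automatically a \emph{final} segment of $\mu[i]$, and rectification does not change the multiset of labels; hence the labels of $\mu[i]$ present in $A^{(k)}$ form a final segment of $\mu[i]$ no matter how the slides moved them. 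Once the content of $A^{(k)}$ is pinned down this way, ballotness plus row/column strictness plus the horizontal-strip condition force the positions: row $i$ must consist exactly of that final segment. No tracking of the switch sequence is needed.

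This is also where your proposal has its gap: the step you yourself flag --- that the label of $\mu[i]$ emerging from column $c$ attaches contiguously to the existing final segment, with nothing stranded and no gap --- is precisely the content of the lemma, and your plan defers it to ``careful bookkeeping of the switch sequence in a generic $2\times 2$ window.'' That bookkeeping is delicate for reasons your sketch underestimates (during the rectification of column $c$ the labels already sitting in rows east of $c$ can themselves slide west past the $\bullet$, so the picture ``column-$c$ labels move north and land at the west end of a stationary segment'' is not accurate), and in any case it is unnecessary: the contiguity is forced by the content argument above, not by local analysis of switches. So as written the proposal is a plausible but more laborious alternative whose decisive step is left open; supplying the ``final segment of labels present'' observation would let you discard the induction on $k$ and the $2\times 2$ case analysis entirely and recover the paper's short argument.
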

\begin{proof}
By Lemma~\ref{lem:increasing_goodness}(I, II), $A^{(k)}$ has strictly increasing rows and columns.
By Lemma~\ref{lem:Ahorizstrip}, the labels $\mu[i]$ form a horizontal strip in $A^{(k)}$ for each $i$; moreover the labels of $\mu[i]$ appearing in $A^{(k)}$ are a final segment of $\mu[i]$.  By Lemma~\ref{lem:no_edge_labels}(I), there are no edge labels in $A^{(k)}$.
By Lemma~\ref{lem:Aballot}, $A^{(k)}$ is ballot. The lemma follows.
\end{proof}

\begin{corollary}
\label{cor:rects}
$A$ rectifies to $T_\mu$.
\end{corollary}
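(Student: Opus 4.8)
The plan is to deduce Corollary~\ref{cor:rects} as an immediate consequence of Lemma~\ref{lem:A^k}. Recall that for $A = \Psi(B)$ with $B \in \mathcal{B}_{\lambda,\mu}^\nu$, the full column rectification ${\tt KEqrect}(A)$ is obtained by applying ${\tt KEqjdt}_\x$ in column rectification order until all of $\lambda$ is absorbed. In the notation of Lemma~\ref{lem:A^k}, once we have rectified all $n-k$ columns (equivalently, once $k = n-k$ in the statement, so that no columns remain), the resulting ``partial'' tableau $A^{(n-k)}$ is the fully rectified tableau ${\tt KEqrect}(A)$, which has shape $\mu$ (since $\nu/\lambda$ has $|\mu|$ labels and rectification absorbs all of $\lambda$).

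First I would invoke Lemma~\ref{lem:A^k} with $k = n-k$ (the total number of columns of the ambient rectangle). Since the rectification has used up all columns, $A^{(n-k)} = {\tt KEqrect}(A)$ and this tableau has shape exactly $\mu$. Lemma~\ref{lem:A^k} tells us that the $i$th row of $A^{(n-k)}$ consists of a final segment of $\mu[i] = (\mu_1 + \cdots + \mu_{i-1} + 1, \ldots, \mu_1 + \cdots + \mu_i)$. But the $i$th row of a shape-$\mu$ tableau has exactly $\mu_i$ boxes, and $\mu[i]$ has exactly $\mu_i$ elements, so the ``final segment'' must be all of $\mu[i]$, filling row $i$ in increasing order as $\mu_1 + \cdots + \mu_{i-1} + 1, \ldots, \mu_1 + \cdots + \mu_i$. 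This is precisely the definition of the superstandard tableau $T_\mu$. Hence ${\tt KEqrect}(A) = T_\mu$, i.e., $A$ rectifies to $T_\mu$.

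I do not anticipate any real obstacle here: the corollary is a direct specialization and cardinality count applied to Lemma~\ref{lem:A^k}. The only point requiring minor care is confirming that ``$A^{(k)}$ for $k$ equal to the number of columns'' genuinely coincides with the fully rectified tableau ${\tt KEqrect}(A)$ — that is, that column rectification order indeed exhausts the columns from east to west and that no edge labels survive (the latter being Lemma~\ref{lem:no_edge_labels}(I), already available). Given that, the shape of the output is forced to be $\mu$ because rectification is shape-preserving on $\nu/\lambda$ modulo sliding into $\lambda$, and $|\nu/\lambda| = |\mu|$ by the content hypothesis on $B$. With the shape pinned down as $\mu$ and each row constrained by Lemma~\ref{lem:A^k} to be a final segment of $\mu[i]$ of the correct length $\mu_i$, equality with $T_\mu$ is immediate.

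\begin{proof}
Apply Lemma~\ref{lem:A^k} with $k$ equal to the number of columns of $k \times (n-k)$, so that $A^{(k)} = {\tt KEqrect}(A)$ is the fully rectified tableau. Since $B$ has content $\mu$, the skew shape $\nu/\lambda$ has $|\mu|$ labels, and column rectification (which slides all of $\lambda$ away) produces a tableau of straight shape $\mu$. By Lemma~\ref{lem:A^k}, row $i$ of this shape-$\mu$ tableau is a final segment of $\mu[i]$; as row $i$ has $\mu_i$ boxes and $|\mu[i]| = \mu_i$, this final segment is all of $\mu[i]$, listed left to right in increasing order. This is exactly the superstandard tableau $T_\mu$, so ${\tt KEqrect}(A) = T_\mu$.
\end{proof}
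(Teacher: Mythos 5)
Your overall strategy matches the paper's (the corollary is indeed meant to be read off from Lemma~\ref{lem:A^k} applied to the fully column-rectified tableau), but the step where you pin the shape down is wrong. You assert that $|\nu/\lambda|=|\mu|$ ``by the content hypothesis'' and that rectification preserves the shape modulo sliding into $\lambda$. Neither holds in this setting: content $\mu$ counts \emph{genes}, not boxes or label instances, and a gene may have several instances, including edge labels. Already in Example~\ref{exa:(2),(2,1),(2,2)} one has $|\nu/\lambda|=2$ while $|\mu|=3$ (the tableau $T_1$ there carries an edge label $2_1$). Moreover the equivariant $K$-theoretic rectification does not preserve the number of boxes: edge labels are absorbed into boxes (so the rectified straight shape can have \emph{more} boxes than $\nu/\lambda$), and ${\tt switch}$ can merge or delete copies of a value. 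So you cannot conclude that ${\tt KEqrect}(A)$ has shape $\mu$ by a box count, and your argument that each final segment of $\mu[i]$ is in fact all of $\mu[i]$ — which rests entirely on that shape claim — has a genuine gap.

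The missing ingredient is that the \emph{set of values} $1,\dots,|\mu|$ survives rectification. Each element of $\mu[i]$ occurs in $A=\Psi(B)$, and inspection of ${\tt switch}$ shows no value is ever lost: if a value sits only on the southwestmost edge of a ribbon, the box directly above it (which holds the other symbol, $\bullet$) acquires that value after the switch, and when it sits in boxes of a ribbon containing a $\bullet$, the $\bullet$-boxes acquire it. Hence every element of $\mu[i]$ appears in ${\tt KEqrect}(A)$. Now Lemma~\ref{lem:A^k} (at the final stage, where by Lemma~\ref{lem:no_edge_labels}(I) there are no edge labels) says the entries of row $i$ lie in $\mu[i]$ and form a final segment, so elements of $\mu[i]$ can only occupy row $i$; rows are strictly increasing by Lemma~\ref{lem:increasing_goodness}(I), so there are no repeats, and row $i$ is exactly $\mu[i]$. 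This forces the shape to be $\mu$ and the rectification to be $T_\mu$ — i.e., the fullness of the rows should be deduced from value-survival plus Lemma~\ref{lem:A^k}, not from a cardinality count on $\nu/\lambda$.
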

\begin{proof}
Immediate from Lemma~\ref{lem:A^k}.
\end{proof}

\begin{proposition}\label{prop:Psi_codomain}
For $B \in {\mathcal B}_{\lambda,\mu}^\nu$, $\Psi(B) \in{\mathcal A}_{\lambda,\mu}^\nu$.
\end{proposition}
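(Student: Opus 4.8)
The plan is to show that $\Psi(B)$ lies in $\mathcal{A}_{\lambda,\mu}^\nu$, which by definition means verifying two things: that $\Psi(B) \in {\tt EqInc}(\nu/\lambda,|\mu|)$ and that ${\tt KEqrect}(\Psi(B)) = T_\mu$. The second condition is precisely Corollary~\ref{cor:rects} (equivalently, it follows immediately from Lemma~\ref{lem:A^k} with $k$ equal to the total number of columns of $\nu/\lambda$). So the real content is establishing membership in ${\tt EqInc}(\nu/\lambda,|\mu|)$, i.e., that $\Psi(B)$ is a legal equivariant increasing tableau.

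First I would recall that Lemma~\ref{lem:Asemistd} already tells us $\Psi(B) \in {\tt EqInc}(\nu/\lambda,|\mu|)$: it has the correct shape $\nu/\lambda$, its entries are $1,2,\dots,|\mu|$ (each used once, since $B$ has content $\mu$ and the standardization map is a bijection on labels), row-strictness of box labels follows from (S.1), and strict increase down columns follows from (S.2). Since $\Psi(B)$ is declared $\star$-less, there is no marking condition to check. Thus Lemma~\ref{lem:Asemistd} handles the first requirement outright.

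Therefore the proof is essentially a two-line assembly: \emph{First}, by Lemma~\ref{lem:Asemistd}, $\Psi(B) \in {\tt EqInc}(\nu/\lambda,|\mu|)$. \emph{Second}, by Corollary~\ref{cor:rects}, ${\tt KEqrect}(\Psi(B)) = T_\mu$. These two facts together are exactly the definition of $\mathcal{A}_{\lambda,\mu}^\nu$, so $\Psi(B) \in \mathcal{A}_{\lambda,\mu}^\nu$. In writing this up I would simply cite the two results:

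\begin{proof}
By Lemma~\ref{lem:Asemistd}, $\Psi(B) \in {\tt EqInc}(\nu/\lambda,|\mu|)$. By Corollary~\ref{cor:rects}, ${\tt KEqrect}(\Psi(B)) = T_\mu$. Hence $\Psi(B) \in {\mathcal A}_{\lambda,\mu}^\nu$.
\end{proof}

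The only thing to be careful about is making sure the chain of auxiliary lemmas feeding Corollary~\ref{cor:rects} is genuinely available at this point in the text—namely Lemmas~\ref{lem:increasing_goodness}, \ref{lem:horizontal_stripness_preservation}, \ref{lem:Ahorizstrip}, \ref{lem:no_edge_labels}, \ref{lem:Aballot}, \ref{lem:A^k}, and then Corollary~\ref{cor:rects}. Since all of these are proved earlier in the excerpt, there is no obstacle; the "hard part" was really the horizontal-strip and ballotness preservation under ${\tt switch}$ (Lemmas~\ref{lem:horizontal_stripness_preservation} and~\ref{lem:Aballot}), which are already done, so the present proposition is a short corollary.
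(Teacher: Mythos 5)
Your proof is correct and is exactly the argument the paper gives: cite Lemma~\ref{lem:Asemistd} for membership in ${\tt EqInc}(\nu/\lambda,|\mu|)$ and Corollary~\ref{cor:rects} for rectification to $T_\mu$. Nothing further is needed.
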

\begin{proof}
By Lemma~\ref{lem:Asemistd}, $\Psi(B) \in {\tt EqInc}(\nu/\lambda, |\mu|)$. By Corollary~\ref{cor:rects}, $\Psi(B)$ rectifies to $T_\mu$.
\end{proof}

\begin{lemma}
\label{lem:horizstrip}
For $A \in {\mathcal A}_{\lambda,\mu}^\nu$, $\mu[i]$ forms a horizontal strip in $A$ and each $A'$ in the column rectification of $T$.
\end{lemma}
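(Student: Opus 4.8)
\textbf{Proof plan for Lemma~\ref{lem:horizstrip}.}

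The claim is the ``dual'' counterpart to Lemma~\ref{lem:Ahorizstrip}: there we started with $B\in\mathcal{B}_{\lambda,\mu}^\nu$ and tracked horizontal-strip-ness of $\mu[i]$ through the forward column rectification of $\Psi(B)$; here we start directly with an arbitrary $A\in\mathcal{A}_{\lambda,\mu}^\nu$ and want the same conclusion. The plan is to reduce the present lemma to Lemma~\ref{lem:Ahorizstrip} by exhibiting $A$ as $\Psi$ of an appropriate element of $\mathcal{B}_{\lambda,\mu}^\nu$, together with an application of Lemma~\ref{lem:horizontal_stripness_preservation}.

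First I would observe that $\Phi(A)\in\mathcal{B}_{\lambda,\mu}^\nu$: by definition of $\mathcal{A}_{\lambda,\mu}^\nu$, $A\in{\tt EqInc}(\nu/\lambda,|\mu|)$ with ${\tt KEqrect}(A)=T_\mu$, and $A$ is ballot (which is exactly what is needed to land in $\mathcal{B}_{\lambda,\mu}^\nu$, once one checks (S.1)--(S.4), which follow from the row/column strictness of $A$ together with the indexing conventions in $\Phi$; these verifications are essentially those already behind Lemma~\ref{lem:Asemistd} read in reverse). Actually, the cleanest route is: the definition of $\mathcal{A}_{\lambda,\mu}^\nu$ already packages precisely the conditions needed for $\Phi(A)$ to be semistandard, edge-labeled, of content $\mu$, with no label too high (no-label-too-high transfers directly since $\Phi$ preserves the family of each label and its position), and ballot (this is the definition of an equivariant increasing tableau being ballot, which is part of what ${\tt KEqrect}(A)=T_\mu$ forces via Corollary-type reasoning, or may simply be assumed as part of membership in $\mathcal{A}_{\lambda,\mu}^\nu$ as used here). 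Then $\Psi(\Phi(A))=A$ since $\Psi$ undoes $\Phi$ on $\star$-less tableaux and we may ignore $\star$'s for the horizontal-strip question.

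Now I would invoke Lemma~\ref{lem:Ahorizstrip} with $B:=\Phi(A)$: it states that $\mu[i]$ forms a horizontal strip in $\Psi(B)=A$ and in each tableau of any switch sequence during the column rectification of $A$. Since the intermediate tableaux $A'$ appearing in the column rectification of $A$ are exactly the $V_q$'s of these switch sequences (strung together across the successive slides ${\tt KEqjdt}_\x$ in column rectification order), Lemma~\ref{lem:Ahorizstrip} gives the horizontal-strip property for every such $A'$, which is precisely the assertion of Lemma~\ref{lem:horizstrip}. The only genuinely non-formal point, and the one I expect to require a sentence of care, is confirming that membership in $\mathcal{A}_{\lambda,\mu}^\nu$ really does guarantee $\Phi(A)$ is ballot and hence $\Phi(A)\in\mathcal{B}_{\lambda,\mu}^\nu$ — i.e.\ that ballotness of the equivariant increasing tableau (in the sense defined just before Lemma~\ref{lem:Aballot}) is available to us. If ballotness is not part of the ambient hypotheses on $\mathcal{A}_{\lambda,\mu}^\nu$, one instead derives it a posteriori: the argument of Lemma~\ref{lem:Aballot} run in reverse along the rectification path from $T_\mu=$ ${\tt KEqrect}(A)$ back to $A$ shows ballotness is preserved, and $T_\mu$ is trivially ballot. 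Either way the lemma follows with no further computation.
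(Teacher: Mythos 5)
There is a genuine gap: your reduction is circular. To invoke Lemma~\ref{lem:Ahorizstrip} with $B:=\Phi(A)$ you must first know $\Phi(A)\in\mathcal{B}_{\lambda,\mu}^\nu$, i.e.\ that $\Phi(A)$ is semistandard and ballot. But membership in $\mathcal{A}_{\lambda,\mu}^\nu$ only says $A\in{\tt EqInc}(\nu/\lambda,|\mu|)$ and ${\tt KEqrect}(A)=T_\mu$; neither ballotness of $A$ nor conditions (S.2)--(S.4) for $\Phi(A)$ is part of that definition. Worse, (S.2) and (S.4) for $\Phi(A)$ say precisely that no two labels of the same block $\mu[i]$ share a column and that they increase from southwest to northeast --- that is, they are (essentially) the horizontal-strip statement for $A$ that you are trying to prove. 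In the paper's development this is made explicit: Lemma~\ref{lem:Bsemistd} is proved \emph{using} Lemma~\ref{lem:horizstrip}, and Lemma~\ref{lem:Bballot} (ballotness of $\Phi(A)$, i.e.\ of $A$) also invokes Lemma~\ref{lem:horizstrip}; Proposition~\ref{prop:Phi_codomain} rests on both. So your fallback of ``deriving ballotness a posteriori by reversing Lemma~\ref{lem:Aballot}'' does not rescue the plan either: the reverse-direction ballotness argument itself needs the horizontal-strip property of the intermediate tableaux, and in any case ballotness alone would not give you (S.2)--(S.4).

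The intended argument is much shorter and runs in the opposite direction: $\mu[i]$ trivially forms a horizontal strip in $T_\mu$ (it is exactly row $i$), and Lemma~\ref{lem:horizontal_stripness_preservation} is an \emph{equivalence}, so the horizontal-strip property propagates backwards along every switch sequence of the column rectification, from ${\tt KEqrect}(A)=T_\mu$ through every intermediate tableau $A'$ back to $A$ itself. You cite Lemma~\ref{lem:horizontal_stripness_preservation} only as an ingredient of Lemma~\ref{lem:Ahorizstrip}; using its ``only if'' direction directly, starting from $T_\mu$, is what makes the lemma immediate and keeps the logical order of Section~\ref{sec:proof_of_conjecture} intact.
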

\begin{proof}
This is true for $T_\mu$, and hence true for $A$ and each $A'$ by Lemma~\ref{lem:horizontal_stripness_preservation}.
\end{proof}

\begin{lemma}
\label{lem:Bsemistd}
For $A \in {\mathcal A}_{\lambda,\mu}^\nu$, $\Phi(A)$ is semistandard.
\end{lemma}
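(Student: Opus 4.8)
The statement to prove is Lemma~\ref{lem:Bsemistd}: for $A \in {\mathcal A}_{\lambda,\mu}^\nu$, the genomic tableau $\Phi(A)$ is semistandard, i.e.\ it satisfies (S.1)--(S.4). The plan is to translate each of the four semistandardness conditions back to a structural property of the equivariant increasing tableau $A$, using only the definition of $\Phi$ (erase $\star$'s; relabel $\mu[i] \mapsto i_1, i_2, \ldots, i_{\mu_i}$ in order) together with the combinatorial constraints available on $A$: it is an element of ${\tt EqInc}(\nu/\lambda,|\mu|)$ (row-strict on box labels, column-strict overall, with the $\star$-ing restriction) and, crucially, it rectifies to $T_\mu$, so by Lemma~\ref{lem:horizstrip} each family $\mu[i]$ forms a horizontal strip in $A$ (and throughout rectification).

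First I would check (S.1), ${\tt label}(\x) \prec {\tt label}(\x^\rightarrow)$. If both box labels lie in different families, this is immediate from $\Phi$ sending smaller numerical values to smaller families. If they lie in the same family $\mu[i]$, then the requirement is that the subscripts strictly increase left to right; but the numerical labels in $A$ strictly increase along rows (box-label row-strictness of an equivariant increasing tableau), and $\Phi$ assigns subscripts in increasing numerical order within $\mu[i]$, so the subscript strictly increases. Next, (S.2): every label is $<$-strictly smaller than any label strictly South in its column. Column-strictness of $A$ gives strict increase of numerical values down columns (reading edge labels appropriately); since distinct numerical values in distinct families map to distinctly ordered families, and values in the same family $\mu[i]$ cannot stack in a column because $\mu[i]$ is a horizontal strip in $A$ (Lemma~\ref{lem:horizstrip}), any two labels in the same column have different families, and the lower one has the larger value, hence the larger family. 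This is exactly $<$-strict increase. For (S.3), if $i_j, k_\ell$ appear on the same edge with $i = k$, then two labels of $\mu[i]$ would be on the same horizontal edge of $A$, i.e.\ in the same column — contradicting that $\mu[i]$ is a horizontal strip. Hence $i \ne k$. For (S.4), if $i_j$ is West of $i_k$, we need $j \le \ell$... rather $j \le k$: but $i_j$ West of $i_k$ means a smaller-or-equal ordering is forced by the fact that within the horizontal strip $\mu[i]$ the labels are arranged in increasing order from southwest to northeast, so a West instance of a $\mu[i]$-label has value $\le$ that of an East one; translating through $\Phi$, the West subscript is $\le$ the East subscript.

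The one place that requires a little more than a one-line appeal is ensuring that the ``horizontal strip'' property of $\mu[i]$ in $A$ genuinely rules out two $\mu[i]$-labels sharing a column or sharing a horizontal edge, including the edge-labeled boxes below $\lambda$'s border; this is where I would lean on Lemma~\ref{lem:horizstrip} most carefully, together with the definition of a horizontal strip (no two labels of the set in the same column). I also need to confirm that erasing $\star$'s does not interfere — it does not, since $\star$'s carry no label information and $\Phi$ simply discards them. I expect the main (mild) obstacle is handling edge labels uniformly with box labels in the column-strictness argument for (S.2) and the edge-sharing argument for (S.3): one must remember that in an equivariant increasing tableau an edge label is still column-strictly ordered against box labels and other edge labels in its column (with edge labels of a single edge read in increasing order), and that the horizontal-strip statement of Lemma~\ref{lem:horizstrip} applies to all labels of $\mu[i]$, edge labels included. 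Once that bookkeeping is in place, the proof is a short case check against (S.1)--(S.4).

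Concretely, the proof I would write is: ``Erasing $\star$'s and relabeling does not affect the relative positions of labels, so it suffices to verify (S.1)--(S.4) for $\Phi(A)$. (S.1): box labels of $A$ strictly increase along rows, and $\Phi$ preserves this order while assigning subscripts within each $\mu[i]$ in increasing numerical order, so ${\tt label}(\x) \prec {\tt label}(\x^\rightarrow)$. (S.2): all labels of $A$ strictly increase down columns, so two labels in a common column of $\Phi(A)$ have distinct numerical values with the lower one larger; by Lemma~\ref{lem:horizstrip} no two labels of a single $\mu[i]$ lie in a common column, so these two labels lie in distinct families, whence the lower one is $<$-strictly larger. (S.3): if two labels of the same family lay on one edge, two labels of some $\mu[i]$ would lie in a common column, contradicting Lemma~\ref{lem:horizstrip}. (S.4): by Lemma~\ref{lem:horizstrip}, the labels of $\mu[i]$ form a horizontal strip in $A$, hence are arranged in increasing numerical order from southwest to northeast; translating through $\Phi$, if $i_j$ is West of $i_k$ then $j \le k$.'' I would also remark that Lemma~\ref{lem:horizstrip} is exactly what makes all four checks go through, so the lemma's hypothesis ${\tt KEqrect}(A) = T_\mu$ is used in an essential way.

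\begin{proof}
Since $\Phi$ only erases $\star$'s (which carry no label data) and relabels $\mu[i]$ by $i_1,i_2,\ldots,i_{\mu_i}$ in increasing numerical order, the relative positions of labels in $\Phi(A)$ are those in $A$. We verify (S.1)--(S.4).

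(S.1): The box labels of $A$ strictly increase along rows, so if $\x^\rightarrow \in A$ and $\lab(\x), \lab(\x^\rightarrow)$ are numerical labels $a < b$, then either they lie in different families and $\Phi$ makes the left one $\prec$ the right one, or they lie in the same family $\mu[i]$, in which case $\Phi$ assigns the smaller-subscript label to $\x$ and the larger to $\x^\rightarrow$. In both cases $\lab(\x) \prec \lab(\x^\rightarrow)$ in $\Phi(A)$.

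(S.2): All labels of $A$ (box and edge, reading the labels of a single edge in increasing order) strictly increase down columns. Hence two labels in a common column of $\Phi(A)$ come from numerical values $a < b$ with $a$ the one appearing (weakly) North. By Lemma~\ref{lem:horizstrip}, the labels of each $\mu[i]$ form a horizontal strip in $A$, so no column of $A$ contains two labels of a single $\mu[i]$; thus $a$ and $b$ lie in distinct families. Therefore the northern label is of a strictly smaller family, i.e.\ it is $<$-strictly smaller than the southern one, which is (S.2).

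(S.3): If $i_j$ and $k_\ell$ appear on the same edge of $\Phi(A)$ with $i = k$, then two labels of $\mu[i]$ lie on a single horizontal edge of $A$, and in particular in a single column, contradicting Lemma~\ref{lem:horizstrip}. Hence $i \ne k$.

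(S.4): By Lemma~\ref{lem:horizstrip}, the labels of $\mu[i]$ form a horizontal strip in $A$, hence are arranged in strictly increasing numerical order from southwest to northeast, with no two in a common column. Thus if $i_j$ is West of $i_k$ in $\Phi(A)$, the corresponding numerical label of $\mu[i]$ assigned to the West box is no larger than that of the East box, which by the increasing subscripting means $j \le k$.

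This establishes (S.1)--(S.4), so $\Phi(A)$ is semistandard.
\end{proof}
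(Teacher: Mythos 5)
Your proof is correct and follows the same route as the paper: (S.1) from row-strictness of $A$, and (S.2)--(S.4) from the horizontal-strip property of each $\mu[i]$ supplied by Lemma~\ref{lem:horizstrip}. The paper states this in two sentences; you have simply written out the case checks in full detail.
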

\begin{proof}
Row-strictness of $A$ implies that $\Phi(A)$ satisfies (S.1).
Since by Lemma~\ref{lem:horizstrip}, $\mu[i]$ is a horizontal strip in $A$ for each $i$, (S.2)--(S.4) hold in $\Phi(A)$. 
\end{proof}

\begin{lemma}
\label{lem:Bballot}
For $A \in {\mathcal A}_{\lambda,\mu}^\nu$, $\Phi(A)$ is ballot.
\end{lemma}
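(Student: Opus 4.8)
\textbf{Proof plan for Lemma~\ref{lem:Bballot}.}
The statement to prove is that for $A \in \mathcal{A}_{\lambda,\mu}^\nu$, the semistandard genomic tableau $\Phi(A)$ is ballot. Recall that by definition of ballotness for equivariant increasing tableaux (stated just before Lemma~\ref{lem:Aballot}), $\Phi(A)$ is ballot precisely when $A$ is ballot as an equivariant increasing tableau, so it suffices to show that $A$ itself is ballot. The strategy is to run the column rectification of $A$ in reverse, tracking ballotness through each individual switch, mirroring the forward argument of Lemma~\ref{lem:Aballot} but in the opposite direction.

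First I would observe that $\mathrm{KEqrect}(A) = T_\mu$ by hypothesis, and that $T_\mu$, being the superstandard tableau, is trivially ballot (its column reading word, read right to left down columns, has all of $\mu[1]$ before all of $\mu[2]$, etc., and there are no edge or $\star$-labels to choose among). By Lemma~\ref{lem:horizstrip}, the labels $\mu[i]$ form a horizontal strip in $A$ and in every intermediate tableau of every switch sequence arising in the column rectification of $A$. The plan is then to show: if $V_{j+1}$ (a later tableau in a switch sequence) is ballot, then $V_j$ (the immediately preceding one) is ballot. Iterating this up each switch sequence and across the successive slides ${\tt KEqjdt}_\x$ in column rectification order (from the fully rectified $T_\mu$ back to $A$) yields that $A$ is ballot.

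For the inductive step, I would do the same $2\times 2$ local case analysis as in Lemma~\ref{lem:Aballot}, but reversed. A switch $V_j \to V_{j+1}$ moves a $\bullet$ past labels equal to $j$; only labels of value $j$ change position (a label of value $j$ moves one step, and $\bullet$'s move correspondingly). If $j \notin \mu[i] \cup \mu[i+1]$, the $\mu[i]$- and $\mu[i+1]$-labels sit in identical positions in $V_j$ and $V_{j+1}$, so ballotness transfers immediately. If $j \in \mu[i]$, then in passing from $V_{j+1}$ back to $V_j$ each $j$-label moves (weakly) one box \emph{west} or stays put; since moving a label west can only help ballotness (it makes $\mu[i]$-labels appear earlier in the reverse-column reading word relative to $\mu[i+1]$-labels), any nonballot genotype of $V_j$ would give a nonballot genotype of $V_{j+1}$ by choosing the corresponding labels — contradiction. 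The delicate case is $j \in \mu[i+1]$, where the $\mu[i+1]$-labels move west going from $V_{j+1}$ to $V_j$; here one must check that no $\mu[i+1]$-label can slide west past a position where it would outnumber the $\mu[i]$-labels. Here I would invoke Lemma~\ref{lem:increasing_goodness} (parts (II), (III), (IV)) together with Lemma~\ref{lem:horizstrip} to pin down the local configuration: the label $j$ that moves into a box $\x$ with $\bullet \in \x$, $j \in \x^\rightarrow$, and the absence of edge labels east of $\x$'s column (by the analogue of Lemma~\ref{lem:no_edge_labels}, which applies since $A$ rectifies to $T_\mu$), forces the relevant $\mu[i]$-label to be weakly west, so no new violation is created.

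The main obstacle I anticipate is precisely this last case ($j \in \mu[i+1]$): one must rule out that the westward motion of a $\mu[i+1]$-label during a reverse switch creates an initial segment of the reverse-column reading word with more $\mu[i+1]$-labels than $\mu[i]$-labels, and this requires carefully combining the increasingness properties (Lemma~\ref{lem:increasing_goodness}) with the horizontal-strip structure (Lemma~\ref{lem:horizstrip}) in the $2\times 2$ window, exactly as in the forward argument of Lemma~\ref{lem:Aballot} but with the roles of $V_q$ and $V_{q+1}$ interchanged. Once that local check is complete, the global conclusion follows by induction on the switch sequences and then on the slides in column rectification order, and therefore $\Phi(A)$ is ballot, completing the proof.
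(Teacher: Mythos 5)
Your overall architecture coincides with the paper's: since ${\tt KEqrect}(A)=T_\mu$ is ballot, one shows ballotness propagates backward through every switch of every switch sequence (the paper states the contrapositive, that non-ballotness of $V_\ell$ forces non-ballotness of $V_{\ell+1}$), with the routine cases disposed of by monotonicity of the reading word and the delicate case being the one where the switched value lies in $\mu[i+1]$, handled by a local analysis using Lemmas~\ref{lem:increasing_goodness}, \ref{lem:horizstrip} and~\ref{lem:no_edge_labels}(II). So the route is the same; however, two points in your sketch would not survive execution as written.

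First, your directional statements are reversed relative to the paper's conventions, and in the delicate case this is not harmless. In the forward switch $V_j\to V_{j+1}$ the $j$-labels move weakly one step west or north, so from $V_{j+1}$ back to $V_j$ they move east, not west; and since ${\tt word}(G)$ reads columns from right to left, pushing a family-$i$ label west makes it read \emph{later}, which \emph{hurts} ballotness for the pair $(i,i+1)$. In the case $j\in\mu[i]$ your two reversals cancel and the conclusion (a violation of $V_j$ persists in $V_{j+1}$, because family-$i$ labels are only read later while family-$(i+1)$ labels do not move) is correct, but your stated justification is not; and in the case $j\in\mu[i+1]$ the claim that the local configuration ``forces the relevant $\mu[i]$-label to be weakly west, so no new violation is created'' is backwards, since a family-$i$ label west of the moved family-$(i+1)$ label is read \emph{after} it and prevents nothing. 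Moreover, the paper's delicate case is not a verbatim role-swap of Lemma~\ref{lem:Aballot}: one chooses a violation of $V_\ell$ whose southwestmost family-$(i+1)$ position $\bbb_1$ lies in an eastmost possible column $c$, observes that ballotness of $V_{\ell+1}$ forces the label at $\bbb_1$ to move west into the $\bullet$'s box in column $c^\leftarrow$, uses ballotness of $V_{\ell+1}$ together with Lemma~\ref{lem:no_edge_labels}(II) to identify the box label $x$ directly above that $\bullet$ as a $\mu[i]$-label, uses the eastmost choice of $c$ and Lemma~\ref{lem:horizstrip} to show the label $y$ above $\bbb_1$ lies in neither $\mu[i]$ nor $\mu[i+1]$, and then Lemma~\ref{lem:increasing_goodness}(I,II) leaves no admissible value for $y$, a contradiction. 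You would need to supply this (or an equivalent) argument rather than defer to ``the same analysis as Lemma~\ref{lem:Aballot} with roles interchanged.'' Finally, as you correctly anticipate, Lemma~\ref{lem:no_edge_labels} is stated for $\Psi(B)$ with $B$ ballot, so you must check that the portion you need (part (II) for the column containing the $\bullet$) holds for $A$; its proof is independent of those hypotheses, so this is a small but necessary remark.
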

\begin{proof}
Suppose $\Phi(A)$ is not ballot. Then by definition, $A$ is not ballot. We assert that every tableau in every switch sequence in the column rectification of $A$ is also \emph{not} ballot, implying $T_\mu$ is not ballot, a contradiction.

Suppose $V_\ell$ is not ballot, but $V_{\ell+1}$ is. We derive a contradiction. Since $V_\ell$ is not ballot, we pick a nonballot ${\widetilde V_\ell}$. Suppose this nonballotness can be blamed on
positions $\aaa_1, \ldots, \aaa_s$ containing labels of  $\mu[i]$ and positions $\bbb_1, \ldots, \bbb_{s+1}$ containing labels of $\mu[i+1]$ (for some $i$). Suppose $\aaa_1,\ldots, \aaa_s$ and $\bbb_1,\ldots, \bbb_{s+1}$ are left to right in $\widetilde{V_\ell}$; no two $\aaa_j$'s (respectively $\bbb_j$'s) are in the same column by Lemma~\ref{lem:horizstrip}. We may assume $\bbb_1$ is southwestmost among all these positions, say in column $c$ and that among all offending choices of $i$ and positions, we picked one so that $c$ is eastmost.

Since $V_{\ell+1}$ is supposed ballot, there
is a label $\ell\in \mu[i+1]$ in $\bbb_1$ of $V_\ell$ that moved to column $c^\leftarrow$. Locally, the {\tt switch} is
$\ytableaushort{x y, \bullet \ell} \mapsto \ytableaushort{x y, \ell \bullet}$.
By Lemma~\ref{lem:horizstrip}, $\mu[i+1]$ forms a horizontal strip in $V_\ell$.
Hence $x,y\notin \mu[i+1]$. Also, no label in column $c$ is in $\mu[i]$
since otherwise we contradict that $c$ is chosen eastmost. Now, there is some label $m\in \mu[i]$ above the $\bullet$ in column $c^\leftarrow$
of $V_\ell$ since $V_{\ell+1}$ is ballot.
Using Lemma\ref{lem:no_edge_labels}(II), it follows that $m=x$. Now, we have argued $y\notin \mu[i]\cup
\mu[i+1]$. However, by Lemma~\ref{lem:increasing_goodness}(I, II) applied to $V_\ell$, there are no other possibilities for $y$,
a contradiction.
\end{proof}

\begin{proposition}\label{prop:Phi_codomain}
For $A \in {\mathcal A}_{\lambda,\mu}^\nu$, $\Phi(A) \in{\mathcal B}_{\lambda,\mu}^\nu$. 
\end{proposition}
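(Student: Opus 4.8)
\textbf{Proof proposal for Proposition~\ref{prop:Phi_codomain}.}
The plan is to assemble this from the lemmas already proved about the semistandardization map $\Phi$. Recall that $\mathcal{B}_{\lambda,\mu}^\nu$ consists of the tableaux in ${\tt BallotGen}(\nu/\lambda)$ having content $\mu$, that is, those edge-labeled genomic tableaux of shape $\nu/\lambda$ which are (i) semistandard, (ii) ballot, (iii) have content $\mu$, and (iv) have no label too high. So given $A\in \mathcal{A}_{\lambda,\mu}^\nu$, I must verify each of these four properties for $\Phi(A)$.

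First, by construction $\Phi(A)$ is a genomic tableau of shape $\nu/\lambda$: the substitution $1,\dots,\mu_1\mapsto 1_1,\dots,1_{\mu_1}$, then $\mu_1+1,\dots\mapsto 2_1,\dots$, and so on, produces subscripted labels $i_j$ where the $j$'s appearing for each family $i$ form an initial segment of $\mathbb{Z}_{>0}$, since $A$ uses the label set $\{1,\dots,|\mu|\}$ and $A$ rectifies to $T_\mu$ (so every label value $1,\dots,|\mu|$ in fact occurs). The content of $\Phi(A)$ is $(\mu_1,\mu_2,\dots)=\mu$: the genes of family $i$ are exactly the images $i_1,\dots,i_{\mu_i}$ of the block $\mu[i]$, and there are $\mu_i$ of them. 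Semistandardness of $\Phi(A)$ — conditions (S.1)--(S.4) — is precisely Lemma~\ref{lem:Bsemistd}. Ballotness of $\Phi(A)$ is precisely Lemma~\ref{lem:Bballot}.

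It remains to check condition (iv): no label of $\Phi(A)$ is too high. Here I would argue directly from the equivariant increasing tableau structure. Recall that in $A\in {\tt EqInc}(\nu/\lambda,|\mu|)$, the labels strictly increase down each column (and for edge labels, the label on the lower edge of a box is strictly larger than any label in the box above). Fix a box $\x$ in row $r$, and suppose $i_j$ is the family-$i$ label appearing in $\overline{\x}$, $\x$, or $\underline{\x}$ of $\Phi(A)$; in $A$ this corresponds to some numerical label in $\mu[i]$, i.e.\ a value that is at least $\mu_1+\cdots+\mu_{i-1}+1$. By Lemma~\ref{lem:horizstrip}, $\mu[1],\dots,\mu[i-1]$ each form a horizontal strip in $A$, so each contributes at most one label to any fixed column of $A$; moreover, by strict column-increasingness, if a family-$i$ label sits in $\overline{\x}$ then the $i-1$ boxes strictly above $\x$ in its column must carry the (strictly smaller, hence from $\mu[1]\cup\cdots\cup\mu[i-1]$) labels, forcing one label of each of $\mu[1],\dots,\mu[i-1]$ to appear above — this requires $r\ge i$ boxes above $\overline{\x}$, i.e.\ contradicts $i\ge r$; similarly for $i_j\in\x$ or $\underline{\x}$ with $i>r$ we get a contradiction. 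This is essentially the same counting argument used in the proof of Lemma~\ref{lem:no_edge_labels}(I), applied now to $A$ itself rather than to a partial rectification. Hence no label of $\Phi(A)$ is too high, and $\Phi(A)\in\mathcal{B}_{\lambda,\mu}^\nu$.

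I expect the main (minor) obstacle to be the "too high" verification: one must be careful that the horizontal-strip property (Lemma~\ref{lem:horizstrip}) together with strict column increase genuinely forces the presence of distinct labels from each lower block in the column above $\x$, handling the edge-label case ($i_j\in\overline{\x}$ needing $i<r$) and the box/lower-edge case ($i_j\in\x$ or $\underline{\x}$ needing $i\le r$) separately with the correct off-by-one bookkeeping. Everything else is a direct citation of Lemmas~\ref{lem:Bsemistd} and~\ref{lem:Bballot} plus the elementary observation about shape and content.
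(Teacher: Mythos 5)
Your overall structure coincides with the paper's: shape and content are immediate from the construction, semistandardness is Lemma~\ref{lem:Bsemistd}, and ballotness is Lemma~\ref{lem:Bballot}. The gap is in your verification that no label of $\Phi(A)$ is too high. Your counting argument presupposes that the $i-1$ cells strictly above $\x$ in its column are boxes of $\nu/\lambda$ that carry labels, but $A$ has skew shape: the cells above $\x$ may lie in $\lambda$ and be empty, or the column of $\nu/\lambda$ may simply end at $\x$. Strict column increase and the horizontal-strip property of Lemma~\ref{lem:horizstrip} only constrain labels that are actually present; they do not force one label of each of $\mu[1],\dots,\mu[i-1]$ to appear above $\x$. (A single box of a skew shape in row $1$ filled with a label from $\mu[2]$ is perfectly column-strict; what excludes it is ballotness/rectification, not a column count.) The analogous count in the proof of Lemma~\ref{lem:no_edge_labels}(I) works only because the eastmost columns there have already been column-rectified, hence are top-justified so that the boxes $\x_1,\dots,\x_i$ genuinely exist and are filled, and because that proof uses ``no label of $B$ is too high'' as a hypothesis; transporting it to $A$ itself is circular.

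The paper closes this step in one line: since ${\tt KEqrect}(A)=T_\mu$ and each ${\tt switch}$ moves numerical labels only weakly north (and west), an occurrence of a label of $\mu[i]$ sitting too high in $A$ (above row $i$, with the stated edge conventions) could never end up in row $i$, which is where $T_\mu$ places all of $\mu[i]$; so no label of $A$, hence of $\Phi(A)$, is too high. Alternatively, one could deduce the condition from the already-established ballotness of $\Phi(A)$ by the classical chain argument (every family-$i$ label forces a family-$(i-1)$ label read earlier in the column word, hence weakly northeast, and iterating yields $i$ distinct rows weakly above), but as written your step does not go through and should be replaced by one of these arguments.
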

\begin{proof}
By construction, $\Phi(A)$ is an edge-labeled genomic tableau of shape $\nu / \lambda$ and content $\mu$. By Lemma~\ref{lem:Bsemistd}, $\Phi(A)$ is semistandard.  By Lemma~\ref{lem:Bballot}, $\Phi(A)$ is ballot. Since $A$ rectifies to $T_\mu$, no label of $\Phi(A)$ is too high.
\end{proof}

Given a label $\ell$ in $A\in {\mathcal A}_{\lambda,\mu}^\nu$, 
let $\Phi(\ell)$ be the corresponding label in $\Phi(A)\in {\mathcal B}_{\lambda,\mu}^\nu$.
Recall the definitions of Section~\ref{sec:tableau_weights}.

\begin{lemma}
\label{lemma:edge}
\gap
\begin{itemize}
\item[(I)] If $\ell$ is an edge label, then ${\tt factor}_K(\ell)={\tt edgefactor}(\Phi(\ell))$.
\item[(II)] If $\ell$ is in a $\star$-ed box, then ${\tt factor}_K(\ell)=1-{\tt boxfactor}(\Phi(\ell))$.
\end{itemize}
\end{lemma}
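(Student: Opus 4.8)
The statement is a bookkeeping identity comparing the weight ${\tt factor}_K(\ell)$ assigned to a special label $\ell$ during column rectification of $A$ with the ${\tt edgefactor}$ (respectively $1-{\tt boxfactor}$) of the corresponding label $\Phi(\ell)$ in the semistandardization $\Phi(A)$. The key is to understand precisely which boxes $\ell$ passes through during rectification, since ${\tt factor}_K(\ell)$ is built from the $\hat\beta$-values of those boxes together with the numerically labeled boxes lying East of the final resting place in its row. So first I would fix the column $c$ containing $\ell$ and track the slide process ${\tt KEqjdt}_\x$ for the boxes $\x\in c$ in column rectification order, using Lemma~\ref{lem:no_edge_labels}, Lemma~\ref{lem:A^k}, and Lemma~\ref{lem:increasing_goodness} to pin down the geometry: the special label moves straight North within $c$, and by Lemma~\ref{lem:A^k} the partial rectification $A^{(k)}$ has rows consisting of final segments of the $\mu[i]$'s, which controls exactly how far North $\ell$ travels and what sits to its East when it stops.

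\textbf{Key steps.} (1) Identify, for an edge label $\ell\in\underline{\x}$ in column $c$, the set of boxes $\x_1,\dots,\x_s$ it passes through and the numerically labeled boxes $\y_1,\dots,\y_t$ East of $\x_s$ in that row; translate the product $\prod\hat\beta(\x_i)\prod\hat\beta(\y_j)$ into a single ratio $t_{a}/t_{b}$ of the $t$-variables using the telescoping $\hat\beta(\x)=t_{{\sf Man}(\x)}/t_{{\sf Man}(\x)+1}$ along a north-east lattice path, exactly as in the base-case computation of Lemma~\ref{lem:base_bijection} (the numerators/denominators will be governed by ${\sf Man}$ of the endpoints). (2) Compare this ratio with the formula \eqref{eqn:edgefactordef} for ${\tt edgefactor}(\Phi(\ell))$: one must check that the subscript $r-i+N_{i_j}+1+{\sf Man}(\x)$ appearing in \eqref{eqn:edgefactordef} matches the Manhattan distance of the northeast corner reached after $\ell$'s full journey. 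Here the quantity $N_{i_j}$ (number of genes of the same family strictly $\succ$ the gene of $\ell$) corresponds precisely to the number of boxes $\y_j$ East of $\x_s$ in the row where $\ell$ stops, once we invoke Lemma~\ref{lem:A^k} and the horizontal-strip property Lemma~\ref{lem:Ahorizstrip}. (3) For part (II), run the same analysis for a label $\ell$ in a $\star$-ed box: now by the $\star$-ing rule the box label to its right is not $\ell+1$ in the same row, so when $\bullet$ is switched past, $\ell$ behaves like an edge label ``one step lower,'' and the passage boxes differ from the edge case by exactly the one box $\x$ itself; this accounts for the shift between ${\tt edgefactor}$ and ${\tt boxfactor}$, i.e.\ between the subscript $r-i+N_{i_j}+1+{\sf Man}(\x)$ and ${\sf Man}(\x)+1$ in the numerator of \eqref{eqn:boxfactordef}. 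Conclude ${\tt factor}_K(\ell)=1-{\tt boxfactor}(\Phi(\ell))$ since ${\tt boxfactor}$ is itself a ratio $t_{{\sf Man}(\x)+1}/t_{\cdots}$ while ${\tt factor}_K$ is $1$ minus a ratio.

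\textbf{Main obstacle.} The technical heart is step (2): correctly matching the running index of the $t$-variable. One must verify that, over the course of rectifying column $c$, the special label $\ell$ ends up in a row $r'$ whose position — together with the number $t$ of numerically labeled boxes to its East — reproduces the index $r-i+N_{i_j}+1+{\sf Man}(\x)$ from \eqref{eqn:edgefactordef}, where $r$ and $i$ and $N_{i_j}$ all refer to data of $\Phi(\ell)$ \emph{in the original (unrectified) tableau} $\Phi(A)$, not in the rectification $T_\mu$. Bridging ``original position in $\Phi(A)$'' with ``terminal position during rectification of $A$'' is exactly what Lemmas~\ref{lem:A^k}, \ref{lem:Ahorizstrip} and \ref{lem:horizstrip} are designed to support, so the proof will be a careful but essentially routine chase: set up the lattice-path telescoping, apply those three lemmas to locate $\ell$'s terminal row and count its eastward numerical neighbors, and then observe the two index formulas coincide. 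I expect the argument, once the geometry is stated precisely, to be short; the risk is purely in getting the off-by-one conventions (Manhattan distance from the southwest corner, $1$-based subscripts, the ${}+1$'s) consistent with Sections~\ref{sec:tableau_weights} and~\ref{sec:basecase}.
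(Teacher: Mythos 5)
Your proposal is correct and is essentially the paper's argument: the paper disposes of this lemma in one line ("These follow from the definitions of the factors combined with Lemma~\ref{lem:A^k}"), and your plan — telescope the $\hat\beta$-products along the north-then-east path of Manhattan distances, then use Lemma~\ref{lem:A^k} (with the horizontal-strip lemmas) to see that the special label ends in row $\family(\Phi(\ell))$ with exactly $N_{\Phi(\ell)}$ numerically labeled boxes to its East, so that $s+t$ (resp.\ $s+t+1$ in the box case) reproduces the index $r-i+N_{i_j}+1$ — is precisely the computation that one-liner compresses. Nothing in your route differs from the paper's; it is the same definition chase, just written out.
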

\begin{proof}
These follow from the definitions of the factors
combined with Lemma~\ref{lem:A^k}.
\end{proof}

\begin{lemma}
\label{lemma:starwt}
If $B\in {\mathcal B}_{\lambda,\mu}^\nu$, then
\[{\tt boxwt}(B) = \sum_{A\in \Phi^{-1}(B)}(-1)^{\text{$\#\star$'s in $A$}}\prod_{\text{\rm special box label $\ell$ of $A$} }{\tt factor}_K(\ell).\]
\end{lemma}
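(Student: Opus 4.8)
The plan is to compare the two sides by analyzing how $\Phi^{-1}(B)$ is built from $B$. A tableau $A \in \Phi^{-1}(B)$ is obtained from the semistandard tableau $B$ by a choice of standardization (which is forced by the horizontal-strip structure coming from Lemmas~\ref{lem:Bsemistd} and~\ref{lem:horizstrip}, so $\Psi(B)$ is the unique $\star$-less preimage) together with a choice of a $\star$-ed subset of boxes, subject to the constraint that if $i$ and $i+1$ are box labels in the same row then the box with $i$ may not be $\star$-ed. Since $\Psi(B)$ is the unique $\star$-less preimage and $\Phi$ forgets $\star$'s, every $A \in \Phi^{-1}(B)$ has the \emph{same} underlying increasing tableau $\Psi(B)$; the only freedom is the $\star$-ed subset $\mathcal{X}$ of boxes. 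Therefore the right-hand side becomes $\sum_{\mathcal{X}} (-1)^{|\mathcal{X}|} \prod_{\ell \in \mathcal{X}} {\tt factor}_K(\ell)$, where the sum is over all $\star$-able subsets $\mathcal{X}$ of boxes of $\Psi(B)$ and $\ell$ ranges over the labels occupying those boxes.

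First I would invoke Lemma~\ref{lemma:edge}(II): for a label $\ell$ in a $\star$-ed box $\x$, ${\tt factor}_K(\ell) = 1 - {\tt boxfactor}(\Phi(\ell))$. So the right side equals $\sum_{\mathcal{X}} \prod_{\x \in \mathcal{X}} \bigl(-(1 - {\tt boxfactor}_B(\x))\bigr) = \sum_{\mathcal{X}} \prod_{\x \in \mathcal{X}} ({\tt boxfactor}_B(\x) - 1)$. Here ${\tt boxfactor}_B(\x)$ is evaluated at the corresponding box of $B$; note that ${\tt Man}$, the family, and $N$ all transfer correctly under $\Phi$. The key combinatorial point is then to identify which boxes $\x$ of $\Psi(B)$ are $\star$-able and to match this with the productive boxes of $B$. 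A box $\x$ of $B$ is productive (via (P.1), the only productivity clause available since $B$ has no $\bullet$'s) precisely when $\lab(\x) < \lab(\x^\rightarrow)$ or $\x^\rightarrow \notin B$; translating through $\Phi$ and $\Psi$, this is exactly the condition that, in $\Psi(B)$, the box $\x$ does \emph{not} have $\x^\rightarrow$ in the same row with label equal to $\lab(\x)+1$ --- i.e., exactly the $\star$-ability condition. So the $\star$-able boxes of $\Psi(B)$ are in bijection with the productive boxes of $B$, with matching ${\tt boxfactor}$.

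Given this identification, the right side is $\sum_{\mathcal{X} \subseteq \{\text{productive boxes of } B\}} \prod_{\x \in \mathcal{X}} ({\tt boxfactor}_B(\x) - 1)$. Expanding the product $\prod_{\x \text{ productive}} \bigl(1 + ({\tt boxfactor}_B(\x) - 1)\bigr)$ by the binomial/inclusion-exclusion identity gives exactly this sum, so it collapses to $\prod_{\x \text{ productive}} {\tt boxfactor}_B(\x) = {\tt boxwt}(B)$, which is the left side. I would present this last step as a one-line application of the elementary identity $\prod_i a_i = \sum_{\mathcal{X}} \prod_{i \in \mathcal{X}}(a_i - 1)$ with $a_i = {\tt boxfactor}_B(\x_i)$, exactly parallel to the inclusion--exclusion identity used in Lemma~\ref{lem:wt=tildewt}.

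\textbf{Main obstacle.} The crux is the precise verification that $\star$-ability in $\Psi(B)$ corresponds to productivity (clause (P.1)) in $B$ --- i.e., that the ban on $\star$-ing the box containing $i$ when $i, i+1$ are adjacent box labels in a row is the exact negation of $\lab(\x) < \lab(\x^\rightarrow)$, taking into account the passage between numerical labels and subscripted genomic labels. One must check that adjacent box labels $i, i+1$ in a row of $\Psi(B)$ correspond under $\Psi^{-1}$ to labels $\ell, \ell^\rightarrow$ of $B$ that are \emph{not} $<$-comparable (they are either in the same family with consecutive subscripts, or in adjacent families), and conversely that $\lab(\x) < \lab(\x^\rightarrow)$ in $B$ forces the numerical labels to differ by at least $2$ or $\x^\rightarrow$ to lie outside the shape; Lemma~\ref{lem:horizstrip} and the definition of $\Phi$ handle this, but it requires care with the edge case $\x^\rightarrow \notin B$ and with boxes $\x$ whose right neighbor carries an edge label rather than a box label. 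Everything else is bookkeeping with the definitions of ${\tt Man}$, $N$, and the factor formulas, plus the elementary expansion identity.
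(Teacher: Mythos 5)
Your route is the paper's route: the published proof is exactly the two assertions that (i) a box of $B$ is productive if and only if it may be $\star$-ed in $\Psi(B)$, and (ii) Lemma~\ref{lemma:edge}(II) together with the inclusion--exclusion identity $\sum_{S\subseteq [N]}(-1)^{|S|}\prod_{s\in S}(1-z_s)=z_1\cdots z_N$ then collapses the sum over $\star$-ings of $\Psi(B)$ to ${\tt boxwt}(B)$, and your expansion step and treatment of the case $\x^\rightarrow\notin B$ are fine. The one place your sketch would not survive being written out is the justification of (i), which you rightly single out as the crux: the parenthetical claim that adjacent numerical labels $i,i+1$ in a row of $\Psi(B)$ come from labels that are ``not $<$-comparable (either in the same family with consecutive subscripts, or in adjacent families)'' is inconsistent as stated, since adjacent-family labels \emph{are} $<$-comparable, and Lemma~\ref{lem:horizstrip} plus the definition of $\Phi$ do not rule that case out. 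What is actually needed is ballotness: the configuration $a_{\mu_a}\in\x$, $(a+1)_1\in\x^\rightarrow$ in one row is impossible in $B$, because taking the genotype that selects this $(a+1)_1$ and the $a_{\mu_a}\in\x$, condition (S.4) forces every instance of $a_j$ with $j<\mu_a$ into columns weakly west of $\x$, so the initial segment of the reading word ending at this $(a+1)_1$ contains no selected family-$a$ label, contradicting (the genotype form of) ballotness. Once that case is excluded, consecutive numerical labels in a row can only arise from same-family neighbors, and (S.2)--(S.4) show such neighbors have consecutive subscripts; hence ``non-productive'' and ``non-$\star$-able'' coincide exactly, and the rest of your argument goes through as written.
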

\begin{proof}
A box $\x$ is productive in $B$ if and only if it may be $\star$-ed in $\Psi(B)$.
We are done by Lemma~\ref{lemma:edge}(II) and the ``inclusion-exclusion'' identity
$\sum_{S\subset [N]} (-1)^{|S|}\prod_{s\in S}(1-z_s)=z_1 z_2\cdots z_N$.
\end{proof}

\begin{proof}[Proof of Theorem~\ref{thm:oldconj}]
Recall Theorem~\ref{thm:oldconj} asserts
$K_{\lambda,\mu}^\nu=\sum_{A\in {\mathcal A}_{\lambda,\mu}^\nu} {\tt sgn}(A){\tt wt}_K(A)$. To see this, 
observe that by Propositions~\ref{prop:Psi_codomain} and~\ref{prop:Phi_codomain},

\begin{align*}
\sum_{A\in {\mathcal A}_{\lambda,\mu}^\nu}& {\tt sgn}(A){\tt wt}_K(A)
= \sum_{B\in {\mathcal B}_{\lambda,\mu}^\nu}\sum_{A\in \Phi^{-1}(B)}
{\tt sgn}(A) {\tt wt}_K(A)\\
= & \sum_{B\in {\mathcal B}_{\lambda,\mu}^\nu}\sum_{A\in \Phi^{-1}(B)} \!\!
(-1)^{|\mu| - \#\text{$\star$'s in $A$} - \#\text{labels in $A$}} \!\!\!\!
\prod_{\text{edge label $\ell$ of $A$} }\!\!\!\!{\tt factor}_K(\ell)\!\!\!\!
\prod_{\text{special box label $\ell$ of $A$} }\!\!\!\!{\tt factor}_K(\ell)\\
= & \sum_{B\in {\mathcal B}_{\lambda,\mu}^\nu}\sum_{A\in \Phi^{-1}(B)} \!\!
(-1)^{|\mu| - \#\text{labels in $A$}}
\left(\prod_{\text{edge label $\ell$ of $A$} }\!\!\!\!\!\!\!{\tt factor}_K(\ell)\right)
(-1)^{\text{$\#\star$'s in $A$}} \!\!\!\!\!\!\!\!\!\!\!
\!\!\!\prod_{\text{special box label $\ell$ of $A$} }\!\!\!\!\!\!\!\!\!\!\!\!\!\!\!\!{\tt factor}_K(\ell).
\end{align*}
The number of labels of $A$ equals the number of labels of $B$ for any $A\in \Phi^{-1}(B)$.
Combining this with Lemma~\ref{lemma:edge}(I) shows the previous expression equals
\begin{align*}
= & \sum_{B\in {\mathcal B}_{\lambda,\mu}^\nu}(-1)^{|\mu| - \#\text{labels in $B$}}
\left(\prod_{\text{edge label $\ell$ of $B$}}\!\!\!\!\!\!\!\!{\tt edgefactor}(\ell)\right)
\sum_{A\in \Phi^{-1}(B)} \!\! (-1)^{\text{$\#\star$'s in $A$}}\!\!\!\!\!\!\!\!\!\!\!\!\!
\prod_{\text{special box label $\ell$ of $A$} }\!\!\!\!\!\!\!\!\!\!\!\!\!\!{\tt factor}_K(\ell).
\end{align*}
By Lemma~\ref{lemma:starwt}, this equals
\[=\sum_{B\in {\mathcal B}_{\lambda,\mu}^\nu}(-1)^{|\mu| - \#\text{labels in $B$}}
{\tt edgewt}(B)
{\tt boxwt}(B):=L_{\lambda,\mu}^\nu.\]
Since by Theorem~\ref{thm:main}, $L_{\lambda,\mu}^\nu=K_{\lambda,\mu}^\nu$, we are done.
\end{proof}

\appendix
\section{Proof of Proposition~\ref{prop:goodness_preservation}}
\label{sec:forward_goodness_proof}
We check (G.1)--(G.13) are preserved. Let $U \in \swap_\GG(T)$. 
We prove the conditions in the order: (G.1), (G.2), (G.4), (G.5), (G.6), (G.7), (G.3), (G.8), (G.9), (G.11), (G.10), (G.12), (G.13). In this way, each proof depends only on previously proved conditions.
It is also necessary to show that the prescribed virtual labels in {\sf H5.2} and {\sf T4.2} satisfy the rules (V.1)--(V.3) for virtual labels. This is done in Lemma~\ref{lemma:virtuallabelsexist} as part of the discussion of (G.13).

\smallskip
\noindent
{\bf (G.1):} 
By $T$'s (G.1), no label of $T$ is too high. Hence if some label $\mathcal{P}$ of $U$ is too high, it must be placed in such a location by some miniswap. We therefore consider all miniswaps that might place a label $\mathcal{P}$ on $\x$ or $\underline{\x}$ in $U$, when there is no $\mathcal{P}$ or $\circled{\mathcal{P}}$ north of $\underline{\x}$ in $T$.
By inspection, the miniswaps that can do so are {\sf H1}, {\sf T1}, {\sf T3}, {\sf T4} and {\sf T5}. 

{\sf H1:} The first output of {\sf H1} is not problematic. For if the $\GG \in \x$ in the first output were too high, the $\GG \in \underline{\x}$ in $T$ would also have been to high, in violation of $T$'s (G.1). If the second output of {\sf H1} creates a label that is too high, then by definition of $\gamma$, that output is produced with coefficient $0$. Thus {\sf H1} does not create a tableau violating (G.1).

{\sf T1:} Suppose $T$ has $\bullet_\GG \in \x$ and that, after applying {\sf T1}, $U$ has $\GG \in \x$, which is too high. By Lemma~\ref{lem:piecesobservations}(IV,VII), $T$ has $\GG \in \x^\downarrow$. By $T$'s (G.1), this $\GG \in \x^\downarrow$ is not too high. Since $\GG \in \x^\downarrow$ is \emph{not} too high, but $\GG \in \x$ \emph{is} too high, $\x$ is in row $i-1$ where $i = \family(\GG)$. In particular, $i > 1$.

By $T$'s (G.2), $\bullet_\GG \notin \x^\rightarrow$ in $T$, so if $\x^\rightarrow$ is a box of $T$, then it contains a genetic label. Consider $\lab_T(\x^\rightarrow)$. If $\lab_T(\x^\rightarrow) \prec \GG$, then $\lab_T(\x^\rightarrow)$ is marked. Hence by Lemma~\ref{lem:same999}, $T$ has $\circled{\GG} \in \underline{\x^\rightarrow}$. This contradicts that we are applying {\sf T1}, for then $\x^\rightarrow$ is adjoined to the snake containing $\x$ by (R.3). If $\lab_T(\x^\rightarrow) = \GG$, this again contradicts that we are applying {\sf T1}. If $\lab_T(\x^\rightarrow) \succ \GG$, then $\family(\GG) \leq \family(\lab_T(\x^\rightarrow))$. Thus if $\GG \in \x$ were too high in $U$, then $\lab_T(\x^\rightarrow)$ would already be too high in $T$, violating $T$'s (G.1). Thus $\x^\rightarrow$ must not be a box of $T$.

Let $c$ be the column of $\x$. Consider the labels of $T$ of family $i-1$.  
Suppose such a label appears in column $c$ of $T$. It cannot be South of $\x$, for then it would be marked and violate $T$'s (G.11).  It cannot be in $\x$, since $\bullet_\GG \in \x$. It cannot be North of $\x$, for then it would be too high and violate $T$'s (G.1). Thus there is no label of family $i-1$ in column $c$. 

Since $\x^\rightarrow$ is not a box of $T$, every column East of $c$ contains at most $i-2$ boxes. Therefore any label of family $i-1$ East of column $c$ would be too high, contradicting $T$'s (G.1). Consider a genotype of $T$ involving the $\GG \in \x^\downarrow$. No label of family $i-1$ is read before the $\GG \in \x^\downarrow$. This contradicts $T$'s (G.8). Thus this miniswap cannot create a label that is too high.

{\sf T3:} If the $\GG \in \x$ in either output of {\sf T3} were too high, the $\GG^+ \in \x^\rightarrow$ in $T$ with $\family(\GG^+) = \family(\GG)$ would violate $T$'s (G.1). 

{\sf T4 and T5:} Suppose that either {\sf T4} or {\sf T5} created a label $\mathcal{P}$ that was too high in $U$. Then in the notation of those miniswaps, $U$ has $\mathcal{P}$ in the edge $\overline{\x}$ and we must have $\mathcal{P} \in \{ \FF \} \cup Z$. 
By $T$'s (G.13), there is a label $\QQ \in \underline{\x^\rightarrow}$
(possibly virtual) in $T$ with $\family{\QQ} = \family(\mathcal{P}) + 1$. This $\QQ \in \underline{\x^\rightarrow}$ is then too high in $T$, contradicting $T$'s (G.1).

\smallskip
\noindent
{\bf (G.2):} Consider a snake $S$ in $T$. Since $S$ is a short ribbon (Lemma~\ref{lem:snakes_are_short_ribbons}),
in the region of $U$ defined by $S$, (G.2) can only be violated by having two
$\bullet_{\GG^+}$'s in the same row or column. By inspection of the miniswaps, no two $\bullet_{\GG^+}'s$ can appear in the same row.
If two $\bullet_{\GG^+}$'s appear in the same column, the top $\bullet_{\GG^+}$ arose from a {\sf T4.1} or
{\sf T4.2} miniswap. However in those cases the edge label $\GG\in \tail(S)$ implies $\tail(S)=S$ by $T$'s (G.7).

Thus we check $U$'s (G.2) for pairs of 
snakes $S, S'$. By Lemma~\ref{lem:snakes_arranged_SW-NE}, say $S$ is southwest of $S'$.
If $S$ is entirely SouthWest of $S'$, (G.2) preservation is clear. It remains to consider
the situations where $S$ and $S'$ share a row or a column.

Suppose the snakes are in the configuration of Lemma~\ref{lem:snakes_arranged_SW-NE}(II). 
Here $S=\{\x\}=\ytableaushort{{\bullet_{\GG}}}$ ($\GG,\circled{\GG} \not\in\underline{\x}$). So $\x$ takes part in a trivial {\sf H3} miniswap.  
By $T$'s (G.2), the southmost row $r'$ of $S'$ (assumed to be in $\x$'s row) does not contain $\bullet_{\GG}$. 
Thus $r'$ takes part in a {\sf H9}, {\sf B1} or {\sf B3} miniswap. 
{\sf H9} and {\sf B1} do not introduce a $\bullet_{\GG^+}$, so (G.2) holds here.
We claim {\sf B3} is not possible.
If $r'$ participates in a {\sf B3} miniswap, then by definition $r'=\{\y\}=\ytableaushort{\GG}$. It cannot be that $\y^\leftarrow = \x$, for then $S$ and $S'$ would be the same snake. Let $\FF = \lab(\y^\leftarrow)$. By (G.3), $\FF \prec \GG$. Hence the $\bullet_\GG \in \x$ means that the $\FF \in \y^\leftarrow$ is marked. But then $\y^\leftarrow$ was adjoined to $S'$ by (R.3), i.e.,
$r'=\{\y^\leftarrow,\y\}=\ytableaushort{{\FF^!} \GG}$, a contradiction.

Finally suppose the snakes are in the configuration of Lemma~\ref{lem:snakes_arranged_SW-NE}(III). 
The two adjacent rows of $S$ and $S'$ are $\ytableausetup{boxsize=1.2em}\ytableaushort{\none {\bullet_{\GG}}, {\bullet_{\GG}} {\GG^+}}$ or
$\ytableaushort{\none {\bullet_{\GG}} {\GG^+}, {\bullet_{\GG}} {\GG^+}}$. 
Hence $S'$ takes part in a {\sf H3} or {\sf H8} trivial miniswap. Let $\x$ be the east box of the northmost row of $S$.
The box $\x$ takes part in miniswap {\sf H6}, {\sf H7}, {\sf H8}, or {\sf T3}. 
If it is miniswap {\sf H8}, $\bullet_{\GG^+}\not\in \x$ in $U$.
In the other cases, by definition of $\alpha$, the tableaux produced with $\bullet_{\GG^+} \in \x$ appears with coefficient $0$.

\smallskip
\noindent
{\bf (G.4):}
We show that $U$ does not violate (G.4) in a given column $c$. 

\noindent
{\sf Case 1: ($\bullet_{\GG} \notin c$ in $T$):} By inspection of the
miniswaps,
$c$ either has 
labels removed or else a box label of $c$ is pushed onto a lower edge
of the same box (and a $\bullet_{\GG^+}$ comes into $c$). 

\noindent
{\sf Subcase 1.1: ($c$ is strictly increasing in $T$):} By the above, $c$ is strictly increasing in $U$.

\noindent
{\sf Subcase 1.2: ($c$ is not strictly increasing in $T$):}
Therefore $c$ contains $\{\x, \x^\uparrow \} = \ytableaushort{\FF, {\FF^!}}$ in $T$. 
By the above observation, it suffices to show $\{\x, \x^\uparrow \}$ is not
$\ytableaushort{{\FF^!}, {\FF^!}}$ or $\ytableaushort{\FF, \FF}$ in $U$. Since $\FF^!$ appears in $T$, $\FF \prec \GG$. 
Since $\FF \in \x^\uparrow$ in $T$, there is no $\bullet_\GG$ in $T$ northwest of $\x^\uparrow$. Thus in $U$ no $\bullet_{\GG^+}$ can appear northwest of $\x^\uparrow$. Hence $\FF^! \notin \x^\uparrow$ in $U$. This rules out the first scenario.

We now rule out the second scenario.
By Lemma~\ref{lem:strong_form_of_G10}, in $T$ there is a $\bullet_\GG$ in some box $\y$ West of $\x$ in the same row, 
and furthermore $\EE^! \in \y^\rightarrow$ in $T$.
By Lemma~\ref{lem:strong_form_of_G13}, $T$ has 
$\GG' \in \underline{\y^\rightarrow}$ (possibly marked), $\circled{\GG'} \in \underline{\y^\rightarrow}$
or $(\GG')^!\in \y^\rightarrow$, where $\family(\GG') = \family(\GG)$. Let $S$ be the snake containing $\y$. 

\noindent
{\sf Subcase 1.2.1: ($\y^\rightarrow \in S$):} Since $\EE^!\in\y^\rightarrow$ we have $\{ \y, \y^\rightarrow \} = \tail(S)$. Thus, 
$U$ has $\bullet_{\GG^+} \in \y$ or $\bullet_{\GG^+} \in \y^\rightarrow$. Hence $\FF \notin \x$ in $U$. 

\noindent
{\sf Subcase 1.2.2: ($\y^\rightarrow \notin S$ and neither $\GG$ nor $\circled{\GG}$ appears in $\y$'s column):} Then $S=\{\y\}$ undergoes {\sf H3} and 
$\lab_{U}(\y)=\bullet_{\GG^+}$. Hence $\FF^! \in \x$ in $U$. 

\noindent
{\sf Subcase 1.2.3: ($\y^\rightarrow \notin S$ and either $\GG$ or $\circled{\GG}$ appears in $\y$'s column):}
By Lemma~\ref{lem:same999} applied to $T$, $\GG'=\GG$. Hence $\y^\rightarrow \in S$, violating the assumption of {\sf Subcase 1.2.3}.

\noindent
{\sf Case 2: $(\bullet_\GG \in \x$, where $\x$ is a box of $c$ in $T$):} Let $S$ be the snake containing $\x$.

\noindent
{\sf Subcase 2.1: ($\x \in \head(S)$):} Clearly, there is no (G.4) violation except possibly if we apply {\sf H5.2} or {\sf H5.3}, where $\lab(\x^\rightarrow)=\GG$; thus we assume we are using one of these miniswaps.
Let $\FF$ be the $\prec$-greatest label appearing in $\x^\uparrow$ or $\underline{\x^\uparrow}$. Let $\HH$ be the $\prec$-least label appearing in $\x^\downarrow$ or $\overline{\x^\downarrow}$. After the miniswap, $\GG$ appears in $\x$. We show $\FF < \GG < \HH$.  Since in $T$, $\FF$ is northwest of $\bullet_\GG$, $\FF \prec \GG$ by $T$'s (G.9). If $\family(\FF) = \family(\GG)$, then the $\FF \in \x^\uparrow$ or $\underline{\x^\uparrow}$ and the $\GG \in \x^\rightarrow$ violate $T$'s (G.12). Hence $\FF < \GG$.
If $\HH \prec \GG$, then the $\HH \in \x^\downarrow$ or $\overline{\x^\downarrow}$ is marked in $T$, violating (G.11). If $\HH=\GG$, then 
since we are using {\sf H5.2} or {\sf H5.3}, $\HH=\GG\in \x^\downarrow$ so $\x^\downarrow \in S$, contradicting $\x \in \head(S)$. 
Hence $\GG \prec \HH$. So by $T$'s (G.6), $\GG < \HH$.

\noindent
{\sf Subcase 2.2: ($\x \in \tail(S)$):} 

\noindent
{\sf Subcase 2.2.1: $(\tail(S)$ is {\sf T1}, {\sf T2} or {\sf T3}):} 
By Lemma~\ref{lem:piecesobservations}(IV,V,VII), $S$
has at least two rows and $\lab_T(\x^\downarrow)=\GG$. Suppose there were a label $\QQ$ on $\overline{\x^\downarrow}$ in $T$. By $T$'s (G.4), 
$\QQ < \GG$. But then this $\QQ \in \overline{\x^\downarrow}$ is marked, violating $T$'s (G.11). Hence $\overline{\x^\downarrow}$ is empty.
Let $\FF$ be the $\prec$-greatest label appearing in $\x^\uparrow$ or $\underline{\x^\uparrow}$. Let $\HH$ be the $\prec$-least label appearing in $\x^{\downarrow\downarrow}$ or $\underline{\x^\downarrow}$. Since there is $\GG \in \x^\downarrow$ in $T$, by $T$'s (G.4) we have $\FF < \GG < \HH$.

Each of {\sf T1}, {\sf T2}, or {\sf T3} puts $\GG \in \x$. The swap does not affect $\FF$ nor $\HH$ in column $c$. Now, if the swap 
puts $\bullet_{\GG^+}\in \x^\downarrow$ in $U$, we are done since $\FF < \GG < \HH$. So assume otherwise. Then $\x^\downarrow$ takes part in a miniswap {\sf H5.1}, {\sf H5.2} (choosing the first output), or {\sf H9}. 
In these cases, $U$ has $\GG \in \x$ and $\GG \in \x^\downarrow$. 
Since $\FF < \GG < \HH$, to show that (G.4) holds, we need $\GG \in \x^\downarrow$ in $U$ to be marked.
If the miniswap was {\sf H5.1} or {\sf H5.2}, then
$\bullet_{\GG^+} \in \x^{\downarrow\leftarrow}$ in $U$, so $U$ has $\GG^! \in \x^\downarrow$. If the miniswap is {\sf H9}, then there is some marked label $\EE^! \in \x^{\downarrow\leftarrow}$. By $T$'s (G.10), there is some $\bullet_\GG$ West of $\x^{\downarrow\leftarrow}$ and in its row of $T$. By Lemma~\ref{lem:snakes_arranged_SW-NE}, this $\bullet_\GG$ is part of a single-box snake, which undergoes miniswap {\sf H3}, the $\bullet_\GG$ becoming $\bullet_{\GG^+}$ in the same position. Hence $U$ has $\GG^! \in \x^\downarrow$.

\noindent
{\sf Subcase 2.2.2: ($\tail(S)$ is {\sf T4} or {\sf T5}):} The output of {\sf T4.1} and the first output of {\sf T4.2} leave $c$ unaffected, so 
since no other box of $c$ is part of a snake,
we are done.  The three remaining possibilities (second output of {\sf T4.2},
{\sf T4.3} and {\sf T5}) are similar, so we argue them together.
In these cases, notice $\x^\uparrow$ is not part of a snake. Let $\FF$ and $Z$ be as in the description of these miniswaps. Each places $\GG \in \x$ and $\FF \cup Z \in \overline{\x}$. Let $\HH$ be the $\prec$-least label on $\x^\downarrow$ or $\overline{\x^\downarrow}$. By $T$'s (G.11) 
and since $\lab_T(\x)=\bullet_\GG$, we have $\GG \preceq \HH$. If $\GG < \HH$, this $\HH$ is not moved by the swap, and column $c$ of $U$ satisfies (G.4) at least south of $\x$. Otherwise $\family(\GG) = \family(\HH)$. Since each of these miniswaps says $\GG$ or $\circled{\GG} \in \underline{\x^\rightarrow}$, if $\GG \prec \HH$, then by $T$'s (G.6), $\GG = \HH$. If $\GG=\HH \in \overline{\x^\downarrow}$, we are in {\sf T6}, a contradiction. Hence $\GG=\HH \in \x^\downarrow$ (and $\overline{\x^\downarrow}$ is empty). Consider what happens to $\x^\downarrow$ during the swap. The analysis
to show (G.4) is satisfied south of $\x$
is essentially the same as in {\sf Subcase 2.2.1}, so we omit the details.

Finally, we show (G.4) for $U$ north of $\x$. Let $\EE$ be the $\prec$-greatest label appearing in $T$ in $\x^\uparrow$ or $\underline{\x^\uparrow}$. By $T$'s (G.9), $\EE \prec \GG$. Each of the miniswaps of interest asserts $\GG$ or $\circled{\GG} \in \underline{\x^\rightarrow}$. 
Hence by $T$'s (G.12), $\EE < \GG$. Indeed by $T$'s (G.12), if $\family(\EE) \neq \family(\FF)$ and $\family(\EE) \neq \family(\mathcal{Z})$ for any $\mathcal{Z} \in Z$. Hence by Lemma~\ref{lem:strong_form_of_G13} applied to $\x^\rightarrow$, $\EE < \FF$. Therefore (G.4) holds in $U$ in $c$ north of $\x$.

\noindent
{\sf Subcase 2.2.3: ($\tail(S)$ is {\sf T6}):} No tableau is produced.

\noindent
{\sf Subcase 2.3: ($\x \in \body(S)$):} By Definition-Lemma~\ref{def-lem:snake_classification} and
 Lemma~\ref{lem:piecesobservations}(III), $T$ has $\GG \in \x^\downarrow$ and $\GG \in \x^\rightarrow$. The swap places $\GG \in \x$, and either replaces the $\GG \in \x^\downarrow$ with a $\bullet_{\GG^+}$ or else leaves the $\GG$ in place there. For the rest of the analysis, one proceeds exactly in the manner given
in {\sf Subcase 2.2.1}.

\smallskip
\noindent
{\bf (G.5):} If $U$ violates (G.5), the violation occurs on a horizontal edge $e$ bounding a box of a snake $S$ in $T$ (here $e$ may possibly be
a northern boundary edge of $S$, although only the edge labels of the southern boundary edges are defined as part of $S$).
First assume that we are not in the case of Lemma~\ref{lem:snakes_arranged_SW-NE}(III), so $S$ does not share a column with any other snake.

We break our analysis based on where $e$ appears in relation to $S$.

\noindent
{\sf Case 1: ($e$ bounds a box of $\body(S)$ but not a box of $\head(S)$ or $\tail(S)$)}:
There is no change of labels on $e$ between $T$ and $U$. Hence there is no (G.5) violation on $e$ in $U$.

\noindent
{\sf Case 2: ($e = \underline{\x}$ for some $\x \in \head(S)$)}: The only head miniswaps that could introduce new edge labels
onto $e$ are {\sf H6} and {\sf H7}. In these cases, $T$ has a $\GG^+ \in \x$ that moves to $e = \underline{\x}$ in $U$. If $\GG' \in e$ in $T$ with $\family(\GG') = \family(\GG)$, we violate $T$'s (G.4). Hence the $\GG^+ \in e$ in $U$ is the only label of its family on $e$, as desired.

\noindent
{\sf Case 3: ($e = \overline{\x}$ for some $\x \in \head(S)$)}: If $\x$ is the only box of $\head(S)$, we used {\sf H1} to move a $\GG$ from $\underline{\x}$ to $\overline{\x}$. If there is a label $\GG' \in \overline{\x}$ in $T$ with $\family(\GG') = \family(\GG)$, we
violate $T$'s (G.4). If $|\head(S)|=2$, no miniswap introduces edges onto a northern edge.

\noindent
{\sf Case 4: ($e = \underline{\x}$ for some $\x \in \tail(S)$)}: If $|\tail(S)|=1$, no new edge labels occur during any miniswap (namely
{\sf T1}), so we are done. So assume $|\tail(S)|=2$. New edge labels on $e$ can only occur when using {\sf T3} (second output).
Here, $T$ has $\GG^+ \in \x$, while $U$ has $\GG^+ \in \underline{\x}$. If $U$ violates (G.5), there is $\GG' \in \underline{\x}$ in $T$ with $\family(\GG') = \family(\GG)$, but this contradicts $T$'s (G.4).

\noindent
{\sf Case 5: ($e = \overline{\x}$ for some $\x \in \tail(S)$)}: The miniswaps that could introduce edge labels onto $e$ are
{\sf T4.2}, {\sf T4.3} and {\sf T5}. In the notation of those miniswaps, $T$ has $\bullet_\GG \in \x$, an $\FF^! \in \x^\rightarrow$, and a set $Z$ of labels $\ell$ on $\underline{\x^\rightarrow}$ such that $\FF \prec \ell \prec \GG$. In $U$, all of these labels $\{\FF\} \cup Z$ may have moved to $\overline{\x}$. $U$ violates (G.5) only if it has a label $\QQ \in \overline{\x}$ with $\family(\QQ) = \family(\mathcal{Z})$ for some $\mathcal{Z} \in \{\FF\} \cup Z$. However this $\QQ$ and $\mathcal{Z}$ would violate $T$'s (G.12).

Finally, suppose we are in the case of Lemma~\ref{lem:snakes_arranged_SW-NE}(III). In $T$ the adjacent rows of the snakes are
$\ytableausetup{boxsize=1.2em}\ytableaushort{\none {\bullet_{\GG}}, {\bullet_{\GG}} {\GG^+}}$ \ or
$\ytableaushort{\none {\bullet_{\GG}} {\GG^+}, {\bullet_{\GG}} {\GG^+}}$. Let $\x$ be the east box of the south row in either case.
If $i_j,k_\ell\in \overline{\x}$ in $U$, then $i_j,k_\ell\in \overline{\x}$ in $T$, since 
no miniswap affects $\overline{\x}$. By $T$'s (G.5), $i\neq k$.

\smallskip
\noindent
{\bf (G.6):}
Consider $\HH, \HH'$ in $T$ with $\HH \prec \HH'$ and $\family(\HH) = \family(\HH')$. Say the eastmost $\HH$ in $T$ appears in column $c$, while the westmost $\HH'$ appears in column $d$. By $T$'s (G.4) and (G.6), $c$ is West of $d$. By the swaps' construction, in any $U \in \swap_\GG(T)$, the westmost $\HH'$ in $U$ appears at most one column west of $d$, while the eastmost $\HH$ in $U$ is no further east than column $c$. In any case, no $\HH'$ can be West of an $\HH$ in $U$.

\smallskip
\noindent
{\bf (G.7):} Let $e$ be an edge with $\HH \in e$ in $U$.
We must show there is no $\HH$ West of $e$ in $U$.

\begin{claim}
\label{claim:G7forwardA}
Let ${\mathcal R}$ be the region consisting of the leftmost $c-1$ columns of $T$ 
(equivalently $U$). If $U$ has an $\HH$ in ${\mathcal R}$, then $T$  
has an $\HH$ either in ${\mathcal R}$ or in column $c$. 
\end{claim}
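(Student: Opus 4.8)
The plan is to prove Claim~\ref{claim:G7forwardA} by tracking how individual genetic labels of family $\family(\HH)$ can move under a single $\swap_\GG$, and then deduce (G.7) for $U$ from it. The key observation is that $\swap_\GG$ moves genetic labels only inside snakes (or onto/off of edges bounding snake boxes), and a snake occupies only consecutive columns of $T$; moreover a miniswap never moves a label more than one column to the left. So first I would catalog, from the miniswap definitions in Section~\ref{sec:swaps}, exactly which miniswaps can create an instance of a gene $\HH$ in a column strictly to the west of every instance of $\HH$ in $T$; by inspection the only way a new westmost copy of $\HH$ appears is through a box-to-edge push within a snake (e.g. $\bullet_\GG$ becoming $\GG$ in its own box, as in {\sf H1}, {\sf T1}, {\sf T2}, {\sf T3}, the body miniswaps, etc.) — but in each such case $\HH = \GG$, and by Lemma~\ref{lem:content_preservation} (or directly by the structure of presnakes and Lemma~\ref{lem:presnake_description}) the gene $\GG$ already has a copy elsewhere in $T$ no further west.

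Concretely, to prove Claim~\ref{claim:G7forwardA}: suppose $U$ has an instance of $\HH$ in the region $\mathcal{R}$ of the leftmost $c-1$ columns. If that instance occupies the same box or edge in $T$, we are done. Otherwise it was produced by a miniswap acting on some snake $S$ of $T$. Since $S$ occupies consecutive columns and the miniswap displaces a label at most one column westward, $S$ must intersect column $c-1$ or further west, and the pre-image in $T$ of this $\HH$-instance lies in $S$, hence in a column that is at most column $c$ (in fact, at most one column east of where the $\HH$ sits in $U$). Going through the list in Lemmas~\ref{lem:head_wd}, \ref{lem:body_wd}, \ref{lem:tail_wd}: every miniswap that relocates a genetic label $\HH$ into $\mathcal{R}$ either leaves an instance of $\HH$ in $T$ in column $c$ or west of it (this is the case when $\HH$ simply slid left within $S$, so its $T$-position is in $S$ hence in column $\le c$), or creates a \emph{new} instance of $\HH$, which by inspection only happens when $\HH=\GG$ via an $\m$ case like {\sf H2}, {\sf H7}, {\sf T5} (virtual $\circled{\GG}$ becomes $\GG$) or {\sf T1}, {\sf T3} ($\bullet_\GG$ becomes $\GG$ sitting above a $\GG$); in all these cases the definition of the relevant miniswap, together with the snake structure, guarantees $\GG$ already appeared in $T$ in a column no further east — for {\sf H2}, {\sf H7}, {\sf T5}, because $\circled{\GG}$ only appears with some $\GG$ west of it (as noted in the proof of Lemma~\ref{lem:content_preservation}), and for {\sf T1}, {\sf T3}, because Lemma~\ref{lem:piecesobservations}(IV,V,VII) puts a $\GG$ directly below in $T$, hence in the same column.

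Given Claim~\ref{claim:G7forwardA}, (G.7) for $U$ follows: if $U$ had two instances of $\HH$, with the edge instance $\HH \in e$ in column $c$ and another instance West of $e$, apply the claim to the region $\mathcal{R}$ of columns $1,\dots,c-1$ containing the western instance; then $T$ has an instance of $\HH$ in $\mathcal{R}$ or in column $c$. On the other hand the edge label $\HH \in e$ must, by Lemma~\ref{lemma:veryusefulfact} and the fact that $\m$ moves edge labels only within snakes, come from an instance of $\HH$ in $T$ on an edge or box of column $c$ or $c+1$ (the analysis depends on whether $e$ was filled by a tail miniswap {\sf T3}/{\sf T4.2}/{\sf T4.3}/{\sf T5} or a head miniswap {\sf H1}/{\sf H6}/{\sf H7}). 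Combining, $T$ would contain two instances of $\HH$, the western one weakly west of the eastern, contradicting $T$'s own (G.7) — unless they are forced to coincide, which the column bookkeeping rules out. The main obstacle here is the bookkeeping in the second half: pinning down precisely, for each edge-filling miniswap, which box or edge of $T$ the edge label $\HH\in e$ of $U$ descends from, so that one can line it up against the instance produced by Claim~\ref{claim:G7forwardA}; this is a finite but somewhat delicate case check through {\sf H1}, {\sf H6}, {\sf H7}, {\sf T3}, {\sf T4.2}, {\sf T4.3}, {\sf T5}, using $T$'s (G.4), (G.12), and Lemma~\ref{lem:how_to_check_ballotness} to exclude the would-be coincidences.
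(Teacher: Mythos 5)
Your proof of the claim is correct and takes essentially the same route as the paper: an inspection of the miniswaps showing that an $\HH$ in column $d$ of $U$ comes from an $\HH$ or $\circled{\HH}$ in column $d$ or $d^\rightarrow$ of $T$, with the virtual-label cases ({\sf H2}, {\sf H7}, {\sf T5}) resolved by (V.2), i.e.\ a $\circled{\HH}$ forces a genuine $\HH$ further West, and the $\bullet_\GG$ cases handled via the snake structure (Lemma~\ref{lem:piecesobservations}). Your third paragraph, which tries to deduce (G.7) itself, goes beyond the stated claim and is not needed for it.
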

\begin{proof}
By inspection of the miniswaps, if there is an $\HH$ in column $d$ of $U$, then there was an $\HH$ or $\circled{\HH}$ in either $d$ or $d^\rightarrow$ in $T$. By definition of virtual labels, the existence of $\circled{\HH}$ implies the existence of $\HH$ further West. The claim follows.
\end{proof}

\noindent
{\sf Case 1: ($\HH \notin e$ in $T$):} We list the miniswaps that put $\HH\in e$ in $U$: {\sf H1}, {\sf H6}, {\sf H7}, {\sf T3}, {\sf T4.2}, {\sf T4.3}, {\sf T5}. In what follows, $\x$ refers to the notation of the miniswap discussed.
For {\sf H1}, locally 
we have
\ytableausetup{boxsize=1em} 
$\begin{picture}(93,18)
\put(0,0){$T=\ytableaushort{\bullet}\mapsto 
\ytableaushort{\bullet}=U$}
\put(25,-5){$\HH$}
\put(58,10){$\HH$}
\end{picture}
$ (in fact, $\HH=\GG$). 
By $T$'s (G.7),
the $\HH\in {\underline \x}$ is westmost in $T$. If the $\HH \in \overline{\x}(=e)$ is not westmost in $U$, then by Claim~\ref{claim:G7forwardA} there is some $\HH$ in $e$'s column in $T$ that takes part in a miniswap leading to an $\HH$ West of $e$ in $U$. Clearly, this $\HH$ is not the $\HH\in e$. Thus there are two $\HH$'s in $e$'s column, violating $T$'s (G.4). For {\sf H6}, {\sf H7} and {\sf T3}, we have $\HH=\GG^+$ and $\GG\in \x$ in $U$. Thus by $U$'s (G.4), (G.5) and (G.6), there is not $\HH=\GG^+$ West of $e=\underline{\x^\rightarrow}$ in $U$ . For the 
remaining cases, $\HH\in \{\mathcal F\}\cup Z$ 
(in the notation of the miniswaps). These labels in $\x^\rightarrow$ and 
$\underline{\x^\rightarrow}$  of $T$ are marked. Hence by 
Lemma~\ref{lemma:markedHiswestmost}, they are all westmost in their respective genes in $T$. Therefore
the same labels of $e$ in $U$ are westmost by Claim~\ref{claim:G7forwardA}.

\noindent
{\sf Case 2: ($\HH \in e$ in $T$):} No miniswap involving $\HH\in e$ both keeps an $\HH\in e$
and puts an $\HH$ West of $e$. Thus, 
if there 
is an $\HH$ West of $e$ in $U$, then by $T$'s (G.7) combined with Claim~\ref{claim:G7forwardA} there is
an $\HH$ in the column of $e$ in $T$ other than the $\HH\in e$. This contradicts $T$'s (G.4).

\smallskip
\noindent
{\bf (G.3):}
Consider a row $r$ of $T$.

\noindent
{\sf Case 1: (The labels of $r$ strictly $\prec$-increase from left to right, ignoring $\bullet_{\GG}$'s):} If there is no $\bullet_{\GG}$ in $r$,
then only {\sf H9}, {\sf B1} and {\sf B3} miniswaps could involve labels of $r$. 
Therefore either $r$ is unchanged by $\swap_{\GG}$ (or only some labels
became marked), or
a $\GG$ in $r$ is replaced by $\bullet_{\GG^+}$. Thus, $U$'s (G.3) holds for $r$ in this situation.

Otherwise, $T$ has $\bullet_{\GG}$ in $r$, say in box $\x$. By assumption,
the exceptional configuration of (G.3) does not occur here. We consider all miniswaps involving a $\bullet_\GG$. Note that $T$ and $U$
are identical in $r$ both West of $\x$ and East of $\x^\rightarrow$. Hence it suffices to study the affect of a miniswap locally
at $\{\x^\leftarrow,\x,\x^\rightarrow\}$.

If {\sf H1} or {\sf H2} applies at $\x$, 
then locally at $\x$, $T$ looks like $\ytableausetup{boxsize=1.6em}\Scale[0.8]{\begin{picture}(60,20)
\put(0,0){$\ytableaushort{\FF {\bullet_{\GG}} \HH}$}
\put(26,-5){$\GG$}
\end{picture}}
$
or
$\Scale[0.8]{\begin{picture}(60,20)
\put(0,0){$\ytableaushort{\FF {\bullet_{\GG}} \HH}$}
\put(25,-4){\Scale[.7]{$\circled{\GG}$}}
\end{picture}
}$, where $\x$ is the center box. (If $\FF$ or $\HH$ does not exist, the argument is simplified.) It remains to show $\FF \prec \GG \prec \HH$. By $T$'s (G.9), $\FF \prec \GG$. We cannot have $\GG = \HH$, for then we would apply {\sf H4} or {\sf H5.3}, instead of {\sf H1} or {\sf H2}. Suppose $\HH \prec \GG$. Then the $\HH \in \x^\rightarrow$ is marked, and by Lemma~\ref{lem:same999}, $\circled{\GG} \in \underline{\x^\rightarrow}$. Hence we would apply {\sf T5} or {\sf T6}, instead of {\sf H1} or {\sf H2}. 

Applying miniswaps {\sf H3}--{\sf H5}, {\sf H8}, {\sf B2}, {\sf B3}, {\sf T2} or {\sf T6} at $\x$ clearly preserves (G.3) for $r$.

Suppose {\sf H6}, {\sf H7} or {\sf T3} applies at $\x$ in $T$. Then locally at $\x$, $T$ looks like
$\Scale[0.8]{\begin{picture}(60,20)
\put(0,0){$\ytableaushort{\FF {\bullet_{\GG}} {\GG^+}}$}
\put(25,-4){$\GG$}
\end{picture}}
$,
$\Scale[0.8]{\begin{picture}(60,20)
\put(0,0){$\ytableaushort{\FF {\bullet_{\GG}} {\GG^+}}$}
\put(24,-5){\Scale[.7]{$\circled{\GG}$}}
\end{picture}
}$ 
or
$\Scale[0.8]{\begin{picture}(60,20)
\put(0,0){$\ytableaushort{\FF {\bullet_{\GG}} {\GG^+}}$}
\end{picture}}
$ respectively. (If there is no $\FF$, there is nothing to show.) 
By $T$'s (G.9), $\FF \prec \GG$. Hence (G.3) holds for $r$ in $U$.

Suppose {\sf T1} applies at $\x$. Then at $\x$, $T$ locally looks like $\Scale[0.8]{$\ytableaushort{\FF {\bullet_{\GG}} \HH, \none \GG}$}$ (where the $\GG\in \x^\downarrow$ is guaranteed by Lemma~\ref{lem:piecesobservations}(IV,VII);
again, if $\FF$ or $\HH$ does not exist, the argument is only easier). We must show $\FF \prec \GG \prec \HH$. By $T$'s (G.9), $\FF \prec \GG$. Since we are applying {\sf T1}, $\HH \neq \GG$. Now repeat the above argument for {\sf H1, H2} above \emph{verbatim}.

Suppose {\sf T4} or {\sf T5} applies at $\x$.
Locally at $\x$, $T$ looks like $\Scale[0.8]{\begin{picture}(60,20)
\put(0,0){$\ytableaushort{{\bullet_{\GG}} {\mathcal{E}^!} {\FF}}$}
\put(23,-4){$\GG$}
\end{picture}}
$ or $\Scale[0.8]{\begin{picture}(60,20)
\put(0,0){$\ytableaushort{{\bullet_{\GG}} {\mathcal{E}^!} {\FF}}$}
\put(23,-4){\Scale[0.7]{$\circled{\GG}$}}
\end{picture}}$, while
$U$ looks like
$\Scale[0.8]{\begin{picture}(60,20)
\put(0,0){$\ytableaushort{\GG {\bullet_{\GG^+}} {\FF}}$}
\end{picture}}$. By assumption $\lab_T(\x^\leftarrow)\prec \EE$. Thus,
if $\GG \prec \FF$ (or there is no $\FF$), $U$'s (G.3) is satisfied.
Otherwise $\FF \preceq \GG$. Then $\FF\in \x^{\rightarrow\rightarrow}$ is marked in $U$. By Lemma~\ref{lem:strong_form_of_G13}, $N_\GG = N_\EE$, so by $T$'s Lemma~\ref{lem:how_to_check_ballotness}, $\FF \neq \GG$. Therefore $\FF \prec \GG$. By $T$'s (G.6), we have $\FF < \GG$. This three box configuration $\{ \x, \x^\rightarrow, \x^{\rightarrow\rightarrow}\}$ of $U$ is the exceptional configuration of (G.3).

\noindent
{\sf Case 2: (The labels of $r$ do not strictly increase):} Thus $r$ contains the local configuration
$\Scale[0.8]{\begin{picture}(60,20)
\put(0,0){$\ytableaushort{\HH {\bullet_\GG} {\FF^!}}$}
\end{picture}
}$, where $\HH > \FF$. Call the middle box $\x$. By Lemma~\ref{lem:strong_form_of_G13}, there is $\GG'$ or $\circled{\GG'} \in \underline{\x^\rightarrow}$ with $\family(\GG')=\family(\GG)$ and $N_{\GG'} = N_\FF$.

If $\GG' \neq \GG$, then $\x^\rightarrow$ is not part of the snake containing $\x$. Further by Lemma~\ref{lem:same999}, there is no $\GG$ or $\circled{\GG}$ in $\x^\downarrow$ or $\underline{\x}$ in $T$. Hence $\x$ takes part in a {\sf H3} miniswap, $r$ is unchanged (except for the subscript on the $\bullet$) and (G.3) is preserved. 

If $\GG' = \GG$, we apply {\sf T4}, {\sf T5} or {\sf T6}. Recall {\sf T6} produces no tableau. In the case of {\sf T4.1} and the first output of {\sf T4.2}, we make no local changes in row $r$, so $U$'s (G.3) follows from $T$'s. The remaining considerations are the second output of {\sf T4.2} and the outputs of {\sf T4.3} and {\sf T5}. By $T$'s (G.9), $\HH \prec \GG$. Let $\EE := \lab_T(\x^{\rightarrow \rightarrow})$ (if $\EE$ does not exist, the argument is trivialized). Since $N_\EE = N_{\GG'} = N_\FF$, by $T$'s Lemma~\ref{lem:how_to_check_ballotness}, $\EE \neq \GG$. If $\GG \prec \EE$, then $U$'s (G.3) holds. Otherwise $\EE \prec \GG$, and the $\EE \in \x^{\rightarrow\rightarrow}$ in $T$ is marked. Given $T$'s $\GG$ or $\circled{\GG} \in \underline{\x^\rightarrow}$, it follows by $T$'s (G.6) that $\EE < \GG$. Therefore $U$ has the exceptional (G.3) configuration in $r$.

\smallskip
\noindent
{\bf (G.8):}  Consider any two genes $\EE$ and $\FF$ with $\family(\FF) = \family(\EE) + 1$ and $N_\EE = N_\FF$. It suffices to show that in $U$, every $\EE$ is read before every $\FF$, that is:

\begin{claim}
\label{claimG8}
Given (nonvirtual) instances $e,f$ of $\EE$ and $\FF$ respectively in $U$, either $e$ is East of $f$ or else $e$ is north of $f$ in the same column.
\end{claim}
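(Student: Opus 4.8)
\textbf{Proof plan for (G.8), via Claim~\ref{claimG8}.}
The plan is to fix nonvirtual instances $e$ of $\EE$ and $f$ of $\FF$ in $U$, and to trace them back to $T$, which we know satisfies (G.8). Since $T$ is $\GG$-good and $N_\EE=N_\FF$ with $\family(\FF)=\family(\EE)+1$, by Lemma~\ref{lem:how_to_check_ballotness} applied to $T$ no instance of $\FF$ is East of any instance of $\EE$ in $T$; equivalently, reading down columns right-to-left, every $\EE$ in $T$ precedes every $\FF$ in $T$. We want the same conclusion in $U$. The key observation is that each miniswap moves any given label by at most one box (within its snake section), and crucially preserves the family of box labels except at the unique gene $\GG$ being swapped. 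So the only way Claim~\ref{claimG8} could fail is if $\{\EE,\FF\}\cap\{\GG,\GG^+\}\neq\emptyset$, or if a one-box horizontal or vertical shuffle inside a snake row reverses the relative reading order of an $\EE$ and an $\FF$. I would organize the proof around these possibilities.

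First I would dispose of the generic case: if neither $\EE$ nor $\FF$ equals $\GG$ or $\GG^+$, then by inspection of the miniswap list in Section~\ref{sec:swaps}, every instance of $\EE$ (resp.\ $\FF$) in $U$ either sits in the same box/edge as in $T$, or has been pushed exactly one box west or one box down, and no label of family $\family(\EE)$ or $\family(\FF)$ is created or destroyed (content preservation, Lemma~\ref{lem:content_preservation}). Using $U$'s already-established (G.3), (G.4) and (G.6), together with the fact that the relevant labels lie in consecutive columns, I would argue that if some $e$ in $U$ were West of (or South-in-the-same-column of) some $f$ in $U$, then already in $T$ there would be an $\EE$ that is East of an $\FF$ (the at-most-one-column displacement cannot undo a strict East/North separation unless the separation was by a single column with both labels adjacent, which is exactly the marked-label exceptional configuration of (G.3) — and there one checks directly that the swap does not reverse the order). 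This contradicts Lemma~\ref{lem:how_to_check_ballotness} for $T$.

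The substantive case is $\GG\in\{\EE,\FF\}$ (the case $\GG^+\in\{\EE,\FF\}$ is handled symmetrically, noting $\GG^+$ is the only new gene a swap can introduce and the miniswaps that do so — {\sf H2}, {\sf H7}, {\sf T5}, {\sf T1}, {\sf T3} — each leave an $\GG^+$ further east or below, by Lemma~\ref{lem:piecesobservations} and the $\circled{\GG}$-from-a-westward-$\GG$ principle used in Lemma~\ref{lem:content_preservation}). Suppose $\GG=\FF$, so $\EE=\GG^-$ with $N_{\GG^-}=N_\GG$. I would use the snake decomposition: every new or relocated instance of $\GG$ in $U$ lies in a snake $S$ of $T$, and I would examine, section by section ({\tt head}/{\tt body}/{\tt tail}), where the $\GG$'s of $U$ sit relative to the $\GG^-$'s. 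The marked-label conditions (G.9)--(G.13) of $T$, together with Lemmas~\ref{lem:strong_form_of_G10}, \ref{lem:strong_form_of_G13}, \ref{lem:how_to_check_ballotness} and \ref{lem:virtualG13label}, force every $\GG^-$ of $T$ to lie East of or North-in-column of every box of every snake containing a $\GG$; this survives the at-most-one-step relocation. The dual subcase $\GG=\EE$, so $\FF=\GG^+$, combines the two mechanisms above. I expect the main obstacle to be precisely this snake-by-snake bookkeeping in the $\GG$-case: one must rule out a scenario where a {\tt tail} miniswap ({\sf T4}/{\sf T5}) moves a $\GG$ onto an upper edge $\overline{\x}$ in a row/column where a $\GG^+$ already sits below, or where a reversal of reading order between a relocated $\GG$ and a nearby $\GG^+$ occurs; handling these will lean on Lemma~\ref{lem:how_to_check_ballotness} for $T$ (using $N_{\GG^-}=N_\GG=N_{\GG^+}$ in the relevant configurations, which is exactly what Lemma~\ref{lem:strong_form_of_G13} and the definition of the {\sf T5}/{\sf H5.3} virtual labels guarantee) and on the strong forms (G.10), (G.13) to pin down that such a $\GG^+$ cannot be where it would need to be.
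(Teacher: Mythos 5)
Your overall strategy coincides with the paper's: reduce to $T$ via Lemma~\ref{lem:how_to_check_ballotness}, note that no miniswap moves a label East and each moves a label at most one column West, and then analyze the miniswaps that could reverse the reading order of $e$ and $f$. Your first reduction is right: $f$ North of $e$ in a column contradicts $U$'s (G.4), and otherwise $f$ East of $e$ in $U$ forces $e$ and $f$ into the same column of $T$ with $e$ north of $f$, so some miniswap must move $e$ strictly West. But your organizing dichotomy is misaligned with which miniswaps can do that, and this produces concrete errors. The paper splits on $\family(\EE)$ versus $\family(\GG)$, not on whether $\EE$ or $\FF$ equals $\GG$ or $\GG^+$. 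When $\family(\EE)<\family(\GG)$, the only West-moving miniswaps are {\sf T4} and {\sf T5}; there $T$'s (G.4) forces $f$ onto the same lower edge as the $\GG$ of the tail, so $e$ and $f$ are carried West together and the order is preserved --- this, not the exceptional (G.3) row configuration per se, is the mechanism, and your ``generic case'' must contain it since labels of families other than $\family(\GG)$ do get moved by {\sf T4}/{\sf T5}. Moreover, if $\FF=\GG$ then $\EE$ is the gene of family $\family(\GG)-1$ with $N_\EE=N_\GG$, which is generally not $\GG^-$; and {\sf H2}, {\sf H7}, {\sf T5}, {\sf T1}, {\sf T3} introduce new instances of $\GG$, not of $\GG^+$ (no miniswap creates a new $\GG^+$, and existing $\GG^+$'s move only from a box to its own lower edge, never West), so your ``$\GG^+$ case'' is not symmetric to the $\GG$ case; it is essentially vacuous.

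The genuine gap is the case $\family(\EE)=\family(\GG)$, which by inspection forces $\EE=\GG$ (the only label of $\GG$'s family that any miniswap moves West is an instance of $\GG$ itself). This is where the paper spends nearly all of its effort: with $\family(\FF)=\family(\GG)+1$ and $N_\FF=N_\GG$, one pins down the local configuration around $f$ ($f$ in a box $\x$ versus $f\in\underline{\x}$), uses that every West-moving miniswap has $\bullet_\GG$ immediately left of the moved $\GG$, and derives contradictions from $T$'s (G.3), (G.4), (G.6), (G.8), (G.11), (G.12) and Lemma~\ref{lem:how_to_check_ballotness}; for instance, when $f\in\underline{\x}$ the hypotheses on $\FF$ first rule out {\sf B2}/{\sf B3}/{\sf T2} and then force the miniswap to be {\sf H5.1} or {\sf T4.1}, which do not move $e$ at all. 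Your proposal defers exactly this to ``snake-by-snake bookkeeping'' with a list of lemmas, but that bookkeeping is the proof, not a routine afterthought, and as written your plan gives no indication of how the contradictions are actually obtained.
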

\begin{proof}
Suppose the claim fails for some fixed choice of $e$ and $f$.  Thus in $U$ either $f$ is North of $e$ in its column
or else $f$ is East of $e$. The first scenario contradicts $U$'s (G.4), so assume the second occurs.

If $U$ has a label $\QQ$ in column $c$, then $T$ has $\QQ$ or $\circled{\QQ}$ in $c$ or $c^\rightarrow$.
Thus since $U$ has $f$ East of $e$, by $T$'s Lemma~\ref{lem:how_to_check_ballotness},
$T$ has $e$ and $f$ in the same column. By $T$'s (G.4), $e$ is north of $f$ in $T$.

\noindent
{\sf Case 1: ($\family(\EE) < \family(\GG)$):}
We may assume a miniswap moves $e$ West. Since $\family(\EE) < \family(\GG)$, the only such miniswaps are {\sf T4} and {\sf T5}. Hence $T$ has a box $\x$ with $\GG \in \underline{\x}$ and either $e \in \underline{\x}$ or $e \in \x$. By $T$'s (G.4), $f \in \underline{\x}$. These miniswaps may move $e$ to $\overline{\x^\leftarrow}$, but then by definition, they will also move $f$ to $\x^\leftarrow$ or $\overline{\x^\leftarrow}$.

\noindent
{\sf Case 2: ($\family(\EE) = \family(\GG)$):}
We are done unless $\swap_\GG$ moves $e$ West. The possible miniswaps are {\sf H5}, {\sf B2}, {\sf B3}, {\sf T2}, and {\sf T4}. 
By inspection of the 
miniswaps, $\EE = \GG$.

\noindent
{\sf Case 2.1: ($f \in \x$):}
By $T$'s (G.4), either $e \in \overline{\x}$ or else $e \in \x^\uparrow$ with $\overline{\x}$ empty. In all the miniswaps of interest, $\bullet_\EE \in \x^{\uparrow\leftarrow}$.
So by $T$'s (G.2), $\x^\leftarrow$ contains a genetic label $h$ of some gene $\HH$. The local picture is either
$\Scale[0.8]{\begin{picture}(38,30)
\put(0,14){$\ytableausetup{boxsize=1.5em}\ytableaushort{{\bullet_\EE} e, h f}$}
\end{picture}}$
or 
$\Scale[0.8]{\begin{picture}(42,30)
\put(0,14){$\ytableausetup{boxsize=1.5em}\ytableaushort{{\bullet_\EE} \star, h f}$}
\put(25,12){$e$}
\end{picture}}$ (where $\star$ is some genetic label).
By $T$'s (G.11), $h$ is not marked; hence $\HH \succeq \EE$. 
Thus  by $T$'s (G.6), either $\HH > \EE$ or $\HH = \EE$. If $\HH = \EE$, $h$ and $f$ violate Lemma~\ref{lem:how_to_check_ballotness} for $T$. Thus $\EE < \HH$. By $T$'s (G.3), $\HH \prec \FF$. Since $\family(\FF) = \family(\EE) + 1$, we have 
$\family(\HH) = \family(\FF)$. Indeed by $T$'s (G.6), $\FF = \HH^+$. Now since $N_\EE = N_\FF$, 
we violate $T$'s (G.8) unless there is an $\EE'$ above $h$ in its column with $\family(\EE') = \family(\EE)$. 
If this $\EE' \in \overline{\x^\leftarrow}$, then by (G.11) and (G.6), $\EE' = \EE$. 
But then $f$ and this $\EE \in \overline{\x^\leftarrow}$ violate Lemma~\ref{lem:how_to_check_ballotness} for $T$. 
Otherwise, this $\EE'$ is North of $\x^{\uparrow\leftarrow}$. But then this violates $T$'s (G.12) together with the $e \in \x^\uparrow$ or $\overline{\x}$.

\noindent
{\sf Case 2.2: ($f \in \underline{\x}$):}
By $T$'s (G.4), either $e \in \underline{\x}$ or $e \in \x$. In the relevant miniswaps, $\bullet_\GG \in \x^\leftarrow$. If {\sf B2}, {\sf B3} or {\sf T2} applies, then $T$ has $\EE \in \x^{\leftarrow\downarrow}$; together with $f$, this violates
Lemma~\ref{lem:how_to_check_ballotness} for $T$. Hence {\sf H5} or {\sf T4} applies. Since $f \in \underline{\x}$, any {\sf H5} miniswap used is {\sf H5.1}, while any {\sf T4} miniswap used is {\sf T4.1}; both of these fix $e$, a contradiction.

\noindent
{\sf Case 3: ($\family(\EE) > \family(\GG)$):}
Neither $e$ nor $f$ is affected by $\swap_\GG$, so the Claim holds. 
\end{proof} 

\smallskip
\noindent
{\bf (G.9):} Suppose $\FF$ is northwest of $\bullet_{\GG^+} \in \y$ in $U$ and $\GG^+ \preceq \FF$; we seek a contradiction. We may suppose such $\FF$ is in $\x$ or $\underline{\x}$ in $U$.

\noindent
{\sf Case 1: ($\FF$ appears in the same position in $T$):} By inspection of the miniswaps, $T$ has $\bullet_\GG \in \y, \y^\leftarrow,$ or $\y^\uparrow$. By $T$'s (G.9), the $\FF \in \x$ or $\underline{\x}$ is not northwest of this box. Hence one of the following subcases occurs:

\noindent
{\sf Subcase 1.1: ($T$ has $\bullet_\GG \in \y^\leftarrow$ and $\y$ is South of $\x$ in its column):} Since $\GG \prec \FF$, $T$ 
contains no $\FF^!$, so by $T$'s (G.4), $\FF < \lab_T(\y)$ implying $\GG^+ < \lab_T(\y)$. Hence $\y$ is not part of any snake in $T$, contradicting $\bullet_{\GG^+} \in \y$ in $U$. 

\noindent
{\sf Subcase 1.2: ($T$ has $\bullet_\GG \in \y^\uparrow$ and $\y$ is East of $\x$ in its row):} 
By $T$'s (G.3) and (G.9), $\FF \prec \lab_T(\y)$. Then $\GG^+ \prec \lab_T(\y)$, so $\y$ is not part of any snake in $T$, contradicting $\bullet_{\GG^+} \in \y$ in $U$. 

\noindent
{\sf Case 2: ($\FF$ does not appear in the same position in $T$):}
By inspection of the miniswaps, no label $\HH \succ \GG^+$ is affected by $\swap_{\GG}$. Furthermore labels $\GG^+$ can only be affected if $\family(\GG^+) = \family(\GG)$. Since $\GG^+ \preceq \FF$, this implies $\FF = \GG^+$ with $\family(\FF) = \family(\GG)$. 
By inspection of the miniswaps that affect $\GG^+$, $T$ has $\FF = \GG^+ \in \x$, while $U$ has $\bullet_{\GG^+} \in \x$ and $\FF = \GG^+ \in \underline{\x}$.
Since $\FF \in \underline{\x}$ is Southeast of the $\bullet_{\GG^+}\in \x$ in $U$, by $U$'s (G.2) it cannot also be northwest of a $\bullet_{\GG^+}$, contradicting our assumption.

\smallskip
\noindent
{\bf (G.11):}
Consider a label $\FF^! \in \x$ or $\underline{\x}$ in $U$. By $T$'s (G.2) and 
inspection of the miniswaps, $T$ has either $\FF$ or $\FF^!$ in the same position.

\noindent
{\sf Case 1: (This $\FF$ is marked in $T$):}
By definition, there is a $\bullet_{\GG^+}$ northwest of $\FF^!$ in $U$. If $U$ has a $\bullet_{\GG^+}$ South of $\x$ and in its column, this contradicts $U$'s (G.2). 

We now show $U$ has no $\bullet_{\GG^+}$ North of $\x$ and in its column:
By inspection of the miniswaps, if $\bullet_{\GG^+} \in \y$ in $U$, then $T$ has a $\bullet_\GG$ northwest of $\y$. 
By $T$'s (G.11), $T$ has no $\bullet_\GG$ in $\x$'s column. Since $\FF \prec \GG$, by Lemma~\ref{lemma:Gsoutheastofbullet} $T$ has no $\bullet_\GG$ NorthWest of $\x$. 

\noindent
{\sf Case 2: (This $\FF$ is unmarked in $T$):} 
By definition, $U$ has a $\bullet_{\GG^+}$ northwest of $\FF^!$. By inspection of the miniswaps, if $\bullet_{\GG^+} \in \y$ in $U$, then $T$ has a $\bullet_\GG$ northwest of $\y$. Therefore $T$ has a $\bullet_\GG$ northwest of said $\FF$. Since the $\FF$ is unmarked, it must be that $\GG \preceq \FF$. But since $\FF^!$ appears in $U$, $\FF \prec \GG^+$. Hence $\FF = \GG$. 
By Lemma~\ref{lemma:Gsoutheastofbullet}, $T$ has no $\bullet_\GG$ NorthWest of $\x$. So $T$ has $\bullet_\GG$ either West of $\x$ and in its row or else North of $\x$ and in its column. Since $\FF=\GG$, only the latter case is a concern. In that case, by $T$'s (G.4) and (G.11), in fact $\FF =\GG \in \x$ and $\bullet_\GG \in \x^\uparrow$. Hence by inspection of the miniswaps, $U$ has $\FF = \GG \in \x^\uparrow$. (Note {\sf T4} does not apply at $\x^\uparrow$ by $T$'s (G.7).) Thus $U$ has no $\bullet_{\GG^+}$ in $\x$'s column.

\smallskip
\noindent
{\bf (G.10):}
Consider a label $\FF^! \in \x$ or $\underline{\x}$ in $U$. By $T$'s (G.2) and inspection of the miniswaps, $T$ has either $\FF$ or $\FF^!$ in the same position.

\noindent
{\sf Case 1: (This $\FF$ is marked in $T$):}
By inspection of the miniswaps, if $\bullet_{\GG^+} \in \y$ in $U$, then  $T$ has a $\bullet_\GG$ northwest of $\y$. 
By $T$'s (G.11), $T$ has no $\bullet_\GG$ in $\x$'s column. Since $\FF \prec \GG$, by Lemma~\ref{lemma:Gsoutheastofbullet}, $T$ has no $\bullet_\GG$ NorthWest of $\x$. Hence $U$ has no $\bullet_{\GG^+}$ Northwest of $\x$. But by definition, $U$ has a $\bullet_{\GG^+}$ northwest of $\FF^!$, so it must be in $\x$'s row.

\noindent
{\sf Case 2: (This $\FF$ is unmarked in $T$):}
Since $\FF^!$ appears in $U$, $\FF \preceq \GG$. By definition, $U$ has $\bullet_{\GG^+}$ northwest of said $\FF^!$. By inspection of the miniswaps, if $\bullet_{\GG^+} \in \y$ in $U$, then $\bullet_\GG$ appeared northwest of $\y$ in $T$. Hence $T$ has $\bullet_\GG$ northwest of this $\FF$. Since this $\FF$ is unmarked in $T$, $\FF \succeq \GG$. Thus $\FF = \GG$.

By Lemma~\ref{lemma:Gsoutheastofbullet}, $T$ has no $\bullet_\GG$ NorthWest of $\x$. Therefore $U$ has no $\bullet_{\GG^+}$ NorthWest of $\x$. But $U$ has some $\bullet_{\GG^+}$ northwest of $\x$, so it is either West of $\x$ in $\x$'s row or North of $\x$ in $\x$'s column. In the former case, we are done; in the latter case, we contradict $U$'s (G.11).

\smallskip
\noindent
{\bf (G.12):} Define the {\bf neighborhood} of a box $\uuu$ to be ${\tt Neigh}(\uuu):=\{\uuu, \uuu^\leftarrow, \uuu^\uparrow, {\underline \uuu},
{\overline {\uuu^\leftarrow}}\}$. For a lower edge $\underline{\uuu}$, let ${\tt Neigh}(\underline{\uuu}):=\{\underline{\uuu},\uuu, \uuu^\leftarrow, {\overline \uuu}, \overline{\uuu^\leftarrow}\}$. Given a (possibly virtual) instance $q \in \uuu$ or $\underline{\uuu}$ in $T$ of the gene $\QQ$, let
the {\bf children} of $q$ be all (nonvirtual) $\QQ$'s in $U$ in ${\tt Neigh}(\uuu)$ or ${\tt Neigh}(\underline{\uuu})$, respectively. 
Finally define the children of a $\bullet_{\GG}\in \uuu$ in $T$ to be those $\bullet_{\GG^+}$ in $\uuu, \uuu^\rightarrow,\uuu^\downarrow$ in $U$. Clearly,

\begin{lemma}
\label{lemma:child123}
Every $q$ in $U\in \swap_{\GG}(T)$ is a child of at least one (possibly virtual) $q$ in $T$.
Also, every $\bullet_{\GG^+}$ in $U\in \swap_{\GG}(T)$ is a child of at least one $\bullet_{\GG}$ in $T$. \qed
\end{lemma}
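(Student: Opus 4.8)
The final statement to be proved is Lemma~\ref{lemma:child123}: in $U \in \swap_\GG(T)$, every genetic label $q$ of gene $\QQ$ is a child of some (possibly virtual) instance of $\QQ$ in $T$, and every $\bullet_{\GG^+}$ is a child of some $\bullet_\GG$ in $T$. The proof is a direct case analysis over the miniswaps, organized by the type of snake section that produced the label in question. The key observation making this feasible is that $\swap_\GG$ is assembled from local rules on snake sections (Lemma~\ref{lem:miniswap_commutation} lets us treat each independently), and each miniswap moves any given label by at most one box in one of the cardinal/diagonal directions encoded in the definition of $\tt Neigh$.

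\textbf{Steps.} First, I would dispose of the easy part: any label $q$ in a box or edge of $U$ that does \emph{not} lie in (the region defined by) a snake of $T$ is unchanged by $\swap_\GG$, so it is literally in the same position in $T$ and hence trivially a child of itself (here $\uuu$ is its own box, and $\uuu \in {\tt Neigh}(\uuu)$). So it suffices to examine labels appearing in boxes and lower edges belonging to a snake $S$, together with the upper edges of those boxes (since miniswaps like {\sf H1}, {\sf H6}, {\sf H7}, {\sf T3}, {\sf T4.2}, {\sf T4.3}, {\sf T5} push labels onto upper edges). Then I would go through the miniswap tables of Section~\ref{sec:swaps} section by section — {\sf H1}--{\sf H9} for heads, {\sf B1}--{\sf B3} for bodies, {\sf T1}--{\sf T6} for tails — and for each output tableau, for each genetic label $\ell$ and each $\bullet_{\GG^+}$ appearing in the output, identify a (possibly virtual) preimage label in $T$ within the prescribed neighborhood. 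For genetic labels $\QQ$ with $\family(\QQ) \neq \family(\GG)$, only the labels of $T$ of exactly family $\family(\GG)$ and the $\bullet_\GG$'s are disturbed, so such $\QQ$'s are essentially carried along (e.g.\ the set $Z$ and the $\HH$ in {\sf T4}, {\sf T5}, or the $\FF^!$ labels): a label of $Z$ moving from $\underline{\x^\rightarrow}$ to $\overline{\x}$ sits in ${\tt Neigh}$ of its old edge, and a virtual $\circled{\HH}$ in $T$ that becomes a genuine $\HH$ in $U$ (as in {\sf H5.2}, {\sf T4.2}) is a child of that virtual instance — this is exactly why the statement allows virtual preimages. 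For labels $\GG$ that are newly created (the second outputs of {\sf H1}, {\sf H6}, {\sf T1}, {\sf T3}, and the $\GG$'s emitted by {\sf H2}, {\sf H7}, {\sf T5}, and body swaps {\sf B2}, {\sf B3}), I trace each to the $\GG$, $\circled{\GG}$, or $\bullet_\GG$ that the rule moved or that sat directly above/left: e.g.\ in {\sf B2}/{\sf B3} each $\GG$ in the output comes from the $\bullet_\GG$ immediately above or to its left in $S$, which lies in ${\tt Neigh}$; in {\sf T1} the $\GG$ in $\x$ has $\GG \in \x^\downarrow$ in $T$ (Lemma~\ref{lem:piecesobservations}(IV,VII)), which is in ${\tt Neigh}(\x)$; in {\sf H2}/{\sf H7}/{\sf T5} the emitted $\GG$ comes from the virtual $\circled{\GG}$ on $\underline{\x}$ of $T$, which is in ${\tt Neigh}$. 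Finally, for the $\bullet_{\GG^+}$ assertion: every $\bullet_{\GG^+}$ in $U$ occupies a box $\uuu$ that held $\bullet_\GG$, $\GG$, $\GG^+$, or $\FF^!$ in $T$ (this is recorded in Lemma~\ref{lemma:veryusefulfact}(II)); inspecting the rules, whenever $\bullet_{\GG^+} \in \uuu$ in $U$ the box $\uuu^\uparrow$, $\uuu^\leftarrow$, or $\uuu$ itself contained $\bullet_\GG$ in $T$ — e.g.\ in {\sf H5.2}/{\sf H5.3}/{\sf T2}/{\sf B2}/{\sf B3} the $\bullet_{\GG^+}$ lands one box right of a box that had $\bullet_\GG$, and in {\sf T4}/{\sf T5} it stays put or moves right from $\bullet_\GG \in \x$ — so it is a child of that $\bullet_\GG$.

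\textbf{Main obstacle.} There is no deep idea here; the difficulty is purely one of completeness — making sure the case analysis genuinely covers every output of every miniswap, including the multi-box body swaps {\sf B2} and {\sf B3} (where one must check each of the possibly many $\GG$'s and $\bullet_{\GG^+}$'s simultaneously, using the short-ribbon structure of $\body(S)$ from Lemma~\ref{lem:piecesobservations}(III)), and correctly handling the virtual-label bookkeeping in {\sf H5.2} and {\sf T4.2} so that the ``possibly virtual'' clause is invoked in exactly the right places. Since the excerpt terminates with the statement of the lemma and it is plainly asserted to follow ``by inspection,'' the intended proof is precisely this exhaustive but routine pass through Section~\ref{sec:swaps}, and I would present it as such, grouping the miniswaps by whether a label is fixed, translated, or newly emitted, and citing Lemmas~\ref{lem:piecesobservations}, \ref{lem:presnake_description}, and~\ref{lemma:veryusefulfact} to justify the configurations of $T$ that the rules act on.
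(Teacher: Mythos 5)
Your proposal is correct and matches the paper's intent: the paper offers no written proof (the lemma is prefaced by ``Clearly,'' and carries its \qed in the statement), the intended argument being exactly the routine inspection of the miniswaps that you lay out, with the virtual-label cases ({\sf H2}, {\sf H5.2}, {\sf T4.2}, {\sf T5}) accounting for the ``possibly virtual'' clause. One small slip in an example: for the single-box bottom row of a {\sf B3} body the new $\bullet_{\GG^+}$ sits one box \emph{below} (not right of) a box that held $\bullet_\GG$, but this is still covered by your general statement that the parent lies at $\uuu$, $\uuu^\leftarrow$, or $\uuu^\uparrow$.
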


Suppose $\HH$ and $\HH'$ are instances in $T$ of genes of the same family. By Lemma~\ref{lemma:child123},
it suffices to confirm $U$'s (G.12) for $\ell$ a child of $\HH$ and $\ell'$ a child of $\HH'$. To do this, we break into cases depending
on the relative position of $\HH$ and $\HH'$. By relabeling, we may assume $\HH$ west of $\HH'$.  Specifically,
{\sf Cases 1--3} below concern the situation $\HH$ NorthWest of $\HH'$.
{\sf Cases 4--7} consider the case $\HH$ southwest of $\HH'$.

For the first three cases, let $\x, \y$ be boxes in the same row $r$ of $T$ with $\x$ West of $\y$.
By $T$'s (G.12), we may assume $\HH \in \overline{\x}$ or $\HH \in \x$, as well as $\HH' \in \y$ or $\HH' \in \underline{\y}$. 
By $T$'s (G.12), there is a $\bullet_\GG$ in some box $\z$ of row $r$ appearing East of $\x$ and west of $\y$.

\noindent
{\sf Case 1: (In $T$, we have $\HH$ or $\circled{\HH} \in \overline{\x}$ and $\HH'$ or $\circled{\HH'} \in \underline{\y}$):}
By $T$'s (G.4), $\HH < \lab_T(\x)$. By $T$'s (G.9), $\lab_T(\x) \prec \GG$. Hence $\HH < \GG$. Since $\family(\HH) = \family(\HH')$, also $\HH' < \GG$. Therefore the $\HH' \in \underline{\y}$ is marked (and is not virtual). By $T$'s (G.11), this forces $\z \neq \y$. For convenience, assume $\HH \in \overline{\x}$. (The argument where this label is virtual is strictly easier.)

By Lemma~\ref{lem:strong_form_of_G10}, the box labels in $\y$ and in every box strictly between $\y$ and $\z$ are also marked. Let $\lab_T(\z^\rightarrow) := \EE^!$ and note $\EE < \HH' < \GG$ (the first inequality by a combination of
$T$'s (G.3) and (G.4)). 
Summarizing, $T$ locally looks like one of the following at $r$:
\[\begin{picture}(90,20)
\ytableausetup{boxsize=1.2em}
\put(0,9){$\ytableaushort{\star \cdots {\bullet_{\GG}} {\EE^!} \cdots \star}$}
\put(4,19){$\HH$}
\put(75,4){$\Scale[0.85]{{\HH'}^!}$}
\put(5,-3){$\x$}
\put(35,-3){$\z$}
\put(77,-3){$\y$}
\end{picture} \text{\ \ \ \ or \ \ \ \  }
\begin{picture}(63,20)
\ytableausetup{boxsize=1.2em}
\put(0,9){$\ytableaushort{\star \cdots {\bullet_{\GG}} {\EE^!}}$}
\put(4,19){$\HH$}
\put(46,4){$\Scale[0.85]{{\HH'}^!}$}
\put(5,-3){$\x$}
\put(35,-3){$\z$}
\put(52,-3){$\y$}
\end{picture}.
\]
By $T$'s (G.2), $\bullet_{\GG}\not\in \x^\uparrow, \x^{\uparrow\leftarrow}$. Therefore
${\overline \x}$ is not part of a snake and so the only child of $\HH \in \overline{\x}$ in $U$ is in $\overline{\x}$. 

Suppose $\z$ is the only box in its snake section, i.e., we apply {\sf H1}, {\sf H2}, {\sf H3} or {\sf T1}. 
If the miniswap is {\sf H1}, {\sf H2} or {\sf T1}, then $\GG$ or $\circled{\GG}$ appears in $\z^\downarrow$ or $\underline{\z}$. Hence by Lemma~\ref{lem:same999}, $\circled{\GG} \in \underline{\z^\rightarrow}$, so $\z^{\rightarrow}$ is adjoined by (R.3), contradicting $\z$ the only box in its snake section. Thus the miniswap is {\sf H3} and the unique child of $\bullet_{\GG}$ is in $\z$.
Moreover $\y$ is not part of a snake, or takes part in a trivial {\sf H9} miniswap. Hence the unique child of $\HH' \in \underline{\y}$ is at $\underline{\y}$ in $U$.
Thus $U$'s (G.12) holds in this scenario.

Otherwise the miniswap at $\z$ involves $\z$ and $\z^\rightarrow$. Then the miniswap is {\sf T4}, {\sf T5} or {\sf T6}. In these cases, the child of $\bullet_{\GG}$ is in either $\z$ or $\z^\rightarrow$ in $U$.  If the child of
$\HH' \in \underline{\y}$ is in $\underline{\y}$, we are done.
If not, $\y = \z^\rightarrow$ and $\HH' \in Z$ (in the notation of the miniswaps). Then the child of
$\HH'$ is at $\overline{\z}$ in $U$, so the child of $\HH$ is not North of the child of $\HH'$ and (G.12) holds
vacuously.

\noindent
{\sf Case 2: (In $T$, we have $\HH \in \x$ and $\HH'$ or $\circled{\HH'} \in \underline{\y}$):}
By $T$'s (G.9), $\HH \prec \GG$.

\noindent
{\sf Subcase 2.1: ($\z = \y$):} By $T$'s (G.11), the $\HH'$ or $\circled{\HH'} \in \underline{\y}$ is not marked;
hence $\GG \preceq \HH'$. Thus $\HH\prec\GG\preceq \HH'$, so $\family(\GG) = \family(\HH)$. 
Combined with $T$'s (G.2), this implies the unique child of $\HH \in \x$ is in $\x$.
By $T$'s (G.3) and (G.9), $T$ has $\GG' \in \z^\leftarrow$ with $\family(\GG') = \family(\GG) = \family(\HH)$ and $\GG' \prec \GG$. Hence by $T$'s (G.6), $\GG' = \GG^-$ and $\HH' = \GG$; moreover by $T$'s (V.2), this
$\HH' = \GG$ is not virtual, since it is westmost. Hence, locally at $r$, $T$ is
\[\begin{picture}(90,20)
\ytableausetup{boxsize=1.2em}
\put(0,9){$\ytableaushort{{\HH} {\cdots} {\GG'} {\bullet_{\GG}}}$}
\put(45,2){$\HH'$}
\put(5,-6){$\x$}
\put(44,-6){$\y=\z$}
\end{picture} \text{ \ \ (where $\HH'=\GG, \GG'=\GG^-$).} \]
Thus the miniswap involving $\z$ is one of {\sf H1}, {\sf H4}, {\sf H6} and {\sf T6}. Now {\sf H4} and {\sf T6}
produce no output. If {\sf H1} or {\sf H6} applies, the child of $\HH'\in \underline{\y}=\underline{\z}$ is 
northEast of the child of $\HH\in \x$, so (G.12) is confirmed vacuously.

\noindent
{\sf Subcase 2.2: ($\z \neq \y$):}
By $T$'s (G.4), $\lab_T(\y) < \HH'$. Hence by $\family(\HH) \leq \family(\GG)$, $\lab_T(\y)$ is marked. By Lemma~\ref{lem:strong_form_of_G10}, some $\EE^! \in \z^\rightarrow$. 
The remainder of this case is argued exactly as {\sf Case 1}.

\noindent
{\sf Case 3: (In $T$ we have $\HH$ or $\circled{\HH} \in \overline{\x}$ and $\HH' \in \y$):}
Since $\HH' \in \y$, $\bullet_\GG \notin \y$, so $\z \neq \y$. By $T$'s (G.4) and (G.9), $\HH < \lab_T(\x) \prec \GG$. Therefore also $\HH' < \GG$ and so $\HH' \in \y$ is marked. 
Since $\overline{\x}$ does not participate in the swap, if $\circled{\HH} \in \overline{\x}$, 
this $\circled{\HH}$ has no children, so the (G.12) confirmation is vacuous here. So assume 
$\HH \in \overline{\x}$ in $T$; since $\overline{\x}$ does not participate in the swap, its only child
is in the same position in $U$.
In summary, locally at $r$, $T$ is
\[\begin{picture}(90,16)
\ytableausetup{boxsize=1.3em}
\put(0,4){$\ytableaushort{\star \cdots {\bullet_\GG} \cdots {{\HH'}^!}}.$}
\put(3,14){$\HH$}
\put(5,-8){$\x$}
\put(35,-8){$\z$}
\put(74,-8){$\y$}
\end{picture}\]

\noindent
{\sf Subcase 3.1: ($\z = \y^\leftarrow$):}
Consider the miniswap involving $\z$. First suppose $\z$ is not the only box in its snake section. By 
Definition-Lemma~\ref{def-lem:snake_classification}(I,II), it involves $\y$ and is {\sf T4}, {\sf T5} or {\sf T6}. The last miniswap produces no output.
For the first two miniswaps, one possibility is that the child of ${\HH'}^!\in \y$ is at $\overline{\z}$ in $U$.
Here the (G.12) confirmation is vacuous. Otherwise $U$'s unique child of ${\HH'}^!\in \y$ is in $\y$; here the unique child of $\bullet_\GG \in \z$ is in $\z$ and so $U$'s (G.12) holds.

Otherwise, $\z$ is the only box in its snake section. Thus {\sf H1}, {\sf H2}, {\sf H3} or {\sf T1} applies. 
If {\sf H1}, {\sf H2} or {\sf T1} applies, then by definition or Lemma~\ref{lem:piecesobservations}(IV,VII),
$T$ has $\GG$ or $\circled{\GG}$ in $\z^\downarrow$ or $\underline{\z}$. Hence by Lemma~\ref{lem:same999}, $\circled{\GG} \in \underline{\y}$, so $\y$ is adjoined by (R.3), contradicting $\z$ the only box in its snake section.
Thus it is {\sf H3}, and $U$'s unique children of $\HH' \in \y$ and $\bullet_\GG \in \z$ are in $\y, \z$ respectively; hence
(G.12) is confirmed here.  

\noindent
{\sf Subcase 3.2: ($\z \neq \y^\leftarrow$):}
In this case, $\HH' \in \y$ is not part of a snake in $T$ or takes part in a trivial {\sf H9} miniswap; thus its unique child is in $\y$ in $U$.
Hence it suffices to check that $U$ has a $\bullet_{\GG^+}$ between $\x$ and $\y$.
By Lemma~\ref{lem:strong_form_of_G10}(II), since $\lab_T(\y)$ is marked, $\lab_T(\z^\rightarrow)$ is also marked.
Consider the miniswap involving $\z$. If $T$ has no $\GG$ or $\circled{\GG}$ in $\z^\downarrow$ or $\underline{\z}$, then the miniswap must be one of {\sf H3}, {\sf T4} or {\sf T5} (it cannot
be {\sf T1} by Lemma~\ref{lem:piecesobservations}(IV,VII)). For each of these, a child of
$\bullet_\GG\in \z$ appears in $\z$ or $\z^\rightarrow$ in $U$.
If $T$ has $\GG$ or $\circled{\GG}$ in $\z^\downarrow$ or $\underline{\z}$, then by Lemma~\ref{lem:same999} $\circled{\GG} \in \underline{\z^\rightarrow}$. Hence the miniswap is {\sf T5} or {\sf T6}. In the former case, a child of
$\bullet_{\GG}\in \z$ appears in $\z^\rightarrow$ in $U$. In the latter case, $U$ does not exist.

\noindent
{\sf Case 4: (In $T$, $\HH \in \aaa$ is southwest of $\HH' \in \bbb$):} We will use:

\begin{claim}
In $U$, each child of $\HH\in\aaa$ is west of each child of $\HH'\in\bbb$.
\end{claim}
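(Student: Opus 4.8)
The statement to be proved is the \textsf{Claim} inside \textbf{Case 4} of the (G.12) verification: if $\HH \in \aaa$ is southwest of $\HH' \in \bbb$ in $T$, then in $U \in \swap_\GG(T)$ every child of the $\HH \in \aaa$ lies (strictly) west of every child of the $\HH' \in \bbb$. My plan is to argue directly from the definitions of \emph{child} and \emph{neighborhood} given just above the Claim, combined with the two facts that $\HH$ and $\HH'$ have the same family and that $T$ already satisfies (G.12) (so no $\HH'$ is NorthWest of an $\HH$, hence by Lemma~\ref{lemma:Gsoutheast} no $\HH$ is southEast of an $\HH'$, and $\aaa$ is strictly southwest of $\bbb$). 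The key observation is that a child of $\HH \in \aaa$ lives in $\mathtt{Neigh}(\aaa) = \{\aaa, \aaa^\leftarrow, \aaa^\uparrow, \underline{\aaa}, \overline{\aaa^\leftarrow}\}$, and a child of $\HH' \in \bbb$ lives in $\mathtt{Neigh}(\bbb)$; so the worst case is when a child of $\HH$ moves one step east/up (into $\aaa$ itself, or potentially east) while a child of $\HH'$ moves one step west (into $\bbb^\leftarrow$). I would therefore compare the column of the eastmost possible child of $\HH$ with the column of the westmost possible child of $\HH'$.

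\textbf{Key steps.} First I would record that since $\HH \in \aaa$ is a box label, no miniswap moves it strictly east: inspecting the miniswaps, a box label of family $\family(\HH)$ can be moved only by the swaps acting on $\GG$ (so only if $\family(\HH) = \family(\GG)$), and in every such case the label is pushed \emph{onto its own lower edge} $\underline{\aaa}$ or \emph{stays in $\aaa$}, never into $\aaa^\rightarrow$. Hence every child of $\HH \in \aaa$ is weakly west of column $\mathrm{col}(\aaa)$, and in fact is in $\{\aaa, \aaa^\leftarrow, \aaa^\uparrow, \underline{\aaa}, \overline{\aaa^\leftarrow}\}$, all of which are in columns $\le \mathrm{col}(\aaa)$. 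Symmetrically, for the child of $\HH' \in \bbb$: the relevant neighborhood boxes are $\bbb, \bbb^\leftarrow, \bbb^\uparrow, \underline{\bbb}, \overline{\bbb^\leftarrow}$, all in columns $\ge \mathrm{col}(\bbb) - 1$. So the only way the Claim could fail is $\mathrm{col}(\bbb) - 1 \le \mathrm{col}(\aaa)$, i.e. $\mathrm{col}(\bbb) \in \{\mathrm{col}(\aaa), \mathrm{col}(\aaa)+1\}$, \emph{and} the child of $\HH$ is not strictly west of the child of $\HH'$. Since $\aaa$ is strictly southwest of $\bbb$, we have $\mathrm{col}(\aaa) < \mathrm{col}(\bbb)$, so $\mathrm{col}(\bbb) = \mathrm{col}(\aaa) + 1$, i.e. $\bbb$ is in the column immediately east of $\aaa$ (and strictly north of $\aaa$). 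In that configuration the only potential collision is: a child of $\HH$ in column $\mathrm{col}(\aaa)$ and a child of $\HH'$ in column $\mathrm{col}(\bbb^\leftarrow) = \mathrm{col}(\aaa)$, with the latter weakly north of the former. I would rule this out by showing that in this sub-case $\HH$ cannot actually move (so its children stay in column $\mathrm{col}(\aaa)$ only at rows $\ge \mathrm{row}(\aaa)$, while any child of $\HH'$ in column $\mathrm{col}(\aaa)$ sits at row $\le \mathrm{row}(\bbb) < \mathrm{row}(\aaa)$ by $T$'s (G.4) applied to that column, hence strictly north — giving a genuine NorthWest relation that is harmless for (G.12), or is excluded outright by goodness). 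More precisely, I would invoke that $\bbb$ strictly NorthEast of $\aaa$ together with $\family(\HH) = \family(\HH')$ forces, via $T$'s (G.4), (G.9) and (G.12), a $\bullet_\GG$ strictly between them, which constrains which miniswaps are available at $\aaa$ and $\bbb$ and pins down the children's positions.

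\textbf{Main obstacle.} The delicate point — and the step I expect to spend the most care on — is the boundary sub-case $\mathrm{col}(\bbb) = \mathrm{col}(\aaa)+1$ with $\bbb$ strictly north of $\aaa$: here a child of $\HH'$ can slide into $\bbb^\leftarrow$, which is in $\aaa$'s column, so one must show that whatever child of $\HH$ survives in $\aaa$'s column is strictly \emph{south} of that child of $\HH'$ (equivalently, never on the same edge or in a NorthWest-violating position). This requires a short case analysis over exactly which reverse-swap/miniswap types can act simultaneously at the relevant boxes, using Lemma~\ref{lem:same999}, Lemma~\ref{lem:how_to_check_ballotness}, and the structure of snakes from Section~\ref{sec:snakes} to see that the $\bullet_\GG$ forced between $\HH$ and $\HH'$ prevents $\HH$ from being displaced upward into a column it shares with a displaced $\HH'$. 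Everything else is bookkeeping with the neighborhood definition; the content is entirely in this one geometric near-collision.
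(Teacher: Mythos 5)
There is a genuine gap, and it comes from a misreading of the paper's conventions. In this paper ``west'' and ``southwest'' (lowercase) mean \emph{weakly} west and weakly south-and-west, so the Claim only asserts that each child of $\HH\in\aaa$ is weakly west of each child of $\HH'\in\bbb$, and the hypothesis of Case 4 allows $\aaa$ and $\bbb$ to lie in the \emph{same} column. Your proposal does two things that clash with this. First, you set out to prove the strict statement, which is in fact false: a child of $\HH\in\aaa$ can simply remain in $\aaa$ while the child of $\HH'\in\bbb$ remains in $\bbb$ with $\mathrm{col}(\aaa)=\mathrm{col}(\bbb)$, and even when $\mathrm{col}(\bbb)=\mathrm{col}(\aaa)+1$ a {\sf T4}/{\sf T5} miniswap can push a child of $\HH'$ into column $\mathrm{col}(\aaa)$. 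So your ``main obstacle'' is an artifact of attacking a stronger, false claim; for the weak claim, the whole case $\mathrm{col}(\aaa)<\mathrm{col}(\bbb)$ is immediate, since no miniswap moves any label east and children of $\HH'\in\bbb$ lie in columns $\geq \mathrm{col}(\bbb)-1$.

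Second, and more seriously, you dispose of the same-column case by asserting that $\aaa$ is strictly southwest of $\bbb$, citing $T$'s (G.12) and Lemma~\ref{lemma:Gsoutheast}. Neither rules it out: (G.12) concerns NorthWest (strictly north \emph{and} strictly west) pairs, Lemma~\ref{lemma:Gsoutheast} concerns southEast (strictly east) pairs, and the exception in (G.4) explicitly permits an unmarked $\HH$ directly above $\HH^!$ in the same column (see the second example following the definition of good tableaux). This same-column configuration is precisely the content of the paper's proof of the Claim: (G.4) forces $\HH=\HH'$, $\bbb=\aaa^\uparrow$ and $\HH^!\in\aaa$; Lemma~\ref{lem:strong_form_of_G10}(II) produces a $\bullet_\GG$ in a box $\z$ West of $\aaa$ in its row with all intervening box labels marked; and (G.11), (G.2) then pin down the local picture so that $\bbb$ belongs to no snake, hence the unique child of $\HH'\in\bbb$ stays in $\bbb$ and every child of $\HH\in\aaa$ (all in columns $\leq\mathrm{col}(\aaa)=\mathrm{col}(\bbb)$) is weakly west of it. Your proposal omits this analysis entirely, so it does not establish the Claim as stated.
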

\begin{proof}
If $\aaa$ is West of $\bbb$, then the claim holds by the definition of children.
So assume $\aaa$ and $\bbb$ are in the same column. By $T$'s (G.4), $\HH = \HH'$ and $\HH \in \aaa$ is marked. Hence $\HH \prec \GG$. By Lemma~\ref{lem:strong_form_of_G10}(II), $T$ has a $\bullet_\GG$ in some box 
$\z$ West of $\aaa$ and in its row. By Lemma~\ref{lem:strong_form_of_G10}(II), every box label strictly between 
$\z$ and $\aaa$ is also marked. Thus by $T$'s (G.11), $T$ has no $\bullet_\GG$ in any column
East of $\z$ and west of $\aaa$. Furthermore by $T$'s (G.2), $T$ has no $\bullet_\GG$ Northwest of 
$\z$. Summarizing, $T$ is locally:
$\begin{picture}(73,24)
\ytableausetup{boxsize=1.3em}
\put(10,10){$\ytableaushort{\none \none {\HH}, {\bullet_\GG} {\cdots} {\HH^!}}$}
\put(0,0){$\z$}
\put(63,0){$\aaa$}
\put(63,14){$\bbb$}
\end{picture}$.
Hence 
$\bbb$ is not part of any snake, and so $U$'s unique child of $\HH'\in \bbb$ 
is in $\bbb$.
\end{proof}

No child of $\HH\in \aaa$ is North of $\aaa^\uparrow$ and 
no child of $\HH'\in \bbb$ is South of $\underline{\bbb}$. 
Hence if $\aaa$ is at least two rows below $\bbb$, the (G.12) confirmation is vacuous.

\noindent
{\sf Subcase 4.1: ($\aaa$ is exactly one row south of $\bbb$):}
By inspection of the miniswaps, a child of $\HH\in \aaa$
can only appear North of a child of $\HH'\in\bbb$ if $\bullet_{\GG}\in \aaa^\uparrow, \bbb^\leftarrow$. Then by $T$'s (G.2), $\aaa^\uparrow=\bbb^\leftarrow$. Here, 
the $\HH' \in \bbb$ has a child South of $\bbb$ only if $T$ is locally
$\ytableaushort{{\bullet_{\GG}} {\HH'}, {\HH}}=\ytableaushort{{\bullet_{\GG}} {\GG^+}, {\GG}}$
and $\HH'=\GG^+\in \bbb$ is part of a {\sf T3} miniswap, i.e.,
$\begin{picture}(90,20)\put(0,12){$\ytableaushort{{\bullet_{\HH}} {\HH'}, {\HH}}\mapsto 
\ytableaushort{{\HH} {\bullet_{\HH'}}, {\star}}$}
\put(72,7){$\Scale[0.8]{\HH'}$}
\end{picture}$ (here $\star \in \{\HH^!, \bullet_{\HH'} \}$; the uncertainty is irrelevant). By $T$'s (G.3), it follows $\lab_U(\bbb^\rightarrow)$
is not marked and $\lab_U(\bbb^\rightarrow) \neq \HH'$. Thus we confirm (G.12).

\noindent
{\sf Subcase 4.2: ($\aaa$ is in the same row $r$ as $\bbb$):}
Suppose $\HH' \in \bbb$ has a child South of $\bbb$. Then $\bbb$ is part of a {\sf H6}, {\sf H7} or {\sf T3} miniswap. So $U$'s unique
child of $\HH' \in \bbb$ is at $\underline{\bbb}$ and $U$ has $\bullet_\GG \in \bbb$. 
By $T$'s (G.2), $T$ has no $\bullet_\GG \in \aaa^\uparrow$, so no child of $\HH\in \aaa$ is in $\aaa^\uparrow$. Thus $U$'s (G.12) is confirmed.

Otherwise no child of $\HH' \in \bbb$ is South of $\bbb$. 
If a child of $\HH\in \aaa$ is in $\overline{\aaa^\leftarrow}$, we used
{\sf T4} or {\sf T5} at $\aaa$; thus $U$ has $\bullet_{\GG^+} \in \aaa$ and (G.12) is confirmed. Thus it remains to consider the scenario that
a child of $\HH\in \aaa$ is at $\aaa^\uparrow$ in $U$. This scenario is impossible: By inspection of the miniswaps,
$\GG = \HH$ and $T$ has $\bullet_{\GG} \in \aaa^\uparrow$. Let $\lab_T(\bbb^\uparrow) := \EE$ (by $T$'s (G.2), $\bullet_{\GG}\not\in
\bbb^\uparrow$). Locally $T$ is
$\begin{picture}(52,40)
\put(0,22){$\ytableaushort{{\bullet_\HH} \cdots \EE, \HH \cdots {\HH'}}$}
\put(5,-4){$\aaa$}
\put(38,-4){$\bbb$}
\end{picture}$ (where $\HH=\GG$).
By $T$'s (G.4), either $\EE < \HH'$ or $\EE = \HH'$. 
If $\EE = \HH'$, then by $T$'s (G.4), the $\HH' \in \bbb$ is marked; hence $\HH' \prec \HH$, contradicting $T$'s (G.6).
Thus $\EE < \HH'$. Then $\EE < \HH$, so $\EE \in \bbb^\uparrow$ is marked. By Lemma~\ref{lem:strong_form_of_G13}, $T$ has a label 
$\HH''$ or $\circled{\HH''} \in \underline{\bbb^\uparrow}$ with $\family(\HH'')=\family(\HH)$. 
With $T$'s $\HH' \in \bbb$, this violates $T$'s (G.4).

\noindent
{\sf Case 5: (In $T$, $\HH \in \underline{\aaa}$ is southwest of $\HH' \in \bbb$):} By $T$'s (G.4), $\aaa$ is West of $\bbb$.

\noindent
{\sf Subcase 5.1: ($\aaa$ South of $\bbb$):} By the definition of children, every child of 
 $\HH \in \underline{\aaa}$ is southwest of every child of $\HH' \in \bbb$, so there is nothing to confirm here.

\noindent
{\sf Subcase 5.2: ($\aaa$ and $\bbb$ are in the same row):} By the definition of children, every child of
$\HH\in \underline{\aaa}$ is west of every child of $\HH'\in\bbb$. Every child of $\HH' \in \bbb$ is north of $\underline{\bbb}$.
Thus we are only concerned with the cases that a child of $\HH \in \underline{\aaa}$ is north of $\aaa$, so we assume this. Moreover, by inspection of the miniswaps,
we may assume $T$ has $\bullet_\GG \in\aaa$ or $\bullet_\GG \in \aaa^\leftarrow$.

If $\bullet_\GG \in \aaa^\leftarrow$, then $\aaa$ is part of a {\sf T4} or {\sf T5} miniswap. 
Hence the unique child of $\HH\in \underline{\aaa}$ is in $\aaa^\leftarrow$ or $\overline{\aaa^\leftarrow}$; the unique child of $\HH' \in \bbb$ is in $\bbb$;
and $U$ has $\bullet_{\GG^+} \in \aaa$. So (G.12) is confirmed.

Otherwise, $T$ has $\bullet_\GG \in \aaa$ and moreover $\HH = \GG$. Let $\EE := \lab_T(\aaa^\rightarrow)$. Locally between $\aaa$ and $\bbb$, $T$ is:
$\begin{picture}(88,20)
\put(10,2){$\ytableaushort{{\bullet_\HH} \EE \cdots {\HH'}}$}
\put(13,-4){$\HH$}
\put(0,4){$\aaa$}
\put(80,4){$\bbb$}
\end{picture}$. 
If $\EE^! \in \aaa^\rightarrow$, then by Lemma~\ref{lem:same999}, $\circled{\HH} \in \underline{\aaa^\rightarrow}$, and so $\aaa$ swaps by {\sf T6} and there is no tableau $U$.
Thus $\EE \in \aaa^\rightarrow$ is not marked, and by $T$'s (G.3), $\family(\EE) = \family(\HH)$. By $T$'s (G.6), either $\EE = \HH$ or $\EE = \HH^+$. In the former case,
{\sf H4} applies at $\aaa$, producing no $U$. In the latter case, {\sf H6} applies and one confirms (G.12) by inspection.
(If $\bbb = \aaa^\rightarrow$, one checks, as was done in {\sf Case 4}, that 
the configuration of the final sentence of (G.12) does not occur.) 

\noindent
{\sf Case 6: (In $T$, $\HH \in \aaa$ is southwest of $\HH' \in \underline{\bbb}$):}
By $T$'s (G.4), $\aaa$ is SouthWest of $\bbb$. 
We may assume $\aaa$ is one row South of $\bbb$ and $U$ has a child of $\HH\in \aaa$ in $\aaa^\uparrow$ (otherwise the (G.12) check is vacuous). 
The unique child of $\HH'\in \underline{\bbb}$ is in $\underline{\bbb}$.
Hence, $\HH = \GG$ and $T$ has $\bullet_{\GG} \in \aaa^\uparrow$. Consider the miniswap involving $\aaa^\uparrow$. It is {\sf B2}, {\sf B3}, {\sf T1}, {\sf T2}, {\sf T3}, {\sf T4} or {\sf T5}. If it is {\sf B2}, {\sf B3}, {\sf T2} or {\sf T3}, then locally $T$ is
\[\begin{picture}(90,15)
\put(10,3){$\ytableaushort{{\bullet_\HH} \HH \cdots \star, \HH}$}
\put(61,-3){$\HH'$}
\put(0,-8){$\aaa$}
\put(80,6){$\bbb$}
\end{picture} \text{ \ \ or \ \ \  }
\begin{picture}(90,15)
\put(10,3){$\ytableaushort{{\bullet_\HH} {\HH^+} \cdots \star, \HH}$}
\put(61,-3){$\HH'$}
\put(0,-8){$\aaa$}
\put(80,6){$\bbb$}
\end{picture}
.\] 
Since by $T$'s (G.2), the $\bullet_\HH \in \aaa^\uparrow$ is the only $\bullet_\HH$ in its row, this contradicts $T$'s (G.12).
Thus it is {\sf T1}, {\sf T4} or {\sf T5}. Let $\EE := \lab_T(\aaa^{\uparrow\rightarrow})$. By $T$'s (G.3) and (G.4), $\EE < \HH'$. Hence $\EE < \HH$, and so $\EE^! \in 
\aaa^{\uparrow\rightarrow}$. By Lemma~\ref{lem:same999}, $T$ has $\circled{\HH} \in \underline{\aaa^{\uparrow\rightarrow}}$. Therefore the miniswap is 
{\sf T5} and one confirms (G.12) directly.

\noindent
{\sf Case 7: ($T$ has $\HH \in \underline{\aaa}$ southwest of $\HH' \in \underline{\bbb}$):}
By $T$'s (G.4) and (G.5), $\aaa$ is West of $\bbb$. Hence by the definition of children, $U$ has every child of $\HH \in \underline{\aaa}$ west of every child of $\HH' \in \underline{\bbb}$. 
We may assume that $\aaa$ and $\bbb$ are in the same row and that some child of $\HH \in \underline{\aaa}$ is north of $\aaa$, for otherwise the (G.12) confirmation is vacuous. Then $T$ has $\bullet_\GG \in \aaa$ or $\bullet_\GG \in \aaa^\leftarrow$.

\noindent
{\sf Subcase 7.1: ($\bullet_\GG \in \aaa$):}
Here $\GG = \HH$. By $T$'s (G.4), $\lab_T(\bbb) < \HH'$, whence $\lab_T(\bbb) < \HH$. Therefore, $\lab_T(\bbb)$ is marked. By Lemma~\ref{lem:strong_form_of_G10}(II), $\lab_T(\aaa^\rightarrow)$ is also marked, and so by Lemma~\ref{lem:same999}, $\circled{\HH} \in \underline{\aaa^\rightarrow}$. Thus the miniswap involving $\aaa$ is {\sf T6}, and $U$ does not exist. 

\noindent
{\sf Subcase 7.2: ($\bullet_\GG \in \aaa^\leftarrow$):}
The miniswap involving $\aaa^\leftarrow$ is either {\sf T4} or {\sf T5}. $U$ has $\bullet_{\GG^+} \in \aaa$ and the unique child of $\HH \in \underline{\aaa}$ is in  $\aaa^\leftarrow$ or $\overline{\aaa^\leftarrow}$. The (G.12) confirmation is therefore clear.

\smallskip
\noindent
{\bf (G.13):} By inspection of the miniswaps, if $\EE^! \in \x$ or $\underline{\x}$ in $U$,
then $T$ has $\EE$ or $\EE^!$ in the same location. Thus there are two cases:

\noindent
{\sf Case 1: (This $\EE$ is marked in $T$):} By $T$'s (G.13), there is an $\FF$ or $\circled{\FF}$ in $\underline{\x}$ with $N_\EE=N_\FF$
and $\family(\FF)=\family(\EE)+1$. If $\FF\in \underline{\x}$ is nonvirtual, then since it appears in the same place in $U$, $U$'s (G.13) holds. 
Thus suppose $T$ has $\circled{\FF}\in \underline{\x}$. We check the conditions for $\circled{\FF}$ to appear in $\underline{\x}$ in $U$. Let $U^\star$ be $U$ with $\FF$ added in $\underline{\x}$.

((V.1) holds, i.e., $\FF\in \underline{\x}$ is not marked in $U^\star$): By Lemma~\ref{lem:virtualG13label}, $T$ has $\bullet_\GG \in \x^\leftarrow$. Note $\FF\succeq \GG$, since otherwise 
$\circled{\FF}\in \underline{\x}$ would be marked in $T$, a contradiction. If $\FF=\GG$, then {\sf T5} or {\sf T6} would apply
at $\{ \x^\leftarrow,\x\}$, 
contradicting $\EE^!\in \x$ or $\EE^! \in \underline{\x}$ in $U$. 
Thus $\FF\succ \GG$, as desired.

((V.2) holds, i.e. $U$ has an $\FF$ West of $\underline{\x}$:) By $T$'s (V.2), $T$ has an $\FF$ West of $\x$. This remains true for $U$ since no swap removes a nonvirtual genetic label without putting one further west.

((G.1) holds for $U^\star$:) Immediate from $T$'s $\circled{\FF} \in \underline{\x}$ and $U$'s (G.1).

((G.4) holds for $U^\star$:) We have $\EE<\FF<\lab_T(\x^\downarrow)$. Since
$\FF\succ \GG$, $\lab_T(\x^\downarrow)=\lab_U(\x^\downarrow)$. Hence $U^\star$ does not violate (G.4) locally, so by $U$'s (G.4), we are done.

((G.5) holds for $U^\star$:) Since $\EE^!\in \x$ or $\underline{\x}$ in $U$, by inspection of the miniswaps, $T$ and $U$ have the same set of nonvirtual labels on $\underline{\x}$.

((G.6) holds for $U^\star$:) If there is a ${\widetilde \FF}$ West of $\x$'s column in $U$, there is one west of $\x$'s column in $T$. Hence we are done by $T$'s (V.3).

((G.8) holds for $U^\star$:) 
It suffices to show that in the column reading word of $U^\star$ (with $\FF$ placed in $\underline{\x}$), no $\EE$ is read after the $\FF \in \underline{\x}$. By $T$'s (V.3), this is true in $T$.  
By inspection of the miniswaps, $\EE$ does not appear West of $\x$ in $U$. Hence by $U^\star$'s (G.4), we are done.

((G.9) holds for $U^\star$:) Since $U$ has $\EE^!\in \x$ or $\underline{\x}$, $U$ has a $\bullet_{\GG^+}$
northwest of $\x$. So by $U$'s (G.2), there is no a $\bullet_{\GG^+}$ southeast of $\underline{\x}$.

((G.12) holds for $U^\star$:) This follows from the following two claims:
\begin{claim} If $\family(\FF)=\family({\widetilde \FF})$, then $U$ has no 
${\widetilde \FF}$ SouthEast of $\underline{\x}$. 
\end{claim}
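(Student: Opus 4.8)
The plan is to argue by contradiction via the $\GG$-good tableau $T$, transporting a hypothetical violation in $U$ back to $T$ through the ``children'' relation of Lemma~\ref{lemma:child123}. Recall that we are in the situation where $\EE^!\in\x$ or $\EE^!\in\underline{\x}$ in $U$, where $T$ has $\circled{\FF}\in\underline{\x}$ with $N_\EE=N_\FF$ and $\family(\FF)=\family(\EE)+1$, and where Lemma~\ref{lem:virtualG13label} has already given us $\bullet_\GG\in\x^\leftarrow$ in $T$ together with $\family(\FF)=\family(\GG)$. Note also that, as shown in the verification of (V.1) above, this forces the miniswap acting on $\{\x^\leftarrow,\x\}$ to be one of the \emph{tail} miniswaps {\sf T4} or {\sf T5}, so the local picture near $\x$ in $T$ is tightly constrained.

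First I would suppose $U$ has an instance $\widetilde\FF$ of a gene with $\family(\widetilde\FF)=\family(\FF)$ lying in some box $\z'$ (on $\z'$ or on $\underline{\z'}$) that is strictly SouthEast of $\underline{\x}$, i.e.\ strictly below $\x$'s row and strictly east of $\x$'s column. By Lemma~\ref{lemma:child123}, this $\widetilde\FF$ of $U$ is a child of a (possibly virtual) instance of $\widetilde\FF$ in $T$, sitting on a box $\z$ or $\underline{\z}$ with $\z\in\{\z',\z'^\leftarrow,\z'^\uparrow\}$. In the cases $\z=\z'$ and $\z=\z'^\leftarrow$ one checks immediately that $\z$ is still strictly SouthEast of $\x^\leftarrow$; applying Lemma~\ref{lemma:Gsoutheastofbullet} to the $\GG$-good tableau $T$ (with the $\bullet_\GG\in\x^\leftarrow$, using $\family(\widetilde\FF)=\family(\GG)\le\family(\GG)$, and, when $\widetilde\FF$ is virtual, using (V.3) so that the proof of that lemma still goes through) yields a contradiction. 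The remaining case $\z=\z'^\uparrow$ with $\z$ in $\x$'s row — so $\widetilde\FF$ is East of $\x$ in $\x$'s row in $T$ — is the only delicate one, and I would rule it out using the explicit form of the {\sf T4}/{\sf T5} configuration: the box $\x^\rightarrow$ of $T$ lies \emph{outside} the two-box tail, and combining (G.3), (G.9), the relation $\EE\prec\GG$ (as $\EE^!$ is southeast of $\bullet_\GG$), and Lemma~\ref{lem:strong_form_of_G13} applied at $\x$, one finds $\lab_T(\x^\rightarrow)$ and the labels east of it cannot have family $\family(\GG)$, contradicting $\family(\widetilde\FF)=\family(\GG)$.

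The main obstacle is the bookkeeping in this last case: one must verify carefully that no label of family $\family(\GG)$ can appear weakly east of $\x$ in $\x$'s row of $T$, which requires tracking how the strong forms of (G.10) and (G.13) (Lemmas~\ref{lem:strong_form_of_G10} and~\ref{lem:strong_form_of_G13}) constrain $\underline{\x}$ and $\x^\rightarrow$ given that $\circled{\FF}\in\underline{\x}$ with $\family(\FF)=\family(\GG)$, and ruling out the possibility that a family-$\family(\GG)$ label coexists there. Everything else is a routine comparison of box positions under the SouthEast partial order, and the invocation of Lemma~\ref{lemma:Gsoutheastofbullet} for $T$ (which is legitimate since $T$ is fully $\GG$-good, so we are not circularly relying on (G.13) for $U$).
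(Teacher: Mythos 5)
Your overall plan—pull the hypothetical $\widetilde{\FF}$ of $U$ back to a parent in $T$ via Lemma~\ref{lemma:child123} and contradict $T$'s goodness using the $\bullet_\GG\in\x^\leftarrow$—is close in spirit to the paper's argument, but your execution contains a concrete error: you use the parent/child relation backwards. Children of a label in a box $\uuu$ lie in ${\tt Neigh}(\uuu)=\{\uuu,\uuu^\leftarrow,\uuu^\uparrow,\underline{\uuu},\overline{\uuu^\leftarrow}\}$, so children sit weakly northwest of their parents (the only ``southward'' step is from a box to its own lower edge). Hence if the offending $\widetilde{\FF}$ of $U$ sits in a box $\w$, its $T$-parent sits at $\w$, $\w^\rightarrow$ or $\w^\downarrow$ (or on the corresponding edges), \emph{not} at $\w^\leftarrow$ or $\w^\uparrow$ as you assert. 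So the parent is at least as far SouthEast as the child, every genuine case is the ``easy'' one you dispatch with Lemma~\ref{lemma:Gsoutheastofbullet}, and your ``delicate case'' (a parent East of $\x$ in $\x$'s row) cannot occur at all. Worse, the argument you sketch for that spurious case is unsupported: it assumes a {\sf T4}/{\sf T5} miniswap acts at $\{\x^\leftarrow,\x\}$, whereas the (V.1) verification shows exactly the opposite (those miniswaps would apply only if $\FF=\GG$, which is excluded there), and it asserts that no label of family $\family(\GG)$ can appear east of $\x$ in $\x$'s row, which does not follow from (G.3), (G.9) and Lemma~\ref{lem:strong_form_of_G13}; the paper's claim only excludes $\widetilde{\FF}$'s strictly SouthEast of $\underline{\x}$ and never needs such a statement.

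A second gap is the virtual-parent case. Lemma~\ref{lemma:Gsoutheastofbullet} is stated for actual labels; in its $\family(\FF)=\family(\GG)$ branch the final contradiction plays the (possibly virtual) label on $\underline{\bbb}$ produced by Lemma~\ref{lem:strong_form_of_G13} against the offending label via (G.4). If your parent $\circled{\widetilde{\FF}}$ is virtual, both labels in that step may be virtual, and (V.1)--(V.3) only test each virtual label against the actual tableau, so ``using (V.3)'' does not automatically salvage the proof. The paper sidesteps this: by inspection of the miniswaps, a virtual $\circled{\widetilde{\FF}}\in\underline{\y}$ has children only when $\bullet_\GG\in\y$ or $\y^\leftarrow$, and all such children lie weakly northwest of $\underline{\y}$; since the marked $\EE^!$ forces (via (G.11)) a $\bullet_\GG$ northWest of $\x$, a bullet at such a $\y$ SouthEast of $\x$ contradicts (G.2). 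With the child relation corrected and the virtual case handled either this way or by the paper's route through the (G.12) clause of (V.3), your strategy would go through; as written, it does not.
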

\begin{proof}
By $T$'s (G.11) and $T$'s $\EE^!$ in $\x$'s column, $T$ has a $\bullet_\GG$ northWest of $\x$. Hence by $T$'s (G.2), $T$ has no $\bullet_\GG$ SouthEast of $\x$. Thus by the (G.12) condition of $T$'s (V.3), $T$ has no ${\widetilde \FF}$ SouthEast of $\circled{\FF} \in \underline{\x}$.
A child of ${\widetilde \FF}$ will only be South of its parent if $T$ has ${\widetilde \FF}\in \y$
and $U$ has ${\widetilde \FF}\in \underline{\y}$. Thus $U$ has no ${\widetilde \FF}$ SouthEast of $\underline{\x}$ that is a child of a nonvirtual ${\widetilde \FF}$ in $T$.

The remaining concern is ${\circled {\widetilde \FF}}\in {\underline\y}$ in 
$T$ with a child SouthEast of $\x$ in $U$. By inspection of the miniswaps, 
this ${\circled {\widetilde \FF}}$ cannot have a child South or East of $\underline{\y}$. Therefore $\y$ must be SouthEast of $\x$ in $T$. Again by inspection of the miniswaps, $T$ has a $\bullet_\GG\in \y^\leftarrow$ or a $\bullet_\GG\in \y$. This is impossible by $T$'s (G.2), recalling $T$'s $\bullet_\GG$ northWest of $\x$. 
\end{proof}

\begin{claim}
If $U$ has an ${\widetilde \FF}$ NorthWest of 
$\underline{\x}$ with $\family(\FF)=\family({\widetilde \FF})$, then this ${\widetilde \FF}$ and the
$\FF\in \underline{\x}$ satisfy $U^*$'s (G.12). 
\end{claim}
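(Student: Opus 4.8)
The claim concerns a tableau $U^\star$ built from a $\GG^+$-good $U$ by inserting a non-virtual $\FF$ on $\underline{\x}$, where the original $U$ carries $\EE^!\in\x$ or $\underline{\x}$, and $T$ (the $\GG$-good tableau with $U\in\swap_\GG(T)$) has $\circled{\FF}\in\underline{\x}$. The plan is to verify the appropriate instance of (G.12): given an $\widetilde{\FF}$ in $U$ that is NorthWest of $\underline{\x}$ with $\family(\widetilde\FF)=\family(\FF)$, one must produce a $\bullet_{\GG^+}$ in the intervening row (the last sentence of (G.12) about a marked neighbor or same-gene neighbor in $\x^\rightarrow$ is the secondary concern). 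I would first reduce to the parent tableau $T$: by inspection of the miniswaps (Lemma~\ref{lemma:veryusefulfact} and Lemma~\ref{lemma:child123}), every non-virtual $\widetilde\FF$ in $U$ is a child of a (possibly virtual) $\widetilde\FF$ in $T$, and a child moves at most one step relative to its parent, so an $\widetilde\FF$ NorthWest of $\underline{\x}$ in $U$ forces a $\widetilde\FF$ or $\circled{\widetilde\FF}$ weakly NorthWest of $\underline{\x}$ in $T$ (or at least close enough that the configuration is controlled).

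Next I would exploit the structural facts already in hand near $\x$: since $\EE^!\in\x$ or $\underline{\x}$ with $\EE\prec\GG$, Lemma~\ref{lem:virtualG13label} gives $\bullet_\GG\in\x^\leftarrow$ in $T$ and $\family(\FF)=\family(\GG)$; combined with Lemma~\ref{lem:strong_form_of_G10}(II), every box label of row $r$ between $\x^\leftarrow$ and any genetic label further west is marked, and by (G.2) there is no other $\bullet_\GG$ in row $r$ or in the column of $\x^\leftarrow$. Using $T$'s own (G.12) applied to the pair $(\widetilde\FF,\circled{\FF}\in\underline{\x})$ — valid because $\circled{\FF}$ satisfies (G.12) as part of (V.3) — I get a $\bullet_\GG$ in $T$ strictly between the column of $\widetilde\FF$ and the column of $\x$, lying in the appropriate row (the row of $\x$, or the row shared by $\widetilde\FF$ if they are coplanar). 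Then I track what happens to that $\bullet_\GG$ under $\swap_\GG$: by Lemma~\ref{lemma:child123} it has a child $\bullet_{\GG^+}$ in the same box, one box east, or one box south. I would rule out the "one box south" escape by invoking (G.2) in $U$ together with the $\bullet_{\GG^+}$ we are trying to place (and the $\bullet_\GG\in\x^\leftarrow$ which becomes a $\bullet_{\GG^+}$-ancestor), exactly as in the companion claim just proved above; the "same box or one east" cases keep the $\bullet_{\GG^+}$ between $\widetilde\FF$ and $\underline{\x}$, which is what (G.12) demands. For the final-sentence clause of (G.12), I would note that if $\widetilde\FF$ and $\FF$ were to sit with a marked label or a same-gene label in $(\underline{\x})^\rightarrow$'s box, then $T$ would already violate Lemma~\ref{lem:how_to_check_ballotness} or (G.12) via $\circled{\FF}$, because $N_{\widetilde\FF}\le N_\FF=N_\EE$ and ballotness (G.8) of $U$ pins down the relative position; the argument mirrors the $\EE<\FF<\lab(\x^\downarrow)$ analysis used for (G.4) in {\sf Case 1} of the (G.13) verification.

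The main obstacle is the bookkeeping of "children move at most one step" across \emph{all} the miniswap types simultaneously: I must be certain that no miniswap transports $\widetilde\FF$ or the relevant $\bullet_\GG$ in a way that destroys the betweenness relation, and that the virtual label $\circled{\FF}\in\underline{\x}$ in $T$ really does satisfy $T$'s (G.12) in the precise sense needed (this is (V.3), but one must check the hypotheses of (V.3) apply to the instance at hand). A secondary subtlety is the case $\widetilde\FF=\FF$ itself, i.e.\ $U$ already contains a non-virtual $\FF$ NorthWest of $\underline{\x}$ of the same gene: here (G.12) for $U^\star$ would require a $\bullet_{\GG^+}$ between two instances of the \emph{same} gene $\FF$, and one must check this is forced — again by pushing back to $T$, where $\circled{\FF}\in\underline{\x}$ together with a westward $\FF$ and (V.3)'s (G.12) supply it, unless in fact $\family(\widetilde\FF)=\family(\FF)$ but $\widetilde\FF\neq\FF$, handled by the ballotness argument above. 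Once these two points are pinned down, the conclusion that $U^\star$ (hence the placement $\circled{\FF}\in\underline{\x}$ in $U$) satisfies (G.12) — and therefore that $U$ is genuinely $\GG^+$-good with its stated virtual labels — is immediate, completing this last case of the (G.13) verification and hence of Proposition~\ref{prop:goodness_preservation}.
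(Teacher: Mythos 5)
There is a genuine gap at the heart of your argument: you never actually produce the $\bullet_{\GG^+}$ in $U$ that (G.12) requires. The paper does not "track forward" the bullet supplied by $T$'s (G.12); it proves outright that $U$ has $\bullet_{\GG^+}\in\x^\leftarrow$. That takes two steps you omit: first $\GG\prec\FF$ (if $\GG=\FF$, then $\{\x^\leftarrow,\x\}$ would be a {\sf T5}/{\sf T6} tail, and those miniswaps do not leave $\EE^!$ in place, contradicting the standing hypothesis that $\EE^!$ survives in $U$); second, that no $\GG$ or $\circled{\GG}$ lies in $\x^\leftarrow$'s column, so the snake at $\x^\leftarrow$ is a singleton undergoing the trivial miniswap {\sf H3}, whence $\bullet_{\GG^+}\in\x^\leftarrow$ in $U$. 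Your substitute — get a $\bullet_\GG$ between $\widetilde\FF$ and $\x$ from the (G.12) clause of (V.3) (by (G.2) this is exactly the one in $\x^\leftarrow$) and then follow "its child" — is unsound: Lemma~\ref{lemma:child123} only says every $\bullet_{\GG^+}$ of $U$ is a child of some $\bullet_\GG$ of $T$, not the converse, and many miniswaps ({\sf H1}, {\sf H2}, {\sf H6}, {\sf H7}, {\sf T3}, first outputs) turn a $\bullet_\GG$ into a genetic label with no $\bullet_{\GG^+}$ created anywhere, so without the {\sf H3} analysis the bullet you need may simply not exist in $U$. Your exclusion of the "one box south" case is also circular, since it invokes the very $\bullet_{\GG^+}$ whose existence is in question.

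A second, smaller gap: (G.12) for the pair $(\widetilde\FF,\ \FF\in\underline{\x})$ first demands that $\widetilde\FF$ sit in a box of $\x$'s row or on the upper edge of such a box; a $\widetilde\FF$ strictly above that level would violate (G.12) regardless of bullets. The paper secures this by combining the (G.12) clause of (V.3) for $\circled{\FF}\in\underline{\x}$ in $T$ (no $\widetilde\FF$ NorthWest of $\overline{\x}$ in $T$) with the fact that (G.2) and $\bullet_\GG\in\x^\leftarrow$ forbid any $\bullet_\GG$ northwest of $\x^\leftarrow$, so none of the remaining candidates (in $\y$, $\underline{\y}$, $\overline{\y}$, $\y^\downarrow$ with $\y$ in $\x$'s row) can move North under $\swap_\GG$; your "children move at most one step" remark gestures at this but never rules out the northward moves, which is exactly where the bullet positions matter. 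Finally, the last-sentence clause of (G.12) is handled in the paper by noting the forbidden configuration forces $\FF=\GG$, contradicting $\GG\prec\FF$ — an inequality your proposal never establishes; the ballotness detour you sketch for that clause is unnecessary and, as stated, does not clearly close.
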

\begin{proof}
By assumption, $\EE^!$ remains in $\x$ or $\underline{\x}$ in $U$. 
By Lemma~\ref{lem:virtualG13label}, $T$ has $\bullet_\GG\in \x^\leftarrow$ and $\family(\FF)=\family(\GG)$. By $T$'s (V.1), $\GG\preceq \FF$. If $\GG=\FF$, then $\{\x^\leftarrow, \x\}$ is a {\tt tail} of type {\sf T5} or {\sf T6}; however, these miniswaps do not leave
$\EE^!$ in place, a contradiction. Hence $\GG\prec \FF$. Therefore by $T$'s (G.5) and Lemma~\ref{lem:same999}, $T$ has no $\GG$ or $\circled{\GG}$ in $\x^\leftarrow$'s column.
Hence {\sf H3} applies at $\x^\leftarrow$, and $U$ has $\bullet_{\GG^+}\in \x^\leftarrow$.

By $T$'s (G.12), there is no ${\widetilde \FF}$ NorthWest of ${\overline \x}$. Since no miniswap moves a label more than one box north, it suffices to consider a ${\widetilde \FF}$ in $T$ that is in either $\y$, $\underline\y$, ${\overline \y}$ or $\y^\downarrow$, where $\y$ is in $\x$'s row. By $T$'s (G.2) and $\bullet_\GG\in \x^\leftarrow$, $T$ has no $\bullet_\GG$ strictly northwest of $\x^\leftarrow$. Hence no such ${\widetilde \FF}$ in these four positions can move North. Therefore any $\widetilde{\FF}$ NorthWest of $\FF\in \underline{\x}$ in $U$ satisfies (G.12), in view of the $\bullet_{\GG^+}\in \x^\leftarrow$. Note that the forbidden configuration from the final
sentence of (G.12) cannot occur since $\FF\neq \GG$, whereas the forbidden configuration forces
$\FF=\GG$.  
\end{proof}

\noindent
{\sf Case 2: (This $\EE$ is unmarked in $T$):} By Lemma~\ref{lemma:child123}, since there is a $\bullet_{\GG^+}$ northwest of this $\EE$ in $U$, there is a $\bullet_\GG$ northwest of $\x$ in $T$. Moreover, by $U$'s (G.11), $\x^\leftarrow$ is a box of $U$, and hence of $T$. Since this $\EE$ is unmarked in $T$, $\EE \succeq \GG$. Since this $\EE$ is marked in $U$, $\EE \prec \GG^+$. Thus, $\EE = \GG$. Let $\HH$ be the gene (if it exists) with $\family(\HH) = \family(\GG) + 1$ and $N_\HH = N_\GG$. If $T$ has $\HH \in \underline{\x}$, then since it appears in the same place in $U$, $U$'s (G.13) holds. 
Thus assume $\HH \notin \underline{\x}$ in $T$.

\noindent
{\sf Subcase 2.1: (In $T$, $\bullet_\GG \in \x^\leftarrow$)}:
The miniswap applied at $\x^\leftarrow$ is {\sf H5.2} or {\sf T4.2}.
We are done by the following lemma:

\begin{lemma}
\label{lemma:virtuallabelsexist}
If $T$ is a $\GG$-good tableau where {\sf H5.2} or {\sf T4.2} applies and $U \in \swap_\GG(T)$,
then all prescribed $\circled{\HH}$'s from the outputs of {\sf H5.2} and {\sf T4.2} are valid 
virtual labels in the sense of {\normalfont (V.1)--(V.3)}.
\end{lemma}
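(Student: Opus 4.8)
The plan is to verify the three virtual-label axioms (V.1)--(V.3) directly, one at a time, for each of the two $\circled{\HH}$'s prescribed in the second output of {\sf H5.2} and the second output of {\sf T4.2}. In both miniswaps, the relevant local picture in $T$ is a $\head$ or $\tail$ consisting of $\{\x, \x^\rightarrow\}$ with $\bullet_\GG \in \x$, where $\x^\rightarrow$ carries a (virtual) $\circled{\HH}$ on its lower edge satisfying $\family(\HH) = \family(\GG)+1$ and $N_\HH = N_\GG$; after the miniswap, $U$ has $\GG$ (or $\GG^!$) in $\x$, $\bullet_{\GG^+}$ in $\x^\rightarrow$, and we must place $\circled{\HH}$ on $\underline{\x^\rightarrow}$ in $U$. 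Since $\circled{\HH}$ was already a valid virtual label in $T$ (by hypothesis $T$ is $\GG$-good), many of the conditions will transfer with little work; the content of the lemma is that the \emph{new} neighboring labels in $U$ (namely the $\GG$ or $\GG^!$ now in $\x$, and the $\bullet_{\GG^+}$ now in $\x^\rightarrow$) do not spoil validity.

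First I would handle (V.1): I must show that if $U^\star$ denotes $U$ with a nonvirtual $\HH$ added to $\underline{\x^\rightarrow}$, then this $\HH$ is not marked, i.e.\ there is no $\bullet_{\GG^+}$ northwest of $\underline{\x^\rightarrow}$ in $U$. The key point is that $\bullet_{\GG^+} \in \x^\rightarrow$ in $U$ (it is \emph{in} the box, not northwest of the edge), and by $U$'s already-established (G.2) there is no other $\bullet_{\GG^+}$ northwest of $\x^\rightarrow$; one also rules out a $\bullet_{\GG^+}$ directly west on $\underline{\x^\rightarrow}$'s row using $U$'s (G.2) again. Next, (V.2): $\circled{\HH} \in \underline{\x^\rightarrow}$ in $T$ already means there is an $\HH$ strictly west of $\x^\rightarrow$ in $T$; since no miniswap deletes a nonvirtual genetic label without reinserting one at least as far west, the westmost $\HH$ in $U$ is still strictly west of $\x^\rightarrow$, so $\HH$ on $\underline{\x^\rightarrow}$ would indeed violate (G.7) in $U^\star$. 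For (V.3), I would go through the listed conditions (G.1), (G.4), (G.5), (G.6), (G.8), (G.9), (G.12) for $U^\star$: (G.1) and (G.9) follow from the corresponding conditions for $U$ together with $\family(\HH)$ being one more than $\family(\GG)$ and the location of $\bullet_{\GG^+}$; (G.5) follows because $T$ and $U$ have the same nonvirtual labels on $\underline{\x^\rightarrow}$ (inspection of the miniswaps); (G.4) is the main local check --- one compares $\HH$ on $\underline{\x^\rightarrow}$ with $\lab(\x^\rightarrow) = \bullet_{\GG^+}$ above (evaluated as $\GG^+ \prec \HH$, using $\family(\GG^+) \le \family(\GG) < \family(\HH)$) and with $\lab(\x^{\rightarrow\downarrow})$ below (which is unchanged from $T$ since labels of family $> \family(\GG)$ are untouched by $\swap_\GG$), invoking $T$'s (G.4) and $T$'s (V.3); (G.6) and (G.12) follow by the same ``west-preservation'' and ``children move at most one box'' observations used throughout Appendix~\ref{sec:forward_goodness_proof} (Lemma~\ref{lemma:child123}), together with $T$'s (V.3); and (G.8) follows by noting $\EE = \GG$ is not read after the $\HH \in \underline{\x^\rightarrow}$ in $T$ by $T$'s (V.3), $\EE$ does not appear strictly west of $\x^\rightarrow$ in $U$, and then using $U^\star$'s (G.4).

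The main obstacle I anticipate is the (G.4) and (G.12) verification for $U^\star$ in the presence of the newly created marked labels and relocated box labels: one has to be careful that the $\GG$ or $\GG^!$ now sitting in $\x$ (which is NorthWest of $\underline{\x^\rightarrow}$) does not create a same-family collision with $\HH$, and that any label of family $\family(\HH)$ lying further southeast in $U$ was already constrained in $T$. Fortunately $\GG$ and $\HH$ have different families, so the (G.4) comparison in the column of $\x^\rightarrow$ is the only genuinely local issue, and it is settled by the chain $\GG^+ \prec \HH$ together with the fact that $\swap_\GG$ does not move labels of family exceeding $\family(\GG)$. I expect the whole lemma to reduce, after these reductions, to citing $T$'s instance of (V.3) plus the already-proved conditions for $U$, exactly in the style of the surrounding appendix arguments.
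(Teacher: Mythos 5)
There is a genuine gap, and it starts with the identification of the local picture. The prescribed $\circled{\HH}$'s of {\sf H5.2} and {\sf T4.2} sit in the \emph{first} outputs, where the bullet stays put: $U$ has $\bullet_{\GG^+}\in\x$ and $\GG^{!}\in\x^\rightarrow$ (resp.\ $\FF^{!}\in\x^\rightarrow$ with $\GG^{!}\in\underline{\x^\rightarrow}$), and the $\circled{\HH}$ is on $\underline{\x^\rightarrow}$. You instead verify the configuration ``$\GG$ in $\x$, $\bullet_{\GG^+}$ in $\x^\rightarrow$, $\circled{\HH}$ on $\underline{\x^\rightarrow}$,'' which is the \emph{second} output --- and that output carries no prescribed virtual label; indeed it cannot, since a $\circled{\HH}$ with $N_\HH=N_\GG$ East of the relocated $\GG$ would contradict Lemma~\ref{lem:how_to_check_ballotness} (equivalently, (G.8) of (V.3)). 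So the object you analyze is not the one the lemma is about, and in your own picture the thing you try to establish is false.

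This misreading propagates into the substance of the checks. For (V.1) you argue there is ``no $\bullet_{\GG^+}$ northwest of $\underline{\x^\rightarrow}$''; in the true configuration the edge \emph{is} (weakly) southeast of the $\bullet_{\GG^+}$ sitting in the adjacent box to its west, so that route is closed --- the only (and very short) argument is order-theoretic: $\GG<\HH$ forces $\GG^+\preceq\HH$, so the added $\HH$ is unmarked. (Note also that ``unmarked'' is not ``no bullet northwest''; it is ``not both southeast of a bullet and $\prec$ its gene.'') For (G.4) you compare the would-be $\HH$ with ``$\lab(\x^\rightarrow)=\bullet_{\GG^+}$ evaluated as $\GG^+$,'' but bullets are ignored in (G.4), and in the correct picture the box above the edge holds a marked $\GG^{!}$ or $\FF^{!}$; the genuine issue is a same-family $\widetilde{\HH}$ elsewhere in the column, which the paper dispatches with Lemma~\ref{lemma:child123} plus the (G.4) clause of $T$'s (V.3). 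For (G.8) you assert that $\GG$ does not appear strictly west of $\x^\rightarrow$ in $U$: in your configuration this is false (the swap just moved $\GG$ there), and in the correct configuration it is exactly the nontrivial point --- one must argue that a $\GG$ West of the edge's column in $T$ would already contradict the validity of $\circled{\HH}$ in $T$. Finally, for (G.12) the needed intermediate bullet is the $\bullet_{\GG^+}$ remaining in the box \emph{west} of the virtual label, not a bullet in the virtual label's own box. Your overall strategy (check (V.1)--(V.3) directly, leaning on $T$'s (V.3) and $U$'s already-proved goodness) is the paper's, but with the configuration corrected essentially every individual verification has to be redone along the lines above.
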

\begin{proof}
Consider such a miniswap. We may assume that $U$ contains an output with a prescribed $\circled{\HH}$, say in $\underline{\x}$.
Let $U^\star$ be $U$ with $\HH$ added in $\underline{\x}$.

((V.1) holds, i.e., $\HH\in \underline{\x}$ is not marked in $U^\star$):
By assumption, $\GG<\HH$ and so $\GG^+\preceq \HH$.

((V.2) holds, i.e., some $\HH$ appears in $U$ West of $\x$): Since $\circled{\HH} \in \underline{\x}$ in $T$, by $T$'s (V.2), some $\HH$ appears in $T$ West of $\x$. By inspection of the miniswaps, this $\HH$ has a child, which is West of $\x$ in $U$.

((G.1) holds in $U^\star$): Immediate from the (G.1) condition of $T$'s (V.3).

((G.4) holds in $U^\star$): By $U$'s (G.4), 
the only concern is an $\widetilde{\HH}$ in $\x$'s column of $U$ with $\family({\widetilde \HH})=\family(\HH)$. By Lemma~\ref{lemma:child123}, this $\widetilde{\HH}$ is a child of an $\widetilde{\HH}$ in $T$. Since $\GG<\widetilde{\HH}$, this $\widetilde{\HH}$ in $T$ is in the same location as its unique child in $U$, contradicting the (G.4) condition of $T$'s (V.3).

((G.5) holds in $U^\star$:) Neither miniswap in question affects the nonvirtual
labels on $\underline{\x}$, so we are done by the (G.5) condition of $T$'s (V.3).

((G.6) holds in $U^\star$:) All labels of this family
appear in the same places in $T$ and $U$, so we are done by the (G.6) condition of $T$'s (V.3).

((G.8) holds in $U^\star$:) First suppose there there is a nonballot 
genotype $G_{U^\star}$ of $U^\star$, with the $\HH\in \underline{\x}$ taken. Since no labels of $\family(\HH)+k$ (for
$k\geq 0$) are moved by $\swap_\GG$, by $T$'s (G.8) there is no violation among those families. By $U$'s (G.8), it suffices to consider the possibility that in ${\tt word}(G_{U^\star})$ the selected $\GG$ appears after the $\HH$. However, if such a $G_{U^\star}$ exists, then by $T$'s (G.4) and inspection of the miniswaps,
it follows there is a $\GG$ West of $\x$ in $T$; this contradicts $\circled{\HH}\in \underline{\x}$ in $T$.

((G.9) holds in $U^\star$:) By $U$'s (G.2), $U$ has no $\bullet_{\GG^+}$ southeast of $\x^\leftarrow$.

((G.12) holds in $U^\star$):
By $T$'s (G.2) and (G.12), $T$ has no $\widetilde{\HH}$ SouthEast of $\underline{\x}$
with $\family(\widetilde{\HH})=\family(\HH)$. The same is true in $U$, as $\swap_\GG$ does not affect $\widetilde{\HH}$.
It remains to consider such ${\widetilde\HH}$ in $U$ that are NorthWest of $\underline{\x}$. Since $\swap_\GG$ does not affect $\widetilde{\HH}$, by $T$'s (G.12), no ${\widetilde\HH}$ is North of $\overline{\x}$ in either $T$ or $U$. Thus assume ${\widetilde\HH}$ is West of $\x$ and either in its row or on a top edge of its row. By assumption, $T$'s $\bullet_\GG\in \x^\leftarrow$ becomes the desired $\bullet_{\GG^+}\in \x^\leftarrow$ in $U$.
\end{proof}

\noindent
{\sf Subcase 2.2: (In $T$, $\DD^! \in \x^\leftarrow$)}:
If $\GG \in \underline{\x}$, this $\GG$ is not in a snake of $T$. Otherwise $\GG \in \x$ and {\sf H9} applies at $\x$.
By Lemma~\ref{lem:strong_form_of_G13}, $T$ has some $\widetilde{\GG} \in \underline{\x^\leftarrow}$ or $\circled{\widetilde{\GG}} \in \underline{\x^\leftarrow}$ with $\family(\widetilde{\GG}) = \family(\GG)$ and $N_{\widetilde{\GG}} = N_\DD$.
By Lemma~\ref{lem:how_to_check_ballotness}, $\widetilde{\GG} \neq \GG$. Therefore by $T$'s (G.6), $\widetilde{\GG} = \GG^-$.
By $T$'s (G.11), $T$ has a $\bullet_\GG$ northWest of $\x^\leftarrow$. Hence this $\widetilde{\GG} \in \underline{\x^\leftarrow}$ is nonvirtual and marked. Thus by $T$'s (G.13), there is some $\widetilde{\HH} \in \underline{\x^\leftarrow}$ or $\circled{\widetilde{\HH}} \in \underline{\x^\leftarrow}$ with $\family(\widetilde{\HH}) = \family(\widetilde{\GG}) + 1$ and $N_{\widetilde{\HH}} = N_{\widetilde{\GG}}$.  We claim there is an $\HH$ South of $\x$ and in its same column, with $\family(\HH)=\family(\GG)+1 = \family(\widetilde{\HH})$ and $N_\HH = N_\GG$. Certainly by $N_{\widetilde{\HH}}= N_{\widetilde{\GG}} = N_\DD$, there is such an $\HH$ somewhere in $T$. By Lemma~\ref{lem:how_to_check_ballotness}, it is located as described. Now if $\HH \in \underline{\x}$, we are done. Otherwise by $T$'s (G.4), $\HH \in \x^\downarrow$. But now $\lab_T(\x^{\leftarrow \downarrow}) \prec \HH$ by $T$'s (G.3) and $\lab_T(\x^{\leftarrow\downarrow}) > \widetilde{\HH}$ by $T$'s (G.4), a contradiction.
\qed

\section{Proof of Proposition~\ref{prop:goodness_preservation_reverse}}
\label{sec:backwards_goodness_proof}
We check that (G.1)--(G.13) are preserved. Let $T \in \revswap_{\GG^+}(U)$, where $U$ is $\GG^+$-good.
Below, the proof of property (G.$j$) only possibly depends
on earlier properties (G.$i$). We also show that the virtual labels prescribed by reverse miniswaps {\sf L2.3}, {\sf L4.3} and {\sf L4.5} are valid virtual labels in the sense of (V.1)--(V.3). This appears as Lemma~\ref{lem:virtual_label_validity_B} in the section `{\sf Consistency of the prescribed virtual labels},' located between the arguments for (G.12) and (G.13).

\smallskip
\noindent
{\bf (G.1):} 
Suppose $T$ has $\QQ \in \x$ or $\QQ \in \underline{\x}$ that is too high. By $U$'s (G.1), the label $\QQ$ does not appear in the same place in $U$. Hence $\QQ$ is placed in $\x$ or $\underline{\x}$ in $T$ by some reverse miniswap. We consider which reverse miniswap this might be. By $U$'s (G.1), $\QQ$ does not appear anywhere in $U$ north of $\underline{\x}$. Hence by visual inspection, the only miniswap to consider is {\sf L1.1}. However to apply {\sf L1.1}, we have by assumption $\QQ \in \x^\uparrow$ in $U$. Since this is impossible by $U$'s (G.1), $T$ cannot have any label too high.

\smallskip
\noindent
{\bf (G.2):} 
By Lemma~\ref{lem:ladders.sw.ne}, ladders lie in distinct rows and columns; hence $T$ has no $\bullet_\GG$ NorthWest of another.
Since $\revswap$ is defined by its action on rows, $T$ has at most one $\bullet_\GG$ in any row.

Suppose $T$ has $\bullet_\GG \in\x$ North of $\bullet_\GG \in y$ and in the same column. These boxes are in ladders of $U$.
By Lemma~\ref{lem:ladders.sw.ne}, $\x$ and $\y$ are in the same ladder of $U$.
By Lemma~\ref{lemma:laddersareshort}, $\x=\y^\uparrow$ and the two boxes are  $\ytableausetup{boxsize=1.3em}\ytableaushort{\GG, {\bullet_{\GG^+}}}$ in $U$. Hence, in order to have $\bullet_\GG \in \y$ in $T$, we must
apply {\sf L1.2} or {\sf L3} to $\y$. By definition, $U$'s $\GG\in \x$ means that 
{\sf L1.2} does not apply. {\sf L3} requires $\GG\in \overline{\y}$, contradicting $U$'s (G.4).

\smallskip
\noindent
{\bf (G.3):}
It is enough to confirm (G.3) for an arbitrary fixed row $\mathcal{R}$ of $T$. If $\mathcal{R}$ does not intersect any ladder of $U$, then $T$'s (G.3) is confirmed in $\mathcal{R}$ by $U$'s (G.3).

Otherwise by Lemma~\ref{lem:ladders.sw.ne}, $\mathcal{R}$ intersects a single ladder $L$. Let $r := \mathcal{R} \cap L$. Let $\x$ be the westmost box of $r$.

If $r$ is {\sf L1}, confirmation is trivial unless $\GG \in \x^\uparrow$.
In that case, locally at $\x$, $\revswap_{\GG^+}$ results in 
$\ytableausetup{boxsize=1.4em}\ytableaushort{\none \GG \none, \FF {\bullet_{\GG^+}} \HH} \mapsto \ytableaushort{\none \star \none, \FF \GG \HH}$ (here $\star = \bullet_{\GG}$, but this is not important to us). (If either $\FF$ or $\HH$ does not exist, the argument is simpler.) We need $\FF \prec \GG \prec \HH$. By $U$'s (G.9), $\FF \preceq \GG$. Since 
$r$ is {\sf L1}, $\FF \neq \GG$, so $\FF \prec \GG$. If $\QQ \in \x^{\uparrow\rightarrow}$ in $U$, by $U$'s (G.3) and (G.4) then $\GG \prec \QQ \preceq \HH$. Otherwise $\bullet_{\GG^+} \in  \x^{\uparrow\rightarrow}$ in $U$. By $U$'s 
(G.11), $\HH \in \x^\rightarrow$ is not marked in $U$. Thus $\GG^+ \preceq \HH$ and $\GG \prec \HH$, as desired.

Suppose $r$ is {\sf L2}. Then $\lab_U(\x) = \GG$, while $\lab_T(\x)\in \{\GG,\bullet_\GG \}$. Hence, $T$'s (G.3) is confirmed from $U$'s (G.3).

If $r$ is {\sf L3}, $r$ does not change and we are done.

Suppose $r$ is {\sf L4.1}, {\sf L4.2} or {\sf L4.3}. Locally at $r$, the reverse miniswap is
\[
\begin{picture}(190,15)
\put(10,0){$\ytableaushort{\FF \GG \bullet \HH} \mapsto \ytableaushort{\FF \bullet {\GG^+} \HH}$}
\put(47,-5){$\GG^+$}
\end{picture}
\text{(the labels of $\underline{\x}$ in $T$ are not displayed)}.\]
We need to show $\FF \prec \GG^+ \prec \HH$. (If $\FF$ or $\HH$ do not exist, the argument is simpler.) By $U$'s (G.3), $\FF \prec \GG$, so $\FF \prec \GG^+$. By $U$'s (G.12) (final sentence), $\HH\neq \GG^+$ and $\HH\in \x^{\rightarrow\rightarrow}$ is not marked in $U$. Thus $\GG^+ \prec \HH$. 

Lastly, suppose $r$ is {\sf L4.4} or {\sf L4.5}. Locally at $r$,
$\ytableaushort{\EE \GG \bullet \HH} \mapsto \ytableaushort{\EE \bullet \FF \HH}$, where $\FF$ is as in the definitions of {\sf L4.4} and {\sf L4.5}.

First we check $T$ has no (G.3) violation between $\EE$ and $\FF$. If $\EE < \FF$, this is obvious. If $\EE > \FF$, $\EE$ and $\FF$ form the exceptional configuration of (G.3) in $T$. Suppose $\family(\EE) = \family(\FF)$. By $U$'s (G.3) and (G.7), $\EE \neq \FF$. Hence by $U$'s (G.6), $\EE \prec \FF$, as desired. 
 
Now we check $\FF \prec \HH$.

\noindent
{\sf Case 1: ($\GG \prec \HH$):} Since $\FF \preceq \GG$, $\FF \prec \HH$ follows. 

\noindent
{\sf Case 2: ($\GG \succeq \HH$):}
By $U$'s (G.3), $\GG > \HH$ and $\HH^! \in \x^{\rightarrow\rightarrow}$ in $U$. 

\noindent
{\sf Subcase 2.1: ($\family(\FF) < \family(\HH)$):} Then $\FF \prec \HH$, and we are done.

\noindent
{\sf Subcase 2.2:  ($\family(\FF) = \family(\HH)$):} 
By Lemma~\ref{lemma:Gsoutheast} applied to $U$, $\FF \neq \HH$. Hence by $U$'s (G.6), $\FF \prec \HH$.

\noindent
{\sf Subcase 2.3: ($\family(\FF) > \family(\HH)$):} We derive a contradiction. By Lemma~\ref{lem:strong_form_of_G13}, $U$ has $\GG' \in \underline{\x^{\rightarrow\rightarrow}}$ or $\circled{\GG'} \in \underline{\x^{\rightarrow\rightarrow}}$ with $ N_\HH = N_{\GG'}$ and $\family(\GG) = \family(\GG')$. If $\GG = \GG'$, then by $U$'s (G.7), $\circled{\GG} \in \underline{\x^{\rightarrow\rightarrow}}$ in $U$. However $\underline{\x^{\rightarrow\rightarrow}}$ is southeast of a $\bullet_{\GG^+}$ in $\x^\rightarrow$ in $U$, contradicting $U$'s (V.1). Hence $\GG \neq \GG'$, so by $U$'s (G.6) $\GG' \succeq \GG^+$. If $\GG'\succ\GG^+$, then by $U$'s (G.6)
there must be a $\GG^+$ in the column of $\x^\rightarrow$. This $\GG^+$ is not
South of $\x^\rightarrow$ by $U$'s (G.12). It is also not north of
$\x^\rightarrow$ by $U$'s (G.9). Thus, $\GG'=\GG^+$.

Since $N_{\GG^+}=N_\HH$, $N_\GG = N_{\HH^-}$. By $U$'s (G.8), since $U$ has $\GG\in \x$, every $\HH^-$ 
appears 
before $\x$ in column reading order.  By $U$'s (G.4) and (G.6), $\HH^-$ does not appear east of $\x^{\rightarrow\rightarrow}$'s column.
By $U$'s (G.12), $\HH^-$ does not appear North of $\x^\rightarrow$ and in its column. 
Since $\bullet_{\GG^+}\in \x^\rightarrow$, $\HH^-$ is not in $\x^\rightarrow$.
By $U$'s (G.11), $\HH^-$ is not South of $\x^\rightarrow$ and in its column. 
Thus $\HH^-$ appears only in $\x$'s column and is north of $\x$. By $U$'s (G.12), it then follows that $\HH^-\in \overline{\x}$ in $U$. 
But then since $N_{\HH^-}=N_{\GG}$ and $\HH^-<\FF$, this contradicts the definition of $\FF$.

\smallskip
\noindent
{\bf (G.4):}
Consider an arbitrary column $c$ of $U$; we show that (G.4) holds for $c$ in $T$.

\noindent
{\sf Case 1: ($U$ has no $\bullet_{\GG^+}$ in $c$):}

\noindent
{\sf Subcase 1.1: ($c$ is $<$-increasing in $U$):} By inspection
of the reverse miniswaps, it is clear that
$c$ is $<$-increasing in $T$.

\noindent
{\sf Subcase 1.2: ($c$ contains $\ytableausetup{boxsize=1.2em}\ytableaushort{{\FF},{\FF^!}}$):}
Suppose the depicted $\FF$ is in $\x$ in $U$. Since $\FF^!\in \x^\downarrow$ in $U$, $\FF \preceq \GG$. So there are two possibilities:

\noindent
{\sf Subcase 1.2.1: ($\FF\prec \GG$):} Since $\FF\prec\GG$, $\x$ and $\x^\downarrow$ are not in ladders of $U$. Therefore, $\FF\in \x$ and $\FF\in \x^\downarrow$ in $T$; however;
we do not know {\it a priori} which of these $\FF$'s are marked. We must show
$T$ has unmarked $\FF\in \x$ and $\FF^!\in \x^\downarrow$. 
We will need the following
\begin{lemma}
\label{lem:keybulletreversalfact}
Let $U$ be $\GG^+$-good and $T\in \revswap_{\GG^+}(U)$. Suppose $\bullet_\GG\in \y$ in $T$. Then in $U$, either $\GG\in \y$ or $\bullet_{\GG^+}\in \y$. 
\end{lemma}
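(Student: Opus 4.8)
\textbf{Proof proposal for Lemma~\ref{lem:keybulletreversalfact}.}
The plan is to argue by inspecting every reverse miniswap that can place a $\bullet_\GG$ in a box $\y$ of $T$. Recall that $\revswap_{\GG^+}$ acts on ladder rows of $U$, and each reverse miniswap replaces a $\bullet_{\GG^+}$ (on the left of the ``$\mapsto$'') by a $\bullet_\GG$ on the right. Scanning the list {\sf L1.1}--{\sf L4.5}, the cases whose output contains a $\bullet_\GG$ are exactly: {\sf L2.1} (where $\lab_U(\y)=\GG$ and the reverse miniswap gives $\y=\x$ the label $\bullet_\GG$); {\sf L4.1} (again $\lab_U(\y)=\GG$, this being the left box $\x$ of an {\sf L4} row, which becomes $\bullet_\GG$); and the outputs of {\sf L2.2}, {\sf L2.3}, {\sf L3}, {\sf L4.2}, {\sf L4.3}, {\sf L4.4}, {\sf L4.5} in which the \emph{right} box receives $\bullet_\GG$ (e.g.\ {\sf L3}, where $\lab_U(\y)=\bullet_{\GG^+}$). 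First I would partition the analysis according to whether $\y$ is the westmost box of its ladder row $r$ or the eastmost box.

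If $\y$ is the westmost box $\x$ of $r$, then by the classification Lemma~\ref{lem:ladder_row_classification}, $r$ is one of {\sf L1}--{\sf L4}, and the only subcases producing $\bullet_\GG\in \x$ in $T$ are {\sf L2.1} and {\sf L4.1}; in both, $\lab_U(\x)=\GG$, so we are in the first alternative of the lemma. If $\y$ is the eastmost box of a two-box ladder row (so $r$ is {\sf L4}), then $\lab_U(\y)=\bullet_{\GG^+}$ by Lemma~\ref{lem:ladder_row_classification}, and we are in the second alternative. Finally, if $\y$ receives $\bullet_\GG$ via {\sf L2.2}, {\sf L2.3} or {\sf L3}, then $\y$ is a single-box ladder row, and reading off those miniswaps we see $\lab_U(\y)=\GG$ for {\sf L2.2} and {\sf L2.3} and $\lab_U(\y)=\bullet_{\GG^+}$ for {\sf L3}. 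In every case the conclusion of the lemma holds; the check is purely case-by-case and uses nothing beyond the definitions of the reverse miniswaps together with Lemma~\ref{lem:ladder_row_classification} to constrain which labels can occupy $\y$ in $U$.

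I expect the only mild subtlety to be bookkeeping: making sure the list of output-configurations containing $\bullet_\GG$ is complete, and correctly identifying, for each, whether the $\bullet_\GG$ lands in the westmost or eastmost box of $r$, so that the corresponding $U$-label is read off correctly. Since $\revswap_{\GG^+}$ acts independently on ladders and trivially off of ladders (a box of $T$ not in any ladder of $U$ keeps its $U$-label, and such a box cannot acquire a $\bullet_\GG$), there are no interaction effects between rows to worry about. Thus the argument is a finite inspection, and the statement follows.

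\begin{proof}
By definition, $\revswap_{\GG^+}$ alters only boxes lying in ladder rows of $U$; a box not in any ladder of $U$ keeps its $U$-label in $T$, and in particular cannot acquire a $\bullet_\GG$. So if $\bullet_\GG\in \y$ in $T$, then $\y$ lies in some ladder row $r$ of $U$, and the $\bullet_\GG$ is introduced by the reverse miniswap applied to $r$. By Lemma~\ref{lem:ladder_row_classification}, $r$ is one of {\sf L1}, {\sf L2}, {\sf L3}, {\sf L4}. Examining the reverse miniswaps {\sf L1.1}--{\sf L4.5}, the only ones whose output places a $\bullet_\GG$ are: {\sf L2.1}, {\sf L2.2}, {\sf L2.3}, {\sf L3}, {\sf L4.1}, {\sf L4.2}, {\sf L4.3}, {\sf L4.4}, {\sf L4.5}. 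We consider where $\y$ sits in $r$.

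If $r$ has a single box $\x$, then $\y=\x$, and $\bullet_\GG\in \x$ in $T$ arises only via {\sf L2.1}, {\sf L2.2}, {\sf L2.3} (all requiring $\lab_U(\x)=\GG$, so $\GG\in \y$ in $U$) or {\sf L3} (requiring $\lab_U(\x)=\bullet_{\GG^+}$, so $\bullet_{\GG^+}\in \y$ in $U$).

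If $r$ has two boxes, then $r$ is {\sf L4}, and by Lemma~\ref{lem:ladder_row_classification} its westmost box $\x$ has $\lab_U(\x)=\GG$ while its eastmost box $\x^\rightarrow$ has $\lab_U(\x^\rightarrow)=\bullet_{\GG^+}$. If $\y=\x$, then $\bullet_\GG\in \x$ in $T$ arises via {\sf L4.1}, and $\lab_U(\y)=\GG$. If $\y=\x^\rightarrow$, then $\bullet_\GG\in \x^\rightarrow$ in $T$ arises via one of {\sf L4.2}--{\sf L4.5}, and $\lab_U(\y)=\bullet_{\GG^+}$.

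In every case, either $\GG\in \y$ or $\bullet_{\GG^+}\in \y$ in $U$, as claimed.
\end{proof}
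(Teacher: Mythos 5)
Your opening observation is the right one, but your case inspection — which is the entire content of the proof after that observation — contains two concrete errors. First, your list of reverse miniswaps whose output contains a $\bullet_\GG$ omits {\sf L1.2}: by the stated convention, the $\bullet$ on the right of the ``$\mapsto$'' in {\sf L1.2} is a $\bullet_\GG$, so a single-box ladder row can also produce $\bullet_\GG\in \y$ with $\lab_U(\y)=\bullet_{\GG^+}$; your single-box case (``arises only via {\sf L2.1}, {\sf L2.2}, {\sf L2.3} \dots or {\sf L3}'') is therefore incomplete. Second, you have misread the {\sf L4} miniswaps: in {\sf L4.1}--{\sf L4.5} the $\bullet_\GG$ in the output always sits in the \emph{westmost} box $\x$ (which carried $\GG$ in $U$), while the eastmost box $\x^\rightarrow$ (which carried $\bullet_{\GG^+}$ in $U$) receives a genetic label ($\GG^+$ or $\FF$), never a $\bullet_\GG$. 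So your subcase ``$\y=\x^\rightarrow$, arising via {\sf L4.2}--{\sf L4.5}'' describes a configuration that does not occur, and your subcase ``$\y=\x$, arising via {\sf L4.1}'' should in fact cover all five {\sf L4} subcases. Neither slip threatens the truth of the lemma — in the omitted and misattributed configurations the conclusion still holds — but since the proof's validity rests precisely on the completeness and accuracy of this enumeration, as written it does not stand.

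There is also a much shorter route, which is the one the paper takes: once you observe (as you do in your first sentence) that $\y$ must lie in a ladder of $U$, you are already done, because a ladder is \emph{by definition} a component of the set of boxes containing $\bullet_{\GG^+}$ or unmarked $\GG$ in $U$. No inspection of the individual reverse miniswaps is needed; the case analysis you attempt is both unnecessary and the source of the errors above.
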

\begin{proof}
Since $\bullet_\GG\in \y$ in $T$, $\y$ is in a ladder of $U$. Thus the lemma is immediate from the definition of ladders.
\end{proof}

Since unmarked $\FF\in \x$ in $U$, $U$ has no $\bullet_{\GG^+}$ northwest of $\x$. By $U$'s (G.3) or (G.4), $U$ has no $\GG$ northwest of $\x$.
Hence by Lemma~\ref{lem:keybulletreversalfact}, $T$ has no $\bullet_\GG$  has northwest of $\x$; hence unmarked $\FF\in \x$ in $T$. 

By definition of marked labels, $U$ has $\bullet_{\GG^+}$ 
in some box $\z$ northwest of $\x^\downarrow$ in $U$. By inspection of the miniswaps, there is a $\bullet_\GG$ northwest of $\z$ in $T$. Hence
$\FF^!\in \x^\downarrow$ in $T$.

\noindent
{\sf Subcase 1.2.2: ($\FF = \GG$):} By inspection of the miniswaps, $\bullet_\GG\in \x$ in $T$ and no other box or edge of the column was changed.
Hence we are done by $U$'s (G.4).

\noindent
{\sf Case 2: ($U$ has a $\bullet_{\GG^+}$ in $c$):}
By $U$'s (G.11), $U$ has no marked labels in $c$. Hence by $U$'s (G.4), the labels of $U$ strictly $<$-increase down $c$, ignoring the $\bullet_{\GG^+}$. 
Suppose the $\bullet_{\GG^+}$ in $c$ is in $\x$.
By inspection, a (G.4) violation in $c$ can only occur from {\sf L1.1}, {\sf L4.4} or {\sf L4.5} applied at $\x$. If we apply {\sf L1.1} at $\x$, then locally at $c$, $\ytableaushort{\GG, \bullet} \mapsto \ytableaushort{\bullet, \GG}$, while the rest of the column is unchanged, so $c$ satisfies (G.4) in $T$.

Suppose we apply {\sf L4.4} or {\sf L4.5} at $\x$. 
 By definition $U$ has $\GG \in \x^\leftarrow$ and there is no $\GG^+ \not\in \underline{\x}$ with $\family(\GG^+) = \family(\GG)$. 
Let $\EE$ be the $\prec$-greatest gene in $c$ north of $\overline{\x}$ and let $\HH$ be the $\prec$-least gene in $c$ south of $\underline{\x}$. (If either of these fails to exist, the argument is simplified.) Let $\FF$ be the $\prec$-least element of $Z^\#$ in the notation of {\sf L4.4} and {\sf L4.5}. It remains to show

\begin{claim}
\gap
\begin{itemize}
\item[(I)] $\GG < \HH$, and 
\item[(II)] either $\EE < \FF$, or $\EE = \FF$ with $\EE$ an unmarked label in $\x^\uparrow$ and $\FF < \GG$.
\end{itemize}
\end{claim}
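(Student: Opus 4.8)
The claim to be proved is the two-part inequality asserted in Case 2 of the argument for (G.4): under the hypothesis that {\sf L4.4} or {\sf L4.5} is applied at $\x$ (so $\bullet_{\GG^+}\in \x$ in $U$, $\GG\in \x^\leftarrow$ in $U$, and no $\GG^+\notin\underline{\x}$ with $\family(\GG^+)=\family(\GG)$), we need (I) $\GG < \HH$ where $\HH$ is the $\prec$-least gene south of $\underline{\x}$ in column $c$, and (II) $\EE < \FF$ or else $\EE=\FF$ with $\EE$ unmarked in $\x^\uparrow$ and $\FF<\GG$, where $\EE$ is the $\prec$-greatest gene north of $\overline{\x}$ in $c$ and $\FF=\min Z^\#$. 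The plan is to extract each bound from the goodness of $U$ together with the already-established ballotness lemmas (Lemmas~\ref{lem:how_to_check_ballotness}, \ref{lem:strong_form_of_G13}, \ref{lem:strong_form_of_G10}), imitating closely the analogous reasoning already used in the (G.3) verification (Subcase~2.3 there), since the configurations are essentially the same.

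\textbf{Proof of (I).} First I would dispose of the easy case $\GG<\HH$ by noting $\family(\GG)\le\family(\HH)$ forces nothing further to check. So suppose $\GG\succeq\HH$. Since $\lab_U(\x)=\bullet_{\GG^+}$ and $U$'s (G.4) holds, any genetic label of $c$ south of $\underline{\x}$ must be $\succ\GG$ unless it is a marked $\GG$; but $U$'s (G.11) forbids marked labels in a column containing $\bullet_{\GG^+}$, so in fact $\HH\succ\GG$, hence $\HH>\GG$ unless $\family(\HH)=\family(\GG)$. The remaining possibility $\family(\HH)=\family(\GG)$ with $\HH\preceq\GG$ I rule out exactly as in the (G.3) Subcase~2.3 argument: by Lemma~\ref{lem:strong_form_of_G13} applied to the box south of $\x$ containing (or edge-bounding) $\HH$, one finds a (possibly virtual) label of family $\family(\GG)$ with the correct $N$-value, and combining with $U$'s (G.6), (G.9), (G.12), (G.11) and Lemma~\ref{lem:how_to_check_ballotness} one forces a contradiction with $U$'s ballotness (G.8), since $\GG\in\x^\leftarrow$ must be read before any instance of $\HH^-$ which is then trapped in a column where it cannot legally sit. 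This gives $\GG<\HH$.

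\textbf{Proof of (II) and the main obstacle.} For (II): if $\EE<\FF$ we are done. If $\EE=\FF$, then since $\FF\in Z^\#\subseteq\overline{\x}\cup\{\GG\}$ and $Z^\#$-members other than $\GG$ lie on $\overline{\x}$, I split on whether $\FF=\GG$. If $\FF=\GG$, then $\EE=\GG$ lies north of $\overline{\x}$, so some $\GG$ is NorthWest of $\bullet_{\GG^+}\in\x$; but $U$'s (G.9) gives $\GG\prec\GG^+$ (fine) while Lemma~\ref{lemma:Gsoutheastofbullet} or (G.12) applied to this $\GG$ and the $\GG\in\x^\leftarrow$ yields a (G.12)-violation in $U$ — so $\FF\neq\GG$, i.e. $\FF\in\overline{\x}$ and hence $\FF<\GG$ by the definition of $Z$ (its elements satisfy $\ell\prec\GG$, and since $\family(\FF)=\family(\GG)$ would conflict with $\FF\prec\GG$ only mildly I must instead observe $\family(\FF)<\family(\GG)$: indeed if $\family(\FF)=\family(\GG)$ then $\FF\in\overline{\x}$ together with the unmarked $\GG\in\x^\leftarrow$ violates Lemma~\ref{lem:how_to_check_ballotness} using $N_\FF=N_\GG$, which holds since $\FF\in Z$). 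Finally $\EE=\FF\in\overline{\x}$ means the $\prec$-greatest gene north of $\overline{\x}$ equals a gene on $\overline{\x}$, which by $U$'s (G.4) read along $c$ forces $\EE$ to sit in $\x^\uparrow$ and to be unmarked (a marked copy would put $\bullet_{\GG^+}$ NorthWest of itself, impossible in $c$ by (G.11)). I expect the main obstacle to be the case $\family(\FF)<\family(\HH)$-type bookkeeping in (I): one must carefully track $N$-values through Lemma~\ref{lem:strong_form_of_G13} and confirm the forced virtual or marked label genuinely contradicts (G.8), paralleling the delicate count in the (G.3) Subcase~2.3 paragraph above; everything else is a routine transcription of that earlier argument with $\overline{\x},\underline{\x}$ in the roles there played by the edges adjacent to the $\bullet_{\GG^+}$.
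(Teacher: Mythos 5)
Your proposal has genuine gaps in both parts, and in each case the missing ingredient is the defining hypothesis of {\sf L4.4}/{\sf L4.5} (that no $\GG^+$ with $\family(\GG^+)=\family(\GG)$ lies on $\underline{\x}$), which you state at the outset but never use. For (I), after you correctly get $\HH\succ\GG$ from (G.11), the case still to be excluded is $\HH\succ\GG$ with $\family(\HH)=\family(\GG)$ (not ``$\HH\preceq\GG$'' as you write), and your proposed contradiction cannot work: the configuration $\GG\in\x^\leftarrow$, $\bullet_{\GG^+}\in\x$, $\GG^+\in\underline{\x}$ is a perfectly good configuration --- it is exactly the {\sf L4.1}--{\sf L4.3} situation --- so no amount of ballotness bookkeeping rules it out; moreover Lemma~\ref{lem:strong_form_of_G13} requires a \emph{marked} label and $\HH$ is unmarked. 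The paper instead argues: if $\family(\HH)=\family(\GG)$, then (G.12) (with the $\GG\in\x^\leftarrow$) forces $\HH\in\underline{\x}$, (G.7) forces $\HH\neq\GG$, and (G.6) forces $\HH=\GG^+\in\underline{\x}$, contradicting precisely the {\sf L4.4}/{\sf L4.5} hypothesis.

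For (II), the dichotomy ``either $\EE<\FF$ or $\EE=\FF$'' is the main content of the statement, and you assume it rather than prove it; the paper's Case $\EE<\GG$ does the real work, using Lemma~\ref{lem:draggable_labels_in_complete_sets} to produce $\widetilde\EE\in Z$ of $\EE$'s family and then ruling out $\widetilde\EE\neq\EE$ by a chain ((G.6), (G.8), (G.9), (G.12)) that again terminates in a forced $\GG^+\in\underline{\x}$, so that $\EE=\min Z=\FF$ by Lemma~\ref{lem:how_to_check_ballotness}. You also misplace $Z$: in the ambient notation of the claim (where $\x$ is the bullet box), $Z$ consists of labels on $\overline{\x^\leftarrow}$, the upper edge of the $\GG$-box, not on $\overline{\x}$; and you conflate it with the forward-swap $Z$ of {\sf T4}/{\sf T5}, whose elements satisfy $\ell\prec\GG$ by definition --- the reverse-swap $Z$ carries no such condition, only $N_\ell=N_\GG$. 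Your exclusion of $\FF=\GG$ is also false: a $\GG$ in column $c$ north of $\x$ is directly North of $\bullet_{\GG^+}\in\x$ and northEast of the $\GG\in\x^\leftarrow$, so neither (G.12) nor Lemma~\ref{lemma:Gsoutheastofbullet} is violated, and the case $\EE=\FF=\GG$ genuinely occurs; the paper handles it separately (noting $Z=\emptyset$ and $\EE\in\x^\uparrow$ by (G.7), and checking (G.4) directly, since the $\GG\in\x^\uparrow$ itself reverse-swaps to $\bullet_\GG$ by {\sf L2.1}), which shows that trying to prove (II) literally in that case was the wrong route. Finally, the ``unmarked in $\x^\uparrow$'' assertion must be verified in the output tableau $T$, where new $\bullet_\GG$'s have appeared; your (G.11)-in-$U$ remark does not do this, whereas the paper uses $U$'s (G.2), (G.3), (G.4) together with Lemma~\ref{lem:keybulletreversalfact} to exclude a $\bullet_\GG$ northwest of $\x^\uparrow$ in $T$.
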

\begin{proof}
(I): By $U$'s (G.11), the $\HH$ in $c$ is not marked in $U$, so $\GG \prec \HH$. If $\family(\HH) = \family(\GG)$, then by $U$'s (G.12) $\HH \in \underline{\x}$ rather than $\x^\downarrow$. Then by $U$'s (G.7), $\HH \neq \GG$. But then by $U$'s (G.6), $\HH = \GG^+ \in \underline{\x}$, contradicting our assumption. Thus $\GG < \HH$.

(II): By $U$'s (G.9), $\EE \preceq \GG$. Hence by $U$'s (G.6), either $\EE = \GG$ or else $\EE < \GG$.

\noindent
{\sf Case A:  ($\EE = \GG$):}
By Lemma~\ref{lem:how_to_check_ballotness}, $Z = \emptyset$ (in the notation of {\sf L4.4} and {\sf L4.5}). Further by $U$'s (G.7), $\EE \in \x^\uparrow$ (rather than $\overline{\x}$). Hence locally $c$ changes as $\ytableaushort{\GG, \bullet} \mapsto \ytableaushort{\bullet, \GG}$, while the rest of $c$ is unchanged. Then $c$ satisfies (G.4) in $T$.

\noindent
{\sf Case B: ($\EE < \GG$):} 
If $\FF = \GG$, then $\EE < \FF$, as desired. So by $U$'s (G.4), we may assume $\FF < \GG$ and furthermore that $\family(\EE) \geq \family(\FF)$. Then by Lemma~\ref{lem:draggable_labels_in_complete_sets}, there is an $\widetilde{\EE} \in Z$ with $\family(\widetilde{\EE}) = \family(\EE)$.

If $\widetilde{\EE} \neq \EE$, then by $U$'s (G.6) $\EE = \widetilde{\EE}^+$.  Since $N_{\widetilde \EE} = N_\GG \geq 1$, $\family(\GG^+) = \family(\GG)$ and $N_\EE = N_{\GG^+}$. Hence by $U$'s (G.6) and (G.8), $U$ has $\GG^+$ in $c$ south of $\overline{\x}$. By $U$'s (G.9), it is south of $\underline{\x}$. By $U$'s (G.12), it is not South of $\underline{\x}$. Hence $\GG^+ \in \underline{\x}$ in $U$, contradicting the assumptions of {\sf L4.4} and {\sf L4.5}.

Thus $\widetilde{\EE} = \EE$. By Lemma~\ref{lem:how_to_check_ballotness}, $\widetilde{\EE} = \min Z$, so $\widetilde{\EE} = \FF$. By $U$'s (G.7), the $\EE$ in $c$ in $U$ is in $\x^\uparrow$. Since in $T$, $\bullet_\GG \in \x^\leftarrow$ and $\EE < \GG$, $\EE^! \in \x$ in $T$. 

We must show that unmarked $\EE \in \x^\uparrow$ in $T$. By $U$'s (G.2), $U$ has no $\bullet_{\GG^+}$ northwest of $\x$. By $U$'s (G.3) and (G.4), $U$ has no $\GG$ northwest of $\x^\uparrow$. Hence by Lemma~\ref{lem:keybulletreversalfact}, $T$ has no $\bullet_\GG$ northwest of $\x^\uparrow$, so $T$ has unmarked $\EE \in \x^\uparrow$.
\end{proof}

\smallskip
\noindent
{\bf (G.5):} We are only concerned with reverse miniswaps that produce or relocate edge labels. That is {\sf L2.2}, {\sf L3}, {\sf L4.2}, {\sf L4.4} and {\sf L4.5}. In {\sf L2.2}, we create an edge label $\GG$ on $\underline{\x}$, while $U$ has $\GG \in \x$. Hence by $U$'s (G.4), there is no other label of $\GG$'s family on $\underline{\x}$. This verifies $T$'s (G.5) in this situation. A similar argument applies for {\sf L3} and {\sf L4.2}.

The arguments for {\sf L4.4} and {\sf L4.5} are similar. Consider any $\QQ \in A''$ or ${\widetilde{A''}}$. If (G.5) fails in $T$, we may assume $U$ has $\QQ' \in \underline{\x^\rightarrow}$ with $\family(\QQ) = \family(\QQ')$. If $\QQ \neq \GG$, then by $U$'s (G.4), $\QQ < \GG$, so $\QQ' \in \underline{\x^\rightarrow}$ is marked in $U$, violating $U$'s (G.11). 
Thus $\QQ=\GG$. By $U$'s (G.7), $\QQ' \neq \GG$. Hence by $U$'s (G.6), $\QQ' = \GG^+$, contradicting that {\sf L4.4} or {\sf L4.5} applies.

\smallskip
\noindent
{\bf (G.6):}  We will use
\begin{lemma}
\label{lem:keyreversalfact}
Let $U$ be $\GG^+$-good.
If $\HH$ appears in column $c$ of $T\in \revswap_{\GG^+}(U)$, then $U$ has $\HH$ in column $c$ or column
$c^\leftarrow$. 
\end{lemma}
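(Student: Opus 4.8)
The statement to prove is Lemma~\ref{lem:keyreversalfact}: if a genetic label $\HH$ occupies column $c$ of $T\in\revswap_{\GG^+}(U)$, then $\HH$ occupies column $c$ or column $c^\leftarrow$ in $U$. The natural strategy is a direct, exhaustive case analysis over how the label instance $\HH\in\x$ (with $\x$ a box of column $c$) can have arisen from the reverse miniswap acting on the ladder row containing $\x$, or on the ladder row immediately above $\x$; together with the observation that if $\x$ is not in (nor adjacent to) any ladder, then nothing moved and $\HH\in\x$ already in $U$.

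\textbf{Key steps, in order.} First I would reduce to the local picture: by Lemma~\ref{lemma:laddercommute} and the definition of $\revswap_{\GG^+}$, the label in $\x$ in $T$ is determined by the reverse miniswap applied to the ladder row $r$ through $\x$ (if $\x\in r$) and to the ladder row $r'$ with $r'=\x^\downarrow$'s row directly below only matters via edges, so really only $r$ and the row through $\x^\uparrow$ are relevant. Second, if $\x$ does not meet any ladder of $U$, then $\lab_T(\x)=\lab_U(\x)$ by definition, and $\HH$ is in column $c$ of $U$; done. Third, I would invoke Lemma~\ref{lem:useful_fact_reverse} (the ``inspection of the reverse miniswaps'' lemma already proved) to control what $\lab_T(\x)$ can be in terms of $\lab_U(\x)$ and $\lab_U(\x^\leftarrow)$: if $\lab_U(\x)=\GG^!$ then $\lab_T(\x)=\GG$ (same column); if $\lab_U(\x)=\GG$ then $\lab_T(\x)\in\{\GG,\bullet_\GG\}$ (same column or a $\bullet$, not a genetic label, so no issue); if $\lab_U(\x)=\bullet_{\GG^+}$ then $\lab_T(\x)\in\{\bullet_\GG,\GG,\GG^+,\FF^!\}$, and in the two genetic-label subcases the Lemma pins down that $\lab_T(\x^\leftarrow)=\bullet_\GG$ and (in the $\FF^!$ case) that $\underline{\x}$ carries $\GG$ or $\circled{\GG}$ in $U$ — so I must then trace where those $\GG$ or $\FF$ labels sat in $U$. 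The only genuinely ``new'' genetic labels produced by a reverse miniswap are: the $\GG$ produced by {\sf L1.1} (which requires $\GG\in\x^\uparrow$ in $U$, hence $\GG$ in column $c$ of $U$, fine); the $\GG^+$ produced by {\sf L4.1}--{\sf L4.3} and the $\FF$ produced by {\sf L4.4}--{\sf L4.5} (these land in the \emph{right} box $\x^\rightarrow$ of the two-box ladder row, whose westmost box $\x$ had $\GG\in\x$ in $U$; the relabeled $\GG^+$ or $\FF\in Z^\sharp\ni\GG$ was — by construction of $Z$, $A''$, $A'''$, $Z^\sharp$ — an edge label on $\overline{\x}$ in $U$, i.e. in column $c$ of $U$, or is $\GG$ itself which sat in $\x$ in column $c$). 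Finally, when a genetic label merely \emph{moves} rather than being created, the reverse miniswaps {\sf L4.1}--{\sf L4.5} move the westmost $\GG$ of the row out of $\x$ and the $\bullet$ comes in; that $\GG$ either becomes $\bullet_\GG$ in $\x$ (not a genetic label) or is pushed onto $\underline{\x}$ (still column $c$), and in {\sf L1.1} a $\GG$ moves down one row within the same column. Collecting these, every genetic label of $T$ in column $c$ came from a genetic label (or a genetic edge/box label or the corresponding unmarked version) of $U$ in column $c$ or column $c^\leftarrow$ (the latter only via the $\bullet_\GG\in\x^\leftarrow$ pattern of Lemma~\ref{lem:useful_fact_reverse}(III), where one checks the relevant $\GG$ or $\FF$ sat just west in $U$).

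\textbf{Main obstacle.} The delicate point is the {\sf L4.4}/{\sf L4.5} case, where $\revm$ rearranges the entire edge-label set $A$ of $\overline{\x}$ into $A'$, $A''$ (or $A'''$), $\FF$, and the westmost $\GG$: I need to be sure that the genetic label $\FF$ now sitting in the box $\x^\rightarrow$ of column $c+1$ — wait, actually $\x^\rightarrow$ is the \emph{right} box, one column East of $\x$ — genuinely had an occurrence in $U$ in column $c$ or in $\x^\rightarrow$'s own column $c+1$; since $\FF\in Z^\sharp=Z\cup\{\GG\}$ with $Z\subseteq A=\overline{\x}$ (column $c$) and $\GG\in\x$ (column $c$), and $\x^\rightarrow$ is the box receiving $\FF$, I should phrase the lemma as covering column $c$ and $c^\leftarrow$ relative to the box where the label \emph{ends up}, so that for a label ending in $\x^\rightarrow$ its source column $c$ is ``$c^\leftarrow$'' of $\x^\rightarrow$. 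Getting this bookkeeping of ``which box we measure the column from'' exactly right, uniformly across {\sf L1.1} (vertical move, same column), the {\sf L4.*} cases (the $\GG^+$ or $\FF$ migrating one box East) and the marked-label cases handled by Lemma~\ref{lem:useful_fact_reverse}, is the crux; the rest is routine inspection of the explicitly tabulated reverse miniswaps from Section~\ref{sec:reversal}.
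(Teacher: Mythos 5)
Your proposal is correct and is essentially the paper's own argument: the paper proves this lemma with the single line "By inspection of the reverse miniswaps," and your case-by-case check of {\sf L1}--{\sf L4} (new labels only from {\sf L1.1} in the same column, and the {\sf L4} relabelings moving labels at most one box East, so the source column is $c$ or $c^\leftarrow$ measured from where the label lands) is exactly that inspection written out. The bookkeeping you worry about at the end is resolved correctly, so no gap remains.
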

\begin{proof}
By inspection of the reverse miniswaps.
\end{proof}
Suppose $i_b$ appears West of $i_a$ in $T$. Then by Lemma~\ref{lem:keyreversalfact},
$i_b$ appears west of $i_a$ in $U$. Therefore by $U$'s (G.4), either $b=a$ or $i_b$ is West of $i_a$ in $U$. Thus by $U$'s (G.6), $b \leq a$.

\smallskip
\noindent
{\bf (G.7):} Let $\QQ \in \underline{\x}$ in column $c$ of $T$. First suppose $U$ has $\QQ \in \underline{\x}$. By $U$'s (G.7), $U$ has no $\QQ$ West of column $c$.
Hence by Lemma~\ref{lem:keyreversalfact}, $T$ has no $\QQ$ West of column $c$, as desired.

So suppose $U$ has $\QQ \notin \underline{\x}$. Thus $T$'s $\QQ\in \underline{\x}$ was created by one of the reverse miniswaps {\sf L2.2}, {\sf L3}, {\sf L4.2}, {\sf L4.4} or {\sf L4.5}. In each
case, we are done by Lemma~\ref{lem:keyreversalfact}, provided that we know $U$ has no $\QQ$ West of $c$. This is by assumption in {\sf L2.2} and {\sf L4.2}. This holds for {\sf L3} by $U$'s (G.7). For {\sf L4.4}, $\QQ\in A''$ (in the terminology of {\sf L4.4}). If $\QQ=\GG$, we are done by assumption; otherwise we are done by $U$'s (G.7),
since $\QQ\in A$ (in the terminology of {\sf L4.4}). For {\sf L4.5}, 
$\QQ\in \widetilde{A''}$ (in the terminology of {\sf L4.5}), and we are done by $U$'s (G.7), since $\QQ\in A$ (in the terminology of {\sf L4.5}).

\smallskip
\noindent
{\bf (G.8):}
Suppose $N_\EE = N_\FF$ and $\family(\FF) = \family(\EE) + 1$. By $T$'s (G.4), to show $T$'s (G.8), it suffices to show no $\FF$ is East of any $\EE$ in $T$.

Let $e$ be the westmost instance of $\EE$ and $f$ the eastmost instance of $\FF$ in $U$. By $U$'s (G.8), $e$ is east of $f$ in $U$. Swapping does not move $e$ West and moves $f$ at most one column East. We may therefore assume $e$ and $f$ are in the same column $c$ of $U$. We may also assume the swap moves $f$ East, i.e., the swap involving $f$ is {\sf L4.4} or {\sf L4.5} (say at $\{ \x, \x^\rightarrow \}$). 
If $e \in \overline{\x}$, then $T$'s westmost $\EE$ is in $\x^\rightarrow$ or $\underline{\x^\rightarrow}$, and there is no (G.8) violation.
Hence by $U$'s (G.4), we may assume $e \in \x^\uparrow$.
Note that $N_\FF = N_\GG$, $U$ has $\GG \in \x$, and either $\FF = \GG$ or $\FF \in \overline{\x}$.

By $U$'s (G.3), $\EE \prec \QQ := \lab_U(\x^{\uparrow\rightarrow})$. By $U$'s (G.9), $\QQ \preceq \GG$. By Lemma~\ref{lem:how_to_check_ballotness}, $\QQ \neq \GG$. Hence $\QQ \prec \GG$, so by $U$'s (G.6), $\QQ < \GG$.
By $U$'s (G.4) and Lemma~\ref{lem:draggable_labels_in_complete_sets}, for every $\family(\FF) < i < \family(\GG)$, there is a label $\HH^i \in \overline{\x}$ with $\family(\HH^i) = i$ and $N_{\HH^i} = N_\EE = N_\FF = N_\GG$. By Lemma~\ref{lem:how_to_check_ballotness}, $\QQ\neq \FF$ and $\QQ \neq \HH^i$. Hence by $U$'s (G.6), $\QQ$ is one of $\EE^+, \FF^+, (\HH^i)^+$. Hence, $U$ has a $\GG^+$ in $c^\rightarrow$ with $\family(\GG^+) = \family(\GG)$. By $U$'s (G.9) and (G.12), it follows that $\GG^+ \in \underline{\x^\rightarrow}$; this contradicts the assumptions of {\sf L4.4} or {\sf L4.5}.

\smallskip
\noindent
{\bf (G.9):}  Let $\FF \succeq \GG$. Suppose $\FF\in \x$ (or $\FF\in \underline{\x}$) 
is northwest of $\bullet_\GG \in \y$ in $T$. By 
inspection of the reverse miniswaps, $U$ has an $\FF$ northwest of 
$\underline{\x}$. 

Suppose $\FF\succ \GG$. 
By $U$'s (G.9), $U$ has no $\bullet_{\GG^+}$ southeast
of $\x$. Hence by Lemma~\ref{lem:keybulletreversalfact},
$\GG\in \y$ in $U$. Thus by $U$'s (G.3) and (G.4),
$\FF\preceq \GG$, contradicting $\FF\succ \GG$.

Thus suppose $\FF=\GG$. 
By Lemma~\ref{lem:keybulletreversalfact}, in $U$
either $\bullet_{\GG^+} \in \y$ or $\GG\in \y$. Suppose
$\GG\in \y$. By Lemma~\ref{lemma:Gsoutheast}, $\y$ is not southEast of $\x$,
i.e., $\x$ and $\y$ are in the same column. Hence by $U$'s (G.4), $U$ has $\GG^!\in \y$. Hence $\y$ is not in a ladder of $U$, contradicting $T$'s $\bullet_\GG \in \y$. Thus, $U$ has $\bullet_{\GG^+} \in\y$. By Lemma~\ref{lem:GandbulletG+}, it follows that $\x$ and $\y$ are
in the same row or column. By $U$'s (G.9), in fact $\y=\x^\downarrow$
or $\y=\x^\rightarrow$. Now, by inspection of the reverse miniswaps, we conclude that
$\FF\not\in \x$ (respectively, $\FF\not\in \underline{\x}$) 
or $\bullet_{\GG}\not\in \y$ in $T$, contradicting our initial assumptions.

\smallskip
\noindent
{\bf (G.10):}
Consider $\FF^!\in \x$ or $\FF^!\in \underline{\x}$ in $T$.

\noindent
{\sf Case 1: (This $\FF$ is not in the same location in $U$):} By assumption, 
$\FF\prec\GG$. By inspection
of the reverse miniswaps, the $\FF^!$ in question appears in $T$ as a result of
{\sf L4.4} or {\sf L4.5}. By definition $\bullet_{\GG}\in\x^\leftarrow$
in $T$ and we are done.

\noindent
{\sf Case 2: (This $\FF$ is in the same location in $U$):} 
By $U$'s (G.3) and (G.4), $U$ has no $\GG$ northwest of $\x$. However, $T$ has a 
$\bullet_\GG\in \z$ northwest of $\x$. Hence by
Lemma~\ref{lem:keybulletreversalfact}, $U$ has a $\bullet_{\GG^+}$ northwest of $\x$. Since $\FF\prec\GG\prec \GG^+$, this means $U$'s $\FF \in \x$ or $\underline{\x}$ is marked

By Lemma~\ref{lem:strong_form_of_G10}, $U$ has a $\bullet_{\GG^+}\in {\sf w}$ West of $\x$ and in the same row. If $T$ has $\bullet_\GG \in {\sf w}$ or ${\sf w}^\leftarrow$, we are done.
Otherwise, {\sf L1.1} applies at ${\sf w}$. Then $U$ has $\GG\in {\sf w}^\uparrow$, and by Lemma~\ref{lem:strong_form_of_G13} $U$ has a $\widetilde{\GG}\in \underline{\x}$ (possibly virtual)
in $U$ such that $\family(\GG)=\family({\widetilde \GG})$; this contradicts
$U$'s (G.12). 

\smallskip
\noindent
{\bf (G.11):}
Consider $\FF^!\in \x$ or $\FF^!\in \underline{\x}$ in $T$.

\noindent
{\sf Case 1: ($\FF$ is not in the same location in $U$):} By assumption, 
$\FF\prec\GG$. By inspection
of the miniswaps, the $\FF^!$ in question appears in $T$ as a result of
{\sf L4.4} or {\sf L4.5}. Therefore we have $\FF < \GG$ and $N_\FF = N_\GG$.

Suppose $\bullet_\GG \in \y$ in $T$, where $\y$ is in $\x$'s column. By $U$'s (G.2), $\bullet_{\GG^+} \notin \y$ in $U$. Hence by Lemma~\ref{lem:keybulletreversalfact}, $U$ has $\GG \in \y$. This contradicts Lemma~\ref{lem:how_to_check_ballotness} (applied to $U$), since $U$ has $\GG \in \y$ and $\FF \in \overline{\x^\leftarrow}$, where $\FF < \GG$ and $N_\FF = N_\GG$. 

\noindent
{\sf Case 2: ($\FF$ is in the same location in $U$):} 
By the first paragraph of the argument of (G.10) {\sf Case 2}, this $\FF$ is marked in $U$. 
Suppose $T$ has $\bullet_\GG \in \y$, where $\y$ is in $\x$'s column. By $U$'s (G.11), $U$ has no $\bullet_{\GG^+}$ in $\x$'s column. Hence by Lemma~\ref{lem:keybulletreversalfact}, $U$ has $\GG \in \y$. By $U$'s (G.4), since $\FF \prec \GG$, $\y$ is South of $\x$ and in its column.
By assumption, $T$ has some $\bullet_\GG$ northwest of $\x$. With $\bullet_\GG \in \y$, this contradicts $T$'s (G.2).

\smallskip
\noindent
{\bf (G.12):}
Let $\x, \z$ be boxes in row $r$ with $\x$ West of $\z$. Suppose $\family(\FF)= \family(\FF')$. Cases 1--3 consider the case $U$ has $\FF$ NorthWest of $\FF'$.
By $U$'s (G.12), we may assume $U$ has $\FF \in \overline{\x}$ or $\FF \in \x$ as well as $\FF' \in \z$ or $\FF' \in \underline{\z}$. By $U$'s (G.12), $U$ has a $\bullet_{\GG^+}$ in some box $\y$ of row $r$ that is East of $\x$ and west of $\z$. Cases 4--7 consider the case $U$ has $\FF$ southwest of $\FF'$. 

\noindent
{\sf Case 1: ($U$ has $\FF$ or $\circled{\FF} \in \overline{\x}$ and $\FF'$ or $\circled{\FF'} \in \underline{\z}$):} 
By $U$'s (G.4), $\FF < \lab_U(\x)$. By $U$'s (G.9), $\lab_U(\x) \prec \GG^+$. Hence $\FF < \GG^+$. Since $\family(\FF) = \family(\FF')$, also $\FF' < \GG^+$. Therefore $U$'s $\FF' \in \underline{\z}$ is marked (and is not virtual). By $U$'s (G.11), it follows $\y \neq \z$. Moreover by $U$'s Lemma~\ref{lem:strong_form_of_G10}, $\lab_U(\y^\rightarrow)$ is marked.
By $U$'s (G.4), $\GG \notin \z$. Hence $\z$ is not in a ladder, and so $T$ has $\FF' \in \underline{\z}$.  
For convenience, assume $\FF \in \overline{\x}$. (The argument where this label is virtual is strictly easier).

\noindent
{\sf Case 1.1: ($\x$ and $\y$ are \emph{not} in the same ladder of $U$):}
By Lemma~\ref{lem:ladders.sw.ne}, $\x$ is not in any ladder.
Hence $T$ has $\FF \in \overline{\x}$. 
By inspection, unless the reverse miniswap applied at $\y$ is {\sf L1.1}, $T$ has $\bullet_\GG \in \y$ or $\bullet_\GG \in \y^\leftarrow \neq \x$. Hence although $T$ has $\FF$ NorthWest of $\FF'$, there is no (G.12) violation. If the reverse miniswap is {\sf L1.1}, then $U$ has $\GG \in \y^\uparrow$.  
By Lemma~\ref{lem:same999}, $U$ has $\circled{\GG} \in \underline{\y^\rightarrow}$; with the $\GG \in \y^\uparrow$, this violates the (G.12) condition of $U$'s (V.3).

\noindent
{\sf Case 1.2: ($\x$ and $\y$ are in the same ladder of $U$):}
Then $\y = \x^\rightarrow$ and the ladder row is type {\sf L4}. 
If it is {\sf L4.1}, {\sf L4.2} or {\sf L4.3}, then $U$ has $\GG \in \y^\leftarrow$, $\bullet_{\GG^+} \in \y$ and $\GG^+ \in \underline{\y}$ with $\family(\GG^+) = \family(\GG)$. Since $\lab_U(\y^\rightarrow)$ is marked, this contradicts $U$'s (G.12).

Hence it is {\sf L4.4} or {\sf L4.5}. By Lemma~\ref{lem:strong_form_of_G13}, $\underline{\y^\rightarrow}$ contains a label $\FF''$ with $\family(\FF'') = \family(\FF)$ and a (possibly virtual) label $\GG''$ with $\family(\GG'') = \family(\GG)$. By Lemma~\ref{lem:strong_form_of_G13}, $N_{\FF''} = N_{\GG''}$. By $U$'s (G.12), there is no label of $\FF$'s family North of $\y$ and in $\y$'s column. By $U$'s (G.11), there is no label of that family South of $\y$ and in $\y$'s column. Hence there is no label of $\FF$'s family in $\y$'s column. Hence by $U$'s (G.6) and (G.7), $\FF'' = \FF^+$. By $U$'s (G.12), there is no label in $\y$'s column of the same family as $\GG$. Hence by $U$'s (G.6) and Lemma~\ref{lemma:Gsoutheast}, $\GG'' = \GG^+$. So $N_\FF = N_\GG$ and $\FF \in Z$ (in the notation of {\sf L4.4}/{\sf L4.5}). Thus $T$ has $\FF \in \underline{\y}$ or $\FF \in \y$. Therefore by $T$'s (G.3) and (G.4), $T$ has $\FF \in \underline{\y}$, and so has $\FF$ southWest of $\FF'$, in agreement with (G.12).

\noindent
{\sf Case 2: ($U$ has $\FF \in \x$ and $\FF'$ or $\circled{\FF'} \in \underline{\z}$):}
By $U$'s (G.9), $\FF \prec \GG^+$. 

\noindent
{\sf Subcase 2.1: ($\y=\z$):} By $U$'s (G.11), $(\FF')^! \notin \underline{\z}$; hence $\GG^+ \preceq \FF'$. Thus $\family(\FF) = \family(\GG^+) = \family(\GG)$. Hence by $U$'s (G.9), $\family(\lab_U(\y^\leftarrow)) = \family(\FF)$. So by $U$'s (G.6) and (G.9), $\lab_U(\y^\leftarrow) = (\FF')^-$ and $\FF' = \GG^+$. Hence by $U$'s (G.6) and (G.7), $U$ has (nonvirtual) $\GG^+ \in \underline{\y}$. The applicable reverse miniswap is then {\sf L4.1}, {\sf L4.2} or {\sf L4.3}. Hence in $T$, $\FF$ is not NorthWest of $\FF'$, and (G.12) holds.

\noindent
{\sf Subcase 2.2: ($\y \neq \z$):}
By $U$'s (G.4), $\lab_U(\z) < \FF'$. Therefore $\lab_U(\z)$ is marked; by Lemma~\ref{lem:strong_form_of_G10}, $\lab_U(\y^\rightarrow)$ is marked. 
The reverse swap does not affect $\underline{\z}$. Hence we may assume $T$ has $\FF' \in \underline{\z}$.

If the reverse swap moves the $\FF \in \x$ South, then $T$ has $\FF$ southWest of $\FF'$, in accordance with (G.12). No reverse swap can move the $\FF \in \x$ North. Hence we may assume $T$ has $\FF \in \x$ or $\FF \in \x^\rightarrow$.
If $\lab_T(\x^\rightarrow) = \FF$, then $\y = \x^\rightarrow$ and $\FF = \GG$, so $T$ has $\y \ni \GG \succeq \lab_T(\y^\rightarrow)$, contradicting $T$'s (G.3).
Thus $\lab_T(\x) = \FF$.  By Lemma~\ref{lem:strong_form_of_G13}, $U$ has a label of the same family as $\GG$ on $\underline{\y^\rightarrow}$. Hence the reverse miniswap involving $\y$ is not {\sf L1.1}, since $\GG \in \y^\uparrow$ would violate $U$'s (G.12). Hence by inspection of the reverse miniswaps, $T$ has $\bullet_\GG \in \y$ or $\bullet_\GG \in \y^\leftarrow$, in accordance with $T$'s (G.12). 

\noindent
{\sf Case 3: ($U$ has $\FF$ or $\circled{\FF} \in \overline{\x}$ and $\FF' \in \z$):}
Since $\FF' \in \z$, $\bullet_{\GG^+} \notin \z$, so $\y \neq \z$. By $U$'s (G.4), $\FF < \lab_U(\x)$ whereas
by $U$'s (G.9), $\lab_U(\x) \prec \GG^+$; hence $\FF < \GG^+$. Therefore also $\FF' < \GG$ and so $\FF' \in \z$ is marked in $U$. The box $\z$ is not part of a ladder, so $T$ has $\FF' \in \z$. No reverse swap can move move the $\FF \in \overline{\x}$ North. If it moves South, it will be southWest of $\z$ in $T$, so no (G.12) violation ensues. Hence we may assume $T$ has $\FF \in \overline{\x}$. 
By Lemma~\ref{lem:strong_form_of_G10}, $\y^\rightarrow$ contains a marked label $\EE^!$, and so by Lemma~\ref{lem:strong_form_of_G13}, $U$ has a (possibly virtual) $\GG' \in \underline{\y^\rightarrow}$ with $\family(\GG') = \family(\GG)$.

The reverse miniswap involving $\y$ is not {\sf L1.1}, for $\GG \in \y^\uparrow$ violates $U$'s (G.12), together with $\GG' \in \underline{\y^\rightarrow}$. Thus, $T$ has $\bullet_\GG \in \y$ or $\bullet_\GG \in \y^\leftarrow$. Unless $T$ has $\bullet_\GG \in \y^\leftarrow$ and $\x = \y^\leftarrow$, this does not violate $T$'s (G.12).
Suppose $\x = \y^\leftarrow$ and $T$ has $\bullet_\GG \in \x$. The reverse miniswap involving $\y$ is {\sf L4}. Hence $U$ has $\GG \in \x$. By Lemma~\ref{lem:strong_form_of_G13}, either $\family(\EE) = \family(\FF)$ or $U$ has some $\FF'' \in \underline{\y^\rightarrow}$ with $\family(\FF'') = \family(\FF)$. By Lemma~\ref{lem:strong_form_of_G13}, $N_{\GG'} = N_\EE = N_{\FF''}$. 
By $U$'s (G.11) and (G.12), $\y$'s column does not contain a label of $\FF$'s family. Hence by $U$'s (G.6) and (G.7), $\FF'' = \FF^+$.
By $U$'s (G.9) and (G.12), if $\y$'s column contains a label of the same family as $\GG$, it is on $\underline{\y}$. By $U$'s (G.6) and (G.7), it can only be $\GG^+$; this contradicts $U$'s (G.12) (last sentence).
Thus $U$ has $\GG^+ \notin \underline{\y}$, and so $\GG' = \GG^+$. The reverse miniswap is {\sf L4.4} or {\sf L4.5}. We have $N_\FF = N_\GG$. Thus $\FF \in Z$ (in the notation of {\sf L4.4} and {\sf L4.5}), contradicting that $T$ has $\FF \in \overline{\x}$.

\noindent
{\sf Case 4: ($U$ has $\FF \in \aaa$ southwest of $\FF' \in \bbb$):} Say $\bbb$ is in row $r$. No reverse miniswap can move $\FF$ North or move $\FF'$ further South than $\bbb^\downarrow$. Hence unless $\aaa$ is in row $r$, $T$ has $\FF$ southwest of $\FF'$ and no (G.12) violation ensues. So assume $\aaa$ is in row $r$.

If the reverse swap moves $\FF$, then it cannot also move $\FF'$; hence $T$ has $\FF$ southwest of $\FF'$ and no (G.12) violation ensues. Thus assume $T$ has $\FF \in \aaa$.
To violate (G.12), the reverse swap must move $\FF'$ South. So in $T$, $\FF' \in \underline{\bbb}$, $\FF' \in \underline{\bbb^\rightarrow}$ or $\FF' \in \bbb^\downarrow$.

\noindent
{\sf Subcase 4.1: ($T$ has $\FF' \in \underline{\bbb}$):}
Here $\FF' = \GG$ and the reverse miniswap involving $\bbb$ is {\sf L2.2} or {\sf L4.2}. Although $T$ has $\FF$ NorthWest of $\FF'$, $T$ has $\bullet_\GG \in \bbb$ to avoid violating (G.12). (We avoid violating the last sentence of $T$'s (G.12) by $U$'s (G.3) in the {\sf L2.2} case and by $T$'s $\bullet_\GG \in \bbb$ in the {\sf L4.2} case.)

\noindent
{\sf Subcase 4.2: ($T$ has $\FF' \in \underline{\bbb^\rightarrow}$):}
Here $U$ has $\bullet_{\GG^+} \in \bbb^\rightarrow$ and the reverse miniswap involving $\bbb^\rightarrow$ is {\sf L4.4} or {\sf L4.5}. Then $T$ has $\bullet_\GG \in \bbb$, so although $T$ has $\FF$ NorthWest of $\FF'$, they do not violate (G.12).

\noindent
{\sf Subcase 4.3: ($T$ has $\FF' \in \bbb^\downarrow$):}
Here $U$ has $\bullet_{\GG^+} \in \bbb^\downarrow$ and $\FF' = \GG$. Hence by $U$'s (G.2), $\lab_U(\aaa^\downarrow)$ is a genetic label. Since $\aaa^\downarrow$ is northWest of a $\bullet_{\GG^+}$, $\lab_U(\aaa^\downarrow)$ is not marked. Hence by $U$'s (G.4), $\FF < \lab_U(\aaa^\downarrow)$. But by $U$'s (G.9), $\lab_U(\aaa^\downarrow) \prec \GG^+$, a contradiction.

\noindent
{\sf Case 5: ($U$ has $\FF \in \underline{\aaa}$ southwest of $\FF' \in \bbb$):} Say $\bbb$ is in row $r$. No reverse miniswap can move $\FF$ further North than $\aaa$ or $\FF'$ further South than $\bbb^\downarrow$. Therefore unless $\aaa$ is in $r$, $T$ has $\FF$ southwest of $\FF'$ and no (G.12) violation ensues. Hence assume $\aaa$ is in $r$.

\noindent
{\sf Subcase 5.1: (The reverse swap moves $\FF$ North):}
Here $U$ has $\bullet_{\GG^+} \in \aaa$; the reverse miniswap involving $\aaa$ is {\sf L4.1}, {\sf L4.2} or {\sf L4.3}; and $T$ has $\FF \in \aaa$. By $U$'s (G.2), $\FF'$ takes part in no reverse miniswap, so $T$ has $\FF' \in \bbb$. Hence $T$ has $\FF$ southwest of $\FF'$ and no (G.12) violation ensues.

\noindent
{\sf Subcase 5.2: ($T$ has $\FF \in \underline{\aaa}$):}
To have $\FF$ NorthWest of $\FF'$ in $T$, the reverse swap must move $\FF'$ to $\bbb^\downarrow$. Hence $U$ has $\bullet_{\GG^+} \in \bbb^\downarrow$ and $\FF' = \GG$. Since $U$ has $\FF \in \underline{\aaa}$ and $\bullet_{\GG^+} \in \bbb^\downarrow$, it has a label in $\aaa^\downarrow$. By $U$'s (G.2), it is a genetic label $\HH$. By $U$'s (G.4), $\FF < \HH$. Hence since $\family(\FF) = \family(\GG)$, $\GG < \HH$. But by $U$'s (G.9), $\HH \prec \GG^+$, a contradiction.

\noindent
{\sf Case 6: ($U$ has $\FF \in \aaa$ southwest of $\FF' \in \underline{\bbb}$):}
Say $\aaa$ is in row $r$.
No swap can move $\FF$ North. No swap can move $\FF'$ further South than $\underline{\bbb^\downarrow}$. Hence unless $\bbb \in r^\uparrow$, $T$ has $\FF$ southwest of $\FF'$ and no (G.12) violation ensues.
Thus, assume $\bbb \in r^\uparrow$. 
To obtain a (G.12) violation, $\FF'$ must move South to $\underline{\bbb^\downarrow}$ or $\underline{\bbb^{\downarrow\rightarrow}}$.

\noindent
{\sf Subcase 6.1: ($T$ has $\FF' \in \underline{\bbb^\downarrow}$):}
Here $U$ has $\bullet_{\GG^+} \in \bbb^\downarrow$, $T$ has $\FF \in \aaa$ and $\FF' = \GG$. So $T$ has $\bullet_\GG \in \bbb^\downarrow$. It remains to show we do not violate the last sentence of (G.12). By $U$'s (G.12), $\family(\lab_U(\bbb^{\downarrow\rightarrow})) \neq \family(\GG)$; hence the same in true in $T$. Hence if $\lab_T(\bbb^{\downarrow\rightarrow})$ is marked, then $\lab_U(\bbb^{\downarrow\rightarrow})$ is marked. Then by Lemma~\ref{lem:strong_form_of_G13}, $U$ has a label on $\underline{\bbb^{\downarrow\rightarrow}}$ of the same family as $\GG$, contradicting $U$'s (G.12). Thus (G.12) is confirmed in $T$.

\noindent
{\sf Subcase 6.2: ($T$ has $\FF' \in \underline{\bbb^{\downarrow\rightarrow}}$):}
Here $U$ has $\bullet_{\GG^+} \in \bbb^{\downarrow\rightarrow}$ and $\GG \in \bbb^\downarrow$, while $T$ has $\FF \in \aaa$ and $\bullet_\GG \in \bbb^\downarrow$. Thus, although $T$ has $\FF$ NorthWest of $\FF'$, they do not violate (G.12).

\noindent
{\sf Case 7: ($U$ has $\FF \in \underline{\aaa}$ southwest of $\FF' \in \underline{\bbb}$):}
Say $\bbb$ is in row $r$. No swap can move $\FF$ further North than $\aaa$. No swap can move $\FF'$ further South than $\underline{\bbb^\downarrow}$. Hence if $\aaa$ is South of row $r^\downarrow$, then $T$ has $\FF$ southwest of $\FF'$ and no (G.12) violation ensues.

\noindent
{\sf Case 7.1: ($\aaa$ is in row $r^\downarrow$):}
$T$ has $\FF$ southwest of $\FF'$, unless $T$ has both $\FF \in \aaa$ and $\FF'$ in $\underline{\bbb^\downarrow}$ or $\FF' \in \underline{\bbb^{\downarrow\rightarrow}}$. Suppose these both occur. Then $U$ has $\bullet_{\GG^+} \in \aaa$ and either $\bullet_{\GG^+} \in \bbb^\downarrow$ or $\bullet_{\GG^+} \in \bbb^{\downarrow\rightarrow}$. Since by $U$'s (G.4), $\aaa$ is West of $\bbb$, this contradicts $U$'s (G.2).

\noindent
{\sf Case 7.2: ($\aaa$ is in row $r$):}
Suppose $T$ has both $\FF \in \aaa$ and $\FF'$ in $\underline{\bbb^\downarrow}$, $\FF' \in \bbb^{\downarrow\rightarrow}$ or $\FF' \in \underline{\bbb^{\downarrow\rightarrow}}$. Then $U$ has $\bullet_{\GG^+} \in \aaa$ and either $\bullet_{\GG^+} \in \bbb^\downarrow$ or $\bullet_{\GG^+} \in \bbb^{\downarrow\rightarrow}$, contradicting $U$'s (G.2). Hence the reverse swap cannot both move $\FF$ North and move $\FF'$ South.

Suppose $T$ has $\FF \in \aaa$. Then $T$ has $\FF' \in \underline{\bbb}$ and $U$ has $\bullet_{\GG^+} \in \aaa$. Then $\FF = \GG^+$ and $\family(\FF) = \family(\GG) = \family(\GG^+)$. The reverse miniswap involving $\aaa$ is {\sf L4.1}, {\sf L4.2} or {\sf L4.3}. Hence $U$ has $\GG \in \aaa^\leftarrow$. By $U$'s (G.4), $\lab_U(\bbb) < \FF'$, so it is marked. By Lemma~\ref{lem:strong_form_of_G10}, $\lab_U(\aaa^\rightarrow)$ is marked. This contradicts the last sentence of $U$'s (G.12).

Suppose $T$ has $\FF'$ in $\underline{\bbb^\downarrow}$, $\bbb^{\downarrow\rightarrow}$ or $\underline{\bbb^{\downarrow\rightarrow}}$. Then $T$ has $\FF \in \underline{\aaa}$  and $\bullet_\GG \in \bbb^\downarrow$, while $U$ has $\bullet_{\GG^+} \in \bbb^\downarrow$ or $\bbb^{\downarrow\rightarrow}$. Hence although $T$ has $\FF$ NorthWest of $\FF'$, they do not violate (G.12).

\smallskip
\noindent
{\bf Consistency of the prescribed virtual labels:}
\begin{lemma}\label{lem:virtual_label_validity_B}
Let $U$ be a $\GG^+$-good tableau in which we apply {\sf L2.3}, {\sf L4.3} or {\sf L4.5}, and let $T \in \revswap_{\GG^+}(U)$. Then all prescribed $\circled{\GG}$'s from the outputs of {\sf L2.3}, {\sf L4.3} and {\sf L4.5} are valid virtual labels in the sense of {\normalfont (V.1)--(V.3)}.
\end{lemma}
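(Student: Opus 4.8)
The plan is to argue exactly as in Lemma~\ref{lemma:virtuallabelsexist} (the forward analogue appearing in Appendix~\ref{sec:forward_goodness_proof}), checking the three requirements (V.1)--(V.3) for each prescribed $\circled{\GG}$ in turn. Fix a ladder row of $U$ where {\sf L2.3}, {\sf L4.3} or {\sf L4.5} is applied and let $\x$ be its westmost box, so that in $T$ we have $\bullet_\GG \in \x$ and the prescribed $\circled{\GG} \in \underline{\x}$ (or $\circled{\GG} \in \underline{\aaa}$ in the {\sf L4.5} notation, but after translating notation these are the same). Write $U^\star$ for $T$ with an honest $\GG$ added on $\underline{\x}$; then the obligations are: (V.1) this $\GG \in \underline{\x}$ is not marked in $U^\star$; (V.2) some $\GG$ appears West of $\x$ in $T$; and (V.3) the conditions (G.1), (G.4), (G.5), (G.6), (G.8), (G.9), (G.12) all hold for $U^\star$.

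First I would dispatch (V.1): by definition of the three reverse miniswaps, these cases are precisely the ones where $\x$ does \emph{not} contain the westmost $\GG$, so by $U$'s (G.4) there is a $\GG$ strictly West of $\x$ whose presence means any $\GG$ on $\underline\x$ is the second-or-later instance in its gene; in particular no $\bullet_{\GG^+}$ lies northwest of $\underline{\x}$ in $U$, so by (the reverse-swap analogue of) Lemma~\ref{lem:keybulletreversalfact} there is no $\bullet_\GG$ northwest of $\x$ in $T$, and the $\GG \in \underline{\x}$ is unmarked. This same West-of-$\x$ instance of $\GG$ gives (V.2) after noting, as in the (G.7) argument above, that it (or a child of it) persists West of $\x$ in $T$. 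For (V.3), most conditions are inherited directly: (G.1) from $U$'s (G.1) together with the fact that the added $\GG$ sits in a ladder box; (G.5) because the reverse miniswap does not alter the nonvirtual edge labels on $\underline{\x}$; (G.6) because all labels of $\family(\GG)$ occupy the same columns in $T$ and in $U^\star$ as some West-shifted version in $U$, and one invokes Lemma~\ref{lem:keyreversalfact}. Conditions (G.4), (G.8), (G.9) and (G.12) are then checked exactly by the local arguments already given in the bodies of the (G.4), (G.8), (G.9) and (G.12) verifications above, now applied to $U^\star$ instead of $T$ — the added $\GG$ on $\underline{\x}$ is southeast of $\bullet_\GG \in \x$, so (G.9) forces any potential conflicting $\bullet_{\GG^+}$ not to be southeast of $\underline\x$, and any $\GG$ NorthWest of $\underline{\x}$ must be in $\x$'s row or on a top edge of that row (by $U$'s (G.12) together with the observation that no reverse miniswap moves a label more than one box north), hence sees the $\bullet_\GG \in \x$.

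The main obstacle, as in the forward case, is the ballotness check (G.8) for $U^\star$: one must rule out a genotype of $U^\star$ in which the selected $\GG \in \underline{\x}$ is read before some selected $\GG^-$ (or, more precisely, a violation between $\family(\GG^-)=\family(\GG)$ and the next family). The key point is that such a violating genotype would force a $\GG^-$, or equivalently a $\GG$ of the \emph{first} instance of its gene, to lie East of or below-in-column the $\GG \in \underline{\x}$; but by $U$'s (G.4) and the structure of the reverse miniswap (which in {\sf L4.5} in particular requires that $\x$ is \emph{not} the westmost $\GG$) there is already a $\GG$ West of $\x$ in $T$, and one then derives a contradiction with $U$'s (G.8) using Lemma~\ref{lem:how_to_check_ballotness} and the equality $N_{\GG^-} = N_{(\text{next family label})}$ coming from $U$'s Lemma~\ref{lem:strong_form_of_G13}. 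Once (G.8) is settled, (G.12) — the only remaining condition that genuinely interacts with the newly added label — follows from the two small claims used in Case 1 of the (G.13) verification above, transported to the reverse setting: namely that no label of $\family(\GG)$ lies SouthEast of $\underline{\x}$ in $T$ (using $U$'s (G.2) and the absence of a $\bullet_{\GG^+}$ southeast of $\x$), and that any label of $\family(\GG)$ NorthWest of $\underline{\x}$ sees the $\bullet_\GG \in \x$. I would conclude by remarking that since $U^\star$ satisfies all of (V.1)--(V.3), the prescribed $\circled{\GG}$ is indeed a legitimate virtual label of $T$, completing the verification. \qed
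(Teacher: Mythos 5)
Your skeleton is the same as the paper's (form $T^\star$ by materializing the prescribed $\circled{\GG}$, then verify (V.1)--(V.3) condition by condition), but several of the substantive checks are either misstated or waved away. A first concrete problem: in {\sf L4.5} the prescribed $\circled{\GG}$ lies on $\underline{\x^\rightarrow}$, beneath the box that receives $\FF$ — not on the lower edge of the box receiving $\bullet_\GG$ — so your assertion that "after translating notation these are the same" as the {\sf L2.3}/{\sf L4.3} situation is false. This matters because the {\sf L4.5} verifications are genuinely different: for (G.4) below that edge one needs $U$'s (G.11) and (G.12) to force $\family(\lab(\x^{\rightarrow\downarrow}))>\family(\GG)$, and for (G.5) one needs the case-specific observation that a second label of $\GG$'s family on that edge could only be $\GG^+$ (by (G.6) and (G.7)), contradicting the {\sf L4.5} hypothesis; neither is covered by "the local arguments already given above." Also, your (V.1) detour is both unnecessary and shaky: the inference "a $\GG$ West of $\x$ implies no $\bullet_{\GG^+}$ northwest of $\underline{\x}$ in $U$" is not justified as stated, whereas the correct one-line argument is simply that every bullet of $T$ is $\bullet_\GG$, so no instance of $\GG$ can be marked.

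The real gap is (G.8). Adding a label of family $i:=\family(\GG)$ can only create a ballotness violation between families $i-1$ and $i$ (the new $\GG$ being read before the matching gene of family $i-1$); your framing of the danger as a violation "between $\family(\GG)$ and the next family" is backwards, and the contradiction you sketch via Lemma~\ref{lem:strong_form_of_G13} does not apply because no marked label is present in these configurations. The paper settles (G.8) by three case-specific arguments: for {\sf L4.5} it is immediate from the hypothesis $N_\GG=N_\EE$ for $\EE\in Z$; for {\sf L2.3} one compares with the other output $\widetilde{T}$ of {\sf L2.3}, whose column reading word coincides with that of $T^\star$ and whose goodness is already known; for {\sf L4.3} one applies Lemma~\ref{lem:how_to_check_ballotness} in $U$ to the family-$(i-1)$ gene $\FF$ with $N_\FF=N_\GG$. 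Similarly, (G.12) is not obtained by "transporting" the two claims from Case 1 of the forward (G.13) verification — those rest on Lemma~\ref{lem:virtualG13label} and the presence of marked labels, which is not the situation here — and you never rule out the exceptional configuration in the last sentence of (G.12). The paper instead argues {\sf L2.3} via the other output's (G.12), {\sf L4.3} via the $\GG^+\in\x^\rightarrow$ already present in $T$, and {\sf L4.5} by re-running Cases 3--5 of the (G.12) verification for $T$. As written, your proposal leaves these steps unproved.
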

\begin{proof}
Consider such a reverse miniswap. We may assume $T$ contains an output with a prescribed $\circled{\GG}$, say in $\underline{\x}$. Let $T^\star$ be $T$ with $\GG$ added in $\underline{\x}$.

((V.1) holds, i.e., $\GG \in \underline{\x}$ is not marked in $T^\star$): Every $\bullet$ in $T^\star$ is $\bullet_\GG$, so by definition no $\GG$ in $T^\star$ is marked.

((V.2) holds, i.e., $T$ has a $\GG$ West of $\x$): By assumption of the reverse miniswaps, $U$ has some $\GG$ West of $\x$. Hence by Lemma~\ref{lem:keyreversalfact}, $T$ also has a $\GG$ West of $\x$. 

((G.1) holds in $T^\star$): Immediate from $U$'s (G.1).

((G.4) holds in $T^\star$): Let $\HH = \lab_T(\x^\downarrow)$. We must show $\GG < \HH$.  By Lemma~\ref{lem:ladders.sw.ne}, $\x^\downarrow$ is not in a ladder of $U$, so $\HH = \lab_U(\x^\downarrow)$. Hence by $U$'s (G.4), we get $\GG < \HH$ in the case of {\sf L2.3} and {\sf L4.3}. In the case of {\sf L4.5}, by $U$'s (G.11), $\family(\HH) \geq \family(\GG)$. But by $U$'s (G.12), $\family(\HH) \neq \family(\GG)$, so $\GG < \HH$.

Let $\FF = \lab_T(\x^\uparrow)$. We must show $\FF < \GG$ in the {\sf L2.3} and {\sf L4.3} cases. (In the {\sf L4.5} case, there is nothing to confirm here.) Since $\x^\uparrow$ is not part of a ladder in $U$, $\FF = \lab_U(\x^\uparrow)$ as well. Hence $\FF < \GG$ follows from $U$'s (G.4).

((G.5) holds in $T^\star$): Suppose not. Then there is some $\GG' \in \underline{\x}$ in $T$ with $\family(\GG') = \family(\GG)$. This $\GG'$ is not the result of any reverse miniswap, so $\GG' \in \underline{\x}$ in $U$. In the case of {\sf L2.3} or {\sf L4.3}, this contradicts $U$'s (G.4). In the case of {\sf L4.5}, note that by $U$'s (G.6) and (G.7), $\GG' = \GG^+$; this contradicts the assumption of {\sf L4.5}. 

((G.6) holds in $T^\star$): This follows from Lemma~\ref{lem:keyreversalfact}, as in the proof of $T$'s (G.6) above.

((G.8) holds in $T^\star$): In the case of {\sf L4.5}, this is immediate from the assumption that $N_\GG = N_\EE$ for every $\EE \in Z$. In the case of {\sf L2.3}, consider the tableau $\widetilde{T}$ differing from $T$ only in the box $\x$, where we choose the other output of {\sf L2.3}. By $\widetilde{T}$'s (G.8), $\widetilde{T}$ is ballot. But $\widetilde{T}$ and $T^\star$ have the same column reading words. Hence $T^\star$ is ballot.

Finally consider the case of {\sf L4.3}. By $T$'s (G.8), $T$ is ballot. Hence if there is a genotype $G$ of $T^\star$ that is not ballot, $G$ uses $\GG \in \underline{\x}$. Let $\FF$ be the gene with $\family(\FF) = \family(\GG) - 1$ and $N_\FF = N_\GG$. Since $G$ is not ballot, $\FF$ appears after $\GG$ in ${\tt word}(G)$. Say $\FF$ appears in column $c$ in $G$. By inspection of the reverse miniswaps, $\FF$ appears in $U$ either in column $c$ or column $c^\leftarrow$. Thus considering this $\FF$ and $U$'s $\GG \in \x$, we contradict Lemma~\ref{lem:how_to_check_ballotness} for $U$.

((G.9) holds in $T^\star$): $\underline{\x}$ is southeast of a $\bullet_\GG$ in $T$. Hence by $T$'s (G.2), $\underline{\x}$ is not northwest of a $\bullet_\GG$ in $T$, and this (G.9) verification is vacuous.

((G.12) holds in $T^\star$): For {\sf L2.3}, consider the tableau $\widetilde{T}$ differing from $T$ only in the box $\x$, where we choose the other output of {\sf L2.3}. By $\widetilde{T}$'s (G.12), any label $\GG'$ of $\widetilde{T}$ with $\family(\GG') = \family(\GG)$ that is West of $\x$ must be no further North than the upper edge of $\x$'s row. Since $T^\star$ and $\widetilde{T}$ are identical outside of $\x$, this is also true of $T^\star$. Since $T^\star$ has $\bullet_\GG \in \x$ and $\GG \notin \x^\leftarrow$, $T^\star$'s (G.12) then follows.

For {\sf L4.3}, observe that in light of the $\GG^+ \in \x^\rightarrow$ in $T$, it follows from $T$'s (G.12) that any label $\GG'$ of $T$ with $\family(\GG') = \family(\GG)$ that is West of $\x$ must be no further North than the upper edge of $\x$'s row. Since $T^\star$ has $\bullet_\GG \in \x$ and $\GG \notin \x^\leftarrow$, $T^\star$'s (G.12) then follows.

The {\sf L4.5} case is proved by repeating the arguments above for $T$'s (G.12) ({\sf Cases 3--5}).
\end{proof}

\smallskip
\noindent
{\bf (G.13):} For every marked label $\EE^!$ in $T$, $U$ has $\EE$ or $\EE^!$ in the same position. Thus our analysis splits into two cases:

\noindent
{\sf Case 1: ($U$ has $\EE^! \in \bbb$ or $\underline{\bbb}$):}
Let $\ell$ be this instance of $\EE^!$ and let $\bbb$ be in column $c$. $U$ also has some $\FF$ or $\circled{\FF} \in \underline{\bbb}$ with $N_\EE = N_\FF$ and $\family(\FF) = \family(\EE) + 1$. Since $\ell$ is marked, $\EE \preceq \GG$.

\noindent
{\sf Subcase 1.1: ($\family(\EE) \leq \family(\GG) - 2$):}
Such a marked label is not moved by any reverse miniswap. Hence, $\ell$ is in the same position in $U$ and $T$.
By Lemma~\ref{lem:strong_form_of_G13}, $U$ has $\FF \in \underline{\bbb}$ (rather than $\circled{\FF}$). This $\FF$ is also not  moved by any reverse miniswap, so $T$ has $\FF \in \underline{\bbb}$ as well, and (G.13) is satisfied.

\noindent
{\sf Subcase 1.2: ($\family(\EE) = \family(\GG) - 1$):} Here, $\family(\FF) = \family(\GG)$. As in {\sf Subcase~1.1}, $\ell$ is in the same position in $U$ and $T$. Suppose $U$ has $\FF \in \underline{\bbb}$ (rather than $\circled{\FF}$). By $U$'s (G.11) $\bullet_{\GG^+} \notin c$ in $U$. Hence there is no reverse miniswap that affects $\FF \in \underline{\bbb}$. So $T$ has $\FF \in \underline{\bbb}$ and (G.13) holds.

Hence, assume $\circled{\FF} \in \underline{\bbb}$ in $U$. By Lemma~\ref{lem:virtualG13label}, $\family(\FF) = \family(\GG^+)$ and $U$ has $\bullet_{\GG^+} \in \bbb^\leftarrow$. Let $T^\star$ be $T$ with $\FF \in \bbb$. By $U$'s (V.1), $\GG \prec \FF$. Hence $T^\star$ satisfies (V.1). By the proof of Lemma~\ref{lemma:reversecontentpres}, it satisfies (V.2).  It remains to show that $T^\star$ satisfies (G.1), (G.4)--(G.6), (G.8), (G.9) and (G.12).

(G.1): Since $U$ has $\circled{\FF} \in \underline{\bbb}$, this follows from $T$'s (G.1).

(G.4): By $T$'s (G.4), since $\EE < \FF$, $T$ has $\QQ < \FF$ for any $\QQ$ north of $\bbb$ in $c$. By $U$'s (G.4), since no reverse miniswap affects labels of family greater than $\family(\FF) = \family(\GG)$, we also have $\QQ > \FF$ for any $\QQ$ appearing south of $\bbb$ in $c$ in $T$. Hence (G.4) holds in $T^\star$.

(G.5): By $U$'s (G.5), $U$ has no label of family $\family(\FF)$ on $\underline{\bbb}$. Indeed by $U$'s (G.4), $U$ has no label of that family in $c$. By $U$'s (G.11), $U$ has $\bullet_{\GG^+} \notin c$. Hence no ladder of $U$ intersects $c$ and $T$ has no label of family $\family(\FF)$ on $\underline{\bbb}$. Hence (G.5) holds in $T^\star$.

(G.6): As no ladder of $U$ intersects $c$, the genes West of $c$ in $U$ are exactly the genes West of $c$ in $T$, and the genes East of $c$ in $T$ are a subset of those East of $c$ in $U$. Hence (G.6) holds in $T^\star$.

(G.8): As no ladder of $U$ intersects $c$, $c$ is the same in $U$ and $T$. Since $N_\EE = N_\FF$ and $\family(\FF) = \family(\EE) + 1$, it suffices to check no $\EE$ is read in $T^\star$ after $\FF \in \underline{\bbb}$.
By $U$'s (G.4), there is no $\EE$ in $U$ South of $\underline{\bbb}$ in $c$. Thus, this is also true in $T$. By Lemma~\ref{lemma:markedHiswestmost}, $\ell$ is the westmost $\EE$ in $U$. But the genes that appear West of $c$ in $U$ are exactly the genes that appear West of $c$ in $T$. Hence no $\EE$ appears West of $\bbb$ in $T$, no $\EE$ in read in $T^\star$ after $\underline{\bbb}$ and $T^\star$'s (G.8) holds.

(G.9): Since $U$ has $\bullet_{\GG^+} \in \bbb^\leftarrow$, by $U$'s (G.2), $\bbb$ is not northwest of a $\bullet_{\GG^+}$. Hence by inspection of the reverse miniswaps, $\bbb$ is not northwest of a $\bullet_\GG$ in $T$, so (G.9) holds in $T^\star$.

(G.12): Suppose $i$ is a label of $\FF$'s family appearing in $\aaa$ or $\underline{\aaa}$ NorthWest of $\underline{\bbb}$ in $T$. First suppose $i$ does not appear in the same position in $U$. Then $i$ was involved in a reverse miniswap. If it was {\sf L1.1}, then $\GG \in \aaa^\uparrow$ in $U$, contradicting $U$'s (G.12). It obviously was not {\sf L1.2}. If it was an {\sf L2} miniswap, $i \in \aaa$ in $U$, contradicting $U$'s (G.12). The same holds for {\sf L3}. For {\sf L4.1--3} to apply, $\aaa = \bbb^\leftarrow$ by $U$'s (G.2), but this contradicts $U$'s (G.12) (last sentence). In {\sf L4.4} and {\sf L4.5}, $Z \neq \emptyset$ and the only labels of concern are the $\GG$'s. Since they satisfy (G.12) in $U$, they do in $T^\star$.

Now, suppose that $i$ is in the same position in $U$ and $T$. Hence $i$ is NorthWest of $\underline{\bbb}$ in $U$. Let $\bbb$ be in row $r$. By $U$'s (G.12), either $U$ has $i \in \aaa$ and $\aaa \in r$, or else $U$ has $i \in \underline{\aaa}$ and $\aaa \in r^\uparrow$. Also $\aaa$ is West of $\bbb^\leftarrow$. Since the labels in question do not move, it remains to check that $\bullet_\GG$ appears in row $r$ in $T$ East of $\aaa$ and west of $\bbb$. We are clearly only concerned when the reverse miniswap involving $\bbb^\leftarrow$ is {\sf L1.1}, or is {\sf L4} with $\bbb^\leftarrow = \aaa^\rightarrow$. If it is {\sf L1.1}, $U$ has $\GG \in \bbb^{\leftarrow\uparrow}$, contradicting $U$'s (G.12). If it is {\sf L4} with $\bbb^\leftarrow = \aaa^\rightarrow$, then $i$ would not appear in the same position in $T$, contradicting our assumption.

Now, suppose $i$ is a label of $\FF$'s family appearing in $\aaa$ or $\underline{\aaa}$ SouthEast of $\underline{\bbb}$ in $T$. First, suppose that $i$ does not appear in the same position in $U$. Then $i$ was involved in a reverse miniswap. By $U$'s (G.2), this can only be a {\sf L2} reverse miniswap. It is obviously not {\sf L2.1}. If it is {\sf L2.2} or {\sf L2.3}, then by $U$'s (G.12), the $\GG$ or $\circled{\GG} \in \underline{\aaa}$ is not a (G.12) violation in $T$ because of $T$'s $\bullet_\GG \in \aaa$.

Otherwise $i$ is in the same position in $U$ and $T$. Hence $i$ is SouthEast of $\underline{\bbb}$ in $U$. By $U$'s (G.12), $U$ has a $\bullet_{\GG^+}$ SouthEast of $\bbb$. Given $U$'s $\bullet_{\GG^+} \in \bbb^\leftarrow$, this contradicts $U$'s (G.2).

We conclude that the desired $\circled{\FF}$ appears on $\underline{\bbb}$ in $T$.

\noindent
{\sf Subcase 1.3: ($\family(\EE) = \family(\GG)$):}
Suppose $\ell$ is moved by the reverse swap. Recall $\EE \prec \GG^+$. By inspection, no reverse swap will move such a $\EE \prec \GG^+$ with $\family(\EE) = \family(\GG)$ unless $\EE = \GG$. Since the $\bullet$'s in $T$ are $\bullet_\GG$'s, no $\EE$ will be marked in $T$, so $T$'s (G.13) check is vacuous.

Hence assume that $\ell$ is unaffected by $\revswap_{\GG^+}$ and indeed that $\EE \prec \GG$. Since $\family(\FF) = \family(\GG) + 1$, no reverse miniswap affects any instance of $\FF$. In particular, if $U$ has $\FF \in \underline{\bbb}$ (instead of $\circled{\FF}$), then $T$ will also have $\FF \in \underline{\bbb}$ and satisfy (G.13).

Hence further assume that $U$ has $\circled{\FF} \in \underline{\bbb}$. We need $\circled{\FF} \in \underline{\bbb}$ in $T$. By Lemma~\ref{lem:virtualG13label}, $U$ has $\bullet_{\GG^+} \in \bbb^\leftarrow$ and $\family(\FF) = \family(\GG^+)$. Let $c$ be $\bbb$'s column. By $U$'s (G.11), $U$ has $\bullet_{\GG^+} \notin c$.  By $U$'s (G.4), since $\EE \prec \GG$, $U$ has $\GG \notin c$. Hence no ladder of $U$ intersects $c$ and so column $c$ in $T$ is identical to column $c$ in $U$. 

Let $T^\star, U^\star$ be $T, U$ respectively with $\FF \in \underline{\bbb}$ added. We show $T^\star$ satisfies (V.1)--(V.3).

(V.1): Since $\FF > \GG$, this is obvious.

(V.2): By $U$'s (V.2), $U$ has an $\FF$ West of $c$. Hence there is an $\FF$ West of $c$ in $T$, as needed.

(V.3): We show $T^\star$ satisfies (G.1), (G.4), (G.5), (G.6), (G.8), (G.9) and (G.12).
We know $T$ satisfies these. 

(G.1): Immediate from $T$'s (G.1), given $U$'s $\circled{\FF} \in \underline{\bbb}$.

(G.4) and (G.5): These hold in $T^\star$ since they hold in $U^\star$, and column $c$ of $T$ is identical to column $c$ of $U$. 

(G.6): The genes West of $c$ in $U$ are exactly the genes West of $c$ in $T$, and the genes East of $c$ in $T$ are a subset of those East of $c$ in $U$. Now $T^\star$'s (G.6) follows.

(G.8): 
Immediate from $U$'s $\circled{\FF} \in \underline{\bbb}$ and the facts that 
\begin{itemize}
\item the genes West of $c$ in $T$ are exactly the genes West of $c$ in $U$,
\item the genes East of $c$ in $T$ are a subset of those East of $c$ in $U$, and 
\item column $c$ in $T$ is identical to column $c$ in $U$.
\end{itemize}

(G.9): Since $\ell$ is marked in $T$, $T$ has a $\bullet_\GG$ northwest of $\underline{\bbb}$. Hence by $T$'s (G.2), $\underline{\bbb}$ is not northwest of a $\bullet_\GG$ in $T^\star$, so this condition is vacuous.

(G.12): Take $\FF'$ with $\family(\FF') = \family(\FF)$. Suppose $\FF'$ is NorthWest of $\underline{\bbb}$ in $T^\star$. Since $\family(\FF') > \family(\GG)$, $\FF'$ appears in the same positions in both $T^\star$ and $U^\star$. Hence $\FF'$ is NorthWest of $\underline{\bbb}$ in $U'$. But then this $\FF'$ is northwest of $U^\star$'s $\bullet_{\GG^+} \in \bbb^\leftarrow$, contradicting $U^\star$'s (G.9). Thus $T^\star$ has no such $\FF'$ NorthWest of $\underline{\bbb}$. Similarly $T^\star$ has no $\FF'$ SouthEast of $\underline{\bbb}$.

\noindent
{\sf Case 2: ($\ell$ is a marked label in $T$ that is not marked in $U$):}
Suppose $\ell$ is an instance of $\EE^!$ on $\bbb$ or $\underline{\bbb}$ in $T$.  Since $\ell$ is marked and every bullet in $T$ is $\bullet_\GG$, $\EE \prec \GG$. Hence $\EE \prec \GG^+$, and any instance of $\EE$ southeast of a $\bullet_{\GG^+}$ is marked in $U$.

\noindent
{\sf Case 2.1: ($\family(\EE) = \family(\GG)$):}
No reverse miniswap affects any instance of $\EE$. Hence $\ell$ is in the same position in $U$ as in $T$. Since $\ell$ is unmarked in $U$, $U$ has no $\bullet_{\GG^+}$ northwest of $\underline{\bbb}$. Hence, since $U$ has $\EE \in \bbb$ or $\underline{\bbb}$, by $U$'s (G.3) and (G.4), $U$ has no $\GG$ northwest of $\underline{\bbb}$. But there is $\bullet_\GG$ northwest of $\underline{\bbb}$ in $T$; this contradicts Lemma~\ref{lem:keybulletreversalfact}.

\noindent
{\sf Case 2.2: ($\family(\EE) < \family(\GG)$):}
If $\ell$ is not moved by $\revswap_{\GG^+}$, we obtain a contradiction exactly as in {\sf Case 2.1}.
Otherwise, it is moved by a {\sf L4.4} or {\sf L4.5} reverse miniswap. Then by definition $N_\EE = N_\GG$. 
Let $\FF$ be the gene (which must exist) with $N_\FF = N_\EE = N_\GG$ and $\family(\FF) = \family(\EE) + 1$. By Lemma~\ref{lem:draggable_labels_in_complete_sets}, $\FF$ appears in $\bbb^\leftarrow$'s column in $U$. Hence by $U$'s (G.4), $U$ has $\FF \in \bbb^\leftarrow$ or $\overline{\bbb^\leftarrow}$. It is then in the set $A''$ or $A''' \cup \{\GG\}$ (in the notation of {\sf L4.4}/{\sf L4.5}) and appears on $\underline{\bbb}$ in $T$ (possibly virtual), as desired.
\qed

\section{Block decomposition; completion of proof of Proposition~\ref{thm:weight_preservation}(II)}
\label{sec:blockdecomp}

Below, we define, for each
$S\in {\tt Snakes}_\GG$, a set ${\mathcal B}_S$ containing $S$. Clearly
$\cup_{S\in {\tt Snakes}_\GG} {\mathcal B}_S={\tt Snakes}_\GG$. Along the way, we will argue that 
if $S,S'\in {\tt Snakes}_\GG$ and $S'\in {\mathcal B}_S$, then
${\mathcal B}_S={\mathcal B}_{S'}$. This proves (D.1). 

Recall $\Gamma_i:=\{T\in P_\GG\colon \text{$T$ contains a snake from ${\mathcal B}_i$}\}$.
From the construction, the following two additional conditions will also be essentially clear:
\begin{itemize}
\item[(D.3)] Suppose  $A,B\in \Gamma_i$. For each snake in ${\mathcal B}_i$ 
and $A$, there is a snake of ${\mathcal B}_i$ in $B$ and 
in the exact same location.
\item[(D.4)] $A, B \in \Gamma_i$ are identical outside of the snakes in $\mathcal{B}_i$.
\end{itemize}

The bulk of the work is to establish (D.2). This will be done simultaneously with the description of each
${\mathcal B}_S$. To establish (D.2), we must verify (\ref{eqn:abc}) by considering the {\tt boxfactor}s, {\tt edgefactor}s
and {\tt virtualfactor}s from every box and edge of the common shape $\nu/\lambda$.
Except where otherwise noted, by inspection, these factors do not change for boxes/edges not in ${\mathcal B}_S$. 
Thus, the majority of our discussion concerns
the region defined by ${\mathcal B}_S$. For simplicity, we assume $\mathcal{B}_J = \emptyset$. The modifications for the general case are straightforward, using (D.3) and (D.4).

Assume $\GG := i_k$. Let $S\in {\tt Snakes}_\GG$ be in the tableau $U$. We break into cases according to the type of $\head(S)$. We write $T_1, T_2, \dots$ for the fine tableau in $\swap_S(U)$, in the order illustrated in each case. We write $U_j$ for $T_j$ together with its coefficient in $\swap_S(U)$.

\noindent
{\sf Case 0: ($\head(S) = \emptyset$):} By Definition-Lemma~\ref{def-lem:snake_classification}, either the southmost row of $S$ contains a single box, or else it consists of two boxes $\x, \x^\rightarrow$ with $\bullet \in \x$ and a marked label in $\x^\rightarrow$. Thus, $\body(S)$ is either empty, or it falls under case {\sf B1} or {\sf B3}.

\noindent
{\sf Subcase 0.1: ($\body(S)$ is {\sf B1}):} Let ${\mathcal B}_S=\{S\}$.
Since $S$ contains no $\bullet_\GG$, ${\tt swapset}_{\mathcal{B}_S}(U) = U$. Thus $\wt({\tt swapset}_{\mathcal{B}_s}(U)) = \wt(U)$, which implies (\ref{eqn:abc}).

\noindent
{\sf Subcase 0.2: ($\body(S)$ is {\sf B3} or $\body(S) = \emptyset$):}
By  Definition-Lemma~\ref{def-lem:snake_classification}(III) either $S$ has at least two rows, or else $S = \tail(S)$. In either case, $\tail(S) \neq \emptyset$.

\noindent
{\sf Subcase 0.2.1: ($\tail(S)$ is {\sf T1}):}
Let ${\mathcal B}_{S}=\{S\}$. Locally at the snake $S$, this swap looks like
\ytableausetup{boxsize=1em}
$\ytableaushort{ \none \bullet,  \bullet \GG,  \GG}\mapsto \prod_{\x \ni \bullet_\GG} \hat{\beta}(\x) \cdot\ytableaushort{ \none \GG,  \GG \bullet, \bullet}$. Note that $\body(S)$ is nonempty in this case.
This swap does not affect the locations or weights of edge labels or virtual labels in $U$. Hence ${\tt edgewt}(U) = {\tt edgewt}(T_1)$ and ${\tt virtualwt}(U) = {\tt virtualwt}(T_1)$. One checks that a box outside $S$ is productive in $U$ if and only if it is productive in $T_1$. (The critical checks are for the box immediately east of the northmost box of $S$ and the box immediately west of the southmost box of $S$.) Also, each such productive box has the same ${\tt boxfactor}$ in $U$ and $T_1$. The productive boxes of 
$S_1$ are the boxes $\{\x\}$ containing $\GG$, while in $S$ they are the boxes $\{\x^\downarrow \}$ containing $\GG$.  
For each productive box $\x$ of $S_1$ with ${\tt boxfactor}(\x) := w_\x$, 
there is a corresponding productive box $\x^\downarrow$ in $S$ with ${\tt boxfactor}(\x^\downarrow) = \hat{\beta}(\x)  w_\x $. 

Thus $\wt U = \wt U_1$ follows from 
\[ (-1)^{d(U)} \!\!\!\! \prod_{\x : \lab_{U}(\x) = \bullet_{\GG}} \!\!\!\!  \hat{\beta}(\x)  w_\x  =   \left( \prod_{\x : \lab_{U}(\x) = \bullet_{\GG}} \!\!\!\!  \hat{\beta}(\x) \right) \cdot (-1)^{d(U)}  \!\!\!\! \prod_{\x : \lab_{U}(\x) = \bullet_{\GG}} \!\!\!\!   w_\x.\] 

\noindent
{\sf Subcase 0.2.2: ($\tail(S)$ is {\sf T2}):}
Let ${\mathcal B}_{S}=\{S\}$.
This case is similar to {\sf Subcase 0.2.1}; we have something like
\ytableausetup{boxsize=1em}
\[\ytableaushort{ \none \bullet \GG,  \bullet \GG,  \GG}\mapsto  - \prod_{\x \ni \bullet} \hat{\beta}(\x) \cdot\ytableaushort{ \none \GG \bullet,  \GG \bullet, \bullet}.\]
This swap preserves locations and weights of edge labels. Hence ${\tt edgewt}(U) = {\tt edgewt}(T_1)$. 
A box outside $S$ is productive in $U$ if and only if it is productive in $T_1$. Also, each such 
productive box has the same ${\tt boxfactor}$ in $U$ and $T_1$. Let $\y$ be the box containing the 
Northmost $\GG$ in $S$. In $S_1$, the productive boxes are the boxes $\{ \x\}$ containing $\GG$. 
The productive boxes of $S$ are the boxes $\{ \x^\downarrow \}$ containing $\GG$ in all but the northmost row of $S$, and $\y$ if $\GG^+ \not\in \y^\rightarrow$ with $\family(\GG) = \family(\GG^+)$. 
For each productive box $\x$ in $S_1$ with ${\tt boxfactor}(\x) := w_\x$, there is a corresponding productive $\x^\downarrow$ of $S$ with ${\tt boxfactor}(\x^\downarrow) = \hat{\beta}(\x) w_\x$. 

The box $\y$ is productive in $S$ with ${\tt boxfactor}(\y) := y$ if and only if $\circled{\GG} \in \overline{\y}$ in $S_1$ with ${\tt virtualfactor}_\GG(\overline{\y}) = y$. The swap does not otherwise affect the location or weight contribution of virtual labels. Finally note $(-1)^{d(U)} = (-1)^{d(T_1) - 1}$. If $\y$ is productive in $S$, then $\wt U = \wt U_1$ follows from
\[ (-1)^{d(U)} \cdot y \!\!\!\! \prod_{\x : \lab_{U}(\x) = \bullet_{\GG}} \!\!\!\!  \hat{\beta}(\x)  w_\x  =  \left( -\prod_{\x : \lab_{U}(\x) = \bullet_{\GG}} \!\!\!\!  \hat{\beta}(\x) \right) \cdot  (-1)^{d(U) - 1} \cdot y \cdot \!\!\!\! \prod_{\x : \lab_{U}(\x) = \bullet_{\GG}} \!\!\!\!   w_\x.\] If $\y$ is not productive in $S$, we use the same identity without $y$.

\noindent
{\sf Subcase 0.2.3: ($\tail(S)$ is {\sf T3}):}
This is again similar to {\sf Subcase 0.2.1}. Locally at $S$, we have something like
\ytableausetup{boxsize=1.3em}
\[\begin{picture}(350,43)
\put(0,30){$\ytableaushort{\none \bullet {\GG^+}, \bullet \GG, \GG} \mapsto \prod_{\x \ni \bullet} \hat{\beta}(\x) \cdot\ytableaushort{\none \GG {\GG^+}, \GG \bullet, \bullet} - \alpha \cdot \ytableaushort{\none \GG \bullet, \GG \bullet, \bullet}.$}
\put(238,25){$\GG^+$}
\end{picture}
\]
By Lemma~\ref{lem:piecesobservations}(V), $S$ has at least two rows. Let $\y$ be the box containing $\GG^+$ in $S$. Here $\alpha := \prod_{\x \ni \bullet} \hat{\beta}(\x)$ if $\bullet \notin \y^\uparrow$ and $\alpha := 0$ otherwise.

The locations and weights of virtual labels are unaffected by the swap. Therefore, 
${\tt virtualwt}(U) = {\tt virtualwt}(T_1) = {\tt virtualwt}(T_2)$. Furthermore the 
{\tt edgefactor}s and {\tt boxfactor}s from labels outside of $S$ are the same in each of 
$U, T_1, T_2$, so we restrict attention to the {\tt boxfactor}s and {\tt edgefactor}s from labels inside $S$.

\noindent
{\sf Subcase 0.2.3.1: ($\bullet_\GG \not\in\y^\uparrow$)}:
Let ${\mathcal B}_{S}=\{S\}$.
The productive boxes of $S_1$ and $S_2$ are the boxes $\{ \x\}$ containing $\GG$ and not in the northmost row, and possibly also $\y$ (depending on what label, if any, appears in $\y^\rightarrow$). 
The productive boxes of $S$ are those boxes $\{ \x^\downarrow \}$ containing $\GG$ not in the second row from the top, the box $\z$ containing $\GG$ in the second row from the top, and possibly also $\y$.
One sees $\y$ is productive in any of $S, S_1, S_2$ if and only if it is productive in all of them. 
Further, if it is productive, then it has the same {\tt boxfactor}, say $a$, in each one.

For each productive box $\x$ in each of $S_1, S_2$, with ${\tt boxfactor}(\x) := w_\x$, there is a corresponding productive box $\x^\downarrow$ in $S$, with ${\tt boxfactor}(\x^\downarrow) = \hat{\beta}(\x) w_\x$. 
Let ${\tt edgefactor}_{\underline{\z^\rightarrow} \in T_2}(\GG^+) := 1 - b$. Then ${\tt boxfactor}_U(\z) = \hat{\beta}(\z^\uparrow) b$.

Now (\ref{eqn:abc}) is the statement 
$\wt U = \wt( U_1 + U_2)$. If $\y$ is productive in $U$, this follows from the identity
\[ a \hat{\beta}(\z^\uparrow) b \prod_{\x} \hat{\beta}(\x) w_\x = \left( \hat{\beta}(\z^\uparrow)  \prod_{\x}  \hat{\beta}(\x) \right) \cdot \left[  a \prod_{\x}    w_\x - (1-b) \cdot a  \prod_{\x}    w_\x  \right] \] 
Otherwise we use the same identity without $a$.

\noindent
{\sf Subcase 0.2.3.2: ($\bullet_\GG \in \y^\uparrow$)}: Here $\alpha=0$, so we ignore $T_2$.
Let $S'$ be the snake containing $\y^\uparrow$ and let ${\mathcal B}_{S}=\{S, S'\}$. By Lemma~\ref{lem:snakes_arranged_SW-NE}, $S' = \{\y^\uparrow\}$ participates in a trivial {\sf H3} or {\sf H8} miniswap.

The productive boxes $\{\x\}$ of $S_1$ are those containing $\GG$ (even the box $\y^\leftarrow$) and possibly also $\y$ (depending on what label, if any, appears in $\y^\rightarrow$). The productive boxes of $S$ are the boxes $\{ \x^\downarrow \}$ containing $\GG$ and possibly also $\y$. One checks that $\y$ is productive in $S$ if and only if it is productive in $S_1$. If productive, 
${\tt boxfactor}_U(\y)={\tt boxfactor}_{T_1}(\y) := y$. In $S'$, $\y^{\uparrow\rightarrow}$ is productive if and only if it is in $S'_1$; if it is productive, it contributes the same ${\tt boxfactor}(\y^{\uparrow\rightarrow}) := q$ to both.

For each productive $\x$ in $S_1$ with ${\tt boxfactor}(\x) := w_\x$, there is a corresponding productive $\x^\downarrow$ in $S$ with ${\tt boxfactor}(\x^\downarrow)= \hat{\beta}(\x) w_\x$. If $\y$ and $\y^{\uparrow\rightarrow}$ are productive in $U$, (\ref{eqn:abc}) follows from 
\[   qy \!\!\!\! \prod_{\x : \lab_{U}(\x) = \bullet_{\GG}} \!\!\!\!  \hat{\beta}(\x)  w_\x  =  q\left( \prod_{\x : \lab_{U}(\x) = \bullet_{\GG}} \!\!\!\!  \hat{\beta}(\x) \right) \cdot  y \cdot \!\!\!\! \prod_{\x : \lab_{U}(\x) = \bullet_{\GG}} \!\!\!\!   w_\x.\]
Otherwise we use the same identity without $q$, $y$ or both.

\noindent
{\sf Subcase 0.2.4: ($\tail(S)$ is {\sf T4}):}
Let $\tail(S) = \{ \x, \y:= \x^\rightarrow \}$. 
By (G.7), the $\GG \in \underline{\y}$ is westmost in its gene, so $\body(S) = \emptyset$.

\noindent
{\sf Subcase 0.2.4.1: ($\tail(S)$ is {\sf T4.1}):}
Set ${\mathcal B}_S = \{ S \}$. Locally at $S$,
$\begin{picture}(85,13)
\put(0,0){$\ytableaushort{\bullet {\mathcal{F}^!}} \mapsto 
\ytableaushort{\bullet {\mathcal{F}^!}},$}
\put(17,-4){\Scale[.7]{\GG, \HH}}
\put(68,-4){\Scale[.6]{\GG^!, \HH}}
\end{picture}$
where we have $\family(\mathcal{H}) = \family(\GG) + 1$ and $N_\mathcal{H}  = N_\GG$. Virtual labels appear in the same places in $U$ and $T_1$. In particular, 
neither 
$U$ nor $T_1$ can have $\circled{\GG} \in \underline{\x}$, since it would be West of every $\GG$. Further no labels move. As no weights change, trivially $\wt U = \wt U_1$, which implies (\ref{eqn:abc}). 

\noindent
{\sf Subcase 0.2.4.2: ($\tail(S)$ is {\sf T4.2}):}

\noindent
{\sf Subcase 0.2.4.2.1: ($\GG^- \in \x^\leftarrow$ with $\family(\GG^-) = \family(\GG)$):}
Set ${\mathcal B}_S = \{ S \}$. Locally at $S$, the swap is 
$\begin{picture}(213,16)
\put(0,0){$\ytableaushort{{\GG^-} \bullet {\FF^!}} \mapsto \ytableaushort{{\GG^-} \bullet {\FF^!}} + \hat{\beta}(\x) \cdot \ytableaushort{{\GG^-} \GG \bullet}.$}
\put(33,-3){\Scale[.5]{\GG, \circled{\HH}}}
\put(100,-3){\Scale[.5]{\GG^!, \circled{\HH}}}
\put(177,14){\Scale[.5]{\FF \cup Z}}
\end{picture}$
Here (\ref{eqn:abc}) is equivalent to $\wt U = \wt T_1 + \hat{\beta}(\x) \wt T_2$.

The $\FF^! \in \y$ is productive in $U$ and $T_1$ if and only if $\circled{\FF} \in \overline{\y}$ in $T_2$. 
If these boxes are productive,
\[{\tt boxfactor}_U(\y)={\tt boxfactor}_{T_1}(\y)={\tt virtualfactor}_{\overline{\y} \in T_2}(\circled{\FF}) :=a.\]
The box $\x$ is not productive in $U$ or $T_1$, but it is in $T_2$. Let ${\tt boxfactor}_{T_2}(\x) := u$. Then ${\tt virtualfactor}_{\underline{\y}\in U}(\circled{\HH}) = u$ and ${\tt virtualfactor}_{\underline{\y} \in T_1}(\circled{\HH})= u-1$. 
Let $w := {\tt edgefactor}_{\underline{\y} \in T}(\GG)$. Then  we have ${\tt edgefactor}_{\underline{\y} \in T_1}(\GG^!)=w$ and ${\tt edgefactor}_{\overline{\x} \in T_2}(\FF)=w$.  The box $\x^\leftarrow$ is productive in $U$ and $T_1$, but not in $T_2$. We have ${\tt boxfactor}_U(\x^\leftarrow) = {\tt boxfactor}_{T_1}(\x^\leftarrow)  = \hat{\beta}(\x) u$.

If $\y$ is productive in $U$, 
$\wt U  = \wt T_1 + \hat{\beta}(\x) \wt T_2$ 
follows from the identity on ${\mathcal B}_S$-contributions:
$\hat{\beta}(\x)au^2w =  \hat{\beta}(\x)au(u-1)w + \hat{\beta}(\x) a u w$.
If $\y$ is not productive in $U$, it follows from the same identity after cancelling $a$'s.

\noindent
{\sf Subcase 0.2.4.2.2: ($\GG^- \notin \x^\leftarrow$ or $\GG^- < \GG$):}
Let $\overline{T} := \phi_4^{-1}(T) \in P_{\GG}$.   
Let $\overline{S}$ be the snake of $\overline{T}$ containing $\x$.
Set $\mathcal{B}_S = \{ S, \overline{S} \}$; thus $\Gamma=\{T,\overline{T}\}$.

Locally at $S$, the swap is
$\begin{picture}(166,16)
\put(0,0){$\ytableaushort{\bullet {\FF^!}} \mapsto \ytableaushort{\bullet {\FF^!}} + \hat{\beta}(\x) \cdot \ytableaushort{\GG \bullet}.$}
\put(17,-3){\Scale[.5]{\GG, \circled{\HH}}}
\put(68,-3){\Scale[.5]{\GG^!, \circled{\HH}}}
\put(129,14){\Scale[.5]{\FF \cup Z}}
\end{picture}$ Locally at ${\overline S}$, 
$\begin{picture}(60,16)
\put(0,0){$\ytableaushort{\bullet {\FF^!}} \mapsto 0.$}
\put(21,-3){\Scale[.5]{\circled{\GG}}}
\put(5,-3){\Scale[.7]{\GG}}
\end{picture}$
By Proposition~\ref{prop:bundle_buddies:T6andT4}, $[T]P_\GG = [\overline{T}]P_\GG$. Hence (\ref{eqn:abc}) is equivalent to
$\wt U + \wt \overline{U} = \wt U_1 + \wt U_2$. Here $U(\Gamma)=\{U,\overline U\}$.

The $\FF^! \in \y$ is productive in $U$, $T_1$ and $\overline{U}$
if and only if $\circled{\FF} \in \overline{\y}$ in $T_2$. 
If these boxes are productive,
\[{\tt boxfactor}_U(\y)={\tt boxfactor}_{T_1}(\y)={\tt boxfactor}_{\overline U}(\y)
={\tt virtualfactor}_{\overline{\y} \in T_2}(\circled{\FF}):=a.\]
The box $\x$ is not productive in $U, T_1$ or $\overline U$.
Let $w = {\tt edgefactor}_{\underline{\y} \in T}(\GG)$. Then  we have
${\tt virtualfactor}_{\underline{\y} \in \overline{T}}(\circled{\GG}) = -w$, ${\tt edgefactor}_{\underline{\y} \in T_1}(\GG^!)=w$ and ${\tt edgefactor}_{\overline{\x} \in T_2}(\FF)=w$. 
Let $u = {\tt virtualfactor}_{\underline{\y}\in U}(\circled{\HH})$. Then ${\tt virtualfactor}_{\underline{\y} \in T_1}(\circled{\HH})= u-1$. Let $1-v = {\tt edgefactor}_{\underline{\x} \in \overline{U}}(\GG)$. Then ${\tt boxfactor}_{T_2}(\x) = \frac{v}{\hat{\beta}(\x)}$. 

If $\y$ is productive in $U$, 
$\wt U + \wt \overline{U} = \wt U_1 + \wt U_2$ 
follows from the identity on ${\mathcal B}_S$-contributions:
$auw - a(1-v)w =  a(u-1)w + a\hat{\beta}(\x) \frac{v}{\hat{\beta}(\x)}w$.
The same is true if $\y$ is not productive in $U$ except that $a$ does not
appear.

\noindent
{\sf Subcase 0.2.4.3: ($\tail(S)$ is {\sf T4.3}):}

\noindent
{\sf Subcase 0.2.4.3.1: ($\GG^- \in \x^\leftarrow$ with $\family(\GG^-) = \family(\GG)$):}
Set $\mathcal{B}_S = \{ S\}$. 
 Locally at $S$, the swap is 
$\begin{picture}(150,16)
\put(0,0){$\ytableaushort{{\GG^-} \bullet {\FF^!}} \mapsto \hat{\beta}(\x) \cdot \ytableaushort{{\GG^-} \GG \bullet}.$}
\put(33,-3){\Scale[.5]{Z \cup \GG}}
\put(114,14){\Scale[.5]{\FF \cup Z}}
\end{picture}$
Here (\ref{eqn:abc}) is equivalent to $\wt U = \hat{\beta}(\x) \wt T_1$.

For every label $\ell \in Z \cup \GG$ in $U$, there is a unique label $\ell_1 \in \FF \cup Z$ with 
$\family(\ell_1) = \family(\ell) - 1$.  Also, 
${\tt edgefactor}_U(\ell)={\tt edgefactor}_{T_1}(\ell_1):=a_\ell$. 

In $U$, $\y$ is productive if and only if $\y^\rightarrow$ does not contain a label of the same family as $\FF$. Hence $\y$ is productive in $U$ if and only if $\circled{\FF} \in \overline{\y}$ in $T_1$.
If $\y$ is productive in $U$, then ${\tt boxfactor}_U(\y)={\tt boxfactor}_{\overline{U}}(\y) = {\tt virtualfactor}_{\overline{\y} \in T_1}(\circled{\FF}) := b$.

The box $\x$ is productive in $T_1$, but not $U$. Let $w:={\tt boxfactor}_{T_1}(\x)$. The box $\x^\leftarrow$ is productive in $U$, but not in $T_1$. We have ${\tt boxfactor}_U(\x^\leftarrow) = \hat{\beta}(\x) w$. 

Hence if $\y$ is productive in $U$, $\wt U = \hat{\beta}(\x) \wt T_1$ follows from 
\[
\hat{\beta}(\x) w  b \prod_{\ell \in Z \cup \GG} a_\ell  = \hat{\beta}(\x) b  w \prod_{\ell_1 \in \FF \cup Z} a_\ell.
\] Otherwise, we use the same identity after canceling $b$.

\noindent
{\sf Subcase 0.2.4.3.2: ($\GG^- \notin \x^\leftarrow$ or $\GG^- < \GG$):}
Let $\overline{T} := \phi_4^{-1}(T) \in P_{\GG}$.
Let $\overline{S}$ be the snake of $\overline{T}$ containing $\x$ and set $\mathcal{B}_S = \{ S, \overline{S} \}$. 

Locally at $S$, the swap is
\ytableausetup{boxsize=1.6em}
$\begin{picture}(136,18)
\put(0,0){$\ytableaushort{\bullet {\mathcal{F}^!}} \mapsto \hat{\beta}(\x) \cdot \ytableaushort{\GG \bullet}.$}
\put(22,-2){\Scale[.5]{Z \cup \GG}}
\put(90,18){\Scale[.5]{\mathcal{F} \cup Z}}
\end{picture}
$
Locally at ${\overline S}$, 
$\begin{picture}(66 ,16)
\put(0,0){$\ytableaushort{\bullet {\FF^!}} \mapsto 0.$}
\put(21,-3){\Scale[.5]{Z \cup \circled{\GG}}}
\put(5,-3){\Scale[.7]{\GG}}
\end{picture}$
By Proposition~\ref{prop:bundle_buddies:T6andT4}, $[T]P_\GG = [\overline{T}]P_\GG$. Hence (\ref{eqn:abc}) is equivalent to $\wt U + \wt \overline{U} = \wt U_1$.

For every label $\ell \in Z \cup \GG$ in $U$, there is a unique label $\ell_1 \in \FF \cup Z$ with 
$\family(\ell_1) = \family(\ell) - 1$.  Also, 
${\tt edgefactor}_U(\ell)={\tt edgefactor}_{T_1}(\ell_1):=a_\ell$. 
If $\ell \in Z$ in $U$, then there is unique $\overline{\ell} \in Z$ in $\overline{U}$ with $\family(\ell) = \family(\overline{\ell})$, and ${\tt edgefactor}_{\overline{U}}(\overline{\ell}) = a_\ell$. Further ${\tt virtualfactor}_{\underline{\y} \in \overline{U}}(\circled{\GG})= -a_\GG$.

In $U$, $\y$ is productive if and only if $\y^\rightarrow$ does not contain a label of the same family as $\FF$. Hence $\y$ is productive in $U$ if and only if it is productive in $\overline{U}$ and further if and only if $\circled{\FF} \in \overline{\y}$ in $T_1$.
If $\y$ is productive in $U$, then ${\tt boxfactor}_U(\y)={\tt boxfactor}_{\overline{U}}(\y) = {\tt virtualfactor}_{\overline{\y} \in T_1}(\circled{\FF}) := b$.

The box $\x$ is productive in $T_1$. Let $w:={\tt boxfactor}_{T_1}(\x)$.
  Observe ${\tt edgefactor}_{\underline{\x} \in \overline{U}}(\GG) =1- \hat{\beta}(\x) w$. 
Hence $\wt U + \wt \overline{U} = \wt U_1$ follows from 
\[
b \prod_{\ell \in Z \cup \GG} a_\ell + b (1- \hat{\beta}(\x) w) (-a_\GG) \prod_{\ell \in Z} a_\ell = \hat{\beta}(\x) b  w \prod_{\ell \in Z \cup \GG} a_\ell
\] if $\y$ is productive in $U$. If it is not productive, we use the same identity without $b$.

\noindent
{\sf Subcase 0.2.5: ($\tail(S)$ is {\sf T5}):} Either $\body(S) = \emptyset$ and $\tail(S) = S$, or else $\body(S) \neq \emptyset$.
Set $B_S = \{ S \}$. Let $\tail(S) = \{ \x, \y:= \x^\rightarrow \}$.

\noindent
{\sf Subcase 0.2.5.1: ($\body(S) = \emptyset$):}
Locally at $S$,
$\begin{picture}(130,18)
\put(0,0){$\ytableaushort{\bullet {\FF^!}} \mapsto \hat{\beta}(\x) \cdot \ytableaushort{\GG \bullet}.$}
\put(21,-2){\Scale[.5]{\circled{\GG} \cup Z}}
\put(91,18){\Scale[.5]{\FF \cup Z}}
\end{picture}$
In $U$, $\x$ is not productive, while $\y$ is productive if and only if $\y^\rightarrow$ does not contain a label of the same family as $\FF$. In $T_1$, $\x$ is productive, but $\y$ is not. $\y$ is productive in $U$ if and only if $\circled{\FF} \in \overline{\y}$ in $T_1$. If $\y$ is productive in $U$, then ${\tt boxfactor}_U(\y) = {\tt virtualfactor}_{\overline{\y} \in T_1}(\circled{\FF}) := a$.

For every edge label $\ell \in Z$ in $U$, there is a unique $\ell_1 \in \FF \cup Z$ in $T_1$ with $\family(\ell_1) = \family(\ell)-1$. Furthermore ${\tt edgefactor}_U(\ell) = {\tt edgefactor}_{T_1}(\ell_1) := b_\ell$. Let $\ell_1^M$ be greatest label of $Z$ in $T_1$, and let ${\tt edgefactor}_{T_1}(\ell_1^M) := w$. Then ${\tt virtualfactor}_{\underline{\y} \in U}(\circled{\GG}) = -w$.

Let ${\tt boxfactor}_{T_1}(\x) := c$. Then ${\tt virtualfactor}_{\underline{\x} \in U}(\circled{\GG}) = \hat{\beta}(\x) c$. Since $T_1$ has one more $\GG$ than $U$, $d(U) = d(T_1) - 1$.

In this case (\ref{eqn:abc}) is equivalent to $\wt U = \hat{\beta}(\x) \wt T_1$. This follows from 
\[
(-1)^{d(U)} \cdot \left(\prod_{\ell \in Z} b_\ell \right) \cdot (-w)\hat{\beta}(\x) c \cdot a   =\hat{\beta}(\x)\cdot (-1)^{d(U)+1} \cdot w \left( \prod_{\ell \in Z} b _\ell \right) \cdot a \cdot c,
\]
if $\y$ is productive in $U$. If it is not productive, we use the same identity without $a$.

\noindent
{\sf Subcase 0.2.5.2: ($\body(S) \neq \emptyset$):}
Locally at $S$, the swap looks like
\[\begin{picture}(250,42)
\put(0,28){$\ytableaushort{\none \bullet {\FF^!}, \bullet \GG, \GG} \mapsto -\prod_{\z \ni \bullet} \hat{\beta}(\z) \cdot \ytableaushort{\none \GG \bullet, \GG \bullet, \bullet}.$}
\put(40,27){\Scale[.5]{\circled{\GG} \cup Z}}
\put(169,46){\Scale[.5]{\FF \cup Z}}
\end{picture}
\]

Let $A$ be the set of boxes of $S_1$ containing $\GG$. The productive boxes of $S$ are $\{\z^\downarrow : \z \in A\}$, as well as perhaps $\y$, which is productive if and only if $\y^\rightarrow$ contains a label of the same family as $\FF$ in $U$. The productive boxes of $S_1$ are $A$. For each $\z \in A$, let $a_\z := {\tt boxfactor}_{T_1}(\z)$. Then ${\tt boxfactor}_U(\z^\downarrow) = \hat{\beta}(\z)  a_\z$. If $\y$ is productive in $U$, let $b := {\tt boxfactor}_U(\y)$.
Observe $\y$ is productive in $U$ if and only if $\circled{\FF} \in \overline{\y}$ in $T_1$. Furthermore if $\y$ is productive in $U$, then ${\tt virtualfactor}_{\overline{\y} \in T_1}(\circled{\FF}) = b$.

For every edge label $\ell \in Z$ in $U$, there is a unique $\ell_1 \in \FF \cup Z$ in $T_1$ with $\family(\ell_1) = \family(\ell)-1$. Furthermore ${\tt edgefactor}_U(\ell) = {\tt edgefactor}_{T_1}(\ell_1) := c_\ell$. Let $\ell_1^M$ be greatest label of $Z$ in $T_1$, and let ${\tt edgefactor}_{T_1}(\ell_1^M) := w$. Then ${\tt virtualfactor}_{\underline{\y} \in U}(\circled{\GG}) = -w$.

In this case (\ref{eqn:abc}) is equivalent to $\wt U =  -\prod_{\z \in A} \hat{\beta}(\z) \wt T_1$. This follows from
\[
\left( \prod_{\ell \in Z} c_\ell \right) \cdot (-w) \cdot b \prod_{\z \in A} (\hat{\beta}(\z)a_\z) = \left (- \prod_{\z \in A} \hat{\beta}(\z) \right) \cdot w \left( \prod_{\ell \in Z} c_\ell \right) \cdot b \cdot \prod_{\z \in A} a_\z.
\]

\noindent
{\sf Subcase 0.2.6: ($\tail(S)$ is {\sf T6}):}
This case is covered by {\sf Subcases~0.2.4.2} and~{\sf 0.2.4.3}.

\noindent
{\sf Case 1: ($\head(S)$ is {\sf H1}):}
Set $B_S = \{S\}$. Let $\x$ be the unique box of $S$. Locally at $S$, the swap is
\ytableausetup{boxsize=1em}
$\begin{picture}(124,14)
\put(0,0){$\ytableaushort{\bullet}\mapsto \beta(\x) \cdot \ytableaushort{{\GG}} + \gamma \cdot \ytableaushort{\bullet},$}
\put(3,-4){$\Scale[.8]{\GG}$}
\put(109,10){$\Scale[.8]{\GG}$}
\end{picture}
$ where $\gamma := 0$ if $\x$ is in row $i$ and $\gamma := 1$ otherwise.

Let ${\tt boxfactor}_{T_1}(\x) := a$. The box $\x$ is productive in $U$ if and only if $\family(\lab(\x^\leftarrow)) = \family(\GG)$, in which case ${\tt boxfactor}_U(\x) =a$. Observe ${\tt edgefactor}_{{\underline\x}\in U}(\GG) = 1-\hat{\beta}(\x)a$,
${\tt edgefactor}_{\overline{\x}\in T_2}(\GG) =  1 - a$, 
and $\x$ is not productive in $T_2$. Notice further that $\gamma = 0$ if and only if $a = 1$.

If $\x^\leftarrow$ is empty, then $\x^\leftarrow$ is not productive in any of $U, T_1, T_2$. If $\x^\leftarrow$ is nonempty and $\lab(\x^\leftarrow) < \GG$, then $\x^\leftarrow$ is productive in all three tableaux, and ${\tt boxfactor}_U(\x^\leftarrow) = {\tt boxfactor}_{T_1}(\x^\leftarrow) = {\tt boxfactor}_{T_2}(\x^\leftarrow) := b$. If $\x^\leftarrow$ is nonempty and $\family(\lab(\x^\leftarrow)) = \family(\GG)$, then $\x^\leftarrow$ is productive in $T_2$, but not in $U$ or $T_1$. Moreover by (G.6), $\GG^- \in \x^\leftarrow$, so ${\tt boxfactor}_{T_2}(\x^\leftarrow) = \hat{\beta}(\x)a$.

In this case (\ref{eqn:abc}) is equivalent to $\wt U = \beta(\x) \wt T_1 + \gamma \wt T_2$. Since $\gamma = 1$ whenever $\wt T_2 \neq 0$, it suffices to show $\wt U = \beta(\x) \wt T_1 + \wt T_2$. If $\x^\leftarrow$ is nonempty and $\lab(\x^\leftarrow) < \GG$, this follows from
\[ 
(1 - \hat{\beta}(\x) a) \cdot b = \beta(\x) \cdot ab + (1-a) \cdot b.
\]
If $\x^\leftarrow$ is empty, we use the same identity without $b$.
If $\family(\lab(\x^\leftarrow)) = \family(\GG)$, we use the identity
\[
(1 - \hat{\beta}(\x) a) \cdot a = \beta(\x) \cdot a + (1-a) \cdot \hat{\beta}(\x) a.
\]

\noindent
{\sf Case 2: ($\head(S)$ is {\sf H2}):}
Set $B_S = \{S\}$. Let $\x$ be the unique box of $S$. Locally at $S$,
\ytableausetup{boxsize=1em}
$\begin{picture}(108,12)
\put(0,0){$\ytableaushort{\bullet}\mapsto \ytableaushort{\bullet} + \beta(\x) \cdot \ytableaushort{{\GG}}.$}
\put(1,-4){\Scale[.6]{\circled{\GG}}}
\end{picture}$

Let ${\tt boxfactor}_{T_2}(\x) := a$. Then ${\tt virtualfactor}_{\underline{\x}\in U}(\circled{\GG}) := a \hat{\beta}(\x)$. In $T_1$, $\circled{\GG} \in \overline{\x}$ with ${\tt virtualfactor}_{\underline{\x}\in T_1}(\circled{\GG}) = a$. Due to $T_2$'s extra $\GG \in \x$, $d(T_2) = d(U) + 1 = d(T_1) + 1$.

In this case (\ref{eqn:abc}) is equivalent to $\wt U = \wt T_1 + \beta(\x) \wt T_2$. This follows from
\[
(-1)^{d(U)} \cdot a \hat{\beta}(\x) = (-1)^{d(U)} \cdot a + \beta(\x) \cdot (-1)^{d(U)+1} \cdot a.
\]

\noindent
{\sf Case 3: ($\head(S)$ is {\sf H3}):}  
Here $S=\{\x\}$. Let $\y = \x^{\downarrow\leftarrow}$. Locally at $S$,
\ytableausetup{boxsize=1.8em}
$\begin{picture}(155,12)
\put(0,0){$\Scale[0.8]{\ytableaushort{{\bullet_{i_k}}}}\mapsto \Scale[0.8]{\ytableaushort{{\bullet_{i_{k+1}}}}}.$}
\end{picture}
$

\noindent
{\sf Subcase 3.1: ($i_{k+1} \in \x^\downarrow  = \y^\rightarrow$, $i_k \in \y$, no $\bullet$ West of $\y$ in the same row)}:
In $U$ the $\y$ is not productive, whereas in $T_1$, $\y$ is productive. Let $a:={\tt boxfactor}_{T_1}(\y)$.

\noindent
{\sf Subcase 3.1.1: ($\y$ contains the only $i_k$ in $T$)}:
Let $\overline{T} := \phi_2^{-1}(T)$. 
Let $S'$ be the snake in $T$ containing $\y$, 
$\overline{S}$ be the snake in $\overline{T}$ 
containing $\x$ and $\overline{S'}$ the snake in $\overline{T}$ 
containing $\y$. Set $B_S = \{S, S', \overline{S}, \overline{S'} \}$. 
Locally at $S\cup S'$ and at $\overline{S}\cup \overline{S'}$ the
swaps are respectively,
\[\Scale[0.7]{\begin{picture}(46,27)
\put(0,17){\ytableaushort{\none \bullet, {i_k} {i_{k+1}}}}
\end{picture}}\  \mapsto
\Scale[0.7]{\begin{picture}(50,27)
\put(0,17){\ytableaushort{\none \bullet, {i_k} {i_{k+1}}}}
\end{picture}} \text{ \ \ and \ \ }
\Scale[0.7]{\begin{picture}(46,27)
\put(0,17){\ytableaushort{\none \bullet, \bullet {i_{k+1}}}}
\put(7,-6){$i_k$}
\end{picture}}\  \ \mapsto \beta(\y)
\Scale[0.7]{\begin{picture}(50,27)
\put(0,17){\ytableaushort{\none \bullet, {i_k} {i_{k+1}}}}
\end{picture}}
,\] 
where $\bullet=\bullet_{i_k}$ before the swap and $\bullet=\bullet_{i_{k+1}}$
after the swap.

By Proposition~\ref{prop:bundle_buddies:expseesaw/empty}, $[T]P_{{i_k}} = -[\overline{T}]P_{{i_k}}$. Hence, (\ref{eqn:abc}) is equivalent to 
\begin{claim}
$\wt (U-\overline{U}) = \wt (T_1 - \beta(\y) T_1)$.
\end{claim}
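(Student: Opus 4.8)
The claim $\wt(U - \overline{U}) = \wt(T_1 - \beta(\y)T_1) = \hat{\beta}(\y)\wt(T_1)$ is a purely local weight bookkeeping statement, so the strategy is to track exactly which boxes and edges change weight contribution among the four tableaux $U$, $\overline U$, $T_1$ (and the same $T_1$ appearing in the computation), and to verify that all contributions outside $\mathcal{B}_S$ cancel by definition, reducing everything to an identity among the local factors of $\mathcal{C}_2$, $\mathcal{C}_2'$, and their swapped images. First I would fix notation for the four relevant tableaux. By construction, $T$ and $\overline T = \phi_2^{-1}(T)$ differ only in the region $\mathcal{C}_2$ versus $\mathcal{C}_2'$: $T$ has $i_k \in \y$ and $\bullet_{i_{k+1}}\in \x$ and $i_{k+1}\in\y^{\rightarrow}$ (i.e.\ $\x^\downarrow$), whereas $\overline T$ has $\bullet_{i_k}\in \y$ (i.e.\ the configuration $\mathcal{C}_2$) with $i_k\in\underline{\y}$. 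Applying $\swap_{\GG}$ with $\GG = i_k$: on the $S\cup S'$ part of $T$ (type {\sf H3} head on $S=\{\x\}$ plus the snake $S'$ containing $\y$), the swap acts trivially up to the bullet subscript, giving $T_1$; on the $\overline S\cup\overline{S'}$ part of $\overline T$ the swap emits a factor $\beta(\y)$ and produces the same underlying tableau $T_1$ with the $i_k$ restored in $\y$. So $U = T_1$-as-a-fine-tableau and $\overline U = \beta(\y)\cdot T_1$-as-a-fine-tableau, but with different \emph{virtual-label placements and marked-label data} — which is why $\wt(U)\neq\wt(\overline U)$ a priori, and the whole content of the claim.

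The key steps, in order, are: (1) confirm that $U$, $\overline U$, $T_1$ all agree outside $\mathcal{B}_S$, and that the boxes/edges outside $\mathcal{B}_S$ contribute identically, so that $\wt$ factors as (common part)$\times$(local part) in all three; this follows from the definition of $\swap_{\GG}$ acting independently on snakes (Lemma~\ref{lem:miniswap_commutation}) together with the fact that $\phi_2$ changes nothing outside $\mathcal{C}_2$. (2) Identify the local factor of $T_1$: the box $\y$ is productive in $T_1$ (it carries $i_k$, and $i_{k+1}\in\y^\rightarrow$ with $i_k < i_{k+1}$, so (P.1) applies), contributing ${\tt boxfactor}_{T_1}(\y) =: a$; no edge label sits on $\underline{\y}$ in $T_1$; and there is no virtual label on $\underline{\y}$ because $i_k$ is present there. (3) Identify the local factor of $U$: here $\y$ is \emph{not} productive (since $\bullet_{i_{k+1}}\in\x = \y^{\uparrow\rightarrow}$ and $i_k\in\y$ with $\bullet$ directly northwest — this is precisely the scenario forbidding productivity; compare (P.4)), but $\underline{\y}$ now carries either $i_k$ or $\circled{i_k}$ — in fact by {\sf L2.2}/{\sf L2.3} logic an edge label $i_k$, contributing ${\tt edgefactor}(i_k)$, and one computes ${\tt edgefactor}_{\underline{\y}\in U}(i_k) = 1 - \hat{\beta}(\y)a$ by the relation between ${\tt edgefactor}$ and ${\tt boxfactor}$ on the same diagonal (the identity $\beta(\x)= 1-\hat{\beta}(\x)$ combined with how ${\sf Man}$ shifts). (4) Identify the local factor of $\overline U = \beta(\y) T_1$: the $\beta(\y)$ prefactor times $a$ (the $\y$-box contribution, now productive again in $T_1$), so local part $= \beta(\y)a = (1-\hat{\beta}(\y))a$. (5) Assemble: $\wt(U) - \wt(\overline U) = \big[(1-\hat{\beta}(\y)a) - (1-\hat{\beta}(\y))a\big]\cdot(\text{common}) = (1 - \hat{\beta}(\y))\cdot(\text{common})$ wait — recompute: $(1-\hat{\beta}(\y)a) - (1-\hat{\beta}(\y))a = 1 - \hat{\beta}(\y)a - a + \hat{\beta}(\y)a = 1 - a$. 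And $\wt(T_1 - \beta(\y)T_1) = (1-\beta(\y))\wt(T_1) = \hat{\beta}(\y)\cdot a\cdot(\text{common})$. These match iff $1-a = \hat{\beta}(\y)a$, i.e.\ iff $a = 1/(1+\hat{\beta}(\y))$, which is not generally true — so I have mislabeled some factor and the actual bookkeeping must distribute the $\hat\beta(\y)$ differently (likely the box $\y$ in $U$ receives ${\tt boxfactor} = \hat{\beta}(\y)a$ via its label $i_k$ being evaluated under (P.2) since $\bullet_{i_{k+1}}\in\x=\y^{\rightarrow\uparrow}$, not $\y^\rightarrow$, so the configuration for (P.2) may partially apply). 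The correct accounting, which I would carry out carefully, is: in $U$ the label $i_k\in\y$ together with $\bullet_{i_{k+1}}\in\y^{\uparrow\rightarrow}$ triggers productivity via (P.4) after all (no $\bullet_{i_{k+1}}$ southeast of $\y$), giving ${\tt boxfactor}_U(\y) = \hat{\beta}(\y)a$ (with the Manhattan-shifted denominator matching $T_1$'s), while $\underline{\y}$ carries $\circled{i_k}$ with ${\tt virtualfactor} = a$ or similar; then $\wt(U) = \hat{\beta}(\y)a\cdot a'\cdots$ — and the identity becomes the elementary $\hat{\beta}(\y)\cdot 1 + (1-\hat{\beta}(\y))\cdot(-1)\cdot\ldots$; I expect it to reduce, as in all previous subcases, to a one-line algebraic identity of the shape $X - Y = \hat{\beta}(\y)Z$ once the factors are correctly named.

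\textbf{Main obstacle.} The real work is step (3)–(4): pinning down \emph{precisely} which productivity clause (P.1)–(P.4) governs the box $\y$ in each of $U$ and $\overline U$, and whether $\underline{\y}$ carries a genuine edge label $i_k$, a virtual $\circled{i_k}$, or nothing — and in the virtual case, whether \eqref{eqn:funny_virtual_factor} or \eqref{eqn:normal_virtual_factor} applies (this depends on whether $\lab(\y)$ and the relevant smaller labels on $\underline{\y}$ are marked). This is exactly the kind of delicate marked-vs-unmarked and virtual-factor casework that pervades Section~\ref{sec:weight_preservation}, and getting the sign $(-1)^{d}$ right (since $T_1$ has an extra $i_k$ relative to one of $U, \overline U$, shifting $d$ by one) is where errors would creep in. Once those factors are correctly identified using Lemmas~\ref{lem:strong_form_of_G10}, \ref{lem:strong_form_of_G13} and \ref{lem:same999} to rule out spurious edge/virtual labels, the claim collapses to an elementary identity in $\mathbb{Z}[t_1^{\pm1},\ldots,t_n^{\pm1}]$ of the same flavor as those closing Subcases~0.2.*, and the proof of this subcase — and hence the contribution of this $\mathcal{B}_S$ to (D.2) — is complete.
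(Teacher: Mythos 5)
Your overall strategy is the same as the paper's — reduce everything to the local ${\tt boxfactor}$/${\tt edgefactor}$ contributions of the region $\mathcal{B}_S$ and verify an elementary identity in the $t_i$'s — but the execution contains concrete errors and, as you yourself note, the computation never closes, so there is a genuine gap. First, you have swapped the roles of $U$ and $\overline{U}$: Subcase 3.1.1 is precisely the case where the $i_k\in\y$ is the \emph{only} $i_k$ in $T$, so in $U$ the edge $\underline{\y}$ carries neither $i_k$ nor $\circled{i_k}$ (a virtual label would violate (V.2)); it is $\overline{U}=\phi_2^{-1}(T)$, with configuration $\mathcal{C}_2$, that carries the genuine edge label $i_k\in\underline{\y}$, contributing ${\tt edgefactor}_{\underline{\y}\in\overline{U}}(i_k)=1-a\hat{\beta}(\y)$. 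Second, your productivity analysis of $\y$ is wrong twice over: $i_k$ and $i_{k+1}$ lie in the same family, so they are $<$-incomparable and (P.1) never applies; $\y$ is productive in $T_1$ only because the swap has turned the bullet in $\x$ into $\bullet_{i_{k+1}}$, triggering (P.4), whereas in $U$ and $\overline{U}$ the bullet is still $\bullet_{i_k}$, so $\y$ is productive in neither — your proposed "corrected accounting" (that (P.4) already applies in $U$, with a virtual $\circled{i_k}$ on $\underline{\y}$) fails for the same reason. Third, you conflate the coefficient $\beta(\y)$ emitted by $\swap$ on $\overline{U}$ with the weight of $\overline{U}$ itself; $\overline{U}$ is a fine tableau with its own $\wt$, and $\beta(\y)$ belongs only on the right-hand side of the claim.

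With the factors correctly named — $a:={\tt boxfactor}_{T_1}(\y)$, $b$ the common ${\tt boxfactor}$ of $\x^\downarrow=\y^\rightarrow$ in all three tableaux, $\x$ and $\y$ unproductive in both $U$ and $\overline{U}$, and $d(U)=d(\overline{U})=d(T_1)$ (each tableau has exactly one instance of the gene $i_k$, so your worry about a sign shift from an "extra $i_k$" is unfounded) — the claim reduces to
\begin{equation*}
b-\bigl(1-a\hat{\beta}(\y)\bigr)\cdot b \;=\; \hat{\beta}(\y)\,ab \;=\; (1-\beta(\y))\,\wt(T_1)\ \text{(locally)},
\end{equation*}
which is the one-line identity the paper uses. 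Since your write-up ends with an acknowledged mismatch and a guessed repair that is itself incorrect, the proposal as it stands does not prove the claim; fixing the three points above and carrying out the displayed identity would complete it along the paper's lines.
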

\begin{proof}
First, $\x$ is not productive in $U,\overline{U}$ or $T_1$.
Second, $\x^\downarrow$ is productive in $U,\overline{U}$ and $T_1$ and moreover
\[{\tt boxfactor}_U(\x^\downarrow)={\tt boxfactor}_{\overline U}(\x^\downarrow)={\tt boxfactor}_{T_1}(\x^\downarrow):=b.\]
Third, $\y$ is not productive in $\overline{U}$.

Next, ${\tt edgefactor}_{\underline{\y}\in \overline{U}}(i_k)=1-a\hat{\beta}(\y)$.
Hence the claim follows from
\[b-(1-a\hat{\beta}(\y))\cdot b = \hat{\beta}(\y)\cdot ab = (1-\beta(\y)){\tt wt}(T_1).\qedhere \]
\end{proof}

\noindent
{\sf Subcase 3.1.2: (Subcase 3.1.1 does not apply)}:
Let $\overline{T} := \phi_1^{-1}(T)$. Let $S'$ be the snake in $T$ containing
$\y$, $\overline{S}$ be the snake in $\overline{T}$ containing 
$\x$ and $\overline{S'}$ the snake in $\overline{T}$ containing $\y$. 
Set $B_S = \{S, S', \overline{S}, \overline{S'}\}$. 
Locally at $S\cup S'$ and at $\overline{S}\cup \overline{S'}$ the
swaps are respectively,
\[\Scale[0.7]{\begin{picture}(46,27)
\put(0,17){\ytableaushort{\none \bullet, {i_k} {i_{k+1}}}}
\end{picture}}\  \mapsto
\Scale[0.7]{\begin{picture}(50,27)
\put(0,17){\ytableaushort{\none \bullet, {i_k} {i_{k+1}}}}
\end{picture}} \text{ \ \ and \ \ }
\Scale[0.7]{\begin{picture}(46,27)
\put(0,17){\ytableaushort{\none \bullet, \bullet {i_{k+1}}}}
\put(5,-7){$\Scale[0.8]{\circled{i_k}}$}
\end{picture}}\  \ \mapsto 
\Scale[0.7]{\begin{picture}(50,31)
\put(0,17){\ytableaushort{\none \bullet, {\bullet} {i_{k+1}}}}
\end{picture}}
+\beta(\y)
\Scale[0.7]{\begin{picture}(50,27)
\put(0,17){\ytableaushort{\none \bullet, {i_k} {i_{k+1}}}}
\end{picture}}
,\] 
where $\bullet=\bullet_{i_k}$ before the swap and $\bullet=\bullet_{i_{k+1}}$
after the swap. Note that $T_1=\overline{T_2}$.

By Proposition~\ref{prop:bundle_buddies:poofseesaw/empty}, $[T]P_{{i_k}} = -[\overline{T}]P_{{i_k}}$. 
Thus (\ref{eqn:abc}) is equivalent to 
\begin{claim}
$\wt U - \wt \overline{U} = \wt T_1 - \wt \overline{T_1} - \beta(\y) \wt T_1$.
\end{claim}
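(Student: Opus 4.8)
The plan is to prove the identity by computing and comparing the four weights $\wt U$, $\wt{\overline{U}}$, $\wt{T_1}$, $\wt{\overline{T_1}}$ directly, using that all four tableaux agree outside the three-box neighborhood $N$ consisting of $\x$, $\y = \x^{\downarrow\leftarrow}$, $\y^\rightarrow = \x^\downarrow$ together with their incident horizontal edges. Writing each weight as $(-1)^{d}\cdot\mathtt{edgewt}\cdot\mathtt{virtualwt}\cdot\mathtt{boxwt}$, I would first dispatch a short lemma: the productivity of every box \emph{outside} $N$ is the same in all four tableaux. Indeed the only label changes are $\bullet_{i_k}\leftrightarrow$ (genetic $i_k$) in $\y$ and $\bullet_{i_k}\to\bullet_{i_{k+1}}$ in $\x$ (and in $\y$ for $\overline{T_1}$); since $\bullet_{i_k}$ and $\bullet_{i_{k+1}}$ are both evaluated as family-$i$ labels, they behave identically for (P.1), while conditions (P.2)--(P.4) at $\x^\leftarrow = \y^\uparrow$, $\y^\leftarrow$, and $\y^{\rightarrow\rightarrow}$ are ruled out or rendered common using (G.3)--(G.6) and the fact that no $i_{k+1}$ can sit on $\underline{\x} = \overline{\y^\rightarrow}$ by (G.4). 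Consequently a single common factor $C$ (the product of all contributions from outside $N$, plus the contributions inside $N$ that are common to all four, such as the boxfactor of $\y^\rightarrow$ and the genetic edgefactors on the edges of $N$) divides out of each of the four weights, reducing the claim to an identity among the local data.

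Next I would record that local data. Since $\overline{U} = \phi_1^{-1}(U)$ replaces the genetic $i_k\in\y$ by $\bullet_{i_k}\in\y$ together with a virtual $\circled{i_k}\in\underline{\y}$ (legitimate precisely because, with no genetic $i_k$ in $\y$, placing $i_k$ on $\underline{\y}$ no longer violates (G.4)), and $\overline{T_1}$ likewise has $\bullet_{i_{k+1}}\in\y$ and $\circled{i_k}\in\underline{\y}$, the sign bookkeeping comes down to: $\bullet$-box-labels do not count toward $d$, so $\overline{U}$ and $\overline{T_1}$ each have one fewer genetic $i_k$ than $U$ and $T_1$, whence $(-1)^{d(\overline{U})} = -(-1)^{d(U)}$ and $(-1)^{d(\overline{T_1})} = -(-1)^{d(T_1)} = -(-1)^{d(U)}$. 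For the box factors, $\y$ is not productive in $U$ or $\overline{U}$ but is productive in $T_1$ via (P.4) (because $\bullet_{i_{k+1}}\in\y^{\rightarrow\uparrow} = \x$), contributing a boxfactor $q$; and the box $\x$ is productive or not uniformly, its boxfactor (if productive) changing only by the index shift from $N_{i_{k+1}} = N_{i_k} - 1$. The virtual $\circled{i_k}\in\underline{\y}$ contributes the ``normal'' factor of $(\ref{eqn:normal_virtual_factor})$ in both $\overline{U}$ and $\overline{T_1}$ (not the ``funny'' factor, since $\lab(\y)$ is a $\bullet$, hence unmarked), and this common value equals $q\cdot\hat{\beta}(\y)$: this elementary identity $\hat{\beta}(\y)\cdot q = \mathtt{virtualfactor}_{\underline{\y}}(\circled{i_k})$ drops out of unwinding the definitions once ${\sf Man}(\y) = {\sf Man}(\x) - 2$, the row of $\y$, and $N_{i_k}$ are matched up, and it is the engine of the whole computation --- the factor $\hat{\beta}(\y) = 1-\beta(\y)$ on the right-hand side is exactly what rewrites the $\overline{T_1}$ contribution into the $\overline{U}$ contribution.

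With all of this in hand I would assemble the four weights as $C$ times explicit monomials in the local factors, cancel $C$, and check that $\wt U - \wt{\overline{U}}$ and $\wt{T_1} - \wt{\overline{T_1}} - \beta(\y)\wt{T_1}$ both collapse to the same expression, using the sign flips above and $\hat{\beta}(\y) q = \mathtt{virtualfactor}_{\underline{\y}}(\circled{i_k})$. Subcase~3.1.1, whose one-line identity $b - (1 - a\hat{\beta}(\y))b = \hat{\beta}(\y)ab$ is precisely the analogue without the $\overline{T_1}$ term, provides the template for the final algebra.

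The main obstacle I anticipate is not any single calculation but the \emph{census}: pinning down exactly which boxes change productivity status and which virtual labels appear or disappear across the four tableaux, and doing so uniformly over the residual sub-configurations (whether $\x$ is productive at all; what label, if any, occupies $\y^\leftarrow$ or $\x^\rightarrow$; whether the family-$(i+1)$ label that (G.13) may force onto $\underline{\y}$ in $U$ is still present in $\overline{U}$). Each such point is controlled by the goodness axioms (G.1)--(G.13), the virtual-label rules (V.1)--(V.3), and the productivity rules (P.1)--(P.4), but verifying that every case funnels into the single polynomial identity above --- in particular that nothing outside $N$ secretly changes weight --- is where the care is needed.
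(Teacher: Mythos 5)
Your overall strategy is the same as the paper's: compute the four weights locally, cancel the common factor from everything outside the affected boxes, and close with the sign flip $d(\overline{U}) = d(U)-1$, $d(\overline{T_1})=d(T_1)-1$ together with the identity $\mathtt{virtualfactor}_{\underline{\y}\in\overline{U}}(\circled{i_k}) = a\hat{\beta}(\y)$ (your $q\hat{\beta}(\y)$), where $a$ is the boxfactor that $\y$ newly acquires in $T_1$ via (P.4). But your census contains an error that makes the final algebra impossible: you assert that $\circled{i_k}\in\underline{\y}$ contributes the same normal factor in $\overline{T_1}$ as in $\overline{U}$. It does not. In $\overline{T_1}$ the box $\y$ holds $\bullet_{i_{k+1}}$, so an $i_k$ on $\underline{\y}$ would be marked (it lies southeast of that bullet), (V.1) fails in the fully swapped tableau, and by the fine-tableau convention of Section~\ref{sec:weights_for_bulleted_tableaux} the virtual label is simply absent from $\overline{T_1}$. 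This asymmetry is exactly what the claim needs: with local contributions $\wt U\sim(-1)^{d}b$, $\wt\overline{U}\sim(-1)^{d-1}a\hat{\beta}(\y)b$, $\wt T_1\sim(-1)^{d}ab$ and $\wt\overline{T_1}\sim(-1)^{d-1}b$ (no virtual factor), both sides of the claim equal $(-1)^{d}b\bigl(1+a\hat{\beta}(\y)\bigr)$. With your symmetric census the left side is still $(-1)^{d}b\bigl(1+a\hat{\beta}(\y)\bigr)$ but the right side becomes $(-1)^{d}\,2ab\hat{\beta}(\y)$, which differs in general, so your computation cannot ``collapse to the same expression.''

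A second point you wave away would also sink the argument if it occurred: you allow $\x$ to be ``productive or not uniformly,'' with its boxfactor ``changing only by the index shift'' $N_{i_{k+1}} = N_{i_k}-1$. That shift is not harmless --- if $\x$ carried different boxfactors in the $\bullet_{i_k}$- and $\bullet_{i_{k+1}}$-tableaux, the two sides of the claim would acquire unequal scalar factors and the identity would be false. What is actually true (and what the paper uses) is that $\x$ is not productive in any of the four tableaux: a box containing a $\bullet$ is productive only via (P.2) (see the examples following (P.1)--(P.4)), and (P.2) at $\x$ would require a family-$i$ label on $\underline{\x}=\overline{\x^\downarrow}$, which is impossible by (G.4)/(G.5) since $i_{k+1}\in\x^\downarrow$. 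So you must show $\x$ contributes nothing at all, not merely that its hypothetical contribution ``shifts''; the rest of your outline (the sign bookkeeping, the identity $a\hat{\beta}(\y)$ for the virtual factor in $\overline{U}$, and the analogy with Subcase~3.1.1) is sound once these two points are repaired.
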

\begin{proof}
Firstly, $\x$ is not productive in $U$, $\overline{U}$, $T_1$ or $\overline{T_1}$. Secondly, $\x^\downarrow$ is productive in $U$, $\overline{U}$, $T_1$ and $\overline{T_1}$. Moreover, 
\[
{\tt boxfactor}_U(\x^\downarrow) = {\tt boxfactor}_{\overline{U}}(\x^\downarrow) = {\tt boxfactor}_{T_1}(\x^\downarrow) = {\tt boxfactor}_{\overline{T_1}}(\x^\downarrow) := b.
\] Note $\y$ is not productive in $U$, $\overline{U}$ or $\overline{T_1}$.
Observe ${\tt virtualfactor}_{\underline{\y}\in \overline{U}}(\circled{i_k}) = a \hat{\beta}(\y)$.
 Finally, $d(U) = d(T_1) = d(\overline{U})+1 = d(\overline{T_1}) +1$. The claim then follows from
\[
(-1)^{d(U)} \cdot b - (-1)^{d(U) -1} \cdot a \hat{\beta}(\y) \cdot b = (-1)^{d(U)} \cdot ab - (-1)^{d(U) -1} \cdot b - \beta(\y) \cdot (-1)^{d(U)} \cdot ab. \qedhere
\]
\end{proof}

\noindent
{\sf Subcase 3.2: (Subcase 3.1 does not apply)}:

\noindent
{\sf Subcase 3.2.1: ($\x^\downarrow$ is part of a {\sf T3} $\tail$)}:
Let $S'$ be the snake containing $\x^\downarrow$, and let $\mathcal{B}_S = \mathcal{B}_{S'}$. The remaining discussion of this case is found with the discussion of $S'$; see e.g., {\sf Subcase 0.2.3}.

\noindent
{\sf Subcase 3.2.2: (Subcase 3.2.1 does not apply)}:
Set $\mathcal{B}_S = \{S\}$. Recall that locally at $S$,
\ytableausetup{boxsize=1.8em}
$\begin{picture}(55,16)
\put(0,0){$\Scale[0.8]{\ytableaushort{{\bullet_{i_k}}}}\mapsto \Scale[0.8]{\ytableaushort{{\bullet_{i_{k+1}}}}}.$}
\end{picture}
$  The swap affects no weight factors.

\noindent
{\sf Case 4: ($\head(S)$ is {\sf H4}):}
We argue the case $S = \head(S)$. When $S$ is a multirow, weight preservation follows by combining the present argument with that of {\sf Case 0.2}.

Assume $S = \{\x, \x^\rightarrow\}$. Let $\overline{T}=\phi_{3,\{\x,\x^\rightarrow\}}(T)$, and let $\overline{S}$ be the snake containing $\x$ in $\overline{T}$. Set $B_S = \{S, \overline{S}\}$. 
Locally at $S, \overline{S}$ respectively the swaps are
\[\ytableausetup{boxsize=1em}
\begin{picture}(53,10)
\put(0,0){$\ytableaushort{\bullet \GG} \mapsto 0$}
\put(3,-4){$\Scale[.8]{\GG}$}
\end{picture} 
\text{ and }
\begin{picture}(53,10)
\put(0,0){$\ytableaushort{\bullet \GG} \mapsto \hat{\beta}(\x) \ytableaushort{\GG \bullet}.$}
\end{picture}
\]
Since by Proposition~\ref{prop:bundle_buddies:horizontal/death}, $[T]P_{\GG} = [\overline{T}]P_{\GG}$, (\ref{eqn:abc}) is equivalent to:
\begin{claim}
$\wt U + \wt \overline{U} = \wt \overline{T_1}$.
\end{claim}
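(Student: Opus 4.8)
The plan is to verify the weight identity $\wt U + \wt \overline{U} = \wt \overline{T_1}$ by a careful bookkeeping of which boxes and edges contribute nonzero factors in each of the three tableaux, exactly as in the earlier subcases. First I would recall the local pictures: in $U$ we have $\bullet_\GG \in \x$, $\GG \in \x^\rightarrow$ and $\GG \in \underline{\x}$ (this is exactly configuration {\sf H4}); in $\overline{U} = \phi_3^{-1}(T)$ the two rightmost boxes are arranged according to $\mathcal{C}_3$, so that $\GG \in \underline{\x}$ has been moved to $\underline{\x^\rightarrow}$ (with the appropriate bookkeeping: this is precisely the effect of $\phi_3$ reversed). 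Applying $\swap_\GG$ to the {\sf H4} snake $S$ kills the tableau $U$, while applying $\swap_\GG$ to $\overline{S}$ (which is now of a different type, since $\GG \notin \underline{\x}$) produces $\hat{\beta}(\x) \cdot T_1$ where $T_1$ has $\GG \in \x$, $\bullet_{\GG^+} \in \x^\rightarrow$. Thus $\overline{T_1}$ (the image of $T_1$ under the relevant pairing, reflecting how $\overline{U}$ feeds into $P_{\GG^+}$) is what appears. Since $P_\GG$ is linear and $[T]P_\GG = [\overline{T}]P_\GG$ by Proposition~\ref{prop:bundle_buddies:horizontal/death}, the coefficient identity needed for (\ref{eqn:abc}) reduces to the displayed weight identity.

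Next I would compare the weight factors box-by-box and edge-by-edge. The box $\x^\rightarrow$ is not productive in $U$ (it contains $\GG$ with $\GG \in \underline{\x}$ of its own family, or $\lab(\x^{\rightarrow\rightarrow})$ is not larger — one checks {\sf H4}'s configuration makes it non-productive exactly by the presence of $\GG \in \underline{\x}$), while in $\overline{T_1}$ the box $\x^\rightarrow$ holds $\bullet_{\GG^+}$, whose productivity is governed by (P.2): it is productive iff $\GG^+ \in \underline{\x^\rightarrow}$ and the family of $\lab(\x^{\rightarrow\rightarrow})$ is not $\family(\GG)$. The box $\x$ in $U$ holds $\bullet_\GG$; its productivity (via (P.3) or (P.1)) should match that of the box $\x$ in $\overline{T_1}$, which holds $\GG$. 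The edge $\underline{\x}$ in $U$ carries $\GG$, contributing ${\tt edgefactor}(\GG) = 1 - \hat\beta(\text{something})$; in $\overline{U}$ the edge $\underline{\x^\rightarrow}$ carries the corresponding $\GG$, and one checks the Manhattan-distance shift is absorbed correctly; in $\overline{T_1}$ the edge $\underline{\x^\rightarrow}$ carries $\GG^+$. The key is that $d(U) = d(\overline{U})$ and these relate to $d(\overline{T_1})$ by the creation of the extra $\GG$ in $T_1$; I expect $d(\overline{T_1}) = d(U) + 1$ or $d(U)$ depending on whether the pairing $\phi_3$-type map deletes a label. One then reduces everything to an elementary identity of the shape $a(1 - \hat\beta b) + a \cdot (\text{something}) = a \hat\beta b \cdot (\text{something else})$, in the same spirit as the identities closing {\sf Subcases 0.2.4.2} and {\sf 0.2.4.3}; I would write out the common factor $a$ (the shared contribution of boxes outside $\{\x, \x^\rightarrow\}$ and their non-disputed edges), and reduce to a $1$- or $2$-term polynomial identity in $\hat\beta(\x)$ and one or two edgefactors.

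The main obstacle I anticipate is correctly matching the virtual-label contributions: in $U$ there may be a $\circled{\HH}$ on $\underline{\x^\rightarrow}$ (with $\family(\HH) = \family(\GG)+1$, $N_\HH = N_\GG$) coming from configuration {\sf H4}, and I must track what becomes of it in $\overline{U}$ and in $\overline{T_1}$ — whether it stays virtual, becomes non-virtual, or its ${\tt virtualfactor}$ switches between the ``funny'' form (\ref{eqn:funny_virtual_factor}) and the ``normal'' form (\ref{eqn:normal_virtual_factor}) depending on whether the label in $\x^\rightarrow$ is marked. The sign bookkeeping in $d(\cdot)$ interacts with this, since a virtual-to-real transition changes $d$. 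I would handle this by splitting into the two sub-subcases according to whether $\lab_U(\x^{\rightarrow\rightarrow})$ is present and whether it forces $\x^\rightarrow$ to be productive in $\overline{T_1}$, writing the corresponding elementary identity in each, exactly parallel to how {\sf Subcase~0.2.4.2} split on the productivity of $\y$. Once the factors are correctly identified, the remaining algebra is a one-line verification of the form $(1 - \hat\beta(\x)a) + (\text{correction}) = \hat\beta(\x) \cdot (\text{output factor})$, with the $\wt \overline{U}$ term supplying the correction.
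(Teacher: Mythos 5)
Your overall skeleton matches the paper's: invoke Proposition~\ref{prop:bundle_buddies:horizontal/death} to equate $[T]P_\GG$ and $[\overline{T}]P_\GG$, reduce (\ref{eqn:abc}) to a local weight identity on $\{\x,\x^\rightarrow\}$, and close with a one-line algebraic check. But the local bookkeeping you propose is wrong in several places, and as set up it would not close. First, you misidentify the pairing: $\phi_3$ replaces $\mathcal{C}_3$ by $\mathcal{C}_3'$, i.e.\ it \emph{deletes} the edge label $\GG\in\underline{\x}$; it does not move it to $\underline{\x^\rightarrow}$ (that is the $\phi_4$ pairing used for the {\sf T6}/{\sf T4} cases). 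Consequently $d(U)=d(\overline{U})+1=d(\overline{T_1})+1$, not $d(U)=d(\overline{U})$ as you assert, and this sign discrepancy is exactly what makes the two left-hand terms combine. Second, the productivity facts are the opposite of what you claim: $\x$ is \emph{not} productive in $U$ or $\overline{U}$ (it holds $\bullet_\GG$ with $\GG\in\x^\rightarrow$, so (P.1)--(P.4) all fail), but it \emph{is} productive in $\overline{T_1}$ via (P.3), with ${\tt boxfactor}_{\overline{T_1}}(\x)=:a$; meanwhile $\x^\rightarrow$ \emph{can} be productive in $U$ and $\overline{U}$ --- this depends only on whether $\x^{\rightarrow\rightarrow}$ carries a label of family $i$, not on the presence of $\GG\in\underline{\x}$ as you say --- and is never productive in $\overline{T_1}$. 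Third, $\overline{T_1}$ (the {\sf H5.3}-type output) carries no $\GG^+$ on $\underline{\x^\rightarrow}$; the new object there is the virtual label $\circled{\GG}\in\overline{\x^\rightarrow}$, which is present exactly when $\x^\rightarrow$ is productive in $U$, and its ${\tt virtualfactor}$ equals the common boxfactor $b$.

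With the correct data the identity is forced: the edge label $\GG\in\underline{\x}$ in $U$ contributes ${\tt edgefactor}=1-\hat{\beta}(\x)a$ (same $a$ as above), $\overline{U}$ has no such factor but the opposite sign, and the claim reduces to
\begin{equation*}
(-1)^{d(U)}\bigl(1-\hat{\beta}(\x)a\bigr)b+(-1)^{d(U)-1}b=\hat{\beta}(\x)\,(-1)^{d(U)-1}ab,
\end{equation*}
with $b$ omitted when $\x^\rightarrow$ is unproductive. Note also that the ``main obstacle'' you anticipate --- tracking a $\circled{\HH}$ on $\underline{\x^\rightarrow}$ and the funny-vs-normal virtual factor --- is not the issue in this case; the only virtual-label matching needed is the $\circled{\GG}$ on the \emph{upper} edge of $\x^\rightarrow$ in $\overline{T_1}$ against the productivity of $\x^\rightarrow$ in $U$ and $\overline{U}$. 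So the gap is not the strategy but the identification of $\phi_3$'s effect (deletion, hence the $d$-shift) and the productive-box/virtual-label inventory; as written, your productivity claims for $\x$ and $\x^\rightarrow$ and your $d$-relation would lead to an identity that fails.
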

\begin{proof}
In $U$ and $\overline{U}$, $\x$ is not productive. However $\x$ is productive in $\overline{T_1}$. Let ${\tt boxfactor}_{\overline{T_1}}(\x) := a$. In $\overline{T_1}$, $\x^\rightarrow$ is not productive. In $U$, $\x^\rightarrow$ is productive if and only if $\x^{\rightarrow\rightarrow}$ does not contain a label of family $i$. Further, $\x^\rightarrow$ is productive in $U$ if and only if it is productive in $\overline{U}$ and if and only if $\circled{\GG} \in \overline{\x^\rightarrow}$ in $\overline{T_1}$. In this case, ${\tt boxfactor}_{U}(\x^\rightarrow) = {\tt boxfactor}_{\overline{U}}(\x^\rightarrow) = {\tt virtualfactor}_{\overline{\x^\rightarrow} \in \overline{T_1}}(\circled{\GG}) := b$.

In $T$, ${\tt edgefactor}(\underline{\x}) = 1 - \hat{\beta}(\x) a$. Finally $d(U) = d(\overline{U}) + 
1 = d(\overline{T_1}) + 1$. If $\x^\rightarrow$ is productive in $U$, then the claim follows from
\[
(-1)^{d(U)} \cdot (1 - \hat{\beta}(\x) a) \cdot b  + (-1)^{d(U)-1} \cdot b = \hat{\beta}(\x)(-1)^{d(U) - 1} \cdot ab.
\]
Otherwise we use the same identity with $b$'s removed.
\end{proof}

\noindent
{\sf Case 5: ($\head(S)$ is {\sf H5}):}
We only explicitly argue the case $S = \head(S)$.  The case $S$ is a multirow ribbon follows by combining the present arguments with those from {\sf Case 0.2}. Hence $S= \head(S)=\{\x, \y := \x^\rightarrow\}$.

\noindent
{\sf Subcase 5.1: ($\head(S)$ is {\sf H5.1}):} Let ${\mathcal B}_S=\{S\}$. Locally at $S$ the swap is
\ytableausetup{boxsize=1.4em}
$\begin{picture}(92,18)
\put(0,0){$\ytableaushort{\bullet \GG} \mapsto \ytableaushort{\bullet {\GG^!}}$}
\put(21,-5){$\HH$}
\put(74,-5){$\HH$}
\end{picture}$,
where  $\HH\in \underline{\y}$, $\family(\mathcal{H}) = \family(\GG) + 1$ and 
$N_\mathcal{H}  = N_\GG$. Since no labels move, (\ref{eqn:abc}) follows trivially.

\noindent
{\sf Subcase 5.2: ($\head(S)$ is {\sf H5.2}):} Locally at $S$ the swap is
\ytableausetup{boxsize=1.4em}
$\begin{picture}(170,12)
\put(0,0){$\ytableaushort{\bullet \GG} \mapsto \ytableaushort{\bullet {\GG^!}} + \hat{\beta}(\x) \cdot \ytableaushort{\GG \bullet}$.}
\put(20,-5){\Scale[.8]{$\circled{\HH}$}}
\put(73,-5){\Scale[.8]{$\circled{\HH}$}}
\end{picture}
$
By Lemma~\ref{lem:how_to_check_ballotness}, 
$\y$ contains the westmost instance of $\GG$ and that hence $\circled{\GG} \notin \underline{\x}$.

\noindent
{\sf Subcase 5.2.1: ($\GG^- \in \x^\leftarrow$ with $\family(\GG^-) =i$):} Let ${\mathcal B}_S=\{S\}$.
Now, $\y$ is productive in $U$ if and only if it is productive in $T_1$ and if and only if 
$\circled{\GG} \in \overline{\y}$ in $T_2$. In this case, 
\[{\tt boxfactor}_U(\y)={\tt boxfactor}_{T_1}(\y)={\tt virtualfactor}_{\overline{\y}\in T_2}(\circled{\GG}):=a.\]
Let ${\tt virtualfactor}_{\underline{\y} \in U}(\circled{\HH}) := b$. The box $\x$ is productive only in $T_2$, with ${\tt boxfactor}_{T_2}(\x) = b$. The box $\x^\leftarrow$ is productive in $U$ and $T_1$; $\x^\leftarrow$ is not productive in $T_2$. Furthermore, ${\tt boxfactor}_U(\x^\leftarrow) = {\tt boxfactor}_{T_1}(\x^\leftarrow) = \hat{\beta}(\x) b$. Observe ${\tt virtualfactor}_{\underline{\y} \in T_1}(\circled{\HH}) = b-1$.  
In this case, (\ref{eqn:abc}) is equivalent to $\wt U = \wt T_1 + \hat{\beta}(\x) \wt T_2$. If $\y$ is productive in $U$, this follows from the identity
\[
b \cdot a \hat{\beta}(\x) b = (b-1) \cdot a \hat{\beta}(\x) b  + \hat{\beta}(\x) \cdot a \cdot b. 
\]
If $\y$ is not productive in $U$, we use the same identity without $a$.

\noindent
{\sf Subcase 5.2.2: ($\GG^- \notin \x^\leftarrow$ or $\family(\GG^-) \neq i$):}
Observe $T \in \mathcal{S}_3'$. Let $\overline{T} = \phi_3^{-1}(T)$. Note that $\circled{\HH} \notin \underline{\y}$ in $\overline{T}$.
Let $\overline{S}$ be the snake of $\overline{T}$ containing $\x$.
Set $\mathcal{B}_S = \{S, \overline{S}\}$. The swap at $S$ is illustrated at the start of {\sf Case 5.2} above. Locally at $\overline{S}$, the swap is 
$\begin{picture}(170,20)
\put(0,0){$\ytableaushort{\bullet \GG} \mapsto 0$.}
\put(5,-3){$\Scale[.9]{\GG}$}
\end{picture}
$

Observe that $\y$ is productive in $U$ if and only if it is productive in $\overline{U}$ and if and only if it is productive in $T_1$; $\y$ is not productive in $T_2$. Also if $\y$ is productive in $U$, then ${\tt boxfactor}_U(\y) = {\tt boxfactor}_{\overline{U}}(\y) = {\tt boxfactor}_{T_1}(\y) := a$. There is $\circled{\GG} \in \overline{\y}$ in $T_2$ if and only if $\y$ is productive in $U$. In this case, ${\tt virtualfactor}_{\overline{\y} \in T_2}(\circled{\GG}) = a$.

Let $b := {\tt virtualfactor}_{\underline{\y} \in U}(\circled{\HH})$ and $1-c := {\tt edgefactor}_{\underline{\x} \in \overline{U}}(\GG)$. Consequently we have ${\tt virtualfactor}_{\underline{\y} \in T_1}(\circled{\HH}) = b - 1$, while ${\tt boxfactor}_{T_2}(\x) = c / \hat{\beta}(\x)$. Observe $d(U) = d(T_1) = d(T_2) = d(\overline{U}) - 1$.  By Proposition~\ref{prop:bundle_buddies:horizontal/death}, $[T]P_{\GG} = [\overline{T}]P_\GG$, to (\ref{eqn:abc}) is equivalent to $\wt U + \wt \overline{U} = \wt T_1 + \hat{\beta}(\x) \wt T_2$. If $\y$ is productive in $U$, this follows from
\[
(-1)^{d(U)} \cdot b \cdot a + (-1)^{d(U)-1} \cdot (1-c) \cdot a = (-1)^{d(U)} \cdot (b-1) \cdot a + \hat{\beta}(\x) \cdot (-1)^{d(U)} \cdot a \cdot \frac{c}{\hat{\beta}(\x)}.
\]
Otherwise it follows from the same identity without $a$'s.

\noindent
{\sf Case 5.3: ($\head(S)$ is {\sf H5.3}):}

\noindent
{\sf Subcase 5.3.1: ($\circled{\GG} \in \underline{\x}$):}
Set $B_S = \{S\}$. Let $S = \{ \x, \y := \x^\rightarrow\}$. Locally at $S$, the swap is $\ytableaushort{ \bullet \GG} \mapsto \hat{\beta}(\x) \ytableaushort{\GG \bullet}$.
Let $a := {\tt virtualfactor}_{\underline{\x} \in U}(\circled{\GG})$.
In $U$, $\x$ is not productive, while $\y$ is productive if and only if $\y^\rightarrow$ does not contain a label of family $i$. Further $\y$ is productive in $U$ if and only if $\circled{\GG} \in \overline{\y}$ in $T_1$. If $\y$ is productive in $U$, then ${\tt boxfactor}_U(\y) = {\tt virtualfactor}_{\overline{\y} \in T_1}(\circled{\GG} := b$. In $T_1$, $\x$ is productive, but $\y$ is not; ${\tt boxfactor}_{T_1}(\x) = \frac{a}{\hat{\beta}(\x)}$. Here (\ref{eqn:abc}) is equivalent to $\wt U = \hat{\beta}(\x) \wt T_1$. If $\y$ is productive in $U$, this follows from
$a \cdot b = \hat{\beta}(\x) \cdot b \cdot \frac{a}{\hat{\beta}(\x)}$.
Otherwise we use the same identity without $b$.

\noindent
{\sf Subcase 5.3.2: ($\GG^- \in \x^\leftarrow$ with $\family(\GG^-) = i$):}
By (G.12) and Lemma~\ref{lemma:Gsoutheast}, no label of family $i$ appears in $\x$'s column.
Hence the $\GG \in \y$ is the Westmost $\GG$. In particular, $\circled{\GG} \notin \underline{\x}$, so this case is disjoint from {\sf Subcase 5.3.1.} Set $B_S = \{S\}$. Locally at $S$, the swap is $\ytableaushort{ \bullet \GG} \mapsto \hat{\beta}(\x)\ytableaushort{\GG \bullet}$.

Let $a := {\tt boxfactor}_U(\x^\leftarrow)$. In $U$, $\x$ is not productive, while $\y$ is productive if and only if $\y^\rightarrow$ does not contain a label of family $i$ if and only if $\circled{\GG} \in \overline{\y}$ in $T_1$.
In this case ${\tt boxfactor}_U(\y) = {\tt virtualfactor}_{\overline{\y} \in T_1}(\circled{\GG}) := b$. In $T_1$, $\x^\leftarrow$ and $\y$ are not productive, while $\x$ is productive and ${\tt boxfactor}_{T_1}(\x) = a/\hat{\beta}(\x)$. Here (\ref{eqn:abc}) is equivalent to
${\tt wt}(U)=\hat{\beta}(\x){\tt wt}(T_1)$. If $\y\in U$ is productive then this follows from 
$ab=\hat{\beta}(x)\cdot b\cdot a/\hat{\beta}(\x)$; otherwise the same is true after removing the $b$'s.

\noindent
{\sf Subcase 5.3.3: (Subcases 5.3.1 and 5.3.2 do not apply):}
There is no $\GG^{-}\in \x^\leftarrow$ because we are not in {\sf Subcase 5.3.2}. 
Since we are not in {\sf Case 5.1}
there is not $\HH\in \underline{\y}$ with ${\tt family}(\HH)=i+1$ and
$N_{\HH}=N_{\GG}$. By the assumption that we are in {\sf Case 5}, there can be no label of $\family(\GG)$ in the column of $\x$. Thus if the $\GG\in \y$ were not Westmost then $\circled{\GG}\in \underline{\x}$ and we would be
in {\sf Subcase 5.3.1}, a contradiction. Therefore we conclude $T\in {\mathcal S}_3'$. Let ${\overline T}:=\phi_3^{-1}(T)$. 
Let ${\overline S}$ be the snake in ${\overline T}$ containing $\x$. Then set ${\mathcal B}_S=\{S,\overline{S}\}$.

Locally at $S$ we have the swap is $\ytableaushort{ \bullet \GG} \mapsto \hat{\beta}(\x) \ytableaushort{\GG \bullet}$.
Locally at $\overline{S}$, the swap is 
$\begin{picture}(66,20)
\put(0,0){$\ytableaushort{\bullet \GG} \mapsto 0$.}
\put(5,-3){$\Scale[.9]{\GG}$}
\end{picture}
$
By Proposition~\ref{prop:bundle_buddies:horizontal/death}, $[T]P_{k} = [\overline{T}]P_{k}$.
Therefore (\ref{eqn:abc}) is equivalent to ${\tt wt}(U)+{\tt wt}(\overline{U})=\hat{\beta}(\x){\tt wt}(T_1)$. 
This is exactly 
proved (up to renaming of tableaux) in {\sf Case~4}.

\noindent
{\sf Case 6: ($\head(S)$ is {\sf H6}):}
Here $S = \{\x, \x^\rightarrow \}$.

\noindent
{\sf Subcase 6.1: ($\bullet_{\GG} \notin \x^{\rightarrow \uparrow})$}:
Let ${\mathcal B}_{S}=\{S\}$. Locally at $S$,
\ytableausetup{boxsize=1.4em}
$\begin{picture}(320,23)
\put(0,0){$\ytableaushort{{\bullet} {\GG^+}} \mapsto \beta(\x) \cdot\ytableaushort{{\GG} {\GG^+}} 
+ \hat{\beta}(\x) \cdot \ytableaushort{{\GG} {\bullet}}$.}
\put(5,-3){$\GG$}
\put(185,-3){$\GG^+$}
\end{picture}
$ 

In $U, T_1$ and $T_2$, $\x$ is not productive. 
Now, $\x^\rightarrow$ is productive in $T_1$ if and only if it is productive in $T_2$ if and
only if it is productive in $U$; in case of productivity,
\[a:={\tt boxfactor}_U(\x^\rightarrow)={\tt boxfactor}_{T_1}(\x^\rightarrow)={\tt boxfactor}_{T_2}(\x^\rightarrow).\]
Next, let ${\tt edgefactor}_{{\underline\x}\in U}(\GG):= 1 - b$. Thus
${\tt edgefactor}_{\underline{\x^\rightarrow}\in T_2}(\GG^{+}) = 1-b/\hat{\beta}(\x)$.

Finally, (\ref{eqn:abc}) is equivalent to ${\tt wt}(U)=\beta(\x){\tt wt}(T_1)+\hat{\beta}(\x)(T_2)$. If
$\x^\rightarrow$ is productive in $U$, then this follows from
\[(1-b)\cdot a = \beta(\x)\cdot a+\hat{\beta}(\x)\cdot (1-b/\hat{\beta}(\x))\cdot a;\]
otherwise we are done by the same expression without the $a$'s.

\noindent
{\sf Subcase 6.2: ($\bullet_{\GG} \in \x^{\rightarrow \uparrow}$)}: Thus $T\in {\mathcal S}_2$. 
Let $\overline{T}:=\phi_2(T)$. Let $\overline{S}$ be the snake of $\overline{T}$ 
containing $\x$. Let $S'$ and $\overline{S'}$ be the snakes of $T$ and $\overline{T}$
containing $\x^{\rightarrow\uparrow}$, respectively.
Set ${\mathcal B}_S = \{ S, S', \overline{S}, \overline{S'} \}$. 

Notice $S'$ falls into {\sf Case 3} and in fact the ${\mathcal B}_{S'}$ defined there equals
the current ${\mathcal B}_S$. Hence (\ref{eqn:abc}) holds by {\sf Case 3}.

\noindent
{\sf Case 7: ($\head(S)$ is {\sf H7}):}
Here $S = \{ \x, \x^\rightarrow\}$.

\noindent
{\sf Subcase 7.1: ($\bullet_{\GG} \notin \x^{\rightarrow \uparrow}$)}: 
Let ${\mathcal B}_S=\{S\}$. Locally at $S$,
\ytableausetup{boxsize=1.4em}
\[\begin{picture}(300,18)
\put(0,0){${\ytableaushort{{\bullet}{\GG^+}}} \mapsto 
{\ytableaushort{{\bullet} {\GG^+}}} + \beta(\x) \cdot {\ytableaushort{{\GG} {{\GG^+}}}} + \hat{\beta}(\x) \cdot 
{\ytableaushort{{\GG} {\bullet}}}$}
\put(3,-5){\Scale[.8]{$\circled{\GG}$}}
\put(234,-3){\Scale[.9]{\GG^+}}
\end{picture}
\] 
First, $\x$ is not productive in $U,T_1,T_2$ or $T_3$ whereas $\x^\rightarrow$ is productive in each of $U,T_1,T_2$ and $T_3$
or otherwise not productive in any of these tableaux. If $\x^\rightarrow$ is productive then
\[a:={\tt boxfactor}_U(\x^\rightarrow)={\tt boxfactor}_{T_i}(\x^\rightarrow) \text{\ for $i=1,2,3$.}\]
Second,  let $b:={\tt virtualfactor}_{\underline{\x}\in U}(\circled{\GG})$.
 Third, ${\tt edgefactor}_{\underline{\x^\rightarrow}\in T_3}(\GG^+)=1-b/\hat{\beta}(\x)$.
Fourth, $d(T_1)=d(U)$, $d(T_2)=d(T_3)=d(U)+1$. 

Here (\ref{eqn:abc}) is equivalent to ${\tt wt}(U)={\tt wt}(T_1)+\beta(\x){\tt wt}(T_2)+\hat{\beta}(\x){\tt wt}(T_3)$.
If  $\x^\rightarrow$ is productive in $U$, then this follows from
\[(-1)^{d(U)}b\cdot a=(-1)^{d(U)}a+\beta(\x)(-1)^{d(U)+1}a+\hat{\beta}(\x)(-1)^{d(U)+1}\left(1-\frac{b}{\hat{\beta}(\x)}\right)\cdot a;\]
otherwise we are done by the same identity without the $a$'s.

\noindent
{\sf Subcase 7.2:  ($\bullet_{\GG} \in \x^{\rightarrow \uparrow}$)}: Thus $T\in {\mathcal S}_1$. 
Let $\overline{T}:=\phi_1(T)$. Let $\overline{S}$ be the snake of $\overline{T}$ 
containing $\x$. Let $S'$ and $\overline{S'}$ be the snakes of $T$ and $\overline{T}$
containing $\x^{\rightarrow\uparrow}$, respectively.
Set ${\mathcal B}_S = \{ S, S', \overline{S}, \overline{S'} \}$. 

Notice $S'$ falls into {\sf Case 3} and in fact the ${\mathcal B}_{S'}$ defined there equals
the current ${\mathcal B}_S$. Hence (\ref{eqn:abc}) holds by {\sf Case 3}.

\noindent
{\sf Case 8: ($\head(S)$ is {\sf H8}):} Here $\head(S)=S$. The definitions of $\mathcal{B}_S$ and subsequent analysis are exactly the same as in {\sf Case 3}.

\noindent
{\sf Case 9: ($\head(S)$ is {\sf H9}):} Let ${\mathcal B}_S=\{S\}$. If $\head(S)=S$, then since swapping at $S$ does nothing (including no change to any
$\bullet$ indices), (\ref{eqn:abc}) is trivially true. If $\head(S)\neq S$, then we use an argument similar to {\sf Subcase 0.2}. 

\section*{Acknowledgments}
AY thanks Hugh Thomas for his collaboration during \cite{Thomas.Yong:H_T}. Both authors thank him for his crucial role during the early stages of this project.
OP was supported by an Illinois Distinguished Fellowship, an NSF Graduate Research Fellowship and NSF MCTP
grant DMS 0838434.
AY was supported by NSF grants and a Helen Corley Petit fellowship at UIUC.

\end{document}